\date{October 18, 2023}
\definecolor{darkred}{rgb}{1,0,0} 
\definecolor{darkgreen}{rgb}{0,1,0}
\definecolor{darkblue}{rgb}{0,0,1}
\theoremstyle{plain}  
\newtheorem{theorem}{Theorem}[section]
\newtheorem*{theoremA}{Theorem A}
\newtheorem*{theoremB}{Theorem B}
\newtheorem*{theoremC}{Theorem C}
\newtheorem*{theoremD}{Theorem D}
\newtheorem*{theoremE}{Theorem E}
\newtheorem*{theorem*}{Theorem}
\newtheorem*{conjecture*}{Conjecture}
\newtheorem{corollary}[theorem]{Corollary}
\newtheorem{lemma}[theorem]{Lemma}
\newtheorem{proposition}[theorem]{Proposition}
\newtheorem{tech-lemma}[theorem]{Technical Lemma}
\newtheorem{definition}[theorem]{Definition}
\theoremstyle{remark}
\newtheorem{example}[theorem]{Example}
\newtheorem{remark}[theorem]{Remark}
\newtheorem*{remark*}{Remark}
\newtheorem*{claim*}{Claim}
\newtheoremstyle{TheoremForIntro} 
        {.6em}{.6em}              
        {\itshape}                      
        {}                              
        {\bfseries}                     
        {.}                             
        { }                             
        {\thmname{#1}\thmnote{ \bfseries #3}}
    \theoremstyle{TheoremForIntro}
\numberwithin{equation}{section}
\renewcommand{\leq}{\leqslant}
\renewcommand{\geq}{\geqslant}
\newcommand{\R}{\mathbb{R}}
\newcommand{\Z}{\mathbb{Z}}
\newcommand{\C}{\mathbb{C}}
\newcommand{\HH}{\mathbb{H}}
\newcommand{\cB}{{\mathcal B}}
\newcommand{\cC}{{\mathcal C}}
\newcommand{\cE}{{\mathcal E}}
\newcommand{\cG}{{\mathcal G}}
\newcommand{\cH}{{\mathcal H}}
\newcommand{\cK}{{\mathcal K}}
\newcommand{\cL}{{\mathcal L}}
\newcommand{\cM}{{\mathcal M}}
\newcommand{\cO}{{\mathcal O}}
\newcommand{\cP}{{\mathcal P}}
\newcommand{\cT}{{\mathcal T}}
\newcommand{\cV}{{\mathcal V}}
\newcommand{\cW}{{\mathcal W}}
\newcommand{\cX}{{\mathcal X}}
\newcommand{\Fuch}{\mathrm{Fuch}}
\newcommand{\rPGL}{\mathrm{PGL}}
\newcommand{\rPSL}{\mathrm{PSL}}
\newcommand{\rPSO}{\mathrm{PSO}}
\newcommand{\rSU}{\mathrm{SU}}
\newcommand{\rA}{\mathrm{A}}
\newcommand{\rB}{\mathrm{B}}
\newcommand{\rD}{\mathrm{D}}
\newcommand{\rS}{\mathrm{S}}
\newcommand{\rK}{\mathrm{K}}
\newcommand{\rF}{\mathrm{F}}
\newcommand{\rE}{\mathrm{E}}
\newcommand{\rT}{\mathrm{T}}
\newcommand{\SU}{\mathrm{SU}}
\newcommand{\rP}{\mathrm{P}}
\newcommand{\rG}{\mathrm{G}}
\newcommand{\rU}{\mathrm{U}}
\newcommand{\rC}{\mathrm{C}}
\newcommand{\rGL}{\mathrm{GL}}
\newcommand{\rL}{\mathrm{L}}
\newcommand{\rZ}{\mathrm{Z}}
\newcommand{\rSL}{\mathrm{SL}}
\newcommand{\rSO}{\mathrm{SO}}
\newcommand{\rO}{\mathrm{O}}
\newcommand{\rSp}{\mathrm{Sp}}
\newcommand{\rSpin}{\mathrm{Spin}}
\newcommand{\lieg}{\mathfrak{g}}
\newcommand{\fgl}{\mathfrak{gl}}
\newcommand{\fsl}{\mathfrak{sl}}
\newcommand{\fsu}{\mathfrak{su}}
\newcommand{\fso}{\mathfrak{so}}
\newcommand{\fsp}{\mathfrak{sp}}
\newcommand{\fa}{\mathfrak{a}}
\newcommand{\fb}{\mathfrak{b}}
\newcommand{\fc}{\mathfrak{c}}
\newcommand{\fe}{\mathfrak{e}}
\newcommand{\ff}{\mathfrak{f}}
\newcommand{\fg}{\mathfrak{g}}
\newcommand{\fh}{\mathfrak{h}}
\newcommand{\fk}{\mathfrak{k}}
\newcommand{\fl}{\mathfrak{l}}
\newcommand{\fm}{\mathfrak{m}}
\newcommand{\fp}{\mathfrak{p}}
\newcommand{\fs}{\mathfrak{s}}
\newcommand{\fu}{\mathfrak{u}}
\newcommand{\fz}{\mathfrak{z}}
\DeclareMathOperator{\ad}{ad}
\DeclareMathOperator{\Ad}{Ad}
\DeclareMathOperator{\Aut}{Aut}
\DeclareMathOperator{\aut}{aut}
\DeclareMathOperator{\rk}{rk}
\DeclareMathOperator{\Hom}{Hom}
\DeclareMathOperator{\Id}{Id}
\newcommand{\G}{\mathrm{G}}
\newcommand{\rH}{\mathrm{H}}
\newcommand{\smtrx}[1]{\left (\begin{smallmatrix}#1\end{smallmatrix}\right)}
\let\oldmarginpar\marginpar
\renewcommand\marginpar[1]{\oldmarginpar{\tiny\bf\begin{flushleft} #1
\end{flushleft}}}
\newcommand{\rkfge}{r(e)} 
\newcommand{\tilfg}{\tilde\fg} 
\newcommand{\tilrG}{\tilde\rG} 
\newcommand{\tilfm}{\tilde\fm} 
\begin{document}

%
%

\title[Cayley correspondences and Higher Teichmüller spaces]
{A general Cayley correspondence and higher rank Teichmüller spaces}
%
%


\author[S. Bradlow]{Steven Bradlow}
\address{S. Bradlow,
\newline\indent
Department of Mathematics, University of Illinois at Urbana-Champaign\newline\indent
Urbana, IL 61801, USA}
\email{bradlow@math.uiuc.edu}

\author[B. Collier]{Brian Collier}
\address{B. Collier,
\newline\indent Department of Mathematics, University of California Riverside\newline\indent Riverside, CA 92521, USA}
\email{brian.collier@ucr.edu}

\author[O. Garc\'{i}a-Prada]{Oscar Garc\'{i}a-Prada}
\address{O. Garc\'{i}a-Prada,
\newline\indent Instituto de Ciencias Matem\'aticas,  CSIC-UAM-UC3M-UCM, \newline\indent 
Nicol\'as Cabrera, 13--15, 28049 Madrid, Spain}
\email{oscar.garcia-prada@icmat.es}

\author[P.~B.\ Gothen]{Peter B.\ Gothen}
\address{P.~B. Gothen,
\newline\indent Centro de Matemática da Universidade do Porto, 
    \newline\indent Faculdade de Ci\^encias da Universidade do Porto, 
    \newline\indent Rua do Campo Alegre s/n, 4169-007 Porto, Portugal}
\email{pbgothen@fc.up.pt}

\author[A. Oliveira]{Andr\'e Oliveira}
\address{A. Oliveira, 
\newline\indent 
Centro de Matem\'atica da Universidade do Porto,
     \newline\indent Faculdade de Ci\^encias da Universidade do Porto, 
     \newline\indent
     Rua do Campo Alegre s/n, 4169-007 Porto, Portugal \newline\indent \textsl{On leave from:}\newline\indent Departamento de Matem\'atica, Universidade de Tr\'as-os-Montes e Alto Douro, UTAD,\newline\indent
Quinta dos Prados, 5000-911 Vila Real, Portugal}
\email{andre.oliveira@fc.up.pt\newline\indent agoliv@utad.pt}

\thanks{
}
\keywords{Semistable Higgs bundles, connected components of moduli spaces}
\subjclass[2010]{14D20, 14F45, 14H60}

\begin{abstract}
 We introduce a new class of $\fsl_2$-triples in a complex simple Lie algebra $\fg$, which we call magical. Such an $\fsl_2$-triple canonically defines a real form and various decompositions of $\fg$. Using this decomposition data, we explicitly parameterize special connected components of the moduli space of Higgs bundles on a compact Riemann surface $X$ for an associated real Lie group, hence also of the corresponding character variety of representations of $\pi_1X$ in the associated real Lie group. This recovers known components when the real group is split, Hermitian of tube type, or $\rSO_{p,q}$ with $1<p\leq q$, and also constructs previously unknown components for the quaternionic real forms of $\rE_6$, $\rE_7$, $\rE_8$ and $\rF_4$. The classification of magical $\fsl_2$-triples is shown to be in bijection with the set of $\Theta$-positive structures in the sense of Guichard--Wienhard, thus the mentioned parameterization conjecturally detects all examples of higher rank Teichm\"uller spaces. Indeed, we discuss properties of the surface group representations obtained from these Higgs bundle components and their relation to $\Theta$-positive Anosov representations, which indicate that this conjecture holds.
\end{abstract}

\maketitle

\markleft{{Bradlow, Collier, Garc\'{i}a-Prada, Gothen, Oliveira}}

\tableofcontents


\section{Introduction}

In this paper we introduce a new framework for special components in moduli spaces of Higgs bundles. Via the nonabelian Hodge correspondence these components are the analogs of higher rank Teichm\"uller spaces in character varieties of surface group representations. The framework unifies previously described constructions for various types of real Lie groups, namely split real groups, Hermitian groups of tube type, and $\rSO_{p,q}$, and establishes the existence of new Teichm\"uller-like spaces for quaternionic exceptional real Lie groups.

Fix a closed orientable surface $\Sigma$ with genus $g\geq 2$ and fundamental group $\pi_1\Sigma$. For any reductive Lie group $\rG$, the $\rG$-character variety $\cX(\rG)$ parameterizes conjugacy classes of reductive representations $\pi_1\Sigma\to \rG$.  Recall that the Teichm\"uller space $\cT$ of complex structures on $\Sigma$ is realized as the set of conjugacy classes of \emph{Fuchsian representations} $\pi_1\Sigma\to \rPSL_2\R$. Moreover, $\cT$ defines an open and closed subset of $\cX(\rP\rSL_2\R)$ consisting entirely of discrete and faithful representations.  In the general setting, where $\rPSL_2\R$ is replaced by a reductive group $\rG$, there is a class of representations (introduced by Labourie \cite{AnosovFlowsLabourie} and since studied by many authors; see \cite{guichard_wienhard_2012,KLPDynamicsProperCocompact,AnosovAndProperGGKW}) called \emph{Anosov representations} which generalize many features of Fuchsian representations.  
These representations define \emph{open} subsets of the character variety consisting entirely of discrete and faithful representations with many interesting geometric and dynamical properties. Unlike $\cT\subset \cX(\rPSL_2\R)$, the Anosov loci are not necessarily closed, so do not automatically define connected components.  In cases where they do constitute such components, they define subsets of $\cX(\rG)$ which are open, closed and consist entirely of discrete faithful representations. Such spaces are called \emph{higher rank Teichm\"uller spaces} \cite{AnnaICM,BeatriceBourbaki}. 

One way of constructing Anosov representations is to post-compose a lift of a representation in $\cT$ with a homomorphism $\iota_e:\rSL_2\R\to\rG$. Up to conjugation, such homomorphisms are labeled by nilpotent elements $e$ in the Lie algebra of $\rG$.  When $\rG$ is a complex simple Lie group, there is a (unique, up to conjugation) special homomorphism $\iota_e:\rSL_2\C\to\rG$, called \emph{principal}, and the restriction of $\iota_e$ to $\rSL_2\R$ is contained in the split real form $\rG^\R\subset\rG$, \cite{ptds}.  In \cite{liegroupsteichmuller}, Hitchin used this to define connected components of $\cX(\rG^\R)$ containing $\iota_e(\cT)$ --- now called \emph{Hitchin components}. Representations in Hitchin components were shown to be Anosov by Labourie for $\rP\rSL_n\R$ \cite{AnosovFlowsLabourie} and, with different methods, by Fock--Goncharov \cite{fock_goncharov_2006} for general split groups. Other examples of components of Anosov representations arise from so-called \emph{maximal representations} into Hermitian Lie groups \cite{MaxRepsAnosov,CompteRenduBIW}. 

Recently, Guichard--Wienhard \cite{PosRepsGWPROCEEDINGS,GuichardWienhardPosFull} defined a generalization of Lusztig's theory of total positivity \cite{LusztigTotPos} called $\Theta$-positivity. 
Roughly, a parabolic subgroup $\rP_\Theta\subset\rG^\R$ of a real Lie group $\rG^\R$ has a \emph{$\Theta$-positive structure} if triples of pairwise disjoint transverse points in $\rG^\R/\rP_\Theta$ admit a cyclic order. For such pairs $(\rG^\R,\rP_\Theta)$, it is possible to define a set of \emph{$\Theta$-positive Anosov representations}. This set is open in $\cX(\rG^\R)$ and conjectured to be closed \cite{PosRepsGWPROCEEDINGS,GuichardWienhardPosFull}.  The $\Theta$-positive structures have been classified, leading to a list of possible higher rank Teichm\"uller spaces, which includes all the examples mentioned above as well as two other possible families.

The Hitchin components were discovered in \cite{liegroupsteichmuller} using the \emph{nonabelian Hodge correspondence}, which defines a homeomorphism between the character variety $\cX(\rG)$ and the moduli space $\cM(\rG)$ of polystable $\rG$-Higgs bundles on a Riemann surface $X$ with underlying surface $\Sigma$. In particular, using Higgs bundles, Hitchin parameterized the Hitchin component by a vector space of holomorphic differentials. The spirit of the current paper is similar, and Higgs bundles will be our main focus.  Due to the transcendental nature of this correspondence, it is very difficult to characterize the notions of Anosov representations and $\Theta$-positive structures in terms of Higgs bundles so we develop in this paper a new Lie theoretic notion, called \emph{magical $\fsl_2$-triple} in a complex Lie algebra $\fg$, which is adapted to the language of Higgs bundles.  

In one of our main results, we classify all such magical $\fsl_2$-triples, and confirm that this classification establishes a bijection between them and $\Theta$-positive structures. Furthermore we prove properties about the resulting Higgs bundles and 
find new connected components in moduli spaces $\cM(\rG^\R)$ where $\rG^\R$ is a real Lie group determined by a magical $\fsl_2$-triple.  We call these components {\it Cayley components} (see Definition \ref{cayleycomp}) because the construction generalizes a similarly named construction in the case where $\rG^\R$ is a Hermitian group of tube type.  Using the nonabelian Hodge correspondence to translate our results into statements about character varieties, we show that these components contain open sets of $\Theta$-positive Anosov representations and hence should describe new higher rank Teichm\"uller spaces. 

We now give slightly more detailed statements of our results, starting with a description of the magical $\fsl_2$-triples.

Let $\fg$ be a complex simple Lie algebra and $e\in\fg$ be a nonzero nilpotent element. By the Jacobson--Morozov theorem, $e$ can be completed to a triple $\{f,h,e\}$ which generates a subalgebra of $\fg$ isomorphic to $\fsl_2\C$. This defines a bijective correspondence between conjugacy classes of nonzero nilpotents and conjugacy classes of $\fsl_2\C$-subalgebras. Using the decomposition of $\fg$ as an $\fsl_2\C$-module, we define a \emph{vector space} involution 
\[\sigma_e:\fg\longrightarrow\fg,\]
which is $+\Id$ on the trivial $\fsl_2\C$-representation, $-\Id$ on the nonzero highest weight spaces and $-\Id$ on $f$ (see \S \ref{subsec: nilpotents and sl2 triples} for details). We call the $\fsl_2$-triple $\{f,h,e\}\subset\fg$ \emph{magical} if $\sigma_e$ is a \emph{Lie algebra involution}. 

The involution $\sigma_e$ was first defined by Hitchin for principal $\fsl_2$-triples. A key point in his work was showing that the involution $\sigma_e$ is a Lie algebra homomorphism \cite[Proposition 6.1]{liegroupsteichmuller}.  
Generalizing the main results of \cite{liegroupsteichmuller}, we show that magical $\fsl_2$-triples determine components of character varieties which conjecturally describe all higher rank Teichm\"uller components.  The character varieties in which these occur are determined by canonical real forms $\fg^\R\subset\fg$ associated to magical triples $\{f,h,e\}$ (see Definition \ref{def: canonical real form}). For principal triples, the canonical real form is the split real form of $\fg$ \cite[Proposition 6.1]{liegroupsteichmuller}.

\begin{theoremA}[Theorem \ref{thm character variety components}]
 Let $\rG$ be a complex simple Lie group with Lie algebra $\fg$ and $\{f,h,e\}\subset\fg$ be a magical $\fsl_2$-triple with canonical real form $\rG^\R\subset\rG$. Let $\Sigma$ be a closed orientable surface of genus $g\geq2$ and $\cX(\rG^\R)$ be the $\rG^\R$-character variety of $\Sigma$. Then, there exists a nonempty open and closed subset
    \[\cP_e(\rG^\R)\subset\cX(\rG^\R),\]
 which contains $\iota_e(\cT)$ and does not contain representations which factor through compact subgroups. 
 Moreover, the centralizer of any representation $\rho\in\cP_e(\rG^\R)$ is compact. In particular, there is no proper parabolic subgroup $\rP^\R\subset \rG^\R$ such that $\rho:\pi_1\Sigma\to\rP^\R\hookrightarrow\rG^\R$.
\end{theoremA}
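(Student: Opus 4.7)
The plan is to use the nonabelian Hodge correspondence to transport the question from $\cX(\Sigma,\rG^\R)$ to the moduli space $\cM(X,\rG^\R)$ of polystable $\rG^\R$-Higgs bundles on a Riemann surface $X$ with underlying surface $\Sigma$. From the magical $\fsl_2$-triple $\{f,h,e\}$ I expect to extract a canonical decomposition of $\fg^\R$ (into $h$-eigenspaces or, more precisely, into the $\sigma_e$-isotypic components intersected with the Cartan decomposition). This decomposition should refine the Hitchin-type parametrization and should be used to single out a subvariety $\cP_e(X,\rG^\R)\subset\cM(X,\rG^\R)$ consisting of Higgs bundles whose Higgs field has a distinguished ``$e$-component'' that is nowhere vanishing. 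The subset $\cP_e(\Sigma,\rG^\R)$ of the character variety is then defined as the image of $\cP_e(X,\rG^\R)$ under nonabelian Hodge.

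To prove the statement I would proceed in four steps. First, nonemptiness together with the inclusion $\iota_e(\cT(\Sigma))\subset\cP_e(\Sigma,\rG^\R)$: a Fuchsian representation lifted to $\rSL_2\R$ and post-composed with $\iota_e$ corresponds, on the Higgs side, to a Higgs bundle whose Higgs field is a uniformizing section valued precisely in the line spanned by $e$; by construction this lies in $\cP_e$. Second, openness: the defining condition ``the $e$-component of the Higgs field is nowhere zero'' is an open condition because it is the non-vanishing of a section of a line (or vector) bundle built from the reduction data. Third, closedness: this is the heart of the argument. One shows that any polystable limit of Higgs bundles in $\cP_e$ remains in $\cP_e$ by analyzing the possible degenerations; the key Lie-theoretic input is that the magical property forces the $e$-component to be paired, via the Killing form, with an invariant of the correct weight so that its vanishing would force a strict destabilization — contradicting polystability of nearby members. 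Fourth, translation of these open/closed conclusions back to the character variety via the nonabelian Hodge homeomorphism.

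For the second half of the theorem I would argue entirely on the Higgs bundle side. If a representation $\rho$ factored through a compact subgroup $\rK^\R\subset\rG^\R$, the associated polystable Higgs bundle would have Higgs field $\varphi\equiv 0$ (since any flat bundle with unitary monodromy is a polystable Higgs bundle with zero Higgs field). But then the $e$-component of $\varphi$ vanishes identically, contradicting membership in $\cP_e$. The same principle rules out factorization through a proper parabolic subgroup $\rP^\R\subset\rG^\R$: a Higgs bundle coming from such a reduction admits a strict parabolic reduction whose associated graded Higgs field cannot realize the nowhere-vanishing $e$-component unless the reduction was actually to all of $\rG^\R$. Finally, for compactness of the centralizer $\rZ(\rho)$, I would identify $\rZ(\rho)$ with the group of automorphisms of the corresponding Higgs bundle preserving the Higgs field; a noncompact factor in this group would produce, via the Jordan decomposition, either a nontrivial semisimple central element forcing a reduction to a Levi subgroup, or a unipotent element forcing a strict parabolic reduction, both excluded as above. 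Hence $\rZ(\rho)$ is reductive with no noncompact factor, i.e.\ compact.

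The hardest step will be closedness of $\cP_e$ in $\cM(X,\rG^\R)$. This is where the magical property of the $\fsl_2$-triple must play an essential role: openness and the Fuchsian embedding are fairly formal, but closedness demands showing that under sequences of polystable Higgs bundles the non-vanishing of the $e$-component cannot degenerate to a zero. I expect the proof to combine the weight decomposition induced by $h$ with the involution $\sigma_e$ and to invoke stability arguments of Hitchin--Kobayashi type, in the spirit of the analogous closedness statements already established for the Hitchin, maximal, and $\rSO(p,q)$ cases, here unified by the magical framework.
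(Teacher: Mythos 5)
Your high-level strategy — transport the problem to Higgs bundles via nonabelian Hodge — matches the paper's, and some pieces (Fuchsian locus, exclusion of compact factorizations) are essentially what the paper does. But there are genuine gaps in the three hardest steps.

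\textbf{The definition of $\cP_e$ is not well-posed as stated.} You propose to define $\cP_e$ as the locus of Higgs bundles ``whose Higgs field has a distinguished $e$-component that is nowhere vanishing.'' There is no intrinsic ``$e$-component'' of $\varphi \in H^0(\cE_\rH[\fm]\otimes K)$: to speak of an $e$-component one must first fix a reduction of $\cE_\rH$ to the subgroup $\rC\cdot\rT$, and the existence of such a reduction, together with its essential uniqueness, is precisely the content one must establish. The paper instead defines the Cayley components as the image of an explicit map $\Psi_e:\cM_{K^{m_c+1}}(\rG^\R_{0,ss})\times \bigoplus_j H^0(K^{l_j+1})\to\cM(\rG^\R)$ built from the Slodowy slice; the ``nowhere vanishing'' statement about the Higgs field is then a corollary, not the definition. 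Proving that $\Psi_e$ is well-defined (preserves polystability, Lemma~\ref{lem no differentials stability}) and injective on isomorphism classes (Proposition~\ref{prop injective Cayley}, which uses the classification of magical triples and, in the exceptional case, the orbit structure of $\mathbb P(V')$) is substantial work that your proposal omits.

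\textbf{The openness and closedness arguments do not go through as sketched.} You argue openness from ``non-vanishing is open,'' but this presupposes the reduction data persists under perturbation; the paper proves openness by showing $\Psi_e$ is a local isomorphism via the isomorphism of deformation complexes $\HH^\bullet(C^\bullet_\cC)\cong\HH^\bullet(C^\bullet_\cH)$ (Proposition~\ref{prop:isom in hypercohomo}) together with $\HH^2=0$ (Proposition~\ref{prop H2 cayley vanishes}) and $\Aut(\cE_\rC,\psi)=\Aut(\Psi_e(\cE_\rC,\psi))$. For closedness, your ``Killing form / destabilization'' heuristic is not developed into an argument, and it is not the mechanism the paper uses: the paper establishes closedness by exhibiting a commutative square of Hitchin maps in which $h_\cC$, $h$, and the base map $\Gamma_e$ (induced by $\gamma_e:Z_\fm\sslash\rC\to\fm\sslash\rH$, shown faithfully flat with finite fibers in Proposition~\ref{prop:faithflat,surj}) are all proper, so divergent sequences in the domain map to divergent sequences in $\cM(\rG^\R)$. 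This is where the identification of the source of $\Psi_e$ with a product of honest moduli spaces is indispensable.

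\textbf{The centralizer argument is logically inverted.} You propose to prove compactness of $Z_{\rG^\R}(\rho)$ by excluding a parabolic reduction, which you had already tried to exclude separately; this is circular. The paper proves compactness directly: injectivity of $\Psi_e$ implies $\Aut(\cE_\rH,\varphi)$ is a closed subgroup of $\rC$, so $\fz_\fg(\rho)\subset\aut(\cE_\rH[\rG],\varphi)=\aut(\cE_\rH,\varphi)\subset\fc$, and $\fc\cap\fg^\R=\fc^\R$ is compact. The ``no parabolic factorization'' statement is then Corollary~\ref{cor:norepfactors properparab}, deduced from compactness because the center of a proper parabolic is noncompact — the opposite direction of inference from yours.
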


As mentioned above, the sets $\cP_e(\rG^\R)$ are constructed by applying the nonabelian Hodge correspondence to Cayley components in the moduli space $\cM(\rG^\R)$ of $\rG^\R$-Higgs bundles. Briefly, a $\rG^\R$-Higgs bundle on a compact Riemann surface $X$ is a pair $(\cE,\varphi)$, where $\cE$ is a holomorphic principal bundle on $X$ and $\varphi$ (the Higgs field) is a holomorphic section of an associated vector bundle twisted by the holomorphic cotangent bundle $K$ of $X$ (see \S \ref{subsec: Higgs bundles} for more details). We will also consider the moduli space $\cM_L(\rG^\R)$ of $L$-twisted Higgs bundles, where the twisting line bundle $K$ is replaced by a line bundle $L$. 

The Cayley components  in $\cM(\rG^\R)$ are constructed from the Lie theoretic data of a magical $\fsl_2$-triple.  In addition to the real form $\fg^\R$, each magical $\fsl_2$-triple $\{f,h,e\}\subset\fg$ defines a real form $\fg^\R_\cC$ of the centralizer $\fg_0$ of the semisimple element $h$ (see Definition \ref{def: Cayley real form}). We call $\fg^\R_\cC$ the \emph{Cayley real form}. We also show that a magical $\fsl_2$-triple $\{f,h,e\}$ is principal (see Proposition \ref{prop subalge g(e)}) in a simple subalgebra $\fg(e)\subset\fg$ defined as the semisimple part of the double centralizer of $\{f,h,e\}$, i.e., the centralizer of the centralizer of $\{f,h,e\}$.  This defines a decomposition of the Cayley real form (see Proposition \ref{prop cayley real form class}) as
\[\fg_\cC^\R=\tilfg^\R\oplus\R^{\rkfge},\]
where $\tilfg^\R$ is either zero or a simple real Lie algebra and
$\rkfge=\rk(\fg(e))$ is the rank of $\fg(e)$. Hence we have a real Lie group 
\begin{equation}\label{cayleygroup intro}
\rG^\R_\cC=\tilrG^\R\times (\R^+)^{\rkfge},
\end{equation}
 which we call the \emph{Cayley group}.   
This additional structure imposed by the existence of a magical $\fsl_2$-triple leads to a concrete description of these new connected components in terms of moduli spaces associated to the Cayley group.

\begin{theoremB}[Theorem \ref{thm Cayley map open closed}]
Let $\rG$ be a complex simple Lie group with Lie algebra $\fg$, and $\{f,h,e\}\subset\fg$ be a magical $\fsl_2$-triple with canonical real form $\rG^\R$. Let $\fg(e)\subset\fg$ be the semisimple part of the double centralizer of $\{f,h,e\}$ and  $\rG_\cC^\R=\tilrG^\R\times (\R^+)^{\rkfge}$ be the Cayley group. Let $X$ be a compact Riemann surface of genus $g\geq 2$ with canonical bundle $K$, and let $\cM(\rG^\R)$ be the moduli space of $\rG^\R$-Higgs bundles over $X$.  Then there is a positive integer $m_c$ and a well-defined injective, open and closed map 
\begin{equation}\label{eq Cayley map intro}
    \Psi_e:\cM_{K^{m_c+1}}(\tilrG^\R)\times\bigoplus\limits_{j=1}^{\rkfge}H^0(K^{l_j+1})\longrightarrow \cM(\rG^\R),
\end{equation}
where $\{l_j\}$ are the exponents of $\fg(e)$ and $\cM_{K^{m_c+1}}(\tilrG^\R)$ is the moduli space of $K^{m_c+1}$-twisted $\tilrG^\R$-Higgs bundles. Furthermore, every Higgs bundle in the image of $\Psi_e$ has nowhere vanishing Higgs field. 
\end{theoremB}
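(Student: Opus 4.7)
The plan is to construct $\Psi_e$ explicitly using the Lie-theoretic data of the magical $\fsl_2$-triple $\{f,h,e\}$, and then to verify injectivity and the open-and-closed property.

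First I would set up the decomposition data. The semisimple element $h$ gives an integer grading $\fg=\bigoplus_{k}\fg_k$, with $\fg_0$ the centralizer of $h$, and $m_c$ is defined to be the largest $k$ with $\fg_k\neq 0$. The magical hypothesis makes $\sigma_e$ a Lie algebra involution, whose $(+1,-1)$-decomposition $\fg=\fh\oplus\fm$ yields the canonical real form $\fg^\R$. Inside $\fg_0$, the Cayley real form decomposes by Proposition \ref{prop cayley real form class} as $\fg_\cC^\R=\fg_{0,ss}^\R\oplus\R^{\rk(\fg(e))}$, where by Proposition \ref{prop subalge g(e)} the triple $\{f,h,e\}$ is principal inside $\fg(e)$. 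These pieces assemble into a canonical embedding $\rG_\cC^\R\hookrightarrow\rG^\R$ compatible with the $h$-grading and the isotropy decomposition $\fm=\bigoplus_k\fm_k$.

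Second I would construct the map. Given $(\cE_0,\varphi_0)\in\cM_{K^{m_c+1}}(X,\rG^\R_{0,ss})$ and $\omega_j\in H^0(K^{l_j+1})$, I would first pair $\cE_0$ with natural reductions of the $(\R^+)^{\rk(\fg(e))}$ factor extracted from powers of $K$, and then extend the structure group along $\rG^\R_\cC\hookrightarrow\rG^\R$ to obtain a $\rG^\R$-bundle $\cE$. The Higgs field $\varphi\in H^0(\cE(\fm)\otimes K)$ is defined as a sum of three pieces: a nowhere-vanishing $e$-component landing in $\cE(\fg_{m_c})\otimes K$ determined by the tautological section, the $\varphi_0$-contribution landing in the $\fg_0$-part of $\fm$, and the contributions $\omega_j$ inserted along the principal $\fsl_2$-triple of $\fg(e)$ in the Hitchin-section fashion. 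The magical condition is precisely what forces each summand to take values in $\fm\otimes K$, so $(\cE,\varphi)$ is a genuine $\rG^\R$-Higgs bundle; polystability follows because the harmonic metric on $\cE_0$ combined with the metric from the principal embedding in $\fg(e)$ produces a solution to the Hitchin equations on $(\cE,\varphi)$. Injectivity is straightforward: any Higgs bundle in the image carries a canonical reduction to $\rG_\cC^\R$ induced by the $h$-grading, from which $(\cE_0,\varphi_0)$ is recovered as the $\fg_0$-component and each $\omega_j$ as a $\fg(e)$-invariant of $\varphi$ at weight $l_j+1$, exactly as in Hitchin's parameterization. Nowhere vanishing of $\varphi$ is built in through the $e$-component.

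The hard part is the open-and-closed property. For openness, I expect to characterize the image as the set of polystable $\rG^\R$-Higgs bundles admitting a holomorphic reduction to $\rG_\cC^\R$ compatible with the $h$-grading whose $\fg_{m_c}$-component of the Higgs field is nowhere zero. Nowhere vanishing is manifestly an open condition, and existence of such a reduction should be open by an infinitesimal slice/deformation argument using that the magical decomposition is rigid under small perturbations of $(\cE,\varphi)$. For closedness, given a family of Higgs bundles in the image degenerating to a polystable limit, one must show the limit still admits the required reduction with invertible top-weight Higgs component. Here I would combine: nowhere vanishing of the $e$-component (which is a closed condition once one knows the limit is polystable with Higgs field not identically zero on the top weight), the uniqueness of the associated graded under Jordan--H\"older, and the Lie-theoretic rigidity of magical triples --- to force the limiting Higgs field to retain the same shape and hence to lie in the image. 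Preventing the $\fg_{m_c}$-component from bubbling off or reducing its rank in the limit, while controlling the stability type simultaneously, is the principal technical obstacle.
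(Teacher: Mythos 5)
Your broad picture is right, but there are several genuine gaps, two of which are serious.

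\textbf{Definition of $m_c$.} You define $m_c$ as the largest $k$ with $\fg_k\neq 0$, i.e.\ the highest $\ad_h$-weight. That is not the paper's $m_c$. In Lemma \ref{lem identification of Slodowy domain with Cayley group}, $m_c$ is zero in Case (1) of Theorem \ref{thm: classification weighted dynkin} and otherwise the unique positive $m_j$ in the $\fsl_2$-data with $\dim(Z_{2m_j})>1$. In Case (3), for instance, the highest weight is $4p-6$ while $m_c=p-1$; in Case (4) the highest weight is $10$ while $m_c=3$. Also note the nowhere-vanishing component of the Higgs field is the $f$-component in weight $-2$ (the uniformizing part), not an $e$-component in weight $m_c$.

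\textbf{Polystability in the image.} The claim that ``the harmonic metric on $\cE_0$ combined with the metric from the principal embedding in $\fg(e)$ produces a solution to the Hitchin equations'' does not hold in general, and this is exactly where the bulk of the technical work lies. For general magical triples the Higgs field mixes the $\rG_{0,ss}^\R$-Higgs piece $\tilde\psi_{m_c}$ with the $f$-part and the differentials $q_{l_j+1}$, and the moment map term $[\varphi,-\tau_h(\varphi)]$ has nontrivial cross-terms; one cannot simply superpose the two metrics. The paper handles this in Theorem \ref{thm Cayley map moduli spaces}, reducing to the case where all differentials vanish (Lemma \ref{lem no differentials stability}) via a $\C^*$-scaling/openness argument, and then proving Lemma \ref{lem no differentials stability} case by case. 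Case (4) requires introducing an auxiliary $\Z/4\Z$-grading $\fg=\bigoplus_{j\in\Z/4\Z}\hat\fg_j$, identifying $(f+\phi_3)$ as a $(\hat\rG_0,\hat\fg_1)$-Higgs pair, invoking Simpson's fixed-point principle (Proposition \ref{prop: fixedpoint}), and importing the finite-dimensional GIT lemma for Hermitian-tube-type magical nilpotents from \cite{BGRmaximalToledo} through the decomposition $\fh=\fh'\oplus\fsl_2\C$. None of this is recovered by superposing harmonic metrics.

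\textbf{Closedness.} You correctly identify the difficulty but do not resolve it; ``preventing the $\fg_{m_c}$-component from bubbling off'' is left as an obstacle. The paper's route is different and much more effective: it factors through the Hitchin maps. It shows the slice map $\chi_e:f+V_\fm\to\fm\sslash\rH$ and the induced $\gamma_e:Z_\fm\sslash\rC\to\fm\sslash\rH$ are faithfully flat with finite fibers (Proposition \ref{prop:faithflat,surj}), hence the global map $\Gamma_e$ between Hitchin bases is proper, and since the Hitchin map on $\cM(X,\rG^\R)$ is proper, a sequence in the image that diverges in the domain must diverge in $\cM(X,\rG^\R)$. If you want to salvage your direct limiting argument, you would still need something equivalent to properness of the Hitchin map to exclude degenerations along fibers; without it the argument is incomplete.

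\textbf{Openness.} Your proposal characterizes the image and argues rigidity of the magical reduction under perturbation. The paper instead identifies the deformation complexes: $C^\bullet_\cC\hookrightarrow C^\bullet_\cH$ has acyclic quotient (Proposition \ref{prop:isom in hypercohomo}), the automorphism groups agree (Corollary \ref{Cor same automorphism groups}), and $\HH^2=0$ (Proposition \ref{prop H2 cayley vanishes}), so the Cayley map is a local isomorphism of Kuranishi models. Your version could plausibly be made to work, but you would need to argue the ``rigidity'' precisely, and in practice the hypercohomology computation is cleaner and yields the local isomorphism directly.

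\textbf{Injectivity.} Your heuristic is in the right direction but omits the actual mechanism: one must show that any gauge isomorphism between two Higgs bundles in the image lands in $\rC\cap\rH$. The paper does this (Proposition \ref{prop injective Cayley}) by first using holomorphicity to constrain the gauge transformation to a parabolic (Lemma \ref{lem in parabolic}), and then, in Case (4), a delicate orbit analysis in $\mathbb P(V')$ (Proposition \ref{prop H' stabilizer in parabolic}). Simply invoking ``a canonical reduction induced by the $h$-grading'' does not give injectivity, because a priori the gauge group could move the reduction.
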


\begin{remark}
 The connected components in the image of $ \Psi_e$ are the Cayley components.  The integer $m_c$ and the exponents of $\fg(e)$ come from the decomposition of $\fg$ as an $\fsl_2\C$-module. Namely, as an $\fsl_2\C$-module, $\fg=W_0\oplus W_{2m_c}\oplus \bigoplus_{j=1}^{\rkfge}W_{2l_j}$, where $W_{2k}$ is a direct sum of a certain number of copies of the unique irreducible $\fsl_2\C$-representation of dimension $2k+1$. See Lemma \ref{lem identification of Slodowy domain with Cayley group} for more details.
\end{remark}

The map $\Psi_e$ is a moduli space version of the global Slodowy slice map for Higgs bundles constructed in \cite{ColSandGlobalSlodowy}. However, it is nontrivial to show that when $\{f,h,e\}$ is magical the Slodowy map descends to an injective map on moduli spaces.  Our proof relies on our third main result, namely the classification of magical $\fsl_2$-triples given in Theorem \ref{thm: classification weighted dynkin}. 

\begin{theoremC}[Theorem \ref{thm: classification weighted dynkin} and Proposition \ref{prop canonical real form}] Let $\fg$ be a simple complex Lie algebra and let $\fg^\R\subset\fg$ be a real form. Then $\fg^\R$ is the canonical real form associated to a magical $\fsl_2$-triple if and only if it is one of the following:
\begin{enumerate}
\item $\fg$ is any type and $\fg^\R$ is its split real form;
\item $\fg$ has type $\rA_{2n-1},~ \rB_n,~\rC_n,~\rD_n,~\rD_{2n}$, or $\rE_7$ and $\fg^\R$ is Hermitian of tube type, i.e.~$\fg^\R$ is one of the following:
\begin{enumerate}
\item $\fsu_{n,n}$,
\item $\fso_{2,p}\ (\mathrm{with}\ 2+p=2n+1)$,
\item $\fsp_{2n}\R$,
\item $\fso_{2,p}\ (\mathrm{with}\ 2+p=n)$,
\item $\fso_{4n}^*$, or 
\item the real form of $\rE_7$ of Hermitian type;
\end{enumerate}
\item $\fg$ has type $\rB_n$ or $\rD_n$ and $\fg^\R$ is $\fso_{p,q}$ with $1<p<q$ and $p+q=2n+1$ or $p+q=2n$ respectively;
\item $\fg$ has type $\rE_6,~\rE_7,~\rE_8$ or $\rF_4$ and $\fg^\R$ is its quaternionic real form.
\end{enumerate}
\end{theoremC}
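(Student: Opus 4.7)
The plan is to carry out a systematic case analysis using the Jacobson--Morozov classification of nilpotent orbits via weighted Dynkin diagrams. The strategy has two parts: first, translate the magical condition into explicit combinatorial data on the $\mathfrak{sl}_2$-module decomposition of $\fg$, and second, verify this condition for every nilpotent orbit in every complex simple Lie algebra.

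To begin, I would make $\sigma_e$ explicit on the $\fsl_2\C$-isotypic decomposition $\fg=\bigoplus_{k\geq 0} V_{2k}\otimes M_k$, where $V_{2k}$ is the irreducible of highest weight $2k$ and $M_k=\Hom_{\fsl_2\C}(V_{2k},\fg)$. The definition prescribes $\sigma_e$ on highest weight vectors and on $f$, which, together with $\fsl_2\C$-invariance of the isotypic summands, pins down $\sigma_e$ on all of $\fg$. The requirement that $\sigma_e$ be a Lie algebra involution then reads as a bracket vanishing condition: if $u\in M_a$ and $v\in M_b$ are highest weight vectors whose bracket $[u,v]\in M_{a+b}$ has nonzero highest-weight component, then matching signs forces a parity relation on $(a,b,a+b)$. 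I expect this to collapse into the very rigid shape
\[\fg = W_0 \oplus W_{2m_c}\oplus \bigoplus_{j=1}^{\rk(\fg(e))} W_{2l_j}\]
already flagged in the Remark after Theorem B, with sharp constraints on the highest weights $\{m_c, l_j\}$ (e.g.\ $l_j < m_c$, and $m_c$ is an exponent of $\fg(e)$ shifted by a fixed amount). This rigidity will eliminate the vast majority of nilpotent orbits at a stroke.

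For the existence half of the theorem, I would exhibit an explicit magical $\fsl_2$-triple realizing each real form on the list. For the split real forms, the principal $\fsl_2\C$-triple works: its decomposition is $\bigoplus_i V_{2m_i}$ with the $m_i$ the exponents, so the shape above is trivially satisfied and $\sigma_e$ is the standard split involution. For the Hermitian tube-type cases, I would use the $\fsl_2\C$-triple associated to the Cayley transform, i.e.\ the nilpotent whose grading is the $\pm 1, 0$ grading coming from the Jordan algebra structure of the noncompact dual. For $\fso_{p,q}$ with $1<p<q$, the relevant orbit has partition $[3, 2^{2(p-1)}, 1^{q-p-1}]$, adapted to the usual parity constraints on orthogonal partitions. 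For the quaternionic real forms of $\rE_6, \rE_7, \rE_8, \rF_4$, I would pick out the unique orbit whose $\fsl_2\C$-module decomposition matches the rigid shape, using the published tables in Collingwood--McGovern. In every case the magical condition is verified directly from the module decomposition.

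For the necessity half, I would run through the full nilpotent orbit classification. In the classical types the orbits are labelled by partitions (with parity constraints in $\rB, \rC, \rD$), and the constraint that the $\fsl_2\C$-module decomposition have the rigid shape forces the partition into one of the explicit families above. In the exceptional types I would use Bala--Carter labels and check each orbit against the rigid shape; most are eliminated immediately by having three or more nontrivial isotypic components with incompatible highest weights. Once the admissible orbits are identified, the canonical real form is read off from Definition~\ref{def: canonical real form}, and the list is matched against the statement. The main obstacle is the exceptional case analysis for $\rE_7$ and $\rE_8$, where the number of orbits is substantial and where one must carefully distinguish the split, quaternionic, and Hermitian tube-type real forms (e.g.\ $\rE_{7(7)}$ vs.\ $\rE_{7(-5)}$ vs.\ $\rE_{7(-25)}$); the rigid shape derived in step one should reduce this to a short finite check. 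A secondary subtlety, in type $\rD_n$, is disentangling the non-split orthogonal families $\fso_{p,q}$ ($1<p<q$) from the Hermitian tube-type $\fso^*_{4n}$ and $\fso_{2,2n-2}$, for which I would compare weighted Dynkin diagrams directly.
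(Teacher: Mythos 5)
Your plan diverges substantially from the paper's route, and there are both a genuine gap and a concrete error. The paper does \emph{not} first derive a ``rigid shape'' constraint on the $\fsl_2$-module decomposition and then filter orbits by it. Instead, it reformulates the magical condition via the Sekiguchi correspondence: a Cayley triple $\{\hat f,\hat h,\hat e\}\subset\fg^\R$ is magical if and only if its real centralizer $\fc^\R$ is compact and $\dim(\fh\cap V(\gamma(\hat e)))=\dim\fc$ (Proposition~\ref{prop: magical real nilpotent properties}), and then evaluates the latter with the Kostant--Rallis formula $\dim(\fh\cap V)=\tfrac12(\dim V+\dim\fh-\dim\fm)$ (Proposition~\ref{prop dim of centralizer formula}). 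This converts the automorphism condition into a purely numerical check read off from partition/signed-Young-diagram data (for classical $\fg$) or from Dokovi\'c's published tables (for exceptional $\fg$), and it simultaneously pins down the canonical real form because one is working with real nilpotent orbits from the start. The ``rigid shape'' $\fg=W_0\oplus W_{2m_c}\oplus\bigoplus_j W_{2l_j}$ you invoke is in fact the content of Proposition~\ref{prop: magical sl2 data}, which the paper deduces \emph{after} the classification; it is not available as an a-priori filter.

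The gap in your plan is precisely this missing intermediate step: you observe that $\sigma_e$ being a Lie algebra involution imposes a sign/vanishing condition on brackets of highest-weight vectors, and you ``expect this to collapse'' into the rigid shape, but you give no argument. Matching signs on highest-weight vectors alone only constrains the summand of the Clebsch--Gordan decomposition of $W_{2a}\otimes W_{2b}$ of top weight; the automorphism condition must also hold on the lower Clebsch--Gordan pieces, which involve non-highest-weight vectors, and turning all of that into a usable constraint on the isotypic decomposition is nontrivial (the paper's Proposition~\ref{prop: bracket relation on g0} only treats the $\fg_0$-graded piece). Without the Kostant--Rallis dimension formula you would be stuck verifying an automorphism property rather than counting dimensions, which is a very different amount of work --- especially in $\rE_7$ and $\rE_8$.

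Separately, your proposed partition for the magical orbit in $\fso_{p,q}$ ($1<p<q$) is wrong. The correct partition (Theorem~\ref{thm: partition classification}, Case (6)) is $[2p-1,\,1^{\,q-p+1}]$: one long row of length $2p-1$ plus $q-p+1$ singletons. Your candidate $[3,2^{2(p-1)},1^{q-p-1}]$ does not even have the right number of boxes except coincidentally, and does not match the weighted Dynkin diagram of the paper's Case~(3), which has $p-1$ leading $2$'s. This is exactly the kind of mismatch the paper's translation algorithm (computing $h$ from the Jordan blocks and rearranging eigenvalues) is designed to detect, and you should rerun that step.
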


\begin{remark}
    A real form $\fg^\R$ is called \emph{quaternionic} if its associated Riemannian symmetric space is quaternionic K\"ahler, equivalently, the maximal compact subalgebra has a simple factor isomorphic to $\fsu_2$. Up to isomorphism, there is a unique quaternionic real form of type $\rE_6,~\rE_7,~\rE_8$ and $\rF_4$, see \cite[Appendix C]{knappbeyondintro}.
\end{remark}
The proof of Theorem C uses the correspondence between nilpotents in classical Lie algebras and partitions, and classification data of Dokovi\'c \cite{ExceptionalNilpotentsInner,ExceptionalNilpotentsOuter} for exceptional Lie algebras. 

While a magical $\fsl_2$-triple $\{f,h,e\}\subset\fg$ defines a canonical real form $\fg^\R\subset\fg,$ the generators $\{f,h,e\}$ are not real, i.e., $f,h,e\notin\fg^\R.$ 
We can obtain real $\fsl_2$-triples using the so called Cayley transform (see \S\ref{subsec: Real nilpotents and the Sekiguchi correspondence}).  The Cayley transform of a magical $\fsl_2$-triple, denoted by $\{\hat f,\hat h,\hat e\}$, has each of its generators in the canonical real form $\fg^\R.$  
This allows us to relate magical triples to the Guichard--Wienhard notion of $\Theta$-positivity.  Recall that a nilpotent element $\hat e\in\fg^\R$ determines a parabolic subgroup $\rP_{\hat e}^\R\subset\rG^\R$. The four families of pairs $(\rG^\R,\rP_{\hat e}^\R)$, which arise from magical $\fsl_2$-triples are the following:
\begin{enumerate}
        \item $\rG^\R$-split and $\rP^\R_{\hat e}$ is the Borel subgroup.
        \item $\rG^\R$ is a Hermitian group of tube type and $\rP^\R_{\hat e}$ is the maximal parabolic associated to the Shilov boundary.
        \item $\rG^\R$ is locally isomorphic to $\rSO_{p,q}$ and $\rP^\R_{\hat e}$ stabilizes an isotropic flag of the form  
        \[\R\subset\R^2\subset\cdots\subset \R^{p-1}\subset\R^{q+1}\subset\cdots\subset\R^{p+q-1}\subset\R^{p+q}.\]
        \item $\rG^\R$ is a quaternionic real form of $\rE_6$, $\rE_7$, $\rE_8$ or $\rF_4$, so that its restricted root system is that of $\rF_4$, and $\rP_{\hat e}^\R$ is determined by the simple roots $\{\alpha_1,\alpha_2\}$, where 
       \begin{center}
          \begin{tikzpicture}[scale=.4]
    \draw (-1,0) node[anchor=east]  {$\rF_4:$};
  \foreach \x in {0,...,3}
    \draw[xshift=\x cm,thick] (\x cm,0) circle (.25cm);
      \draw[thick] (0.25 cm,0) -- +(1.5 cm,0);
    \draw[thick] (2.25 cm, .1 cm) -- +(1.5 cm,0);
    \draw[thick] (2.8 cm, 0) -- +(.3cm, .3cm);
    \draw[thick] (2.8 cm, 0) -- +(.3cm, -.3cm);
    \draw[thick] (2.25 cm, -.1 cm) -- +(1.5 cm,0);
    \draw[thick] (4.25 cm, 0) -- +(1.5cm,0);
    \node at (0,0) [below = 1 mm ] {\scriptsize{$\alpha_1$}};
    \node at (2,0) [below = 1 mm ] {\scriptsize{$\alpha_2$}};
    \node at (4,0) [below = 1 mm ] {\scriptsize{$\alpha_3$}};
    \node at (6,0) [below = 1 mm ] {\scriptsize{$\alpha_4$}};
    \node at (7,0)  {.};
  \end{tikzpicture}
        \end{center}
    \end{enumerate}
Comparing this list with Guichard--Wienhard's classification of $\Theta$-positive structures gives the following theorem.

\begin{theoremD} [Theorem \ref{thm pos and mag}]   
Let $\rG$ be a complex simple Lie group and $\rG^\R\subset\rG$ be a real form. A pair $(\rG^\R,\rP^\R_\Theta)$ admits a $\Theta$-positive structure if and only if $\rP_\Theta^\R=\rP_{\hat e}^\R$, where $\{\hat f,\hat h,\hat e\}\subset\fg^\R$ is the Cayley transform of a magical $\fsl_2$-triple with canonical real from $\rG^\R.$
\end{theoremD} 
\begin{remark}
 Even though Theorem D results from observing that the two classifications agree, a posteriori, more can be said about the link between positivity and magical $\fsl_2$-triples. Namely, the collection of invariant cones arising from a positive structure is exactly the orbit of the nilpotent $\hat e$ by the identity component of the Levi factor $\rL_\Theta.$ It would be interesting to develop the link between these two perspectives further. 
\end{remark}
To further relate the open and closed sets $\cP_e(\rG^\R)$ from Theorem A  with $\Theta$-positivity, we prove that each of the sets $\cP_e(\rG^\R)$ contains an open set of $\Theta$-positive Anosov representations. As above, let $\fg(e)\subset\fg$ be the semisimple part of the double centralizer of a magical $\fsl_2$-triple $\{f,h,e\}$. The Lie algebra $\fg(e)$ defines a split subalgebra $\fg(e)^\R$ of the canonical real form $\fg^\R$. Let $\rG(e)^\R\subset\rG^\R$ be the connected subgroup with Lie algebra $\fg(e)^\R$. 
One special property of magical $\fsl_2$-triples is that their $\rG^\R$-centralizer $\rC^\R$ is compact. By construction, the groups $\rG(e)^\R$ and $\rC^\R$ commute, so we can form a $\rG^\R$-representation by multiplying a $\rG(e)^\R$-representation with a $\rC^\R$-representation.
In Proposition \ref{prop hitchin rep in Pe} we show that the sets $\cP_e(\rG^\R)$ contain representations of the form  
\begin{equation}
    \label{eq reps intro positive}\rho=\rho_{Hit}*\rho_{\rC^\R}:\pi_1\Sigma\longrightarrow\rG^\R, 
\end{equation}
where $\rho_{Hit}:\pi_1\Sigma\to\rG(e)^\R$ is a $\rG(e)^\R$-Hitchin representation and $\rho_{\rC^\R}:\pi_1\Sigma\to\rC^\R$ is any representation.  This allows us to prove the following theorem.

\begin{theoremE}[Theorem \ref{thm positive reps in Pe comps}]
Let $\rG$ be a simple complex Lie group with Lie algebra $\fg.$ Let $\{f,h,e\}\subset\fg$ be a magical $\fsl_2$-triple with canonical real form $\rG^\R\subset\rG.$ Then the set of representations $\rho_{Hit}*\rho_{\rC^\R}$ from \eqref{eq reps intro positive} are $\Theta$-positive Anosov representations. 
  In particular, each of the sets $\cP_e(\rG^\R)\subset\cX(\rG^\R)$ from Theorem $\rA$ contains a nonempty open set of $\Theta$-positive Anosov representations. 
\end{theoremE}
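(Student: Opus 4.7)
The plan is to verify the $\Theta$-positive Anosov property for the representations $\rho=\rho_{Hit}*\rho_{\rC^\R}$ directly, and then invoke openness of the $\Theta$-positive Anosov locus in $\cX(\Sigma,\rG^\R)$. Throughout, let $(\rG^\R,\rP^\R_\Theta)$ be the $\Theta$-positive pair associated to the magical $\fsl_2$-triple $\{f,h,e\}$ via Theorem D.

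First I would observe that, since $\{f,h,e\}$ is principal in $\fg(e)$ (Proposition \ref{prop subalge g(e)}) and $\fg(e)^\R\subset\fg^\R$ is the split real form of $\fg(e)$, the representation $\rho_{Hit}:\pi_1\Sigma\to\rG(e)^\R$ is a genuine Hitchin representation for the split group $\rG(e)^\R$. By Labourie \cite{AnosovFlowsLabourie} and Fock--Goncharov (as reformulated in \cite{guichard_wienhard_2012}), such $\rho_{Hit}$ is Borel Anosov and $\Theta'$-positive in $\rG(e)^\R$, where $\Theta'$ is the full set of simple roots of $\rG(e)^\R$; this is case (1) of Theorem D applied internally to $\rG(e)^\R$.

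Next I would show that the inclusion $\rG(e)^\R\hookrightarrow\rG^\R$ is \emph{positivity preserving} in the sense that the positive unipotent semigroup of the Borel of $\rG(e)^\R$ lands inside the positive unipotent semigroup of $\rP^\R_\Theta\subset\rG^\R$, and the induced equivariant map of flag varieties $\rG(e)^\R/\rB(e)^\R\to\rG^\R/\rP^\R_\Theta$ sends positive triples to positive triples. This is the key technical step. I would prove it case by case using the classification in Theorem D, exploiting that in each case one has an explicit description of $\fg(e)$ inside $\fg$ together with an concrete model of the positive structure on $(\rG^\R,\rP^\R_\Theta)$ from \cite{PosRepsGLW}; the inclusion of positive semigroups can then be checked directly via the root space decomposition of $\fg$ with respect to $\ad(h)$ and the principal nilpotent in $\fg(e)$. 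Granting this, the composition $\pi_1\Sigma\to\rG(e)^\R\hookrightarrow\rG^\R$ is automatically $\Theta$-positive Anosov, with boundary map obtained by pushing forward the Hitchin boundary curve.

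Finally, the $\rG^\R$-centralizer $\rC^\R$ of the magical triple is compact (Theorem A) and commutes with $\rG(e)^\R$, so it lies in the Levi factor of $\rP^\R_\Theta$. Consequently, multiplication by $\rho_{\rC^\R}$ alters neither the boundary map $\partial\pi_1\Sigma\to\rG^\R/\rP^\R_\Theta$ nor the contraction/dilation estimates underlying the Anosov condition, so positivity and the Anosov property are preserved for $\rho=\rho_{Hit}*\rho_{\rC^\R}$. Proposition \ref{prop hitchin rep in Pe} places these $\rho$ inside $\cP_e(\Sigma,\rG^\R)$, and since the $\Theta$-positive Anosov locus is open in $\cX(\Sigma,\rG^\R)$ by \cite{PosRepsGLW}, the intersection with $\cP_e(\Sigma,\rG^\R)$ contains a nonempty open set of $\Theta$-positive Anosov representations. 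The main obstacle is the second step: verifying that the principal embedding $\rG(e)^\R\hookrightarrow\rG^\R$ is positivity preserving. A uniform argument seems unlikely, and a case-by-case analysis following the four families in Theorem D is probably unavoidable, with the quaternionic exceptional family (where $\Theta=\{\alpha_1,\alpha_2\}$ in the $\rF_4$ restricted root system) requiring the most care because the positive structure there is more intricate than in the split, Hermitian tube type, or $\rSO_{p,q}$ cases.
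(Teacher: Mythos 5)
Your proposal is correct and follows essentially the same route as the paper: push forward the Hitchin boundary curve of $\rho_{Hit}$ through the embedding $\rG(e)^\R\hookrightarrow\rG^\R$, verify (case by case, which the paper does in Proposition \ref{prop inclusions  preserve positive}) that the positive unipotent semigroup of the Borel of $\rG(e)^\R$ is sent into the positive semigroup $\rU^\R_{\Theta,+}$, observe that the compact centralizer $\rC^\R$ leaves the boundary curve and Anosov estimates unchanged, and conclude with openness of the $\Theta$-positive Anosov locus. The key technical step you flag — positivity preservation of the principal embedding, with the quaternionic $\rF_4$-restricted-root case being the hardest — is exactly the substance of the paper's Proposition \ref{prop inclusions  preserve positive}.
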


\begin{remark}\label{magic=theta} In the case that the magical $\fsl_2$-triple $\{f,h,e\}\subset\fg$ defines a principal $\fsl_2\C$-subalgebra this theorem is due to Labourie \cite{AnosovFlowsLabourie} in type $A$ and Fock--Goncharov \cite{fock_goncharov_2006} in general. For Hermitian groups of tube type it is due to \cite{MaxRepsAnosov}; see also \cite{CompteRenduBIW}. It is expected that the sets $\cP_e(\rG^\R)$ correspond exactly to the sets of $\Theta$-positive Anosov representations in all cases. 
\end{remark}

Our results follow on from several projects focused on the enumeration or understanding of distinguished components in the moduli spaces of Higgs bundles, and hence in character varieties for surface groups. 

Given that Higgs bundles consist of an underlying principal bundle together with a Higgs field, it is clear that the topological type of the principal bundle is an invariant of connected components in the moduli space. Similarly, for the character varieties, the topological type of the associated flat bundle is also an invariant of the components. For complex reductive Lie groups \cite{JunLiConnectedness,Oliveira_GarciaPrada_2016}, and also for compact groups \cite{ramanathan_1975}, the components are fully classified by these topological invariants. The count is more complicated for $\rG^\R$-Higgs bundles, where $\rG^\R$ is a non-compact real form.  Indeed, this was already evident in Goldman's component count for $\rG^\R$-character varieties where $\rG^\R$ is a finite cover of $\rPSL_2\R$ \cite{TopologicalComponents}, and in Hitchin's Higgs-bundle version for $\rSL_2\R$ \cite{selfduality}.  Since then the enumeration and study of connected components in Higgs bundle moduli space for other non-compact real forms has been extensively pursued; for example see \cite{liegroupsteichmuller,sp4GothenConnComp,UpqHiggs,AndrePGLnR,AndreOscarSUstar,HiggsbundlesSP2nR,Sp(2p2q)modulispaceconnected,SO2n*connected,CollierSOnn+1components,so(pq)BCGGO,TopInvariantsAnosov,Baraglia18,baragliaschaposnikmonodromyrank2,DavidLauraCayleyLanglands}.

From one perspective, the starting point for the present work is the description of the Hitchin components in \cite{liegroupsteichmuller}. In particular, Hitchin proved Theorems A and B above for the case in which the magical $\fsl_2$-triple $\{f,h,e\}\subset\fg$ defines a principal $\fsl_2\C$-subalgebra, i.e.~Case (1) of the Theorems C and D. Indeed, in this case, the first factor in \eqref{cayleygroup intro} equals the center of the group (which is finite),  and Hitchin's description is recovered exactly by the map \eqref{eq Cayley map intro}.  The opposite extreme, where the second factor in \eqref{cayleygroup intro} is $H^0(K^2)$, occurs in Case (2) of the classification theorems. In this case Theorems A and B recover results for $\rG^\R$-Higgs bundles when $\rG^{\R}$ is of Hermitian tube type (see \cite{HermitianTypeHiggsBGG} and \cite{BGRmaximalToledo}). In particular, the moduli space $\cM_{K^{m_c+1}}(\tilrG^\R)\times H^0(K^2)$ has $m_c=1$, and is then exactly the moduli space of $K^2$-twisted Higgs bundles for the Cayley partner to $\rG^{\R}$, i.e.~the space which describes components with maximal Toledo invariant. The third case in Theorems C and D includes the case investigated in \cite{so(pq)BCGGO} for $\rG^\R=\rSO_{p,q}$, in which case the map \eqref{eq Cayley map intro} recovers the description of the `exotic' components identified in \cite{so(pq)BCGGO}, but now adds the remaining locally isomorphic groups.

From a slightly different perspective, our results relate to a program initiated by Hitchin to count connected components by a Morse-theoretic method  \cite{selfduality,liegroupsteichmuller}.  Described more fully in \S\ref{section components}, the method is based on a proper function $F:\cM(\rG^\R)\to \R$ defined by the $L^2$-norm of the Higgs field, and exploits the fact that proper functions attain their minima on closed sets. The locus of local minima thus has at least as many components as the full moduli space. Obvious minima of $F$, where the Higgs field is identically zero, lie on components detected by the topological invariants of principal bundles. The existence of other components --- including the ones we study in this paper --- is detected by more subtle local minima.  In \S\ref{section components} we identify such minima coming from the components in the image of \eqref{eq Cayley map intro} and use this to enumerate the components.

We end this introduction with some open questions, organized in a series of conjectures, and a short discussion on what remains to be proven. 
\begin{conjecture*}
    \begin{enumerate}
        \item A representation $\rho\in\cX(\rG^\R)$ is $\Theta$-positive if and only if $\rho$ is in one of the spaces $\cP_e(\rG^\R)$ from Theorem $\rA$.
        \item A connected component of $\cX(\rG^\R)$ is a higher rank Teichm\"uller space if and only if it is a connected component of one of the spaces $\cP_e(\rG^\R)$ from Theorem $\rA$ or $\rG^\R$ is a Hermitian group of nontube type and the Toledo invariant is maximal. 
        \item All components of $\cX(\rG^\R)$ which are not higher rank Teichm\"uller spaces are uniquely labeled by  invariants which depend only on the topological type of $\rG^\R$-bundles over $\Sigma$.
        \item Other than the components in the image of the Cayley map \eqref{eq Cayley map intro} and the components with maximal Toledo invariant for Hermitian groups of nontube type, the components of $\cM(\rG^\R)$ are uniquely labeled by topological invariants of $\rG^\R$-bundles over $\Sigma$. 
    \end{enumerate}
\end{conjecture*}
We note that the first two conjectures are consistent with Guichard--Wienhard's conjecture that positivity provides the correct unifying framework for higher rank Teichm\"uller spaces \cite{PosRepsGWPROCEEDINGS}. 
Since the first version of this paper was released, one direction of the first conjecture has been settled. Namely, all representations in the spaces $\cP_e(\rG^\R)$ are $\Theta$-positive. This had already been established for split groups \cite{fock_goncharov_2006,AnosovFlowsLabourie} and for Hermitian groups of tube type \cite{BIWmaximalToledoAnnals,CompteRenduBIW}. 
For groups locally isomorphic to $\rSO_{p,q}$, Beyrer--Pozzetti recently proved that the space of positive representations is closed  \cite{BeyrerPozzettiSOpq}, and hence all representations in $\cP_e(\rSO_{p,q})$ are positive. 
Separately, Guichard--Labourie--Wienhard proved that positive representations are closed in the space of representations which do not factor through proper parabolic subgroups \cite{GLWPosRepsarXiv}. Hence, by Theorem A, positive representations define components of the character variety, namely the components $\cP_e(\rG^\R)$. 
Very recently, this has been shown with a proof independent from the results of this paper in \cite{BGLPWPosClosed}. 

The Hermitian groups of nontube type are locally isomorphic to $\rSU_{p,q}$ with $p\neq q$, $\rSO^*_{2n+2}$ and $\rE_6^{-14}$. For such groups there is not a notion of positivity; however, representations with maximal Toledo invariant always factor through a maximal tube type subgroup where they are positive. Hence, maximal representations into such groups define higher rank Teichm\"uller spaces \cite{BIWmaximalToledoAnnals}; see also \cite{UpqHiggs,BGRmaximalToledo}. 
Note that if (2) holds, then (3) and (4) are equivalent. The simple groups for which all conjectures have been established are $\rP\rSL_n\R$ by \cite{liegroupsteichmuller} and \cite{AnosovFlowsLabourie}, the Hermitian real forms locally isomorphic to $\rSU_{p,q}$ and $\rSO_{2,3}$ by \cite{UpqHiggs,chains-2018,nonmaxSp4,AndreQuadraticPairs} and \cite{BIWmaximalToledoAnnals}, the groups locally isomorphic to $\rSO_{p,q}$ for $2<p<q$ by \cite{so(pq)BCGGO} and \cite{BeyrerPozzettiSOpq}, and groups locally isomorphic to $\rSU^*_{2n}$ and $\rSp_{2p,2q}$ by \cite{AndreOscarSUstar,Sp(2p2q)modulispaceconnected}. The noncompact real forms of simple groups which are missing are $\rSp_{2n}\R$ for $n>2$, $\rSO_{2n}^*$, $\rSO_{2,n}$ for $n>3$, and all real  forms of exceptional type. 

\subsection*{Acknowledgments:} We would like to especially thank Jeff
Adams for his help with many of the Lie theoretic aspects of this
paper. We also thank Mark Burger, Carlos Florentino, François
Labourie, Ana Peón-Nieto, Beatrice Pozzetti, Andy Sanders and Anna
Wienhard for enlightening conversations. Finally, we also thank the anonymous referees
for a number of remarks which lead to relevant improvements in the paper. This material is based
partly upon work supported by the National Science Foundation under
Grant No. 1440140, while some of the authors were in residence at the
Mathematical Sciences Research Institute in Berkeley, California,
during the Fall semester of 2019. The authors acknowledge support from
U.S. National Science Foundation Grants DMS 1107452, 1107263, 1107367
``RNMS: GEometric structures And Representation varieties" (the GEAR
Network). B.C. was partially supported by the NSF under
Award No.1604263. O.G.-P. was partially supported by the Spanish
MINECO under the ICMAT Severo Ochoa grant No.SEV-2015-0554, and grant
No.MTM2016-81048-P.  P.G. and A.O. were supported by CMUP under the project with reference
UIDB/00144/2020 and by the project EXPL/MAT-PUR/1162/2021, both financed
by national funds through FCT -- Funda\c{c}\~ao para a Ci\^encia e a
Tecnologia, I.P. 
A.O. also wishes to thank the Department of
Mathematics of the University of Maryland and the Instituto de
Ciencias Matematicas (ICMAT), that he visited in the course of
preparation of this paper.



\section{Nilpotents and magical $\fsl_2$-triples}\label{sec: Magical nilpotents and sl_2-subalgebras}
Let $\fg$\label{frakg} be a finite-dimensional complex simple Lie algebra and $\rG$
be a connected complex Lie group with Lie algebra $\fg$. For
background on nilpotents we mostly follow \cite{CollMcGovNilpotents}.

\subsection{Nilpotents and $\fsl_2$-triples }\label{subsec: nilpotents and sl2 triples}
An element $e\in\fg$ is called \emph{nilpotent} if the corresponding adjoint map 
\[\ad_e:\fg\longrightarrow\fg\]
is a nilpotent endomorphism. The nilpotent elements of $\fg$ form a $\rG$-invariant cone consisting of finitely many $\rG$-orbits. In fact, there is a unique nilpotent orbit which is open and dense in the nilpotent cone, and elements in this orbit are called \emph{principal nilpotents}. 
For example, when $\rG=\rSL_n\C$, nilpotent orbits are in bijection with partitions of $n$ by the Jordan decomposition theorem. In this case, a principal nilpotent is conjugate to a full Jordan block.  

By the Jacobson--Morozov theorem, every nonzero nilpotent element $e\in\fg$ can be completed to a triple of nonzero elements $\{f,h,e\}\subset\fg$ satisfying 
    \begin{equation}\label{eq bracket relations sl2}
    \xymatrix{[h,e]=2e,&[h,f]=-2f&\text{and}&[e,f]=h.}
    \end{equation}
Moreover, if $\{f,h,e\}$ and $\{f',h,e\}$ are two such triples, then $f=f'$. 
A triple $\{f,h,e\}$ of nonzero elements verifying the bracket relations \eqref{eq bracket relations sl2} will be called an \emph{$\fsl_2$-triple} and the subalgebra $\langle f,h,e\rangle\subset\fg$ will be called the \emph{$\fsl_2\C$-subalgebra associated to $\{f,h,e\}$}. This defines a bijection between conjugacy classes of nilpotents and conjugacy classes of $\fsl_2\C$-subalgebras
\begin{equation}\label{eq:nilp<->sl2triple}
\xymatrix{\{e\in\fg \text{ nonzero nilpotent}\}/\rG \ar@{<->}[r]^-{1-1} & \{\phi:\fsl_2\C\to\fg\}/\rG.}
\end{equation}

An $\fsl_2$-triple $\{f,h,e\}$ defines two decompositions of $\fg$. One as an $\fsl_2\C$-module, namely
\begin{equation}
    \label{eq sl2 module decomp}\fg=\bigoplus\limits_{j=0}^MW_{j},
\end{equation}
where $W_j$ is isomorphic to a direct sum of $n_j$ copies (with
$n_j\geq0$) of the unique irreducible $(j+1)$-dimensional $\fsl_2\C$-representation. By \emph{$\fsl_2$-data} of  $\{f,h,e\}$ we will mean the collection of pairs of non-negative integers $(j,n_j)$ such that, for each $j>0$, the multiplicity $n_j$ of $W_j$ is positive (so we consider the pair $(0,n_0)$ part of the $\fsl_2$-data even if $n_0=0$).
Another decomposition of $\fg$ determined by $\{f,h,e\}$ is given by $\ad_h$-weight spaces,
\begin{equation}
    \label{eq adh weight decomp}\fg=\bigoplus_{j=-l}^l\fg_j,
\end{equation}
where $\fg_j=\{x\in\fg~|~\ad_h(x)=jx\}$. Note that $\ad_e:\fg_j\to\fg_{j+2}$ and $\ad_f:\fg_{j}\to\fg_{j-2}$. The subalgebra $\bigoplus_{j\geq 0}\fg_j$ is a parabolic subalgebra determined by the nilpotent $e.$

\begin{remark}\label{rmk: even nilpotents}
    A nilpotent $e\in\fg$ is called \emph{even} if $\ad_h$ only has even eigenvalues, i.e., if $\fg_j=0$ for all $j$ odd. The $\fsl_2\C$-subalgebra $\langle f,h,e\rangle\subset\fg$, for an even nilpotent $e$, defines a subgroup of the adjoint group of $\rG$ which is isomorphic to $\rPSL_2\C$.
\end{remark}

The centralizer $\ker(\ad_e)=V(e)=V\subset\fg$\label{p:centralizer-e} of $e$ decomposes into a direct sum of highest weight spaces of each $W_j$
\begin{equation}\label{eq decomp highest weight spaces}
    V=\bigoplus\limits_{j\geq0}V_j,
\end{equation}
where $V_j=W_j\cap\fg_j$. We have the following proposition (see Lemmas 3.4.5 and 3.7.3 of  \cite{CollMcGovNilpotents}).

\begin{proposition}\label{prop W_0 preserves graded intersections}
The subspace $V\subset\fg$ is a subalgebra such that $V_0=W_0$ is a reductive subalgebra and $\bigoplus_{j>0}V_j$ is a nilpotent subalgebra. In addition, for each $j,k$, the subspace $W_j\cap \fg_k \subset\fg$  is preserved by bracketing with $W_0$.
\end{proposition}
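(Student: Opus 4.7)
The plan is to split the proposition into four assertions and dispatch them in turn, each following from a direct computation with the Jacobi identity together with standard facts about centralizers of semisimple subalgebras.

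First I would check that $V=\ker(\ad_e)$ is a subalgebra: if $x,y\in V$, then the Jacobi identity gives $\ad_e([x,y])=[\ad_e(x),y]+[x,\ad_e(y)]=0$, so $[x,y]\in V$. Next I would identify $V_0$ with $W_0$: since $W_0$ is by definition the sum of the trivial $\fsl_2\C$-isotypic components, every element of $W_0$ is killed by $\ad_e$, $\ad_f$, and $\ad_h$; in particular $W_0\subset\fg_0$, so $V_0=W_0\cap\fg_0=W_0$. The same computation shows that $W_0$ is precisely the centralizer in $\fg$ of the $\fsl_2\C$-subalgebra $\langle f,h,e\rangle$. Since the latter is semisimple and $\fg$ is reductive, its centralizer is reductive by the standard result on centralizers of reductive subalgebras (e.g. Bourbaki, LIE I).

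To handle $\bigoplus_{j>0}V_j$, I would first verify that the decomposition $V=\bigoplus_j V_j$ is compatible with the bracket. For $x\in V_j\subset\fg_j$ and $y\in V_k\subset\fg_k$, the element $[x,y]$ lies in $V$ (by the first step) and in $\fg_{j+k}$ (since bracketing is compatible with $\ad_h$-grading). Because $V_m=W_m\cap\fg_m$ and no other $V_\ell$ meets $\fg_m$ for $\ell\neq m$, the intersection $V\cap\fg_{j+k}$ equals $V_{j+k}$. Thus $[V_j,V_k]\subset V_{j+k}$, so $\bigoplus_{j>0}V_j$ is a graded subalgebra with strictly positive weights; the descending central series is dominated by the grading filtration and terminates after finitely many steps because the decomposition \eqref{eq sl2 module decomp} is finite, giving nilpotency.

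For the final assertion, the key observation is that $W_0$ coincides with the centralizer of $\langle f,h,e\rangle$, so the adjoint action of $W_0$ on $\fg$ commutes with the $\fsl_2\C$-action. Commuting with $\ad_h$ forces $[W_0,\fg_k]\subset\fg_k$, and commuting with the full $\fsl_2\C$-action means the $W_0$-action is a map of $\fsl_2\C$-modules, hence preserves the isotypic decomposition $\fg=\bigoplus W_j$; combining these gives $[W_0,W_j\cap\fg_k]\subset W_j\cap\fg_k$. None of these steps is genuinely hard; the only point that requires care is making sure the claim $V\cap\fg_m=V_m$ is used correctly so that the bracket $[V_j,V_k]$ lands in $V_{j+k}$ rather than merely in some larger intersection, as this is what upgrades the mere subalgebra property of $V$ to a graded subalgebra compatible with the $\ad_h$-weight decomposition.
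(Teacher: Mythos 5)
Your proof is correct and follows essentially the same standard route as the paper: the paper itself gives no proof but cites Lemmas 3.4.5 and 3.7.3 of Collingwood--McGovern, and your argument supplies the self-contained $\fsl_2$-module and Jacobi-identity computations underlying those references. The point you rightly isolate, that $V\cap\fg_m=V_m$ (which holds because a highest-weight vector of $\ad_h$-weight $m$ necessarily lies in the isotypic component $W_m$), is exactly what makes the grading compatible with the bracket and hence drives the nilpotency argument.
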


\begin{remark}\label{rem slodowy slice}\mbox{}
\begin{enumerate}
\item  Note that $V_0=W_0\subset\fg$ is the Lie subalgebra which centralizes the $\fsl_2\C$-subalgebra $\langle f,h,e\rangle$. We will often denote this subalgebra by $\fc=W_0\subset\fg$.
\item The affine space 
    \[f+V\subset\fg\]
    is a slice of the adjoint action of $\rG$ on $\fg$ through the nilpotent $f$, which is usually called a \emph{Slodowy slice} \cite{Slodowy-Simplesingularities-book}. Note that $\fc$ preserves the Slodowy slice. 
   \end{enumerate}
\end{remark}

\subsection{Magical $\fsl_2$-triples}\label{subsec: Magical nilpotents}
Let $\{ f,h,e\}\subset \fg$ be an $\fsl_2$-triple. Note that 
\[\fg=\bigoplus_{j=0}^M\bigoplus_{k=0}^jW_j\cap\fg_{j-2k}\] and that $W_j\cap\fg_{j-2k}=\ad_f^k(V_j)$. Consider the map $\sigma_e:\fg\to\fg$ defined by the linear extension of 
\begin{equation}
    \label{eq magical involution}\sigma_e(x)=\begin{dcases}
    x&\text{if } x\in V_0\\
    (-1)^{k+1}x&\text{if }x\in \ad_f^k(V_j) \text{ for some $0\leq k\leq j$ and $j>0$}.
\end{dcases}
\end{equation}
 This defines a vector space involution of $\fg$ with $\sigma_e|_{V_j}=-\Id$ for  $j>0$. On the given $\fsl_2$-triple, we have $\sigma_e(f)=-f$, $\sigma_e(h)=h$ and $\sigma_e(e)=-e$.
 
\begin{definition}\label{def: magical nilpotent}
    An $\fsl_2$-triple $\{f,h,e\}$ will be called \emph{magical} if the involution $\sigma_e:\fg\to\fg$ defined by \eqref{eq magical involution} is a Lie algebra involution.
    We will also refer to a nilpotent element $e\in\fg$ as \emph{magical} if it belongs to a magical $\fsl_2$-triple. 
\end{definition} 
\begin{remark}
    Although the terminology was not used, Hitchin showed in \cite[Proposition 6.1]{liegroupsteichmuller} that a principal $\fsl_2$-triple is magical. 
\end{remark}
\begin{remark}\label{remark subalgebra mag}
    Note that if $\{f,h,e\}\subset\fg$ is magical and contained in a reductive subalgebra $\fg'\subset\fg,$ then $\{f,h,e\}$ is magical in the subalgebra $\fg'.$
    \end{remark}
We will classify magical nilpotents in \S \ref{sec: mag class} and by \eqref{eq:nilp<->sl2triple} this will be equivalent to classifying magical $\fsl_2$-triples.  
A key feature of principal $\fsl_2$-triples is that the subalgebra $\fg_0$ is a Cartan subalgebra. We now generalize this to magical triples.
For an $\fsl_2$-triple $\{f,h,e\}$, let $Z_{2m_j}=W_{2m_j}\cap\fg_0$. Thus, we have a decomposition of $\fg_0$ as a $\fc$-module    
\begin{equation}\label{eq:g0Sl2module}
\fg_0=\fc\oplus \bigoplus_{j=1}^M Z_{2m_j}.
\end{equation}

\begin{proposition}\label{prop: bracket relation on g0}
If $\{f,h,e\}$ is a magical $\fsl_2$-triple, then $[Z_{2m_i},Z_{2m_j}]\subset \fc$ for all $m_i,m_j$, and $[Z_{2m_i},Z_{2m_j}]=0$ if $m_i\neq m_j$. 
\end{proposition}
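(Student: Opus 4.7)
The plan is to combine two independent parity-vanishing arguments---one from the magical involution $\sigma_e$, the other from Clebsch--Gordan theory for $\fsl_2\C$---to kill every component of $[Z_{2m_i},Z_{2m_j}]$ outside $\fc$, and outside $0$ when $m_i\neq m_j$.

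First I would compute the action of $\sigma_e$ on the summands of $\fg_0 = \fc \oplus \bigoplus_{j=1}^M Z_{2m_j}$. Since $Z_{2m}=\ad_f^m(V_{2m})$, formula \eqref{eq magical involution} gives $\sigma_e|_{Z_{2m}} = (-1)^{m+1}\Id$, while $\sigma_e|_{\fc}=+\Id$. For $x\in Z_{2m_i}$ and $y\in Z_{2m_j}$ with $m_i,m_j>0$, the hypothesis that $\sigma_e$ is a Lie algebra involution yields
\[\sigma_e([x,y]) = (-1)^{m_i+m_j}[x,y].\]
Decomposing $[x,y]=c+\sum_{m>0}z_m$ according to $\fg_0=\fc\oplus\bigoplus Z_{2m}$ and matching $\sigma_e$-eigenvalues on each summand, I get $z_m=0$ whenever $m+m_i+m_j$ is even, and $c=0$ when $m_i+m_j$ is odd.

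Second, I would view the bracket $W_{2m_i}\otimes W_{2m_j}\to\fg$ as an $\fsl_2\C$-equivariant map. Factoring each isotypic summand as $W_{2m}\cong U_{2m}\otimes V_{2m}$, where $U_{2m}$ is the abstract $(2m+1)$-dimensional irreducible $\fsl_2\C$-module and $V_{2m}$ is the multiplicity space, Schur's lemma decomposes the projection of the bracket onto $W_{2n}$ as a scalar multiple of the abstract Clebsch--Gordan projection $p_n\colon U_{2m_i}\otimes U_{2m_j}\to U_{2n}$ tensored with a linear map $B_n\colon V_{2m_i}\otimes V_{2m_j}\to V_{2n}$. Since $x$ and $y$ belong to weight-zero subspaces, the $U_{2m_i}\otimes U_{2m_j}$ factor of $x\otimes y$ is a zero-weight tensor, and $p_n$ evaluated on it is governed by the classical Clebsch--Gordan coefficient $\langle m_i,0;\,m_j,0\,|\,n,0\rangle$. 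By the Weyl reversal symmetry $\langle j_1,m_1;j_2,m_2|J,M\rangle = (-1)^{j_1+j_2-J}\langle j_1,-m_1;j_2,-m_2|J,-M\rangle$ applied with all magnetic quantum numbers zero, this coefficient vanishes whenever $m_i+m_j+n$ is odd.

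Combining the two steps: for every $n\geq 1$, the $Z_{2n}$-component of $[x,y]$ vanishes---Step~1 handles the case that $m_i+m_j+n$ is even and Step~2 handles the case that $m_i+m_j+n$ is odd. This proves $[Z_{2m_i},Z_{2m_j}]\subset\fc$. When in addition $m_i\neq m_j$, the trivial module $U_0$ does not appear in the Clebsch--Gordan decomposition of $U_{2m_i}\otimes U_{2m_j}$, so $p_0=0$ and the $\fc$-component of $[x,y]$ also vanishes, yielding $[Z_{2m_i},Z_{2m_j}]=0$. The main obstacle is to make the Schur-type factorization of the bracket precise, since the multiplicity-space description is only implicit in the paper's set-up; once that is in place, both parity statements are immediate consequences of standard $\fsl_2\C$-theory, and the magical hypothesis intervenes only through Step~1.
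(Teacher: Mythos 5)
Your proof is correct and follows essentially the same strategy as the paper's: combine the $\sigma_e$-parity argument on $\fg_0$ with the Clebsch--Gordan parity for the zero-weight component of the bracket, and observe that the trivial summand $U_0$ appears in $U_{2m_i}\otimes U_{2m_j}$ only when $m_i=m_j$. The only cosmetic difference is that you derive the CG parity from the Weyl reversal symmetry of Clebsch--Gordan coefficients, whereas the paper obtains it directly from the skew-symmetry of the volume-form contraction applied to $z_1^{m_i}z_2^{m_i}\otimes z_1^{m_j}z_2^{m_j}$.
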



Before giving the proof we recall some facts about $\fsl_2\C$-representation theory. Consider the decomposition \eqref{eq sl2 module decomp} of $\fg$.
The Lie bracket defines a morphism of $\fsl_2\C$-representations:
    \[[~,~]:W_{2m_i}\otimes W_{2m_j}\longrightarrow W_0\oplus\bigoplus_{k=1}^MW_{2m_k.}\]
According to the Clebsch--Gordan formula, the tensor product $W_{2m_i}\otimes W_{2m_j}$ decomposes as a direct sum of irreducible representations
    \begin{equation}\label{eq:C-G-decomp}
    W_{2m_i}\otimes W_{2m_j}\cong \bigoplus_{l=0}^{2\min(m_i,m_j)}\Big(S^{2m_i+2m_j-2l}\Big)^{\oplus n_in_j},
    \end{equation}
where $S^{d}$ is the $d^{th}$-symmetric product of the standard $\fsl_2\C$-representation $W_1$. The projection onto the summand $(S^{2m_i+2m_j-2l})^{\oplus n_in_j}$ is given by contracting $l$-times with the volume form on $\C^2$. 
If we represent $S^{2d}$ as homogeneous polynomials in $z_1,z_2$ of degree $2d$, the elements $x\in Z_{2m_i}$ and $y\in Z_{2m_j}$ are multiples of $z_1^{m_i}z_2^{m_i}$ and $z_1^{m_j}z_2^{m_j}$, respectively. Moreover, since the volume form is skew-symmetric, contracting $(2l+1)$-times $z_1^{m_i}z_2^{m_i}$ with $z_1^{m_j}z_2^{m_j}$ gives zero. Thus, the projection of the bracket $[x,y]$ to $Z_{2m_k}$ is zero when $m_i+m_j=m_k+1 \mod 2$.

\begin{proof}[Proof of Proposition \ref{prop: bracket relation on g0}]
Suppose $\{f,h,e\}$ is magical. Let $x\in Z_{2m_i}$, $y\in Z_{2m_j}$ and write $[x,y]=z_0+\sum z_k$, where $z_0\in \fc$ and $z_k\in Z_{2m_k}$. Note that $\sigma_e(z_0)=z_0$ and $\sigma_e(z_k)=(-1)^{m_k+1}z_k$. By assumption, we have $\sigma_e([x,y])=[\sigma_e(x),\sigma_e(y)]$, thus
    \[z_0+\sum (-1)^{m_k+1}z_k=(-1)^{m_i+m_j}\big(z_0+\sum z_k\big).\]
In particular, if $m_i+m_j=m_k \mod 2$ then $z_k=0$. It follows, by the above discussion, that $z_k=0$ for all $k>0$.  Thus, 
 \[[Z_{2m_i},Z_{2m_j}]\subset\fc\]
for all $m_i,m_j$. Moreover, by Schur's Lemma, the projection of the bracket $[x,y]$ to $W_0$ is zero unless the decomposition of $W_{2m_i}\otimes W_{2m_j}$ has the trivial representation $W_0$ as a summand. But by \eqref{eq:C-G-decomp} this only happens if $m_i=m_j$, completing the proof.
\end{proof}

By Proposition \ref{prop: bracket relation on g0}, a magical $\fsl_2$-triple $\{f,h,e\}$ defines a Lie algebra involution $\theta_e:\fg_0\to\fg_0$
\begin{equation}
    \label{eq g0 involution}
    \theta_e(x)=\begin{dcases}
    x&\text{if } x\in W_0\\
    -x& \text{if } x\in \bigoplus_{j=1}^M Z_{2m_j}.
\end{dcases}
\end{equation}
\begin{remark}
    Note that $\theta_e$ and $\sigma_e|_{\fg_0}$ are different since $\theta_e(h)=-h$ and $\sigma_e(h)=h$. 
\end{remark}

\subsection{The canonical real form associated to a magical nilpotent}\label{subsec: involutions and real forms}
In this section we mainly follow \cite[\S 3]{CartanGaloisCohomAdams}. A \emph{real form} of the complex Lie group $\rG$ is defined to be the fixed point set $\rG^\tau$ of an anti-holomorphic involution 
\[\tau:\rG\longrightarrow\rG.\]
We will sometimes refer to the involution $\tau$ itself as a real form.
Note that even though $\rG$ is connected, the real form $\rG^\tau$ may not be connected. For example, $\rSO_{p,q}\subset\rSO_{p+q}\C$ is a real form which has two components whenever $p$ or $q$ is nonzero. 
If the fixed point set $\rG^\tau\subset\rG$ is compact, the real form is said to be \emph{compact}. Such real forms exist and are unique up to conjugation. 

A holomorphic involution $\sigma:\rG\to\rG$ is called a \emph{Cartan involution for a real form $\tau$} if $\sigma\tau=\tau\sigma$ and, in addition, $\sigma\tau$ is a compact real form of $\rG$. Given a real form $\tau$, a Cartan involution $\sigma$ for $\tau$ exists and is unique up to conjugation by the identity component $(\rG^\tau)^0\subset\rG^\tau$. Conversely, given a holomorphic involution $\sigma$, there exists a real form $\tau$, unique up to conjugation by $(\rG^\sigma)^0$, such that $\sigma$ is a Cartan involution for $\tau$.

  The following proposition will be useful (cf.~\cite[Theorem 3.13]{CartanGaloisCohomAdams}).
\begin{proposition}\label{prop: compatible involution}
Let $\rG'\subset\rG$ be a reductive subgroup. If $\sigma:\rG\to\rG$ is a holomorphic involution of $\rG$ with $\sigma(\rG')=\rG'$ and $\tau_{\rG'}$ is a real form of $\rG'$ such that $\sigma|_{\rG'}$ a Cartan involution for $\tau_{\rG'}$, then there exists a real form $\tau:\rG\to\rG$ with $\sigma$ a Cartan involution for $\tau$ such that $\tau|_{\rG'}=\tau_{\rG'}$. Conversely, if $\tau:\rG\to\rG$ is a real form of $\rG$ with $\tau(\rG')=\rG'$ and $\sigma_{\rG'}$ is a Cartan involution for $\tau|_{\rG'}$, then there exists a Cartan involution $\sigma:\rG\to\rG$ for $\tau$ such that $\sigma|_{\rG'}=\sigma_{\rG'}$.
 \end{proposition}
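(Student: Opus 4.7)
The plan is to rephrase both halves of the proposition as a single extension problem for compact real forms and then appeal to an equivariant conjugacy theorem. Recall the standard correspondence: a holomorphic involution $\sigma$ is a Cartan involution for a real form $\tau$ if and only if $\sigma$ and $\tau$ commute and $\tau_c := \sigma\tau$ is a compact real form. Under this correspondence, the first statement of the proposition becomes the following: given a holomorphic involution $\sigma$ of $\rG$ preserving a reductive subgroup $\rG'$ and the $\sigma|_{\rG'}$-invariant compact real form $\tau_c' := \sigma|_{\rG'}\tau_{\rG'}$ of $\rG'$, produce a $\sigma$-invariant compact real form $\tau_c$ of $\rG$ whose restriction to $\rG'$ equals $\tau_c'$; the real form $\tau := \sigma\tau_c$ then satisfies all the required properties, since one checks directly that $\tau^2 = \Id$, $\sigma\tau = \tau_c$ is a compact real form, and $\tau|_{\rG'} = \sigma|_{\rG'}\tau_c' = \tau_{\rG'}$. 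By the same correspondence, the second statement reduces to producing a $\tau$-invariant compact real form of $\rG$ restricting to the compact real form $\sigma_{\rG'}\tau|_{\rG'}$ of $\rG'$; the argument that follows applies verbatim with $\tau$ in place of $\sigma$.

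I would solve this extension problem in three steps. First, choose any $\sigma$-invariant compact real form $\tau_c^{(0)}$ of $\rG$; such a form exists by a standard averaging argument over the finite group generated by $\sigma$ (see, e.g., Onishchik--Vinberg). Second, the restriction $\tau_c^{(0)}|_{\rG'}$ is a $\sigma|_{\rG'}$-invariant compact real form of $\rG'$, and hence lies in the same $((\rG')^\sigma)^0$-orbit as $\tau_c'$ by the equivariant version of Cartan's uniqueness theorem, which asserts that any two compact real forms of a complex reductive group invariant under a common involution are conjugate by an element of the identity component of the centralizer of that involution. Third, pick $g \in ((\rG')^\sigma)^0$ realizing the conjugacy and set $\tau_c := \Ad(g)\circ\tau_c^{(0)}\circ\Ad(g)^{-1}$; since $g$ centralizes $\sigma$ in $\rG$, the automorphism $\Ad(g)$ commutes with $\sigma$, so $\tau_c$ is again $\sigma$-invariant, and by construction its restriction to $\rG'$ equals $\tau_c'$.

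The main obstacle I anticipate is the equivariant Cartan uniqueness invoked in the second step. The classical statement that any two compact real forms of $\rG'$ are $\rG'$-conjugate is standard; what is needed in addition is that the conjugating element can be chosen inside $((\rG')^\sigma)^0$, so as not to disturb the ambient involution. I would establish this refinement by viewing the space of compact real forms of $\rG'$ as a Riemannian symmetric space of non-positive curvature and applying the Cartan fixed-point theorem for isometric actions on CAT$(0)$ spaces to the involution induced by $\sigma$, combined with the transitivity of $((\rG')^\sigma)^0$ on the resulting totally geodesic fixed locus. With that input in hand, the remainder of the argument is a formal manipulation of the commuting involutions.
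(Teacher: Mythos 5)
The paper does not supply its own proof of Proposition~\ref{prop: compatible involution}: it is stated with a pointer to \cite[Theorem 3.13]{CartanGaloisCohomAdams}, and no argument is given in the text. So there is nothing internal to compare your proof against, and I assess it on its own terms.

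Your reduction to extending a $\sigma$-invariant compact real form from $\rG'$ to $\rG$, and the appeal to nonpositive curvature, are both the right instincts, but as written the argument has a genuine gap at the start of your second step. You choose an arbitrary $\sigma$-invariant compact real form $\tau_c^{(0)}$ of $\rG$ by averaging over $\langle\sigma\rangle$, and then immediately consider "the restriction $\tau_c^{(0)}|_{\rG'}$". For that restriction to exist you need $\tau_c^{(0)}(\rG')=\rG'$, and nothing in the averaging over $\langle\sigma\rangle$ enforces this: $\sigma$-invariance and $\rG'$-invariance are unrelated constraints, and a generic compact real form of $\rG$ will simply fail to preserve a prescribed reductive subgroup. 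Consequently steps two and three do not get off the ground, and the subsequent conjugation by an element of $((\rG')^\sigma)^0$ cannot repair this, since it presupposes a well-defined restriction.

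The fix is to build the constraint on $\rG'$ into the fixed-point argument rather than imposing it afterwards. Let $\rU'=(\rG')^{\tau_{\rG'}}$ be the maximal compact subgroup of $\rG'$ determined by the compact form $\sigma|_{\rG'}\tau_{\rG'}$, and choose a maximal compact subgroup $\rU$ of $\rG$ (as a real group) containing $\rU'$; comparing the splittings $\fg=\fu\oplus i\fu$ and $\fg'=\fu'\oplus i\fu'$ gives $\fu\cap\fg'=\fu'$, so the compact form determined by $\rU$ preserves $\rG'$ and restricts as required. Hence the locus, inside the symmetric space of compact real forms of $\rG$, of forms with the correct restriction to $\rG'$ is nonempty; it is exactly the fixed-point set of the compact group $\rU'$ acting by conjugation, hence a nonempty totally geodesic (so CAT$(0)$) subspace. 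Since $\sigma(\rU')=\rU'$, this subspace is $\sigma$-stable, and the Cartan fixed-point theorem applied inside it produces a $\sigma$-invariant compact form of $\rG$ with the desired restriction. This actually shortens your outline: once the fixed point is taken in the right subspace, the equivariant Cartan uniqueness you invoked in step three (which is correct, but orthogonal to the real difficulty) is no longer needed.
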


An involution $\alpha:\rG\to\rG$ induces an involution $\alpha:\fg\to\fg$, and the Lie algebra of the fixed-point group $\rG^\alpha$ is the fixed-point subalgebra $\fg^\alpha$. Moreover, if $\alpha:\rG\to\rG$ is holomorphic or anti-holomorphic then $\alpha:\fg\to\fg$ is complex linear or conjugate-linear respectively. In the latter case, $\fg^\alpha$ is a real form of $\fg$, i.e., $\fg^\alpha\otimes\C\cong\fg$.
\begin{remark}
    An involution of the Lie algebra $\fg$ does \emph{not} always integrate to an involution of the group $\rG$. However, every inner involution of $\fg$ integrates to $\rG$. Also, when $\rG$ is an adjoint group or simply connected, every Lie algebra involution integrates to $\rG$. Whenever we are dealing with Lie algebra involutions, \emph{we will always assume $\rG$ is a Lie group for which the involution integrates}. 
 \end{remark}
Now fix a real form $\tau$ of $\rG$ and let $\sigma$ be a Cartan
involution for $\tau$. Denote the fixed-point groups by
$\rG^\R=\rG^\tau$ and $\rH=\rG^\sigma$.\label{p:maximal-compact}
Then 
\begin{displaymath}
  \rH^\R=\rH\cap\rG^\R
\end{displaymath}
 is a maximal compact subgroup of both $\rG^\R$ and $\rH$. Furthermore, the associated Lie algebra involution $\sigma:\fg^\R\to\fg^\R$ defines an $\rH^\R$-invariant decomposition of $\fg^\R$ into $\pm 1$-eigenspaces
\begin{displaymath}
\fg^\R=\fh^\R\oplus\fm^\R,
\end{displaymath}
called a \emph{Cartan decomposition}. The associated $\rH$-invariant decomposition $\fg=\fh\oplus\fm$ will be referred to as the \emph{complexified Cartan decomposition}.

Now we go back to our setting. Since the definition of a magical $\fsl_2$-triple involves a complex linear involution of $\fg$, there is a canonical real form of $\fg$ associated to each such triple.

\begin{definition}\label{def: canonical real form}
Let $\{f,h,e\}\subset\fg$ be a magical $\fsl_2$-triple and $\sigma_e:\fg\to\fg$ be the associated Lie algebra involution. Let $\tau_e:\fg\to\fg$ be a real form such that $\sigma_e$ is a Cartan involution \eqref{eq magical involution}. The Lie algebra $\fg^\R=\fg^{\tau_e}$ will be called \emph{the canonical real form of $\fg$ associated to $\{f,h,e\}$.} 
 \end{definition}

\begin{remark}\label{remark compatible involutions}
The $\fsl_2\C$-subalgebra $\fs=\langle f,h,e\rangle$ spanned by the magical $\fsl_2$-triple is $\sigma_e$-stable. Moreover, $\sigma_e|_\fs$ is a Cartan involution for the conjugate linear involution $\tau_\fs:\fs\to\fs$ defined by 
\begin{equation}\label{eq real form of mag sl2}
    \xymatrix{\tau_\fs(h)=- h&\tau_\fs(e)= f&\text{and}&\tau_\fs(f)= e.}
\end{equation}
Since $\fs^{\tau_\fs}$ is isomorphic to $\fsl_2\R$, we can choose the canonical real form $\tau_e:\fg\to\fg$ such that the magical $\fsl_2\C$-subalgebra defines a subalgebra of $\fg^\R$ isomorphic to $\fsl_2\R$. 
\end{remark}

\begin{definition}\label{def Lie group canonical real form} 
Let $\{f,h,e\}\subset\fg$ be a magical $\fsl_2$-triple and $\sigma_e:\fg\to\fg$ be the associated Lie algebra involution \eqref{eq magical involution}. Let $\tau_e:\fg\to\fg$ be a real form such that $\sigma_e$ is a Cartan involution. Let $\rG$ be a connected complex Lie group with Lie algebra $\fg$ such that $\sigma_e$ integrates to an involution $\sigma_e:\rG\to\rG$ and let $\tau_e:\rG\to\rG$ be the anti-holomorphic involution integrating $\tau_e$. We define the \emph{canonical real form $\rG^\R$ of $\rG$ associated to $e$} to be the fixed-point group $\rG^{\tau_e}\subset\rG$.
\end{definition}

The Lie algebra of the canonical real form $\rG^\R$ is the canonical real form $\fg^\R$ of Definition \ref{def: canonical real form}.
The complex linear Lie algebra involution $\theta_e:\fg_0\to\fg_0$ defined in \eqref{eq g0 involution} also associates a real form to a magical $\fsl_2$-triple.
\begin{definition}\label{def: Cayley real form}
    Let $\{f,h,e\}$ be a magical $\fsl_2$-triple, $\fg_0$ be the centralizer of $h$ and $\theta_e:\fg_0\to\fg_0$ be the Lie algebra involution from \eqref{eq g0 involution}. Let $\tau_0:\fg_0\to\fg_0$ be a real form, such that $\theta_e$ is a Cartan involution for $\tau_0$. The Lie algebra $\fg_0^{\tau_0}\subset\fg_0$ will be called the \emph{Cayley real form of $\fg_0$ associated to $e$}, and denoted by $\fg_\cC^\R$.
\end{definition} 
\begin{remark}\label{remark compact centralizer}
    Note that $\theta_e|_{\fc}=\sigma_e|_{\fc}=\Id:\fc\to\fc$ is a Cartan involution for a compact real form $\tau_\fc$ of $\fc$. Thus, by Proposition \ref{prop: compatible involution}, we can assume that the canonical real form $\tau_e:\fg\to\fg$ and the Cayley real form $\tau_0:\fg_0\to\fg_0$ are such that 
    $\tau_e|_\fc=\tau_\fc=\tau_0|_\fc$. In particular, the centralizer $\fc^{\tau_\fc}$ of the $\fsl_2\R$-subalgebra $\fs^{\tau_e}\subset\fg^{\tau_e}$ is compact (where  $\fs=\langle f,h,e\rangle$).
\end{remark}

\subsection{Real nilpotents and the Sekiguchi correspondence}\label{subsec: Real nilpotents and the Sekiguchi correspondence}

The classification of magical $\fsl_2$-triples will use the classification of nilpotent elements in real Lie algebras and the Sekiguchi correspondence. 
Fix a real form $\tau:\rG\to\rG$, a Cartan involution $\sigma:\rG\to\rG$ for $\tau$ and write $\rG^\R=\rG^\tau$, $\rH=\rG^\sigma$ and $\fg=\fh\oplus\fm$ for the complexified Cartan decomposition. In this section, we will refer to $\fsl_2$-triples in $\fg$ as $\fsl_2$-triples, to distinguish them from $\fsl_2\R$-triples in $\rG^\R$, which will also appear.

The \emph{Sekiguchi correspondence} gives a one-to-one correspondence between $\rG^\R$-conjugacy classes of nilpotents in $\fg^\R$ and $\rH$-conjugacy classes of nilpotents in $\fm$:
\begin{equation}
\label{eq sekiguchi correspondence}
\xymatrix{\{\hat e\in\fg^\R \text{ nonzero nilpotent}\}/\rG^\R \ar@{<->}[r]^-{1-1} &  \{e\in\fm\text{ nonzero nilpotent}\}/\rH.}
\end{equation}
It was proven independently in \cite{Sekiguchi} and \cite{DokovicKostConj}. 

We now describe the correspondence in more detail and refer the reader to \cite[Chapter 9]{CollMcGovNilpotents} and \cite[\S6.1]{CartanGaloisCohomAdams} for further details.
The Jacobson--Morozov theorem also holds over $\R$. Namely, every nonzero nilpotent $\hat e\in\fg^\R$ can be completed to an $\fsl_2\R$-triple $\{\hat f,\hat h,\hat e\}$, such that $\hat f,\hat h,\hat e\in\fg^\R\setminus\{0\}$ satisfy the bracket relations \eqref{eq bracket relations sl2}. Moreover, this defines a bijection on conjugacy classes 
\begin{equation*}
\xymatrix{\{ \hat e\in\fg^\R \text{ nonzero nilpotent}\}/\rG^\R \ar@{<->}[r]^-{1-1} &  \{\phi:\fsl_2\R\to\fg^\R\}/\rG^\R.}
\end{equation*}
Following \cite[Chapter 9.4]{CollMcGovNilpotents}, an $\fsl_2\R$-triple $\{\hat f,\hat h,\hat e\}\subset\fg^\R$ is called a \emph{Cayley triple} if $\sigma(\hat f)=-\hat e$, $\sigma(\hat e)=-\hat f$ and $\sigma(\hat h)=-\hat h$. Using Proposition \ref{prop: compatible involution}, one can show that every $\fsl_2\R$-triple is $(\rG^\R)^0$-conjugate to a Cayley triple. On the other hand, an $\fsl_2$-triple $\{f,h,e\}$ is called a \emph{normal triple} if $\sigma(f)=-f$, $\sigma(h)=h$ and $\sigma(e)=-e$. Note that every magical $\fsl_2$-triple is a normal triple with respect to the Cartan involution \eqref{eq magical involution}.

The \emph{Cayley transform} defines a bijection between Cayley triples in $\fg^\R$ and normal triples in $\fg$ by
\[\gamma:\xymatrix@R=0em{\text{Cayley\ triples}\ar[r]&\text{Normal triples}\\\{\hat f,\hat h,\hat e\}\ar@{|->}[r]& \{\frac{1}{2}(\hat f+\hat e-i\hat h),i(\hat e-\hat f),\frac{1}{2}(\hat f+\hat e+i\hat h)\},}
\]
with inverse given by 
\begin{equation}
    \label{eq inverse cayley transform}\gamma^{-1}:\xymatrix@R=0em{\text{Normal\ triples}\ar[r]&\text{Cayley triples}\\\{f, h, e\}\ar@{|->}[r]& \{\frac{1}{2}(f- e+i h),i(f+ e),\frac{1}{2}( f-e-i h)\}.}
\end{equation}
\begin{remark}
    We will refer to both $\gamma$ and $\gamma^{-1}$ as the Cayley transform. Note that $\gamma$ takes the standard generators of $\fsl_2\R$ to those of $\fsu_{1,1},$ and hence is defined by conjugating by the M\"obius transformation identifying the upper half space with the Poincar\'e disk. 
\end{remark}

For the proof of the following, see for instance \cite[Theorem 9.5.1]{CollMcGovNilpotents}.
\begin{proposition}
     The Cayley transform provides the bijective correspondence of the Sekiguchi correspondence \eqref{eq sekiguchi correspondence}.
 \end{proposition}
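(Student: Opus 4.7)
The plan is to verify that the Cayley transform, which we already know bijects Cayley $\fsl_2\R$-triples with normal $\fsl_2\C$-triples at the level of individual triples, descends to the stated bijection at the level of conjugacy classes.

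First I would check the computational claim that $\gamma$ and $\gamma^{-1}$ are well-defined at the level of triples. Starting from a Cayley $\fsl_2\R$-triple $\{\hat f, \hat h, \hat e\}\subset\fg^\R$, the three bracket relations \eqref{eq bracket relations sl2} for the image triple follow from a direct expansion using $[\hat h,\hat e]=2\hat e$, $[\hat h,\hat f]=-2\hat f$, $[\hat e,\hat f]=\hat h$. Using $\sigma(\hat e)=-\hat f$, $\sigma(\hat f)=-\hat e$, $\sigma(\hat h)=-\hat h$ together with the complex-linearity of $\sigma$ extended to $\fg$, one verifies $\sigma(f)=-f$, $\sigma(h)=h$, $\sigma(e)=-e$, so the image is a normal $\fsl_2\C$-triple; in particular $e,f\in\fm$ and $h\in\fh$. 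The analogous computation for $\gamma^{-1}$ works because $\tau$ restricted to the real subalgebra spanned by $f,h,e,\imag f, \imag h, \imag e$ sends the normal triple to its conjugates in the prescribed Cayley pattern. Thus $\gamma\circ\gamma^{-1}$ and $\gamma^{-1}\circ\gamma$ are the identity maps on the respective sets of triples.

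Next I would bring in the Jacobson--Morozov theorems on both sides. On the real side, every nonzero nilpotent $\hat e\in\fg^\R$ lies in an $\fsl_2\R$-triple, which by the result cited in the text is $(\rG^\R)^0$-conjugate to a Cayley triple; moreover any two Cayley triples through $\hat e$ are conjugate by an element of the $(\rG^\R)^0$-centralizer of $\hat e$. On the symmetric side, the Kostant--Rallis theorem (which can be deduced using Proposition \ref{prop: compatible involution} applied to the $\fsl_2\C$-subalgebra spanned by a Jacobson--Morozov completion of $e\in\fm$) guarantees that every nonzero nilpotent $e\in\fm$ lies in a normal triple $\{f,h,e\}$, unique up to $\rH$-conjugacy. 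Combined with the previous paragraph, this yields bijections
\[
    \{\hat e\in\fg^\R\text{ nonzero nilpotent}\}/\rG^\R \;\longleftrightarrow\; \{\text{Cayley triples}\}/\rG^\R,
\]
\[
    \{e\in\fm\text{ nonzero nilpotent}\}/\rH \;\longleftrightarrow\; \{\text{normal triples}\}/\rH.
\]

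Finally, I would show that $\gamma$ intertwines these two quotients. If $\Ad_g$ with $g\in (\rG^\R)^0$ carries one Cayley triple to another, then since $g\in\rG^\sigma\cap\rG^\tau\subset\rG^\sigma=\rH$, it also carries $\gamma$ of the first to $\gamma$ of the second (the Cayley transform is a complex-linear combination that commutes with any automorphism, and preservation of the Cayley condition ensures $g\in\rH$). Conversely, an element of $\rH$ conjugating two normal triples lies in $\rG^\tau$ as soon as one checks it preserves the real structure on the corresponding Cayley triples; since $\gamma^{-1}$ recovers the Cayley triple from its image, the conjugating element in fact lies in $\rG^\R$. Thus $\gamma$ descends to a bijection between the two orbit sets, establishing the Sekiguchi correspondence.

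The main obstacle is the last paragraph: showing that $\rG^\R$-conjugacy of Cayley triples translates to $\rH$-conjugacy of their Cayley transforms, and vice versa. This requires pinning down how the Cartan involution $\sigma$ constrains conjugating elements to lie simultaneously in $\rH$ and $\rG^\R$, and it is the content of the detailed arguments in \cite[Chapter 9]{CollMcGovNilpotents} which we would invoke for the technical verification.
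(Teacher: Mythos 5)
The paper offers no proof of this proposition beyond the citation to \cite[Theorem 9.5.1]{CollMcGovNilpotents}, so your sketch — which also ends by deferring the key technical lemma to that same reference — is following the same route. The overall shape of your argument (reduce both sides to conjugacy classes of triples via the two Jacobson--Morozov-type theorems, then show the Cayley transform intertwines the group actions) is the standard one.

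That said, the justification you give in the last paragraph is not correct as stated, and it is worth naming precisely where it breaks. You assert that if $g\in(\rG^\R)^0$ carries one Cayley triple to another then $g$ automatically lies in $\rG^\sigma\cap\rG^\tau=\rH^\R$, and parenthetically that ``preservation of the Cayley condition ensures $g\in\rH$.'' That is false: the $\rG^\R$-centralizer of a Cayley triple is a reductive subgroup which can have noncompact factors (it contains, e.g., the split torus through $\hat h$ in general), so a conjugating element $g$ need not preserve $\sigma$. What is true — and what the reference actually proves — is the weaker statement that such a $g$ can be \emph{replaced} by a $k\in\rH^\R$. The argument uses the Cartan decomposition $g=k\exp(x)$ with $k\in\rH^\R$, $x\in\fm^\R$: one shows, using $\sigma$-stability of both Cayley triples and the uniqueness of Cartan decompositions of the stabilizer, that $\exp(x)$ centralizes the first triple, whence $\Ad_k$ already carries it to the second. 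The same replacement is needed in the converse direction (two normal triples $\rH$-conjugate implies $\rH^\R$-conjugate). Since you explicitly flag this paragraph as the main obstacle and defer to Chapter~9 of \cite{CollMcGovNilpotents}, the proposal as a whole is acceptable as a proof plan, but the sentence asserting $g\in\rH$ outright should be reworded so as not to claim something stronger than what the cited argument delivers.
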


\begin{definition}\label{def cayley mag}
    Let $\fg^\R$ be a real form of $\fg$ with Cartan involution $\sigma$. A Cayley triple $\{\hat f,\hat h, \hat e\}\subset\fg^\R$ is \emph{magical} if its Cayley transform $\gamma(\{\hat f,\hat h, \hat e\})\subset\fg$ is magical and, moreover, $\fg^\R$ is the canonical real form of $\gamma(\{\hat f,\hat h, \hat e\})$. A nilpotent $\hat e\in\fg^\R$ will be called \emph{magical} if it belongs to a magical Cayley triple.
\end{definition}

Let $\{\hat f,\hat h,\hat e\}\subset\fg^\R$ be a Cayley triple and $\fc^\R\subset\fg^\R$ be its centralizer. Similarly, let $\fc\subset\fg$ be the centralizer of its Cayley transform $\{\gamma(\hat f),\gamma(\hat h),\gamma(\hat e)\}\subset\fg$. It is straightforward to check that $\fc^\R\otimes \C=\fc$.

Recall that $V(\gamma(\hat e))=\ker(\ad_{\gamma(\hat e)})\subset\fg$ denotes the centralizer of the nilpotent $\gamma(\hat e)\in\fg$.

\begin{proposition} \label{prop: magical real nilpotent properties}
   Let $\{\hat f,\hat h,\hat e\}\subset\fg^\R$ be a Cayley triple. Then $\{\hat f,\hat h,\hat e\}$ is magical if and only if $\fc^\R\subset\fh^\R$ and $\dim(\fh\cap V(\gamma(\hat e)))=\dim(\fc)$. 
\end{proposition}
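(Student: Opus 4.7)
My plan is to identify the two listed conditions as precisely those needed to make the combinatorial involution $\sigma_e$ from \eqref{eq magical involution} coincide on all of $\fg$ with the Cartan involution $\sigma$ of $\fg^\R$. Once the identification $\sigma_e = \sigma$ holds, magicality in the sense of Definition \ref{def cayley mag} follows immediately: $\sigma_e$ is automatically a Lie algebra involution (because $\sigma$ is), and the real form for which this $\sigma_e$ is the Cartan involution is $\fg^\R$ by construction, so $\fg^\R$ is the canonical real form of the Cayley transform $\{f,h,e\} := \gamma(\{\hat f,\hat h,\hat e\})$.

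The key preliminaries concern how $\sigma$ interacts with the $\fsl_2$-structure of the normal triple $\{f,h,e\}$. Because $\sigma(e) = -e$, the centralizer $V = \ker(\ad_e)$ is $\sigma$-stable, and because $\sigma(h) = h$ the involution $\sigma$ preserves each graded piece $V_j = V \cap \fg_j$. Because $\sigma(f) = -f$, the identity $\sigma \circ \ad_f = -\ad_f \circ \sigma$ holds, so any $x \in V_j$ with $\sigma(x) = \epsilon x$ satisfies $\sigma(\ad_f^k(x)) = (-1)^k \epsilon\,\ad_f^k(x)$. For the forward direction this makes the argument immediate: if $\sigma = \sigma_e$, then by \eqref{eq magical involution} we have $\sigma|_{V_0} = +\Id$ and $\sigma|_{V_j} = -\Id$ for $j>0$, which yield $\fc \subset \fh$ (hence $\fc^\R \subset \fh^\R$) and $\fh \cap V = V_0 = \fc$ respectively.

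For the converse, the complexification identity $\fh = \fh^\R \otimes_\R \C$ turns $\fc^\R \subset \fh^\R$ into $\fc = V_0 \subset \fh$. Combined with the dimension hypothesis and the $\sigma$-invariance of each $V_j$, this forces $V_j \subset \fm$ for every $j > 0$. Propagating through the identity $\sigma\circ\ad_f^k = (-1)^k \ad_f^k \circ \sigma$, one then finds that $\sigma$ acts on $\ad_f^k(V_j)$ as $(-1)^{k+1}\Id$ for $j > 0$, and trivially on $V_0 = W_0$ where $\ad_f$ vanishes. This is precisely the formula defining $\sigma_e$ on the decomposition $\fg = \bigoplus_{j,k} \ad_f^k(V_j)$, so $\sigma = \sigma_e$ and the triple is magical. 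The only genuine computation is this propagation step in the reverse direction, and I expect it to be the main subtlety; once one is disciplined about the bracket identity $\sigma\circ\ad_f = -\ad_f\circ\sigma$ coming from the normal triple, the rest is bookkeeping around the Cayley transform conventions.
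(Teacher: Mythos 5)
Your proposal is correct and follows essentially the same route as the paper: both directions reduce to checking that under the stated hypotheses the Cartan involution $\sigma$ of $\fg^\R$ coincides with $\sigma_e$ on each piece $\ad_f^k(V_j)$, the forward direction being immediate from Definition \ref{def: magical nilpotent}. Your version usefully makes explicit the propagation step $\sigma\circ\ad_f=-\ad_f\circ\sigma$ and the resulting sign $(-1)^{k+1}$ on $\ad_f^k(V_j)$ for $j>0$, which the paper's proof states only implicitly by asserting $\sigma$ agrees with $\sigma_e$ on $\fc$, on the highest weight spaces, and on $f$.
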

\begin{proof}
If $\{\hat f,\hat h,\hat e\}\subset\fg^\R$ is magical, then $\fc^\R\otimes\C=\fc\subset\fh$ and $V(\gamma(\hat e))\cap\fh=\fc$ by Definition \ref{def: magical nilpotent}. Conversely, if $\fc^\R\subset\fh^\R$ and $\dim(\fh\cap V(\gamma(\hat e)))=\dim(\fc)$, then the Cartan involution $\sigma$ satisfies \eqref{eq magical involution}. Indeed, $\sigma$ is a Lie algebra involution which preserves $V(\gamma(\hat e))$. Moreover, $\sigma$ equals $\Id$ on $\fc$, equals $-\Id$ on the nontrivial highest weight spaces, and also $\sigma(\gamma(\hat f))=-\gamma(\hat f)$.
\end{proof}
The first point of Proposition \ref{prop: magical real nilpotent properties} says that the centralizer of a magical Cayley triple is compact. For the dimension of  $\fh\cap V(\gamma(\hat e))$ we will use the following result.

\begin{proposition}\label{prop dim of centralizer formula}
    \cite[Proposition 5]{KostantRallis} The dimension of $\fh\cap V(\gamma(\hat e))$ is given by 
    \[\dim(\fh\cap V(\gamma(\hat e)))=\frac{1}{2}\Big(\dim(V(\gamma(\hat e)))+\dim (\fh)-\dim(\fm)\Big).\]
\end{proposition}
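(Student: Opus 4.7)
The plan is to use the fact that $e := \gamma(\hat e)$ sits inside the normal $\fsl_2\C$-triple $\{f,h,e\}=\gamma(\{\hat f,\hat h,\hat e\})$, so that $\sigma(e)=-e$, $\sigma(h)=h$, $\sigma(f)=-f$. Consequently $\sigma$ commutes with $\ad_h$ and anti-commutes with $\ad_e$, hence preserves every $\ad_h$-weight space and also $V(e)=\ker\ad_e$. In particular $V(e)=(V(e)\cap\fh)\oplus(V(e)\cap\fm)$, so the formula to be proved is equivalent to
\[
\dim(\fh\cap V(e))-\dim(\fm\cap V(e))=\dim\fh-\dim\fm.
\]

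Next I would decompose $\fg$ into $\fsl_2\C$-isotypic components, each $\sigma$-stable since $\sigma$ commutes with the Casimir. Writing the isotypic component for the $(n+1)$-dimensional irrep as $R_n\otimes M_n$ and fixing the standard twisted involution $\tilde\sigma_n$ on $R_n$ acting as $(-1)^k$ on the weight space of weight $n-2k$, the anti-commutation with $\ad_e$ forces $\sigma|_{R_n\otimes M_n}=\tilde\sigma_n\otimes A_n$ for some involution $A_n$ on $M_n$. Let $k_n^\pm$ denote the dimensions of the $\pm 1$-eigenspaces of $A_n$. A direct sign count across the weight spaces of $R_n$ then yields
\begin{align*}
\dim(V(e)\cap(R_n\otimes M_n)\cap\fh)&-\dim(V(e)\cap(R_n\otimes M_n)\cap\fm)=k_n^+-k_n^-,\\
\dim((R_n\otimes M_n)\cap\fh)&-\dim((R_n\otimes M_n)\cap\fm)=\begin{cases}k_n^+-k_n^-&n\text{ even,}\\ 0&n\text{ odd.}\end{cases}
\end{align*}
For even $n$ the two contributions agree automatically, so the task reduces to showing $k_n^+=k_n^-$ when $n$ is odd.

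The main obstacle is precisely this odd-$n$ equality, which I would resolve by exploiting $\sigma$-invariance of the Killing form $B$. The restriction of $B$ to $R_n\otimes M_n$ factors as the $\fsl_2\C$-invariant pairing on $R_n$ (symmetric when $n$ is even and symplectic when $n$ is odd, since the invariant form on $S^n\C^2$ has parity $(-1)^n$) tensored with a non-degenerate form $q_n$ on $M_n$ of matching parity, as forced by symmetry of $B$. A direct check shows that $\tilde\sigma_n$ rescales the invariant form on $R_n$ by $(-1)^n$, so $\sigma$-invariance of $B$ forces $A_n$ to anti-preserve $q_n$ when $n$ is odd. Since $A_n$ is then an involutive anti-symmetry of a symplectic form, its $\pm 1$-eigenspaces are isotropic and hence Lagrangian, giving $k_n^+=k_n^-$. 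Summing over all $n$ yields the displayed equality of differences, and combining with $\dim V(e)=\dim(V(e)\cap\fh)+\dim(V(e)\cap\fm)$ and dividing by two produces the stated formula.
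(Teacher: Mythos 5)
Your proof is correct. The paper does not actually prove this statement — it cites \cite[Proposition 5]{KostantRallis} directly — so there is no paper proof to compare against, but your argument is a complete and valid self-contained derivation. The reduction to showing $\dim(\fh\cap V(e))-\dim(\fm\cap V(e))=\dim\fh-\dim\fm$ (i.e.\ $\tr(\sigma|_{V(e)})=\tr(\sigma|_{\fg})$) is exactly right; the factorization $\sigma|_{R_n\otimes M_n}=\tilde\sigma_n\otimes A_n$ follows by the Schur-lemma observation that $(\tilde\sigma_n\otimes\Id)\circ\sigma$ is $\fsl_2$-linear on the isotypic block; and the trace computation $\tr(\tilde\sigma_n)=\sum_{k=0}^n(-1)^k$ correctly gives $1$ for $n$ even and $0$ for $n$ odd. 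For the key step that $k_n^+=k_n^-$ when $n$ is odd: the isotypic blocks are mutually Killing-orthogonal and $B$ restricted to each block is non-degenerate, forcing $B|_{R_n\otimes M_n}$ to be the tensor product of the unique (up to scalar) invariant form on $R_n$ — of parity $(-1)^n$ — with a non-degenerate form $q_n$ on $M_n$ of the same parity; since $\tilde\sigma_n$ scales the $R_n$-form by $(-1)^n$, $\sigma$-invariance of $B$ indeed forces $q_n(A_nm,A_nm')=(-1)^n q_n(m,m')$, and for $n$ odd the $\pm1$-eigenspaces of the diagonalizable involution $A_n$ are $q_n$-isotropic subspaces of a symplectic space whose dimensions sum to $\dim M_n$, hence Lagrangian. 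This is a clean representation-theoretic approach; it may differ in organization from the original Kostant–Rallis argument, but it requires no imports beyond Clebsch–Gordan and basic symplectic linear algebra and is a worthwhile alternative to leaving the statement as a black-box citation.
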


\section{Classification of magical $\fsl_2$-triples}\label{sec: mag class}
In this section we classify (conjugacy classes of) magical $\fsl_2$-triples in complex simple Lie algebras $\fg$. For classical Lie algebras, we use a classification of nilpotents using signed Young diagrams. For exceptional Lie algebras, we use results of Dokovi\'c in \cite{ExceptionalNilpotentsInner,ExceptionalNilpotentsOuter}. 

\subsection{The classification theorem}\label{subsec: mag class}
There is a complete invariant of conjugacy classes of nilpotent elements of $\fg$ (and hence of $\fsl_2$-triples) called the weighted Dynkin diagram. We briefly recall how this works and refer the reader to \cite[\S3.5]{CollMcGovNilpotents} for more details. Recall that the Dynkin diagram of $\fg$ is a diagram associated to a Cartan subalgebra $\fa\subset\fg$ and a choice of simple roots $\Pi=\{\alpha_1,\ldots,\alpha_{\rk\fg}\}\subset \fa^*$. Its nodes are labeled by the simple roots $\alpha_i$.

Consider an $\fsl_2$-triple $\{f,h,e\}\subset\fg$. Since $h$ is semisimple, there exists a Cartan subalgebra $\fa\subset \fg$ containing $h$. Furthermore, we may choose a set of simple roots $\Pi=\{\alpha_1,\ldots,\alpha_{\rk\fg}\}\subset\fa^*$ so that $\alpha_i(h)\geq0$ for all $i$. In fact, the properties of $\fsl_2$-representation theory imply that $\alpha_i(h)\in\{0,1,2\}$. The \emph{weighted Dynkin diagram} associated to the $\fsl_2$-triple $\{f,h,e\}\subset\fg$ is defined to be the Dynkin diagram of $(\fg,\fa,\Pi)$, where the node associated to the simple root $\alpha_i$ is labeled by the integer $\alpha_i(h)$. 
Note that an $\fsl_2$-triple is even (see Remark \ref{rmk: even nilpotents}) if and only if every node is labeled with either a $0$ or a $2$.
It turns out that if two $\fsl_2$-triples in $\fg$ have the same weighted Dynkin diagram, then they are conjugate. 
However, not every Dynkin diagram whose nodes have labels in $\{0,1,2\}$ is the weighted Dynkin diagram of an $\fsl_2$-triple.

Here is one of the cornerstones of this paper: the classification of magical $\fsl_2$-triples. 
\begin{theorem}\label{thm: classification weighted dynkin}
    Let $\fg$ be a simple complex Lie algebra. Then an $\fsl_2$-triple  $\{f,h,e\}\subset\fg$ is magical if and only if the associated weighted Dynkin diagram is one of the following:
\begin{enumerate}
        \item $\fg$ is any type and every node is labeled with a  $2$;
        \item $\fg$ has type $\rA_{2n-1},~ \rB_n,~\rC_n,~\rD_n,~\rD_{2n}$, or $\rE_7$ with weighted Dynkin diagrams
\begin{center}
   \hspace{-1cm} \begin{tabular}
        {ccc}
        \begin{tikzpicture}[scale=.4]
    \draw (-1,0) node[anchor=east]  {$\rA_{2n-1}:$};
    \foreach \x in {0,...,6}
    \draw[xshift=\x cm,thick] (\x cm,0) circle (.25cm);
    \node at (0,0) [below = 1mm ] {${\scriptstyle 0}$};
    \node at (2,0) [below = 1mm ] {${\scriptstyle 0}$};
    \node at (4,0) [below = 1mm ] {${\scriptstyle 0}$};
    \node at (6,0) [below = 1mm ] {${\scriptstyle 2}$};
    \node at (6,0) [above = 1mm ] {${\scriptstyle \alpha_n}$};
    \node at (8,0) [below = 1mm ] {${\scriptstyle 0}$};
    \node at (10,0) [below = 1mm ] {${\scriptstyle 0}$};
      \node at (12,0) [below = 1mm ] {${\scriptstyle 0}$};
    \draw[thick] (0.25 cm,0) -- +(1.5 cm,0);
    \draw[dotted,thick] (2.3 cm,0) -- +(1.4 cm,0);
    \draw[thick] (4.25 cm,0) -- +(1.5 cm,0);
    \draw[thick] (6.25 cm,0) -- +(1.5 cm,0);
    \draw[dotted,thick] (8.3 cm, 0) -- +(1.4 cm, 0);
    \draw[thick] (10.25 cm,0) -- +(1.5 cm,0);
    \end{tikzpicture}&~&
    \begin{tikzpicture}[scale=.4]
        \draw (-1,0) node[anchor=east]  {$\rB_{n}:$};
    \foreach \x in {0,...,4}
    \draw[xshift=\x cm,thick] (\x cm,0) circle (.25cm);
    \node at (0,0) [below = 1mm ] {${\scriptstyle 2}$};
    \node at (2,0) [below = 1mm ] {${\scriptstyle 0}$};
    \node at (4,0) [below = 1mm ] {${\scriptstyle 0}$};
    \node at (6,0) [below = 1mm ] {${\scriptstyle 0}$};
    \node at (8,0) [below = 1mm ] {${\scriptstyle 0}$};
   \draw[thick] (0.25 cm,0) -- +(1.5 cm,0);
    \draw[dotted,thick] (2.3 cm,0) -- +(1.4 cm,0);
    \draw[thick] (4.25 cm,0) -- +(1.5 cm,0);
    \draw[thick] (6.25 cm, -.1 cm) -- +(1.5 cm,0);
    \draw[thick] (6.9 cm, -.3cm) -- +(.3cm, .3cm);
    \draw[thick] (6.9 cm, .3cm) -- +(.3cm, -.3cm);
    \draw[thick] (6.25 cm, .1 cm) -- +(1.5 cm,0);
    \end{tikzpicture}
    \\
    \begin{tikzpicture}[scale=.4]
    \draw (-1,0) node[anchor=east]  {$\rC_{n}:$};
    \foreach \x in {0,...,4}
    \draw[xshift=\x cm,thick] (\x cm,0) circle (.25cm);
    \node at (0,0) [below = 1mm ] {${\scriptstyle 0}$};
    \node at (2,0) [below = 1mm ] {${\scriptstyle 0}$};
    \node at (4,0) [below = 1mm ] {${\scriptstyle 0}$};
    \node at (6,0) [below = 1mm ] {${\scriptstyle 0}$};
    \node at (8,0) [below = 1mm ] {${\scriptstyle 2}$};
   \draw[thick] (0.25 cm,0) -- +(1.5 cm,0);
    \draw[dotted,thick] (2.3 cm,0) -- +(1.4 cm,0);
    \draw[thick] (4.25 cm,0) -- +(1.5 cm,0);
    \draw[thick] (6.25 cm, .1 cm) -- +(1.5 cm,0);
    \draw[thick] (6.8 cm, 0) -- +(.3cm, .3cm);
    \draw[thick] (6.8 cm, 0) -- +(.3cm, -.3cm);
    \draw[thick] (6.25 cm, -.1 cm) -- +(1.5 cm,0);
    \end{tikzpicture}
    &~&
    \begin{tikzpicture}[scale=.4]
        \draw (-1,0) node[anchor=east]  {$\rD_n:$};
    \foreach \x in {0,...,4}
    \draw[xshift=\x cm,thick] (\x cm,0) circle (.25cm);
    \draw[thick] (10 cm, 1 cm) circle (.25cm);
    \draw[thick] (10 cm, -1 cm) circle (.25cm);
    \draw[thick] (0.25 cm,0) -- +(1.5 cm,0);
    \draw[dotted,thick] (2.3 cm,0) -- +(1.4 cm,0);
    \draw[thick] (4.25 cm,0) -- +(1.5 cm,0);
    \draw[thick] (6.25 cm, 0 cm) -- +(1.5 cm,0);
    \draw[thick] (8.25 cm, -.1 cm) -- +(1.5 cm, -.9 cm);
    \draw[thick] (8.25 cm, .1 cm) -- +(1.5 cm, .9 cm);
    \node at (0,0) [below = 1 mm ] {${\scriptstyle 2}$};
    \node at (2,0) [below = 1 mm ] {${\scriptstyle 0}$};
    \node at (4,0) [below = 1 mm ] {${\scriptstyle 0}$};
    \node at (6,0) [below = 1 mm ] {${\scriptstyle 0}$};
    \node at (8,0) [below = 1 mm ] {${\scriptstyle 0}$};
    \node at (10,1) [right = 1 mm ] {${\scriptstyle 0}$};
    \node at (10,-1) [right = 1 mm ] {${\scriptstyle 0}$};
    \end{tikzpicture}
    \\
    \begin{tikzpicture}[scale=.4]
    \draw (-1,0) node[anchor=east]  {$\rD_{2(2n+1)}:$};
    \foreach \x in {0,...,4}
    \draw[xshift=\x cm,thick] (\x cm,0) circle (.25cm);
    \draw[thick] (10 cm, 1 cm) circle (.25cm);
    \draw[thick] (10 cm, -1 cm) circle (.25cm);
    \draw[thick] (0.25 cm,0) -- +(1.5 cm,0);
    \draw[dotted,thick] (2.3 cm,0) -- +(1.4 cm,0);
    \draw[thick] (4.25 cm,0) -- +(1.5 cm,0);
    \draw[thick] (6.25 cm, 0 cm) -- +(1.5 cm,0);
    \draw[thick] (8.25 cm, -.1 cm) -- +(1.5 cm, -.9 cm);
    \draw[thick] (8.25 cm, .1 cm) -- +(1.5 cm, .9 cm);
    \node at (0,0) [below = 1 mm ] {${\scriptstyle 0}$};
    \node at (2,0) [below = 1 mm ] {${\scriptstyle 0}$};
    \node at (4,0) [below = 1 mm ] {${\scriptstyle 0}$};
    \node at (6,0) [below = 1 mm ] {${\scriptstyle 0}$};
    \node at (8,0) [below = 1 mm ] {${\scriptstyle 0}$};
    \node at (10,-1) [right = 1 mm ] {${\scriptstyle 2}$};
    \node at (10,1) [right = 1 mm ] {${\scriptstyle 0}$};
    \end{tikzpicture}
&&
\begin{tikzpicture}[scale=.4]
    \draw (-1,0) node[anchor=east]  {$\rD_{2(2n)}:$};
    \foreach \x in {0,...,4}
    \draw[xshift=\x cm,thick] (\x cm,0) circle (.25cm);
    \draw[thick] (10 cm, 1 cm) circle (.25cm);
    \draw[thick] (10 cm, -1 cm) circle (.25cm);
    \draw[thick] (0.25 cm,0) -- +(1.5 cm,0);
    \draw[dotted,thick] (2.3 cm,0) -- +(1.4 cm,0);
    \draw[thick] (4.25 cm,0) -- +(1.5 cm,0);
    \draw[thick] (6.25 cm, 0 cm) -- +(1.5 cm,0);
    \draw[thick] (8.25 cm, -.1 cm) -- +(1.5 cm, -.9 cm);
    \draw[thick] (8.25 cm, .1 cm) -- +(1.5 cm, .9 cm);
    \node at (0,0) [below = 1 mm ] {${\scriptstyle 0}$};
    \node at (2,0) [below = 1 mm ] {${\scriptstyle 0}$};
    \node at (4,0) [below = 1 mm ] {${\scriptstyle 0}$};
    \node at (6,0) [below = 1 mm ] {${\scriptstyle 0}$};
    \node at (8,0) [below = 1 mm ] {${\scriptstyle 0}$};
    \node at (10,-1) [right = 1 mm ] {${\scriptstyle 0}$};
    \node at (10,1) [right = 1 mm ] {${\scriptstyle 2}$};
    \end{tikzpicture}
    \end{tabular}
    \begin{tikzpicture}[scale=.4]
        \draw (-1,0) node[anchor=east]  {$\rE_7:$};
    \foreach \x in {0,...,5}
    \draw[thick,xshift=\x cm] (\x cm,0) circle (.25cm);
    \foreach \y in {0,...,4}
    \draw[thick,xshift=\y cm] (\y cm,0) ++(.25 cm, 0) -- +(15 mm,0);
    \draw[thick] (4 cm, 1.5 cm) circle (.25cm);
    \draw[thick] (4 cm, .25 cm) -- +(0, 1 cm);
    \node at (0,0) [below = 1 mm ] {${\scriptstyle 0}$};
    \node at (2,0) [below = 1 mm ] {${\scriptstyle 0}$};
    \node at (4,0) [below = 1 mm ] {${\scriptstyle 0}$};
    \node at (6,0) [below = 1 mm ] {${\scriptstyle 0}$};
    \node at (8,0) [below = 1 mm ] {${\scriptstyle 0}$};
    \node at (10,0) [below = 1 mm ] {${\scriptstyle 2}$};
    \node at (4,1.5) [right = 1 mm ] {${\scriptstyle 0}$};
    \end{tikzpicture}
\end{center}
        \item  $\fg$ has type $\rB_n$ or $\rD_n$ with weighted Dynkin diagrams
        \begin{center}
          \hspace{-1cm}  \begin{tabular}
                {ccc}
                \begin{tikzpicture}[scale=.4]
        \draw (-1,0) node[anchor=east]  {$\rB_{n}:$};
    \foreach \x in {0,...,6}
    \draw[xshift=\x cm,thick] (\x cm,0) circle (.25cm);
    \node at (0,0) [below = 1mm ] {${\scriptstyle 2}$};
    \node at (2,0) [below = 1mm ] {${\scriptstyle 2}$};
    \node at (4,0) [below = 1mm ] {${\scriptstyle 2}$};
    \node at (4,0) [above = 1mm ] {${\scriptstyle \alpha_{p-1}}$};
    \node at (6,0) [below = 1mm ] {${\scriptstyle 0}$};
    \node at (8,0) [below = 1mm ] {${\scriptstyle 0}$};
    \node at (10,0) [below = 1mm ] {${\scriptstyle 0}$};
    \node at (12,0) [below = 1mm ] {${\scriptstyle 0}$};
   \draw[thick] (0.25 cm,0) -- +(1.5 cm,0);
    \draw[dotted,thick] (2.3 cm,0) -- +(1.4 cm,0);
    \draw[thick] (8.25 cm,0) -- +(1.5 cm,0);
    \draw[dotted,thick] (6.3 cm,0) -- +(1.4 cm,0);
    \draw[thick] (4.25 cm,0) -- +(1.5 cm,0);
    \draw[thick] (10.25 cm, -.1 cm) -- +(1.5 cm,0);
    \draw[thick] (10.9 cm, -.3cm) -- +(.3cm, .3cm);
    \draw[thick] (10.9 cm, .3cm) -- +(.3cm, -.3cm);
    \draw[thick] (10.25 cm, .1 cm) -- +(1.5 cm,0);
    \end{tikzpicture}&~&\begin{tikzpicture}[scale=.4]
        \draw (-1,0) node[anchor=east]  {$\rD_n:$};
    \foreach \x in {0,...,5}
    \draw[xshift=\x cm,thick] (\x cm,0) circle (.25cm);
    \draw[thick] (12 cm, 1 cm) circle (.25cm);
    \draw[thick] (12 cm, -1 cm) circle (.25cm);
    \draw[thick] (0.25 cm,0) -- +(1.5 cm,0);
    \draw[dotted,thick] (2.3 cm,0) -- +(1.4 cm,0);
    \draw[thick] (4.25 cm,0) -- +(1.5 cm,0);
    \draw[dotted,thick] (6.3 cm, 0 cm) -- +(1.4 cm,0);
    \draw[thick] (8.25 cm, 0 cm) -- +(1.5 cm,0);
    \draw[thick] (10.25 cm, -.1 cm) -- +(1.5 cm, -.9 cm);
    \draw[thick] (10.25 cm, .1 cm) -- +(1.5 cm, .9 cm);
    \node at (0,0) [below = 1 mm ] {${\scriptstyle 2}$};
    \node at (2,0) [below = 1 mm ] {${\scriptstyle 2}$};
    \node at (4,0) [below = 1 mm ] {${\scriptstyle 2}$};
    \node at (4,0) [above = 1 mm ] {${\scriptstyle \alpha_{p-1}}$};
    \node at (6,0) [below = 1 mm ] {${\scriptstyle 0}$};
    \node at (8,0) [below = 1 mm ] {${\scriptstyle 0}$};
    \node at (10,0) [below = 1 mm ] {${\scriptstyle 0}$};
    \node at (12,1) [right = 1 mm ] {${\scriptstyle 0}$};
    \node at (12,-1) [right = 1 mm ] {${\scriptstyle 0}$};
    \end{tikzpicture}
            \end{tabular}
        \end{center}
where $1<p< n-1$ for $\rB_n$ and $1<p< n-2$ for $\rD_n;$
        \item $\fg$ has type $\rE_6,~\rE_7,~\rE_8$ or $\rF_4$ with weighted Dynkin diagrams 
        \begin{center}\hspace{-1cm}
            \begin{tabular}
                {cccc}
                \begin{tikzpicture}[scale=.4]
        \draw (-1,0) node[anchor=east]  {$\rE_6:$};
    \foreach \x in {0,...,4}
    \draw[thick,xshift=\x cm] (\x cm,0) circle (.25cm);
    \foreach \y in {0,...,3}
    \draw[thick,xshift=\y cm] (\y cm,0) ++(.25 cm, 0) -- +(15 mm,0);
    \draw[thick] (4 cm, 1.5 cm) circle (.25cm);
    \draw[thick] (4 cm, .25 cm) -- +(0, 1 cm);
    \node at (0,0) [below = 1 mm ] {${\scriptstyle 0}$};
    \node at (2,0) [below = 1 mm ] {${\scriptstyle 0}$};
    \node at (4,0) [below = 1 mm ] {${\scriptstyle 2}$};
    \node at (6,0) [below = 1 mm ] {${\scriptstyle 0}$};
    \node at (8,0) [below = 1 mm ] {${\scriptstyle 0}$};
    \node at (4,1.5) [right = 1 mm ] {${\scriptstyle 2}$};
      \end{tikzpicture}
    &~&~&
\begin{tikzpicture}[scale=.4]
        \draw (-1,0) node[anchor=east]  {$\rE_7:$};
    \foreach \x in {0,...,5}
    \draw[thick,xshift=\x cm] (\x cm,0) circle (.25cm);
    \foreach \y in {0,...,4}
    \draw[thick,xshift=\y cm] (\y cm,0) ++(.25 cm, 0) -- +(15 mm,0);
    \draw[thick] (4 cm, 1.5 cm) circle (.25cm);
    \draw[thick] (4 cm, .25 cm) -- +(0, 1 cm);
    \node at (0,0) [below = 1 mm ] {${\scriptstyle 2}$};
    \node at (2,0) [below = 1 mm ] {${\scriptstyle 2}$};
    \node at (4,0) [below = 1 mm ] {${\scriptstyle 0}$};
    \node at (6,0) [below = 1 mm ] {${\scriptstyle 0}$};
    \node at (8,0) [below = 1 mm ] {${\scriptstyle 0}$};
    \node at (10,0) [below = 1 mm ] {${\scriptstyle 0}$};
    \node at (4,1.5) [right = 1 mm ] {${\scriptstyle 0}$};
    \end{tikzpicture}
    \\\begin{tikzpicture}[scale=.4]
        \draw (-1,0) node[anchor=east]  {$\rE_8:$};
    \foreach \x in {0,...,6}
    \draw[thick,xshift=\x cm] (\x cm,0) circle (.25cm);
    \foreach \y in {0,...,5}
    \draw[thick,xshift=\y cm] (\y cm,0) ++(.25 cm, 0) -- +(15 mm,0);
    \draw[thick] (4 cm, 1.5 cm) circle (.25cm);
    \draw[thick] (4 cm, .25 cm) -- +(0, 1 cm);
    \node at (0,0) [below = 1 mm ] {${\scriptstyle 0}$};
    \node at (2,0) [below = 1 mm ] {${\scriptstyle 0}$};
    \node at (4,0) [below = 1 mm ] {${\scriptstyle 0}$};
    \node at (6,0) [below = 1 mm ] {${\scriptstyle 0}$};
    \node at (8,0) [below = 1 mm ] {${\scriptstyle 0}$};
    \node at (10,0) [below = 1 mm ] {${\scriptstyle 2}$};
    \node at (12,0) [below = 1 mm ] {${\scriptstyle 2}$};
    \node at (4,1.5) [right = 1 mm ] {${\scriptstyle 0}$};
    \end{tikzpicture}
&~&~&
    \begin{tikzpicture}[scale=.4]
    \draw (-1,0) node[anchor=east]  {$\rF_4:$};
  \foreach \x in {0,...,3}
    \draw[xshift=\x cm,thick] (\x cm,0) circle (.25cm);
    \draw[thick] (0.25 cm,0) -- +(1.5 cm,0);
    \draw[thick] (2.25 cm, .1 cm) -- +(1.5 cm,0);
    \draw[thick] (2.8 cm, 0) -- +(.3cm, .3cm);
    \draw[thick] (2.8 cm, 0) -- +(.3cm, -.3cm);
    \draw[thick] (2.25 cm, -.1 cm) -- +(1.5 cm,0);
    \draw[thick] (4.25 cm, 0) -- +(1.5cm,0);
    \node at (0,0) [below = 1 mm ] {${\scriptstyle 0}$};
    \node at (2,0) [below = 1 mm ] {${\scriptstyle 0}$};
    \node at (4,0) [below = 1 mm ] {${\scriptstyle 2}$};
    \node at (6,0) [below = 1 mm ] {${\scriptstyle 2}$};
  \end{tikzpicture}
            \end{tabular}
        \end{center}
    \end{enumerate}
\end{theorem}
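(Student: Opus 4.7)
My plan is to reduce the classification to a finite case-by-case verification using Proposition~\ref{prop: magical real nilpotent properties}. By the Sekiguchi correspondence combined with the Cayley transform, every magical $\fsl_2\C$-triple in $\fg$ is conjugate to a normal triple arising from a magical Cayley triple $\{\hat f, \hat h, \hat e\}\subset \fg^\R$, where $\fg^\R$ is the canonical real form. Thus it suffices to examine, for each real form $\fg^\R$ of $\fg$ and each $(\rG^\R)^0$-orbit of nilpotents in $\fg^\R$, whether the two conditions of Proposition~\ref{prop: magical real nilpotent properties} hold: the centralizer $\fc^\R$ is contained in the maximal compact $\fh^\R$, and the numerical identity, which by Proposition~\ref{prop dim of centralizer formula} reads $\dim V(e) + \dim\fh = \dim\fm + 2\dim\fc$, is satisfied.

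For the classical complex simple Lie algebras, I would parametrize nilpotents in each real form by signed Young diagrams as in \cite[Chapter 9.3]{CollMcGovNilpotents}. The quantities $\dim \fh$, $\dim\fm$, $\dim V(e)$, and the isomorphism type of $\fc^\R$ are all determined by the signed Young diagram via standard combinatorial formulas. The compactness condition on $\fc^\R$ drastically restricts the possible sign patterns: the rows of the diagram must pair up in a highly symmetric way. Imposing the numerical condition on top of this narrows the surviving diagrams to a short list, which can then be matched against the partitions for: principal nilpotents in the split form (case~(1)); the tube-type Hermitian magical nilpotents (cases~(2)(a)--(e)); and the $\fso_{p,q}$-series (case~(3)). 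Their weighted Dynkin diagrams are then computed by the standard algorithm and shown to agree with those listed.

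For the exceptional types $\rE_6, \rE_7, \rE_8, \rF_4, \rG_2$, I would rely on Dokovi\'c's explicit classification of real nilpotent orbits \cite{ExceptionalNilpotentsInner,ExceptionalNilpotentsOuter}. For every nilpotent orbit in every real form this supplies the Kac coordinates, the isomorphism type of the centralizer, and the dimension of the centralizer in $\fg$. Running through each real form of each exceptional Lie algebra, the compactness condition immediately eliminates most orbits, and for the remaining candidates the numerical condition is checked orbit by orbit. The magical orbits that emerge are precisely: the principal nilpotents in the split forms (case~(1)); the sub-regular even magical orbit in the Hermitian tube-type real form of $\rE_7$ (case~(2)(f)); and, in each quaternionic real form of $\rE_6, \rE_7, \rE_8, \rF_4$, the unique orbit whose weighted Dynkin diagram has $2$'s at the two nodes specified in case~(4). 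For $\rG_2$ one checks that only the principal nilpotent is magical.

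The main obstacle is the exceptional-case check: even after imposing compactness of $\fc^\R$, several candidate orbits survive across the four exceptional Lie algebras in their various real forms, and the numerical condition must then be verified essentially by hand using Dokovi\'c's tables. On the classical side, the principal subtlety is the parity book-keeping for signed Young diagrams, in particular for $\fso_{p,q}$, where the cases $p+q$ odd and $p+q$ even give slightly different combinatorics and must be handled separately.
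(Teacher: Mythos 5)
Your approach coincides with the paper's: both reduce via the Sekiguchi correspondence and the Cayley transform to real Cayley triples, both apply Proposition~\ref{prop: magical real nilpotent properties} together with the Kostant--Rallis dimension formula of Proposition~\ref{prop dim of centralizer formula} to get exactly the numerical criterion you write, both classify classical cases via signed Young diagrams (this is the paper's Theorem~\ref{thm: partition classification}, then translated to weighted Dynkin diagrams by the standard algorithm), and both handle exceptional types by reading $\dim(\fh\cap V(e))$, $\dim(\fh\cap\fc)$, and the type of $\fc^\R$ out of Dokovi\'c's tables. The proposal matches the paper's proof in both strategy and organization.
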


The following is an immediate corollary. 
\begin{corollary}\label{cor mag implies even and injmap h->m}
  Every magical $\fsl_2$-triple $\{f,h,e\}\subset\fg$ is even. In particular, if $\fg=\fh\oplus\fm$ is the $\pm1$-eigenspace of the Lie algebra involution \eqref{eq magical involution}, then  $\fc=\ker(\fh\xrightarrow{\ad_f}\fm)$ and moreover $\ad_f(\fm)\xrightarrow{\ad_f}\ad_f^2(\fm)$ is an isomorphism. 
\end{corollary}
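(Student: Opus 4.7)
The plan is to derive all three claims from the explicit $\fsl_2\C$-module decomposition of $\fg$ once evenness is in hand. First, I would read off evenness directly from Theorem \ref{thm: classification weighted dynkin}: every weighted Dynkin diagram listed in cases (1)--(3) has all of its labels in $\{0,2\}$, so by Remark \ref{rmk: even nilpotents} the associated $\fsl_2$-triple is even. Consequently, in the decomposition \eqref{eq sl2 module decomp} only even highest weights occur, and I can write $\fg=\bigoplus_{j\geq 0} W_{2m_j}$ with each irreducible summand $W_{2m_j}=\bigoplus_{k=0}^{2m_j}\ad_f^k(V_{2m_j})$.

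Next, I would combine \eqref{eq magical involution} with this decomposition to identify $\fh$ and $\fm$ explicitly. Since $\sigma_e$ is $+\Id$ on $\fc=V_0$ and acts as $(-1)^{k+1}$ on $\ad_f^k(V_{2m_j})$ when $m_j>0$,
\[
\fh=\fc\,\oplus\!\!\bigoplus_{m_j>0,\,k\text{ odd}}\!\!\ad_f^k(V_{2m_j}), \qquad \fm=\!\!\bigoplus_{m_j>0,\,k\text{ even}}\!\!\ad_f^k(V_{2m_j}).
\]
Restricted to $W_{2m_j}$, $\ad_f$ is the standard lowering operator of the irreducible $\fsl_2\C$-representation, hence injective on $\ad_f^k(V_{2m_j})$ when $k<2m_j$ and zero when $k=2m_j$. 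Because $2m_j$ is even, every odd $k$ contributing to $\fh$ satisfies $k\leq 2m_j-1<2m_j$, so $\ad_f$ maps each such summand isomorphically into $\ad_f^{k+1}(V_{2m_j})\subset\fm$; together with $\ad_f|_\fc=0$, this shows $\ker(\ad_f\colon\fh\to\fm)=\fc$.

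For the last assertion, the same bookkeeping gives $\ad_f(\fm)=\bigoplus_{m_j>0,\,k\text{ odd},\,1\leq k\leq 2m_j-1}\ad_f^k(V_{2m_j})$ -- the top piece $\ad_f^{2m_j}(V_{2m_j})\subset\fm$ is killed, and the other even-$k$ pieces are shifted to odd $(k+1)$-pieces. On each surviving summand $k<2m_j$, so $\ad_f$ is still injective, and its image fills out the even-$k$ pieces $\ad_f^{k+1}(V_{2m_j})$ with $2\leq k+1\leq 2m_j$, which is exactly $\ad_f^2(\fm)$. The only substantive input here is evenness, supplied by the classification theorem; once that is available the rest is a purely formal consequence of the $\fsl_2\C$-module structure of $\fg$ and the sign pattern of $\sigma_e$, so I do not expect a genuine technical obstacle.
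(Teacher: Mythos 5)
Your proof is correct and is exactly the argument the paper intends (the corollary is stated as ``an immediate corollary'' of Theorem~\ref{thm: classification weighted dynkin}): evenness is read off the weighted Dynkin diagrams, and the kernel and isomorphism claims then follow from the $\fsl_2\C$-module bookkeeping and the sign pattern of $\sigma_e$ exactly as you carry out. One small slip: you should cite all four cases (1)--(4) of the classification, not just (1)--(3); case (4) also has every label in $\{0,2\}$, so the conclusion is unaffected.
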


\subsection{The proof}

We now prove Theorem \ref{thm: classification weighted dynkin}. Let $\fg^\R\subset\fg$ be a real form of a complex simple Lie algebra and $\fg=\fh\oplus\fm$ be a complexified Cartan decomposition. Let $\{f,h,e\}\subset\fg$ be a normal $\fsl_2$-triple and $\fc\subset\fg$ its $\fg$-centralizer. We will classify (conjugacy classes of) magical $\fsl_2$-triples of $\fg$ among the normal ones. This will be done via the corresponding real notions of Definition \ref{def cayley mag} by the Sekiguchi correspondence and using Propositions \ref{prop: magical real nilpotent properties} and \ref{prop dim of centralizer formula}.  We will actually prove the theorem by classifying (conjugacy classes of) magical nilpotents in $\fg$ (see \eqref{eq:nilp<->sl2triple}).

We start with the exceptional case. In \cite{ExceptionalNilpotentsInner} and \cite{ExceptionalNilpotentsOuter}, Dokovi\'c computes the dimensions $\dim(\fh\cap\fc)$ and $\dim(\fh\cap V(e))$ for all real forms $\fg^\R$ of simple exceptional Lie algebras. By Proposition \ref{prop: magical real nilpotent properties}, the normal triple $\{f,h,e\}$ is magical if and only if these dimensions are both equal to the dimension of $\fc^\R\subset\fh^\R$.

\begin{proof}[Proof of Theorem \ref{thm: classification weighted dynkin} for exceptional Lie algebras]
For $\fg^\R\subset\fg$ a real form of inner type, the nilpotent orbits (thus the conjugacy classes of nilpotents) are listed in tables VI-XV of \cite{ExceptionalNilpotentsInner}. The first column of each table lists the associated weighted Dynkin diagram of $\fg$, the fourth column lists the dimension of $\fh\cap V(e)$, the fifth column lists the dimension of $\fh\cap\fc$, and the last column lists the isomorphism class of $\fc^\R$.  
For the two outer real forms of $\fe_6$, the weighted Dynkin diagram is column 1 of Tables VI and VII of \cite{ExceptionalNilpotentsOuter}, while the dimensions of $\fh\cap V(e)$ and $\fh\cap \fc$ are columns 9 and 10 of Table VI and columns 12 and 13 of Table VII. 

Table \ref{Table of exceptional inner real forms} of Section \ref{sec diagrams and tables} summarizes this information for inner real forms of $\fg$; note that the real forms $\ff_{4}^{-20}$ and $\fe_6^{-14}$ do not admit magical nilpotents.
For the two outer real forms of $\fe_6$, there is only one magical nilpotent. Namely, the real form $\fe_6^{-26}$ has no magical nilpotents and there is one magical nilpotent in the split real form $\fe_6^6$ (Table VII row 20 of \cite{ExceptionalNilpotentsOuter}). In this case, the weighted Dynkin diagram is that of Case (1) of Theorem \ref{thm: classification weighted dynkin} and $\fc^\R=0$.
  \end{proof}  

We now move to the case of real forms of classical Lie algebras. 
Conjugacy classes of nilpotent endomorphisms of $\C^n$ are in bijective correspondence with \emph{partitions} of $n$. Namely, if $n=\sum_{i=1}^nr_i\cdot i$ is a partition of $n$, with $r_i\geq 0$ the multiplicity of $i$, then the nilpotent endomorphism associated to this partition is 
\begin{equation}
    \label{eq jordan block of nilp}e=\smtrx{
    J_1^{\oplus r_1}\\&\ddots\\&&J_n^{\oplus r_n}},
\end{equation}
where $J_i$ is the standard $i\times i$ Jordan block. Note that $n=n\cdot 1=1+1+\cdots+1$ corresponds to the zero nilpotent whereas $n=1\cdot n=n$ corresponds to the principal nilpotent.

The following proposition classifies conjugacy classes of nilpotents in $\fsl_n\C$, $\fso_n\C$ and $\fsp_{2m}\C$. For a proof, see \cite[Chapter 5.1]{CollMcGovNilpotents}.

\begin{proposition}\label{prop: class nilp}
Let $\rG$ be a connected complex simple Lie group with Lie algebra $\fg$.
\begin{itemize}
    \item For $\fg=\fsl_n\C$, $\rG$-conjugacy classes of nilpotents are in bijective correspondence with partitions of $n$.
    \item For $\fg=\fso_{2n+1}\C$, $\rG$-conjugacy classes of nilpotents are in bijective correspondence with partitions of $2n+1=\sum_{i=1}^{2n+1}r_i\cdot i$, where $r_i$ is even whenever $i$ is even.
    \item  For $\fg=\fsp_{2n}\C$, $\rG$-conjugacy classes of nilpotents are in bijective correspondence with partitions of $2n=\sum_{i=1}^{2n}r_i\cdot i$, where $r_i$ is even whenever $i$ is odd.
    \item For $\fg=\fso_{2n}\C$, $\rG$-conjugacy classes of nilpotents are in bijective correspondence with partitions of $2n=\sum_{i=1}^{2n}r_i\cdot i$, where $r_i$ is even whenever $i$ is even, except that there are two classes associated to partitions which have $r_i=0$ for all $i$ odd.
\end{itemize}
\end{proposition}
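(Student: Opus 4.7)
The plan is to reduce each case to Jordan normal form together with representation theory of $\fsl_2\C$ and the classification of invariant bilinear forms on its irreducibles. For $\fsl_n\C$ the statement is just the Jordan normal form theorem: since conjugation in $\rSL_n\C$ agrees with $\rGL_n\C$-conjugation on the nilpotent cone (any conjugating element can be rescaled), the orbits are indexed by the multiset of Jordan block sizes, i.e.\ by partitions of $n$.

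For the orthogonal and symplectic cases, the plan is the following. Let $\fg\subset\fgl(V)$ be the subalgebra preserving a non-degenerate bilinear form $B$ on $V$ which is symmetric (orthogonal case) or skew-symmetric (symplectic case). Given a nilpotent $e\in\fg$, I would invoke Jacobson--Morozov to complete it to an $\fsl_2$-triple $\{f,h,e\}\subset\fg$, so that $V$ decomposes as an $\fsl_2\C$-module
\[
V=\bigoplus_{j\geq 0} V(j)^{\oplus r_{j+1}},
\]
where $V(j)$ is the irreducible of highest weight $j$ and dimension $j+1$; by the $\fsl_n\C$ case the multiset $\{(j+1,r_{j+1})\}$ already recovers the partition of $\dim V$ attached to the underlying nilpotent orbit in $\fgl(V)$. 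The next step is to analyze how $B$ restricts to these isotypic pieces. A standard fact (which I would prove by exhibiting the pairing via the unique $\fsl_2\C$-invariant element of $(V(j)\otimes V(j))^*$) is that $V(j)$ carries a unique-up-to-scalar $\fsl_2\C$-invariant bilinear form, which is symmetric when $j$ is even and skew when $j$ is odd. Using the fact that $h\in\fg$ is semisimple with integer eigenvalues, the isotypic pieces are mutually $B$-orthogonal, and within each isotypic component $V(j)^{\oplus r_{j+1}}$ the form $B$ becomes the tensor of the canonical $\fsl_2\C$-pairing on $V(j)$ with a non-degenerate pairing on $\C^{r_{j+1}}$ of the opposite symmetry type to that needed on $V$.

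This is where the parity constraints enter. In the orthogonal case $B$ is symmetric, so on $V(j)^{\oplus r_{j+1}}$ for $j$ odd (equivalently $i=j+1$ even) the auxiliary pairing on $\C^{r_{j+1}}$ must be skew, forcing $r_{j+1}$ to be even; odd-dimensional blocks impose no constraint. In the symplectic case $B$ is skew, so now even parts are unconstrained and odd parts are forced to have even multiplicity. This gives the necessity half of each statement. For sufficiency I would build, partition by partition, a model nilpotent with prescribed Jordan type preserving the appropriate $B$ by taking the above decomposition as a definition, and then show that any two nilpotents in $\fg$ with the same partition are $\rG$-conjugate by first conjugating them in $\rGL(V)$ (Jordan form), then correcting the conjugator using the uniqueness up to scalar of the invariant pairing on each $V(j)$ and Witt extension for the auxiliary pairing on $\C^{r_{j+1}}$.

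The one remaining subtlety, which I expect to be the main obstacle, is the $\rSO_{2n}\C$ statement that ``very even'' partitions (those with only even parts, equivalently $r_i=0$ for all odd $i$) correspond to two orbits. The plan here is to note that the above argument gives a single $\rO_{2n}\C$-orbit in every case, so the question is whether this orbit splits under the index-two subgroup $\rSO_{2n}\C\subset\rO_{2n}\C$. Equivalently, one asks whether the stabilizer $\rZ_{\rO_{2n}\C}(e)$ meets the non-identity component of $\rO_{2n}\C$. Using the model built above, the centralizer factors according to the isotypic decomposition, and on each $V(j)^{\oplus r_{j+1}}$ one can check that the centralizer surjects onto the isometry group of the auxiliary pairing on $\C^{r_{j+1}}$. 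When some $r_{j+1}$ with $j$ even is odd, this auxiliary isometry group is $\rO$ of odd dimension, which contains $-\Id$ of determinant $-1$ on $V$, and the orbit does not split; otherwise every auxiliary factor is $\rSp$ of even dimension, hence has determinant $1$, and the orbit splits into two. Assembling these computations yields the two-orbit statement and completes the classification.
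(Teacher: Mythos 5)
The paper itself gives no proof of this proposition---it simply cites Collingwood--McGovern, Chapter 5.1---so there is no argument of the paper's to compare against; your plan (Jordan form, Jacobson--Morozov, isotypic decomposition of the bilinear form, Witt extension, and a centralizer computation for the very-even $\rSO_{2n}\C$ case) is the standard route taken in that reference, and the overall structure is correct.

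There is one concrete gap in the very-even case analysis for $\fso_{2n}\C$. You split into two cases: ``some $r_{j+1}$ with $j$ even is odd,'' where you use $-\Id$ on the auxiliary space $\C^{r_{j+1}}$ (correctly, since $\det(\Id_{V(j)}\otimes(-\Id))=(-1)^{(j+1)r_{j+1}}=(-1)^{r_{j+1}}=-1$ when $j+1$ is odd and $r_{j+1}$ is odd), and ``otherwise,'' which you claim forces all auxiliary factors to be symplectic groups. That claim is false: the negation of ``some $r_{j+1}$ with $j$ even is odd'' allows $r_{j+1}$ with $j$ even to be nonzero and \emph{even}, in which case the auxiliary isometry group on $\C^{r_{j+1}}$ is $\rO_{r_{j+1}}\C$ (the auxiliary pairing is symmetric since $j$ is even), not a symplectic group. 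Your chosen element $-\Id$ then has determinant $(-1)^{r_{j+1}}=+1$ on $V$ and detects nothing. The correct dichotomy is whether $r_i>0$ for some odd $i$, and the element to use is any reflection $g\in\rO_{r_i}\C$ with $\det g=-1$ (which exists whenever $r_i\geq 1$, regardless of parity); then $\det(\Id_{V(i-1)}\otimes g)=\det(g)^{i}=-1$ since $i$ is odd. With that change, the ``otherwise'' case really is $r_i=0$ for all odd $i$, the reductive centralizer is $\prod_{i\ \text{even}}\rSp_{r_i}\C\subset\rSO_{2n}\C$, and the $\rO_{2n}\C$-orbit splits. One further cosmetic point: what controls the splitting is the \emph{reductive part} of $\rZ_{\rO_{2n}\C}(e)$, namely the centralizer of the full $\fsl_2$-triple, since the unipotent radical is connected; you gesture at this by saying the centralizer ``surjects onto'' the product of auxiliary isometry groups, but it would be cleaner to invoke the Levi decomposition explicitly.
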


Note that the above proposition is independent of the choice of $\rG$ under the given conditions, since the any two choices are related by a quotient by central elements.

Given a partition $n=\sum_{i=1}^n r_i\cdot i$, define the \emph{dual partition} by $n=\sum_{j=1}^ns_j$, where $s_j=\sum_{i=j}^nr_i$. The following proposition describes the centralizer of a nilpotent and the centralizing subalgebra of an associated $\fsl_2\C$-subalgebra; see \cite[Chapter 6.1]{CollMcGovNilpotents}. 

\begin{proposition}\label{prop: class nilp centralizers}
    Let $\fg$ be $\fsl_n\C$, $\fso_n\C$ or $\fsp_{2m}\C$. Let $e\in\fg$ be a nilpotent element with corresponding partition $n=\sum_{i=1}^nr_i\cdot i$ and dual partition $n=\sum_{j=1}^ns_j$, with $2m=n$ for $\fsp_{2m}\C$. Finally, let $V(e)=\ker(\ad_e)\subset\fg$ be the centralizer of $e$ and $\fc$ be the centralizer of an associated $\fsl_2\C$-subalgebra. Then $\dim(V(e))$ and $\fc$ are characterized as follows:
\begin{center}
    \begin{tabular}
        {|c|c|c|c|}\hline
        $\fg$&$\fsl_n\C$&$\fso_n\C$&$\fsp_{2m}\C$\\\hline
        $\dim(V(e))$&$\sum_{j=1}^ns_j^2-1$&$\frac{1}{2}(\sum_{j=1}^ns_j^2-\sum_{i-\text{odd}}r_i)$& 
    $\frac{1}{2}(\sum_{j=1}^ns_j^2+\sum_{i-\text{odd}}r_i)$\\\hline
        $\fc$&$\fs(\bigoplus_{i=1}^n \fgl_{r_i}\C)$&$\bigoplus_{i-\text{even}}\fsp_{r_i}\C\oplus \bigoplus_{i-\text{odd}}\fso_{r_i}\C$&$ \bigoplus_{i-\text{odd}}\fsp_{r_i}\C\oplus \bigoplus_{i-\text{even}}\fso_{r_i}\C$\\\hline
    \end{tabular}
\end{center}
\end{proposition}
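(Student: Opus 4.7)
The plan is to analyze each type by decomposing the standard representation as an $\fsl_2\C$-module. Fix an $\fsl_2$-triple $\{f,h,e\}$ attached to the partition $n = \sum_i r_i\cdot i$; then $V=\C^n$ (or $\C^{2m}$) decomposes as $V = \bigoplus_i \C^{r_i}\otimes W_{i-1}$, where $W_{i-1}$ denotes the unique $i$-dimensional irreducible $\fsl_2\C$-representation. First, for $\fgl_n\C$, a direct Jordan-block calculation shows that $\ker(\ad_e)\cap \Hom(W_{j-1},W_{i-1})$ has dimension $\min(i,j)$, so
\begin{equation*}
\dim V_{\fgl_n}(e) \;=\; \sum_{i,j} r_i r_j \min(i,j) \;=\; \sum_{k} \Bigl(\sum_{i\geq k} r_i\Bigr)^{\!2} \;=\; \sum_j s_j^2,
\end{equation*}
using $\min(i,j)=\#\{k:k\leq i,\,k\leq j\}$. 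Subtracting one for the trace condition gives the $\fsl_n\C$ formula. For the reductive centralizer $\fc$, Schur's lemma applied to $V = \bigoplus_i\C^{r_i}\otimes W_{i-1}$ gives $\End_{\fsl_2}(V)=\bigoplus_i\fgl_{r_i}\C$, and imposing the trace-zero condition yields $\fc=\fs(\bigoplus_i\fgl_{r_i}\C)$.

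For the orthogonal and symplectic cases the key input is that each $W_{i-1}$ carries a unique (up to scalar) nondegenerate $\fsl_2\C$-invariant bilinear form $B_{i-1}$, and its symmetry type alternates with $i$: symmetric when $i$ is odd, skew-symmetric when $i$ is even. Denoting the defining invariant form of $\fg$ by $\Omega$ (symmetric for $\fso_n\C$, skew for $\fsp_{2m}\C$), its restriction to each isotypic summand $\C^{r_i}\otimes W_{i-1}$ must factor as $Q_i\otimes B_{i-1}$ for a nondegenerate form $Q_i$ on $\C^{r_i}$, whose symmetry type is then forced by the rule ``product of parities equals parity of $\Omega$''. This pins down the pattern of symmetric versus skew $Q_i$ claimed in the proposition and identifies the $\fsl_2$-centralizer in $\fg$ with the direct sum of the classical Lie algebras preserving the $Q_i$; the parity constraints on $r_i$ from Proposition \ref{prop: class nilp} fall out of the requirement that each $Q_i$ be nondegenerate.

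Finally, to pin down $\dim V(e)$ in types B, C, D, I would use that $\Omega$ induces an involution $\theta\colon X\mapsto -X^\dagger$ on $V_{\fgl_n}(e)$ whose fixed-point set is $V_{\fg}(e)$, so
\begin{equation*}
\dim V_{\fg}(e)\;=\;\tfrac12\bigl(\dim V_{\fgl_n}(e)+\tr(\theta)\bigr).
\end{equation*}
Decomposing $V_{\fgl_n}(e)$ into its $\Hom$-blocks indexed by pairs $(i,j)$, the involution $\theta$ swaps the $(i,j)$- and $(j,i)$-blocks when $i\neq j$, so these pairs contribute zero to $\tr(\theta)$. On each diagonal $(i,i)$-block, $\ker\ad_e$ is identified with $\End(\C^{r_i})\otimes \mathrm{span}(I,e,\dots,e^{i-1})$, and a short computation using $B_{i-1}(e^kv,w)=(-1)^k B_{i-1}(v,e^kw)$ shows that $\theta$ acts as $(-1)^{k+1}$ on the $e^k$-factor tensored with the adjoint involution on $\End(\C^{r_i})$. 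Summing the alternating series over $k$ forces the even-$i$ blocks to contribute nothing, while the odd-$i$ blocks contribute $\pm r_i$ according to the symmetry type of $Q_i$; this yields the correction term $\sum_{i\text{ odd}}r_i$ and hence the claimed formula. The main obstacle throughout is exactly this parity/sign bookkeeping in the last step --- one must simultaneously track the symmetry types of $B_{i-1}$ and $Q_i$, the type of $\fg$, and the action of $\theta$ on each Hom-block --- whereas all the remaining ingredients reduce to Schur's lemma and elementary $\fsl_2\C$-representation theory.
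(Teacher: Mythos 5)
The paper offers no proof of this proposition at all---it simply cites \cite[Chapter 6.1]{CollMcGovNilpotents}. So your argument is not ``the paper's proof'' retold, but an actual derivation; the route you take (decompose the standard representation as $\bigoplus_i\C^{r_i}\otimes W_{i-1}$, use Schur for $\fc$, compute $\dim V(e)$ in $\fgl_n$ by Jordan-block counting, then pass to $\fso$/$\fsp$ via the involution $\theta\colon X\mapsto -X^\dagger$) is exactly the standard one, and the individual steps you describe are correct: $\dim\bigl(\ker\ad_e\cap\Hom(W_{j-1},W_{i-1})\bigr)=\min(i,j)$, the identity $\sum r_ir_j\min(i,j)=\sum s_k^2$, the parity of the invariant form $B_{i-1}$, the factorization $\Omega|_{\C^{r_i}\otimes W_{i-1}}=Q_i\otimes B_{i-1}$, the trace formula $\dim V_\fg(e)=\tfrac12\bigl(\dim V_{\fgl_n}(e)+\tr\theta\bigr)$, and the sign $\theta(A\otimes e^k)=(-1)^{k+1}A^{\dagger_{Q_i}}\otimes e^k$.

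However, the last step as you have written it glosses over a sign that your own bookkeeping determines, and this matters because the proposition as printed in the paper is actually wrong for $\fsp_{2m}\C$. The trace of $A\mapsto A^{\dagger_{Q_i}}$ on $\End(\C^{r_i})$ is $+r_i$ when $Q_i$ is symmetric and $-r_i$ when $Q_i$ is skew, and $\sum_{k=0}^{i-1}(-1)^{k+1}=-1$ for $i$ odd. For $\fso_n$ the odd-$i$ blocks have $Q_i$ symmetric, contributing $-r_i$ and giving the stated $\tfrac12\bigl(\sum s_j^2-\sum_{i\text{ odd}}r_i\bigr)$. But for $\fsp_{2m}\C$ the odd-$i$ blocks have $Q_i$ \emph{skew}, contributing $+r_i$, so your method produces
\begin{equation*}
\dim V_{\fsp_{2m}}(e)=\tfrac12\Bigl(\sum_j s_j^2+\sum_{i\text{ odd}}r_i\Bigr),
\end{equation*}
with a $+$ sign. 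This agrees with Collingwood--McGovern (Corollary 6.1.4) and can be sanity-checked with $e=0$ in $\fsp_2$: the partition $[1,1]$ gives $\sum s_j^2=4$ and $\sum_{i\text{ odd}}r_i=2$, and only the $+$ sign recovers $\dim\fsp_2=3$. So the paper's table entry for $\fsp_{2m}\C$ has a sign typo; your ``hence the claimed formula'' should be replaced by a note that the claimed $\fsp$ formula must carry a $+$, and that this is what your computation yields. (In the one place the paper actually invokes this formula for $\fsp_{2m}$, the constraint $r_i=0$ for $i$ odd makes the sign moot, so the error is inert there, but the table statement is still incorrect.)
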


The different noncompact real forms $\fg^\R\subset\fg$ of the Lie algebras $\fsl_n\C,~\fso_n\C,~\fsp_{2m}\C$ are described in Table \ref{Table of classical real forms} of Section  \ref{sec diagrams and tables}.
We follow \cite[Chapter 9.3]{CollMcGovNilpotents} for the classification of nilpotents in these real forms. In $\fsl_n\R$ and $\fsu^*_{2m}$, such classification can be phrased in terms of partitions. For the remaining real forms in the mentioned table, it can be phrased in terms of signed Young diagrams. Recall that partitions of $n$ are described by Young diagrams. We will use the convention that the Young diagram associated to a partition $n=\sum_{i=1}^nr_i\cdot i$ has $r_i$ rows of length $i$. A \emph{signed Young diagram} is a Young diagram in which each box is decorated with a $+$ or $-$ sign and these signs alternate along each row. The \emph{signature} of a signed Young diagram is $(p,q)$ if there are $p$ plus signs and $q$ minus signs. Given a signed Young diagram, for each sub-diagram of rows of length $i$, let $p_i$ denote the number of rows with leftmost box labeled $+$ and $q_i$ denote the number of rows with leftmost box labeled $-$. The following proposition collects a set of propositions proved in Section 9.3 of \cite{CollMcGovNilpotents}. 

\begin{proposition}\label{prop: nilp in gR}The classification of conjugacy classes of nilpotent elements in classical real Lie algebras reads as follows:
\begin{itemize}
\item $\rSL_n\R$-conjugacy classes of nilpotents in $\fsl_n\R$ are in 1-1 correspondence with partitions $n=\sum_{i=1}^nr_i\cdot i$, except that there are two orbits associated to partitions with $r_i=0$ for all $i$ odd. The centralizer of an associated $\fsl_2\R$-subalgebra is isomorphic to $\fs(\bigoplus_{i=1}^n\fgl_{r_i}\R)$.

\item $\rSU^*_{2m}$-conjugacy classes of nilpotents in $\fsu^*_{2m}$ are in 1-1 correspondence with partitions $m=\sum_{i=1}^m r_i\cdot i$. The centralizer of an associated $\fsl_2\R$-subalgebra is isomorphic to $\fs(\bigoplus_{i=1}^m\fu^*_{2r_i})$.

\item  $\rSU_{p,q}$-conjugacy classes of nilpotents in $\fsu_{p,q}$ are in 1-1 correspondence with signed Young diagrams of signature $(p,q)$. The centralizer of an associated $\fsl_2\R$-subalgebra is isomorphic to $\fs(\bigoplus_{i=1}^n\fu_{p_i,q_i})$.

\item $\rSO_{p,q}$-conjugacy classes of nilpotents in $\fso_{p,q}$are in 1-1 correspondences with signed Young diagrams of signature $(p,q)$ where even rows occur with even multiplicity and have their leftmost boxes labeled with $+$, except that there are two orbits for diagrams in which all rows have even length. 
    The centralizer of an associated $\fsl_2\R$-subalgebra is isomorphic to $\bigoplus_{i-even}\fsp_{p_i+q_i}\R\oplus\bigoplus_{i-odd}\fso_{p_i,q_i}$.

\item   $\rSO_{2m}^*$-conjugacy classes of nilpotents in $\fso_{2m}^*$ are in 1-1 correspondence with signed Young diagrams of size $m$ and any signature in which rows with odd length have their leftmost boxes labeled with a $+$. 
    The centralizer of an associated $\fsl_2\R$-subalgebra is isomorphic to $\bigoplus_{i-even}\fsp_{2p_i,2q_i}\oplus\bigoplus_{i-odd}\fso_{2(p_i+q_i)}^*$.

\item $\rSp_{2m}\R$-conjugacy classes of nilpotents in $\fsp_{2m}\R$ are in 1-1 correspondence with signed Young diagrams of size $2m$ of any signature where odd rows occur with even multiplicity and have their leftmost boxes labeled with $+$. 
    The centralizer of an associated $\fsl_2\R$-subalgebra is isomorphic to $\bigoplus_{i-odd}\fsp_{p_i+q_i}\R\oplus\bigoplus_{i-even}\fso_{p_i,q_i}$. 

\item $\rSp_{2p,2q}$-conjugacy classes of nilpotents in $\fsp_{2p,2q}$ are in 1-1 correspondence with signed Young diagrams of signature $(p,q)$ in which even rows have their leftmost boxes labeled $+$. 
    The centralizer of an associated $\fsl_2\R$-subalgebra is isomorphic to $\bigoplus_{i-odd}\fsp_{2p_i,2q_i}\oplus\bigoplus_{i-even}\fso^*_{2(p_i+q_i)}$.
    \end{itemize}
\end{proposition}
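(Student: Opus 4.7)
The plan is to follow the standard classification strategy for nilpotent orbits in real classical Lie algebras as developed in \cite[Chapter 9.3]{CollMcGovNilpotents}, which we organize case by case, reducing everything to linear-algebraic data of (signed) Young diagrams. In each case, the underlying principle is the same: if $\fg^\R$ sits inside $\fsl_N\C$, $\fso_N\C$ or $\fsp_{2m}\C$ as the fixed points of an anti-involution, then a nilpotent $\hat e\in\fg^\R$ has an underlying Jordan type over $\C$ (a partition of $N$), and the additional structure preserved by $\fg^\R$ refines that partition into a signed Young diagram. The classification amounts to (i) showing any two nilpotents with the same decorated combinatorial data are $\rG^\R$-conjugate, and (ii) enumerating which decorated diagrams actually occur.

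First I would handle the two ``partition-only'' cases. For $\fsl_n\R$, one applies rational canonical form over $\R$: the Jordan blocks are defined over $\R$ because $0$ is the only eigenvalue, so the partition is a complete invariant; orbit splitting occurs precisely when all row lengths are even, because then the centralizer in $\rGL_n\R$ meets both components of $\rGL_n\R$, and each $\rSL_n\R$-orbit is a single connected component of the $\rGL_n\R$-orbit. For $\fsu^*_{2m}$, one uses the quaternionic structure: each Jordan block comes in a pair because the anti-involution forces blocks to occur with quaternionic multiplicity, so the allowed partitions are arbitrary partitions of $m$ (viewed as giving $2m$ via doubling).

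Next, for the forms preserving a Hermitian, symmetric or skew form, I would use Jacobson--Morozov to pass to an $\fsl_2\R$-triple $\{\hat f,\hat h,\hat e\}$ and decompose the standard representation $V$ as an $\fsl_2\R$-module. Each irreducible summand is a Jordan block $V_i$ of size $i$, on which the preserved form restricts to a form of a definite type determined by the parity of $i$ and the preserved structure (the key computation being that on $V_i$, the induced form is symmetric or skew according to the parity of $i$ and the type of the ambient form). The leftmost box of each row receives a sign according to the signature of the form induced on the highest weight line $V_i\cap V(\hat e)$, and the alternating-sign rule along each row follows because $\ad(\hat f)$ shifts the form's sign. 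The parity/multiplicity constraints (even rows occur with even multiplicity in $\fso_{p,q}$, etc.) arise because in those cases the induced form on $V_i$ is skew or forces quaternionic pairing. Orbit splitting in $\fso_{p,q}$ when all rows are even is analogous to the split case above.

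Finally, the centralizer of an $\fsl_2\R$-triple is computed block by block: inside the centralizer of the $\fsl_2\C$-triple, which by Proposition \ref{prop: class nilp centralizers} is the product of $\fgl_{r_i}\C$, $\fsp_{r_i}\C$ or $\fso_{r_i}\C$ factors corresponding to each block size $i$, one intersects with the real form and the preserved form. Each block of size $i$ contributes the real form of the corresponding classical algebra dictated by the signs $(p_i,q_i)$ and the parity of $i$: unitary/orthogonal/symplectic real forms of the appropriate real rank, producing the centralizer lists stated. The combinatorics — even versus odd row length, preserved form being symmetric versus skew versus Hermitian, and how these match up — is where all the bookkeeping lives, and this is the main obstacle; however, each individual case is a short verification and the full table is exactly that assembled in \cite[\S9.3]{CollMcGovNilpotents}, so we conclude by citing the case-by-case results there.
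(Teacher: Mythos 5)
The paper offers no proof of this proposition: it simply states that it ``collects a set of propositions proved in Section 9.3 of \cite{CollMcGovNilpotents},'' and your proposal likewise ends by citing that same source. So your approach coincides with the paper's, modulo the fact that you provide a strategy sketch where the paper gives a bare citation.

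One point in your sketch has the logic inverted. You write that for $\fsl_n\R$ ``orbit splitting occurs precisely when all row lengths are even, because then the centralizer in $\rGL_n\R$ meets both components of $\rGL_n\R$.'' This is backwards. The $\rGL_n\R$-orbit of a nilpotent $e$ is $\rGL_n\R/Z$ with $Z=Z_{\rGL_n\R}(e)$, and it decomposes into $[\R^\times:\det(Z)]$ many $\rSL_n\R$-orbits. When $Z$ meets the non-identity component of $\rGL_n\R$, i.e.\ $\det(Z)=\R^\times$, the $\rGL_n\R$-orbit is a \emph{single} $\rSL_n\R$-orbit. Splitting occurs precisely when $\det(Z)\subset\R^{>0}$, which is what happens when every Jordan block size $i$ is even: for a block of odd size the Toeplitz centralizer $\C[J_i]$ has real elements of negative determinant $a^i$ (take $a<0$), while for even $i$ all such determinants $a^i$ are positive. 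Thus the clause should read that the centralizer lies entirely in $\rGL_n^+\R$, forcing the $\rGL_n\R$-orbit to have two components, each an $\rSL_n\R$-orbit. Your sentence ``each $\rSL_n\R$-orbit is a single connected component of the $\rGL_n\R$-orbit'' is the right picture, but it is incompatible with your preceding assertion that the centralizer meets both components. With that correction, the rest of your sketch — passing to Jordan data, decorating with signs from the restriction of the preserved form to highest-weight lines, computing block-wise real forms of the complex centralizers from Proposition~\ref{prop: class nilp centralizers} — is the standard argument recorded in the cited reference.
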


\begin{remark}\label{remark: partition of g from gR data}
For the classical Lie algebras other than $\fsu^*_{2m},~\fso_{2m}^*,~\fsp_{2p,2q}$, the partition of the associated nilpotent orbit in the complexification $\fg$ corresponds to the Young diagram obtained by forgetting the signs. For $\fsu^*_{2m},~\fso_{2m}^*,~\fsp_{2p,2q}$, the partition of the associated nilpotent orbit in $\fg$ corresponds to the Young diagram obtained doubling every row and forgetting the signs.
 \end{remark}

 We now classify magical nilpotent elements for classical real forms in terms of signed Young diagrams and partitions.
\begin{theorem}\label{thm: partition classification}
    Let $\fg^\R$ be a real form of a classical complex simple Lie algebra $\fg$. A nilpotent $\hat e\in\fg^\R$ is magical if and only if it is one of the following cases:
    \begin{enumerate}
        \item $\fg^\R\cong\fsl_n\R$ and the associated Young diagram has one row of length $n$.
        \item $\fg^\R\cong\fso_{p,p+1}$ or $\fg^\R\cong\fso_{p+1,p}$ and the signed Young diagram has one row of length $2p+1$.
         \item $\fg^\R\cong\fso_{p,p}$ and the signed Young diagram has one row of length $2p-1$ and one row of length $1$.
        \item $\fg^\R\cong\fsp_{2m}\R$ and the signed Young diagram has one row of length $2m$.
        \item $\fg^\R\cong\fsu_{m,m},~\fso^*_{4m}~,\fsp_{2m}\R$ and the signed Young diagram has $m$-rows of length $2$ and the leftmost boxes are either all labeled $+$ or all labeled $-$.
        \item $\fg^\R\cong\fso_{p,q}$ and the signed Young diagram has one row of length $2\min\{p,q\}-1$ and $(|q-p|+1)$-rows of length $1$, where the labels of the length $1$ row are the same and opposite the label of the leftmost box of the row of length $2\min\{p,q\}-1$.
    \end{enumerate}
\end{theorem}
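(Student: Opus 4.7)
The plan is to invoke the Sekiguchi correspondence to recast the question on the complex side and then apply Proposition \ref{prop: magical real nilpotent properties}: a nilpotent $\hat e \in \fg^\R$ is magical if and only if its Cayley triple satisfies both $\fc^\R \subseteq \fh^\R$ (compactness of the real centralizer) and $\dim(\fh \cap V(e)) = \dim \fc$. By Proposition \ref{prop dim of centralizer formula}, the second condition reduces to the purely numerical identity
\[
\dim V(e) + \dim \fh - \dim \fm = 2\,\dim \fc,
\]
in which $V(e)$ and $\fc$ are computed on the complex side from the partition underlying the signed Young diagram via Remark \ref{remark: partition of g from gR data} and Proposition \ref{prop: class nilp centralizers}, while $\dim\fh - \dim\fm$ is tabulated from the real form itself.

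First I would apply the compactness condition to each real form listed in Proposition \ref{prop: nilp in gR}. Since $\fc^\R$ is a direct sum of classical real Lie algebras indexed by the row-lengths, requiring every summand to be compact sharply restricts the admissible signed Young diagrams. In $\fsl_n\R$, the factor $\fgl_{r_i}\R$ is compact only when $r_i = 0$, forcing $r_n = 1$ and recovering case (1); in $\fsu^*_{2m}$ each $\fu^*_{2r_i}$ is non-compact for $r_i \geq 1$, so no magical nilpotents exist; in $\fsu_{p,q}$ compactness of $\fu_{p_i,q_i}$ forces $p_i q_i = 0$ for every $i$; in $\fso_{p,q}$ the $\fsp_{p_i+q_i}\R$ factors forbid every even-length row while $\fso_{p_i,q_i}$ compact forces $p_i q_i = 0$ on odd rows; the analyses for $\fsp_{2m}\R$, $\fso_{2m}^*$ and $\fsp_{2p,2q}$ are parallel, in each case cutting the admissible diagrams to a short list indexed by at most two row-lengths with prescribed sign patterns. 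In particular, this step already rules out $\fsu^*_{2m}$ and $\fsp_{2p,2q}$ as sources of magical nilpotents.

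Finally I would verify the dimension identity on each surviving candidate. Given a partition $n = \sum r_i \cdot i$ with dual partition $s_j = \sum_{i \geq j} r_i$, Propositions \ref{prop: class nilp} and \ref{prop: class nilp centralizers} supply closed formulas for $\dim V(e)$ and $\dim\fc$, so the displayed identity becomes a polynomial equation in $p, q$ (or $m$) that can be checked directly. I expect this uniform numerical verification to be the main obstacle. The non-homogeneous shape of case (6) is the most delicate: its diagram consists of a single row of length $2\min(p,q)-1$ together with $|q-p|+1$ rows of length one, giving the dual partition $[|q-p|+2, 1^{2\min(p,q)-2}]$, and one must show that $\dim(\fh \cap V(e))$ collapses exactly to $\binom{|q-p|+1}{2} = \dim\fc$. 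The remaining work is to confirm the identity fails for every compactness-admissible diagram outside cases (1)--(6) (for example, diagrams with mixed row-lengths in $\fsu_{p,q}$ when $p \neq q$, or unbalanced configurations in $\fso_{2m}^*$) and to match each successful diagram with the real form in which it lives, yielding precisely the six classes in the theorem.
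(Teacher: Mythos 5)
Your plan is exactly the paper's strategy: reduce via the Sekiguchi correspondence and Propositions \ref{prop: magical real nilpotent properties}, \ref{prop dim of centralizer formula} to the compactness of $\fc^\R$ together with the numerical identity $2\dim\fc=\dim V(e)+\dim\fh-\dim\fm$, then use Proposition \ref{prop: nilp in gR} to restrict to diagrams with compact centralizer and check the identity against Propositions \ref{prop: class nilp centralizers}. The paper carries this out with the same case division (ruling out $\fsu^*_{2m}$ and $\fsp_{2p,2q}$, and organizing the verification by first handling diagrams with $r_1=0$ and then reducing the $r_1\neq 0$ case to a smaller subalgebra), and your worked computation for case (6) matches the paper's arithmetic.
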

\begin{remark}
    In the first four cases, $\fg^\R$ is split and we have the principal nilpotent. Case (5) corresponds to Lie algebras which are Hermitian of tube type, and the same holds in (6) if $p=2$ or $q=2$. 
\end{remark}

\begin{proof}
Let $\fg^\R$ be a real form of a classical complex simple Lie algebra $\fg$ and $\fg=\fh\oplus\fm$ be a Cartan decomposition. By Propositions \ref{prop: magical real nilpotent properties} and \ref{prop dim of centralizer formula} a nilpotent $\hat e\in\fg^\R$ is magical if and only if the centralizer of an associated $\fsl_2\R$-subalgebra $\fc^\R$ is compact and 
   \begin{equation}\label{eq: cond-magical dim-dim=0}
   2\dim(\fc^\R\otimes \C)-\dim(V(\gamma(\hat e)))-\dim(\fh)+\dim(\fm)
      \end{equation}
      vanishes.
    Now we use Proposition \ref{prop: nilp in gR} together with this criterion to detect magical nilpotents in $\fg^\R$.

For $\fg^\R=\fsl_n\R$, $\fc^\R=\fs(\bigoplus_{i=1}^n\fgl_{r_i}\R)$. So $\fc^\R$ is compact if and only if the partition is $n=1\cdot n$, i.e.~the corresponding Young diagram has just one row of length $n$. So we are left with this corresponding nilpotent $\hat e$ (namely $\gamma(\hat e)$ is the principal nilpotent). In this case, $\fc^\R=0$. Moreover, the dual partition is $n=n\cdot 1$ so Proposition \ref{prop: class nilp centralizers}, together with Table \ref{Table of classical real forms} of Section  \ref{sec diagrams and tables}, show that $-\dim(V(\gamma(\hat e)))-\dim(\fh)+\dim(\fm)=-n+1+n-1=0$. Hence \eqref{eq: cond-magical dim-dim=0} equals to zero, so $\hat e\in\fsl_n\R$ is magical, proving (1).

The remaining cases will be dealt with by a similar argument, where in each case we use Proposition \ref{prop: nilp in gR} to identify $\fc^\R$ and then Proposition \ref{prop: class nilp centralizers} and Table \ref{Table of classical real forms} of Section \ref{sec diagrams and tables} to compute \eqref{eq: cond-magical dim-dim=0}.

For $\fg^\R=\fsu^*_{2m}$, $\fc^\R=\fs(\bigoplus_{i=1}^m\fu^*_{2r_i})$ is compact if and only if $r_m=1$ and $r_i=0$ for $i\neq m$, so that $\fc^\R=\fsu^*_2=\fsu_2$. We are then left with the nilpotent in $\hat e\in\fsu^*_{2m}$ whose corresponding nilpotent (under the Cayley transform) in $\fg=\fsl_{2m}\C$ is given by the partition $2m=2\cdot m$. Its dual partition is $2m=\sum_{j=1}^{2m}s_j$, with $s_j=2$ for $1\leq j\leq m$ and $s_j=0$ otherwise. Then \eqref{eq: cond-magical dim-dim=0} equals $6-6m$.
Hence the nilpotent $\hat e\in \fsu^*_{2m}$ can only be magical if $m=1$. But $\fsu^*_{2}$ is compact and thus has no nonzero nilpotent elements (recall that magical nilpotents are nonzero by definition), so $\hat e$ is not magical. We conclude that $\fsu^*_{2m}$ does not admit magical nilpotents.

Consider now the case of $\fg^\R=\fsu_{p,q}$. Then  $\fc^\R=\fs(\bigoplus_{i=1}^{p+q}\fu_{p_i,q_i})$, which is compact if and only if $p_i=0$ or $q_i=0$ for each $i$.  The associated nilpotent in $\fsl_{p+q}\C$ corresponds to the partition $p+q=\sum_{i=1}^{p+q}r_i\cdot i$, where $r_i=p_i+q_i$. We see that \eqref{eq: cond-magical dim-dim=0} is given by
\begin{equation}
    \label{eq supq form}2\sum_{i=1}^{p+q}r_i^2-\sum_{i=1}^{p+q}s_i^2-(q-p)^2,
\end{equation}
with $p+q=\sum_{i=1}^{p+q}s_i$ the corresponding dual partition. We want to understand when \eqref{eq supq form} vanishes.

First assume $r_1=0$. Using $s_i=r_i+s_{i+1}$ twice, \eqref{eq supq form} can be rewritten as
\[-4\sum_{i=2}^{p+q-1}r_is_{i+1}-\sum_{i=3}^{p+q}s_i^2-(q-p)^2.\]
If $r_i\neq 0$ for some $i>2$, then this expression is strictly negative, therefore the corresponding partition does not correspond to a magical nilpotent in $\fsu_{p,q}$. If $r_2$ is the only nonzero $r_i$, the previous expression equals $-(q-p)^2$, hence \eqref{eq supq form} vanishes if and only if $p=q$. So the nonzero nilpotent  determined by that partition is magical, and corresponds to Case (5) for $\fg^\R=\fsu_{m,m}$.

Now suppose $r_1\neq0$. Since the Jordan block $J_1$ is a $1\times 1$ zero matrix, a nilpotent $\hat e\in\fsu_{p,q}$ with $r_1\neq0$ is contained in a subalgebra isomorphic to $\fsu_{p-r_1,q}$ (in case $r_1=p_1$) or $\fsu_{p,q-r_1}$ (in case $r_1=q_1$).  In this subalgebra, $\hat e$ has no $r_1$-term. If it is magical, then by the above argument we must have $r_i=0$ for $i>2$ and $q-p=\pm r_1$. Thus, \eqref{eq supq form} is given by  
\[2r_1^2+2r_2^2-r_1^2-2r_1r_2-r_2^2-r_2^2-r_1^2=-2r_1r_2.\]
This is zero if and only if $r_1=0$ or $r_2=0$, but we are assuming $r_1\neq 0$ and if $r_2=0$ then $\hat e$ is the zero nilpotent. So there are no magical nilpotents in $\fsu_{p,q}$ other than the one detected in the previous paragraph.

Consider now $\fg^\R=\fso_{p,q}$. Then $\fc^\R=\bigoplus_{i-even}\fsp_{p_i+q_i}\R\oplus\bigoplus_{i-odd}\fso_{p_i,q_i}$. This is compact if and only if $p_i+q_i=0$ for $i$ even and either $p_i=0$ or $q_i=0$ for $i$ odd. The partition of the associated nilpotent in $\fso_{p+q}\C$ is $p+q=\sum_{i=1}^{p+q}r_i\cdot i$, where $r_i=p_i+q_i$ with $p_i,q_i$ under the stated conditions. Then twice the quantity \eqref{eq: cond-magical dim-dim=0} equals to 
\begin{equation}
    \label{eq sopq form} 2\sum_{i=1}^{p+q}r_i^2-\sum_{i=1}^{p+q}r_i-\sum_{i=1}^{p+q}s_i^2+p+q-(q-p)^2.
\end{equation}

First assume that $r_1=0$.
If only one $r_k$ is nonzero, then $p+q=r_k\cdot k$ and $(q-p)^2=r_k^2$, because in this case the odd number $r_k$ equals the signature $\pm(p-q)$. Therefore \eqref{eq sopq form} simplifies to $r_k(k-1)(r_k-1)$. Since $r_1=0$, this is zero if and only if $r_k=1$ and thus $k=p+q$ and $k$ is odd. This proves that Case (2) of the theorem is a magical nilpotent for $\fso_{p,p+1}$ if the left most box is labeled $-$ and $\fso_{p+1,p}$ if the left most box is labeled $+$. 
Still assuming $r_1=0$, and using again that $s_i=r_i+s_{i+1}$, \eqref{eq sopq form} can be rewritten as
\[-4\sum_{i=2}^{p+q-1}r_is_{i+1}-\sum_{i=3}^{p+q}s_i^2-\sum_{i=1}^{p+q}r_i+p+q-(q-p)^2.\]
If at least two $r_i$ are nonzero, then $p+q=\sum_{i=1}^{p+q-1}s_i$ (because $r_{p+q}=0$) and $-4\sum_{i=2}^{p+q-1}r_is_{i+1}+2s_2<0$. Such a nilpotent is not a magical one because
\[-4\sum_{i=2}^{p+q-1}r_is_{i+1}-\sum_{i=3}^{p+q}s_i^2-\sum_{i=1}^{p+q}r_i+p+q-(q-p)^2\leq-4\sum_{i=2}^{p+q-1}r_is_{i+1}+2s_2-\sum_{i=1}^{p+q}r_i-(q-p)^2<0.\]

Now assume $r_1\neq 0$. As in the $\fsu_{p,q}$-case, the nilpotent $\hat e$ is contained in a subalgebra isomorphic to $\fso_{p-r_1,q}$ or $\fso_{p,q-r_1}$ and has no $r_1$-term. If $\hat e$ is magical, by the above argument, the partition must be of the form $p+q=r_1\cdot 1+1\cdot (2\min\{p,q\}-1)$. Since the signature of the signed Young diagram is $(p,q)$, if the left most box of the row of length $2\min\{p,q\}-1$ is labeled $+$, then each row of length $1$ is labeled $-$ and vice versa.  This means that $(q-p)^2=(1-r_1)^2$. In this case \eqref{eq sopq form} is given by 
\[2r_1^2+2-r_1-1-(r_1+1)^2-(2\min\{p,q\}-2)+r_1+2\min\{p,q\}-1-(1-r_1)^2.\]
This expression always vanishes, proving Case (6) of the theorem. 

For $\fg^\R=\fso_{2m}^*$, we have that $\fc^\R=\bigoplus_{i-even}\fsp_{2p_i,2q_i}\oplus\bigoplus_{i-odd}\fso_{2(p_i+q_i)}^*$. This is compact if and only if $p_i+q_i=0$ for all $i$ odd and either $p_i=0$ or $q_i=0$ for all $i$ even. So we are left with nilpotents $\hat e\in \fso_{2m}^*$ whose partition of the corresponding nilpotent in $\fso_{2m}\C$ is (cf.~Remark \ref{remark: partition of g from gR data}) $2m=\sum_{i=1}^m(2 r_i)\cdot i$, with $r_i=p_i+q_i$ verifying these conditions. Then twice the quantity \eqref{eq: cond-magical dim-dim=0} is given by
\begin{equation}
    \label{eq: so2m* form}2\sum_{i=1}^{m}(2r_i)^2+2\sum_{i=1}^{m} 2r_i-\sum_{i=1}^{m}s_i^2-2m,
\end{equation}
where $s_i=\sum_{j=i}^m2r_j$. Since $r_1=0$ and $s_i=2r_i+s_{i+1}$, \eqref{eq: so2m* form} is given by
\[-4\sum_{i=2}^{m-1}2r_is_{i+1}+2\sum_{i=1}^m2r_i-\sum_{i=3}^{m}s_i^2-2m.\]
If $r_i$ is nonzero for $i>2$, then the above expression is negative and the nilpotent is not magical. If $r_2$ is the only nonzero $r_i$, then \eqref{eq: so2m* form} equals $4r_2-2m$.
This is zero if and only if $r_2=\frac{m}{2}$ with $\frac{m}{2}$ an integer. In such case the nilpotent $\hat e$ is magical, proving the part of Case (5) regarding $\fso_{2m}^*$.

Consider now $\fg^\R=\fsp_{2m}\R$. We have that $\fc^\R=\bigoplus_{i-odd}\fsp_{p_i+q_i}\R\oplus\bigoplus_{i-even}\fso_{p_i,q_i}$, so it is  compact if and only if $p_i+q_i=0$ for $i$ odd and either $p_i=0$ or $q_i=0$ for all $i$ even, so we are left with nilpotents under these conditions. 
Write $2m=\sum_{i=1}^{2m}r_i\cdot i$ for the partition of the associated nilpotents (by the Cayley transform) in $\fsp_{2m}\C$, where $r_i=p_i+q_i$ satisfy the previous constrains. Again, twice the quantity \eqref{eq: cond-magical dim-dim=0} equals
\begin{equation}
    \label{eq: sp2mR form}2\sum_{i=1}^{2m}r_i^2-2\sum_{i=1}^{2m}r_i-\sum_{i=1}^{2m}s_i^2+2m.
\end{equation}
with where $s_i=\sum_{j=i}^{2m}r_j$.
Since $r_1=0$ and $s_i=r_i+s_{i+1}$, \eqref{eq: sp2mR form} can be rewritten as
\[-4\sum_{i=2}^{2m-1}r_is_{i+1}-\sum_{i=3}^{2m}s_i^2-2\sum_{i=1}^{2m}r_i+2m.\]
Similarly to the previous cases, if at least two $r_i$ are nonzero, then this expression is negative so the corresponding nilpotents are not magical. If $2m=r_k\cdot k$, then \eqref{eq: sp2mR form} is given by $(2-k)r_k^2-2r_k+2m$, which is zero if and only if $k=2$ or $k=2m$. This proves Case (4) and completes the proof of Case (5).

Finally, let us consider $\fg^\R=\fsp_{2p,2q}$, in which case we know that $\fc^\R=\bigoplus_{i-odd}\fsp_{2p_i,2q_i}\oplus\bigoplus_{i-even}\fso^*_{2(p_i+q_i)}$, which is compact if and only if $p_i+q_i=0$ for every $i$ even and either $p_i=0$ or $q_i=0$ for all $i$ odd.
Let $2p+2q=\sum_{i=1}^{p+q}2r_i\cdot i$ be the partition of the associated nilpotents in $\fsp_{2p+2q}\C$ (see Remark \ref{remark: partition of g from gR data}), where each $r_i=p_i+q_i$ verifies the previous conditions. Then we have that twice the number \eqref{eq: cond-magical dim-dim=0} is given by
\begin{equation}
    \label{eq: sp2p2q form}2\sum_{i=1}^{p+q}(2r_i)^2+\sum_{i=1}^{p+q}2r_i-\sum_{i=1}^{p+q}s_i^2-2p-2q-4(q-p)^2,
\end{equation}
where $s_i=\sum_{j=i}^{p+q}2r_j$. If $r_1=0$, then \eqref{eq: sp2p2q form} can be rewritten as
\[\sum_{i=1}^{p+q}2r_i-4\sum_{i=2}^{p+q-1}2r_is_{i+1}-\sum_{i=3}^{p+q}s_i^2-2p-2q-4(q-p)^2.\]
This expression is always negative, hence no magical nilpotents arise with $r_1=0$. As in previous cases, if $r_1\neq0$, then the nilpotent must lie in a subalgebra isomorphic to $\fsp_{2p-2r_1,2q}$ or $\fsp_{2p,2q-2r_1}$ and  have $r_1=0$ in that subalgebra. Moreover, if $\{f,h,e\}$ is magical in $\fsp_{2p,2q},$ then it is magical in the subalgebra (see Remark \ref{remark subalgebra mag}).  But the previous argument says that there is no magical nilpotent in such setting. Hence $\fsp_{2p,2q}$ does not admit any magical nilpotents, and this completes the proof.
\end{proof}

It remains to translate the partition classification of Theorem \ref{thm: partition classification} into the weighted Dynkin diagram classification of Theorem \ref{thm: classification weighted dynkin}. The algorithm for doing this goes as follows. Let $\fa\subset\fg$ be the Cartan subalgebra of diagonal matrices and choose the simple roots such that the positive root spaces correspond to upper triangular matrices. Given a (signed) Young diagram from Theorem \ref{thm: partition classification}, let $e\in\fg$ be the associated nilpotent given by \eqref{eq jordan block of nilp}. Complete $e$ to an $\fsl_2$-triple $\{f,h,e\}$ such that $h$ is a diagonal matrix. Finally, conjugate $\{f,h,e\}$ so that the eigenvalues of $h$ are decreasing and compute $\alpha_i(h)$. We will sketch this process for one case; see \cite[Chapters 3.6, 5.3]{CollMcGovNilpotents} for more details.

Completing a nilpotent  $e\in\fsl_n\C$ to an $\fsl_2$-triple $\{f,h,e\}$ with $h$ diagonal requires doing this for each Jordan block $J_k$. Such an $\fsl_2$-triple $\{f,h,e\}$ is given by 
\[\left\{\smtrx{0\\\mu_1&0\\&\ddots&\ddots\\&&\mu_{k-1}&0},
\smtrx{k-1\\&k-3\\&&\ddots\\&&& 1-k},
\smtrx{0&1\\&\ddots&\ddots\\&&0&1\\&&&0}\right\},\]
where $\mu_j=j(k-j)$. Consider now Case (6) of Theorem \ref{thm: partition classification} with $p\leq q$. Then the resulting semisimple element $h$ is diagonal with entries $\{2p-2,2p-4,\ldots,2-2p,0,\ldots,0\}$. Rearranging the matrix so that the eigenvalues are decreasing, yields 
\[\{2p-2,2p-4,\ldots,2,0,\ldots,0,-2,-4,\ldots,2-2p\}.\]
Using \cite[Lemma 5.3.3]{CollMcGovNilpotents} when $p+q$ is odd and \cite[Lemma 5.3.4]{CollMcGovNilpotents} when $p+q$ is even, we conclude that the associated weighted Dynkin diagram is given by Case (3) of Theorem \ref{thm: classification weighted dynkin}. We leave the remaining cases to the reader. 

\section{Explicit data and real forms for magical $\fsl_2$-triples}\label{sec sl2 data and Cayley realform}

Associated to a given magical $\fsl_2$-triple $\{f,h,e\}\subset\fg$, we explicitly exhibit, in this section, the $\fsl_2\C$-module and $\ad_h$-weight space decompositions \eqref{eq sl2 module decomp} and \eqref{eq adh weight decomp}, the associated canonical real form of $\lieg$ of Definition \ref{def: canonical real form} and the Cayley real form of $\lieg_0$ of Definition \ref{def: Cayley real form}. We also show that the magical $\fsl_2$-triple $\{f,h,e\}\subset\fg$ arises from a principal $\fsl_2$-triple in a simple subalgebra $\fg(e)\subset\fg$, defined as the semisimple part of the double centralizer of $\{f,h,e\}$. 
Finally, we deduce the detailed Lie theoretic information for magical $\fsl_2$-triples in Case (4) of Theorem \ref{thm: classification weighted dynkin}.

\subsection{$\fsl_2$-data}

Recall from Definition \ref{def: canonical real form} that a magical triple $\{f,h,e\}$ of $\fg$ determines a canonical real form $\fg^\R$ via the involution $\sigma_e:\fg\to\fg$. 
Note that the canonical real form of a magical $\fsl_2$-triple follows from Theorem \ref{thm: partition classification} for the classical cases and \cite{ExceptionalNilpotentsInner} for the exceptional cases.

\begin{proposition}\label{prop canonical real form}
   The canonical real forms $\fg^\R\subset\fg$ associated to magical $\fsl_2$-triples are given as follows:
   \begin{enumerate}
       \item In Case (1) of Theorem \ref{thm: classification weighted dynkin}, $\fg^\R$ is the split real form
       \begin{center} \hspace{-1.6 cm}
           \begin{tabular}
               {|c|c|c|c|c|c|c|c|c|c|}\hline
              $\fg$& $\rA_n$&$\rB_n$&$\rC_n$&$\rD_n$&$\rE_6$&$\rE_7$&$\rE_8$&$\rF_4$&$\rG_2$\\\hline
              $\fg^\R$& $\fsl_{n+1}\R$&$\fso_{n,n+1}$&$\fsp_{2n}\R$&$\fso_{n,n}$& $\fe_6^6$& $\fe_7^7$&$\fe_8^8$&$\ff_4^4$&$\fg_2^2$
               \\\hline
           \end{tabular}
       \end{center}
       \item In Case (2) of Theorem \ref{thm: classification weighted dynkin}, $\fg^\R$ is the Hermitian Lie algebra of tube type given by
       \begin{center}
           \begin{tabular}
           {|c|c|c|c|c|c|c|}\hline
           $\fg$&$\rA_{2n-1}$&$\rB_{n}$&$\rC_n$&$\rD_n$&$\rD_{2n}$&$\rE_7$\\\hline
           $\fg^\R$&$\fsu_{n,n}$&$\fso_{2,2n-1}$&$\fsp_{2n}\R$&$\fso_{2,2n-2}$&$\fso_{4n}^*$&$\fe_7^{-25}$\\\hline
       \end{tabular}
       \end{center}
       \item In Case (3) of Theorem \ref{thm: classification weighted dynkin} with $\fg=\fso_N\C$, $\fg^\R\cong\fso_{p,N-p}$.
       \item In Case (4) of Theorem \ref{thm: classification weighted dynkin}, $\fg^\R$ is the quaternionic real form of $\fg$
       \begin{center}
    \begin{tabular}
        {|c|c|c|c|c|}\hline
        $\fg$&$\rE_6$&$\rE_7$&$\rE_8$&$\rF_4$\\\hline
       $\fg^\R$& $\fe_6^{2}$&$\fe_7^{-5}$&$\fe_8^{-24}$&$\ff_4^{4}$\\\hline
    \end{tabular}
\end{center}
   \end{enumerate}
\end{proposition}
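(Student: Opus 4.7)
The plan is to exploit the Sekiguchi correspondence \eqref{eq sekiguchi correspondence} together with Definition \ref{def cayley mag}: for each magical $\fsl_2$-triple $\{f,h,e\}\subset\fg$ listed in Theorem \ref{thm: classification weighted dynkin}, its canonical real form is characterized as the unique real form $\fg^\R \subset \fg$ containing the Cayley transform $\gamma^{-1}(\{f,h,e\})$, since by definition the Cayley triple must lie in $\fg^\R$ and $\fg^\R$ is the canonical real form of the magical normal triple to which it corresponds. Since Theorem \ref{thm: partition classification} already classifies magical Cayley triples inside every classical real form, and since Dokovi\'c's tables in \cite{ExceptionalNilpotentsInner,ExceptionalNilpotentsOuter} record the host real form of every nilpotent orbit in an exceptional Lie algebra, the proposition reduces to a bookkeeping exercise: for each weighted Dynkin diagram in Theorem \ref{thm: classification weighted dynkin}, match it with the corresponding entry in these classifications.

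For the classical families I would use the standard translation between weighted Dynkin diagrams and partitions sketched at the end of \S\ref{subsec: mag class} (see \cite[Chapter 5.3]{CollMcGovNilpotents}). Case (1) of Theorem \ref{thm: classification weighted dynkin} describes the principal nilpotent orbit; this corresponds under the translation to the single-row (or near-single-row for $\rD_n$) partitions of Cases (1)--(4) of Theorem \ref{thm: partition classification}, all of which live in the split real form. Case (2) of Theorem \ref{thm: classification weighted dynkin} for classical $\fg$ will map to the partitions of Case (5) of Theorem \ref{thm: partition classification} when $\fg$ is of type $\rA_{2n-1}$, $\rC_n$, or $\rD_{2n}$, and to the extremal subcase $p=2$ of Case (6) of Theorem \ref{thm: partition classification} when $\fg$ is of type $\rB_n$ or $\rD_n$; these are precisely the Hermitian tube-type real forms. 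Case (3) will match Case (6) of Theorem \ref{thm: partition classification} with $1 < p < q$, yielding $\fg^\R = \fso_{p,N-p}$.

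For the exceptional cases I would read the real form directly off the last column of Table \ref{Table of exceptional inner real forms} (supplemented by the outer-form data from \cite{ExceptionalNilpotentsOuter} for the split $\rE_6$ case) after locating the row whose weighted Dynkin diagram matches that of Theorem \ref{thm: classification weighted dynkin}. This identifies the quaternionic real forms $\fe_6^{2}$, $\fe_7^{-5}$, $\fe_8^{-24}$, $\ff_4^{4}$ in Case (4), the Hermitian tube type form $\fe_7^{-25}$ in Case (2), and the split forms $\fe_6^{6}$, $\fe_7^{7}$, $\fe_8^{8}$, $\ff_4^{4}$, $\fg_2^{2}$ in Case (1). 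The main obstacle will be the bookkeeping for orthogonal types: for $\rD_n$ one must carefully track the forked nodes, which are interchanged by an outer automorphism and can produce $\fso^*_{4n}$ versus an equivalent form, and for the Hermitian tube type entries in Case (2) one must ensure that the minimum-signature outcome is $\fso_{2,q}$ rather than $\fso_{q,2}$. I would resolve both issues by tracking the sign of the leftmost box of the signed Young diagram in Theorem \ref{thm: partition classification} under the Cayley transform $\gamma$.
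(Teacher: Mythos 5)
Your proposal is correct and takes essentially the same route as the paper: the paper establishes Proposition~\ref{prop canonical real form} in a single remark by pointing to Theorem~\ref{thm: partition classification} for the classical cases and Dokovi\'c's tables for the exceptional ones, which is exactly the matching you describe. The one thing to be careful about is the claim that the canonical real form is ``the unique real form containing the Cayley transform'' --- what is actually unique (up to conjugacy) is the real form $\tau_e$ compatible with the involution $\sigma_e$, and the Sekiguchi/Cayley-transform machinery of Propositions~\ref{prop: magical real nilpotent properties} and~\ref{prop dim of centralizer formula} together with Theorem~\ref{thm: partition classification} then identifies it; as stated your uniqueness claim is slightly too strong, but the rest of the bookkeeping argument is sound and is the intended proof.
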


Let $\fa\subset\fg$ be a Cartan subalgebra and denote the root space decomposition by
\[\fg=\fa\oplus\bigoplus_{\alpha\in \Delta}\fg_\alpha,\]
where $\Delta\subset\fa^*\backslash \{0\}$ is the set of roots and $\fg_\alpha=\{y\in\fg~|~ \ad_x(y)=\alpha(x) y, \ \forall\ x\in\fa\}$ is the root space of $\alpha\in \Delta$. Choosing a set of simple roots $\Pi=\{\alpha_1,\ldots,\alpha_{\rk\fg}\}\subset\Delta$ splits the roots into positive and negative roots $\Delta=\Delta^+\sqcup\Delta^-$, where $\Delta^+$ (resp.\ $\Delta^-$) consists of roots $\alpha=\sum_{i=1}^{\rk\fg}a_i\alpha_i$ with $a_i\in\Z_{\geq0}$ (resp.\ $a_i\in\Z_{\leq0}$) for all $i$.

Let $\{f,h,e\}$ be an $\fsl_2$-triple, with $h\in\fa$ and $\alpha_i(h)\geq 0$ for all $\alpha_i\in\Pi$. The element $h$ acts on a root space $\fg_\alpha$ with weight $\sum_{i=1}^{\rk\fg}a_i\alpha_i(h)$, where $\alpha=\sum_{i=1}^{\rk\fg}a_i\alpha_i$. Thus, the $\ad_h$-weight space decomposition \eqref{eq adh weight decomp} of $\fg$ is given by 
\[\fg=\bigoplus_{j\in\Z}\fg_j,\]
where $\fg_j$ is a direct sum of root spaces $\fg_\alpha$ with $\alpha=\sum_{i=1}^{\rk\fg}a_i\alpha_i$ and $j=\sum_{i=1}^{\rk\fg}a_i\alpha_i(h)$  if $j\neq0$, and $\fg_0$ is the direct sum of $\fa$ and the set of root spaces $\fg_\alpha$ with $\alpha=\sum_{i=1}^{\rk\fg}a_i\alpha_i$ such that $0=\sum_{i=1}^{\rk\fg}a_i\alpha_i(h)$, i.e.~
\begin{equation}\label{eq:weight-spdirectsumroot-sp}
\fg_0=\fa\oplus\bigoplus_{\alpha(h)=0}\fg_\alpha\ \text{ and }\ \ \fg_j=\bigoplus_{\alpha(h)=j}\fg_\alpha,\ j\neq 0.
\end{equation}

We record the Lie algebra $\fg_0$ of a magical nilpotent. This follows immediately from \eqref{eq:weight-spdirectsumroot-sp} and from the weighted Dynkin diagram classification of Theorem \ref{thm: classification weighted dynkin}.
\begin{proposition}\label{prop g0 isomorphism type}
The subalgebra $\fg_0\subset\fg$ associated to a magical $\fsl_2$-triple in $\fg$ is described as follows:
   \begin{enumerate}
       \item In Case (1) of Theorem \ref{thm: classification weighted dynkin}, $\fg_0\cong\C^{\oplus \rk\fg}$.
       \item In Case (2) of Theorem \ref{thm: classification weighted dynkin}, 
       \begin{center}\hspace{-1.5cm}
        \begin{tabular}
        {|c|c|c|c|c|c|c|}\hline
            $\fg$&$\rA_{2n-1}$&$\rB_n$&$\rC_n$&$\rD_n$&$\rD_{2n}$&$\rE_7$\\

            \hline $\fg_0$&$\fsl_{n}\C\oplus\fsl_n\C\oplus\C$&$\fso_{2n-1}\C\oplus\C$&$\fsl_n\C\oplus\C$&$ \fso_{2n-2}\C\oplus\C$&$\fsl_{2n}\C\oplus\C$&$\fe_6\oplus\C$\\
            \hline
        \end{tabular}
    \end{center}
    \item In Case (3) of Theorem \ref{thm: classification weighted dynkin} with $\fg=\fso_N\C$, then $\fg_0=\C^{p-1}\oplus \fso_{N-2p+2}\C$.
    \item In Case (4) of Theorem \ref{thm: classification weighted dynkin},
     \begin{center}\hspace{-2cm}
        \begin{tabular}
        {|c|c|c|c|c|}\hline
            $\fg$&$\rE_{6}$&$\rE_7$&$\rE_8$&$\rF_4$\\

            \hline $\fg_0$&$\fsl_{3}\C\oplus\fsl_3\C\oplus\C^2$&$ \fsl_{6}\C\oplus\C^2$&$\fe_6\oplus\C^2$&$\fsl_{3}\C\oplus\C^2$\\
            \hline
        \end{tabular}
    \end{center}
\end{enumerate}
\end{proposition}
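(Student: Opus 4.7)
The plan is to identify $\fg_0$ as the Levi subalgebra of the parabolic subgroup cut out by $h$, read off from the weighted Dynkin diagrams of Theorem \ref{thm: classification weighted dynkin}. By \eqref{eq:weight-spdirectsumroot-sp}, we have
\[\fg_0=\fa\oplus\bigoplus_{\alpha(h)=0}\fg_\alpha,\]
and since the value of $\alpha=\sum a_i\alpha_i$ on $h$ is the non-negative integer $\sum a_i\alpha_i(h)$, a root satisfies $\alpha(h)=0$ precisely when $a_i=0$ for every simple root $\alpha_i$ with $\alpha_i(h)\neq 0$. Thus $\fg_0$ is the standard Levi subalgebra associated to the subset $\Pi_0\subset\Pi$ of simple roots labeled by $0$ in the weighted Dynkin diagram.

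By Corollary \ref{cor mag implies even and injmap h->m}, each magical nilpotent is even, so every label is either $0$ or $2$. Standard Lie theory then gives that the semisimple part $[\fg_0,\fg_0]$ is the Lie algebra whose Dynkin diagram is the full subdiagram on $\Pi_0$, while the center of $\fg_0$ has dimension $|\Pi\setminus\Pi_0|$, namely one copy of $\C$ for each node removed. The proof reduces to inspecting, for each magical weighted Dynkin diagram listed in Theorem \ref{thm: classification weighted dynkin}, the subdiagram obtained by deleting the nodes labeled $2$.

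First, in Case (1) every node carries label $2$, so $\Pi_0=\emptyset$ and $\fg_0=\fa\cong\C^{\oplus\rk\fg}$. In Case (2) precisely one node is labeled $2$; removing this node from the diagrams of types $\rA_{2n-1}$, $\rB_n$, $\rC_n$, $\rD_n$, $\rD_{2n}$, $\rE_7$ yields respectively two copies of $\rA_{n-1}$, $\rB_{n-1}$, $\rA_{n-1}$, $\rD_{n-1}$, $\rA_{2n-1}$, $\rE_6$, and one adds a central $\C$. In Case (3) the labeled-$2$ nodes are $\alpha_1,\ldots,\alpha_{p-1}$, whose removal leaves an orthogonal Lie algebra of rank $n-p+1$ (type $\rB$ or $\rD$ according to the parity of $N$), plus $p-1$ central copies of $\C$; this gives $\fso_{N-2p+2}\C\oplus\C^{p-1}$.

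The remaining four exceptional cases require reading off sub-diagrams. For $\rE_6$ in Case (4) the labeled-$2$ nodes are the middle node of the row and the trivalent branch node; removing both breaks the graph into two disjoint $\rA_2$'s, giving $\fsl_3\C\oplus\fsl_3\C\oplus\C^2$. For $\rE_7$ in Case (4) the labeled-$2$ nodes are the two leftmost row nodes, and deleting them produces the linear chain
\[(\text{branch node})-\alpha_4-\alpha_5-\alpha_6-\alpha_7\]
of length five, which is $\rA_5$, yielding $\fsl_6\C\oplus\C^2$. For $\rE_8$ in Case (4) the two rightmost row nodes are labeled $2$; removing them leaves the five-node row together with the branch, exactly the $\rE_6$ diagram, so the Levi is $\fe_6\oplus\C^2$. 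For $\rF_4$ in Case (4) the two rightmost nodes are labeled $2$, leaving the simply-laced $\rA_2$ subdiagram on the two leftmost nodes, whence $\fsl_3\C\oplus\C^2$. The main potential pitfall is recognizing correctly the sub-Dynkin diagrams in the exceptional cases—particularly that the $\rE_7$ case degenerates to a type-$\rA_5$ chain rather than something containing a triple point; once these identifications are made the proposition follows immediately from the Levi description above.
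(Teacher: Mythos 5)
Your proof is correct and follows the same approach the paper takes: the paper simply states that the result ``follows immediately from the weighted Dynkin diagram classification of Theorem \ref{thm: classification weighted dynkin},'' and your argument spells out exactly this, reading off the Levi subalgebra $\fg_0$ from the subdiagram of nodes labeled $0$. Your identifications of the resulting sub-Dynkin diagrams in all cases, including the exceptional ones, are accurate.
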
 

The $\fsl_2\C$-module decomposition $\fg=\bigoplus_{j}W_j$ from \eqref{eq sl2 module decomp} can be deduced from the $\ad_h$-weight space decomposition. Namely, 
\begin{equation}
    \label{eq nj expression}n_{j}=\dim(\fg_{j})-\dim(\fg_{j+2}).
\end{equation} 
Recall that the $\fsl_2$-data of a magical nilpotent is determined by the collection of pairs of non-negative integers $\{(m_j,n_j)\}_{j=0}^M$ such that, for each $j\geq 1$, the multiplicity $n_{2m_j}$ of $W_{2m_j}$ is positive. Thus, the $\fsl_2\C$-module decomposition of a magical nilpotent is given by
   \[\fg=\fc\oplus\bigoplus\limits_{j=1}^MW_{2m_j.}\]

\begin{proposition}\label{prop: magical sl2 data}
The $\fsl_2$-data of a magical $\fsl_2$-triple $\{f,h,e\}$ is given as follows:
\begin{enumerate}
    \item In Case (1) of Theorem \ref{thm: classification weighted dynkin} the set of $\{m_j\}$ is given by 
    \begin{center}\hspace{-1cm}
        \begin{tabular}
            {|c|c|c|}\hline
            $\rA_{n}:$ $\{0,1,2,\ldots,n\}$&$\rB_n:$ $\{0,1,3,\ldots,2n-1\}$&$\rC_n:$ $\{0,1,3,\ldots,2n-1\}$\\\hline
            $\rD_n:$ $\{0,1,3,\ldots,2n-3,n-1\}$&$\rE_6:$ $\{0,1,4,5,7,8,11\}$&$\rE_7:$ $\{0,1,5,7,9,11,13,17\}$\\\hline
            $\rE_8:$ $\{0,1,7,11,13,17,19,23,29\}$&$\rF_4:$ $\{0,1,5,7,11\}$ &$\rG_2:$ $\{0,1,5\}$\\\hline
        \end{tabular}
    \end{center}
     For all cases, $n_0=0$ and $n_{2m_j}=1$, with the exception that $n_{4n-2}=2$ for $\rD_{2n}$.

    \item In Case (2) of Theorem \ref{thm: classification weighted dynkin}, $\{m_j\}=\{0,1\}$ and $n_0$ and $n_2$ are given as follows:
    \begin{center}
        \begin{tabular}
        {|c|c|c|c|c|c|c|}\hline
            $\fg$&$\rA_{2n-1}$&$\rB_n$&$\rC_n$&$\rD_n$&$\rD_{2n}$&$\rE_7$\\

            \hline $n_0$&$n^2-1$&$2n^2-5n+3$&$\dfrac{n(n-1)}{2}$&$2n^2-7n+6$&$n(2n+1)$&$52$\\

            \hline $n_2$&$n^2$&$2n-1$&$\dfrac{n(n+1)}{2}$&$2n-2$&$n(2n-1)$&$27$\\
            \hline
        \end{tabular}
    \end{center}

    \item In Case (3) of Theorem \ref{thm: classification weighted dynkin}, we have $\{m_j\}=\{0,1,3,\ldots,2p-3,p-1\}$ and
    \[n_0=\dfrac{(N-2p)(N-2p+1)}{2}\ \ \ \text{and}\ \ \ n_{2m_j}=\begin{dcases}
        N-2p+2&p\text{ even and }m_j=p-1\\
        N-2p+1&p\text{ odd and }m_j=p-1\\
        1&\text{otherwise},
    \end{dcases}\] 
    where $N=2n+1$ in type $\rB_n$ and $N=2n$ in type $\rD_n$.

    \item In Case (4) of Theorem \ref{thm: classification weighted dynkin}, $\{m_j\}=\{0,1,3,5\}$, $n_2=1$, $n_{10}=1$ and $n_0$ and $n_6$ are given as follows:
\begin{center} 
    \begin{tabular}
        {|c|c|c|c|c|}\hline
        $\fg$&$\rE_6$&$\rE_7$&$\rE_8$&$\rF_4$\\\hline
        $n_0$&$8$&$21$&$52$&$3$\\\hline
        $n_6$&$8$&$14$&$26$&$5$\\\hline
    \end{tabular}
\end{center}
\end{enumerate}
\end{proposition}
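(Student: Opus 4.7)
The computation rests on formula \eqref{eq nj expression}, namely $n_j = \dim \fg_j - \dim \fg_{j+2}$, combined with the root-space description \eqref{eq:weight-spdirectsumroot-sp}. For a given weighted Dynkin diagram, $\dim \fg_j$ for $j \neq 0$ equals the number of positive roots $\alpha = \sum a_i \alpha_i$ with $\sum_i a_i \alpha_i(h) = j$, while $\dim \fg_0$ is fixed by Proposition \ref{prop g0 isomorphism type}. The four cases of Theorem \ref{thm: classification weighted dynkin} will then be treated separately.

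\textbf{Cases (1) and (2).} Case (1) is the classical result of Kostant on principal $\fsl_2$-triples: the $m_j$ are the exponents of $\fg$, each with multiplicity one, except for the middle exponent $2n-1$ in type $\rD_{2n}$ which has multiplicity two; moreover $\fc = \fa$ is Cartan, so $n_0 = 0$. Case (2) is also structural: exactly one node is labeled $2$, so every positive root satisfies $\alpha(h) \in \{0,2\}$, giving a $3$-grading $\fg = \fg_{-2}\oplus\fg_0\oplus\fg_2$. Hence $\{m_j\} = \{0,1\}$ with $n_2 = \dim \fg_2$ and $n_0 = \dim \fc = \dim \fg_0 - n_2$; the tabulated numerics follow from Proposition \ref{prop g0 isomorphism type} together with the standard description of the maximal parabolic associated to the Shilov boundary in each Hermitian tube-type algebra.

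\textbf{Case (3).} Here $\fg = \fso_N\C$ with $N = 2n+1$ in type $\rB_n$ and $N = 2n$ in type $\rD_n$. Theorem \ref{thm: partition classification} identifies the magical nilpotent via the partition $(2p-1, 1, \ldots, 1)$ on the standard representation $V = \C^N$; thus as an $\fsl_2\C$-module
\begin{equation*}
V \cong S^{2p-2} \oplus (N-2p+1)\, S^0,
\end{equation*}
where $S^k$ denotes the $(k+1)$-dimensional irreducible $\fsl_2\C$-module. Combining the $\fsl_2\C$-module isomorphism $\fg \cong \Lambda^2 V$ with Clebsch--Gordan gives
\begin{equation*}
\fg \cong \Lambda^2 S^{2p-2}\ \oplus\ (N-2p+1)\, S^{2p-2}\ \oplus\ \binom{N-2p+1}{2} S^0,
\end{equation*}
and $\Lambda^2 S^{2p-2} \cong S^{4p-6}\oplus S^{4p-10}\oplus\cdots\oplus S^{2}$ contributes exactly one copy of $S^{2m}$ for each $m \in \{1,3,\ldots,2p-3\}$. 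Collecting terms recovers $n_0 = \binom{N-2p+1}{2}$ and $n_{2m_j} = 1$ for $m_j \in \{1,3,\ldots,2p-3\}\setminus\{p-1\}$; whether $p-1$ belongs to $\{1,3,\ldots,2p-3\}$ (controlled by the parity of $p$) decides whether the exponent $m_j = p-1$ receives an extra contribution $+1$ from $\Lambda^2 S^{2p-2}$ on top of the $N-2p+1$ from $(N-2p+1)S^{2p-2}$, matching the stated dichotomy.

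\textbf{Case (4) and the main obstacle.} For $\fg$ of type $\rE_6, \rE_7, \rE_8, \rF_4$, the weighted Dynkin diagrams of Theorem \ref{thm: classification weighted dynkin}(4) have exactly two nodes labeled $2$ (and the rest $0$), so $\alpha(h) \in \{0, 2, 4, 6, 8, 10\}$ for all positive roots $\alpha$. Enumerating positive roots (in Bourbaki order, to align with the labels on the diagram) and tallying $\alpha(h)$ produces $\dim \fg_j$ for each $j$; combining with Proposition \ref{prop g0 isomorphism type} and \eqref{eq nj expression} yields the tabulated multiplicities and confirms $\{m_j\} = \{0,1,3,5\}$. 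The main obstacle is precisely this root-by-root enumeration in each exceptional case, which is finite but tedious; a systematic approach is to group positive roots by the two-dimensional height vector $(\alpha(h_1),\alpha(h_2))$ relative to the two labeled simple roots, which reduces the bookkeeping to a tabulation against a small, explicit set of height values.
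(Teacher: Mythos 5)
Your proposal is correct, and while Cases (1), (2) and (4) follow essentially the same route as the paper (Kostant's theorem for the principal case, the $3$-grading and $\dim\fg_0$ for the Hermitian case, and direct root enumeration from the weighted Dynkin diagrams for the exceptional case), your treatment of Case (3) is genuinely different and cleaner. The paper establishes the $\rB_n$ case (and by analogy $\rD_n$) by an induction on the Dynkin diagram, tracking which positive roots of $\rB_n$ lie above each node and how the multiplicities $n_{2m_j}$ change when one passes from $\rB_{n-1}$ to $\rB_n$ --- a several-step combinatorial bookkeeping. You instead exploit the partition description from Theorem \ref{thm: partition classification} to write the standard $\fso_N\C$-module as the $\fsl_2\C$-module $V\cong S^{2p-2}\oplus(N-2p+1)S^0$, then use $\fg\cong\Lambda^2 V$ together with the formula $\Lambda^2 S^{2p-2}=S^{4p-6}\oplus S^{4p-10}\oplus\cdots\oplus S^2$; this produces $n_0=\binom{N-2p+1}{2}$, a single copy of each $S^{2m}$ for odd $m\le 2p-3$, and $N-2p+1$ extra copies of $S^{2p-2}$, so the parity of $p$ (whether $p-1$ is already odd) decides exactly whether $n_{2(p-1)}$ is $N-2p+2$ or $N-2p+1$. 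This route avoids the Dynkin-diagram induction entirely, handles $\rB_n$ and $\rD_n$ uniformly, and makes the dichotomy at $m_j=p-1$ transparent rather than emergent from the root count. The paper's inductive method has the minor virtue of staying within the weighted-Dynkin-diagram framework that the surrounding sections rely on, but your $\Lambda^2$ argument is the more efficient and structural of the two.
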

\begin{proof}
  For Case (1), all the nodes of the Dynkin diagram have label $2$, and hence the nilpotent $e\in\fg$ is principal. We have that $n_0=0$ since the $\fg$-centralizer of a principal $\fsl_2$-triple is trivial. The integers $m_j$ with $n_{2m_j}>0$ are the exponents of $\fg$ (see \cite[Chapter 4]{CollMcGovNilpotents}). 

  For Case (2), there is one root $\alpha_{M}$ with label $2$ and all other roots are labeled $0$. Moreover, if $\sum\limits a_i\alpha_i$ is the expression of the highest root, then $a_M=1$. Thus, the $\ad_h$-weight space decomposition is $\fg=\fg_{-2}\oplus\fg_0\oplus\fg_2$ and the $\fsl_2\C$-module  decomposition is $\fg=W_0\oplus W_2$.  We have $\dim(\fg_0)=n_0+n_2$ and $\dim(\fg)=3n_2+n_0$, hence $n_2=\frac{\dim(\fg)-\dim(\fg_0)}{2}$. 

  We compute the cases of $\rA_{2n-1}$ and leave the rest to the reader. For the $\rA_{2n-1}$ weighted Dynkin diagram we have $\fg_0=\fsl_{n-1}\C\oplus\C\oplus\fsl_{n-1}\C$. Hence,
    \[\xymatrix{n_2=\frac{(4n^2-1)-(2n^2-2)-1}{2}=n^2&\text{and}&n_0=\dim(\fg)-3n_2=n^2-1.}\]

For Case (3), $\rB_n$ and $\rD_n$ are similar. We will prove the $\rB_n$-case and leave $\rD_n$ to the reader. The proof is by induction, showing that 
\newsavebox{\boxA}
\sbox{\boxA}{
    \begin{tikzpicture}[scale=.4]
        \draw (-1,0) node[anchor=east]  {$\rB_{n-1}$:};
    \foreach \x in {0,...,6}
    \draw[xshift=\x cm,thick] (\x cm,0) circle (.25cm);
    \node at (0,0) [below = 1mm ] {${\scriptstyle 2}$};
    \node at (2,0) [below = 1mm ] {${\scriptstyle 2}$};
    \node at (4,0) [below = 1mm ] {${\scriptstyle 2}$};
    \node at (4,0) [above = 1mm ] {${\scriptstyle \alpha_{p-2}}$};
    \node at (6,0) [below = 1mm ] {${\scriptstyle 0}$};
    \node at (8,0) [below = 1mm ] {${\scriptstyle 0}$};
    \node at (10,0) [below = 1mm ] {${\scriptstyle 0}$};
    \node at (12,0) [below = 1mm ] {${\scriptstyle 0}$};
   \draw[thick] (0.25 cm,0) -- +(1.5 cm,0);
    \draw[dotted,thick] (2.3 cm,0) -- +(1.4 cm,0);
    \draw[thick] (8.25 cm,0) -- +(1.5 cm,0);
    \draw[dotted,thick] (6.3 cm,0) -- +(1.4 cm,0);
    \draw[thick] (4.25 cm,0) -- +(1.5 cm,0);
    \draw[thick] (10.25 cm, -.1 cm) -- +(1.5 cm,0);
    \draw[thick] (10.9 cm, -.3cm) -- +(.3cm, .3cm);
    \draw[thick] (10.9 cm, .3cm) -- +(.3cm, -.3cm);
    \draw[thick] (10.25 cm, .1 cm) -- +(1.5 cm,0);
    \end{tikzpicture}
}
\newsavebox{\boxB}
\sbox{\boxB}{
\begin{tikzpicture}[scale=.4]
        \draw (-1,0) node[anchor=east]  {$\rB_{n}$:};
    \foreach \x in {0,...,6}
    \draw[xshift=\x cm,thick] (\x cm,0) circle (.25cm);
    \node at (0,0) [below = 1mm ] {${\scriptstyle 2}$};
    \node at (2,0) [below = 1mm ] {${\scriptstyle 2}$};
    \node at (4,0) [below = 1mm ] {${\scriptstyle 2}$};
    \node at (4,0) [above = 1mm ] {${\scriptstyle \alpha_{p-1}}$};
    \node at (6,0) [below = 1mm ] {${\scriptstyle 0}$};
    \node at (8,0) [below = 1mm ] {${\scriptstyle 0}$};
    \node at (10,0) [below = 1mm ] {${\scriptstyle 0}$};
    \node at (12,0) [below = 1mm ] {${\scriptstyle 0}$};
   \draw[thick] (0.25 cm,0) -- +(1.5 cm,0);
    \draw[dotted,thick] (2.3 cm,0) -- +(1.4 cm,0);
    \draw[thick] (8.25 cm,0) -- +(1.5 cm,0);
    \draw[dotted,thick] (6.3 cm,0) -- +(1.4 cm,0);
    \draw[thick] (4.25 cm,0) -- +(1.5 cm,0);
    \draw[thick] (10.25 cm, -.1 cm) -- +(1.5 cm,0);
    \draw[thick] (10.9 cm, -.3cm) -- +(.3cm, .3cm);
    \draw[thick] (10.9 cm, .3cm) -- +(.3cm, -.3cm);
    \draw[thick] (10.25 cm, .1 cm) -- +(1.5 cm,0);
    \end{tikzpicture}
}
\begin{center}
 \begin{tikzpicture}
                \node at (0,0){\usebox{\boxA}};
                \node at  (8,0){\usebox{\boxB}};
                \node at (4,0){$\Longrightarrow$};
    \end{tikzpicture}
            \end{center}
The base case was proven in Case (2). 
Let $\alpha=\sum_{j=1}^na_j\alpha_j$ be a positive root in $\rB_n$. The root space of $\alpha$ is in $\fg_{2\sum_{j=1}^{p-1}a_j}$, the $2\sum_{j=1}^{p-1}a_j$-eigenspace of $\ad_h$. 

The set of roots with $a_1=0$ defines a subsystem of type $\rB_{n-1}$, with corresponding subalgebra $\fso_{2n-1}\C\subset\fg$. 
On the other hand, there are $2n-1$ positive roots in $\rB_{n}$ with $a_1\neq0$, namely
\[\Bigg\{\beta_i=\sum_{j=1}^i\alpha_j\Bigg\}_{i\in\{1,\ldots,n\}}\cup \ \ \ \ \Bigg\{\gamma_i=\beta_n+\sum_{k=i}^n\alpha_{k}\Bigg\}_{i\in\{n,\ldots,2\}}.\]
We have 
\[\fg_{\beta_i}\subset \begin{dcases}
    \fg_{2i}&i\leq p-2\\
    \fg_{2p-2}&p-1\leq i\leq n
\end{dcases}\ \ \ \ \text{and}\ \ \ \ \fg_{\gamma_i}\subset \begin{dcases}
    \fg_{2p-2}& p\leq i\leq n\\
    \fg_{2p-2+2(p-i)}&2\leq i\leq p-1
\end{dcases}.\]
In particular, for $j\geq 0$ we have 
\[\dim(\fg_{2j})=\begin{dcases}
    \dim(\fg_{2j}\cap\fso_{2n-1}\C)+3+2n-2p&j=p-1\\
    \dim(\fg_{2j}\cap\fso_{2n-1}\C)+1&\text{otherwise}
\end{dcases}.\]
Set $\theta_{2m_j}=\dim(\fg_{2m_j}\cap\fso_{2n-1}\C)-\dim(\fg_{2m_j+2}\cap\fso_{2n-1}\C)$. Using \eqref{eq nj expression} we have
\[
  n_{2m_j}=\theta_{2m_j}+\begin{dcases}
2n-2p+2   &m_j=p-1\\
2p-2n-2&m_j=p-2\\
1&m_j=2p-3\\
0& \text{otherwise}
\end{dcases}.\]
The result for $\rB_{n-1}$ gives the values of $\theta_{2m_j}$. Thus, we have $n_{0}=(2n+1-2p)(n-p+1)$, $n_{2m_j}=1$ for $m_j\in\{1,3,\ldots,2p-3\}\setminus \{p-1\}$ and
\[n_{2p-2}=\begin{dcases}
        2n-2p+3&p\text{ even}\\
        2n-2p+2&p\text{ odd}
    \end{dcases}.\]

Finally, we refer to the diagrams in section \ref{subsec root posets exceptional} to prove Case (4). In these diagrams, the circles denote the positive roots and the integer labels correspond the $\ad_h$-eigenvalue on the root space. For $\rE_8$, we have $\dim(\fg_0)=2+\dim(\fe_6)=80$, and
  \[\xymatrix@=1em{\dim(\fg_{10})=1,&\dim(\fg_{8})=1,&\dim(\fg_6)=27,&\dim(\fg_4)=27,&\dim(\fg_2)=28.}\]
  Thus, the nonzero $n_{2m_j}$'s  are $n_{10}=1$, $n_6=26$, $n_2=1$ and $n_0=52$. This settles the $\rE_8$ case, and the other ones, $\rE_6$, $\rE_7$ and $\rF_4$, are left to the reader. 
 \end{proof}
 
\subsection{The centralizer $\fc$ and its centralizer}


The next description of the centralizer $\fc$ of a magical $\fsl_2$-triple $\{f,h,e\}\subset\fg$ follows, for classical Lie algebras, from the partition classification of magical nilpotents of Theorem \ref{thm: partition classification} and \cite[Theorem 6.1.3]{CollMcGovNilpotents}. For the exceptional Lie algebras, $\fc$ is the complexification of the last column in the tables of \cite{ExceptionalNilpotentsInner}, see Table \ref{Table of exceptional inner real forms} of Section \ref{sec diagrams and tables}. 

\begin{proposition}\label{prop c subalgebra}
    The centralizer $\fc\subset\fg$ of a magical $\fsl_2$-triple is given as follows:
    \begin{enumerate}
        \item In Case (1) of Theorem \ref{thm: classification weighted dynkin}, $\fc=0$;
        \item In Case (2) of Theorem \ref{thm: classification weighted dynkin}, 
         \begin{center}
           \begin{tabular}
           {|c|c|c|c|c|c|c|}\hline
           $\fg$&$\rA_{2n-1}$&$\rB_{n}$&$\rC_n$&$\rD_n$&$\rD_{2n}$&$\rE_7$\\\hline
           $\fc$&$\fsl_n\C$&$\fso_{2n-2}\C$&$\fso_{n}\C$&$\fso_{2n-3}\C$&$\fsp_{2n}\C$&$\ff_4$\\\hline
       \end{tabular}
       \end{center}
        \item In Case (3) of Theorem \ref{thm: classification weighted dynkin} with $\fg=\fso_N\C$, $\fc\cong \fso_{N-2p+1}\C$;
        \item In Case (4) of Theorem \ref{thm: classification weighted dynkin},
\begin{center}
    \begin{tabular}
        {|c|c|c|c|c|}\hline
        $\fg$&$\rE_6$&$\rE_7$&$\rE_8$&$\rF_4$\\\hline
       $\fc$& $\fsl_3\C$&$\fsp_6\C$&$\ff_4$&$\fso_3\C$\\\hline
    \end{tabular}
\end{center}
    \end{enumerate}
\end{proposition}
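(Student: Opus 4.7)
My plan is to read off the centralizer case-by-case directly from the partition/Dynkin-diagram classification already established in Theorems~3.2 and~3.3, combined with the general formulas for centralizers recorded earlier.

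\textbf{Case (1) -- principal nilpotent:} The weighted Dynkin diagram with all labels $2$ characterizes the principal nilpotent orbit. A standard fact (see \cite[Thm.~4.1.6]{CollMcGovNilpotents}) is that the centralizer of a principal $\fsl_2$-triple is the centre of $\fg$, which is zero in the adjoint picture. So $\fc=0$.

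\textbf{Cases (2) and (3) for classical $\fg$:} The plan is to substitute the partition given by Theorem~3.3 into the formulas of Proposition~2.18 (keeping in mind Remark~2.14: for $\fsu^*_{2m}$, $\fso^*_{2m}$, $\fsp_{2p,2q}$ one must first pass to the Young diagram in $\fg$ by doubling rows). Concretely:
\begin{itemize}
\item For $\fsu_{n,n}$ (type $\rA_{2n-1}$) the diagram has $n$ rows of length $2$, giving partition $r_2=n$ in $\fsl_{2n}\C$; Proposition~2.18 yields $\fc=\fs(\fgl_n\C)=\fsl_n\C$.
\item For $\fsp_{2n}\R$ (type $\rC_n$, tube-type case) and $\fso^*_{4n}$ (type $\rD_{2n}$) the same kind of reduction gives $\fc=\fso_n\C$ and $\fc=\fsp_{2n}\C$ respectively, using the $\fsp$/$\fso$ formulas.
\item For $\fso_{2,2n-1}$, $\fso_{2,2n-2}$, and the Case~(3) algebras $\fso_{p,q}$, the signed Young diagram of Case~(6) of Theorem~3.3 has $r_{2p-1}=1$ and $r_1=|q-p|+1$. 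Plugging into $\fc=\bigoplus_{i\text{ even}}\fsp_{r_i}\C\oplus\bigoplus_{i\text{ odd}}\fso_{r_i}\C$ immediately yields $\fc\cong\fso_{N-2p+1}\C$ (the $\fso_1\C$ summand being trivial).
\end{itemize}
Each of these computations is essentially book-keeping once the partition is pinned down.

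\textbf{Exceptional cases (parts of~(2) and all of~(4)):} For $\rE_7$ in Case~(2) and for $\rE_6,\rE_7,\rE_8,\rF_4$ in Case~(4), the strategy is to identify the relevant row in Dokovi\'c's tables \cite{ExceptionalNilpotentsInner}. In Section~3.2 these tables were already invoked (see Table~\ref{Table of exceptional inner real forms}) to classify magical nilpotents via the columns giving $\dim(\fh\cap\fc)$ and $\dim(\fh\cap V(e))$; the last column of the same tables records the isomorphism class of $\fc^\R$. Complexifying gives $\fc$: one reads off $\fc=\ff_4$ for the Hermitian $\fe_7^{-25}$ magical orbit, and $\fc=\fsl_3\C,\fsp_6\C,\ff_4,\fso_3\C$ for the quaternionic orbits in $\fe_6,\fe_7,\fe_8,\ff_4$ respectively. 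Because the underlying bijection of Theorem~3.2 between magical orbits and weighted Dynkin diagrams has already been established in \S3.2 using precisely these tables, the only remaining task is to match rows and record the entries.

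\textbf{Main obstacle:} The whole argument is largely a tabulation, and the only genuine care needed is in the exceptional entries, where one must be sure the row identified in Dokovi\'c's classification (indexed by weighted Dynkin diagram) is exactly the magical one produced in Theorem~3.2. This matching has implicitly been done already in the proof of Theorem~3.2 for the exceptional cases, so no new work is required; the proposition is essentially an extraction from the tables.
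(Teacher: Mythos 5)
Your proposal matches the paper's own (very brief) proof exactly: for classical $\fg$ it reads the partition from Theorem \ref{thm: partition classification} (and, for $\fsu^*$, $\fso^*$, $\fsp_{2p,2q}$, applies Remark \ref{remark: partition of g from gR data} to pass to the ambient Young diagram) and then substitutes into the centralizer formulas of Proposition \ref{prop: class nilp centralizers}; for the exceptional cases it reads $\fc^\R$ off the last column of Dokovi\'c's tables in \cite{ExceptionalNilpotentsInner} (already tabulated in Table \ref{Table of exceptional inner real forms}) and complexifies. Your spelled-out sample computations check out, and the Case (1) fact that the centralizer of a principal $\fsl_2$-triple is the centre of $\fg$ is the standard ingredient the paper leaves implicit.
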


We now show that a magical $\fsl_2$-triple $\{f,h,e\}\subset\fg$ arises from a principal $\fsl_2$-triple in a simple subalgebra $\fg(e)\subset\fg$.
\begin{proposition}\label{prop subalge g(e)}
    Let $\{f,h,e\}\subset\fg$ be a magical $\fsl_2$-triple and $\fc\subset\fg$ be the centralizer of $\{f,h,e\}$. Then the centralizer of $\fc$ is the direct sum $\fz(\fc)\oplus\fg(e)$, where $\fz(\fc)$ is the center of $\fc$ and $\fg(e)\subset\fg$ is a simple subalgebra such that $\{f,h,e\}$ is a principal $\fsl_2$-triple of $\fg(e)$. The subalgebra $\fz(\fc)\oplus\fg(e)$ is described as follows:
    \begin{itemize}
        \item For Case (1) of Theorem \ref{thm: classification weighted dynkin},  $\fg(e)=\fg$ and $\fz(\fc)=0$.
        \item For Case (2) of Theorem \ref{thm: classification weighted dynkin}, $\fg(e)=\langle f,h,e\rangle\cong\fsl_2\C$ and $\fz(\fc)=\{0\}$, unless $\fg\cong\fso_5\C$ in which case $\fz(\fc)=\fc\cong\C$.
        \item For Case (3) of Theorem \ref{thm: classification weighted dynkin}, $\fg(e)\cong\fso_{2p-1}\C\subset\fso_N\C=\fg$ and $\fz(\fc)=0$, unless $\fg\cong\fso_{2p+1}\C$ in which case $\fz(\fc)=\fc\cong\C$. 
        \item For Case (4) of Theorem \ref{thm: classification weighted dynkin}, $\fg(e)\cong Lie(\rG_2)$\footnote{We use the notation $Lie(\rG_2)$ for the Lie algebra of the exceptional group $\rG_2$ since $\fg_2$ denotes the weight $2$ space of $\ad_h$.} and $\fz(\fc)=0.$
    \end{itemize}
\end{proposition}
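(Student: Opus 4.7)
The plan has a uniform structural part followed by a case-by-case identification of $\fg(e)$. For the structural part: since $\fc$ is reductive in $\fg$ (Proposition \ref{prop W_0 preserves graded intersections}), the centralizer $\fz_\fg(\fc)$ is also reductive and decomposes as $\fz_\fg(\fc) = Z(\fz_\fg(\fc)) \oplus [\fz_\fg(\fc), \fz_\fg(\fc)]$. Setting $\fg(e) := [\fz_\fg(\fc), \fz_\fg(\fc)]$, the $\fsl_2\C$-subalgebra $\langle f,h,e\rangle$ lies in $\fz_\fg(\fc)$ by the defining property of $\fc$; being semisimple and hence perfect, its projection to the abelian summand $Z(\fz_\fg(\fc))$ vanishes, so $\langle f,h,e\rangle \subset \fg(e)$. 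The identification $Z(\fz_\fg(\fc)) = \fz(\fc)$ is then clean: on one hand $\fz(\fc)$ commutes with $\fc$ by definition and with $\fg(e) \subset \fz_\fg(\fc)$ by construction, giving $\fz(\fc) \subset Z(\fz_\fg(\fc))$; conversely, any $x \in Z(\fz_\fg(\fc))$ commutes with $\{f,h,e\} \subset \fz_\fg(\fc)$, so $x \in \fc$, whence $x \in \fc \cap \fz_\fg(\fc) = \fz(\fc)$.

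For the case-by-case identification of $\fg(e)$, I would use the refined $(\fsl_2\C \times \fc)$-decomposition $\fg = \fc \oplus \bigoplus_j V_{2m_j} \otimes S^{2m_j}$, where $V_{2m_j}$ is the $n_{2m_j}$-dimensional $\fc$-module of $\fsl_2\C$-highest weight vectors of weight $2m_j$ and $S^{2m_j}$ denotes the $(2m_j+1)$-dimensional $\fsl_2\C$-irrep. Taking $\fc$-invariants yields $\fg(e) = \bigoplus_j V_{2m_j}^\fc \otimes S^{2m_j}$. In Case (1), $\fc=0$ gives $\fg(e) = \fg$. In Case (2), an explicit matrix realization of $\fc$ (for example $\fsl_n\C \hookrightarrow \fsl_{2n}\C$ via $X \mapsto X \otimes \Id_2$ under $\C^{2n} \cong \C^n \otimes \C^2$) combined with Schur's lemma shows that each $V_{2m_j}$ is an irreducible $\fc$-rep, of which only the one at $m_j=1$ is trivial (and one-dimensional), giving $\fg(e) = S^2 = \langle f,h,e\rangle$; the low-rank exception $\fso_5\C$ additionally contributes an abelian summand $\fz(\fc) \cong \C$. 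In Case (3) with $\fg = \fso_N\C$, the spaces $V_{2m_j}$ for $m_j \neq p-1$ are one-dimensional and $\fc$-trivial, while $V_{2(p-1)}$ decomposes (depending on the parity of $p$) as the standard $\fso_{N-2p+1}\C$-rep possibly plus a trivial summand; the surviving invariants assemble into $\fg(e) = \bigoplus_{k=1}^{p-1} S^{2(2k-1)}$, which by dimension count $(p-1)(2p-1) = \dim \fso_{2p-1}\C$ is the principal $\fsl_2\C$-decomposition of $\fso_{2p-1}\C$. In Case (4), the data $n_2 = n_{10} = 1$ from Proposition \ref{prop: magical sl2 data}(4) matches the exponents $\{1,5\}$ of $\fg_2$; moreover $\fc$ is simple (so $\fz(\fc)=0$) and acts as a nontrivial irreducible $\fc$-representation of dimension $n_6$ on $V_6$, so $V_6^\fc = 0$ and $\fg(e) = S^2 \oplus S^{10} \cong \fg_2$. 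In every case the resulting $\fsl_2\C$-module structure of $\fg(e)$ coincides with that of a principal embedding, so $\{f,h,e\}$ is principal in $\fg(e)$.

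The main obstacle is Case (4), where no matrix model is available. The cleanest route is to invoke the classical exceptional dual pairs $(\fg_2, \ff_4) \subset \fe_8$, $(\fg_2, \fsp_6\C) \subset \fe_7$, $(\fg_2, \fsl_3\C) \subset \fe_6$, and $(\fg_2, \fsl_2\C) \subset \ff_4$, in each of which the two factors are mutual centralizers. A self-contained alternative would require determining the $\fc$-module structure on each $V_{2m_j}$ directly, which is feasible (the $n_{2m_j}$-dimensional space $V_{2m_j}$ must be an irreducible $\fc$-rep whose dimension matches a fundamental rep of $\fc$) but involves some representation-theoretic bookkeeping for the exceptional algebras.
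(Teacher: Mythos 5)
Your structural argument is a genuine and cleaner alternative to the paper's: the paper shows $\fz(\fc)\oplus\fg(e)$ equals the centralizer of $\fc$ by first computing its intersection with $\fg_0$ and then propagating across the grading via $\ad_f^j$ and $\ad_e^j$, whereas you get the same conclusion in one stroke from the fact that the centralizer of a reductive-in-$\fg$ subalgebra is reductive, with $\fg(e):=[\fz_\fg(\fc),\fz_\fg(\fc)]$ and a short computation identifying $Z(\fz_\fg(\fc))$ with $\fz(\fc)$. (One small point you elide: the identity $\fg(e)=\bigoplus_j V_{2m_j}^\fc\otimes S^{2m_j}$ needs the observation that $\fg(e)$ has no trivial $\fsl_2$-summand — any $\fsl_2$-invariant $v\in\fg(e)$ lies in $\fc\cap\fz_\fg(\fc)=\fz(\fc)$ and hence in $\fg(e)\cap\fz(\fc)=0$ — after which equality follows by dimension. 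This is easy but should be said.)

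The real gap is in the case-by-case identification, which is where your route shifts the burden rather than removing it. In Case (3) you assert that $V_{2(p-1)}$ is the standard $\fso_{N-2p+1}\C$-module (plus a trivial line when $p$ is even); this is exactly the content that needs proving, and the paper gets it for free from the partition $[2p-1,1^{N-2p+1}]$, which exhibits $e$ as principal inside $\fso_{2p-1}\C\subset\fso_N\C$ directly. Similarly in Case (4) the whole argument hinges on $V_6^\fc=0$: if instead $V_6$ contained a trivial $\fc$-summand you would get $\fg(e)\supset S^2\oplus S^6\oplus S^{10}\cong\fso_7\C$, not $Lie(\rG_2)$. You acknowledge this but defer to exceptional dual pairs $(Lie(\rG_2),\fc)\subset\fg$, which only settles the question once you know that the $\fc\subset\fg$ arising from the magical $\fsl_2$-triple is conjugate to the $\fc$ appearing in the dual pair — an identification you do not make, and which is not automatic. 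The paper sidesteps both issues via Bala–Carter: the weighted Dynkin diagrams have labels $\rB_3$ (for $\ff_4$) and $\rD_4$ (for $\fe_6,\fe_7,\fe_8$), so $e$ is principal in $\fso_7\C$ or $\fso_8\C$, and a principal triple there lies inside $Lie(\rG_2)=W_2\oplus W_{10}$; the $\fc$-centralization then follows because $n_{10}=1$ and $\fc$ (being semisimple by Proposition \ref{prop c subalgebra}) has no nontrivial characters. To make your route airtight you would need either to carry out the representation-theoretic bookkeeping you allude to, or to insert the Bala–Carter/partition inputs from the paper — at which point the advantage of your approach over the paper's is confined to the structural reduction at the start.
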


\begin{proof}
    We first identify the listed subalgebras $\fg(e)$ and show they centralize $\fc,$ then establish $\fz(\fc)\oplus\fg(e)$ is the centralizer of $\fc.$ 
   The first part is obvious in Cases (1) and (2).

    For Case (3) of Theorem \ref{thm: classification weighted dynkin}, the magical nilpotent $e\in\fso_N\C$ corresponds to the Young diagram with one row of length $2p-1$ and $N-2p+1$-rows of length $1$, by Case (6) of Theorem \ref{thm: partition classification}. This corresponds to principally embedding $e$ in $\fso_{2p-1}\C$ followed by the embedding $\fso_{2p-1}\C\subset\fso_N\C$. In this case, the centralizer $\fc$ of $\{f,h,e\}$ is isomorphic to $\fso_{N-2p+1}\C$. The centralizer of $\fg(e)=\fso_{2p-1}\C$ is also isomorphic to $\fso_{N-2p+1}\C$ and contains the centralizer of $\{f,h,e\}$.  Thus $\fc$ centralizes $\fg(e)$.

    For Case (4) of Theorem \ref{thm: classification weighted dynkin}, we use the classification of nilpotents by Bala--Carter's theory (see \cite[\S8]{CollMcGovNilpotents}). Very briefly, $\rG$-conjugacy classes of nilpotents in $\fg$ are in bijective correspondence with $\rG$-conjugacy classes of pairs $(\fl,\fp_\fl)$. Here, $\fl\subset\fg$ is the Levi factor of a parabolic subalgebra of $\fg$ and $\fp_\fl\subset\fl$ is the parabolic subalgebra of $\fl$ associated 
    to a so-called \emph{distinguished} nilpotent of the semisimple part $[\fl,\fl]$ of $\fl$, i.e.~one which does not belong to any proper Levi subalgebra of $[\fl,\fl]$. In particular, the principal nilpotent in $[\fl,\fl]$ is distinguished and corresponds to the Borel subalgebra of $\fl$.  

    In the tables of \cite[\S8.4]{CollMcGovNilpotents}, the label of the nilpotent has the form $X_N(a_i)$, where $X_N$ is the type of the associated Levi $\fl$ and $a_i$ is the number of simple roots in a Levi of $\fp_\fl$. The notation $X_N(a_0)=X_N$ is used and, in this case, the associated distinguished nilpotent of $\fl$ is principal. The labels of the weighted Dynkin diagrams of the magical nilpotents from Case (4) of Theorem \ref{thm: classification weighted dynkin} are $\rB_3$ for $\fg=\ff_4$ and $\rD_4$ for $\fg=\fe_6,~\fe_7,~\fe_8$. 
    Thus, the magical nilpotent in $\rF_4$ arises from the principal nilpotent in $\fso_7\C\subset\ff_4$ and the magical nilpotents in type $\rE_i$ arise from a principal nilpotent in $\fso_8\C\subset\fe_i$ for $i=6,7,8$. 

 Now, a principal nilpotent in $\fso_7\C$ or $\fso_8\C$ is induced by a principal nilpotent in a subalgebra isomorphic of type $\rG_2$, $Lie(\rG_2)\subset\fso_7\C\subset\fso_8\C$. More precisely, for a principal $\fsl_2$-triple $\{f,h,e\}\subset\fso_7\C\subset\fso_8\C$, the $\fsl_2\C$-module decomposition is 
    \[W_2\oplus W_6\oplus W_{10,}\]
    where the multiplicity $n_6$ of $W_6$ is $1$ for $\fso_7\C$ and $2$ for $\fso_8\C$, and
    \[Lie(\rG_2)\cong W_2\oplus W_{10.}\]
    Recall from Proposition \ref{prop: magical sl2 data} that the magical $\fsl_2$-triple in $\fg=\ff_4,~\fe_6,~\fe_7,~\fe_8$ of Case (4) of Theorem \ref{thm: classification weighted dynkin}, induces the $\fsl_2\C$-module decomposition 
    \[\fg=W_0\oplus W_2\oplus W_6\oplus W_{10},\] and we have $\fg(e)=W_2\oplus W_{10}\cong Lie(\rG_2)$. 

To complete the proof we claim that $\fc$ centralizes $W_2\oplus W_{10}$. We have $W_2=\langle f,h,e\rangle$ and hence $\fc$ commutes with $W_2$. The multiplicity of $n_{10}$ is 1. Hence $Z_{10}=W_{10}\cap\fg_0$ is 1-dimensional and $\fc$ acts by a character on $Z_{10}$. But $\fc$ has no nontrivial characters by Proposition \ref{prop c subalgebra}. The space $W_{10}$ is generated by the action of $W_2$ on $Z_{10}$, so $\fc$ centralizes 
$\fg(e)=W_2\oplus W_{10}$. 

Finally we argue that $\fz(\fc)\oplus\fg(e)$ is equal to the
centralizer of $\fc.$ By Proposition \ref{prop c subalgebra},
\begin{equation}
  \label{eq:def-re-fgss}
  \fg_0=\C^{\rkfge}\oplus\tilfg,
\end{equation}
where $\rkfge = \rk(\fg(e))$ and $\tilfg=\fg_{0,ss}\subset \fg_0$
is the semisimple part of $\fg_0$.
Moreover, $\fc\subset\tilfg$ is the complexification of the maximal compact subalgebra of $\tilfg.$ By construction $\fg(e)\cap\fg_0=\C^{\rkfge}$.  Since $\tilfg$ has a trivial center, the only elements in $\tilfg$ which centralize $\fc$ is the center of $\fc.$ 
From Proposition \ref{prop c subalgebra}, $\fz(\fc)=0$ except when $\fc=\fso_2\C=\C.$  
So the intersection of the centralizer of $\fc$ with $\fg_0$ is $\fg(e)\oplus\fz(\fc).$ 
Let $x$ be an arbitrary element of the centralizer of $\fc$ and write $x=\sum x_{2j}$ for $x_{2j}\in\fg_{2j}.$ Since $[\fc,\fg_j]\subset\fg_j$ we must have $[x_j,\fc]=0$ for all $j.$ 
For $j>0$, we have $[\fc,\ad_f^{j}x_j]=0$ and $\ad_f^{j}x_j\in\fg_0\cap(\fg(e))$, and, for $j<0$, we have $[\fc, \ad_e^j(x_j)]=0$ and $\ad_e^jx_j\in\fg_0\cap(\fg(e))$. 
Since $\{f,h,e\}\subset\fg(e),$ we conclude that $x_j\in\fg(e)$ for all $j\neq0.$ Hence, $\fz(\fc)\oplus\fg(e)$ is the centralizer of $\fc.$
\end{proof}
The following proposition is immediate from Propositions \ref{prop: magical sl2 data} and \ref{prop subalge g(e)} (and proof).
\begin{proposition}\label{prop sl2 decomp of g(e)}
  Let $\{f,h,e\}\subset\fg$ be a magical $\fsl_2$-triple and $\fg=\bigoplus_{j=0}^MW_{2m_j}$ be the $\fsl_2\C$-module decomposition. 
  \begin{itemize}
      \item For Case (3) of Theorem \ref{thm: classification weighted dynkin}, we have 
      \[\fg(e)\cong\fso_{2p-1}\C=\begin{dcases}
          \bigoplus_{j=1}^{p-1}W_{4j-2}&p\text{ odd}\\
          (W_{2p-2}\cap\fg(e))\oplus\bigoplus_{j=1, j\neq \frac{p}{2}}^{p-1}W_{4j-2}&p\text{ even}
      \end{dcases}\]
      \item For Case (4) of Theorem \ref{thm: classification weighted dynkin}, $\fg(e)\cong Lie(\rG_2)=W_2\oplus W_{10}$.
   \end{itemize}  
\end{proposition}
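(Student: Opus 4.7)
The plan is to combine Propositions \ref{prop: magical sl2 data} and \ref{prop subalge g(e)} with Kostant's description of the $\fsl_2$-module decomposition under a principal $\fsl_2$-triple. First I would invoke Proposition \ref{prop subalge g(e)} to identify $\fg(e)\cong\fso_{2p-1}\C$ in Case (3) and $\fg(e)\cong Lie(\rG_2)$ in Case (4), and to note that $\{f,h,e\}$ is principal inside $\fg(e)$ in each case. Since a principal $\fsl_2$-triple in a simple Lie algebra $\fg'$ decomposes $\fg'$ as $\bigoplus_iW_{2\ell_i}$ with the $\ell_i$ being the exponents of $\fg'$, and since the exponents of $\rB_{p-1}$ are $\{1,3,\ldots,2p-3\}$ while the exponents of $\rG_2$ are $\{1,5\}$, this immediately produces the abstract isomorphisms $\fg(e)\cong\bigoplus_{j=1}^{p-1}W_{4j-2}$ in Case (3) and $\fg(e)\cong W_2\oplus W_{10}$ in Case (4).

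The remaining task is to realize each abstract irreducible summand as a concrete subspace of the ambient $\fg$. I would use the fact that $\fg(e)\subset\fg$ is $\fsl_2$-invariant, so each irreducible piece $W_{4j-2}$ of $\fg(e)$ must embed into the isotypic component of weight $4j-2$ in $\fg$. Whenever the ambient multiplicity $n_{4j-2}$ recorded in Proposition \ref{prop: magical sl2 data} equals $1$, the abstract summand is forced to coincide with the full isotypic component $W_{4j-2}\subset\fg$. For Case (4) this closes the argument at once, since Proposition \ref{prop: magical sl2 data} gives $n_2=n_{10}=1$.

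The one delicate point, and the step I expect to require a brief discussion, is Case (3) when $p$ is even. Here the weight $2(p-1)$ is attained at the index $j=p/2$ and its ambient multiplicity $n_{2(p-1)}=N-2p+2$ is strictly greater than $1$. For $p$ odd the ambiguity never arises because $p-1$ is even and hence distinct from every value $2j-1$, so $n_{4j-2}=1$ for all $j$ and the clean formula $\fg(e)=\bigoplus_{j=1}^{p-1}W_{4j-2}$ holds. For $p$ even, all weights $4j-2$ with $j\neq p/2$ still have multiplicity $1$ in $\fg$ by Proposition \ref{prop: magical sl2 data}, so those summands of $\fg(e)$ again coincide with full isotypic components of $\fg$; at $j=p/2$, however, $\fg(e)$ contributes only a single irreducible copy of dimension $2p-1$ inside the larger isotypic component of $\fg$, and I would denote this copy by $W_{2p-2}\cap\fg(e)$. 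The uniqueness of this copy, which justifies writing it as an intersection, is forced by the principal $\fsl_2$-decomposition of $\rB_{p-1}$, in which the exponent $p-1$ appears with multiplicity $1$.
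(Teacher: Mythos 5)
Your proof is correct and follows essentially the same route as the paper: Proposition \ref{prop subalge g(e)} identifies $\fg(e)$ together with the fact that $\{f,h,e\}$ is principal there, Kostant's exponent decomposition for principal $\fsl_2$-triples in $\rB_{p-1}$ and $\rG_2$ gives the abstract list of irreducibles, and the multiplicity data in Proposition \ref{prop: magical sl2 data} then identifies each summand with an ambient isotypic component (with the single multiplicity-one intersection at $j=p/2$ when $p$ is even). This is precisely the content the paper compresses into ``immediate from Propositions \ref{prop: magical sl2 data} and \ref{prop subalge g(e)} (and proof).''
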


Finally, we prove the following lemma which will be useful in the next section.
\begin{lemma}\label{lemma:Ccentralizesg(e)}
    Let $\{f,h,e\}\subset\fg$ be a magical $\fsl_2$-triple and $\rG$ be a connected Lie group with Lie algebra $\fg$, such that the involution $\sigma_e$ in \eqref{eq magical involution} integrates to $\rG$. Let $\rC\subset\rG$ be the centralizer of $\{f,h,e\}$. Then $\rC$ centralizes the subalgebra $\fg(e)\subset\fg$ described in Proposition \ref{prop subalge g(e)}.
\end{lemma}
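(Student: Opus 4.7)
The plan is to combine the Lie algebra commutation $[\fc,\fg(e)]=0$ from Proposition~\ref{prop subalge g(e)} with a case-by-case check along Theorem~\ref{thm: classification weighted dynkin}. Exponentiating the Lie algebra fact shows that $\rC^0$ centralizes $\fg(e)$, so it suffices to prove that $\Ad(c)\vert_{\fg(e)}=\Id$ for every $c\in\rC$. Fix such a $c$. Since $\Ad(c)$ fixes $f,h,e$, it commutes with the $\fsl_2\C$-action of $\fs=\langle f,h,e\rangle$, hence preserves each isotypic summand $W_{2m_j}$ of the decomposition~\eqref{eq sl2 module decomp} and, by Schur's lemma in the $\fsl_2\C$-equivariant setting, acts as $\pi_j(c)\otimes\Id$ on $W_{2m_j}\cong V_j\otimes S^{2m_j}$, where $V_j$ is the multiplicity space. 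Since $\fg(e)$ is intrinsically defined (as the semisimple part of the centralizer of $\fc$), it is $\Ad(\rC)$-invariant, so $\Ad(c)\vert_{\fg(e)}$ is an automorphism of $\fg(e)$ fixing $\fs$ pointwise; the claim reduces to showing that this restricted automorphism is trivial.

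For Case~(1) of Theorem~\ref{thm: classification weighted dynkin} we have $\fg(e)=\fg$ and $\fc=0$, and I will appeal to the classical fact that the centralizer of a principal $\fsl_2$-triple lies in $Z(\rG)$, which acts trivially on $\fg$ under $\Ad$. For Case~(2), $\fg(e)=\fs$, so fixing $\fs$ pointwise is fixing $\fg(e)$ pointwise. For Case~(3), I will use the block realization $\fc\oplus\fg(e)\cong\fso_{N-2p+1}\C\oplus\fso_{2p-1}\C\subset\fso_N\C$ coming from Propositions~\ref{prop c subalgebra} and~\ref{prop subalge g(e)}: since $\{f,h,e\}$ is principal in the $\fso_{2p-1}\C$-block and the centre of $\rSO_{2p-1}\C$ is trivial, any $c\in\rC$ must act on that block through the trivial group and hence fix $\fg(e)$ pointwise.

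The hard part will be Case~(4), where $\fg(e)\cong\mathrm{Lie}(\rG_2)=W_2\oplus W_{10}$ and $\fs$ is a principal $\fsl_2$ of $\fg(e)$. By the reduction above, $\Ad(c)\vert_{\fg(e)}$ acts by a scalar $\lambda\in\C^\times$ on the $1$-dimensional multiplicity space of $W_{10}$; the symmetric pair structure $[W_{10},W_{10}]\subset W_2$ and the triviality of $\Ad(c)$ on $W_2$ give only $\lambda^2=1$, and the nontrivial possibility $\lambda=-1$ is the Cartan involution of the symmetric pair $(\mathrm{Lie}(\rG_2),\fso_4)$, which must be excluded. My plan is to exploit the global $\fsl_2\C$-equivariant bracket structure on $\fg=W_0\oplus W_2\oplus W_6\oplus W_{10}$: the cross-component brackets $[W_{10},W_6]\to W_6\oplus W_{10}$ and $[W_{10},W_0]\to W_{10}$ must be nonzero by simplicity of $\fg$, and these, combined with the multiplicities $n_{2m_j}$ of Proposition~\ref{prop: magical sl2 data} and the reductive structure of $\fg_0$ given in Proposition~\ref{prop g0 isomorphism type}, will yield multiplicative relations among the scalars $\pi_j(c)$ that force $\lambda=1$. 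Equivalently, one can verify case by case, using the tables of~\cite{CollMcGovNilpotents} and~\cite{ExceptionalNilpotentsInner}, that the component group $\rC/\rC^0$ is trivial for each of the four exceptional orbits in Case~(4), reducing the claim to the already-handled identity component.
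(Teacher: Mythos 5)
Your overall strategy—reduce to the assertion that $\Ad(c)\vert_{\fg(e)}$ is an automorphism of $\fg(e)$ fixing the principal $\fsl_2$-triple $\fs$ pointwise, and argue it is therefore the identity—is genuinely different from the paper's proof. The paper handles Cases (3) and (4) topologically: after reducing to $\rG$ simply connected, it identifies $\pi_0(\rC)$ with $\pi_1(\rG\cdot e)$ and reads off from the tables in \cite{CollMcGovNilpotents} that this group is trivial in Case (4), while for Case (3) it computes the $\rSO_N\C$-centralizer of $\{f,h,e\}$ explicitly and observes that it equals the centralizer of $\fg(e)\cong\fso_{2p-1}\C$. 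Your Lie-algebra-automorphism approach could, in principle, be cleaner—but it has a gap.

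The missing ingredient, in both Cases (3) and (4), is the fact that $\fg(e)$ has \emph{no outer automorphisms}. In Case (3), $\fg(e)\cong\fso_{2p-1}\C$ is of type $\rB_{p-1}$, and in Case (4), $\fg(e)\cong Lie(\rG_2)$ is of type $\rG_2$; in both cases $\mathrm{Out}(\fg(e))$ is trivial. An automorphism of a simple Lie algebra that fixes a principal $\fsl_2$-triple pointwise preserves the Cartan $Z_{\fg(e)}(h)$ and the positive system, hence induces a diagram automorphism; once that diagram automorphism is trivial, the automorphism is inner and lies in the centralizer of the principal $\fsl_2$ in the adjoint group, which is the (trivial) center. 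Your Case (3) argument only invokes the triviality of $Z(\rSO_{2p-1}\C)$, which handles the inner part but silently assumes the automorphism is inner; $\Ad(c)$ is inner in $\fg$, but its restriction to the subalgebra $\fg(e)$ need not a priori be inner in $\fg(e)$.

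In Case (4) the proposal goes further astray. The claim ``$[W_{10},W_{10}]\subset W_2$'' is false: if it held, the linear map that is $+1$ on $W_2$ and $-1$ on $W_{10}$ would be a nontrivial Lie algebra automorphism of $Lie(\rG_2)$ fixing $\fs$ pointwise, contradicting the triviality of $\mathrm{Out}(\rG_2)$ and $Z(\rG_2^{\mathrm{ad}})$. (Indeed, the $W_{10}$-component of $[W_{10},W_{10}]$ is nonzero, and that alone already forces $\lambda^2=\lambda$, i.e.\ $\lambda=1$, so the ``hard case'' you worry about never arises.) The identification of the hypothetical $\lambda=-1$ with the Cartan involution of $(Lie(\rG_2),\fso_4)$ is also incorrect: that Cartan involution is $\sigma_e\vert_{\fg(e)}$, which sends $e\mapsto -e$, $f\mapsto -f$, so it does not fix $\fs$ pointwise and does not act by a single scalar on $W_{10}$. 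Finally, the proposed ``multiplicative relations among the scalars $\pi_j(c)$'' does not make sense as stated, since for $W_0=\fc$ and $W_6$ the multiplicity spaces have dimension $>1$ and $\pi_j(c)$ is not a scalar. Your fallback—checking that $\rC/\rC^0$ is trivial case by case—is essentially the paper's proof for Case (4) (via $\pi_1(\rG\cdot e)$ for $\rG$ simply connected), but you would still need to add the reduction to the simply connected case, which the proposal omits.
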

\begin{proof}
In cases (1) and (2) of Theorem \ref{thm: classification weighted dynkin}, this is immediate, since $\rC$ is the center of $\rG$ in Case (1) and $\fg(e)=\{f,h,e\}$ in Case (2). 
For cases (3) and (4), note that we have $[\fc,\fg(e)]=0$ by Proposition \ref{prop subalge g(e)}. Thus, we must understand how the group of components $\pi_0(\rC)$ acts on $\fg(e)$. Note that it suffices to show that $\rC$ acts trivially when $\rG$ is simply connected. 

For $\rG$ simply connected and $e\in\fg$ a nilpotent, the fundamental group of the $\rG$-orbit $\rG\cdot e\subset\fg$ is given by the components of $\rC$ (see \cite[Lemma 6.1.1]{CollMcGovNilpotents}),
\[\pi_1(\rG\cdot e)=\pi_0(\rC).\]
For Case (4) of Theorem \ref{thm: classification weighted dynkin}, $\pi_1(\rG\cdot e)$ is trivial  (see \cite[\S8.4]{CollMcGovNilpotents}). Thus, $\rC$ is connected and we conclude that $\rC$ acts trivially on $\fg(e)$. 

For Case (3), we have $\pi_1(\rG\cdot e)=\pi_0(\rC)=\Z/2\Z$ \cite[\S6.1]{CollMcGovNilpotents}. The $\rSO_N\C$-centralizer of $\{f,h,e\}$ also has two connected components since it is given by $\rS(\rO_1\C\times \rO_{N-2p+1}\C)\cong\rO_{N-2p+1}\C$ \cite[Theorem 6.1.3]{CollMcGovNilpotents}. Thus, it suffices to prove that the $\rSO_N\C$-centralizer of $\{f,h,e\}$ also centralizes $\fg(e)$. In this case, we have $\fg(e)\subset\fg$ is $\fso_{2p-1}\C\subset\fso_N\C$, and the $\rSO_N\C$-centralizer of $\fso_{2p-1}\C$ is $\rS(\rO_1\C\times\rO_{N-2p+1}\C)$. Thus, $\rC$ centralizes $\fg(e)$.
\end{proof}

\subsection{The Cayley real form}

By Proposition \ref{prop g0 isomorphism type} the subalgebra $\fg_0\subset\fg$ associated to a magical $\fsl_2$-triple has the form 
\[\fg_0=\C^{\rkfge}\oplus\tilfg=\fg_0\cap\fg(e)\oplus\tilfg.\]
Recall that it has a special real form -- the Cayley real form -- denoted by $\fg_\cC^\R$ and defined in Definition \ref{def: Cayley real form}.

\begin{proposition}\label{prop cayley real form class}
Let $\fg_\cC^\R\subset\fg_0$ be the Cayley real form associated a magical $\fsl_2$-triple $\{f,h,e\}\subset\fg$. Then 
\[\fg_\cC^\R\cong\R^{\rkfge}\oplus\tilfg^\R,\]
where $\tilfg^\R\subset\tilfg$ is the real form with complexified maximal compact subalgebra $\fc\subset\fg$. Thus,
    \begin{enumerate}
        \item for Case (1) of Theorem \ref{thm: classification weighted dynkin}, $\fg_\cC^\R\cong\R^{ \rk\fg}$.
        \item for Case (2) of Theorem \ref{thm: classification weighted dynkin},
          \begin{center}
           \begin{tabular}
           {|c|c|c|c|c|c|c|}\hline
           $\fg$&$\rA_{2n-1}$&$\rB_{n}$&$\rC_n$&$\rD_n$&$\rD_{2n}$&$\rE_7$\\\hline
           $\fg_\cC^\R$&$\R\oplus\fsl_n\C$&$\R\oplus\fso_{1,2n-2}$&$\R\oplus\fsl_{n}\R$&$\R\oplus\fso_{1,2n-3}$&$\R\oplus\fsu_{2n}^*$&$\R\oplus\fe_6^{-26}$\\\hline
       \end{tabular}
       \end{center}
       \item for Case (3) of Theorem \ref{thm: classification weighted dynkin}, $\fg_\cC^\R\cong \R^{ p-1}\oplus\fso_{1,N-2p+1}$.
       \item for Case (4) of Theorem \ref{thm: classification weighted dynkin}, 
       \begin{center}
    \begin{tabular}
        {|c|c|c|c|c|}\hline
        $\fg$&$\rE_6$&$\rE_7$&$\rE_8$&$\rF_4$\\\hline
       $\fg_\cC^\R$& $\R^2\oplus\fsl_3\C$&$\R^2\oplus\fsu_6^*$&$\R^2\oplus\fe_6^{-26}$&$\R^2\oplus\fsl_3\R$\\\hline
    \end{tabular}
\end{center}
    \end{enumerate}
\end{proposition}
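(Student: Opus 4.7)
The plan is to decompose $\fg_0$ into two pieces that are each stable under the involution $\theta_e$ of \eqref{eq g0 involution}, identify the induced real form on each piece, and then read off the specific summands from the classification data already tabulated in Propositions \ref{prop g0 isomorphism type} and \ref{prop c subalgebra}. Concretely, by Proposition \ref{prop subalge g(e)} the magical $\fsl_2$-triple $\{f,h,e\}$ is principal in the simple subalgebra $\fg(e)$, so the intersection $\fa(e):=\fg(e)\cap\fg_0$ is a Cartan subalgebra of $\fg(e)$ and is therefore abelian of dimension $\rk(\fg(e))$. A dimension count against Proposition \ref{prop g0 isomorphism type} shows $\fa(e)$ coincides with the center of $\fg_0$, so $\fg_0=\fa(e)\oplus\fg_{0,ss}$ as a Lie algebra direct sum. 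Since $\fg(e)$ is simple, the centralizer of the principal $\fsl_2$ in $\fg(e)$ is trivial, giving $\fc\cap\fg(e)=0$, so $\fc\subset\fg_{0,ss}$ and $\fa(e)\subset\bigoplus_{j\geq 1}Z_{2m_j}$.

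I would then verify that $\theta_e$ preserves the above decomposition and determines the real form on each piece. On $\fa(e)$ the preceding inclusion gives $\theta_e=-\Id$, so the $(+1)$-eigenspace is $0$ and the Cartan decomposition of the corresponding real form is purely non-compact; for an abelian complex Lie algebra $\C^r$ with Cartan involution $-\Id$, the (essentially unique) compatible real form is $\R^r$ (take $\tau_0$ to be the standard complex conjugation; then $\theta_e\tau_0$ fixes $i\R^r$, which is compact). Hence the $\fa(e)$-component of $\fg_\cC^\R$ is $\R^{\rk(\fg(e))}$. On $\fg_{0,ss}$ the $(+1)$-eigenspace of $\theta_e$ is $\fc$ (the entire $W_0$ lies in $\fg_{0,ss}$) and the $(-1)$-eigenspace is $\bigl(\bigoplus_{j\geq 1}Z_{2m_j}\bigr)\cap\fg_{0,ss}$, so $\theta_e$ is a Cartan involution of a real form $\fg_{0,ss}^\R\subset\fg_{0,ss}$ whose complexified maximal compact subalgebra is exactly $\fc$. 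This yields the claimed decomposition
\[\fg_\cC^\R\cong \R^{\rk(\fg(e))}\oplus\fg_{0,ss}^\R.\]

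It then remains to match $\fg_{0,ss}^\R$ explicitly in each of the four cases. Substituting the isomorphism types of $\fg_{0,ss}$ from Proposition \ref{prop g0 isomorphism type} and of $\fc$ from Proposition \ref{prop c subalgebra}, the target real form is pinned down by the list of real forms classified by their complexified maximal compact subalgebra: for example $(\fsl_n\C,\fsu_n)\leftrightarrow\fsl_n\C_\R$ in Case (2.a), $(\fso_m\C,\fso_{m-1}\C)\leftrightarrow\fso_{1,m-1}$ in Cases (2.b), (2.d), (3), $(\fsl_n\C,\fso_n\C)\leftrightarrow\fsl_n\R$ in Case (2.c) and in Case (4) for $\fg=\rF_4$, $(\fsl_{2n}\C,\fsp_{2n}\C)\leftrightarrow\fsu_{2n}^*$ in Case (2.e), $(\fe_6,\ff_4)\leftrightarrow\fe_6^{-26}$ in Case (2.f) and in Case (4) for $\fg=\rE_8$, and $(\fsl_6\C,\fsp_6\C)\leftrightarrow\fsu_6^*$ in Case (4) for $\fg=\rE_7$. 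Each of these identifications is standard from Cartan's table of real forms.

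The main difficulty is the structural step: confirming that the abelian summand of $\fg_0$ is exactly $\fa(e)=\fg_0\cap\fg(e)$, which is what forces $\theta_e=-\Id$ on it and therefore produces the split factor $\R^{\rk(\fg(e))}$ rather than some other real form of $\C^{\rk(\fg(e))}$. The subsequent case-by-case identification of $\fg_{0,ss}^\R$ is then purely a table lookup and carries no analytic content beyond the classification of real forms.
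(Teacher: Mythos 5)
Your proof is correct and follows essentially the same route as the paper: read off the decomposition $\fg_0 = \fz(\fg_0)\oplus\fg_{0,ss}$, observe that $\theta_e$ fixes exactly $\fc$ (so $\fc$ is the complexified maximal compact of the Cayley real form) and is $-\Id$ on the abelian summand, and then identify each $\fg_{0,ss}^\R$ from the tables of Propositions \ref{prop g0 isomorphism type} and \ref{prop c subalgebra}. The paper's own proof is a one-line appeal to these two propositions, so your version is simply more explicit.

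One small imprecision worth flagging: the claim that ``a dimension count against Proposition \ref{prop g0 isomorphism type} shows $\fa(e)$ coincides with the center of $\fg_0$'' is not literally a dimension count --- two abelian subalgebras of the same dimension need not coincide. What you actually need is one inclusion, e.g.\ $\fa(e)\subset\fz(\fg_0)$. This follows either by citing the assertion $\fg(e)\cap\fg_0 = \C^{\rk(\fg(e))}$ made in the proof of Proposition \ref{prop subalge g(e)}, or by the direct argument that $h\in\fa(e)$ is central in $\fg_0$, that the remaining summands $Z_{2m_j}\subset\fa(e)$ with $n_{2m_j}=1$ satisfy $[Z_{2m_j},\fc]=0$ (since $\fg(e)$ centralizes $\fc$) and $[Z_{2m_i},Z_{2m_j}]=0$ by Proposition \ref{prop: bracket relation on g0} (distinct indices vanish; equal indices give a $1$-dimensional bracket landing in $\fc$, killed by the Killing-form orthogonality to $\fc$), and that when $n_{2m_c}>1$ the one-dimensional piece $\fg(e)\cap Z_{2m_c}$ similarly centralizes $Z_{2m_c}$. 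Once the inclusion is in hand the dimension count does finish. Everything else --- the identification of the split torus factor $\R^{\rk(\fg(e))}$, and the case-by-case matching of $(\fg_{0,ss},\fc)$ to real forms via Cartan's classification --- is sound.
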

\begin{proof}
    The Cayley real form is the real form of $\fg_0$ with the property that the complexification of the maximal compact subalgebra is $\fc$. The classification follows from Proposition \ref{prop c subalgebra}.
\end{proof}

\begin{remark}\label{rem R factors of cayley}
    Note that, in all of the cases, each $Z_{2m_j}$ with $n_{2m_j}=1$ contributes with an $\R$-factor to $\fg_\cC^\R$. In Case (2), the $\R$-factor of $\fg_\cC^\R$ is given by $\langle h\rangle$, the real span of $h$, and in Case (3), with $p$ even, an additional $\R$-factor arises from $\fg(e)\cap Z_{2p-2}$.
\end{remark}

Let $\fg^\R\subset\fg$ be any real form of a complex reductive Lie algebra, with complexified Cartan decomposition $\fg=\fh\oplus\fm$. Recall that the \emph{real rank} of $\fg^\R$ is defined to be the maximal dimension of a subalgebra $\fa\subset\fm$ such that the direct sum of $\fa$ with its $\fh$-centralizer is a Cartan subalgebra of $\fg$. 

From Propositions \ref{prop canonical real form} and \ref{prop cayley real form class}, a simple comparison of the real ranks (see for instance Appendices C.3 and C.4 of \cite{knappbeyondintro}) proves the next result.
\begin{proposition}\label{prop real ranks the same}
     Let $\{f,h,e\}\subset\fg$ be a magical $\fsl_2$-triple. Then the real rank of the canonical real form $\fg^\R$ equals the real rank of the Cayley real form $\fg^\R_\cC$. 
 \end{proposition}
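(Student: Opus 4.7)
The plan is to carry out the case-by-case comparison that the authors allude to, using the fact that real rank is additive with respect to direct sums of reductive Lie algebras and that an abelian summand $\mathbb{R}^k$ contributes $k$ to the real rank. Using Proposition \ref{prop cayley real form class}, for each of the four families of magical $\fsl_2$-triples, I would write
\[
\mathrm{rk}_{\mathbb{R}}(\fg_{\cC}^{\R}) \;=\; \mathrm{rk}(\fg(e)) \;+\; \mathrm{rk}_{\mathbb{R}}(\fg_{0,ss}^{\R}),
\]
where the first term is read off from the number of $\R$-factors (using Remark \ref{rem R factors of cayley} to account for the extra $\R$-factor coming from $\fg(e)\cap Z_{2p-2}$ when $p$ is even in Case (3)) and the second term is obtained from the standard real rank of each classical or exceptional real form appearing in Proposition \ref{prop cayley real form class}.

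Then, using Proposition \ref{prop canonical real form}, I would compare to the real rank of the canonical real form $\fg^{\R}$ as tabulated in a standard reference such as Appendix C of Knapp. Concretely: in Case (1), both ranks equal $\mathrm{rk}\,\fg$; in Case (2) both equal the real rank of the Hermitian tube type algebra (for example, for $\rA_{2n-1}$ one gets $1+\mathrm{rk}_{\mathbb{R}}(\fsl_n\C)=1+(n-1)=n=\mathrm{rk}_{\mathbb{R}}(\fsu_{n,n})$, and similarly for the other six rows); in Case (3) one computes $(p-1)+\mathrm{rk}_{\mathbb{R}}(\fso_{1,N-2p+1})=p=\min(p,N-p)=\mathrm{rk}_{\mathbb{R}}(\fso_{p,N-p})$; and in Case (4) one checks $2+\mathrm{rk}_{\mathbb{R}}(\fg_{0,ss}^{\R})=4$ equals the real rank of the quaternionic real form in each of the four exceptional cases ($\fe_6^2,\fe_7^{-5},\fe_8^{-24},\ff_4^4$ all have real rank $4$, while e.g.\ $\fsu_6^*$ and $\fe_6^{-26}$ contribute $2$, as does $\fsl_3\C$ viewed as a real form, and $\fsl_3\R$).

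There is no substantive obstacle here, so I would not expect a hard step; the only thing to be careful about is the bookkeeping for the $\rD_n$ Case (3) when $p$ is even (because of the extra $\R$ in the Cayley form) and ensuring that the conventions for the two $\rD_n$ cases in Case (2) (with $2+p=n$ vs.\ $2+p=2n-2$) are consistent with the real rank values used. Optionally, one could give a more conceptual argument by observing that the semisimple element $h$ of the magical triple lies in $\fm^{\R}$ and that the split $\fsl_2\R$-subalgebra $\fs^{\tau}\subset\fg^{\R}$, together with a maximal abelian subspace of the non-compact part of $\fg(e)^{\R}$ extended by the non-compact Cartan of $\fg_{0,ss}^{\R}$, realizes a maximal split Cartan in $\fg^{\R}$; but since the tables already confirm the equality, the direct comparison is cleaner and is the approach I would present.
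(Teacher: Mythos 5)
Your proof is correct and takes exactly the same route the paper does: the paper's own proof is a one-line reference to a "simple comparison of real ranks" using Propositions \ref{prop canonical real form} and \ref{prop cayley real form class} together with Knapp's tables, and your proposal simply carries out that comparison in detail. The arithmetic you give in each of the four families checks out (Case (1): $\rk\fg=\rk\fg$; Case (2): $1+\rk_{\R}(\fg_{0,ss}^{\R})$ matches the real rank of the tube-type form in each row; Case (3): $(p-1)+1=p=\min(p,N-p)$; Case (4): $2+2=4$ in all four exceptional quaternionic cases), and your caveat about the $p$ even subcase of Case (3) is unnecessary since Proposition \ref{prop cayley real form class} already records the Cayley real form uniformly as $\R^{p-1}\oplus\fso_{1,N-2p+1}$ so the count is the same regardless of the parity of $p$.
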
 

We will also need the notion of the Cayley group $\rG^\R_\cC$.

\begin{definition}\label{def cayley group}
    Let $\{f,h,e\}\subset\fg$ be a magical $\fsl_2$-triple with Cayley real form $\fg_\cC^\R=(\R^+)^{\rkfge}\oplus\tilfg^\R$. Let $\rG$ be a connected Lie group with Lie algebra $\fg$ such that the involution $\sigma_e$ from \eqref{eq magical involution} integrates to an involution $\sigma_e:\rG\to\rG$. Let $\rG^\R\subset\rG$ be the canonical real form and $\rC\subset\rG$ be the centralizer of $\{f,h,e\}$. Then the \emph{Cayley group} of $\{f,h,e\}$ and $\rG$ is the group 
    \[\rG_\cC^\R=(\R^+)^{\rkfge}\times \tilrG^\R~,\] where $\tilrG^\R$ is the real Lie group with Lie algebra $\tilfg^\R$ and maximal compact $\rC\cap\rG^\R$.
\end{definition}
\begin{remark}
    In general, the complexification of the maximal compact of the Cayley group $\rG^\R_\cC$ is $\rG^{\sigma_e}\cap\rC=\rH\cap\rC$. For a principal $\fsl_2$-triple, $\rC=\rZ(\rG)\subset\rG$ is the center of $\rG$ and $\tilfg^\R=0$. Thus $\tilrG^\R=\rZ(\rG^\R)$ is the center of $\rG^\R$. In particular,  $\rC\neq \rC\cap\rH$ in general. For example, when $\rG=\rSL_n\C$ and $\{f,h,e\}$ is a principal $\fsl_2$-triple, $\rC=\Z/n\Z$ is the center of $\rSL_n\C$ but the center of the canonical real form $\rSL_n\R$ is either $\Z/2\Z$ or trivial.
\end{remark}


    

\subsection{Lie theory structure for magical nilpotents in exceptional Lie algebras} \label{sec lie theory for except}

 Let $\{f,h,e\}\subset\fg$ be a magical $\fsl_2$-triple from Case (4) of Theorem \ref{thm: classification weighted dynkin}. In this section we will study the structure of the magical $\fsl_2$-triple in more detail. The root poset diagrams in \S\ref{subsec root posets exceptional} will be important, so frequently referenced in this discussion. In these diagrams, the labeling of a line connecting a positive root $\beta$ to a higher positive root $\gamma$ corresponds to the simple root $\alpha_j$ for which $\gamma=\beta+\alpha_j$. The labeling of every line can be deduced from the labeling of the left most line of each row.

 Recall that the $\fsl_2\C$-module decomposition \eqref{eq sl2 module decomp} is $\fg=W_0\oplus W_2\oplus W_6\oplus W_{10}$, the $\Z$-grading \eqref{eq adh weight decomp} is $\fg=\bigoplus_{j=-5}^5\fg_{2j}$ and the subalgebra $\fg(e)\subset\fg$ described in Proposition \ref{prop subalge g(e)} is $\fg(e)\cong Lie(\rG_2)=W_2\oplus W_{10}$. 
The complexified Cartan decomposition of the involution $\sigma_e:\fg\to\fg$ is $\fg=\fh\oplus\fm$, where
\begin{equation}
    \label{eq hm decomp}\xymatrix@=1em{\fh=\fg_{-8}\oplus\fg_{-4}\oplus\fg_0\oplus\fg_4\oplus\fg_8&\text{and}&\fm=\fg_{-10}\oplus\fg_{-6}\oplus\fg_{-2}\oplus\fg_{2}\oplus\fg_{6}\oplus\fg_{10}}.
\end{equation}

Each of the weight spaces $\fg_{2j}$ with $j\neq0$ is a direct sum of root spaces as in \eqref{eq:weight-spdirectsumroot-sp}. From the diagrams in \S\ref{subsec root posets exceptional}, it is clear that the weight spaces $\fg_{\pm2}$ decompose as a direct sum of two $\fg_0$-representations,
\begin{equation}
   \label{eq G0 invariant g2 decomp} \fg_{\pm2}=\fg_{\pm2}^b\oplus\fg_{\pm\tilde\alpha},
\end{equation}
where $\fg_{\tilde\alpha}$ is the root space of the simple root $\tilde\alpha$ in the diagrams in \S\ref{subsec root posets exceptional}, and $\fg_{\pm2}^b$ is the direct sum of root spaces in $\fg_{\beta}\subset\fg_{\pm2}$ with $\beta\neq\pm\tilde\alpha$.
We can then decompose $f\in\fg_{-2}$ and $e\in\fg_2$ as 
\begin{equation}
    \label{eq decomp except}\xymatrix{f=f_b+\tilde f & \text{and}&e=e_b+\tilde e,}
\end{equation} where $e_b$, $f_b$ and $\tilde e$, $\tilde f$ are the projections of $e$ and $f$ onto $\fg_{\pm2}^b$ and $\fg_{\pm\tilde\alpha}$ respectively. Define further $\tilde h=[\tilde e,\tilde f].$

\begin{lemma}\label{lem decomp except}
  Each of the terms $\tilde f$, $f_b$, $\tilde e$ and $e_{b}$ in  \eqref{eq decomp except} is nonzero.
\end{lemma}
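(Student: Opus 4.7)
The plan is to prove the lemma uniformly across the four exceptional cases of Case (4) of Theorem \ref{thm: classification weighted dynkin} by a dimension count on the $\fg_0$-orbits of $e$ in $\fg_2$ and of $f$ in $\fg_{-2}$. The guiding observation is that these orbits are open in $\fg_{\pm 2}$, hence $e$ and $f$ cannot be confined to any proper $\fg_0$-invariant subspace.

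First, I would identify the stabilizer of $e$ in $\fg_0$. From the decomposition \eqref{eq decomp highest weight spaces}, $V(e) = \bigoplus_{j\geq 0}V_j$ with $V_j = W_j\cap\fg_j$; since $V_j\subset\fg_j$ sits entirely outside $\fg_0$ for $j>0$, one obtains $V(e)\cap\fg_0 = V_0 = W_0 = \fc$, and the analogous argument using lowest-weight spaces gives $V(f)\cap\fg_0 = \fc$. Combining this with the $\fsl_2$-data of Proposition \ref{prop: magical sl2 data}(4) — which yields $\dim\fg_0 = n_0+n_2+n_6+n_{10} = n_0+n_6+2$ and $\dim\fg_2 = n_2+n_6+n_{10} = n_6+2$ — together with $\dim\fc = n_0$, gives
\[
\dim[\fg_0,e] \;=\; \dim\fg_0 - \dim(V(e)\cap\fg_0) \;=\; n_6+2 \;=\; \dim\fg_2,
\]
and likewise $\dim[\fg_0,f] = \dim\fg_{-2}$.

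Next, I would apply a proper-subspace argument. By \eqref{eq G0 invariant g2 decomp}, both $\fg_2^b$ and $\fg_{\tilde\alpha}$ are $\fg_0$-invariant; they have dimensions $\dim\fg_2-1$ and $1$ respectively, and since $n_6\geq 5$ in all four cases (so $\dim\fg_2\geq 7$), both are \emph{proper} subspaces of $\fg_2$. If $\tilde e = 0$ (resp.\ $e_b = 0$), then $e$ would lie in the proper $\fg_0$-invariant subspace $\fg_2^b$ (resp.\ $\fg_{\tilde\alpha}$), which would force $[\fg_0,e]$ to be contained in the same proper subspace — contradicting $\dim[\fg_0,e] = \dim\fg_2$. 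Hence $\tilde e, e_b \neq 0$, and the identical argument applied to $f\in\fg_{-2} = \fg_{-2}^b\oplus\fg_{-\tilde\alpha}$ gives $\tilde f, f_b\neq 0$.

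I anticipate no substantial obstacle: the argument is entirely uniform across $\rE_6,\rE_7,\rE_8,\rF_4$, and its only inputs are the description of $V(e)\cap\fg_0$ from \eqref{eq decomp highest weight spaces}, the $\fg_0$-invariance of the decomposition \eqref{eq G0 invariant g2 decomp} (already justified using the root posets of \S\ref{subsec root posets exceptional}), and the explicit multiplicities $n_0,n_2,n_6,n_{10}$ computed in Proposition \ref{prop: magical sl2 data}. The main conceptual point is simply that, because the $\fg_0$-orbit of $e$ already exhausts all of $\fg_2$ in dimension, $e$ must meet every nontrivial summand of any $\fg_0$-invariant direct sum decomposition of $\fg_2$.
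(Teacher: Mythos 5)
Your proof is correct, and it takes a genuinely different route from the paper's. The paper argues directly from the $\fsl_2$-module structure: since $\fg = W_0\oplus W_2\oplus W_6\oplus W_{10}$, the maps $\ad_f:\fg_{10}\to\fg_8$ and $\ad_f:\fg_6\to\fg_4$ are isomorphisms; then, inspecting the root posets, the first is seen to coincide with $\ad_{\tilde f}$ (since $\fg_{10}$ and $\fg_8$ are root spaces differing by $\tilde\alpha$) while the second coincides with $\ad_{f_b}$ (since the roots in $\fg_6$ and $\fg_4$ both have $\tilde\alpha$-coefficient $1$, so bracketing with $\fg_{-\tilde\alpha}$ lands outside $\fg_4$). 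Your approach instead reduces everything to a single dimension count: $\ker(\ad_e|_{\fg_0}) = V(e)\cap\fg_0 = \fc$ forces $\dim[\fg_0,e] = \dim\fg_0 - n_0 = n_6 + 2 = \dim\fg_2$, so $e$ cannot lie in any proper $\fg_0$-invariant subspace of $\fg_2$; since $\fg_2 = \fg_2^b\oplus\fg_{\tilde\alpha}$ is $\fg_0$-invariant with both summands nonzero (the hypothesis $n_6\geq 5$ is stronger than needed; $n_6\geq 0$ already suffices), each projection of $e$ is nonzero, and the same holds for $f$ by symmetry. Both proofs rely on \eqref{eq G0 invariant g2 decomp} and Proposition \ref{prop: magical sl2 data}, but yours avoids inspecting the fine coefficient structure of the root posets (which roots sit where in $\fg_4$ vs.\ $\fg_6$, etc.), trading that for a rank--nullity argument on $\ad_e|_{\fg_0}$; this buys a cleaner, case-independent statement, at the cost of being slightly less explicit about \emph{which} map realizes each isomorphism --- information the paper reuses later in Lemma \ref{lem exceptional bracket relations}.
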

\begin{proof}
 The $\fsl_2\C$-module decomposition $\fg=W_0\oplus W_2\oplus W_6\oplus W_{10}$ implies that $\ad_f:\fg_{10}\to\fg_8$ and $\ad_f:\fg_6\to\fg_4$ are isomorphisms. The map $\ad_f:\fg_{10}\to\fg_8$ is equal to $\ad_{\tilde f}$ since $\fg_8$ and $\fg_{10}$ are root spaces which differ by the root $\tilde\alpha$. So $\tilde f$ cannot be zero. On the other hand, $\ad_f:\fg_6\to\fg_4$ is given by $\ad_{f_b}$ since $\fg_4$ and $\fg_6$ are both a direct sum of root spaces $\fg_\beta$, where $\beta$ has the form $\beta=\sum n_i\alpha_i$ and the coefficient of $\tilde\alpha$ is 1. So again $f_b\neq 0$. Similar arguments imply $\tilde e\neq0$ and $e_b\neq0$.
\end{proof}

Note that $\tilde\alpha$ is a red root labeled with a $2$ in \S\ref{subsec root posets exceptional}. Denote by $\{\beta_1,\beta_2,\beta_3\}$ the other red root spaces which are still labeled with a $2$. We claim that $\{\tilde\alpha,\beta_1,\beta_2,\beta_3\}$ are a $\rD_4$-system with 
\begin{center}
  \begin{tikzpicture}[scale=.4]
    \draw (-1,0) node[anchor=east]  {$\rD_{4}$:};
    \draw[thick] (0.25 cm,0) -- +(1.5 cm,0);
    \draw[thick] (2.25 cm, -.1 cm) -- +(1.5 cm, -.9 cm);
    \draw[thick] (2.25 cm, .1 cm) -- +(1.5 cm, .9 cm);
    \node at (0,0) [below = 1 mm ] {${\scriptstyle \beta_1}$};
    \draw[thick] (0 cm,0 cm) circle (.25cm);
    \node at (2,0) [below = 1 mm ] {${\scriptstyle \tilde\alpha}$};
    \draw[thick] (2 cm,0 cm) circle (.25cm);
    \node at (4,-1) [right = 1 mm ] {${\scriptstyle \beta_2}$};
    \draw[thick] (4 cm,-1cm) circle (.25cm);
    \node at (4,1) [right = 1 mm ] {${\scriptstyle \beta_3}$};
    \draw[thick] (4 cm,1 cm) circle (.25cm);
    \end{tikzpicture} 
\end{center}
Since there is an action of the symmetric group on three letters on the roots of $\rD_4$, the choice of which $\beta_i$ corresponds to which root space in $\fg_2^b$ is irrelevant. 
\begin{lemma}\label{lem red so8}
The root spaces associated to the red roots in the diagrams of Section \ref{subsec root posets exceptional} form a subalgebra isomorphic to $\fso_8\C$.
\end{lemma}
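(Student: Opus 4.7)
The plan is to exhibit the red root spaces as arising from a $\rD_4$ root subsystem of the ambient root system $\Delta$ of $\fg$, and then apply the standard construction of a reductive subalgebra from a closed symmetric root subsystem.

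First I would verify that the four roots $\{\tilde\alpha,\beta_1,\beta_2,\beta_3\}$ identified just before the lemma form a simple base of type $\rD_4$: by inspecting the Cartan integers read off from the diagrams in \S\ref{subsec root posets exceptional}, the $\beta_i$ are mutually orthogonal and each is joined to $\tilde\alpha$ by a single bond. Write $\Delta_{\rD_4}\subset\Delta$ for the subsystem they generate, which has $24$ roots, i.e.\ $12$ positive.

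Next I would check, by going through each of the four diagrams, that the red circles are in bijection with the positive roots of $\Delta_{\rD_4}$. This is a finite combinatorial verification, made plausible by the Bala--Carter analysis in the proof of Proposition \ref{prop subalge g(e)}: in the simply-laced cases $\fg=\fe_6,\fe_7,\fe_8$ the magical nilpotent is principal in a Levi $\fso_8\C\subset\fg$ and the red roots are the positive roots of this Levi; for $\fg=\ff_4$ the magical nilpotent is principal in $\fso_7\C$, which lies inside a $\rD_4$ subsystem of $\ff_4$ whose positive roots are exactly the red ones.

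Finally, for any closed symmetric subsystem $\Delta'\subset\Delta$, the subspace
\[\fg_{\Delta'}=\Big(\sum_{\alpha\in\Delta'}[\fg_\alpha,\fg_{-\alpha}]\Big)\oplus\bigoplus_{\alpha\in\Delta'}\fg_\alpha\]
is a reductive Lie subalgebra of $\fg$ whose root system with respect to its Cartan (the first summand) is $\Delta'$. Applying this to $\Delta'=\Delta_{\rD_4}$ yields a semisimple subalgebra of type $\rD_4$, hence isomorphic to $\fso_8\C$, generated by the red root spaces and their commutators. The main obstacle is Step 2: the bookkeeping needed to match the red circles in each diagram with the positive roots of $\Delta_{\rD_4}$. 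This is routine once the diagrams are in hand, but requires care especially in the non-simply-laced case $\ff_4$, where the magical $\fsl_2$-triple embeds via the $\rB_3\subset\rD_4$ inclusion rather than being principal in the $\rD_4$ itself.
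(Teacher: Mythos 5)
Your plan — exhibit $\{\tilde\alpha,\beta_1,\beta_2,\beta_3\}$ as a $\rD_4$ base, identify the red circles with the $\rD_4$ positive roots, and then invoke the standard closed-subsystem-to-subalgebra construction — is essentially the same route the paper takes, and it is correct.

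The one point you should make more explicit is the closedness verification, which is in fact the crux of the paper's proof. Checking that the twelve red circles \emph{are} the twelve positive $\rD_4$ roots is not by itself enough to conclude that $\fg_{\Delta_{\rD_4}}$ is a subalgebra: you must also rule out the possibility that some integer combination of $\tilde\alpha,\beta_1,\beta_2,\beta_3$ which is not a $\rD_4$ root is nevertheless a root of $\fg$, for in that case a bracket $[\fg_\alpha,\fg_\beta]$ with $\alpha,\beta\in\Delta_{\rD_4}$ would land outside $\fg_{\Delta_{\rD_4}}$ (and, in a simple Lie algebra, it would be nonzero). Your Bala--Carter heuristic does not dispose of this: it tells you the nilpotent is distinguished in \emph{some} Levi of the stated type, but it does not show that the particular subsystem spanned by these four roots coincides with it, nor does it help in the $\ff_4$ case where — as you rightly note — the Bala--Carter Levi has type $\rB_3$, not $\rD_4$ (the $\rD_4$ here is the non-Levi subsystem of long roots of $\ff_4$). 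The paper closes this gap by observing that any such extra combination must have $\tilde\alpha$-coefficient nonzero and some $\beta_i$-coefficient at least $2$, and then checking against the root tables in Knapp that none of these expressions is a root of $\fg$. Adding that check to your Step 2 completes the argument.
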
 
\begin{proof}
The proof is by direct computation. The positive roots of $\rD_4$ are 
\begin{equation}
    \label{eq d4 pos roots}\{\tilde\alpha,\beta_1,\beta_2,\beta_3\}\cup\bigg\{\tilde\alpha+\sum_{n_i\in\{0,1\}}n_i\beta_i\bigg\}\cup\{2\tilde\alpha_1+\beta_1+\beta_2+\beta_3\}.
\end{equation}
Using the expression of $\beta_i$ in terms of the simple roots of $\fg$, one checks that $\{\tilde\alpha,\beta_1,\beta_2,\beta_3\}$ satisfy the relations of a $\rD_4$ root system and that no other linear combinations of $\{\tilde\alpha,\beta_1,\beta_2,\beta_3\}$ define roots in $\fg$.  
From the diagrams \S\ref{subsec root posets exceptional}, it is clear that $\tilde\alpha+\beta_i$ is a root, but none of $\beta_i+\beta_j$,  $\tilde\alpha-\beta_i$ or $\beta_i-\beta_j$ is a root. Any other linear combination of $\tilde\alpha,\beta_1,\beta_2,\beta_3$ will have the coefficient of $\tilde\alpha$ being nonzero and a coefficient $n_i\geq 2$. All such roots in $\fg$ are listed in the tables \cite[Appendix C.2]{knappbeyondintro} and one checks that the only expressions which are roots of $\fg$ are in \eqref{eq d4 pos roots}.\footnote{Note that in the notation of \ref{subsec root posets exceptional}, $\tilde\alpha=\alpha_1,\alpha_8$ for $\fg=\fe_7,\fe_8$ respectively, while in the notation of \cite[Appendix C.2]{knappbeyondintro}, $\tilde\alpha=\alpha_7,\alpha_8$ for $\fg=\fe_7,\fe_8$ respectively.}
\end{proof}
Recall that the coroot $h_\alpha$ associated to a root $\alpha\in\fa^*$ is defined by $h_\alpha=2\frac{\alpha^*}{\langle \alpha,\alpha\rangle}$ where $\alpha^*\in\fa$ satisfies $\langle \alpha^*,x\rangle=\alpha(x)$ for all $x\in\fa$. Let $\Delta_+\subset\fa^*$ denote a set of positive roots with simple roots $\{\alpha_1,\ldots,\alpha_{\rk(\fg)}\}$, and let $\{f_i,h_{\alpha_i},e_i\}$ be $\fsl_2$-triples with $e_{i}\in\fg_{\alpha_i}$ and $f_i\in\fg_{-\alpha_i}$. 
This data determines a principal $\fsl_2$-triple $\{f,h,e\}\subset\fg$  given by, 
\[h=\sum_{\alpha\in\Delta^+}h_{\alpha}=\sum_{i=1}^{\rk(\fg)}r_ih_{\alpha_i},\ \ \ \ \ e=\sum\limits_{i=1}^{\rk(\fg)}\sqrt{r_i}e_i,\ \ \ \ \ f=\sum_{i=1}^{\rk(\fg)}\sqrt{r_i}f_i.\] 
For the simple roots $\{\tilde\alpha,\beta_1,\beta_2,\beta_3\}$ of $\rD_4$, the above construction yields
\begin{equation}\label{eq principal so8 sl2}
    \{ f, h, e\}=\Bigg\{\sqrt{12}f_{\tilde\alpha}+\sqrt{6}\sum_{j=1}^3f_{\beta_j},12h_{\tilde\alpha}+6\sum_{j=1}^3h_{\beta_j},\sqrt{12}e_{\tilde\alpha}+\sqrt{6}\sum_{j=1}^3e_{\beta_j}\Bigg\}.
\end{equation}
\begin{lemma}
    A principal $\fsl_2$-triple $\{f,h,e\}\subset\fso_8\C\subset\fg$  in the $\fso_8\C$-subalgebra from Lemma \ref{lem red so8} is a magical $\fsl_2$-triple in $\fg$ from Case (4) of Theorem \ref{thm: classification weighted dynkin}.
\end{lemma}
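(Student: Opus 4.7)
The plan is to show that the principal $\fsl_2$-triple $\{f,h,e\}\subset\fso_8\C\subset\fg$ defined explicitly in \eqref{eq principal so8 sl2} has the same weighted Dynkin diagram (with respect to $\fg$) as the magical $\fsl_2$-triple from Case (4) of Theorem \ref{thm: classification weighted dynkin}. Since $\rG$-conjugacy classes of $\fsl_2$-triples in $\fg$ are classified by weighted Dynkin diagrams, this identification will immediately imply that $\{f,h,e\}$ is magical of the required type. For $\fg=\ff_4$ the same strategy applies, with $\fso_7\C$ replacing $\fso_8\C$.

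First, I would compute the $\ad_h$-eigenvalues on the root spaces of $\fg$. Since $h$ is an explicit $\R$-linear combination of the coroots $h_{\tilde\alpha}$ and $h_{\beta_j}$, the eigenvalue $\alpha(h)$ on any root space $\fg_\alpha$ is determined by the Cartan integers pairing $\alpha$ with these four coroots, which depend only on the coefficients of $\tilde\alpha$ and of $\beta_1,\beta_2,\beta_3$ when $\alpha$ is expressed in an appropriate basis. The diagrams in \S\ref{subsec root posets exceptional} already display every positive root of $\fg$ together with the $\ad_h$-eigenvalue under the magical $\fsl_2$-triple from \S\ref{sec: mag class}; the task therefore reduces to verifying, root-by-root, that the explicit formula for $h$ in \eqref{eq principal so8 sl2} reproduces those integer labels.

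Next, I would choose a system of simple roots for $\fg$ such that $\alpha_i(h)\geq 0$ for every simple $\alpha_i$, which is always possible after conjugating by the Weyl group. A direct computation then shows that $\alpha_i(h)\in\{0,2\}$ and that the resulting labeling matches the weighted Dynkin diagrams listed in Case (4) of Theorem \ref{thm: classification weighted dynkin}. Together with \eqref{eq principal so8 sl2} identifying $e$ as a sum of positive root vectors of $\fso_8\C$, this completes the identification of the $\rG$-orbit of $\{f,h,e\}$.

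The main technical obstacle is organizing this eigenvalue computation cleanly for the exceptional Lie algebras without tedious bookkeeping. This is largely avoided by exploiting the poset diagrams of \S\ref{subsec root posets exceptional}, which sort the positive roots of $\fg$ by $\ad_h$-eigenvalue, reducing verification to a check on the simple roots of $\fg$ and on the red roots $\{\tilde\alpha,\beta_1,\beta_2,\beta_3\}$. An alternative (and conceptually parallel) route is to invoke the Bala--Carter classification: as noted in the proof of Proposition \ref{prop subalge g(e)}, the magical nilpotents of Case (4) carry Bala--Carter label $\rD_4$ (respectively $\rB_3$ for $\ff_4$), meaning that they arise as principal nilpotents inside a Levi subalgebra of that type; one then verifies that the $\fso_8\C$ of Lemma \ref{lem red so8} is $\rG$-conjugate to such a Levi subalgebra. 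Either way, once conjugacy with a Case (4) triple is established, magicality is already known from Theorem \ref{thm: classification weighted dynkin}.
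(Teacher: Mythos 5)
Your proposal is correct and takes essentially the same route as the paper: write $h$ explicitly as in \eqref{eq principal so8 sl2}, evaluate the simple roots of $\fg$ on $h$, show the resulting weighted Dynkin diagram coincides with the one listed in Case (4) of Theorem \ref{thm: classification weighted dynkin}, and conclude conjugacy (hence magicality, since magicality is conjugation-invariant) from the uniqueness of weighted Dynkin diagrams. The paper's execution is somewhat more economical than what you sketch: rather than verifying the $\ad_h$-eigenvalues root-by-root against the poset diagrams of \S\ref{subsec root posets exceptional}, it checks only the simple roots $\alpha_i$ of $\fg$, and for $\alpha_i\notin\{\tilde\alpha,\beta_1,\beta_2,\beta_3\}$ it observes there is a unique $\beta_l$ with $\alpha_i+\beta_l$ a root and a unique $\beta_k$ with $\alpha_i-\beta_k$ a root, so the two contributions $\langle\alpha_i,\beta_l\rangle/\langle\beta_l,\beta_l\rangle$ and $\langle\alpha_i,\beta_k\rangle/\langle\beta_k,\beta_k\rangle$ cancel because all four roots have the same length; this gives $\alpha_i(h)=0$ without any case-by-case bookkeeping. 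No Weyl-group conjugation is needed either, since the standard simple roots already produce non-negative labels. Your alternative via the Bala--Carter label ($\rD_4$, resp.\ $\rB_3$) is also valid and is in fact the mechanism used in the proof of Proposition \ref{prop subalge g(e)} to identify the nilpotent, but the paper does not use it for this particular lemma.
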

\begin{proof}
Consider the simple roots $\{\tilde\alpha,\beta_1,\beta_2,\beta_3\}$ of $\rD_4$ and the principal $\fsl_2$ given above. 
 We must show that the numbers $\alpha_i(h)$ match the weighted Dynkin diagrams from Case (4) of Theorem \ref{thm: classification weighted dynkin}. 
Let $\alpha_i$ be a simple root of $\fg$ which is not in $\{\beta_1,\beta_2,\beta_3\}$. Then $\alpha_i$ is orthogonal to $\tilde\alpha$, and 
\[\alpha_i\Bigg(12h_{\tilde\alpha}+6\sum_{j=1}^3h_{\beta_j}\Bigg)=\alpha_i\Bigg(6\sum_{j=1}^3h_{\beta_j}\Bigg).\]
If $\alpha_i$ is orthogonal to each $\beta_j$, then $\alpha_i(\tilde h)=0$. For $\fg=\fe_7,~\fe_8$ respectively, the simple roots $\{\alpha_4,\alpha_5,\alpha_7\},~\{\alpha_2, \alpha_3,\alpha_4,\alpha_5\}$ are orthogonal to each $\beta_j$. For the remaining simple roots $\alpha_j\notin\{\tilde\alpha,\beta_1,\beta_2,\beta_3\}$, there is a unique $\beta_l$ such that $\alpha_i+\beta_l$ is a root and a unique $\beta_k\neq \beta_l$ such that $-\alpha_i+\beta_k$ is a root.
Hence
\[\alpha_i\Bigg(12h_{\tilde\alpha}+6\sum_{j=1}^3h_{\beta_j}\Bigg)=6\alpha_i(h_{\beta_l})+6\alpha_i(h_{\beta_k})=12\frac{\langle\alpha_i,\beta_l\rangle}{\langle \beta_l,\beta_l\rangle}+12\frac{\langle\alpha_i,\beta_k\rangle}{\langle \beta_k,\beta_k\rangle.}\]
Since the roots, $\beta_l$, $\beta_k$, $\alpha_i+\beta_l$ and $\alpha_i-\beta_k$ have the same length we have $\alpha_i(\tilde h)=0$.
Finally, if $\beta_1$ is the simple root which is also a simple root of $\fg$, then $\tilde\alpha(\tilde h)=2$ and $\beta_1(\tilde h)=2$. Thus, the weighted Dynkin diagram of $\{ \tilde f,\tilde h, \tilde e\}$ corresponds to a magical $\fsl_2$-triple from Case (4) of Theorem \ref{thm: classification weighted dynkin}.
\end{proof}

\begin{lemma}\label{lem fg(e) in red so8}
  Let $\{f,h,e\}\subset\fg$ be a magical $\fsl_2$-triple from Case (4) in Theorem \ref{thm: classification weighted dynkin}. Then $\fg_{\tilde\alpha}\subset\fg(e)$ is a simple root space for $\fg(e)$ associated to a long root and $\langle e_b\rangle\subset\fg(e)$ is a simple root space for $\fg(e)$ associated to a short root. 
\end{lemma}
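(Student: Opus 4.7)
The plan is to realize the subalgebra $\fg(e)\cong Lie(\rG_2)$ inside the $\fso_8\C$-subalgebra of Lemma \ref{lem red so8} via the classical triality folding, and then to read off the two simple root spaces directly from the explicit form \eqref{eq principal so8 sl2} of the principal $\fsl_2$-triple.

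First I would show $\fg(e)\subset\fso_8\C$. Since $n_{10}=1$ by Proposition \ref{prop: magical sl2 data}, the weight space $\fg_{10}\subset\fg$ is one-dimensional; on the other hand, the highest root $2\tilde\alpha+\beta_1+\beta_2+\beta_3$ of the $\rD_4$-system lies in $\fg_{10}$, so $\fg_{10}$ must coincide with this root space, and in particular is contained in $\fso_8\C$. Because $W_{10}$ is the irreducible $\fsl_2\C$-submodule generated from $\fg_{10}$ by iterated application of $\ad_f$, with $f\in\fso_8\C$, it follows that $W_{10}\subset\fso_8\C$, and therefore $\fg(e)=W_2\oplus W_{10}\subset\fso_8\C$.

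Next I would identify $\fg(e)$ with the triality-fixed subalgebra of $\fso_8\C$. The triality automorphism of $\fso_8\C$ fixes the central simple root $\tilde\alpha$ and cyclically permutes $\{\beta_1,\beta_2,\beta_3\}$, and its fixed-point subalgebra is a principal copy of $Lie(\rG_2)$. Since $\fg(e)\cong Lie(\rG_2)$ and also contains the principal $\fsl_2$-triple of $\fso_8\C$, uniqueness (up to $\fso_8\C$-conjugacy) of the principal $\rG_2$-subalgebra lets me identify $\fg(e)$ with this triality-fixed subalgebra. Under the folding $\rD_4\rightsquigarrow\rG_2$, triality-fixed roots of $\rD_4$ descend to long roots of $\rG_2$ and triality-orbits of size three descend to short roots; in particular, $\tilde\alpha$ yields a long simple root of $\rG_2$, while the orbit $\{\beta_1,\beta_2,\beta_3\}$ yields a short simple root whose root space is the triality-invariant line $\langle e_{\beta_1}+e_{\beta_2}+e_{\beta_3}\rangle$ inside $\fg_{\beta_1}\oplus\fg_{\beta_2}\oplus\fg_{\beta_3}$. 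To finish, I would match this against the decomposition $e=e_b+\tilde e$: since $\tilde e\in\fg_{\tilde\alpha}$ is nonzero by Lemma \ref{lem decomp except} and $\fg_{\tilde\alpha}$ is one-dimensional, $\fg_{\tilde\alpha}\subset\fg(e)$ is precisely the long simple root space identified above; and from \eqref{eq principal so8 sl2} one reads off $e_b=\sqrt{6}\,(e_{\beta_1}+e_{\beta_2}+e_{\beta_3})$, so $\langle e_b\rangle$ coincides with the triality-invariant line and is therefore the short simple root space of $\fg(e)$.

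The delicate point will be the identification of $\fg(e)$ with the triality-fixed subalgebra of $\fso_8\C$: the abstract isomorphism $\fg(e)\cong Lie(\rG_2)$ given by Proposition \ref{prop subalge g(e)} must be realized by precisely this embedding and not merely a conjugate copy. I expect to dispatch this via the uniqueness, up to $\fso_8\C$-conjugacy, of the principal $\rG_2$-subalgebra, combined with the observation that both subalgebras contain the same principal $\fsl_2$-triple of $\fso_8\C$ constructed in \eqref{eq principal so8 sl2}.
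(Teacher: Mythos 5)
Your proof is correct, but it takes a genuinely different route from the paper's. The paper's argument is a direct bracket computation: it reads $e_b$ and $\tilde e$ off the explicit principal triple \eqref{eq principal so8 sl2} and then verifies by hand that $\{\tilde e, e_b\}$ generate the positive root spaces of $\fg(e)\cong Lie(\rG_2)$ with highest root $[\tilde e,[e_b,[e_b,[e_b,\tilde e]]]]$; the pattern of exponents (one $\tilde e$, three $e_b$'s) matches the $\rG_2$ highest root $2\alpha_{\mathrm{long}}+3\alpha_{\mathrm{short}}$, which is exactly what pins $\fg_{\tilde\alpha}$ as the long and $\langle e_b\rangle$ as the short simple root space. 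You instead invoke triality folding $\rD_4\rightsquigarrow\rG_2$: the fixed central node $\tilde\alpha$ descends to the long simple root and the 3-orbit $\{\beta_1,\beta_2,\beta_3\}$ descends to the short one, with root space the orbit-invariant line. Both routes work; yours is more conceptual and avoids the explicit calculation, at the cost of having to justify that $\fg(e)$ \emph{is} the triality-fixed copy rather than a conjugate of it. You flag this, and your proposal via uniqueness up to conjugacy can be made to work, but there is a crisper way to close it: under its principal $\fsl_2$, $\fso_8\C$ decomposes as $V_2\oplus V_6^{\oplus 2}\oplus V_{10}$ (exponents $1,3,3,5$), so both the $V_2$- and $V_{10}$-isotypic parts have multiplicity one; hence there is a \emph{unique} $\fsl_2$-submodule of type $V_2\oplus V_{10}$ containing $\{f,h,e\}$, and both $\fg(e)$ and the triality-fixed subalgebra (which visibly contains the triality-invariant triple \eqref{eq principal so8 sl2}) are such submodules, so they coincide. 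As a small aside, your reading $e_b=\sqrt{6}(e_{\beta_1}+e_{\beta_2}+e_{\beta_3})$ and $\tilde e=\sqrt{12}\,e_{\tilde\alpha}$ is the one actually consistent with \eqref{eq principal so8 sl2}; the paper's own proof transposes the two coefficients, an apparent typo that your version corrects.
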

\begin{proof}
This follows from the fact that $Lie(\rG_2)\subset\fso_8\C$ and a principal $\fsl_2$-triple in $Lie(\rG_2)$ is also principal in $\fso_8\C$. Using the decomposition \eqref{eq principal so8 sl2}, we have $e_b=\sqrt{12}(e_{\beta_1}+e_{\beta_2}+e_{\beta_3})$ and $\tilde e=\sqrt{6}e_{\tilde\alpha}$. A direct computation shows that $\{\tilde e,e_b\}$ generate the positive roots of $Lie(\rG_2)$ with highest root $[\tilde e,[e_b,[e_b,[e_b,\tilde e]]]]$.
\end{proof}
\begin{remark}
    For Case (4) of Theorem \ref{thm: classification weighted dynkin}, this gives a direct proof that the sum $W_2\oplus W_{10}$ in the $\fsl_2\C$-module decomposition of $\fg$ is a subalgebra isomorphic to $Lie(\rG_2)$.
\end{remark}

Recall that the canonical real form $\fg^\R$ associated to $\{f,h,e\}$ is the quaternionic real form of $\fg$. Thus, $\fh\cong\fsl_2\C\oplus\fh'$, where $\fh'$ is $\fsp_6\C$, $\fsl_6\C$, $\fso_{12}\C$ and $\fe_7$ when $\fg$ is $\ff_4$, $\fe_6$, $\fe_7$, $\fe_8$ respectively. 
\begin{lemma}
    The decomposition $\fh=\fsl_2\C\oplus\fh'$ is given by 
    \begin{equation}\label{eq h'+sl2 decomp}
        \xymatrix{\fsl_2\C=\fg_{-8}\oplus[\fg_{-8},\fg_8]\oplus\fg_8&\text{and}&\fh'=\fg_{-4}\oplus[\fg_{-4},\fg_4]\oplus\fg_4.}
    \end{equation}
\end{lemma}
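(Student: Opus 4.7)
The plan is to first produce a copy of $\fsl_2\C$ inside $\fh$, verify that it is an ideal of the reductive Lie algebra $\fh$, and then identify the complementary ideal with $\fg_{-4}\oplus[\fg_{-4},\fg_4]\oplus\fg_4$.

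For the first step, Proposition~\ref{prop: magical sl2 data}(4) gives $n_{10}=1$, and the weights $\pm 8$ in the $\ad_h$-grading only arise in $W_{10}$, so $\fg_{\pm 8}$ are one-dimensional. Lemma~\ref{lem red so8} identifies $\fg_{\pm 8}$ with root spaces in the $\fso_8\C$-subalgebra of $\fg$ (namely the root spaces of a root of height $4$), so $\fs := \fg_{-8}\oplus[\fg_{-8},\fg_8]\oplus\fg_8$ is the root $\fsl_2\C$ of that root; in particular $[\fg_{-8},\fg_8]$ is a one-dimensional coroot space and $\fs\cong\fsl_2\C$. The parity $(-1)^{k+1}$ in \eqref{eq magical involution} evaluated at $k=1,9$ (the positions of weight $\pm 8$ in $W_{10}$) equals $+1$, so $\sigma_e$ fixes $\fg_{\pm 8}$ and hence $\fs\subset\fh$.

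For the second step, the inclusions $[\fg_0,\fg_{\pm 8}]\subseteq\fg_{\pm 8}$ are automatic from the grading, and $[\fg_{\pm 4},\fg_{\pm 8}]\subseteq\fg_{\pm 12}=0$ since $\fg$ has no $\ad_h$-weights of absolute value greater than $10$. The heart of the proof is the vanishing $[\fg_{\pm 8},\fg_{\mp 4}]=0$. Here I will use the identification $\fg(e)\cong Lie(\rG_2)$ from Propositions~\ref{prop subalge g(e)} and~\ref{prop sl2 decomp of g(e)}: writing $\alpha$ for the short and $\beta$ for the long simple root of $\rG_2$, the subspaces $\fg_{\pm 8}\subset W_{10}\subset\fg(e)$ coincide with the long-root spaces $\fg(e)_{\pm(3\alpha+\beta)}$. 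The space $\fg_{\mp 4}$ splits as $(\fg_{\mp 4}\cap W_{10})\oplus(\fg_{\mp 4}\cap W_6)$; the first summand corresponds to $\fg(e)_{\mp(\alpha+\beta)}$, whose bracket with $\fg(e)_{\pm(3\alpha+\beta)}$ lies in weight $\pm 2\alpha$, which is not a root of $\rG_2$. The $W_6$-isotypic part of $\fg$, as a $\fg(e)$-module, decomposes as $n_6$ copies of the $7$-dimensional fundamental representation of $\rG_2$ (this being the unique $\rG_2$-irreducible whose principal $\fsl_2\C$-restriction is $W_6$, with the multiplicities of Proposition~\ref{prop: magical sl2 data}(4) matching exactly the trivial, adjoint and $7$-dimensional $\fg(e)$-summands); the long-root operator $\fg(e)_{\pm(3\alpha+\beta)}$ applied to the weight $\mp(\alpha+\beta)$ vector of this $7$-dimensional representation again lands in the non-existent weight $\pm 2\alpha$ space. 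Both contributions therefore vanish.

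For the third step, the paragraph preceding the Lemma gives $\fh\cong\fsl_2\C\oplus\fh'$ with $\fh'$ a simple complex Lie algebra. Since $\fs\cong\fsl_2\C$ is now shown to be a simple ideal of $\fh$, it must coincide with the $\fsl_2\C$-summand, and $\fh'$ equals its centralizer in $\fh$. By Step~2, $\fg_{\pm 4}\subset\fh'$, so $[\fg_{-4},\fg_4]\subset\fh'$ by the Jacobi identity, giving $\fg_{-4}\oplus[\fg_{-4},\fg_4]\oplus\fg_4\subseteq\fh'$. Since $\fg_{\pm 8}\subseteq\fs$, the decomposition $\fh'=\fg_{-4}\oplus(\fh'\cap\fg_0)\oplus\fg_4$ is forced; and because $\fh'$ is a simple complex Lie algebra carrying a nontrivial three-term $\Z$-grading with nonzero extremal pieces, the Lie ideal of $\fh'$ generated by $\fg_{\pm 4}$ equals all of $\fh'$, forcing $\fh'\cap\fg_0=[\fg_{-4},\fg_4]$ and completing the proof. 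The main obstacle is the vanishing $[\fg_{\pm 8},\fg_{\mp 4}]=0$ in Step~2, which is where the magical structure and the identification $\fg(e)\cong Lie(\rG_2)$ enter essentially.
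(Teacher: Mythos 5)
Your proof is correct, but it takes a genuinely different route from the paper's. The paper's argument is a short computation in the root poset: it observes (from the diagrams in \S\ref{subsec root posets exceptional}) that every root contributing to $\fg_{\pm 4}$ and $\fg_{\pm 8}$ has $\tilde\alpha$-coefficient exactly $\pm 1$, so a bracket such as $[\fg_8,\fg_{-4}]$, which lands in $\fg_4$ for $\ad_h$-degree reasons, would have $\tilde\alpha$-coefficient $0$, whereas $\fg_4$ sits entirely in $\tilde\alpha$-coefficient $+1$; hence $[\fg_8,\fg_{-4}]=[\fg_{-8},\fg_4]=0$, and Jacobi then gives $[\fh',\fs]=0$. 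You instead work $\rG_2$-theoretically through $\fg(e)\cong Lie(\rG_2)$, splitting $\fg_{\mp 4}$ into its $W_{10}$ and $W_6$ pieces, identifying them with the weight-$\mp(\alpha+\beta)$ spaces of the adjoint and of the seven-dimensional representation respectively, and observing that $2\alpha$ is neither a root nor a weight of those representations. Your approach is more conceptual and avoids the diagrams, at the cost of needing the $\fg(e)$-module decomposition of $\fg$; that decomposition is indeed forced as you say (the only $\rG_2$-irreducibles with highest $\ad_h$-weight at most $10$ are the trivial, the seven-dimensional, and the adjoint, and the multiplicities $n_0$, $1$, $n_6$, $1$ from Proposition~\ref{prop: magical sl2 data}(4) leave no other option), though spelling out that the isotypic piece $W_6$ is therefore automatically $\fg(e)$-stable would tighten the argument. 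Your Step~3 is also slightly more careful than the paper's, which essentially asserts the identification of $\fh'$; your remark that $\fg_{-4}\oplus[\fg_{-4},\fg_4]\oplus\fg_4$ is a nonzero ideal of the simple algebra $\fh'$ closes that gap cleanly.
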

\begin{proof}
   It is clear that $\fh'=\fg_{-4}\oplus[\fg_{-4},\fg_4]\oplus\fg_4$ is a subalgebra and $\fg_{-8}\oplus[\fg_{-8},\fg_8]\oplus\fg_8$ is a subalgebra isomorphic to $\fsl_2\C$. Note that $[\fg_{\pm8},\fg_{\pm4}]=0$ since there is not a weight-$12$ summand of the grading. Using the root poset diagrams in \S\ref{subsec root posets exceptional}, $\fg_{\pm 8}$ and $\fg_{\pm4}$ are direct sums of root spaces $\fg_\alpha$ with $\alpha=\pm 1\tilde\alpha+\sum_{\alpha_i\neq \tilde\alpha}n_i\alpha_i$. This implies $[\fg_8,\fg_{-4}]=0$ and $[\fg_{-8},\fg_4]=0$ since $[\fg_8,\fg_{-4}]\subset\fg_4$ and $[\fg_{-8},\fg_4]\subset\fg_{-4}$. Now the Jacobi identity implies that $[\fh',\fg_{-8}\oplus[\fg_8,\fg_{-8}]\oplus\fg_8]=0$.
\end{proof}
\begin{lemma}\label{lem h' sl2 invariant decomp of m}
Consider the decomposition $\fh=\fsl_2\C\oplus\fh'$ from \eqref{eq h'+sl2 decomp} and the decomposition of $\fm$ from \eqref{eq hm decomp} and \eqref{eq G0 invariant g2 decomp}. Then $\fm$ decomposes as 
\[\fm=\vcenter{\xymatrix@=0em{\fg_{10}&\oplus&\fg_{6}&\oplus&\fg_2^b&\oplus&\fg_{-\tilde\alpha}\\
&&&\oplus&&&\\
\fg_{+\tilde\alpha}&\oplus&\fg_{-2}^b&\oplus&\fg_{-6}&\oplus&\fg_{-10},}}\]
where the rows are $\fh'$-invariant and the columns are $\fsl_2\C$-invariant.
\end{lemma}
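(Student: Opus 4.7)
The plan is to verify row invariance under $\fh'$ and column invariance under $\fsl_2\C$ via bracket computations, using the Jacobi identity and the $\rG_2$-structure of $\fg(e)$. By weight considerations $[\fg_s, \fg_k] \subset \fg_{s+k}$, so most brackets automatically vanish or land in the ``obvious'' summand; the delicate checks concern how $\fg_{\pm\tilde\alpha}$ and $\fg_{\pm 2}^b$ interact with $\fg_{\pm 4}$ and $\fg_{\pm 8}$, since each of these pairs shares an $\ad_h$-weight but is split between different rows.

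Two structural inputs drive the argument. First, because $\fh = \fsl_2\C \oplus \fh'$ is a direct sum of ideals, $[\fsl_2\C, \fh'] = 0$; combined with \eqref{eq h'+sl2 decomp}, this gives the non-trivial vanishings $[\fg_{-4}, \fg_8] = 0 = [\fg_4, \fg_{-8}]$. Second, inside $\fg(e) \cong Lie(\rG_2)$, with $\tilde\alpha$ the long simple root, $e_b$ the short simple root vector, and $\eta = 3e_b + \tilde\alpha$ the long root corresponding to $\fg_8 \subset \fg(e)$, one checks directly from the $\rG_2$ root system that
\[ \fg_{\tilde\alpha} = [\fg_{-8}, \fg_{10}], \qquad \fg_{-\tilde\alpha} = [\fg_{-10}, \fg_8], \qquad [\fg_{-8}, \fg_{\tilde\alpha}] = 0 = [\fg_8, \fg_{-\tilde\alpha}], \]
the last pair because $\pm(\eta - \tilde\alpha) = \pm 3e_b$ is not a root of $\rG_2$.

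The key row-invariance identities $[\fg_{-4}, \fg_{-\tilde\alpha}] = 0$ and $[\fg_4, \fg_{\tilde\alpha}] = 0$ (which ensure, for instance, that $\ad_{\fg_{-4}}$ does not carry $\fg_{-\tilde\alpha} \subset$ Row 1 down into $\fg_{-6} \subset$ Row 2) follow by Jacobi; for example,
\[ [\fg_{-4}, \fg_{-\tilde\alpha}] = [\fg_{-4}, [\fg_{-10}, \fg_8]] = [[\fg_{-4}, \fg_{-10}], \fg_8] + [\fg_{-10}, [\fg_{-4}, \fg_8]] = 0, \]
using $[\fg_{-4}, \fg_{-10}] \subset \fg_{-14} = 0$ together with the first structural input. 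Column invariance for Columns 1 and 4 (which lie entirely in $\fg(e)$) is then immediate from the displayed identities above.

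For the remaining verifications --- such as $[\fg_{-8}, \fg_6] \subset \fg_{-2}^b$ and $[\fg_{-4}, \fg_2^b] \subset \fg_{-\tilde\alpha}$ --- we decompose each weight space into its $\fg(e)$-part and the complementary $\rC$-isotypic piece. By Lemma \ref{lemma:Ccentralizesg(e)} and the magic-square structure of the quaternionic real forms, $\fg$ decomposes as $\fg(e) \oplus \fc \oplus P$, where $P$ is the $7$-dimensional fundamental $\rG_2$-representation tensored with a suitable $\rC$-module. Each remaining bracket then breaks into contributions within $\fg(e)$ (controlled by the $\rG_2$ root system) and contributions involving $P$ (controlled by the fact that the $\rG_2$-weights of the $7$-dimensional representation are $0$ and the short roots --- notably \emph{not} including $\pm\tilde\alpha$), and in every case the $\fg_{\pm\tilde\alpha}$-components appear in only the predicted row. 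The main obstacle is purely bookkeeping: roughly a dozen individual bracket checks must be carried out, each mechanical once the above root-system data is assembled.
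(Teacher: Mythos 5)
Your approach is correct in outline but goes by a noticeably different route than the paper. The paper's proof is a three-sentence observation: reading off the root poset diagrams of \S\ref{subsec root posets exceptional}, one sees that $\fg_{\pm 4}$, $\fg_{\pm 6}$, $\fg_{\pm 8}$, $\fg_{\pm\tilde\alpha}$ are spanned by root spaces whose $\tilde\alpha$-coefficient is $\pm 1$, that $\fg_{\pm 2}^b$ has $\tilde\alpha$-coefficient $0$, and that $\fg_{\pm 10}$ has $\tilde\alpha$-coefficient $\pm 2$; the $\tilde\alpha$-coefficient thus provides a second grading, and rows and columns are simply the homogeneous pieces of this bigrading compatible with the $\fh'$- and $\fsl_2\C$-actions. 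Since $\fg_{\pm 4}\subset\fh'$ carry $\tilde\alpha$-coefficient $\pm 1$ and $\fg_{\pm 8}\subset\fsl_2\C$ also carry $\pm 1$, row- and column-preservation is immediate. Your version replaces this elementary coefficient bookkeeping with the $\rG_2=\fg(e)$-module structure of $\fg$: the Jacobi computations for $[\fg_{\pm 4},\fg_{\mp\tilde\alpha}]=0$ are clean and correct, and the $\rG_2$-weight reasoning (using that the $7$-dimensional representation has weights $0$ and the short roots, excluding $\pm\tilde\alpha$) does indeed force the remaining inclusions, including the $[P,P]$ brackets. This buys conceptual clarity at the cost of additional structural input: the identification of the complement of $\fg(e)\oplus\fc$ as $V_7\otimes U$ is asserted via ``the magic-square structure'' but not derived --- it does follow from $W_6$ being an $n_6$-fold multiple of the $7$-dimensional $\fsl_2\C$-representation (forcing $W_6\cong V_7^{\oplus n_6}$ as a $\rG_2$-module) plus Schur's lemma for the commuting $\rC$-action, but you should say so. Also note that your ``first structural input'' $[\fg_{-4},\fg_8]=0$ is itself established in the preceding lemma by exactly the $\tilde\alpha$-coefficient argument, so you are using the paper's device at one remove. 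The roughly-dozen-bracket bookkeeping you defer is indeed mechanical, but in the cases involving two $P$-factors it is the $\rG_2$-weight additivity (not just the action of $\fg(e)$ on $V_7$) that saves you, and this deserves a sentence. Net: correct strategy, genuinely different and more structural than the paper's, but left with a gap at the tensor-decomposition step and at the $[P,P]$ checks.
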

\begin{proof}
Observe that $\fg_{\pm4}$, $\fg_{\pm6}$, $\fg_{\pm 8}$, $\fg_{-\tilde\alpha}$ are direct sums root spaces $\fg_{\alpha}$ with $\alpha=\sum_{i}n_i\alpha$, where the coefficient of the simple root $\tilde\alpha$  is $\pm1$ and $\fg_{\pm10}$ is the root space for $\pm\tilde\alpha\pm\gamma$ where $\fg_8$ is the root space for the root $\gamma$. 
Thus, the rows are preserved by bracketing with $\fg_{\pm4}$ and the columns are preserved by bracketing with $\fg_{\pm 8}$. 
\end{proof}

Finally, we deduce some bracket relations which will be useful later. 
\begin{lemma}\label{lem exceptional bracket relations}
  Let $\{f,h,e\}\subset\fg$ be a magical $\fsl_2$-triple from Case (4) of Theorem \ref{thm: classification weighted dynkin} and let $\fg=W_0\oplus W_2\oplus W_6\oplus W_{10}$ be the $\fsl_2\C$-module decomposition. Let $f=f_b+\tilde f$ and $e=e_b+\tilde e$ be the decompositions \eqref{eq decomp except} and $V_6=\ker(\ad_e|_{W_6})$. Then, for any $\phi\in V_6$,
\begin{equation}
     \label{eq adfb^3 f nonzer0}\ad_{f_b}^3(\tilde f)\neq 0\in\fg_{-8},
 \end{equation} 
 \begin{equation}
     \label{eq ad^3phi}\ad_f^3(\phi)=[f_b,[\tilde f,[f_b,\phi]]]=[[f_b,\tilde f],[f_b,\phi]],
 \end{equation}
  \begin{equation}
     \label{eq ad f_b+phi}\ad_{f_b+\phi}\tilde f= [f_b,\tilde f],
 \end{equation}
 \begin{equation}
    \label{eq ad^3 f_b+phi} \ad_{f_b+\phi}^3\tilde f= \ad_{f_b}^3(\tilde f)+3\ad_{f}^3(\phi)+\ad^2_{\phi}\circ \ad_{f_b}(\tilde f).
 \end{equation}
\end{lemma}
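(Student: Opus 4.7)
The plan is to work inside the subalgebra $\fg(e) \cong \mathrm{Lie}(\rG_2)$ from Proposition \ref{prop subalge g(e)} and exploit Lemma \ref{lem fg(e) in red so8}. Let $\alpha$ denote the short and $\beta$ the long simple root of $\fg(e)$. Then $\fg_{\tilde\alpha} \subset \fg(e)$ is the long positive simple root space and $\langle e_b\rangle \subset \fg(e)$ is the short; since $f \in \fg(e)$ and the 1-dimensional space $\fg_{-\tilde\alpha}$ must lie in $\fg(e)$ by a dimension argument, $\tilde f$ lies in the long negative simple root space and $f_b$ in the short. Identity \eqref{eq adfb^3 f nonzer0} then follows directly from the $\rG_2$ root structure: by Serre's relations, $\ad_{f_b}^3(\tilde f)$ is a nonzero scalar multiple of a root vector for $-(3\alpha+\beta)$, which lies in $\fg_{-8}$ by $\ad_h$-weight.

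The key step for the remaining identities is to recognize $W_6$ as a $\fg(e)$-submodule of $\fg$ isomorphic to $n_6$ copies of the $7$-dimensional standard representation $V_{\omega_1}$ of $\mathrm{Lie}(\rG_2)$. Indeed, $W_6$ is $\fg(e)$-invariant, and its decomposition under the principal $\fsl_2 \subset \fg(e)$ contains only copies of the $7$-dimensional irreducible $\fsl_2$-representation; the only irreducible $\mathrm{Lie}(\rG_2)$-representation whose principal $\fsl_2$-decomposition is $W_6$ is the standard one. Under this identification, $V_6 = W_6 \cap \fg_6$ corresponds to the highest-weight lines, spanned by vectors of $\rG_2$-weight $2\alpha+\beta$, while the weights of $V_{\omega_1}$ are $0$ together with the six short roots $\pm\alpha, \pm(\alpha+\beta), \pm(2\alpha+\beta)$. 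Two crucial vanishings then follow by weight analysis: for any $\phi \in V_6$,
\[
[\tilde f,\phi] = 0 \quad \text{and} \quad [f_b,[f_b,\phi]] = 0,
\]
because the target weights $2\alpha$ and $\beta$ are long roots, hence not weights of $V_{\omega_1}$.

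Identity \eqref{eq ad f_b+phi} is then immediate from $[\tilde f,\phi]=0$. For \eqref{eq ad^3phi}, one computes $\ad_f(\phi)=[f_b,\phi]$, then $\ad_f^2(\phi)=[\tilde f,[f_b,\phi]]$ using $[f_b,[f_b,\phi]]=0$, and finally $\ad_f^3(\phi)=[f_b,[\tilde f,[f_b,\phi]]]$ after noting that $[\tilde f,[\tilde f,[f_b,\phi]]]$ would target the non-weight $\alpha-\beta$; the second equality in \eqref{eq ad^3phi} follows from the Jacobi identity $[f_b,[\tilde f,[f_b,\phi]]]=[[f_b,\tilde f],[f_b,\phi]]+[\tilde f,[f_b,[f_b,\phi]]]$ combined with $[f_b,[f_b,\phi]]=0$. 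Identity \eqref{eq ad^3 f_b+phi} is the most intricate: using \eqref{eq ad f_b+phi} one obtains $\ad_{f_b+\phi}^2(\tilde f)=[f_b,[f_b,\tilde f]]+[\phi,[f_b,\tilde f]]$, and applying $\ad_{f_b+\phi}$ once more yields four terms to regroup. The two diagonal terms produce $\ad_{f_b}^3(\tilde f)$ and $\ad_\phi^2([f_b,\tilde f])$ directly, while the cross term $\ad_{f_b}[\phi,[f_b,\tilde f]]$ rewrites as $[f_b,[\tilde f,[f_b,\phi]]]=\ad_f^3(\phi)$ using $[\tilde f,\phi]=0$, and the cross term $\ad_\phi[f_b,[f_b,\tilde f]]$ splits via Jacobi into two pieces both equal to $\ad_f^3(\phi)$ (the first using the transformation already made, the second using the second equality of \eqref{eq ad^3phi}), for a total of $2\ad_f^3(\phi)$. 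The main obstacle will be the careful Jacobi bookkeeping in this final step to correctly assemble the factor of $3$.
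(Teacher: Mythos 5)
Your proof is correct and arrives at the same identities, but the route to the key vanishings is genuinely different from the paper's. The paper proves $[\tilde f,\phi]=0$ and $[f_b,[f_b,\phi]]=0$ by tracking $\ad_h$-weights together with the coefficient of the simple root $\tilde\alpha$ (referring to the root-poset diagrams of \S\ref{subsec root posets exceptional}) and, for the second vanishing, the $\fsl_2$-module decomposition of $\fg$: $\fg_{\tilde\alpha}\subset W_2\oplus W_{10}$ while $\ad_f^2(V_6)\subset W_6$. You instead recognize $W_6$ as a $\fg(e)$-module isomorphic to $n_6$ copies of the $7$-dimensional standard $\rG_2$-representation $V_{\omega_1}$, identify $V_6$ with the weight-$(2\alpha+\beta)$ spaces, and deduce both vanishings from the fact that $2\alpha$ and $\beta$ are not weights of $V_{\omega_1}$. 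This is cleaner and more self-contained---one conceptual frame handles the whole lemma, without cross-referencing root-poset diagrams or toggling between three decompositions of $\fg$. Two minor imprecisions worth flagging: $2\alpha$ is not a long root of $\rG_2$ (it is not a root at all), though your stated conclusion---that it is not a weight of $V_{\omega_1}$---still holds because the nonzero weights of $V_{\omega_1}$ are precisely the short roots; and Serre's relations give $\ad_{f_b}^4(\tilde f)=0$, whereas the \emph{non}vanishing of $\ad_{f_b}^3(\tilde f)$ is really the statement that $3\alpha+\beta$ is a root of $\rG_2$, which one reads off the $\alpha$-string through $\beta$ rather than from Serre.
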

\begin{proof}
Equation \eqref{eq adfb^3 f nonzer0} follows directly from Lemma \ref{lem fg(e) in red so8} and the bracket relations in $Lie(\rG_2)$. For equation \eqref{eq ad^3phi}, we have 
\[\ad_f^3(\phi)=[f_b+\tilde f,[f_b+\tilde f,[f_b+\tilde f,\phi]]].\]
Since $\ad_{\tilde f}\fg_6=0\subset \fg_4$, $[\tilde f,\phi]=0$ and so
\[\ad_f^3(\phi)=[f_b,[f_b,[f_b,\phi]]]+[\tilde f,[\tilde f,[f_b,\phi]]]+[\tilde f,[f_b,[f_b,\phi]]]+[f_b,[\tilde f,[f_b,\phi]]].\]
We will show that the first three terms are zero. Recall that $\fg_6$ is a direct sum of root spaces $\fg_\alpha$ where the coefficient of $\tilde\alpha$ is $1$. 
Thus, 
\[\xymatrix{[f_b,[f_b,\phi]]\subset\fg_{\tilde\alpha}&\text{and}[\tilde f,[f_b,\phi]]\subset\fg_2^b.}\]
Since $[\fg_{\pm2}^b,\fg_{\mp\tilde\alpha}]=0$, the first two terms are zero. 
For the third term, note that $\ad_{f_b}^2(V_6)\subset\fg_{\tilde\alpha}$ is the projection of $\ad_f^2(V_6)$ onto $\fg_{\tilde\alpha}$. But $\fg_{\tilde\alpha}\subset W_2\oplus W_{10}$ by Lemma \ref{lem fg(e) in red so8} and $\ad_f^2(V_6)\cap W_2\oplus W_{10}=0$. Hence $\ad_{f_b}^2(\phi)=0$ for $\phi\in V_6$, and 
\[\ad_f^3(\phi)=[f_b,[\tilde f,[f_b,\phi]]].\]
The Jacobi identity and $\ad_{f_b}^2(\phi)=0$ imply $[f_b,[\tilde f,[f_b,\phi]]]=[[f_b,\tilde f],[f_b,\phi]]$.

Equation \eqref{eq ad f_b+phi} follows since $[\fg_{-\tilde\alpha},\fg_6]=0$. For \eqref{eq ad^3 f_b+phi}, we have 
\[\ad^3_{f_b+\phi}(\tilde f)=[f_b+\phi,[f_b+\phi,[f_b,\tilde f]]]\]
since $[\tilde f,\phi]=0$. Thus, 
\[\ad^3_{f_b+\phi}=\ad^3_{f_b}(\tilde f)+[f_b,[\phi,[f_b,\tilde f]]]+[\phi,[f_b,[f_b,\tilde f]]+\ad^2_\phi([f_b,\tilde f]).\]
The middle two terms are in $\fg_0$. Using the Jacobi identity and $[\tilde f,\phi]=0$, we have 
\begin{equation*}
\begin{split}
[f_b,[\phi,[f_b,\tilde f]]]+[\phi,[f_b,[f_b,\tilde f]]]&=-[f_b,[\tilde f,[\phi,f_b]]]-[[f_b,\tilde f],[\phi,f_b]]-[f_b,[[f_b,\tilde f],\phi]]\\
&=[f_b,[\tilde f,[f_b,\phi]]]+[[f_b,\tilde f],[f_b,\phi]]+[f_b,[\tilde f,[f_b,\phi]]]\\&
=3\ad_f^3(\phi),
\end{split}
\end{equation*}
by \eqref{eq ad^3phi}.
\end{proof}
As a result of the above discussion, we have the following proposition. Recall that a nonzero nilpotent is magical if it belongs to a magical $\fsl_2$-triple.
\begin{proposition}
The nilpotent $[f_b,\tilde f]\subset\fg_{-4}$ is a magical nilpotent in $\fh'$ of the type of Case (2) of Theorem \ref{thm: classification weighted dynkin} and $[f_b,[f_b,[f_b,\tilde f]]]$ is a magical (i.e.~nonzero) nilpotent in $\fsl_2\C$.
\end{proposition}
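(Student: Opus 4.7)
The plan is to handle the two claims separately: the $\fsl_2\C$ claim is essentially immediate from \eqref{eq adfb^3 f nonzer0} and \eqref{eq h'+sl2 decomp}, while the $\fh'$ claim requires analyzing the Jordan form of $\ad_{[f_b,\tilde f]}$ on $\fh'$ via the three-level grading \eqref{eq h'+sl2 decomp} and then matching against the classification in Theorem \ref{thm: classification weighted dynkin}.

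For the $\fsl_2\C$ claim, I would observe that $[f_b,[f_b,[f_b,\tilde f]]]=\ad_{f_b}^3(\tilde f)$ is nonzero by \eqref{eq adfb^3 f nonzer0} and lies in $\fg_{-8}$. By \eqref{eq h'+sl2 decomp}, $\fg_{-8}$ is the one-dimensional nilnegative root space of the $\fsl_2\C$ summand of $\fh$, so this element is a nonzero nilpotent in $\fsl_2\C$, automatically principal and therefore magical (the $\rA_1$ instance of Case (1) of Theorem \ref{thm: classification weighted dynkin}).

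For the first claim, set $F:=[f_b,\tilde f]\in\fg_{-4}$. The nonvanishing of $F$ follows from \eqref{eq adfb^3 f nonzer0}, since $F=0$ would give $\ad_{f_b}^3(\tilde f)=\ad_{f_b}^2(F)=0$. By \eqref{eq h'+sl2 decomp}, $F\in\fh'$ and $\fh'$ carries the three-level grading $\fh'=\fg_{-4}\oplus[\fg_{-4},\fg_4]\oplus\fg_4$, on which $\ad_F$ lowers grade by $2$. Consequently $\ad_F^3=0$ as an endomorphism of $\fh'$. I would then apply Jacobson--Morozov inside $\fh'$ to complete $F$ to an $\fsl_2$-triple $\{F,H',E'\}$; in the resulting $\fsl_2$-module decomposition, each summand $W_{2k}$ contributes a Jordan block of size $2k+1$ to $\ad_F$, and the vanishing $\ad_F^3=0$ forces all blocks to have size at most three. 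Hence $\fh'=W_0^{n_0}\oplus W_2^{n_2}$ as an $\fsl_2$-module.

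This module structure matches precisely the $\fsl_2$-data of Case (2) magical nilpotents from Proposition \ref{prop: magical sl2 data}(2). Since $\fh'$ is simple of type $\rC_3,\rA_5,\rD_6,$ or $\rE_7$ (for $\fg=\ff_4,\fe_6,\fe_7,\fe_8$ respectively) -- exactly the Hermitian tube-type algebras appearing in Case (2) -- the weighted Dynkin diagram of $F$, with labels in $\{0,2\}$ and the unique label $2$ forced onto a simple root appearing with coefficient one in the highest root, must coincide with one of the Case (2) diagrams of Theorem \ref{thm: classification weighted dynkin}. Therefore $F$ is magical of Case (2) type. The main technical step is the passage from the pre-existing three-level grading on $\fh'$ given by \eqref{eq h'+sl2 decomp} to the $\fsl_2$-module decomposition of $\fh'$ via the nilpotency estimate $\ad_F^3=0$; once this step is in place, the classification in Theorem \ref{thm: classification weighted dynkin} pins down $F$ uniquely as a Case (2) magical nilpotent in $\fh'$.
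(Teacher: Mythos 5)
Your handling of the $\fsl_2\C$ claim is correct: $\ad_{f_b}^3(\tilde f)\in\fg_{-8}$ is nonzero by \eqref{eq adfb^3 f nonzer0}, and by \eqref{eq h'+sl2 decomp} the space $\fg_{-8}$ is the nilnegative line of the $\fsl_2\C$ summand of $\fh$, so the element is a nonzero nilpotent there. Likewise your nonvanishing argument for $F=[f_b,\tilde f]$ (via $\ad_{f_b}^2(F)=\ad_{f_b}^3(\tilde f)\ne 0$) and the observation $\ad_F^3=0$ on $\fh'$ are both fine.

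The gap is the step ``Hence $\fh'=W_0^{n_0}\oplus W_2^{n_2}$ as an $\fsl_2$-module,'' and everything that hangs on it. From $\ad_F^3=0$ you only get that all Jordan blocks of $\ad_F$ have size $\le 3$, which allows $\fh'\cong W_0^{n_0}\oplus W_1^{n_1}\oplus W_2^{n_2}$; you have silently assumed $n_1=0$, i.e.\ that $F$ is an \emph{even} nilpotent, and this is precisely the nontrivial content. For a Hermitian tube-type grading $\fh'=\fg_{-4}\oplus\fg_0'\oplus\fg_4$, every nonzero $x\in\fg_{-4}$ satisfies $\ad_x^3=0$, but only those in the unique open $\fg_0'$-orbit are even nilpotents of $\fh'$ (equivalently, Case~(2) magical). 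For instance when $\fh'\cong\fsl_6\C$ and $\fg_{-4}\cong M_3(\C)$, a rank-one or rank-two matrix $x$ gives partition $[2,1^4]$ or $[2,2,1,1]$, both with odd weighted Dynkin labels and $W_1$ summands in the adjoint module. So the nilpotency bound $\ad_F^3=0$ alone cannot single out the orbit, and your subsequent assertion that the weighted Dynkin diagram is forced to have a unique label $2$ on a cominuscule node is valid only once evenness is known.

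To close the gap you must show $F$ lands in the open $\fg_0'$-orbit of $\fg_{-4}$, i.e.\ that $\ad_F^2\colon\fg_4\to\fg_{-4}$ is an isomorphism (equivalently, that a Jacobson--Morozov semisimple element for $F$ can be taken equal to the grading element $h'$ of \eqref{eq h'+sl2 decomp} up to scaling). This regularity is not a formal consequence of $\ad_{f_b}^3(\tilde f)\ne 0$: the latter says only that $F$ avoids the hyperplane $\ker(\ad_{f_b}^2\colon\fg_{-4}\to\fg_{-8})$, whereas the complement of the open orbit is cut out by a higher-degree invariant (for $\fsl_6\C$, the $3\times 3$ determinant). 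The natural way to verify the regularity is via the explicit $\rD_4$-picture preceding the Proposition: writing $f_b=\sqrt 6\sum_j f_{\beta_j}$ and $\tilde f=\sqrt{12}f_{\tilde\alpha}$ from \eqref{eq principal so8 sl2}, one gets $F$ as a nonzero linear combination of the three root vectors $f_{\tilde\alpha+\beta_j}$, all three coefficients nonzero, which exhibits $F$ as a principal nilpotent of the $\fsl_2^{\oplus 3}\subset\fh'$ generated by the red roots in $\fg_{\pm 4}$ and hence a regular element of $\fg_{-4}$. Without some such argument the proof is incomplete.
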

\begin{remark}\label{rem fb magical from case 2}
    Note that $[\fg_{-2}^b,\fg_2^b]\subset\fg_0$ is isomorphic to $[\fg_4,\fg_{-4}]$, thus $\{f_b,[e_b,f_b],e_b\}\subset\fg_{-2}^b\oplus[\fg_{2}^b,\fg_{-2}^b]\oplus\fg_2^b$ is a magical nilpotent from Case (2) of Theorem \ref{thm: classification weighted dynkin}.
\end{remark}

We also need to understand the group $\rH$ and its action on $\fm$. Let $\rG$ and $\rH'$ be the simply connected groups with Lie algebras $\fg$ and $\fh'$ respectively. From the description  of the Lie algebras $\fh'$ above, $\rH'$ is $\rSp_6\C$, $\rSL_6\C$, $\rSpin_{12}\C$, $\rE_7$ when $\fg$ is $\ff_4$, $\fe_6$, $\fe_7$, $\fe_8$, respectively. The group $\rH\subset\rG$ is a quotient
\[\rH=(\rH'\times \rSL_2\C)/\Z_2,\]
where $\Z_2$ has generator $(\mu',\mu_2)$ for $\mu'\in\rH'$ and $\mu_2\in\rSL_2\C$ the unique order two elements of the center. 

As an $\rH$-representation, $\fm$ is the tensor product $\fm=V'\otimes V_2$, where $V_2$ is the standard representation of $\rSL_2\C$ and $V'$ is an irreducible $\rH'$-representation known as a \emph{minuscule representation}. The decomposition $\fh'=\fg_{-4}\oplus[\fg_{-4},\fg_4]\oplus\fg_4$ defines a maximal parabolic subgroup $\rP'\subset\rH'$ with Lie algebra $[\fg_{-4},\fg_4]\oplus\fg_4$. In fact, $V'$ is the irreducible representation associated to the Pl\"ucker embedding of $\rH'/\rP'\to \mathbb P(V')$, that is, $\rH'/\rP'$ is isomorphic to the unique closed $\rH'$-orbit in $\mathbb P(V')$.
For example, when $\rH'=\rSL_6\C$, $V'=\Lambda^3\C^6$ is the third exterior product of the standard representation of $\rSL_6\C$ and $\rSL_6\C/\rP'$ is the Grassmannian of three planes in $\C^6$. When $\fh'=\fe_7$, then $V'$ is the unique irreducible $\rE_7$-representation of dimension $56$.

The following result describes the $\rH'$-orbit structure of $\mathbb P(V')$. We refer the reader to the work of Landsberg--Manivel, specifically \cite[\S5.3]{LandsbergManivelorbits}. For the case $\rH'=\rSL_6\C$ this orbit structure was described in \cite{DonagiGrassmannians}. For $\rSp_6\C$ and $\rSpin_{12}\C$ some aspects of the orbit structure are described in \cite{Igusa}, and for $\rE_7$ in \cite{HarisE7}.
\begin{proposition}\label{prop H' orbits on P(V')}
    Consider the action of $\rH'$ on $\mathbb P(V')$ described above. There are four $\rH'$-orbits, $\cO_1,\cO_2,\cO_3,\cO_4$. Moreover, the following facts completely characterize $\cO_1,\cO_3,\cO_4$ 
    \begin{enumerate}
        \item $\cO_1$ is closed and isomorphic to $\rH'/\rP'$;
        \item $\cO_3$ has codimension 1 and $\overline \cO_3$ is the tangent variety of $\rH'/\rP'$;
        \item $p\in\cO_3$ if and only if $p$ is contained in a unique tangent line of $\rH'/\rP'$; 
        \item $\cO_4$ is open.
    \end{enumerate}
\end{proposition}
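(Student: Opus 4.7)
The plan is to invoke the classification results of Landsberg--Manivel \cite[\S5.3]{LandsbergManivelorbits}, which treat the four cases uniformly as instances of the ``subexceptional series'' arising from the second row of the Freudenthal magic square. The key common feature is that each $V'$ carries a quartic $\rH'$-invariant polynomial $P$ (the Freudenthal quartic), together with a ``rank'' function taking values in $\{1,2,3,4\}$ on $V'\setminus\{0\}$, whose $\rH'$-invariant level sets descend to the four projective orbits $\cO_1,\cO_2,\cO_3,\cO_4$. Granting this, the assertion that there are exactly four orbits follows at once.

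First, I would identify $\cO_1$ with $\rH'/\rP'$. By the general theory of minuscule representations, the highest weight vector in $V'$ has stabilizer (up to scalar) equal to the maximal parabolic $\rP'$ associated to the corresponding node of the Dynkin diagram, and its $\rH'$-orbit in $\mathbb{P}(V')$ is the unique closed orbit. This gives $\cO_1\cong \rH'/\rP'$ and realizes the Pl\"ucker embedding. In terms of rank, $\cO_1$ is the rank-$1$ locus, and $\cO_4$ is defined as the (necessarily open) locus $\{P\neq 0\}$, corresponding to rank $4$.

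Next, for the characterization of $\cO_3$ and its closure, I would use the fact that for precisely these four cases the secant variety of $\rH'/\rP'\subset\mathbb{P}(V')$ equals its tangent variety and has codimension one in $\mathbb{P}(V')$; this codimension-$1$ hypersurface coincides with $\{P=0\}$ (see \cite{LandsbergManivelorbits}, and also \cite{DonagiGrassmannians} for $\rSL_6\C$, \cite{Igusa} for $\rSp_6\C$ and $\rSpin_{12}\C$, and \cite{HarisE7} for $\rE_7$). Thus $\overline{\cO_3}=\{P=0\}$ is the tangent variety of $\rH'/\rP'$, and $\cO_3$ is this hypersurface with the closed orbit $\cO_1$ and the (lower-dimensional) singular stratum $\cO_2$ removed. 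The rank-$2$ orbit $\cO_2$ is then identified with $\overline{\cO_3}\setminus(\cO_1\cup\cO_3)$, and coincides with the singular locus of the tangent variety away from $\cO_1$.

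Finally, for the uniqueness of the tangent line through $p\in\cO_3$, I would argue that $\cO_3$ is exactly the smooth, generic stratum of the tangent variety $\tau(\rH'/\rP')$. On this open locus, the natural rational map from the tangent bundle $T(\rH'/\rP')$ to $\tau(\rH'/\rP')$ is an isomorphism onto its image, so each such $p$ lies on a unique tangent line of $\rH'/\rP'$; points parameterized by higher-dimensional fibres of this map all lie in the complement $\cO_1\cup\cO_2$. The main obstacle here is the non-trivial geometric input that tangent equals secant and has codimension one---a property specific to the subexceptional series---but this is precisely what is established uniformly in \cite{LandsbergManivelorbits} via the Jordan algebra / Freudenthal triple system formalism, so the proof ultimately reduces to quoting that framework.
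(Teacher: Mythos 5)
Your proposal takes essentially the same approach as the paper: the paper also establishes this proposition by citing Landsberg--Manivel \cite{LandsbergManivelorbits} (together with \cite{DonagiGrassmannians}, \cite{Igusa}, \cite{HarisE7} for the individual cases) rather than giving an independent proof. Your elaboration via the Freudenthal quartic, the rank stratification, and the tangent-equals-secant property of the subexceptional series is a faithful unpacking of what those references establish and is consistent with the paper's treatment.
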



In the decomposition of $\fm$ given by Lemma \ref{lem h' sl2 invariant decomp of m}, the subspace $\fg_{\tilde\alpha}\oplus\fg_{-2}^b\oplus\fg_{-6}\oplus\fg_{-10}$ is $\rH'$-invariant and hence isomorphic to the representation $V'$. The following proposition will be used in the next section.
\begin{proposition}\label{prop H' stabilizer in parabolic}
    Consider the $\rH'$-invariant subspace of $\fm$ given by $\fg_{\tilde\alpha}\oplus\fg_{-2}^b\oplus\fg_{-6}\oplus\fg_{-10}$. \begin{enumerate}
        \item The point $(\tilde e,0,0,0)\in \fg_{\tilde\alpha}\oplus\fg_{-2}^b\oplus\fg_{-6}\oplus\fg_{-10}$ defines a point in the closed orbit in $\mathbb P(\fg_{\tilde\alpha}\oplus\fg_{-2}^b\oplus\fg_{-6}\oplus\fg_{-10})$ whose stabilizer is the parabolic subgroup $\rP'$ of $\rH'$ with Lie algebra $[\fg_{-4},\fg_4]\oplus\fg_4$. 
        \item For all $\mu\in\C$, a point $(\mu \tilde e,f_{b},0,0)$ defines a point in the codimension one orbit of $\mathbb P(\fg_{\tilde\alpha}\oplus\fg_{-2}^b\oplus\fg_{-6}\oplus\fg_{-10})$ whose stabilizer is contained in the parabolic $\rP'$.
    \end{enumerate}
\end{proposition}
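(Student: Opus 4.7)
The plan is to exploit the minuscule structure of $V' = \fg_{\tilde\alpha}\oplus\fg_{-2}^b\oplus\fg_{-6}\oplus\fg_{-10}$ as an $\rH'$-representation from Proposition \ref{prop H' orbits on P(V')}, in conjunction with the three-step $\ad_h$-grading $\fh' = \fg_{-4}\oplus[\fg_{-4},\fg_4]\oplus\fg_4$ from \eqref{eq h'+sl2 decomp}. For part (1), I will argue that $\tilde e\in\fg_{\tilde\alpha}$ is the highest weight line of $V'$ for this grading. The key observation is that $[\fg_4,\tilde e]\subset V'\cap\fg_6 = 0$: the bracket has $\ad_h$-weight $6$, while by Lemma \ref{lem h' sl2 invariant decomp of m} the space $V'$ is $\fh'$-invariant and meets only the $\ad_h$-weight spaces $\fg_2$, $\fg_{-2}$, $\fg_{-6}$, $\fg_{-10}$. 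Since $\fg_{\tilde\alpha}$ is the one-dimensional root space $\C\tilde e$, the reductive subalgebra $[\fg_{-4},\fg_4]\subset\fg_0$ also preserves the line, so $\mathrm{Lie}(\rP') = [\fg_{-4},\fg_4]\oplus\fg_4$ lies in the stabilizer of $[\tilde e]$. Combined with the fact (from Proposition \ref{prop H' orbits on P(V')}) that $\rH'/\rP'$ is the closed $\rH'$-orbit in $\mathbb P(V')$ with stabilizer exactly $\rP'$ at the highest weight line, this forces $\mathrm{Stab}_{\rH'}([\tilde e]) = \rP'$.

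For part (2), set $p = [(\mu\tilde e, f_b, 0, 0)]$. I will first show $p$ lies on the projective tangent line at $[\tilde e]$ to the closed orbit $\rH'/\rP'$. The affine tangent cone to $\widehat{\rH'/\rP'}$ at $\tilde e$ equals $\C\tilde e\oplus\ad_{\tilde e}(\fg_{-4})$, and by $\ad_h$-weight together with $\fh'$-invariance of $V'$ the image $\ad_{\tilde e}(\fg_{-4})$ lies in $\fg_{-2}^b$. A dimension count using the $\fsl_2$-data in Proposition \ref{prop: magical sl2 data} gives $\dim\fg_{-4} = n_6+1 = \dim\fg_{-2}^b$, while injectivity of $\ad_{\tilde e}|_{\fg_{-4}}$ follows from part (1) since its kernel lies in $\mathrm{Lie}(\rP')\cap\fg_{-4} = 0$. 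Hence $\ad_{\tilde e}:\fg_{-4}\to\fg_{-2}^b$ is an isomorphism, so $f_b\in[\fg_{-4},\tilde e]$ and $(\mu\tilde e, f_b, 0, 0)$ lies in the affine tangent cone; thus $p$ is on the tangent line at $[\tilde e]$. Since $f_b\neq 0$ by Lemma \ref{lem decomp except}, $p\neq[\tilde e]$ and $p\notin\cO_1$.

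To conclude $p\in\cO_3$ and control its stabilizer, I invoke the characterization in Proposition \ref{prop H' orbits on P(V')}: a point belongs to $\cO_3$ iff it lies on a unique tangent line to $\rH'/\rP'$. Points lying on multiple tangent lines form the smaller secant-type orbit $\cO_2$, and using the explicit orbit descriptions in \cite{LandsbergManivelorbits, DonagiGrassmannians, Igusa, HarisE7} one checks case by case that a point of the form $\mu\tilde e + f_b$ with $f_b$ a genuine tangent vector at $\tilde e$ avoids $\cO_2$ and lies in $\cO_3$. Given uniqueness, $\mathrm{Stab}_{\rH'}(p)$ must preserve the unique tangent line through $p$; since the minuscule embedding is defined by quadrics, a projective tangent line not contained in $\rH'/\rP'$ meets the variety only at its tangency point $[\tilde e]$, so $\mathrm{Stab}_{\rH'}(p)$ also fixes $[\tilde e]$, giving $\mathrm{Stab}_{\rH'}(p)\subseteq\mathrm{Stab}_{\rH'}([\tilde e]) = \rP'$ by part (1).

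The main obstacle is the uniqueness step distinguishing $\cO_3$ from $\cO_2$, which in our approach reduces to the case-by-case orbit geometry of the four minuscule varieties for $\rE_7$, $\rSpin_{12}\C$, $\rSL_6\C$, and $\rSp_6\C$. An alternative avoiding the orbit stratification would be to compute $\mathrm{Lie}(\mathrm{Stab}_{\rH'}(p))$ directly by decomposing $x = x_{-4}+x_0+x_4\in\fh'$ and writing the $\fg_{-6}$, $\fg_{-2}^b$, and $\fg_{\tilde\alpha}$ components of $x\cdot(\mu\tilde e + f_b) = \lambda(\mu\tilde e + f_b)$, which yields three coupled equations in $(x_{-4},x_0,x_4,\lambda)$; showing these force $x_{-4}=0$ again requires nontriviality of the pairing of $\ad_{\tilde e}^{-1}$ with $\ad_{f_b}$, so the geometric route via tangent lines seems more efficient.
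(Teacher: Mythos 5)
Your part (1) is fine and is close in substance to the paper's: you derive $\mathrm{Lie}(\rP')\subseteq\mathrm{Lie}(\mathrm{Stab}_{\rH'}([\tilde e]))$ from the fact that $V'$ contains no $\ad_h$-weight-$6$ piece, and then use that $\rP'$ is a maximal parabolic and $[\tilde e]$ is not $\rH'$-fixed. The paper instead writes out the bracket $[(x,y,z),(\tilde e,0,0,0)]=(\lambda(y)\tilde e,[x,\tilde e],0,0)$ and observes $[x,\tilde e]=0\Leftrightarrow x=0$, which yields the same conclusion. Both framings are valid and both implicitly lean on $\ad_{\tilde e}:\fg_{-4}\to\fg_{-2}^b$ being injective.

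For part (2) there is a genuine gap, and you have correctly located it yourself: your argument conditions everything on knowing that the point $p=[(\mu\tilde e,f_b,0,0)]$ lies in $\cO_3$ rather than $\cO_2$, and you only offer ``one checks case by case'' with a nod to the orbit geometry of the four minuscule varieties, without carrying out the verification. This is exactly the step the paper \emph{proves}, and it is done by a dimension count of the stabilizer Lie algebra at $\mu=0$ rather than by inspecting secant/tangent strata. Concretely, the paper writes $w=(x,y,z)\in\fg_{-4}\oplus[\fg_{-4},\fg_4]\oplus\fg_4$ and computes $[w,f_b]=([z,f_b],[y,f_b],[x,f_b],0)$; it then counts solutions of $[w,f_b]=\lambda f_b$ using three structural inputs you do not invoke: that $\ad_{f_b}:\fg_{-4}\to\fg_{-6}$ is an isomorphism, that $\ad_{f_b}:\fg_4\to\fg_{\tilde\alpha}$ is surjective, and -- the crucial one -- that $[\tilde f,f_b]\in\fg_{-4}$ is a magical nilpotent of Case (2) of Theorem \ref{thm: classification weighted dynkin}, which forces a decomposition $[\fg_{-4},\fg_4]=[[\tilde f,f_b],\fg_4]\oplus\fc$ with $\ad_{f_b}:[[\tilde f,f_b],\fg_4]\to\fg_{-2}^b$ an isomorphism and $[\fc,f_b]=0$. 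The resulting count $\dim\mathrm{Lie}(\mathrm{Stab})=\dim([\fg_{-4},\fg_4])$ shows the orbit has codimension one, hence is $\cO_3$, and simultaneously shows $\mathrm{Lie}(\mathrm{Stab})\subset\mathrm{Lie}(\rP')$. Only after that does the paper invoke the unique-tangent-line characterization to promote the Lie-algebra containment to a group containment, in the same way you do. Finally, the paper handles general $\mu$ not by recomputing but by conjugating $(0,f_b,0,0)$ to $(\mu\tilde e,f_b,0,0)$ via $\exp(x)$ for $x\in\fg_4\subset\rP'$, using $\ad_{f_b}:\fg_4\to\fg_{\tilde\alpha}$ surjective and $[\fg_{\tilde\alpha},\fg_4]=0$; your proposal treats general $\mu$ directly, which is why you end up with the ``three coupled equations'' complication. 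The short version: you should replace the unexecuted case-by-case check with the magical-nilpotent dimension count at $\mu=0$, then reduce general $\mu$ to that case by conjugation.
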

\begin{proof}
Write a point in $\fh'$ as $(x,y,z)\in\fg_{-4}\oplus [\fg_{-4},\fg_4]\oplus\fg_4$ and consider $(\tilde e,0,0,0)\in V'=\fg_{\tilde\alpha}\oplus\fg_{-2}^b\oplus\fg_{-6}\oplus\fg_{-10}$. The bracket is given by
\[[(x,y,z),(\tilde e,0,0,0)]=(\lambda(y) \tilde e,[x,\tilde e],0,0),\]
where $\lambda(y)\in\C$ and where $[x,\tilde e]\in\fg_{-2}^b$ is zero if and only if $x=0$. Thus, the $\rH'$-stabilizer  $[\tilde e,0,0,0]\in\mathbb P(V')$ is the parabolic subgroup $\rP'\subset\rH'$ with Lie algebra $[\fg_{-4},\fg_4]\oplus\fg_4$.

For the second point, we first analyze the case $\mu=0$. Note that $\ad_{f_b}:\fg_{-4}\to\fg_{-6}$ is an isomorphism and $\dim(\fg_{-2}^b)=\dim(\fg_{-6})$. Thus, 
\[\dim(\mathbb P( V'))=\dim(\fg_{4}\oplus\fg_{-4})+1=\dim (\fh')-\dim([\fg_4,\fg_{-4}])+1.\]
So $[0,f_b,0,0]\in\mathbb P( V')$ is in the codimension one orbit $\cO_3$ if and only if 
\[\dim(\{w\in\fh'~|~[w,f_b]=\lambda f_b \text{ for some $\lambda\in\C$}\})=\dim([\fg_{-4},\fg_{4}]).\]
To show this, write $w=(x,y,z)\in\fg_{-4}\oplus [\fg_{-4},\fg_4]\oplus\fg_4$. Then the bracket $[w,f_b]$ is given by 
\[[(x,y,z),f_b]=([z,f_b],[y,f_b],[x,f_b],0)\in V'.\]
Since $\ad_{f_b}:\fg_{4}\to\fg_{\tilde\alpha}$ is surjective and $\ad_{f_b}:\fg_{-4}\to\fg_{-6}$ is an isomorphism, the space of $(x,0,z)\in\fh'$ with $\ad_{(x,0,z)}f_b=\lambda f_b$ has dimension $\dim(\fg_{4})-1$. 

Recall that $[\tilde f,f_b]\in\fg_{-4}$ is a magical nilpotent from Case (2) of Theorem \ref{thm: classification weighted dynkin}. For $y\in[\fg_{-4},\fg_4]$, we decompose $[\fg_{-4},\fg_4]=[[\tilde f,f_b],\fg_{4}]\oplus\fc$.
Then $\ad_{f_b}:[[\tilde f,f_b],\fg_{4}]\to\fg_{-2}^b$ is an isomorphism, so there is a one-dimensional subspace of $[[\tilde f,f_b],\fg_{4}]$ which acts on $f_b$ by scalar multiplication. Since $[\fc,f_b]=0$, we have 
\[\dim(\{w\in\fh'~|~[w,f_b]=\lambda f_b \text{ for some $\lambda\in\C$}\})=\dim(\fg_{4})-1+1+\dim(\fc)=\dim([\fg_{-4},\fg_4]).\]

By the above computation, the Lie algebra of the stabilizer of $[0,f_b,0,0]\in\mathbb P(V')$ is contained in $[\fg_{-4},\fg_4]\oplus\fg_4$, so in the Lie algebra of $\rP'$. To show that the stabilizer of $[0,f_b,0,0]$ is indeed contained in $\rP'$, we use the description of the codimension 1 orbit $\cO_3$ of Proposition \ref{prop H' orbits on P(V')}. Namely, there is a unique projective line $\ell\subset\mathbb P(V')$ which is tangent to the closed orbit $\rH'/\rP'$ and passes through $[0,f_b,0,0]$. This line is given by 
\[\ell(\lambda)=[\tilde e, \lambda f_b,0,0]\subset\mathbb P(V').\] 
Since the tangent line is unique, the action of the stabilizer of $[0,f_b,0,0]$ on $\ell$ must fix the intersection of $\ell$ with the closed orbit, which is given by $[\ell(0)]=[\tilde e,0,0,0]$. Since the stabilizer of $\tilde e$ is $\rP'$, we conclude that the stabilizer of $[0,f_b,0,0]$ is contained in $\rP'$.

Finally, since $\ad_{f_b}:\fg_{4}\to\fg_{\tilde\alpha}$ is surjective and $[f_{\tilde\alpha},\fg_4]=0$, for every $\mu\in\C$, there is $x\in\fg_4$ such that $\Ad_{\exp(x)}(0,f_b,0,0)=(\mu\tilde e,f_b,0,0)$. Thus, $[\mu \tilde e,f_b,0,0]$ and $[0,f_b,0,0]$ are in the same $\rH'$-orbit. Moreover since the stabilizers of $[\mu \tilde e,f_b,0,0]$ and $[0,f_b,0,0]$ are conjugate via $\exp(x)\in \rP'$, we conclude that the stabilizer of $[\mu \tilde e,f_b,0,0]$ is contained in $\rP'$, completing the proof.
 \end{proof}

\section{Higgs bundles and the Cayley map}\label{sec: Higgs bundles and the Cayley map}
From now on, $X$ will denote a fixed compact Riemann surface of genus $g\geq 2$, with canonical bundle $K$. All geometric objects we will consider are over $X$. Let $\rH$ be a complex reductive Lie group.
\subsection{Higgs bundles}\label{subsec: Higgs bundles}
Let $\cE_\rH\to X$ be a holomorphic principal $\rH$-bundle. Given a holomorphic action of $\rH$ on a space $Y$, we denote the associated fiber bundle by $\cE_\rH[Y]=(\cE_\rH\times Y)/\rH$,
where $(x ,y)\cdot g=(x\cdot g,g^{-1}\cdot y)$. When $V$ is a vector space, $\cE_\rH[V]$ is a holomorphic vector bundle, and when $\rH$ acts by group homomorphisms on a complex Lie group $\rG$, then $\cE_\rH[\rG]$ is a holomorphic principal $\rG$-bundle.

\begin{definition}\label{def:Higgs pair}
Let $\rG$ be a complex reductive Lie group, $V$ be a complex vector space with a holomorphic $\rG$-action and $L$ be a holomorphic line bundle on $X$. An \emph{$L$-twisted $(\rG,V)$-Higgs pair} is a pair $(\cE_\rG,\varphi)$ consisting of a holomorphic $\rG$-bundle $\cE_\rG\to X$ and a holomorphic section $\varphi\in H^0(\cE_\rG[V]\otimes L)$. The section $\varphi$ is  called the \emph{Higgs field}.
\end{definition}
There is a natural $\C^*$-action on the set of $L$-twisted $(\rG,V)$-Higgs pairs given by 
\begin{equation}\label{eq C* action def}
    \lambda\cdot(\cE,\varphi)=(\cE,\lambda\varphi).
\end{equation}
Our main objects of interest, Higgs bundles, are a particular class of Higgs pairs. 

\begin{definition}\label{def:Higgs bundle}
Let $\rG^\R\subset\rG$ be a real form of a complex semisimple Lie group $\rG$. Let $\rH^\R\subset\rG^\R$ be a maximal compact subgroup, $\rH\subset\rG$ be its complexification and $\fg=\fh\oplus\fm$ be a complexified Cartan decomposition.
An \emph{$L$-twisted $\rG^\R$-Higgs bundle} is an $L$-twisted $(\rH,\fm)$-Higgs pair $(\cE_\rH,\varphi)$.
\end{definition}

We will denote the set of $L$-twisted $\rG^\R$-Higgs bundles by $\cH_L(\rG^\R)$. When the twisting line bundle $L$ is the canonical bundle $K$, we will refer to a $K$-twisted $\rG^\R$-Higgs bundle simply as a \emph{$\rG^\R$-Higgs bundle} and write $\cH_K(\rG^\R)=\cH(\rG^\R)$. 

Let $E_\rH$ be the smooth underlying bundle of a holomorphic bundle $\cE_\rH$. The gauge group $\cG_\rH$ of smooth bundle automorphisms of $E_\rH$ acts on $\cH_L(\rG^\R)$ by pulling back the holomorphic structure and pulling back the Higgs field. In particular, if $(\cE_\rH,\varphi)$ is an $L$-twisted Higgs bundle and $g\in\cG_\rH(E_\rH)$, then 
\[g\cdot \varphi=\Ad_{g}(\varphi).\] 
The automorphism group of an $L$-twisted Higgs bundle $(\cE_\rH,\varphi)$ is the group of holomorphic gauge transformations $g$ of $\cE_\rH$ such that $\Ad_{g}(\varphi)=\varphi$.

\begin{example}\label{ex uniformizing Higgs}
    Here are some relevant examples of Higgs bundles: 
\begin{itemize}
    \item The complex group $\rG$ can be regarded as a real form of $\rG\times \rG$. In this situation $H=G$, $\fm=\fg$, and an $L$-twisted 
 $\rG$-Higgs bundle is thus a  pair $(\cE_\rG,\varphi)$, where $\cE_\rG$ is a holomorphic principal $\rG$-bundle and
$\varphi\in H^0(\cE_\rG[\fg]\otimes L)$. 
    \item For $\rG^\R=\R^+$, an $L$-twisted $\rG^\R$-Higgs bundle is just a holomorphic section $\varphi$ of $L$.
    \item For $\rG^\R=\rPSL_2\R$, we have $\rH\cong\C^*$ and $\fm=\fm^-\oplus\fm^+=\langle f\rangle\oplus\langle e\rangle\cong\C\oplus\C$. To be consistent with later notation, we set $\rH=\rT$ for $\rG^\R=\rPSL_2\R$. The adjoint action of $\rT$ on $\fm$ is given by 
\begin{equation}\label{eq:Taction(elf)-PSL2}
\lambda\cdot(f,e)=(\lambda^{-1}f,\lambda e),
\end{equation} 
where $\lambda\in\rT$.
\item For $\rG^\R=\rSL_2\R$, we have $\rH\cong\C^*$, $\fm=\langle f\rangle\oplus\langle e\rangle$ and the action of $\rH$ is $\lambda\cdot(f,e)=(\lambda^{-2}f,\lambda^{2}e)$.
\end{itemize} 
\end{example}

\begin{definition}
  \label{def:uniformizing-HB}
The \emph{uniformizing Higgs bundle} for the compact Riemann surface $X$ is the $\rPSL_2\R$-Higgs bundle $(\cE_\rT,f)$, where $\cE_\rT$ is the frame bundle of the canonical bundle $K\to X$ and $f\in H^0(\cE_\rT[\langle f\rangle]\otimes K)\cong H^0(\cO)$ is a constant nonzero section.
\end{definition}

\begin{remark}
Since $\deg(K)=2g-2$ is even, the uniformizing $\rPSL_2\R$-Higgs bundle $(\cE_\rT,f)$ lifts to an $\rSL_2\R$-Higgs bundle $(\cE_{\rT'},f)$, where $\cE_{\rT'}$ is the frame bundle of one of the $2^{2g}$ square roots $K^\frac{1}{2}$ of the canonical bundle. We will call such a Higgs bundle a \emph{lift of the uniformizing Higgs bundle} of $X$. Using the standard representation of $\rSL_2\C$ on $\C^2$, an $\rSL_2\C$-Higgs bundle is a holomorphic rank $2$ bundle $V$ with trivial determinant and a holomorphic bundle map $\Phi:V\to V\otimes K$. For a lift of the uniformizing Higgs bundle, we have 
\[(V,\Phi)=\left(K^\frac{1}{2}\oplus K^{-\frac{1}{2}}, \smtrx{0&0\\1&0}:K^\frac{1}{2}\oplus K^{-\frac{1}{2}}\to K^\frac{3}{2}\oplus K^\frac{1}{2}\right).\]
\end{remark}

Given two Lie groups $\rH_1,\rH_2$ and holomorphic principal $\rH_1,\rH_2$-bundles $\cE_{\rH_1}$, $\cE_{\rH_2}$ respectively, the fiber product $\cE_{\rH_1}\times_X\cE_{\rH_2}$ is a holomorphic principal $(\rH_1\times\rH_2)$-bundle. When $\rH_1,\rH_2\subset\rH$ are commuting subgroups, the multiplication map $m:\rH_1\times\rH_2\to\rH$ is a group homomorphism and $(\cE_{\rH_1}\times_X\cE_{\rH_2})[\rH]$ is a holomorphic principal $\rH$-bundle. This is analogous to twisting a vector bundle by a line bundle. 
We will use the notation
\begin{equation}
  \label{eq: star notation}
  (\cE_{\rH_1}\star\cE_{\rH_2})[\rH]=(\cE_{\rH_1}\times_X\cE_{\rH_2})[\rH].
\end{equation}
\subsection{The Cayley map}\label{subsec: Cayley map}
We first describe the global Slodowy slice construction of \cite{ColSandGlobalSlodowy} for an arbitrary even nilpotent $e\in\fg$. When $e\in\fg$ is a magical nilpotent (recall from Corollary~\ref{cor mag implies even and injmap h->m} that every magical nilpotent is even) this leads to $\rG^\R$-Higgs bundles, where $\rG^\R$ is the canonical real form associated to the corresponding magical $\fsl_2$-triple.

Let $e\in\fg$ be an even nilpotent, $\{f,h,e\}\subset\fg$ be an associated $\fsl_2$-triple and $\rG$ be a connected Lie group with Lie algebra $\fg$. Let $\rS\subset\rG$ be the connected subgroup with Lie algebra the $\fsl_2\C$-subalgebra $\fs=\langle f,h,e\rangle$ and $\rC\subset\rG$ be the centralizer of $\{f,h,e\}$.\label{p:def-S-group}
When $\rS\cong\rPSL_2\C$ let $(\cE_\rT,f)$ be the uniformizing Higgs bundle of $X$, and when $\rS\cong\rSL_2\C$ let $(\cE_\rT,f)$ be a lift of the uniformizing Higgs bundle of $X$ to $\rSL_2\R$. The embedding $\rT\hookrightarrow\rS\hookrightarrow\rG$ defines a holomorphic $\rG$-bundle $\cE_\rG=\cE_\rT[\rG]$ by extension of structure group. 

Given a holomorphic $\rC$-bundle $\cE_\rC\to X$, consider the holomorphic $\rG$-bundle 
\[\cE_\rG=(\cE_\rC\star\cE_\rT)[\rG]\]
with the notation \eqref{eq: star notation}.
Since $\rC$ and $\rT$ preserve the subspaces $\fg_j\cap W_{i}\subset\fg$ (in particular the highest weight subspaces $V_j$; cf.~\eqref{eq decomp highest weight spaces}) and also $\langle f\rangle\subset \fg$, the adjoint bundle $\cE_\rG[\fg]$ decomposes as
\[\cE_\rG[\fg]=(\cE_\rC\star\cE_\rT)[\fg]=\bigoplus_{j\in\Z}(\cE_\rC\star\cE_\rT)[\fg_j]\]
and $(\cE_\rC\star\cE_\rT)[V_{j}]\subset(\cE_\rC\star\cE_\rT)[\fg_j]$ and $(\cE_\rC\star\cE_\rT)[\langle f\rangle]\subset(\cE_\rC\star\cE_\rT)[\fg_{-2}]$ define holomorphic subbundles. Moreover, 
since $\rC$ acts trivially on $\langle f\rangle$,
\[(\cE_\rC\star\cE_\rT)[\langle f\rangle]\cong \cE_\rT[\langle f\rangle]\cong K^{-1},\]
by \eqref{eq:Taction(elf)-PSL2}. So, from a holomorphic $\rC$-bundle $\cE_\rC$ and from sections $\phi_j\in H^0((\cE_\rC\star\cE_\rT)[V_j]\otimes K)$, we define the $\rG$-Higgs bundle
\begin{equation}
    \label{eq add high}(\cE_\rG,\varphi)=((\cE_\rC\star\cE_\rT)[\rG],f+\phi_0+\phi_1+\cdots +\phi_N).  
\end{equation}

Recall that $Z_{2m_j}=W_{2m_j}\cap\fg_0$. We have that $\fg_0=W_0\oplus\bigoplus_{j=1}^MZ_{2m_j}$ and, since $e$ is even, $\ad_f^{m_j}:V_{2m_j}\to Z_{2m_j}$ is an isomorphism. Thus, viewing $f$ as a holomorphic section of $(\cE_\rC\star\cE_\rT)[\fg]\otimes K$, we have an isomorphism of holomorphic vector bundles
\[\ad_f^{m_j}:(\cE_\rC\star\cE_\rT)[V_{2m_j}]\otimes K\xrightarrow{\ \ \cong\ \ } \cE_\rC[Z_{2m_j}]\otimes K^{m_j+1}, \]
where we have used the fact that $\rT$ acts trivially on $Z_{2m_j}$ to identify $\cE_\rC[Z_{2m_j}]\otimes K^{m_j+1}$ with $(\cE_\rC\star\cE_\rT)[Z_{2m_j}]\otimes K^{m_j+1}$. 

Let now $\cB_e(\rG)$ denote the set of tuples $((\cE_\rC,\phi_0),\psi_{m_1},\ldots,\psi_{m_N})$, where $(\cE_\rC,\phi_0)$ is a holomorphic $\rC$-Higgs bundle and $\psi_{m_j}\in H^0(\cE_\rC[Z_{2m_j}]\otimes K^{m_j+1})$. By the above discussion, the Higgs bundles of the form \eqref{eq add high} can be described by the map 
\begin{equation}
    \label{eq Slodowy map}
\widehat\Psi_e:\xymatrix@R=0em{\cB_e(\rG)\ar[r]&\cH(\rG)\\ (\cE_\rC,\phi_0,\psi_{m_1},\ldots,\psi_{m_N})\ar@{|->}[r]&((\cE_\rC\star\cE_\rT)[\rG],f+\phi_0+\phi_{m_1}+\cdots +\phi_{m_N})}
\end{equation}
where $\phi_{m_j}\in H^0((\cE_\rC\star\cE_\rT)[V_{2m_j}]\otimes K)$ and $\psi_{m_j}=\ad_f^{m_j}(\phi_{m_j})$. We will refer to this map as the \emph{Slodowy map}; see also \cite{ColSandGlobalSlodowy}.

Note that the map $\widehat\Psi_e$ is equivariant for the action of the $\rC$-gauge group $\cG_\rC$. More precisely, if $g\in\cG_{\rC}$, then $g\star\Id_\rT \in\cG_\rG$ is a $\rG$-gauge transformation of $(\cE_\rC\star\cE_\rT)[\rG]$, and 
\[\widehat\Psi_e(g\cdot(\cE_\rC,\phi_0,\psi_{m_1},\ldots,\psi_{m_N}))=g\star\Id_\rT \cdot \widehat\Psi_e(\cE_\rC,\phi_0,\psi_{m_1},\ldots,\psi_{m_N}).\]

\begin{lemma}\label{lem: G^R Higgs for Slodowy slice}
Let $e\in\fg$ be a magical nilpotent and $\rG^\R\subset\rG$ be the canonical real form. Then the Higgs bundle $\widehat\Psi_e(\cE_\rC,\phi_0,\psi_{m_1},\ldots,\psi_{m_N})$ from \eqref{eq Slodowy map} is contained in $\cH(\rG^\R)$ if and only if $\phi_0=0$ and the bundle $\cE_\rC$ reduces to $\rC\cap \rH$.  
\end{lemma}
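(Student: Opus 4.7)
The plan is to use the Cartan decomposition $\fg=\fh\oplus\fm$ induced by the magical involution $\sigma_e$ of \eqref{eq magical involution}. Since $\{f,h,e\}$ is magical, $\sigma_e$ is $+\Id$ on $V_0=\fc$ and $-\Id$ on each $V_{2m_j}$ with $m_j>0$, while $\sigma_e(h)=h$ and $\sigma_e(f)=-f$. Hence $\fc\subset\fh$, $\langle h\rangle\subset\fh$ (so $\rT\subset\rH$), and $\langle f\rangle$ and $V_{2m_j}\subset\fm$ for $m_j>0$. Moreover $\sigma_e$ preserves $\rC$, with fixed-point subgroup $\rC\cap\rH$ of the same Lie algebra.

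For the backward direction, assume $\phi_0=0$ and that $\cE_\rC$ admits a reduction $\cE_{\rC\cap\rH}$ to $\rC\cap\rH$. Since $\rT$ and $\rC\cap\rH$ commute inside $\rH$, the fiber product $\cE_\rH:=(\cE_{\rC\cap\rH}\star\cE_\rT)[\rH]$ is a well-defined $\rH$-bundle with $\cE_\rH[\rG]\cong\cE_\rG$. The Higgs field $\varphi=f+\sum_j\phi_{m_j}$ then takes values in $\cE_\rH[\fm]\otimes K$, since $f\in\fm$ and $V_{2m_j}\subset\fm$ for $m_j>0$; so $(\cE_\rH,\varphi)$ is a $\rG^\R$-Higgs bundle.

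For the forward direction, suppose the output admits an $\rH$-reduction $\tilde\cE_\rH\subset\cE_\rG$ with $\varphi\in\tilde\cE_\rH[\fm]\otimes K$. In the natural $\rC\cdot\rT$-reduction the Higgs field splits as $f+\phi_0+\sum_j\phi_{m_j}$, where only $\phi_0\in\cE_\rC[\fc]\otimes K$ is $\fh$-valued while $f$ and the $\phi_{m_j}$ are $\fm$-valued. I would argue that, using $\rT\subset\rH$ together with the $\sigma_e$-invariance of $\rC$, the reduction $\tilde\cE_\rH$ can be aligned with the built-in $\rC\cdot\rT$-reduction so as to arise from a reduction of $\cE_\rC$ to $\rC\cap\rH$; the $\fm$-valuedness of $\varphi$ in this common refinement then forces $\phi_0=0$.

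The main obstacle is this rigidity step of matching an arbitrary $\rH$-reduction with the canonical $\rC\cdot\rT$-reduction. I expect to handle it by noting that the $\fh$/$\fm$-splitting of the adjoint bundle is already intrinsic along the $\cE_\rT$-factor (because $\rT\subset\rH$), so the only room for discrepancy lies in the $\rC$-factor, where $\sigma_e$-invariance of $\rC$ and its fixed subgroup $\rC\cap\rH$ provide the required descent of $\cE_\rC$; the $\fh$-component of $\varphi$ in the resulting refined frame is then canonically identified with $\phi_0$, whose vanishing is exactly the remaining condition.
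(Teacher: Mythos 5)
Your backward direction is correct and is essentially the paper's argument: the magical condition forces $h\in\fh$, $\fc\subset\fh$, $\langle f\rangle\subset\fm$ and $V_{2m_j}\subset\fm$ for $m_j>0$, so when $\cE_\rC$ reduces to $\cE_{\rC\cap\rH}$ and $\phi_0=0$ the $\rH$-bundle $(\cE_{\rC\cap\rH}\star\cE_\rT)[\rH]$ carries the Higgs field in $\fm$.

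The issue is your forward direction, and you correctly flag it as the weak point. The paper sidesteps the difficulty you are fighting with by reading ``$\widehat\Psi_e(x)\in\cH(\rG^\R)$'' relative to the built-in $\rC\cdot\rT$-structure on $(\cE_\rC\star\cE_\rT)[\rG]$: it asks whether the $\rG$-bundle is $(\cE_\rC\star\cE_\rT)[\rH][\rG]$ for a reduction $\cE_\rC\cong\cE_{\rC\cap\rH}[\rC]$, and whether $\varphi$ lies in $H^0((\cE_\rC\star\cE_\rT)[\fm]\otimes K)$, where $\fm$ is graded by $\sigma_e$ inside the $\rC\cdot\rT$-equivariant decomposition of $\fg$. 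With that reading the forward direction is tautological: the $\fh$/$\fm$-splitting of the adjoint bundle is precisely the one determined by $\sigma_e$ in the $\rC\cdot\rT$-frame, $\phi_0$ is the $\fc$-component of $\varphi$, $\fc\subset\fh$, so $\varphi\in\fm$ forces $\phi_0=0$; likewise the $\rH$-reduction is by construction of the form $(\cE_{\rC\cap\rH}\star\cE_\rT)[\rH]$, so asking for its existence is exactly asking $\cE_\rC$ to reduce to $\rC\cap\rH$.

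Your version implicitly allows an arbitrary $\rH$-reduction $\tilde\cE_\rH\subset\cE_\rG$ unrelated a priori to the $\rC\cdot\rT$-structure, and then tries to argue after the fact that it can be aligned. That rigidity claim is not established by the observations you give (``$\rT\subset\rH$'' and ``$\sigma_e$-invariance of $\rC$'' do not by themselves produce a common refinement of the two reductions, since two reductions of a principal bundle to different subgroups need not be compatible fiberwise). If you want the statement in the stronger sense, you would have to show that any $\rH$-reduction carrying $\varphi$ in $\fm$ is $\rG$-gauge equivalent to one of the canonical form, which requires an additional argument using the rigidity of the Slodowy-slice form of $\varphi$ (the nowhere-vanishing $f$-component and Lemma~\ref{lem in parabolic}-type uniqueness). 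The cleaner fix is simply to adopt the paper's interpretation, in which case the forward direction becomes the immediate linear-algebra observation already contained in your set-up.
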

\begin{proof}
  Let $\fg=\fh\oplus\fm$ be the complexified Cartan decomposition of the real form $\fg^\R$, hence given by $\sigma_e$. By the definition of a magical nilpotent, $h\in\fh$, $\fc=W_0\subset\fh$, $V_{2m_j}\subset\fm$ and $f\in\fm$. Thus, 
  \[(\cE_\rC\star\cE_\rT)[\rG]\cong(\cE_\rC\star\cE_\rT)[\rH][\rG]\]
  if and only if $\cE_\rC\cong\cE_{\rC\cap\rH}[\rC]$, and $f+\phi_0+\phi_{m_1}+\cdots+\phi_{m_N}\in H^0((\cE_\rC\star\cE_\rT)[\fm]\otimes K)$ if and only if $\phi_0=0$.
\end{proof}

Given a magical $\fsl_2$-triple $\{f,h,e\}\subset\fg$, recall the subalgebra $\fg(e)\subset\fg$ from Proposition \ref{prop subalge g(e)} and the Cayley real form $\fg_\cC^\R=\R^{\rkfge}\oplus\tilfg^\R$ from Proposition \ref{prop cayley real form class}. The Cayley group is defined to be the real Lie group $\rG_\cC^\R=(\R^+)^{\rkfge}\times \tilrG^\R~,$ where $\tilrG^\R$ is the real Lie group with Lie algebra $\tilfg^\R$ and maximal compact $\rC\cap\rG^\R$ (see Definition \ref{def cayley group}).
Recall from Proposition \ref{prop: magical sl2 data}, that the $\fsl_2$-data of a magical $\fsl_2$-triple has at most one $m_j>0$ with $\dim(Z_{2m_j})>1$.

\begin{lemma}\label{lem identification of Slodowy domain with Cayley group}
    Let $\{f,h,e\}\subset\fg$ be a magical $\fsl_2$-triple with $\fsl_2$-data $\{m_j\}_{j=1}^M$ and let $\fg(e)\subset\fg$ be the subalgebra from Proposition \ref{prop subalge g(e)}. Then there is a natural identification
    \[\{x\in\cB_e(\rG)~|~\widehat\Psi_e(x)\in\cH(\rG^\R)\}\longleftrightarrow\cH_{K^{m_c+1}}(\tilrG^\R)\times \prod_{j=1}^{\rkfge}\cH_{K^{l_j+1}}(\R^+).\]
    Here $m_c$ is zero in Case (1) of Theorem \ref{thm: classification weighted dynkin} and is the unique positive $m_j$ with $\dim(Z_{2m_j})>1$ otherwise. The integers $\{l_j\}$ are the exponents of $\fg(e)$, which are
    \[\{l_j\}=\begin{dcases}
        \{m_j\}_{j=1}^M &\text{Cases (1), (2) and (3) with p-even of Theorem \ref{thm: classification weighted dynkin}}\\
        \{m_j\}_{j=1}^M\setminus\{p-1\}&\text{Case (3) p-odd of Theorem \ref{thm: classification weighted dynkin}}\\
       \{m_j\}_{j=1}^M\setminus\{3\}&\text{Case (4) of Theorem \ref{thm: classification weighted dynkin}}.
    \end{dcases}\]
\end{lemma}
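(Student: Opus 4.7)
The plan is to combine Lemma \ref{lem: G^R Higgs for Slodowy slice} with a careful decomposition of the weight spaces $Z_{2m_j}\subset\fg_0$, matched against the structure of the Cayley real form developed in \S\ref{sec sl2 data and Cayley realform}.

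First, by Lemma \ref{lem: G^R Higgs for Slodowy slice}, the condition $\widehat\Psi_e(x)\in\cH(\rG^\R)$ forces $\phi_0=0$ and a reduction $\cE_\rC\cong\cE_{\rC\cap\rH}[\rC]$. By Definition \ref{def cayley group}, $\rC\cap\rH$ is the complexification of the maximal compact subgroup of $\rG_{0,ss}^\R$, so giving such a reduction is the same as giving the underlying principal bundle of a $\rG_{0,ss}^\R$-Higgs bundle. It then remains to match the tuple $(\psi_{m_1},\ldots,\psi_{m_N})$ with a Higgs field in $H^0(\cE_{\rC\cap\rH}[\fm_0]\otimes K^{m_c+1})$ together with $\rk(\fg(e))$ sections of line bundles of the form $K^{l_j+1}$.

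To do so, I would observe that, since by Proposition \ref{prop subalge g(e)} the magical $\fsl_2$-triple is principal in $\fg(e)$, the intersection $\fg(e)\cap\fg_0$ is a Cartan subalgebra of $\fg(e)$. This yields, for every $m_j\geq 1$, a $(\rC\cap\rH)$-invariant direct sum decomposition
\[
Z_{2m_j}=\bigl(\fg(e)\cap Z_{2m_j}\bigr)\oplus\bigl(\fg_{0,ss}\cap Z_{2m_j}\bigr),
\]
in which the first summand has dimension equal to the multiplicity of $m_j$ as an exponent of $\fg(e)$ (cf.\ Proposition \ref{prop sl2 decomp of g(e)}). Since by Lemma \ref{lemma:Ccentralizesg(e)} the group $\rC$ centralizes $\fg(e)$, the associated bundle $\cE_{\rC\cap\rH}[\fg(e)\cap Z_{2m_j}]$ is canonically trivial, and its global sections twisted by $K^{m_j+1}$ contribute one copy of $H^0(K^{m_j+1})$ for each exponent of $\fg(e)$ equal to $m_j$. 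Running over $j$, these assemble into $\prod_{j=1}^{\rk\fg(e)}H^0(K^{l_j+1})=\prod_j\cH_{K^{l_j+1}}(\R^+)$, with the list $\{l_j\}$ exactly as stated.

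For the remaining semisimple pieces, I would verify via a case-by-case check using Proposition \ref{prop: magical sl2 data} and Remark \ref{rem R factors of cayley} that outside of Case (1) there is a unique $m_j\geq 1$ for which $\fg_{0,ss}\cap Z_{2m_j}$ is nonzero, namely $m_c=1,\,p-1,\,3$ in Cases (2), (3), (4) respectively. A dimension count against Propositions \ref{prop canonical real form} and \ref{prop cayley real form class} then identifies $\fg_{0,ss}\cap Z_{2m_c}$ with the $-1$-eigenspace $\fm_0$ of the Cartan involution of $\fg_{0,ss}^\R$, so the corresponding component of $\psi_{m_c}$ is precisely the Higgs field of a $K^{m_c+1}$-twisted $\rG_{0,ss}^\R$-Higgs bundle. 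In Case (1), $\fg_{0,ss}=0$ and $\cH_{K}(\rG_{0,ss}^\R)$ is a singleton, in accordance with the convention $m_c=0$.

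The main obstacle will be the bookkeeping that reconciles the splitting $Z_{2m_j}=(\fg(e)\cap Z_{2m_j})\oplus(\fg_{0,ss}\cap Z_{2m_j})$ with the description of $\fg_\cC^\R$ in Proposition \ref{prop cayley real form class}. The subtlest case is Case (3) with $p$ even, where one of the $\R$-factors of $\fg_\cC^\R$ arises from $\fg(e)\cap Z_{2(p-1)}$ rather than from a separate $Z_{2m_j}$; this is exactly why $\{l_j\}=\{m_j\}$ in this subcase, while $p-1$ (resp.\ $3$) is omitted from $\{l_j\}$ in Case (3) with $p$ odd (resp.\ Case (4)), since there $p-1$ (resp.\ $3$) is not an exponent of $\fg(e)$. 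Once this reconciliation is carried out, the correspondence is manifestly bijective and natural.
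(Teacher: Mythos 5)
Your proof follows essentially the same route as the paper's: both reduce to Lemma \ref{lem: G^R Higgs for Slodowy slice}, split each $Z_{2m_j}$ into its $\fg(e)$- and $\fg_{0,ss}$-pieces, invoke Lemma \ref{lemma:Ccentralizesg(e)} to trivialize the $\fg(e)$-contribution, and then reconcile with Propositions \ref{prop sl2 decomp of g(e)} and \ref{prop cayley real form class} to identify $\fg_{0,ss}\cap Z_{2m_c}$ with the noncompact piece of the Cartan decomposition of $\fg_{0,ss}$. Your phrasing is a bit more uniform -- stating once that $\dim(\fg(e)\cap Z_{2m_j})$ equals the multiplicity of $m_j$ as an exponent of $\fg(e)$, since $\{f,h,e\}$ is principal there and $\fg(e)\cap\fg_0$ is a Cartan of $\fg(e)$ -- whereas the paper separates out Case (1), Cases (3) $p$-odd and (4), and Cases (2) and (3) $p$-even; but the content is identical.

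One small inaccuracy: in Case (1) you assert that $\cH_K(\rG^\R_{0,ss})$ is a singleton because $\fg_{0,ss}=0$. In fact $\rG^\R_{0,ss}=\rZ(\rG^\R)$, the (finite, possibly nontrivial) center of $\rG^\R$, so $\cH_{K^{m_c+1}}(\rG^\R_{0,ss})$ is the finite set of $\rZ(\rG^\R)$-bundles on $X$ rather than a point (cf.\ Remark \ref{rem R+ differentials cayley}). This does not affect the structure of your argument, but the statement should be corrected.
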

\begin{remark}\label{rem R+ differentials cayley}
Recall that $\cH_{L}(\R^+)\cong H^0(L)$, so 
\[\cH_{K^{m_c+1}}(\tilrG^\R)\times \prod_{j=1}^{\rkfge}\cH_{K^{l_j+1}}(\R^+)\cong \cH_{K^{m_c+1}}(\tilrG^\R)\times \prod_{j=1}^{\rkfge}H^0(K^{l_j+1}).\]
 Let $\rZ(\rG^\R)$ be the center of $\rG^\R$. In Case (1) of Theorem \ref{thm: classification weighted dynkin}, $\cH_{K^{m_c+1}}(\tilrG^\R)$ is the finite set of $\rZ(\rG^\R)$-bundles on $X$ so the value of $m_c$ is unimportant.
\end{remark}
\begin{proof}
    By Lemma \ref{lemma:Ccentralizesg(e)}, $\rC$ acts trivially on $\fg(e)\cap\fg_0$.  When $n_{2m_j}=1$, we have $Z_{2m_j}\subset\fg(e)$ and thus $\psi_{m_j}\in H^0(\cE_\rC(Z_{2m_j})\otimes K^{m_j+1})= H^0( K^{m_j+1})$. This proves Case (1).

From Proposition \ref{prop sl2 decomp of g(e)}, we see that for Case (3) with $p$ odd and Case (4), we have $\fg(e)\cap Z_{2m_c}=\{0\}$ and $\tilfg=\fc\oplus Z_{2m_c}$. Thus, $(\cE_{\rC},\psi_{m_c})$ is a $K^{m_c}$-twisted $\tilrG^\R$-Higgs bundle whenever $\cE_\rC$ reduces to $\rC\cap\rH$. Thus, for Case (3) with $p$-odd and Case (4), the result follows.

For Case (2) and Case (3) with $p$-even, we have
$Z_{2m_c}\cap\fg(e)\cong\C$, by Propositions \ref{prop subalge g(e)}
and \ref{prop sl2 decomp of g(e)}. Hence $Z_{2m_c}$ decomposes
$\rC$-invariantly as $Z_{2m_c}=\C\oplus \tilfm$, where the $\C$-factor
is $\fg(e)\cap Z_{2m_c}$ and $\tilfg=\fc\oplus\tilfm$ is the Cartan
decomposition giving the real form $\tilfg^\R$.\label{p:cartan-tilfg}
Hence 
    \[\cE_{\rC}[Z_{2m_c}]\otimes K^{m_c+1}\cong K^{m_c+1}\oplus \cE_{\rC}[\tilfm]\otimes K^{m_c+1.}\]
    Thus, $(\cE_{\rC},\psi_{m_c})=(\cE_{\rC},q_{m_c+1}\oplus\tilde\psi_{m_c})$, where $q_{m_c+1}\in H^0(K^{m_c+1})$ and $(\cE_{\rC},\tilde\psi_{m_c})$ is a $\tilrG^\R$-Higgs bundle whenever $\cE_\rC$ reduces to $\rC\cap\rH$.
 \end{proof}

To summarize, from a magical $\fsl_2$-triple $\{f,h,e\}\subset\fg$, the Slodowy map \eqref{eq Slodowy map} defines a map
\[\widehat\Psi_e:\cH_{K^{m_c+1}}(\tilrG^\R)\times \prod_{j=1}^{\rkfge}\cH_{K^{l_j+1}}(\R^+)\longrightarrow\cH(\rG^\R)\]
given by
\begin{equation}\label{eq Cayley map config}
\widehat\Psi_e((\cE_{\rC},\tilde\psi_{m_c}),q_1,\ldots,q_{\rkfge})=\bigg(\cE_{\rC}\star\cE_{\rT}[\rH],f+\tilde\phi_{m_c}+\sum_{j=1}^{\rkfge}q_{j}\bigg),
 \end{equation}
where $\rG^\R$ is the canonical real form of $e$; here
$\tilde\phi_{m_c}=\ad_f^{-m_c}(\tilde\psi_{m_c})$ and
$q_j\in H^0(K^{l_j+1})$. Note that, by a slight
abuse of notation, we have left the isomorphism of line bundles
$\ad_f^{-l_j}$ implicit and denoted the image of $q_j$ by the same symbol.

We will refer to the map \eqref{eq Cayley map config} as the \emph{Cayley map} since it generalizes the Cayley correspondence of \cite{BGRmaximalToledo} which concerns Case (2) of Theorem \ref{thm: classification weighted dynkin}. In the subsequent sections we will show that the Cayley map  actually preserves the polystability conditions, hence descends to a map on moduli spaces, which will be injective, with open and closed image.

\begin{remark}\label{rem cayley map for other L}
 Note that everything we just described also holds when the line bundle $K$ is replaced by another twisting line bundle $L\to X$. So there is a similarly defined Cayley map, for the $L$-twisted version, one takes $\cE_\rT$ to be the holomorphic frame bundle of $L$ when $\rS\cong\rPSL_2\C$ and $\cE_\rT$ the holomorphic frame bundle of a square root of $L$ when $\rS\cong\rSL_2\C$. In particular, when $\rS\cong\rSL_2\C$ the degree of $L$ must be even. 
\end{remark}

\subsection{The Cayley map is injective on gauge orbits}\label{sec injectivity}
In this section we prove the Cayley map is injective on gauge orbits. We will use the following lemma. 
\begin{lemma}\label{lem in parabolic}
    Let $\{f,h,e\}\subset\fg$ be an $\fsl_2$-triple and $\fg=\bigoplus_{j\in\Z}\fg_{j}$ be the associated $\Z$-grading. Let $\rP\subset\rG$ be the parabolic subgroup with Lie algebra $\fp=\bigoplus_{j\geq 0}\fg_j$ and let $x,x'\in V=\ker(\ad_e)$. If an element $g\in\rP$ satisfies $\Ad_g(f+x)=f+x'$, then $g\in\rC$, with $\rC\subset\rG$ the centralizer of  $\{f,h,e\}$.
\end{lemma}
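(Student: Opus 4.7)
The plan is to use the Levi decomposition $\rP = \rL \ltimes \rN$, where $\rL$ is the Levi with Lie algebra $\fg_0$ (so $\Ad_\ell$ fixes $h$ and preserves the $\ad_h$-grading for any $\ell \in \rL$) and $\rN$ is the unipotent radical with Lie algebra $\bigoplus_{j>0}\fg_j$. Write $g = \ell n$ uniquely in this form. The proof then proceeds by extracting information from $\Ad_g(f+x) = f + x'$ weight by weight with respect to $\ad_h$, starting from the lowest weight.

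Since $x \in V = \bigoplus_{j\geq 0}V_j$ has $\ad_h$-weights $\geq 0$ and $\Ad_n$ strictly raises weights, the weight $-2$ component of $\Ad_n(f+x)$ is exactly $f$. Applying $\Ad_\ell$ preserves the grading, so matching weight $-2$ components in $\Ad_g(f+x) = f + x'$ gives $\Ad_\ell(f) = f$. Thus $\ell$ centralizes both $h$ and $f$, and since $[f, \Ad_\ell(e) - e] = \Ad_\ell[f,e] - [f,e] = 0$, together with the injectivity of $\ad_f : \fg_2 \to \fg_0$ (which holds by $\fsl_2$-theory because no irreducible summand has lowest weight $+2$), this forces $\Ad_\ell(e) = e$; hence $\ell \in \rC$. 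Because $\rC$ commutes with $\ad_e$ it preserves $V$, so replacing $x'$ by $\Ad_{\ell^{-1}}(x') \in V$ we may reduce to the case $\ell = \Id$ and must show that $\Ad_n(f+x) \in f + V$ implies $n = \Id$.

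Writing $n = \exp(X)$ with $X = \sum_{j \geq 1} X_j$ and $X_j \in \fg_j$, I would prove $X_k = 0$ by induction on $k \geq 1$. Assuming $X_j = 0$ for $j < k$, every higher-order bracket $\tfrac{1}{m!}\ad_X^m(f+x)$ with $m \geq 2$ produces terms of weight $\geq 2k - 2 \geq k$, while $\ad_X(x)$ lies in weights $\geq k$; thus the unique contribution to weight $k-2$ in $\Ad_n(f+x) - (f+x)$ is $\ad_{X_k}(f)$. Equating weight $k-2$ parts in $\Ad_n(f+x) = f+x''$ with $x'' \in V$ gives $\ad_{X_k}(f) = x''_{k-2} - x_{k-2} \in V_{k-2}$. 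The crucial input is the decomposition $\fg_{k-2} = V_{k-2} \oplus \ad_f(\fg_k)$, which follows from the $\fsl_2$-module structure $\fg = \bigoplus_j W_j$: in each summand $W_j$ with $j \geq k$, $\ad_f$ restricts to an isomorphism from weight $k$ to weight $k-2$, while $W_{k-2}$ contributes exactly $V_{k-2}$ at weight $k-2$ and nothing at weight $k$. Consequently $\ad_{X_k}(f) \in V_{k-2} \cap \ad_f(\fg_k) = 0$, and since $\ad_f$ is injective on $\fg_k$ for $k \geq 1$ we conclude $X_k = 0$. Hence $n = \Id$ and $g = \ell \in \rC$.

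The argument is essentially a weight-filtration version of the classical transversality of the Slodowy slice $f + V$ to the adjoint orbit, combined with standard $\fsl_2$-representation theory. The only delicate point is the bookkeeping in the inductive step to ensure that mixed brackets from $\ad_X^{\geq 2}(f+x)$ never contaminate the weight $k-2$ component; this works out cleanly precisely because $X$ has strictly positive weights while $f+x$ lives in weights $\geq -2$.
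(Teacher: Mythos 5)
Your proof is correct, and in fact it is considerably more careful than the published proof of this lemma, which reads in its entirety: $\Ad_g(x)\in\fp$ since $x\in\fp$, hence $\Ad_g(f+x)=f+x'$ implies $\Ad_g(f)=f$, and the intersection of the centralizer of $f$ with $\rP$ equals $\rC$. As written, the intermediate claim $\Ad_g(f)=f$ does not follow immediately from what precedes: the hypotheses only yield $\Ad_g(f)-f=x'-\Ad_g(x)\in\fp$, which is strictly weaker --- for instance $g=\exp(tE_{13})$ acting on the principal $\fsl_2$-triple in $\fsl_3\C$ satisfies $\Ad_g(f)\in f+\fp$ without $\Ad_g(f)=f$. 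Comparing $\fg_{-2}$-components only shows that the Levi part of $g$ centralizes $f$, hence lies in $\rC$; killing the unipotent part of $g$ is precisely the Slodowy-slice transversality step carried out by your weight-by-weight induction via the decomposition $\fg_{k-2}=V_{k-2}\oplus\ad_f(\fg_k)$ together with injectivity of $\ad_f$ on positive $\ad_h$-weights. So your argument supplies the detail that the paper's two-line proof leaves implicit, and it is both correct and a genuine improvement in rigor.
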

\begin{proof}
Since $x\in\fp$, we have $\Ad_g(x)\in\fp$. Thus, $\Ad_g(f+x)=f+x'$ implies $\Ad_g(f)=f$. The intersection of the centralizer of $f$ with $\rP$ is $\rC$. So $g\in\rC$.
\end{proof}
\begin{proposition}
    \label{prop injective Cayley} Let $\{f,h,e\}\subset\fg$ be a magical $\fsl_2$-triple and  
    \[\widehat\Psi_e:\xymatrix{\cH_{K^{m_c+1}}(\tilrG^\R)\times \prod_{j=1}^{\rkfge}\cH_{K^{l_j+1}}(\R^+)\ar[r]&\cH(\rG^\R),}\]
    be the Cayley map from \eqref{eq Cayley map config}. Then two points 
    \[\xymatrix@=.5em{\widehat\Psi_e((\cE_{\rC\cap\rH},\tilde\psi_{m_c}),q_1,\ldots,q_{\rkfge})&\text{and}&\widehat\Psi_e((\cE_{\rC\cap\rH}',\tilde\psi_{m_c}'),q_1',\ldots,q_{\rkfge}')}\]
    are in the same $\rH$-gauge orbit if and only if $(\cE_{\rC\cap\rH},\tilde\psi_{m_c})$ and $(\cE_{\rC\cap\rH}',\tilde\psi_{m_c}')$ are in the same $\rC\cap\rH$-gauge orbit and moreover $q_{j}=q_{j}'$ for all $j$.
\end{proposition}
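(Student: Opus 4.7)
The plan is to prove both implications of the iff, with the backward direction reducing, via Lemma~\ref{lem in parabolic}, to a parabolic-rigidity statement.

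\textbf{Forward direction ($\Leftarrow$).} Given a $(\rC\cap\rH)$-gauge transformation $g$ carrying $(\cE_{\rC\cap\rH},\tilde\psi_{m_c})$ to $(\cE'_{\rC\cap\rH},\tilde\psi'_{m_c})$ with $q_{l_j+1}=q'_{l_j+1}$ for all $j$, I will form the $\rH$-gauge transformation $g\star\Id_\rT$ of $\cE_\rC\star\cE_\rT[\rH]$ via the commuting embedding $(\rC\cap\rH)\times\rT\hookrightarrow\rH$. That this intertwines the two Higgs fields is a direct check: $\rC$ centralizes the $\fsl_2$-triple (so fixes $f$); by Lemma~\ref{lemma:Ccentralizesg(e)}, $\rC$ acts trivially on $\fg(e)$, hence trivially on the subspaces that host the genuine differentials $q_{l_j+1}$; and the $\rC$-action on $V_{2m_c}\cap\fg_{0,ss}$ is precisely that under which $\tilde\phi_{m_c}$ is sent to $\tilde\phi'_{m_c}$.

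\textbf{Backward direction ($\Rightarrow$), fiberwise analysis.} Let $g\in\cG_\rH$ intertwine the two Cayley Higgs bundles. In a local trivialization compatible with the $\rC\times\rT$-reduction, the intertwining equation reads $\Ad_{g_p}(f+\phi(p))=f+\phi'(p)$ with $\phi(p),\phi'(p)\in V$ and $g_p\in\rH\subset\rG$. The first step is to establish $\phi=\phi'$ globally. Since both $f+\phi(p)$ and $f+\phi'(p)$ lie in the Slodowy slice $f+V$, the Kostant--Slodowy transverse-slice property forces them to project to the same point under the adjoint quotient $\chi:\fg\to\fg//\rG$; combined with the specific form of Higgs fields in the image of $\widehat\Psi_e$ (cf.\ the global Slodowy machinery of \cite{ColSandGlobalSlodowy}), this gives $\phi=\phi'$ as sections. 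Unpacking, the differentials satisfy $q_{l_j+1}=q'_{l_j+1}$ for all $j$, and it remains to show that $g$ is a $\rC$-gauge transformation carrying $\tilde\psi_{m_c}$ to $\tilde\psi'_{m_c}$.

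\textbf{Backward direction ($\Rightarrow$), reduction to the parabolic.} I will apply Lemma~\ref{lem in parabolic} fiberwise: it suffices to show $g_p\in\rP$ for the parabolic $\rP\subset\rG$ with Lie algebra $\bigoplus_{j\geq 0}\fg_j$. To that end, observe that the underlying $\rG$-bundle $\cE_\rC\star\cE_\rT[\rG]$ carries a canonical parabolic reduction $\cE_\rC\star\cE_\rT[\rP]$ coming from the inclusion $\rT\subset\rP$. Under this reduction the Higgs field lies in the filtration piece $\cE_\rC\star\cE_\rT[\fg_{\geq -2}]\otimes K$, and its image in the lowest graded quotient $\cE_\rT[\langle f\rangle]\otimes K\cong\cO$ is the nowhere vanishing constant section $1$. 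This nowhere-vanishing-$f$ property characterises the parabolic reduction uniquely among compatible $\rP$-reductions, so any $\rG$-gauge transformation intertwining two Higgs bundles in the image of $\widehat\Psi_e$ must preserve it, placing $g$ in $\rP\cap\rH$ fiberwise. Combined with the first step ($g_p\in C_\rG(f+\phi(p))$), Lemma~\ref{lem in parabolic} then yields $g_p\in\rC$, and since $\rC\subset\rH$ (using that the centralizer of a magical triple is compact), $g$ is the desired $(\rC\cap\rH)$-gauge transformation carrying the data.

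\textbf{Main obstacle.} The principal difficulty is making precise the uniqueness of the canonical parabolic reduction: articulating why the nowhere-vanishing of the $f$-component forces a gauge transformation to respect $\cE_\rC\star\cE_\rT[\rP]$, despite the fact that $\rH$ itself is not contained in $\rP$. This is expected to follow from a Harder--Narasimhan-type rigidity for Higgs bundles with the Slodowy structure, but requires care in combining the weight decomposition in $\rG$ with the $\rH$-valued nature of $g$. All remaining steps are either routine or immediate consequences of the $\cG_\rC$-equivariance of $\widehat\Psi_e$ noted just before Lemma~\ref{lem: G^R Higgs for Slodowy slice}.
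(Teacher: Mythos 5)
Your forward direction and the identification of Lemma~\ref{lem in parabolic} as the pivot of the backward direction are both correct and match the paper's strategy. However, the backward direction as written has two real gaps, and the second one is exactly where the paper does its case-by-case work.

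First, the claim that $\phi=\phi'$ follows from the ``Kostant--Slodowy transverse-slice property'' is unjustified. Two points of the Slodowy slice $f+V$ that are conjugate under $\rG$ certainly project to the same point of $\fg\sslash\rG$, but for non-principal nilpotents the restriction $\chi_e:f+V\to\fg\sslash\rG$ is flat with (generically) finite, not singleton, fibers; indeed the paper itself (Remark after Proposition~\ref{prop:faithflat,surj}) says only that they \emph{expect} the induced base map to be an isomorphism, and deliberately avoids relying on it. So you cannot conclude $\phi=\phi'$ from adjoint-invariant data alone. In the paper this step is not needed at all: once one knows $g$ is valued in $\rC$, the equalities $q_{l_j+1}=q'_{l_j+1}$ come for free because $\rC$ acts trivially on the $\fg(e)$-part of $\fg_0$ (Lemma~\ref{lemma:Ccentralizesg(e)}).

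Second, and more seriously, you explicitly flag as the ``principal difficulty'' the claim that the nowhere-vanishing of the $f$-component forces $g$ to respect the canonical $\rP$-reduction, and you do not resolve it. This is where the actual content of the proof lives, and it is genuinely case-dependent. The paper's mechanism is not a uniqueness-of-parabolic-reduction statement but a degree/holomorphicity argument in the graded pieces of the adjoint bundle: in Case~(1) the pieces $(\cE_\rC\star\cE_\rT)[\fg_j\cap W_{2m_i}]\otimes K$ are line bundles $K^{j+1}$ of distinct degrees, so a holomorphic $\Ad_g$ cannot map a higher-degree piece to a lower-degree one and is thus forced into $\rP\cap\rH$; in Case~(2) the observation is immediate since $\fh=\fg_0$; in Case~(3) it is cited from \cite{so(pq)BCGGO}; and in Case~(4) it requires the orbit-structure result Proposition~\ref{prop H' stabilizer in parabolic} for the minuscule $\rH'$-representation, applied after a first holomorphicity reduction into the parabolic with Lie algebra $\fg_{-4}\oplus\fg_0\oplus\fg_4\oplus\fg_8$. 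Your proposed ``the nowhere-vanishing $f$-projection uniquely pins down the reduction'' is not enough, because a general $\rH$-valued gauge transformation need not respect the filtration that defines that projection in the first place; making the projection meaningful is precisely what needs to be proved. If you want to salvage your outline, replace the ``nowhere-vanishing $f$'' heuristic with the degree argument in the graded adjoint bundle, and note that Case~(4) needs the additional orbit-stabilizer input.
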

\begin{proof}
We will prove Proposition \ref{prop injective Cayley} for each case of Theorem \ref{thm: classification weighted dynkin}. Note that it suffices to prove the result for the adjoint group $\rG_{\Ad}$. Indeed, consider a general $\rG$ and let $\pi:\rG\to\rG_{\Ad}$ be the covering. An $\rH$-gauge transformation $g:\cE_\rC\star\cE_{\rT}[\rH]\to\cE_\rC'\star\cE_{\rT}[\rH]$ induces a gauge transformation between the associated bundles for the adjoint group, and if the induced gauge transformation is valued in $\pi(\rC\cap\rH)$ then $g$ must be valued in $\rC\cap\rH$. The $\rC\cap\rH$-gauge group acts trivially on the differentials $q_{j}$, so if $g$ is valued in $\rC\cap\rH$, $q_{j}=q_{j}'$ for all $j$.

Case (1) was proven in \cite{liegroupsteichmuller} using the Hitchin section and moduli spaces. 
Alternatively, suppose $g:\cE_{\rC\cap\rH}\star\cE_\rT[\rH]\to\cE_\rC'\star\cE_\rT[\rH]$ is a holomorphic gauge transformation such that 
\[\Ad_g\bigg(f+\sum_{j=1}^{\rk(\fg)}q_{j}\bigg)=f+\sum_{j=1}^{\rk(\fg)}q_{j}'.\]
The Lie algebra bundle decomposes as 
$\cE_{\rC\cap\rH}\star\cE_\rT[\fg]\otimes K \cong \bigoplus\cE_{\rC\cap\rH}\star\cE_\rT[\fg_j\cap W_{2m_i}]\otimes K$ with each summand $\cE_{\rC\cap\rH}\star\cE_\rT[\fg_j\cap W_{2m_i}]\otimes K\cong K^{j+1}$.
Since $g$ is holomorphic, we have 
\[\Ad_g\bigg(\bigoplus_{j\geq 0}\cE_{\rC\cap\rH}\star\cE_\rT[\fg_j]\otimes K\bigg)\subset\bigoplus_{j\geq 0}\cE_{\rC\cap\rH}\star\cE_\rT[\fg_j]\otimes K.\]
Hence $g$ is valued in the intersection of $\rH$ with the parabolic subgroup associated to $\bigoplus_{j>0}\fg_j$. Thus, $g$ is valued in $\rC\cap\rH$ by Lemma \ref{lem in parabolic}. 

For Case (2) of Theorem \ref{thm: classification weighted dynkin}, the $\Z$-grading is $\fg=\fg_{-2}\oplus\fg_0\oplus\fg_2$ with $\fh=\fg_0$. Hence, any gauge transformation $g:\cE_\rC\star\cE_{\rT}[\rH]\to\cE_{\rC}'\star\cE_{\rT}[\rH]$ is valued in the intersection of $\rH$ with parabolic subgroup associated to $\fg_0\oplus\fg_2$. By Lemma \ref{lem in parabolic}, $g$ is valued in $\rC\cap\rH$.

For Case (3), Proposition \ref{prop injective Cayley} was proven in \cite[Lemma 4.6]{so(pq)BCGGO} when $\rG=\rSO_N\C$, i.e., for $\rG^\R\cong\rSO_{p,q}$. 
As a result, we focus on $\rG=\rPSO_N\C$. For $N$-odd, $\rSO_N\C=\rP\rSO_N\C$ and we are done. 
For $N$-even the centralizer $\rC$ of the magical $\fsl_2$-triple is $\rO_{N-2p+1}\C$ for $\rG=\rSO_N\C$ and $\rO_{N-2p+1}\C/\pm\Id$ for $\rG=\rPSO_N\C$ (see \cite[Theorem 6.1.3]{CollMcGovNilpotents}). But $N$ even implies $\rO_{N-2p+1}\C/\pm\Id\cong\rSO_{N-2p+1}\C$. Since every $\rSO_{N-2p+1}\C$-bundle lifts to a $\rO_{N-2p+1}\C$-bundle, every $\rPSO_N\C$-Higgs bundle in the image of $\hat\Psi_e$ lifts to an $\rSO_N\C$-Higgs bundle in the image of $\hat\Psi_e$.

For Case (4), we use holomorphicity and Proposition \ref{prop H' stabilizer in parabolic} to apply Lemma \ref{lem in parabolic}. Recall that the space $\fm$ decomposes as in Lemma \ref{lem h' sl2 invariant decomp of m}. Write the Higgs field as 
\begin{equation}
 \label{eq phi except decomp}   f+q_2+\phi_3+q_6=\begin{pmatrix}
    q_6&\phi_3&q_2^b&\tilde f\\\tilde q_2& f_b&0&0
\end{pmatrix},
\end{equation}
where the rows are sections of $\cE_{\rC}\star\cE_\rT[\fg_{10}\oplus\fg_6\oplus\fg_{2}^b\oplus\fg_{-\tilde\alpha}]\otimes K$ and $\cE_{\rC}\star\cE_\rT[\fg_{\tilde\alpha}\oplus\fg_{-2}^b\oplus\fg_{-6}\oplus\fg_{10}]\otimes K$, respectively. 
Recall also that $\fg=\fsl_2\C\oplus\fh'=\fg_{-8}\oplus\fg_{-4}\oplus\fg_0\oplus\fg_4\oplus\fg_8$. 

Consider a holomorphic gauge transformation $g:\cE_{\rC}\star\cE_\rT[\rH]\to\cE_{\rC}'\star\cE_\rT[\rH]$. We have $\cE_{\rC}\star\cE_{\rT}[\fg_{-8}]\cong K^{-4}$, thus holomorphicity implies 
\[\Ad_g(\cE_\rC\star\cE_{\rT}[\fg_{-4}\oplus\fg_0\oplus\fg_4\oplus\fg_8])\subset\cE_\rC'\star\cE_{\rT}[\fg_{-4}\oplus\fg_0\oplus\fg_4\oplus\fg_8].\]
Hence $g$ is valued in the parabolic of $\rP\subset\rH$ with Lie algebra $\fg_{-4}\oplus\fg_0\oplus\fg_4\oplus\fg_8$. The action of $\rP$ on $\fm$ preserves the top row of \eqref{eq phi except decomp}. If it preserves the image of $\hat\Psi_e$, we have the gauge transformation
\[\Ad_g\begin{pmatrix}
    q_6&\phi_3&q_2^b&\tilde f\\\tilde q_2& f_b&0&0
\end{pmatrix}=\begin{pmatrix}
    q_6'&\phi_3'&(q_2^b)'&\tilde f\\\tilde q_2'& f_b&0&0
\end{pmatrix}.\]
By Proposition \ref{prop H' stabilizer in parabolic}, the gauge transformation $g$ is valued in the parabolic of $\rH$ with Lie algebra $\fg_0\oplus\fg_4\oplus\fg_8$. Thus, Lemma \ref{lem in parabolic} implies $g$ is valued in $\rC\cap \rH$.
\end{proof}
We have the following immediate corollary.
\begin{corollary}\label{Cor same automorphism groups}
 Let $((\cE_{\rC\cap\rH},\tilde\psi_{m_c}),q_1,\ldots,q_{\rkfge})$ be in the domain of the Cayley map \eqref{eq Cayley map config}. Then the automorphism group of  $((\cE_{\rC\cap\rH},\tilde\psi_{m_c}),q_1,\ldots,q_{\rkfge})$ is equal to the automorphism group of $\widehat\Psi_e((\cE_{\rC\cap\rH},\tilde\psi_{m_c}),q_1,\ldots,q_{\rkfge})$.
\end{corollary}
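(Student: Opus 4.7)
The plan is to derive this corollary directly from Proposition \ref{prop injective Cayley} by taking source and target equal. An automorphism of the image $\widehat\Psi_e((\cE_{\rC\cap\rH},\tilde\psi_{m_c}),q_{l_1+1},\ldots,q_{l_{\rk(\fg(e))}+1})$ is, by definition, a holomorphic $\rH$-gauge transformation $g$ of $\cE_{\rC\cap\rH}\star\cE_\rT[\rH]$ fixing the Higgs field. Applying Proposition \ref{prop injective Cayley} to the pair with $(\cE_{\rC\cap\rH}',\tilde\psi_{m_c}')=(\cE_{\rC\cap\rH},\tilde\psi_{m_c})$ and $q_{l_j+1}'=q_{l_j+1}$, we see that $g$ must come from a $\rC\cap\rH$-gauge transformation $\bar g$ of $\cE_{\rC\cap\rH}$ fixing $\tilde\psi_{m_c}$, together with the identity on $\cE_\rT$. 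This yields an injective map from $\Aut(\widehat\Psi_e(\cdot))$ into $\Aut((\cE_{\rC\cap\rH},\tilde\psi_{m_c}))$, since the differential data $q_{l_j+1}$ is unchanged under such a comparison.

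For surjectivity, we take an automorphism $\bar g$ of $(\cE_{\rC\cap\rH},\tilde\psi_{m_c})$ (regarded as a $K^{m_c+1}$-twisted $\rG_{0,ss}^\R$-Higgs bundle), together with the trivial automorphism of each differential $q_{l_j+1}$ (there is nothing to check, as $\R^+$ is abelian). Then $\bar g\star \Id_\rT$ is a well-defined $\rH$-gauge transformation of $\cE_{\rC\cap\rH}\star\cE_\rT[\rH]$. It remains to verify that this gauge transformation fixes the Higgs field $f+\tilde\phi_{m_c}+\sum q_{l_j+1}$. The element $f$ lives in $\cE_\rT$-part, so it is obviously preserved. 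The term $\tilde\phi_{m_c}$ is preserved because $\bar g$ fixes $\tilde\psi_{m_c}=\ad_f^{m_c}(\tilde\phi_{m_c})$ and $\ad_f^{m_c}$ is an isomorphism on the relevant summand. Finally, by Lemma \ref{lemma:Ccentralizesg(e)}, the group $\rC$ centralizes $\fg(e)$, hence acts trivially on each $Z_{2l_j}\cap\fg(e)$; thus $\bar g\star\Id_\rT$ acts trivially on the line-bundle summands where the $q_{l_j+1}$ live, and preserves them.

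Combining injectivity and surjectivity yields the claimed equality. No serious obstacle is expected here: the corollary is essentially a restatement of Proposition \ref{prop injective Cayley} with source equal to target, together with the observation that $\rC\cap\rH$-gauge transformations automatically preserve all sections valued in $\fg(e)$-summands because $\rC$ centralizes $\fg(e)$.
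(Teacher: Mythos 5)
Your proof is correct and follows the same route the paper intends: the proof of Proposition \ref{prop injective Cayley} shows that any $\rH$-gauge automorphism of $\widehat\Psi_e(\cdot)$ is valued in $\rC\cap\rH$, which together with the $\rC$-equivariance of $\widehat\Psi_e$, the isomorphism $\ad_f^{m_c}$, and Lemma \ref{lemma:Ccentralizesg(e)} gives the bijection on automorphism groups. The paper simply records this as an ``immediate corollary''; you have filled in precisely the details it leaves implicit.
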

\section{Moduli spaces of Higgs bundles}\label{sec:Moduli spaces}
\subsection{Stability conditions and moduli spaces}
In this section we introduce the moduli space of $L$-twisted Higgs bundles, recall some of its features and discuss related objects. For this section we fix a compact Riemann surface $X$ with genus $g\geq 2$.
%
We start by recalling the notions of (semi,poly)stability and the moduli spaces for Higgs pairs. See \cite{HiggsPairsSTABILITY} for more details.
Let $\rG$ be a complex reductive Lie group with Lie algebra $\fg$, equipped with a non-degenerate $\rG$-invariant $\C$-bilinear pairing $\langle\cdot,\cdot\rangle$. 
Let $\rK^\R\subset \rG$ be a maximal compact subgroup with Lie algebra $\fk^\R$.

An element $s\in i\fk^\R$ defines a parabolic subgroup $\rP_s$ and a Levi subgroup $\rL_s$ of $\rG$ by taking
\begin{align*}
\rP_s & =\{g\in \rG \mid e^{t s}ge^{-t s} \text{ is bounded as
$t\to\infty$} \}\subset \rG,\\
\rL_s & =\{g\in \rG\mid e^{t s}ge^{-t s}=g \text{ for all }t\}\subset \rP_s.
\end{align*}
Also, given a holomorphic representation $\rho:\rG\to\rGL(V)$, we have the subspaces 
\begin{equation}\label{s-subspaces}
\begin{split}
V_s& =\{v\in V \mid \rho(e^{t s})v 
\text{ is bounded as }t\to\infty\},\\
V_s^0& =\{v\in V \mid \rho(e^{t s})v=v \text{ for all $t$}
\}\subset V_s.
\end{split}
\end{equation}
Here, $V_s\subset V$ is $\rP_s$-invariant and $V_s^0\subset V_s$ is $\rL_s$-invariant. For the adjoint representation $\Ad:\rG\to\rGL(\fg)$, we have that $\fg_s^0\subset\fg_s$ are the Lie algebras $\fl_s\subset\fp_s$ of $\rL_s\subset\rP_s$.
Since $\langle s, [\fp_s,\fp_s]\rangle=0$, the element $s\in i\fk^\R$ defines the character of $\fp_s$
\[\chi_s := \langle s, - \rangle:\fp_s\longrightarrow\C.\]

Given a holomorphic $\rG$-bundle $\cE_\rG$, we  define the degree of a structure group reduction from $\rG$ to $\rP_s$ using Chern--Weil theory and the character $\chi_s$. Let $\rL_s^\R=\rK^\R\cap \rL_s$ be a maximal compact subgroup of $\rL_s$; the inclusion $\rL_s^\R\subset \rL_s$ is a homotopy equivalence. 
Now suppose $\cE_{\rP_s}\subset \cE_{\rG}$ is a reduction of $ \cE_\rG$ to $\rP_s$. There is a further reduction
$\cE_{\rL_s^\R}\subset \cE_{\rP_s}$ which is unique up to homotopy. Consider a connection $A$ on $\cE_{\rL_s^\R}$ with curvature $F_A\in \Omega^2(\cE_{\rL_s^\R}[\fl_s^\R])$. Then $\chi_s(F_A)$ is a 2-form on $X$ with values in $i\R$. Define the degree of the reduction $\cE_{P_s}\subset\cE_{\rG}$ to be the real number
\[\deg(\cE_{P_s})=\frac{i}{2\pi}\int_X\chi_s(F_A).\]

Let $d\rho:\fg\to\fgl(V)$ be the differential of $\rho$ and $\fz^\R$ be the center of $\fk^\R$. Consider the orthogonal decomposition
$\fz^\R= \ker(d\rho_{|\fz^\R})\oplus  \ker(d\rho_{|\fz^\R})^{\perp}$, and  define
\[ \fk^\R_{\rho} = \fk^\R_{ss} + \ker(d\rho_{|\fz^\R})^{\perp},\]
where $\fk^\R_{ss}$ is the semisimple part of  $\fk^\R$.
Thus $\fk^\R = \fk^\R_{\rho} + \ker(d\rho_{|\fz^\R})$.
We are now ready to define $\alpha$-stability notions, for $\alpha\in i\fz^\R$.

\begin{definition}\label{def:L-twisted-pairs-stability}
Let $\alpha\in i\fz^\R$. An $L$-twisted $(\rG,V)$-Higgs pair $(\cE_\rG,\varphi)$ is:
\begin{itemize}
  \item \emph{$\alpha$-semistable} if for any $s\in i\fk^\R$ 
   and any holomorphic reduction $\cE_{\rP_s}\subset\cE_{\rG}$ such that
    $\varphi\in H^0(\cE_{\rP_s}[V_s]\otimes L)$, we have $\deg(\cE_{\rP_s})\geq \langle\alpha,s\rangle$.

\item \emph{$\alpha$-stable} if for any $s\in i\fk^\R_\rho$ 
   and any holomorphic reduction $\cE_{\rP_s}\subset\cE_{\rG}$ such that
    $\varphi\in H^0(\cE_{\rP_s}[V_s]\otimes L)$, we have $\deg(\cE_{\rP_s})> \langle\alpha,s\rangle$.
 
\item \emph{$\alpha$-polystable} if it is $\alpha$-semistable and whenever $s\in i\fk^\R$ and $\cE_{\rP_s}\subset\cE_\rG$ is a holomorphic reduction with $\deg (\cE_{\rP_s})=\langle\alpha,s\rangle$,
 there is a further holomorphic reduction $\cE_{\rL_s}\subset\cE_{\rP_s}$ such that $\varphi\in H^0(\cE_{\rL_s}[V_s^0]\otimes L)$.
\end{itemize}
\end{definition}

\begin{remark}\label{rem 0-stability}
In this paper, the case $\alpha\neq 0$ will only appear in very specific situations, therefore we will refer to $0$-(semi,poly)stability simply as (semi,poly)stability. It is clear that the (semi,poly)stability of a Higgs pair is preserved by the action of gauge group and the $\C^*$-action from \eqref{eq C* action def}.
\end{remark}
\begin{remark}\label{rem subbundles}
    Consider an $L$-twisted $\rG$-Higgs bundle $(\cE_\rG,\varphi)$ for a semisimple Lie group $\rG$. Using the adjoint representation, we can form the Higgs vector bundle $(\cE_\rG[\fg],\ad_\varphi)$. In this case, $0$-polystability of $(\cE_\rG,\varphi)$ is equivalent to the polystability criterion involving degrees of invariant subbundles. Namely, $(\cE_\rG,\varphi)$ is 0-polystable if and only if for any holomorphic subbundle $\cV\subset\cE_\rG[\fg]$ with $\ad_\varphi(\cV)\subset\cV\otimes L$, we have $\deg(\cV)\leq 0$ and furthermore, if $\deg(\cV)=0$, then $(\cE_\rG[\fg],\ad_\varphi)$ splits as a direct sum of stable Higgs vector bundles of degree $0$. This follows from the Hitchin--Kobayashi correspondence (see \S\ref{sec hitchin kob}).
\end{remark}
\begin{remark}\label{rem stability and coverings}
 Let $\rG_1\to\rG_2$ be a covering and $(\cE_{\rG_2},\varphi)$ be a $\rG_2$-Higgs bundle which lifts to an $\rG_1$-Higgs bundle $(\cE_{\rG_1},\varphi)$, i.e.~$\cE_{\rG_1}(\rG_2)=\cE_{\rG_2}$. Then $(\cE_{\rG_2},\varphi)$ is polystable if and only if $(\cE_{\rG_1},\varphi)$ is polystable. Indeed, any holomorphic parabolic reduction $\cE_{\rP_s}\subset\cE_{\rG_1}$ induces a holomorphic parabolic reduction $\cE_{\rP_s}(\rG_2)\subset\cE_{\rG_2}$ and any holomorphic parabolic reduction $\cE_{\rP_s}\subset\cE_{\rG_2}$ lifts to a reduction $\cE_{\rP_s'}\subset\cE_{\rG_1}$. 
\end{remark}

The following result will be useful. For a proof, see \cite[\S2.10]{HiggsPairsSTABILITY}.
\begin{proposition}\label{prop JordanHolder}
Suppose $(\cE_\rG,\varphi)$ is a strictly polystable $L$-twisted $(\rG,V)$-Higgs pair. Then there exists an $s\in i\fk^\R$, a holomorphic reduction $\cE_{\rL_s}\subset\cE_\rG$ with $\deg(\cE_{\rL_s})=0$ and $\varphi\in H^0(\cE_{\rL_s}(V_s^0)\otimes L)$ such that $(E_{\rL_s},\varphi)$ is a stable as an $L$-twisted $(\rL_s,V_s^0)$-Higgs pair. 
\end{proposition}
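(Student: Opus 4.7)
My plan is to produce the Levi reduction by an inductive refinement, starting from the data supplied by polystability and iterating until stability is achieved. The key observation is that each refinement strictly decreases the dimension of the structure group, so the process must terminate.

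First I would use strict polystability directly. Since $(\cE_\rG,\varphi)$ is not stable, there exist $s_1\in i\fk^\R_\rho$ and a holomorphic reduction $\cE_{\rP_{s_1}}\subset\cE_\rG$ with $\varphi\in H^0(\cE_{\rP_{s_1}}[V_{s_1}]\otimes L)$ and $\deg(\cE_{\rP_{s_1}})\leq 0$; by semistability this degree is $=0$ (recall $\alpha=0$, cf.\ Remark \ref{rem 0-stability}). The polystability clause then provides a further holomorphic reduction $\cE_{\rL_{s_1}}\subset\cE_{\rP_{s_1}}$ with $\varphi\in H^0(\cE_{\rL_{s_1}}[V_{s_1}^0]\otimes L)$, so $(\cE_{\rL_{s_1}},\varphi)$ is an $L$-twisted $(\rL_{s_1},V_{s_1}^0)$-Higgs pair.

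Next I would verify that $(\cE_{\rL_{s_1}},\varphi)$ is polystable as an $(\rL_{s_1},V_{s_1}^0)$-Higgs pair. For this I would choose a maximal compact subgroup $\rK^\R_{\rL_{s_1}}\subset\rL_{s_1}$ compatible with the global choice and observe that any element $t\in i\fk^\R_{\rL_{s_1}}$ commutes with $s_1$, so the parabolic $\rP_t\cap\rL_{s_1}\subset\rL_{s_1}$ together with the original reduction induces a parabolic reduction of $\cE_\rG$ to $\rP_{s_1+\epsilon t}$ for small $\epsilon>0$, with the same Higgs-field compatibility $\varphi\in H^0(\cE_{\rP_{s_1+\epsilon t}}[V_{s_1+\epsilon t}]\otimes L)$ because $V^0_{s_1}\cap V_t=V_{s_1+\epsilon t}\cap V_{s_1}^0$ for $\epsilon$ small. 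A Chern–Weil computation then gives $\deg_{\rL_{s_1}}(\cE_{\rP_t\cap\rL_{s_1}})=\deg(\cE_{\rP_{s_1+\epsilon t}})-\deg(\cE_{\rP_{s_1}})\geq 0$, with equality forcing the further Levi reduction by the polystability of $(\cE_\rG,\varphi)$ applied to $s_1+\epsilon t$. This is the slightly delicate step, essentially the same manipulation that underlies the equivalence of stability notions in \S2.10 of \cite{HiggsPairsSTABILITY}; the main care is to keep track of the character identifications so that the degree computed inside $\rL_{s_1}$ matches the one inherited from $\rG$.

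Having established polystability at the level of $\rL_{s_1}$, I would iterate: if $(\cE_{\rL_{s_1}},\varphi)$ is already stable, take $s=s_1$ and we are done; otherwise apply the same procedure inside $\rL_{s_1}$ to obtain $s_2\in i\fk^\R_{\rL_{s_1},\rho}$, a parabolic reduction of $\cE_{\rL_{s_1}}$ of degree zero, and a further Levi $\rL_{s_2}\subset\rL_{s_1}$ with $\dim\rL_{s_2}<\dim\rL_{s_1}$ through which $\varphi$ factors. Because the dimensions of the successive Levis form a strictly decreasing sequence of non-negative integers, after finitely many steps one reaches a Levi $\rL:=\rL_{s_n}$ and a reduction $\cE_\rL\subset\cE_\rG$ for which $(\cE_\rL,\varphi)$ is stable. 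Taking $s=s_1+\epsilon_2 s_2+\cdots+\epsilon_n s_n$ with $0<\epsilon_n\ll\cdots\ll\epsilon_2\ll 1$ one obtains $\rP_s=\rP_{s_1}\cap\cdots\cap\rP_{s_n}$ and $\rL_s=\rL$, and the characters add linearly so $\deg(\cE_{\rL_s})=\sum\epsilon_i\langle 0, s_i\rangle=0$. This produces the required $s$, completes the inductive construction, and the main obstacle — making sure the intermediate polystability step is both well-posed and inherited from $(\cE_\rG,\varphi)$ — is exactly the compatibility of parabolic reductions of $\rL_{s_i}$ with those of $\rG$ described above.
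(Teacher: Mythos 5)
The paper gives no proof of its own: it simply cites \cite[\S 2.10]{HiggsPairsSTABILITY} for this statement. Your inductive Jordan--H\"older construction is the standard argument and is essentially the one appearing in that reference, so the approach is the right one and the outline is sound. The one genuinely delicate step, namely the compatibility between degrees of parabolic reductions of $\cE_{\rL_{s_1}}$ and those of $\cE_\rG$, is correctly identified as the heart of the matter.

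A few slips worth flagging, none of which undermine the argument. First, the degree comparison should carry a factor of $\epsilon$: since $\chi_{s_1+\epsilon t}=\chi_{s_1}+\epsilon\chi_t$ and $\deg(\cE_{\rP_{s_1}})=0$, the correct statement is $\epsilon\,\deg_{\rL_{s_1}}\bigl(\cE_{\rP_t\cap\rL_{s_1}}\bigr)=\deg\bigl(\cE_{\rP_{s_1+\epsilon t}}\bigr)$; the sign conclusion you draw is unaffected. Second, the claim $\rP_s=\rP_{s_1}\cap\cdots\cap\rP_{s_n}$ is not correct: for commuting $s_i$ with $0<\epsilon_n\ll\cdots\ll\epsilon_2\ll 1$, the parabolic $\fp_s$ consists of vectors whose eigenvalue tuple $(\lambda_1,\ldots,\lambda_n)$ is lexicographically non-negative, which strictly contains $\bigcap_i\fp_{s_i}$. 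What is true, and what you actually use, is $\rL_s=\bigcap_i\rL_{s_i}$, which is exactly the terminal Levi from the iteration; the parabolic intersection formula is unnecessary. Third, the expression $\sum\epsilon_i\langle 0,s_i\rangle$ in the final degree computation conflates the $\alpha$-pairing (which is trivially zero for $\alpha=0$) with the Chern--Weil degree of the reduction; the correct justification is that $\chi_s=\sum_i\epsilon_i\chi_{s_i}$ and each $\chi_{s_i}$-degree of the iterated reduction is zero by construction, so additivity of the integral gives $\deg(\cE_{\rP_s})=0$. With these corrections the argument is complete.
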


We will only need to consider the moduli space for $L$-twisted $\G^\R$-Higgs bundles over $X$, where $\rG^\R$ is a real form of $\rG$. Denote it by $\cM_L(\rG^\R)$. We define it as the space of gauge orbits of polystable $L$-twisted $\rG^\R$-Higgs bundles
\[\cM_L(\rG^\R)=\cH_\rL^{ps}(\rG^\R)/\cG_\rH~,\]
where $\cH_\rL^{ps}(\rG^\R)\subset\cH_\rL(\rG^\R)$ is the subset of polystable $L$-twisted $\rG^\R$-Higgs bundles.

In order to endow $\cM_L(\rG^\R)$ with a topology, suitable Sobolev completions must be used in standard fashion; see \cite{fanHiggsmodulianalytic}, where a detailed adaptation to Higgs bundles is studied in the case $\G=\rGL_n\C$. 
Then the orbits of the $\cG_{\rH}$-action on $\cH_L(\rG^\R)^{ps}$ are closed in the space of semistable $\rG^\R$-Higgs bundles, thus the moduli space $\cM_L(\rG^\R)$ becomes a Hausdorff topological space. If $\cH^s_L(\rG^\R)\subset \cH^{ps}_L(\rG^\R)$ denotes the subset of stable Higgs bundles, then $\cH^s_L(\rG^\R)$ is open in $\cH^{ps}_L(\rG^\R)$. The stable objects thus define an open subset of  $\cM_L(\rG^\R)$. 

\begin{remark} 
A GIT construction of $\cM_L(\rG^\R)$ (actually in the more general setting of Higgs pairs) may be found in \cite{schmitt_2005}, from which is clear that $\cM_L(\rG^\R)$ parameterizes $S$-equivalence classes of semistable $L$-twisted $\rG^\R$-Higgs bundles. This construction generalizes the construction of the moduli space of $\rG^\R$-Higgs bundles by Ramanathan \cite{ramanathan_1975} when $\rG^\R$ is compact and Simpson \cite{SimpsonModuli1,SimpsonModuli2} when $\rG^\R$ is complex reductive (see also Nitsure \cite{NitsureHiggs} for $\rG^\R=\rGL_n\C$).
\end{remark}

\subsection{Local structure of the moduli spaces}\label{sec: local struct}
We now recall some deformation theory for Higgs bundles, for more details see \cite{biswas-ramanan} and \cite{HiggsPairsSTABILITY}. Fix a holomorphic line bundle $L$ on $X$ and let $(\cE_\rH,\varphi)$ be an $L$-twisted $\G^\R$-Higgs bundle. The double complex of sheaves 
\begin{equation}\label{eq complex of sheaves}
  C^\bullet(\cE_\rH,\varphi):\xymatrix{\cE_\rH[\fh]\ar[r]^-{\ad_\varphi}&\cE_\rH[\fm]\otimes L}
\end{equation} 
governs infinitesimal deformations of $(\cE_{\rH},\varphi)$. Thus, when $(\cE_\rH,\varphi)$ is polystable, \eqref{eq complex of sheaves} encodes the local structure of the moduli space $\cM_L(\rG^\R)$ near the point defined by $(\cE_\rH,\varphi)$.
The complex \eqref{eq complex of sheaves} defines a long exact sequence in hypercohomology:
\begin{equation}\label{EQ deformation complex DEF}
\xymatrix@R=1em@C=1.3em{0\ar[r]&\HH^0(C^\bullet(\cE_\rH,\varphi))\ar[r]&H^0(\cE_\rH[\fh])\ar[r]^{ \ad_\varphi\ \ \ \ }&H^0(\cE_\rH[\fm]\otimes L)\ar[r]&\HH^1(C^\bullet(\cE_\rH,\varphi))\ar@{->}`r/3pt [d] `/10pt[l] `^d[llll] `^r/3pt[d][dlll]\\&H^1(\cE_\rH[\fh])\ar[r]^{\ad_\varphi \ \ }&H^1(\cE_\rH[\fm]\otimes L)\ar[r]&\HH^2(C^\bullet(\cE_\rH,\varphi))\ar[r]&0.}
\end{equation}
We have the following proposition; see \cite[Lemma 2.25 and Proposition 3.8]{HiggsPairsSTABILITY}.

\begin{proposition}\label{prop automorphism}
If the $L$-twisted $\rG^\R$-Higgs bundle $(\cE_\rH,\varphi)$ is polystable, then its automorphism group $\Aut(\cE_\rH,\varphi)$ is a complex reductive group which is identified with a closed subgroup of the automorphisms of the fiber $(\cE_\rH(x),\varphi(x))$ for any $x\in X$. 
The zeroth hypercohomology group $\HH^0(C^\bullet(\cE_\rH,\varphi))$ is the Lie algebra of $\Aut(\cE_\rH,\varphi)$. 
\end{proposition}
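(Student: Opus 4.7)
The plan is to combine three ingredients: a direct reading of the Lie algebra from the hypercohomology exact sequence \eqref{EQ deformation complex DEF}, the Hitchin--Kobayashi correspondence (which applies because $(\cE_\rH,\varphi)$ is polystable, see \S\ref{sec hitchin kob}), and a polar decomposition argument to deduce both reductivity and injectivity into the fiber.

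First I would identify the Lie algebra. An infinitesimal holomorphic automorphism of $(\cE_\rH,\varphi)$ is a section $\xi\in H^0(\cE_\rH[\fh])$, and the condition that the infinitesimal gauge transformation preserves $\varphi$ is precisely $\ad_\varphi(\xi)=0$ in $H^0(\cE_\rH[\fm]\otimes L)$. From \eqref{EQ deformation complex DEF} we read off
\[
\HH^0(C^\bullet(\cE_\rH,\varphi)) \;=\; \ker\bigl(\ad_\varphi:H^0(\cE_\rH[\fh])\to H^0(\cE_\rH[\fm]\otimes L)\bigr),
\]
which is therefore exactly the Lie algebra of $\Aut(\cE_\rH,\varphi)$.

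Next I would invoke the Hitchin--Kobayashi correspondence: polystability of $(\cE_\rH,\varphi)$ provides a reduction $h$ of structure group to the maximal compact $\rH^\R\subset \rH$ satisfying the Hitchin equation, together with a compatible flat connection $D_h=A_h+\varphi-\tau_h(\varphi)$, where $\tau_h$ is the conjugation fixed by $h$. Given an algebraic automorphism $g\in\Aut(\cE_\rH,\varphi)$, I would apply fibrewise polar decomposition with respect to $h$ to write $g=u\cdot\exp(\xi)$ with $u$ $h$-unitary and $\xi$ $h$-self-adjoint; a standard argument (Simpson) using the uniqueness of harmonic metrics and that $g$ intertwines the holomorphic structure and $\varphi$ shows that $u$ and $\exp(\xi)$ are individually holomorphic automorphisms of $(\cE_\rH,\varphi)$. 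The unitary factors form a compact real group, and the full automorphism group is its complexification, hence complex reductive. Moreover both $u$ and $\exp(\xi)$ are $D_h$-parallel: $u$ commutes with $h$, the holomorphic structure, and $\varphi$, so it commutes with $A_h$ and with $\varphi-\tau_h(\varphi)$; the same holds for $\exp(\xi)$. Since parallel sections of a flat bundle over a connected base are determined by their value at a single fibre, the evaluation $\mathrm{ev}_x:\Aut(\cE_\rH,\varphi)\to \Aut(\cE_\rH(x),\varphi(x))$ is injective, and its image is closed as it is the image of a morphism of algebraic groups.

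The main obstacle I expect is the polar decomposition step, namely justifying that both factors $u$ and $\exp(\xi)$ individually lie in $\Aut(\cE_\rH,\varphi)$ rather than only their product. This is precisely where polystability (as opposed to mere semistability) is essential, and it relies on the uniqueness up to the centraliser of the harmonic metric solving the Hitchin equation; this rules out Jordan--Hölder-type deformations that would break the decomposition, and can be handled via Proposition \ref{prop JordanHolder} together with Simpson's analytic arguments applied in the present $L$-twisted $\rG^\R$-setting.
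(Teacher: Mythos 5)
Your overall strategy is the right one and matches what the cited reference \cite[Lemma 2.25, Proposition 3.8]{HiggsPairsSTABILITY} does: read off the Lie algebra from $\HH^0$, use the Hitchin--Kobayashi metric, and exploit a polar decomposition to obtain reductivity and injectivity of evaluation at a fibre. The identification of the Lie algebra is correct and requires no comment.

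However, the parallelism step has a genuine gap for the generality claimed. You invoke a flat connection $D_h = A_h + \varphi - \tau_h(\varphi)$ and argue that automorphisms are $D_h$-parallel. This $D_h$ only makes sense when $L = K$: only then is $\varphi - \tau_h(\varphi)$ an honest $\fg^\R$-valued $1$-form on $X$, and only then does the Hitchin equation $F_h + [\varphi,-\tau_h(\varphi)] = 0$ say $D_h$ is flat. The proposition, though, is about $L$-twisted $\rG^\R$-Higgs bundles with arbitrary $L$ (and the paper needs it with $L = K^{m}$ for $m > 1$). For general $L$, Theorem \ref{hk-twisted-higgs} gives a metric $h$ solving $F_h + \mu(\varphi)\omega = -i\alpha\omega$, but there is no flat connection, and the "parallel sections of a flat bundle" reasoning has nothing to attach to. The appeal to "uniqueness of harmonic metrics" has the same problem: harmonic metrics are a feature of the $L = K$ nonabelian Hodge picture, not of the general $L$-twisted Hitchin--Kobayashi correspondence.

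The correct replacement, and the one the reference actually uses, is a Weitzenb\"ock/integration-by-parts argument with the Chern connection $A_h$ of the Hitchin--Kobayashi metric alone. Given $\xi \in \HH^0(C^\bullet(\cE_\rH,\varphi))$, one has $\bar\partial_{A}\xi = 0$ and $[\varphi,\xi] = 0$; pairing the equation $F_h + \mu(\varphi)\omega = -i\alpha\omega$ against $\xi\otimes\tau_h(\xi)$ and integrating over $X$ yields $\partial_{A}\xi = 0$ and $[\tau_h(\varphi),\xi]=0$. Hence $\xi$ is $A_h$-parallel and the evaluation of the Lie algebra at any fibre is injective; exponentiating gives injectivity of $\mathrm{ev}_x$ on the identity component, and one then handles components by algebraicity. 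The polar decomposition argument for reductivity goes through once you replace "$D_h$-parallel" by "$A_h$-parallel" and "uniqueness of harmonic metrics" by "uniqueness of the Hitchin--Kobayashi metric up to the compact centraliser" (Theorem \ref{hk-twisted-higgs}). With these substitutions your outline is sound.
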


Note that the automorphism group $\Aut(\cE_\rH,\varphi)$ acts on $\HH^1(C^\bullet(\cE_\rH,\varphi))$.
Using standard slice methods of Kuranishi (see \cite[Chapter 7.3]{DiffGeomCompVectBun} for details for the moduli space of holomorphic bundles), a neighborhood of the isomorphism class of a polystable Higgs bundle $(\cE_\rH,\varphi)$ in $\cM_L(\rG^\R)$ is given by 
\begin{equation*}\label{EQ: local kuranishi}
  \kappa^{-1}(0)\sslash\Aut(\cE_\rH,\varphi)
\end{equation*}
where $\kappa:\HH^1(C^\bullet(\cE_\rH,\varphi))\to \HH^2(C^\bullet(\cE_\rH,\varphi))$ is the so called Kuranishi map. 
When $\HH^2(\cE_\rH,\varphi)=0$, a neighborhood of the isomorphism class of $(\cE_\rH,\varphi)$ in $\cM_L(\rG^\R)$ is isomorphic to 
\begin{equation}\label{eq localmodel nhgb}
\HH^1(C^\bullet(\cE_\rH,\varphi))\sslash \Aut(\cE_\rH,\varphi).
\end{equation}
We will use the following result in \S\ref{sec: open and closed} to prove that for the Higgs bundles considered there, the corresponding $\HH^2$ vanishes. Therefore, we have no need to recall the construction of the Kuranishi map. 

\begin{proposition}\label{prop H2 vanishes}
Let $\rG^\R\subset\rG$ be a real form of a complex semisimple Lie group $\rG$ and let $L$ be a holomorphic line bundle with $\deg(L)>2g-2$. Then for any polystable $L$-twisted $\rG^\R$-Higgs bundle $(\cE_\rH,\varphi)$ we have $\HH^2(C^\bullet(\cE_\rH,\varphi))=0$.
\end{proposition}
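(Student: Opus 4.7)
The plan is to combine Serre duality for hypercohomology with the Hitchin--Kobayashi correspondence and a Bochner/Weitzenböck vanishing argument. Using the $\Ad$-invariant nondegenerate bilinear form on $\fg$, which restricts to nondegenerate pairings on $\fh$ and on $\fm$ (since $\fh$ and $\fm$ are orthogonal under such a form), we identify $\cE_\rH[\fh]^*\cong\cE_\rH[\fh]$ and $\cE_\rH[\fm]^*\cong\cE_\rH[\fm]$. Serre duality for the two-term complex $C^\bullet$ on $X$ then gives
\[
  \HH^2(C^\bullet(\cE_\rH,\varphi))^*\cong\ker\!\Big([\varphi,\,\cdot\,]\colon H^0(\cE_\rH[\fm]\otimes L^{-1}K)\longrightarrow H^0(\cE_\rH[\fh]\otimes K)\Big).
\]
Thus the vanishing statement reduces to showing that no nonzero holomorphic section $\psi\in H^0(\cE_\rH[\fm]\otimes L^{-1}K)$ satisfies $[\varphi,\psi]=0$.

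Next, since $(\cE_\rH,\varphi)$ is polystable, the Hitchin--Kobayashi correspondence for $L$-twisted $\rG^\R$-Higgs pairs developed in \cite{HiggsPairsSTABILITY} yields a reduction $h$ of $\cE_\rH$ to a maximal compact $\rH^\R\subset\rH$ whose associated Chern connection $A_h$ together with $\varphi$ satisfies the Hitchin equation
\[
  \Lambda_\omega F_{A_h}-\bigl[\varphi,\tau_h(\varphi)\bigr]=0,
\]
where $\tau_h$ denotes the antilinear involution determined by $h$, $\omega$ is a Kähler form on $X$, and $\Lambda_\omega$ is the adjoint of wedging with $\omega$. The central term is absent because $\rG^\R$ is semisimple by hypothesis.

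Endow $\cE_\rH[\fm]\otimes L^{-1}K$ with the Hermitian metric induced by $h$ and by a fixed Hermitian metric on $L$ whose Chern form is a constant multiple of $\omega$; then the Chern curvature of the twisting factor $L^{-1}K$ is itself a constant multiple of $\omega$, with proportionality constant strictly negative since by hypothesis $\deg(L^{-1}K)=(2g-2)-\deg L<0$. Applying the Weitzenböck identity to the holomorphic section $\psi$ and integrating over $X$, the curvature contribution from the $\cE_\rH[\fm]$ factor is rewritten via the Hitchin equation in terms of $\bigl\langle[\varphi,\tau_h(\varphi)]\cdot\psi,\psi\bigr\rangle$, which by the Jacobi identity and the commutation $[\varphi,\psi]=0$ rearranges to the nonnegative quantity $\|[\tau_h(\varphi),\psi]\|^2$. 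The resulting global integral identity takes the schematic form
\[
  \|\nabla^{1,0}\psi\|^2_{L^2}+\|[\tau_h(\varphi),\psi]\|^2_{L^2}=c\cdot\deg(L^{-1}K)\cdot\|\psi\|^2_{L^2}
\]
with $c>0$. Since the right-hand side is nonpositive while the left-hand side is nonnegative, both must vanish, forcing $\psi=0$.

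The main obstacle is the Bochner/Weitzenböck computation itself: one must verify carefully that, after invoking the Hitchin equation to eliminate $\Lambda_\omega F_{A_h}$ in favor of $[\varphi,\tau_h(\varphi)]$, the remaining quadratic expression in $\psi$ rearranges, via the Jacobi identity and the commutation relation $[\varphi,\psi]=0$, into the manifestly nonnegative square $\|[\tau_h(\varphi),\psi]\|^2$, so that the only residual curvature contribution is the one coming from the negative-degree line bundle $L^{-1}K$. Once the signs are pinned down, the strict inequality $\deg L>2g-2$ closes the argument.
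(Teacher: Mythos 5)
Your argument is correct, but it takes a genuinely different route from the paper's. Both proofs start the same way, applying Serre duality to $C^\bullet$ and using the Killing-form self-dualities $\cE_\rH[\fh]^*\cong\cE_\rH[\fh]$, $\cE_\rH[\fm]^*\cong\cE_\rH[\fm]$ to reduce to the statement that a holomorphic section $\psi$ of $\cE_\rH[\fm]\otimes L^{-1}K$ with $[\varphi,\psi]=0$ must vanish. From there the routes diverge. The paper's proof is purely algebro-geometric: it extends to the complex group, observing that $\HH^2(C^\bullet(\cE_\rH,\varphi))$ sits inside $\HH^2(C^\bullet(\cE_\rH[\rG],\varphi))$ because $\cE_\rH[\rG][\fg]$ splits as $\cE_\rH[\fh]\oplus\cE_\rH[\fm]$ compatibly with $\ad_\varphi$, takes the line subbundle $M$ generated by a hypothetical nonzero $s\in H^0(\cE_\rH[\rG][\fg]\otimes L^{-1}K)$ with $\ad_\varphi s=0$ (so $\deg M\geq0$), and gets a contradiction from semistability applied to the $\ad_\varphi$-invariant subbundle $M\otimes LK^{-1}\subset\cE_\rH[\rG][\fg]$, since $\deg(M\otimes LK^{-1})=\deg M+\deg L-(2g-2)>0$. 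Your argument instead invokes the Hitchin--Kobayashi correspondence to produce a solution metric, and runs a Weitzenb\"ock/Bochner computation on $\psi$, replacing $i\Lambda F_h$ by the moment-map term and using the Jacobi identity together with $[\varphi,\psi]=0$ to isolate $-c_1\|[\tau_h(\varphi),\psi]\|^2$ (with $c_1>0$) as the only contribution from $\cE_\rH[\fm]$, so that the strictly negative curvature of $L^{-1}K$ forces $\psi=0$. Your sign analysis, when carried through carefully with the paper's conventions (in particular, $\mu(\varphi)$ landing in $E_h(\fk^\R)$ so that $\ad(i\Lambda F_h)$ is Hermitian, and the adjoint of $\ad_\varphi$ being $-\ad_{\tau_h(\varphi)}$), does close as you claim. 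The trade-offs are worth noting: the paper's proof only uses \emph{semistability} and needs no PDE input, whereas your proof requires \emph{polystability} (to invoke existence of the solution metric), and passes through nontrivial sign conventions that, as you correctly flag, need to be pinned down exactly. On the other hand, your approach is self-contained on the real form $\rG^\R$ and does not require the detour to the complex group; it is the kind of argument that generalizes readily to higher-dimensional base manifolds, while the line-bundle-degree argument is special to curves.
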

\begin{proof}
It suffices to prove the statement for the $L$-twisted $\rG$-Higgs bundle $(\cE_\rG,\varphi)=(\cE_\rH[\rG],\varphi)$ since there is an inclusion $\HH^2(C^\bullet(\cE_\rH,\varphi))\subset \HH^2(C^\bullet(\cE_\rG,\varphi))$. Since $(\cE_\rG,\varphi)$ is semistable, any subbundle $\cV\subset\cE_\rG[\fg]$ with $\ad_\varphi(\cV)=0$ satisfies $\deg(\cV)\leq 0$ by Remark \ref{rem subbundles}. 

Suppose $0\neq\HH^2(C^\bullet(\cE_\rG,\varphi))$. By Serre duality $\HH^2(C^\bullet(\cE_\rG,\varphi))$ is isomorphic to the dual of $\HH^0$ of the complex 
\[C^{\bullet}(\cE_\rG,\varphi)^*\otimes K: ~\cE_\rG[\fg]^*\otimes L^{-1}K\xrightarrow{\ad_{\varphi}^*\otimes \Id_K}\cE_\rG[\fg]^*\otimes K.\]
The Killing form on $\fg$ identifies $\cE_\rG[\fg]^*$ with $\cE_\rG[\fg]$ and $\ad_\varphi^*$ with $-\ad_\varphi$, so the complex 
\[\cE_\rG[\fg]\otimes L^{-1}K\xrightarrow{-\ad_\varphi\otimes \Id_K}\cE_\rG[\fg]\otimes K\]
has nonzero $\HH^0$. 
Thus, there is a nonzero $s\in H^0(\cE_\rG[\fg]\otimes L^{-1}K)$ such that $-\ad_\varphi(s)=0$. 
Let $M\subset\cE_\rG[\fg]\otimes L^{-1}K$ be the holomorphic line bundle generated by $s$ and note that $\deg(M)\geq0$. 
However, $M\otimes LK^{-1}\subset\cE_\rG[\fg]$ satisfies $\ad_\varphi(M\otimes LK^{-1})=0$. So, by semistability of $(\cE_\rG,\varphi)$, 
\[0\leq \deg(M)<\deg(M\otimes LK^{-1})\leq0.\]
This contradiction implies $\HH^2(C^\bullet(\cE_\rG,\varphi))=0$.
\end{proof}

\subsection{The Hitchin map}\label{sec: Hitch fib}

A fundamental ingredient in the theory of Higgs bundles is the {\em Hitchin map} \cite{IntSystemFibration}. We briefly explain this in the setting of $L$-twisted $\rG^\R$-Higgs bundles, for a simple real Lie group $\rG^\R$; see \cite{GarciaPradaPeonRamanan-HKRsection,IntSystemFibration,DonagiGaitsgory} for more details. 

Consider the GIT quotient map $\chi:\fm\to\fm\sslash\rH$. Note that $\chi$ is $\C^*$-equivariant with respect to the standard scaling action of $\C^*$ on $\fm$ and the action of $\C^*$ on $\fm\sslash\rH$ induced by the action of $\C^*$ on the graded ring $\C[\fm]^\rH$ of $\rH$-invariant polynomial functions on $\fm$. 
Namely, if $p\in\C[\fm]^\rH$ is homogeneous, the $\C^*$-action on $\fm\sslash\rH$ is determined by $t\cdot p=t^{\deg(p)}p$. 
Let $\cL$ be the holomorphic  $\C^*$-bundle associated to  $L$ and consider the rank $r$ vector bundle $\cL[\fm\sslash\rH]$ associated to $\cL$ via the $\C^*$-action on $\fm\sslash\rH$. 
The quotient map $\chi:\fm\to\fm\sslash\rH$ defines an $\rH$-invariant map $\fm\otimes L\to\cL[\fm\sslash\rH]$. By $\rH$-invariance this defines the Hitchin map:
\begin{equation}
    \label{eq:h1}h:\cM_L(\rG^\R)\to \cB_L(\rG^\R)= H^0(\cL[\fm\sslash\rH]),\ \ \ \ h(\cE_\rH,\varphi)=\chi(\varphi),
\end{equation}
where the space $\cB_L(\rG^\R)$ is called the Hitchin base. 

Choosing a homogeneous basis $(\chi_1,\ldots,\chi_r)$ of the ring $\C[\fm]^{\rH}$ defines an isomorphism of $\fm\sslash\rH\xrightarrow{\ \cong\ }\C^r$ given by $x\mapsto (\chi_1(x),\ldots,\chi_r(x))$. If the degree $\chi_j$ is $m_j'+1$ with $m'_1<\ldots<m'_r$, then the non-negative integers $m_i'$ are the exponents of the real Lie algebra $\lieg^\R$ (see for example \cite[Proposition 4.4]{GarciaPradaPeonRamanan-HKRsection}). By definition, they are the exponents of the complex Lie algebra obtained by complexifying the maximal split subalgebra of $\fg^\R$ (if $\fg^\R$ is complex, these are its exponents appearing in Case $(1)$ of Proposition \ref{prop: magical sl2 data}). 

Any choice of such homogeneous basis $(\chi_1,\ldots,\chi_r)$ defines an isomorphism 
\begin{equation}\label{eq:isomHitchinbase}
H^0(\cL[\fm\sslash\rH])\xrightarrow{\ \cong\ } \bigoplus_{j=1}^rH^0(L^{m_j'+1}),\ \ \ \ x\mapsto(\chi_1(x),\ldots,\chi_r(x)).
\end{equation}
Using this basis, we obtain the more familiar description of the Hitchin map
\[h: \cM_L(\rG^\R)\longrightarrow\bigoplus_{j=1}^{r}H^0(L^{m'_j+1}),\ \ \ \ h(\cE_\rH,\varphi)=(\chi_1(\varphi),\ldots,\chi_r(\varphi)).\]

For complex Lie groups and $L=K$, the Hitchin map $h$ has many special features, most notably it is an algebraic completely integrable system \cite{IntSystemFibration}. The property  we will use to prove the Cayley map is closed, and which is true for arbitrary groups and twistings, is that the Hitchin map \eqref{eq:h1} is proper. 
This follows from \cite[Theorem 6.1]{NitsureHiggs} for $\rGL_n\C$ and from the fact that the moduli space $\cM_L(\rG^\R)$ admits a finite (and hence proper) map to $\cM_L(\rGL_n\C)$ for some $n$ in such a way that the Hitchin map of $\cM_L(\rG^\R)$ is the restriction of the Hitchin map in $\cM_L(\rGL_n\C)$.

\begin{proposition}
    The Hitchin map $h:\cM_L(\rG^\R)\to\cB_L(\rG^\R)$ from \eqref{eq:h1} is proper. 
 \end{proposition}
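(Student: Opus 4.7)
The plan is to reduce the statement to the classical case of $\rGL_n\C$-Higgs bundles, where properness of the Hitchin map was established by Nitsure \cite{NitsureHiggs}. I will choose a faithful complex linear representation $\rho: \rG \hookrightarrow \rGL_n\C$ (which exists since $\rG$ is a complex reductive Lie group) and transport the entire setup along $\rho$.

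Concretely, the differential $d\rho: \fg \to \fgl_n\C$ restricts to an $\rH$-equivariant linear map $\fm \to \fgl_n\C$ and induces a morphism $\Phi: \cM_L(\rG^\R) \to \cM_L(\rGL_n\C)$ sending $(\cE_\rH, \varphi)$ to the $L$-twisted $\rGL_n\C$-Higgs bundle $(\cE_\rH \times_{\rH, \rho} \rGL_n\C, d\rho(\varphi))$. A standard argument in the spirit of Ramanan (see \cite{HiggsPairsSTABILITY}), based on faithfulness of $\rho$ and reductivity of $\rH$, shows that polystability is preserved, so $\Phi$ is well-defined. Moreover, $d\rho$ descends to a morphism $\fm \sslash \rH \to \fgl_n\C \sslash \rGL_n\C$ of GIT quotients and hence induces a map $\Phi_B: \cB_L(\rG^\R) \to \cB_L(\rGL_n\C)$ fitting into the commutative square
\[
\begin{tikzcd}
\cM_L(\rG^\R) \arrow[r, "\Phi"] \arrow[d, "h"'] & \cM_L(\rGL_n\C) \arrow[d, "h_{\rGL}"] \\
\cB_L(\rG^\R) \arrow[r, "\Phi_B"'] & \cB_L(\rGL_n\C).
\end{tikzcd}
\]
Granted properness of $h_{\rGL}$ and of $\Phi$, properness of $h$ follows formally: for any compact $K \subset \cB_L(\rG^\R)$, the preimage $h^{-1}(K)$ is closed in $\cM_L(\rG^\R)$ and contained in $\Phi^{-1}(h_{\rGL}^{-1}(\Phi_B(K)))$, which is compact.

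The principal obstacle is to establish properness, in fact finiteness, of $\Phi$. Fibers of $\Phi$ parameterize $S$-equivalence classes of holomorphic reductions of structure group from $\rGL_n\C$ to $\rH$ compatible with the Higgs field; since $\rH \hookrightarrow \rGL_n\C$ is a closed embedding of reductive groups, such reductions form a finite set at each polystable point. To promote this to properness in families I would invoke the GIT construction of Schmitt \cite{schmitt_2005}, which realizes $\cM_L(\rG^\R)$ as a closed subscheme of a moduli space that is finite over $\cM_L(\rGL_n\C)$ after introducing suitable auxiliary data. The delicate step lies on the strictly polystable locus, where several non-isomorphic $\rG^\R$-reductions of a fixed polystable $\rGL_n\C$-Higgs bundle may coexist; here Proposition \ref{prop JordanHolder} permits one to handle this by induction on the dimension of a Levi factor.
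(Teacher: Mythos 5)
Your proposal takes essentially the same route as the paper: reduce to Nitsure's properness for $\rGL_n\C$ via a faithful representation $\rG\hookrightarrow\rGL_n\C$, observe the induced map $\cM_L(\rG^\R)\to\cM_L(\rGL_n\C)$ is finite (hence proper) and compatible with the Hitchin maps, and then conclude by the standard formal argument. The paper states the finiteness of the extension-of-structure-group map as a known fact (implicitly referring to the GIT construction), while you sketch a justification for it, but the logic and the key ingredients are identical.
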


\subsection{The Hitchin--Kobayashi correspondence}\label{sec hitchin kob} Finally, we consider an equation for a special metric associated to general $L$-twisted polystable $(\rG,V)$-Higgs pairs.  
Let $\rG$ be a complex reductive Lie group and fix a maximal compact subgroup $\rK^\R\subset\rG$ and a $\rK^\R$-invariant Hermitian inner-product on $V$ so that $d\rho:\fk^\R\to\fu(V)$ is the associated unitary representation.
Let $(\cE_\rG,\varphi)$ be an $L$-twisted $(\rG,V)$-Higgs pair. Fix a metric $h_L$ on the line bundle $L$. 
A metric on $\cE_\rG$ is by definition a reduction of structure group $h$ of $\cE_\rG$ to $\rK^\R$. Fix a metric $h$ and let $E_h\subset \cE_\rG$ be the associated $\rK^\R$-bundle. 
The Hermitian inner-product on $V$ and the metric $h_L$ on $L$ induce a Hermitian metric $h\otimes h_L$ on the bundle $E_h[V]\otimes L$. 
For $\varphi\in H^0(\cE_\rG[V]\otimes L)$ we can make sense of the following expression:
\begin{equation}
    \label{eq moment map}
\mu(\varphi)=d\rho^*\Big(-\frac{i}{2}\varphi\otimes \varphi^*_{h\otimes h_L}\Big),
\end{equation}
where we identify $i\varphi\otimes \varphi^{*_{h\otimes h_L}}$ with a section of $E_h(\fu(V))^*$. Hence $\mu(\varphi)$ defines a section of $E_h(\fk^\R)^*$. Using the nondegenerate pairing, we view $\mu(\varphi)$ as a section of $E_h(\fk^\R)$.
\begin{remark}\label{rem moment map}
    The action of $\rK^\R$ on $V$ is Hamiltonian and the expression for $\mu$ in \eqref{eq moment map} is a bundle version of the moment map for the action.
\end{remark}
 Now fix a K\"ahler form $\omega$ on $X$. Given a metric $h$ on $\cE_\rG$ there is a unique connection (the Chern connection) which is compatible with the holomorphic structure and the metric reduction. The \emph{Hitchin--Kobayashi correspondence} states the following.
 \begin{theorem}\label{hk-twisted-higgs}\cite[Theorem 2.24]{HiggsPairsSTABILITY}
An $L$-twisted $(\rG,V)$-Higgs pair $(\cE_\rG,\varphi)$ is $\alpha$-polystable if and only if there is a metric $h$ on $\cE_\rG$ 
solving
\begin{equation} \label{eq:Hitchin-Kobayashi}
F_h+\mu(\varphi)\omega=-i \alpha\omega,
\end{equation}
where $F_h\in\Omega^2(E_h[\fh^\R])$ denotes the curvature of the
Chern connection of $h$.
\end{theorem}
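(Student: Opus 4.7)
The plan is to establish the two implications separately, with the difficult direction being the existence of a solution to \eqref{eq:Hitchin-Kobayashi} from the polystability assumption. The easy direction is an integration-by-parts argument: given a metric $h$ satisfying \eqref{eq:Hitchin-Kobayashi}, and given $s\in i\fk^\R$ together with a holomorphic reduction $\cE_{\rP_s}\subset\cE_\rG$ with $\varphi\in H^0(\cE_{\rP_s}[V_s]\otimes L)$, one produces a smooth section $\tilde s$ of $\cE_{\rP_s}[\fp_s]$ (or one of its associated bundles) encoding $s$. Pairing \eqref{eq:Hitchin-Kobayashi} with $\tilde s$ via $\langle\cdot,\cdot\rangle$ and integrating over $X$, the curvature term yields $-2\pi\deg(\cE_{\rP_s})$ by Chern--Weil theory applied to the character $\chi_s$, while the $\mu(\varphi)$ term is non-positive because $\mu$ is (up to sign) the moment map for the Hamiltonian action of $\rK^\R$ on $V$ and $\varphi$ lies in the subspace $V_s$. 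The $\alpha$ term contributes $-\langle\alpha,s\rangle\cdot 2\pi$, and rearranging gives $\deg(\cE_{\rP_s})\geq\langle\alpha,s\rangle$, which is $\alpha$-semistability; $\alpha$-polystability is extracted by analyzing the case of equality, which forces the Chern connection of $h$ to respect a further reduction to $\rL_s$ and $\varphi$ to lie in $H^0(\cE_{\rL_s}[V_s^0]\otimes L)$.

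The hard direction, from $\alpha$-polystability to the existence of $h$, is the main obstacle and requires infinite-dimensional analysis. The strategy I would follow is the variational approach via the Donaldson (or Yang--Mills--Higgs) functional
\[\mathrm{YMH}(h)=\int_X\bigl|F_h+\mu(\varphi)\omega+i\alpha\omega\bigr|_h^2~,\]
defined on the space of metrics on $\cE_\rG$, or equivalently on the orbit of a fixed metric $h_0$ under the complex gauge group. The first step is to develop a Donaldson-type functional $\mathcal{M}(h_0,h)$ whose gradient flow is the Hitchin--Kobayashi heat flow and whose critical points solve \eqref{eq:Hitchin-Kobayashi}. One then shows that $\mathcal{M}$ enjoys a cocycle (Donaldson-type) property, is convex along geodesics in the space of metrics, and has gradient controlled by the left-hand side of \eqref{eq:Hitchin-Kobayashi}.

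The core analytic step is a Uhlenbeck--Yau-type argument: assuming no solution exists, one extracts from a minimizing sequence (equivalently, from the heat flow as $t\to\infty$) a weak limit that fails to be a smooth metric, and shows that the resulting $L^p_1$ endomorphism has eigenvalues whose spectral projections define a weakly holomorphic reduction. A regularity theorem (adapted from Uhlenbeck--Yau) promotes this to a genuine holomorphic reduction $\cE_{\rP_s}\subset\cE_\rG$ compatible with $\varphi$. A direct computation using the same integration-by-parts formula as in the easy direction produces $\deg(\cE_{\rP_s})<\langle\alpha,s\rangle$, contradicting $\alpha$-semistability. The upgrade from semistability to polystability (producing an honest solution rather than just an approximate one) requires a separate argument using Proposition~\ref{prop JordanHolder}: the Jordan--H\"older reduction to a stable $(\rL_s,V_s^0)$-Higgs pair gives a solution on the Levi, and this is extended back to $\cE_\rG$ because the associated vanishing theorem implies $\varphi$ is compatible with the reduction. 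The hardest technical point will be the $L^p$-regularity for the weak limit, which requires careful adaptation of Uhlenbeck--Yau to the twisted pair setting; once that is in hand, the rest follows by now-standard arguments from \cite{HiggsPairsSTABILITY} and its predecessors.
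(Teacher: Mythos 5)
This theorem is stated with a citation to \cite[Theorem~2.24]{HiggsPairsSTABILITY}; the paper you are reading does not prove it, so there is no proof in the source to compare against. Your sketch reproduces the standard broad architecture of Hitchin--Kobayashi proofs and is consistent with what the cited reference does: the ``easy'' direction by pairing the equation with the Chern--Weil representative of $\chi_s$ and using the sign of $\langle\mu(\varphi),s\rangle$ on $V_s$; the ``hard'' direction via a Donaldson-type integral-of-the-moment-map functional, a properness/minimization argument, and an Uhlenbeck--Yau regularity step to turn the weak $L^2_1$ limit of a non-convergent minimizing sequence into a genuine destabilizing parabolic reduction; then Jordan--H\"older to pass from stable to polystable.

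Two small imprecisions worth flagging. First, your concluding sentence attributes the compatibility of $\varphi$ with the Levi reduction to ``the associated vanishing theorem''; in fact this is built directly into the definition of $\alpha$-polystability — the required further reduction $\cE_{\rL_s}\subset\cE_{\rP_s}$ with $\varphi\in H^0(\cE_{\rL_s}[V_s^0]\otimes L)$ — and no separate vanishing argument is needed. Second, the Uhlenbeck--Yau/minimization step produces a solution directly only in the \emph{stable} case, where one proves properness of the Donaldson functional; for strictly polystable objects the flow converges to a solution for the associated graded object, and one must invoke Jordan--H\"older to pull a solution back to the original pair. You clearly know this, but your phrasing (``the upgrade from semistability to polystability'') could mislead: semistable-but-not-polystable pairs do not admit solutions of \eqref{eq:Hitchin-Kobayashi}, and the argument does not claim otherwise — it shows existence for stable pairs and then extends to polystable pairs via the Levi reduction.
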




\begin{remark}\label{rmk:Kahler form and parameter}
The existence of solutions $h$ of \eqref{eq:Hitchin-Kobayashi} is independent of the choice of $h_L$.
Also, equation \eqref{eq:Hitchin-Kobayashi} implies that $\alpha$ depends on the fixed K\"ahler form $\omega$. If one chooses a different K\"ahler form $\omega'$, then a solution of \eqref{eq:Hitchin-Kobayashi} will still be a solution for the corresponding equation with $\omega'$, for a different $\alpha'$. This means that, to check for the existence of solutions of \eqref{eq:Hitchin-Kobayashi}, we can fix any $\omega$, and always work with it. 
\end{remark}

When now specialize to the case of Higgs bundles and Higgs pairs arising from $\Z/n\Z$-gradings of $\fg$.
Let $\tau:\fg\to\fg$ be the compact real-form associated to $\rK^\R\subset\rG$ and let $\langle\cdot,\cdot \rangle$ be a nondegenerate $\rG$-invariant complex bilinear form. The form 
$\langle x,-\tau(y)\rangle$ is a $\rK^\R$-invariant positive definite Hermitian inner product on $\fg$. In this case, the moment map $\mu:\fg\to(\fk^\R)^*\to\fk^\R$ is given by $\mu(x)=[x,-\tau(x)]$. 

Given a metric $h$ on $\cE_\rG$ and a metric $h_L$ on $L$, $\tau$ defines an involution $\tau_h:E_h(\fg)\otimes L \to E_h(\fg)\otimes L$.
Thus, for $L$-twisted $\rG$-Higgs bundles, equation \eqref{eq:Hitchin-Kobayashi} is 
\[ F_h+[\varphi,-\tau_{h}(\varphi)]\omega=-i\alpha\omega.\]
When $L=K$ we can view the Higgs field as a $(1,0)$-form valued in $E_h(\fg)$. In this case, we can use $\tau$ and conjugation on $1$-forms to define the involution $\tau_h:\Omega^{1,0}(E_h(\fg))\to\Omega^{0,1}(E_h(\fg))$, and solving \eqref{eq:Hitchin-Kobayashi} is equivalent to solving 
\begin{equation}
    \label{eq Hitchin eq} F_h+[\varphi,-\tau_h(\varphi)]=-i\alpha.
\end{equation}

\begin{remark}
     When $\alpha=0$, equation \eqref{eq Hitchin eq} is usually referred to as the Hitchin equations or the self-duality equations. In this case, the 
Hitchin--Kobayashi correspondence was proven by Hitchin for $\rG=\rSL_2\C$ \cite{selfduality} and by Simpson in general \cite{SimpsonVHS}.
 \end{remark} 
 \begin{remark}
     The uniformizing $\rPSL_2\R$-Higgs bundle $(\cE_\rT,\varphi)$ from Example \ref{ex uniformizing Higgs} (and any lift of it to $\rSL_2\R$) is $0$-stable. Since $\cE_\rT$ is the frame bundle of $K^{-1}$, any metric on $\cE_\rT$ defines a metric on the surface; the metric solving \eqref{eq Hitchin eq} has constant curvature \cite{selfduality}.
 \end{remark}

Finally, suppose $\hat\rG\subset\rG$ is a $\tau$-invariant subgroup with maximal compact subgroup $\hat\rK^\R=\hat\rG\cap\rK^\R$ and $V\subset\fg$ is a $\hat\rG$-invariant vector subspace of $\fg$ such that $\langle \cdot, \cdot\rangle|_V$ is nondegenerate. 
In this case, the moment map equations for the action of $\hat\rK^\R$ on $V$ is given by orthogonally projecting $[x,-\tau(x)]$ onto the Lie algebra $\hat\fk^\R\subset\fk^\R$. 
For example, the quiver bundle equations of \cite{KHCquiversvortices} are an example of this. 
An important special case of this occurs when the orthogonal projection $\fk^\R\to\hat\fk^\R$ does not loose any information, i.e., when $[V,-\tau(V)]\subset\hat\fk^\R$. In this case, when $\alpha=0$ a solution to $(\hat\rG,V)$-Higgs pair equations  also solves the $\rG$-Higgs bundle equations. Thus, if $(\cE_{\hat\rG},\varphi)$ is an $0$-polystable $L$-twisted $(\hat\rG,V)$-Higgs pair, then the associated $\rG$-Higgs bundle $(\cE_{\hat\rG}(\rG),\varphi)$ obtained by extending the structure group is polystable as a $\rG$-Higgs bundle.

For example, consider a $\Z/n\Z$-grading $\fg=\bigoplus_{j\in\Z/n\Z}\hat\fg_j$, i.e., $[\hat\fg_j,\hat\fg_k]\subset\hat\fg_{j+k\mod n}$. 
The connected subgroup $\hat\rG_0\subset\rG$ with Lie algebra $\hat\fg_0$ acts on each summand $\hat\fg_j$. The compact involution $\tau:\fg\to\fg$ can be chosen so that $\tau(\hat\fg_j)=\hat\fg_{-j\mod n}$. By the above discussion, we have the following proposition which was first observed by Simpson \cite[Proposition 6.3]{KatzMiddleInvCyclicHiggs} in the context of vector bundles. 
\begin{proposition}
    \label{prop: fixedpoint} Let $(\cE_{\hat\rG_0},\varphi)$ be a $0$-polystable $L$-twisted $(\hat\rG_0,\hat\fg_1)$-Higgs pair. Then the $L$-twisted $\rG$-Higgs bundle $(\cE_{\hat\rG_0}[\rG],\varphi)$ is polystable as a Higgs bundle.
\end{proposition}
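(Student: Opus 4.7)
The plan is to prove this via the Hitchin--Kobayashi correspondence (Theorem \ref{hk-twisted-higgs}), exploiting the compatibility observation made in the paragraph preceding the proposition, namely that when $[V,-\tau(V)]\subset\hat\fk^\R$ holds, any solution of the $(\hat\rG,V)$-Higgs pair equations automatically solves the ambient $\rG$-Higgs bundle equations.

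First, I would verify that the orthogonality condition $[\hat\fg_1,-\tau(\hat\fg_1)]\subset\hat\fg_0\cap\fk^\R$ holds in our setting. Since $\tau$ is chosen so that $\tau(\hat\fg_1)\subset\hat\fg_{-1\bmod n}$ and since $[\hat\fg_1,\hat\fg_{-1}]\subset\hat\fg_0$ by the $\Z/n\Z$-grading, we see that
\[
[\varphi,-\tau_h(\varphi)]\in E_h\big[\hat\fg_0\cap\fk^\R\big]=E_h[\hat\fk_0^\R],
\]
where $\hat\rK_0^\R=\hat\rG_0\cap\rK^\R$ is a maximal compact of $\hat\rG_0$. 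In particular, the $\rG$-moment map $\mu_\rG(\varphi)=[\varphi,-\tau_h(\varphi)]$ and the $(\hat\rG_0,\hat\fg_1)$-moment map $\mu_{\hat\rG_0}(\varphi)$ are equal as sections of $E_h[\hat\fk_0^\R]\subset E_h[\fk^\R]$.

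Next, by Theorem \ref{hk-twisted-higgs} applied to the polystable $L$-twisted $(\hat\rG_0,\hat\fg_1)$-Higgs pair $(\cE_{\hat\rG_0},\varphi)$, there exists a reduction $h$ of $\cE_{\hat\rG_0}$ to $\hat\rK_0^\R$ solving
\[
F_h+\mu_{\hat\rG_0}(\varphi)\,\omega=0.
\]
Extending structure group along $\hat\rK_0^\R\hookrightarrow\rK^\R$ produces a reduction $\tilde h$ of the associated $\rG$-bundle $\cE_{\hat\rG_0}[\rG]$ to $\rK^\R$. By naturality of the Chern connection under extension of structure group, the curvature of $\tilde h$ is the image of $F_h$ under the induced map $E_h[\hat\fk_0^\R]\hookrightarrow E_{\tilde h}[\fk^\R]$. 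Combined with the moment-map identification from the previous step, this means $\tilde h$ satisfies
\[
F_{\tilde h}+[\varphi,-\tau_{\tilde h}(\varphi)]\,\omega=0,
\]
i.e.\ the Hitchin--Kobayashi equation \eqref{eq:Hitchin-Kobayashi} with $\alpha=0$ for the $\rG$-Higgs bundle $(\cE_{\hat\rG_0}[\rG],\varphi)$. Invoking Theorem \ref{hk-twisted-higgs} in the reverse direction, this forces polystability of $(\cE_{\hat\rG_0}[\rG],\varphi)$.

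The only real content is the verification that $\mu_\rG$ and $\mu_{\hat\rG_0}$ agree on the Higgs field, which is immediate from the $\Z/n\Z$-grading together with the $\tau$-compatibility $\tau(\hat\fg_j)=\hat\fg_{-j}$; everything else is formal. I do not expect any serious obstacle: the step that requires the most care is ensuring that the curvature and moment map computations are truly identical under the extension of structure group $\hat\rK_0^\R\hookrightarrow\rK^\R$, but since both operations are defined fibrewise and the inclusion of Lie algebras $\hat\fk_0^\R\hookrightarrow\fk^\R$ intertwines the relevant brackets, no correction terms appear.
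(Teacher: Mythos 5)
Your proof is correct and follows the same route the paper intends: the paper does not give an explicit proof, but the paragraph immediately preceding the proposition lays out exactly the mechanism you develop, namely that when $[V,-\tau(V)]\subset\hat\fk^\R$, a Hitchin--Kobayashi solution for the $(\hat\rG,V)$-Higgs pair is already a solution of the ambient $\rG$-Higgs bundle equation, and then one invokes the correspondence in both directions. You correctly supply the only nontrivial verification, namely that $[\hat\fg_1,-\tau(\hat\fg_1)]\subset\hat\fg_0\cap\fk^\R=\hat\fk_0^\R$ by the $\Z/n\Z$-grading and the compatibility $\tau(\hat\fg_j)=\hat\fg_{-j\bmod n}$, so that the orthogonal projection defining the Higgs-pair moment map is the identity on $\mu_\rG(\varphi)$; the remaining steps (naturality of the Chern connection under extension of structure group $\hat\rK_0^\R\hookrightarrow\rK^\R$, no correction terms in the curvature) are formal, exactly as you say.
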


\section{The generalized Cayley correspondence}\label{sec: open and closed}

In this section we prove that the Cayley map $\Psi_e$ from \eqref{eq Cayley map config} descends to an injective map on moduli spaces which is open and closed, thus proving Theorem B from the introduction. 
For this section, $\{f,h,e\}$ will be a magical $\fsl_2$-triple,
$\rS\subset\rG$ will be the associated connected subgroup,
$\rC\subset\rG$ will be its centralizer and $\rG^\R\subset\rG$ will be
the associated canonical real form. Recall that $\rH\subset\rG$ is the
complexification of the maximal compact $\rH^\R\subset\rG^\R$.
To simplify notation, throughout this section we denote $\rC\cap\rH$
simply by $\rC$.\footnote{Note that in fact $\rC\cap\rH=\rC$ except
  for the split real forms $\rSL_n\R$ and $\rE^6_6$.}

\subsection{Generalized Cayley correspondence and direct consequences}

Recall from \eqref{eq Cayley map config} that the Cayley map is given by 
 \[\widehat\Psi_e:\xymatrix@R=0em{\cH_{K^{m_c+1}}(\tilrG^\R)\times \prod_{j=1}^{\rkfge}\cH_{K^{l_j+1}}(\R^+)\ar[r]&\cH(\rG^\R),\\
 ((\cE_{\rC},\tilde\psi_{m_c}),q_1,\ldots,q_{\rkfge})\ar@{|->}[r]&(\cE_{\rC}\star\cE_{\rT}[\rH],f+\tilde\phi_{m_c}+\sum_{j=1}^{\rkfge}q_{j})}\]
 where $(\cE_{\rT},f)$ is the uniformizing $\rP\rSL_2\R$ (resp.\ $\rSL_2\R$) Higgs bundle if $\rS\cong\rP\rSL_2\C$ (resp.\ $\rS\cong\rSL_2\C)$.
There is a natural notion of stability on the domain of the Cayley map since it is a product of Higgs bundle spaces. Moreover, every $q_j\in H^0(K^{l_j+1})=\cH_{K^{l_j+1}}(\R^+)$ is polystable. Hence a point
\[((\cE_{\rC},\tilde\psi_{m_c}),q_1,\ldots,q_{\rkfge})\in \cH_{K^{m_c+1}}(\tilrG^\R)\times \prod_{j=1}^{\rkfge}\cH_{K^{l_j+1}}(\R^+)\]
is polystable if and only if $(\cE_{\rC},\tilde\psi_{m_c})\in\cH_{K^{m_c+1}}(\tilrG^\R)$ is polystable.

\begin{theorem}\label{thm Cayley map open closed}
The Cayley map $\hat\Psi_e$ descends to an injective map on moduli spaces,
\begin{equation}\label{eq:Cayleymoduli}
\Psi_e:\cM_{K^{m_c+1}}(\tilrG^\R)\times \prod_{j=1}^{\rkfge}\cM_{K^{l_j+1}}(\R^+)\longrightarrow\cM(\rG^\R).
\end{equation}
which is open and closed. 
\end{theorem}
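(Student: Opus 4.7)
The plan is to establish four properties in turn: (i) $\widehat\Psi_e$ sends polystable objects to polystable objects and is equivariant with respect to gauge group actions, so it descends to moduli spaces; (ii) the induced map $\Psi_e$ is injective; (iii) $\Psi_e$ is open; (iv) $\Psi_e$ is closed.

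First I would handle polystability using the Hitchin--Kobayashi correspondence (Theorem \ref{hk-twisted-higgs}). Starting from a polystable input $((\cE_\rC,\tilde\psi_{m_c}),q_{l_1+1},\ldots)$, the first factor admits a metric solving \eqref{eq:Hitchin-Kobayashi} for $\rG^\R_{0,ss}$, while the uniformizing bundle $(\cE_\rT,f)$ carries a constant-curvature metric. The strategy is to combine these into a metric on $(\cE_\rC\star\cE_\rT)[\rH]$ that solves the $\rG^\R$-Hitchin equation \eqref{eq Hitchin eq} for the image. This should follow the principle of Proposition \ref{prop: fixedpoint} applied to the $\ad_h$-weight grading $\fg=\bigoplus\fg_{2j}$, once one checks that the components $f$, $\tilde\phi_{m_c}$, and the $q_{l_j+1}$'s lie in weight subspaces centralized by $\rC$ (using Lemma \ref{lemma:Ccentralizesg(e)} for the differentials living in $\fg(e)$), so that the moment map contributions decouple. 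Strictly polystable objects require a separate reduction via Proposition \ref{prop JordanHolder}. Injectivity on moduli then follows from Proposition \ref{prop injective Cayley} (gauge-orbit injectivity) together with Corollary \ref{Cor same automorphism groups}, which ensures matching automorphism groups so that closed orbits map to closed orbits.

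For openness, I would use the deformation-theoretic local model \eqref{eq localmodel nhgb}. Near a stable point in the image, a neighborhood in $\cM(\rG^\R)$ is modeled on $\HH^1(C^\bullet)$ modulo the (finite) stabilizer. A dimension count using the $\fsl_2\C$-module decomposition of $\fg$ from Proposition \ref{prop: magical sl2 data} and the bundle isomorphism $(\cE_\rC\star\cE_\rT)[V_{2m_j}]\otimes K \cong \cE_\rC[Z_{2m_j}]\otimes K^{m_j+1}$ should show that the domain and image have equal dimension generically. Combined with injectivity of the induced tangent map (which follows from (ii) and the unobstructedness given by $\HH^2$-vanishing-type arguments à la Proposition \ref{prop H2 vanishes} after accounting for the $K^{m_c+1}$-twist), this yields a local isomorphism and hence openness at stable points; openness at strictly polystable points follows by a standard density argument since polystable-but-not-stable orbits lie in the closure of the stable locus.

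For closedness, I would exploit the properness of the Hitchin map. Given a sequence $\Psi_e(x_n)$ converging in $\cM(\rG^\R)$, the Hitchin invariants $h(\Psi_e(x_n))$ converge, and Kostant's theorem on the restriction of $\rG$-invariant polynomials to the Slodowy slice $f+V$ identifies the invariants corresponding to the exponents of $\fg(e)$ with (scalar multiples of) the differentials $q_{l_j+1}^{(n)}$, which therefore converge. The Hitchin invariants attached to the $\rG^\R_{0,ss}$-factor of the Cayley group also converge, so by properness of the Hitchin map on $\cM_{K^{m_c+1}}(\rG^\R_{0,ss})$, a subsequence of $(\cE_\rC^{(n)},\tilde\psi_{m_c}^{(n)})$ converges; its image under $\Psi_e$ must then equal the original limit by continuity. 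The main obstacle will be the polystability step: verifying that the combined metric really solves the $\rG^\R$-Hitchin equation requires checking the moment map contributions from $f$, $\tilde\phi_{m_c}$, and the $q_{l_j+1}$'s add up correctly in each of the cases of Theorem \ref{thm: classification weighted dynkin}, and for Case~(4) this will lean heavily on the detailed structure established in \S\ref{sec lie theory for except}.
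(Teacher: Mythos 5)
Your overall plan (well-definedness, injectivity, open, closed) matches the paper's structure, but there are genuine gaps in the polystability step, and the closedness step is missing the key technical ingredient.

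\textbf{Polystability.} The paper does not combine metrics to directly solve the $\rG^\R$-Hitchin equation for the full Higgs field $f+\tilde\phi_{m_c}+\sum q_{l_j+1}$, and in general this cannot work: the moment map has cross-terms such as $[f,-\tau(\tilde\phi_{m_c})]$ and $[\tilde\phi_{m_c},-\tau(q_{l_j+1})]$ that a product metric does not kill, and the solution metric on the $K^{m_c+1}$-twisted $\rG^\R_{0,ss}$-Higgs bundle depends on a choice of metric on $K^{m_c+1}$, which further entangles the pieces. The paper's actual argument (Theorem \ref{thm Cayley map moduli spaces}) first proves Lemma \ref{lem no differentials stability}, the special case $q_{l_j+1}=0$, and then deduces the general case by a conjugation trick: stability is preserved by the $\C^*$-action and the gauge group, and is an open condition, so one scales $\tilde\psi_{m_c}$ and then conjugates by a gauge transformation of $\cE_\rT$ to introduce the $q_{l_j+1}$'s with small coefficients. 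You miss this reduction entirely. Moreover, even in the $q_{l_j+1}=0$ case, the combined metric $h_\rC\star h_{\rT}$ does not solve the $\alpha=0$ equation; in Case (4) the paper shows it solves an $\alpha$-equation for $\alpha=\lambda h'\neq0$, and then has to use a finite-dimensional GIT lemma about parabolic degrees (from \cite[Lemma 5.6]{BGRmaximalToledo}) together with Proposition \ref{prop: fixedpoint} for the $\Z/4\Z$-grading $\hat\fg_\bullet$, \emph{not} the $\ad_h$-weight grading $\fg_{2j}$ as you propose. Your appeal to ``moment map contributions decouple'' is not substantiated and is where the real work happens.

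\textbf{Openness.} The paper does not do a dimension count; it constructs a short exact sequence of deformation complexes $0\to C^\bullet_\cC\to C^\bullet_\cH\to C^\bullet_\cH/C^\bullet_\cC\to 0$ and shows the quotient has trivial hypercohomology (Proposition \ref{prop:isom in hypercohomo}), because the map $\beta:\ad_f(\fm)\to\ad_f^2(\fm)$ in the quotient is an isomorphism by Corollary \ref{cor mag implies even and injmap h->m}. This gives a canonical isomorphism $\HH^1(C^\bullet_\cC)\cong\HH^1(C^\bullet_\cH)$ at every polystable point, including strictly polystable ones, so there is no need for your ``density argument'' (which is vague and does not obviously handle the $\Aut$-quotient in the local model). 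Your dimension-count plus tangent-map approach could potentially be made to work, but it is strictly weaker and you would still need to explain how to handle strictly polystable orbits where the local model involves the GIT quotient by a nontrivial reductive group.

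\textbf{Closedness.} Your idea is the right one in spirit (compare the Hitchin maps on domain and target), but it rests on an unproved assertion: that the induced base map $\Gamma_e:\cB_\cC\to\cB(\rG^\R)$ is proper. For a \emph{principal} $\fsl_2$-triple this is Kostant's theorem (the Kostant section is an isomorphism), but for the other magical triples the restriction $\chi_e$ of the adjoint quotient to the Slodowy slice is not an isomorphism onto $\fm\sslash\rH$; the paper instead proves (Proposition \ref{prop:faithflat,surj}, via miracle flatness and a $\C^*$-equivariance argument) that $\gamma_e:Z_\fm\sslash\rC\to\fm\sslash\rH$ is faithfully flat with finite fibers, hence proper. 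Without establishing this, the convergence of $h(\Psi_e(x_n))$ does not let you conclude that $h_\cC(x_n)$ converges, so your properness of the Hitchin map on $\cM_{K^{m_c+1}}(\rG^\R_{0,ss})$ has nothing to bite on.
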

We also refer to $\Psi_e$ as the \emph{Cayley map}. 

\begin{corollary}
    The image of the Cayley map $\Psi_e$ is a union of connected components of $\cM(\rG^\R)$ isomorphic to $\cM_{K^{m_c+1}}(\tilrG^\R)\times \prod_{j=1}^{\rkfge}\cM_{K^{l_j+1}}(\R^+)$. Every $\rG^\R$-Higgs bundle $(\cE_\rH,\varphi)$ in the image of the Cayley map has nowhere vanishing Higgs field $\varphi$.
\end{corollary}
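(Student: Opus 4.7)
The proof breaks into four steps: (a) $\hat\Psi_e$ preserves polystability, (b) $\Psi_e$ is well-defined and injective on moduli, (c) the image is open, and (d) the image is closed. From these the theorem follows, and the corollary is immediate: (c) and (d) together imply $\operatorname{Im}(\Psi_e)$ is a union of connected components, and combined with injectivity plus continuity of $\hat\Psi_e$ we get a homeomorphism onto its image. The nowhere-vanishing of $\varphi$ is an immediate consequence of the construction, since $f$ is a nonzero constant section of $(\cE_\rC\star\cE_\rT)[\langle f\rangle]\otimes K\cong\cO$, a direct summand of the Higgs field at every point.

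For (a), the approach is via the Hitchin--Kobayashi correspondence (Theorem \ref{hk-twisted-higgs}). Polystability of $(\cE_\rC,\tilde\psi_{m_c})$ produces a metric $h_\rC$ solving the $K^{m_c+1}$-twisted HK equation for the Cayley group, while the uniformizing Higgs bundle $(\cE_\rT,f)$ carries the canonical constant-curvature Fuchsian metric $h_\rT$. These combine to a metric on $\cE_\rC\star\cE_\rT[\rH]$ which I claim solves \eqref{eq Hitchin eq} for the image $\rG^\R$-Higgs bundle. The verification amounts to expanding $[\varphi,-\tau_h(\varphi)]$ for $\varphi = f+\tilde\phi_{m_c}+\sum_jq_{l_j+1}$: the diagonal terms reproduce the moment maps of the two factors, while cross-terms vanish or cancel by combining the magical bracket identity (Proposition \ref{prop: bracket relation on g0}), the $W_0$-invariance of graded intersections (Proposition \ref{prop W_0 preserves graded intersections}), and the fact that $\rC$ centralizes $\fg(e)$ (Lemma \ref{lemma:Ccentralizesg(e)}); this is the magical-triple analogue of Proposition \ref{prop: fixedpoint}. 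Step (b) is essentially done: Proposition \ref{prop injective Cayley} gives injectivity on gauge orbits, and Corollary \ref{Cor same automorphism groups} matches automorphism groups, so $\Psi_e$ descends to a well-defined injection on moduli.

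For (c), the plan is to compare deformation complexes: the complex $C^\bullet$ at a point of the image decomposes under the $\ad_h$-grading and the $\fsl_2$-isotypic decomposition of $\fg$, and $\HH^1$ splits into summands matching the tangent space of the domain at the preimage. Combined with the $\HH^0$ identification from Corollary \ref{Cor same automorphism groups}, the Kuranishi local model \eqref{eq localmodel nhgb} yields a local homeomorphism, hence openness. Step (d), closedness, is the main obstacle. My plan is to exploit the properness of the Hitchin map from Section \ref{sec: Hitch fib}: the composition $h\circ\Psi_e$ factors through a closed affine subspace $\cB_e\subset\cB(\rG^\R)$ prescribed by the Cayley combinatorics (the $q_{l_j+1}$'s together with the Hitchin image of $\cM_{K^{m_c+1}}(\rG_{0,ss}^\R)$), so if $\Psi_e(x_n)\to y$, then $h(y)\in\cB_e$, and properness supplies a subsequential limit of the $x_n$ in the domain whose $\Psi_e$-image is $y$. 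The delicate point is recovering the $\rC$-reduction of the limit Higgs bundle; this is where compactness of $\rC^\R$ in $\rG^\R$ (Proposition \ref{prop: magical real nilpotent properties}, Remark \ref{remark compact centralizer})---a defining feature of magical $\fsl_2$-triples---and the polystability of the limit combine to produce the required structure. Once (a)--(d) are assembled, the theorem and corollary follow.
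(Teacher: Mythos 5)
Your overall structure reproduces that of Theorem~\ref{thm Cayley map open closed}, while the paper treats this corollary as an immediate consequence of that theorem: openness and closedness of $\Psi_e$ make the image a union of components, injectivity together with the local homeomorphism established in Proposition~\ref{prop:open} (via Proposition~\ref{prop:isom in hypercohomo} and Corollary~\ref{Cor same automorphism groups}) gives the isomorphism, and your observation that $f$ is a nowhere-zero section of $(\cE_\rC\star\cE_\rT)[\langle f\rangle]\otimes K\cong\cO$, a direct summand of $\cE_\rH[\fm]\otimes K$, correctly handles the last sentence. Your sketches of (b), (c), (d) match the paper in outline; (d) is slightly garbled — the paper argues by contraposition using the commutative square with $\Gamma_e$ and properness of both Hitchin maps, not by recovering a $\rC$-reduction from a limit.

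The real problem is step (a). Combining the solution metric $h_\rC$ for $(\cE_\rC,\tilde\psi_{m_c})$ with the Fuchsian metric $h_\rT$ does \emph{not} solve~\eqref{eq Hitchin eq} for the image Higgs bundle. First, the cross terms do not vanish in general: in Case (4) one has $[\tilde\phi_3,-\tau(f)]\in\fg_8\neq0$, so there is a genuine contribution living in the $\fg_{\pm8}$-part of $\fh$ that nothing in your list of ingredients cancels. Second, even when the cross terms vanish by weight reasons (e.g.\ Case (2), where the grading has only $\fg_{-2}\oplus\fg_0\oplus\fg_2$), the diagonal term is $[\tilde\phi_{m_c},-\tau(\tilde\phi_{m_c})]$, while $h_\rC$ was chosen to balance $[\tilde\psi_{m_c},-\tau(\tilde\psi_{m_c})]$ with $\tilde\psi_{m_c}=\ad_f^{m_c}(\tilde\phi_{m_c})$; the map $\ad_f^{m_c}$ is $\rC$-equivariant but is not a moment-map intertwiner between the $(\rC,Z_{2m_c})$-pair and the $(\rH,V_{2m_c})$-pair, so the two moment maps do not coincide. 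Third, once the differentials $q_{l_j+1}$ are nonzero, terms such as $[q_{l_j+1},-\tau(f)]$ and $[q_{l_j+1},-\tau(q_{l_k+1})]$ appear and the solution metric becomes of Toda type; the Fuchsian metric on $\cE_\rT$ is no longer correct, as already visible in the Hitchin section itself. The paper's actual route to Lemma~\ref{lem no differentials stability} is stability-theoretic: $\alpha$-(poly)stability with a shift $\alpha=h/2$ and finite-dimensional GIT lemmas (Case (2), from \cite{BGRmaximalToledo}), a $\Z/n\Z$-grading and Proposition~\ref{prop: fixedpoint} (Cases (3), (4)), with the $\C^*$-action and gauge-transformation trick in Theorem~\ref{thm Cayley map moduli spaces} used to pass from vanishing to nonzero differentials. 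Your appeal to Propositions~\ref{prop: bracket relation on g0} and~\ref{prop W_0 preserves graded intersections} and Lemma~\ref{lemma:Ccentralizesg(e)} does not substitute for these arguments.
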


\begin{definition}\label{cayleycomp}We refer to the connected components in the image of the Cayley map as the \emph{Cayley components in $\cM(\rG^\R)$}.
\end{definition}

\begin{remark}
For Case (2) of Theorem \ref{thm: classification weighted dynkin}, the Cayley map generalizes the Cayley correspondence of \cite{BGRmaximalToledo,HiggsbundlesSP2nR,sp4GothenConnComp} for Higgs bundles for Hermitian groups of tube type with maximal Toledo invariant. As a result, we refer to the isomorphism defined by the Cayley map as the generalized Cayley correspondence.
For Case (1) of Theorem \ref{thm: classification weighted dynkin}, the Cayley map recovers the Hitchin section of \cite{liegroupsteichmuller} for split real groups. In fact, for all cases, when the $\tilrG^\R$-Higgs bundle $(\cE_{\rC},\tilde\psi_{m_c})$ is trivial, the Cayley map recovers the Hitchin section for the split subgroup $\rG(e)^\R\subset\rG^\R$ with Lie algebra $\fg(e)^\R.$
Finally, for $\rG^\R=\rSO_{p,q}$ with $2\leq p\leq q$, the Cayley map recovers the connected components of $\cM(\rSO_{p,q})$ parameterized in \cite{so(pq)BCGGO,CollierSOnn+1components}.
\end{remark}

\begin{remark}
  When $\rG^\R\subset\rG$ is a split real form with Lie algebra $\fsp_{2n}\R$, $\fso_{n,n+1}$ or the quaternionic real form of $\ff_4$, there are two magical $\fsl_2$-triples, one from Case (1) of Theorem \ref{thm: classification weighted dynkin} and one from Case (2), Case (3) or Case (4), respectively. Note that these are the only cases where the semisimple part $\tilrG^\R\subset\rG_\cC^\R$ of the Cayley group is split and contains a unique magical $\fsl_2$-triple. For these groups, the Cayley map for Case (1) of Theorem \ref{thm: classification weighted dynkin} is obtained by iterating the Cayley maps. For example, when $\rG^\R$ is the quaternionic real form of $\rF_4,$ we have the following diagram 
  \[\xymatrix@=1em{H^0(K^2)\oplus H^0(K^6)\oplus H^0(K^8)\oplus H^0(K^{12})\ar[dr]_{\ \ \  \ \Psi_{e,1}}\ar[rr]^{\Id\oplus \Psi_{e,1}^{K^4}}& &H^0(K^2)\oplus H^0(K^6)\oplus \cM_{K^4}(\rSL_3\R)\ar[dl]^{\Psi_{e,4}}\\&\cM(\rG^\R)&},\]
  where $\Psi_{e,1}$ is the Cayley map from Case (1) of Theorem \ref{thm: classification weighted dynkin}, $\Psi_{e,4}$  is the Cayley map from Case (4) of Theorem \ref{thm: classification weighted dynkin} and $\Psi_{e,1}^{K^4}$ is the $K^4$-twisted version of the Cayley map from Case (1) of Theorem \ref{thm: classification weighted dynkin} for $\rSL_3\R$.
\end{remark}


Even though the Hitchin components are all smooth and contractible, this is not a general feature for the connected components defined by the generalized Cayley correspondence. Nevertheless, in the process of proving Theorem \ref{thm Cayley map open closed}, we show in Proposition \ref{prop H2 cayley vanishes} that for Higgs bundles in the image of the Cayley map, the second hypercohomology group $\HH^2(C^\bullet(\cE_\rH,\varphi))$ vanishes. 
As a result, $\HH^1(C^\bullet(\cE_\rH,\varphi))\sslash\Aut(\cE_\rH,\varphi)$ is a local model for the moduli space $\cM(\rG^\R)$ around $(\cE_\rH,\varphi)$. It follows immediately that $\cM(\rG^\R)$ is locally irreducible around $(\cE_\rH,\varphi)$. Hence, we have the following:
\begin{corollary}
Every Cayley component in $\cM(\rG^\R)$ is locally irreducible and irreducible.
\end{corollary}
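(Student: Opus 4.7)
The plan is to deduce the corollary from the vanishing result for $\HH^2$ that is announced in the preceding paragraph (Proposition \ref{prop H2 cayley vanishes}, to be established separately). Given that vanishing, at every polystable $\rG^\R$-Higgs bundle $(\cE_\rH,\varphi)$ in a Cayley component, the local model \eqref{eq localmodel nhgb} applies, so an open neighborhood of $(\cE_\rH,\varphi)$ in $\cM(\rG^\R)$ is analytically isomorphic to
\[
\HH^1(C^\bullet(\cE_\rH,\varphi))\sslash \Aut(\cE_\rH,\varphi).
\]
This reduces the local question to a statement about a GIT quotient of a finite-dimensional complex vector space by a complex reductive group.

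For local irreducibility, the first factor $\HH^1(C^\bullet(\cE_\rH,\varphi))$ is a complex vector space, hence irreducible as an algebraic (and analytic) variety, and $\Aut(\cE_\rH,\varphi)$ is complex reductive by Proposition \ref{prop automorphism}. The categorical quotient of an irreducible affine variety by a reductive group action is irreducible: its coordinate ring is the ring of invariants inside an integral domain, hence itself an integral domain. Passing to analytic germs, the local ring at the image of $(\cE_\rH,\varphi)$ in $\HH^1\sslash\Aut$ is a domain, so $(\cE_\rH,\varphi)$ admits an irreducible open neighborhood in $\cM(\rG^\R)$. Thus every point of a Cayley component is a point of local irreducibility of $\cM(\rG^\R)$.

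For global irreducibility one invokes the standard fact that a connected complex analytic space that is locally irreducible at every point is irreducible: indeed, distinct irreducible components must intersect in a set at whose points the local ring has at least two minimal primes, contradicting local irreducibility. Since a Cayley component is by definition a connected component of $\cM(\rG^\R)$, and we have just shown it is locally irreducible at every one of its points, it is therefore irreducible. The only genuinely nontrivial input is the vanishing $\HH^2(C^\bullet(\cE_\rH,\varphi))=0$ on the Cayley locus; once this is granted, both assertions of the corollary follow by the short argument above, so the main obstacle lies in Proposition \ref{prop H2 cayley vanishes} rather than in the corollary itself.
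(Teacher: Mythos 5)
Your proposal is correct and follows essentially the same route as the paper: Proposition \ref{prop H2 cayley vanishes} gives $\HH^2=0$, so the local model \eqref{eq localmodel nhgb} is $\HH^1\sslash\Aut$, whose irreducibility (GIT quotient of a vector space by a reductive group) yields local irreducibility, and connectedness of the Cayley component then forces irreducibility. You merely spell out the steps that the paper dispatches with ``It follows immediately.''
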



    

The proof of Theorem \ref{thm Cayley map open closed} is broken into three parts. In \S\ref{subsec:Cayley descends to moduli} we prove that the Cayley map is well-defined and injective, we then prove the Cayley map is open in \S\ref{sec open} and closed in \S\ref{subsec: closed}.
\subsection{The Cayley map descends to moduli spaces}\label{subsec:Cayley descends to moduli}
We first prove the Cayley map descends to an injective map of moduli spaces. 

\begin{theorem}\label{thm Cayley map moduli spaces}
If $((\cE_{\rC},\tilde\psi_{m_c}),q_1,\ldots,q_{\rkfge})\in \cH_{K^{m_c+1}}(\tilrG^\R)\times \prod_{j=1}^{\rkfge}\cH_{K^{l_j+1}}(\R^+)$ is stable (resp.\ polystable), then $\widehat\Psi_e((\cE_{\rC},\tilde\psi_{m_c}),q_1,\ldots,q_{\rkfge})$ is a stable (resp.\ polystable) $\rG^\R$-Higgs bundle. 
In particular, the Cayley map \eqref{eq:Cayleymoduli} is well defined.
\end{theorem}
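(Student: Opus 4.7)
\medskip

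The plan has three parts: descent to gauge orbits, polystability via Hitchin--Kobayashi, and strict stability via automorphism groups.

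First I would address well-definedness on gauge orbits. The Slodowy construction $\widehat\Psi_e$ is manifestly equivariant with respect to the $\cG_\rC$-gauge action, since a gauge transformation $g$ of $\cE_\rC$ induces $g\star\Id_\rT$ on $\cE_\rC\star\cE_\rT[\rH]$ while fixing the differentials $q_{l_j+1}$. Hence $\widehat\Psi_e$ descends to a map from gauge orbits in the domain to $\cG_\rH$-orbits in $\cH(\rG^\R)$. Proposition \ref{prop injective Cayley} provides injectivity of this descended map. What remains is to show the polystability locus is mapped into the polystability locus.

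For polystability, the plan is to invoke the Hitchin--Kobayashi correspondence (Theorem \ref{hk-twisted-higgs}). If $(\cE_\rC,\tilde\psi_{m_c})$ is a polystable $K^{m_c+1}$-twisted $\rG_{0,ss}^\R$-Higgs bundle, there exists a metric $h_\rC$ on $\cE_\rC$ solving the corresponding twisted Hitchin--Kobayashi equation. The uniformizing bundle $\cE_\rT$ carries the canonical hyperbolic metric $h_\rT$ of constant curvature, for which $(\cE_\rT,f)$ solves its Hitchin equations. Combining these via the multiplication $\rC\times \rT\to\rH$ yields a product metric $h=h_\rC\star h_\rT$ on $\cE_\rC\star\cE_\rT[\rH]$. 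I would then verify that $(h,\,f+\tilde\phi_{m_c}+\sum q_{l_j+1})$ satisfies the Hitchin equation \eqref{eq Hitchin eq} for the image $\rG^\R$-Higgs bundle. The curvature of the product metric decomposes as a sum of the curvatures of the two factors, and the moment map term $[\varphi,-\tau_h(\varphi)]$ must be expanded into diagonal pieces plus cross terms between $f$, $\tilde\phi_{m_c}$, and the various $q_{l_j+1}$. The key inputs controlling these cross terms are the bracket relations of Proposition \ref{prop: bracket relation on g0} (which force $[Z_{2m_i},Z_{2m_j}]\subset\fc$ and $=0$ for $m_i\ne m_j$), the $\fsl_2\C$-equivariance of the decomposition $\fg=\bigoplus W_{2m_j}$, and the fact that $\rC$ centralizes $\fg(e)$ (Lemma \ref{lemma:Ccentralizesg(e)}). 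The strategy is that, after a suitable relative rescaling of the metrics, the Hitchin equation for the image splits into the Hitchin--Kobayashi equation on the $(\cE_\rC,\tilde\psi_{m_c})$ factor plus the uniformizing equation on the $(\cE_\rT,f)$ factor, which are both satisfied. Alternatively, in the case where $\tilde\psi_{m_c}=0$ and the differentials vanish, one reduces directly to Proposition \ref{prop: fixedpoint} applied to the $\Z$-grading induced by $\ad_h$.

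For strict stability, I would assume the input is stable and use Corollary \ref{Cor same automorphism groups}, which asserts that the automorphism groups of the input and image agree. If the polystable image were not stable, then by Proposition \ref{prop JordanHolder} it would admit a nontrivial reduction to a proper Levi subgroup of $\rH$, producing in particular a nontrivial element of its automorphism group beyond that coming from the minimal isotropy. Transferring this back through the automorphism identification forces a corresponding enlargement of the automorphism group of $(\cE_\rC,\tilde\psi_{m_c})$, contradicting its stability. Hence the image is stable.

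The main obstacle I expect is the verification in the second paragraph: carefully organizing the cross terms in the moment map, choosing compatible metrics on all factors, and checking that the resulting equation splits as claimed. Once this analytic check is in place, the other two steps are essentially formal consequences of results already established in the paper.
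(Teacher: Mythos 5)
Your descent-to-gauge-orbits and injectivity steps are fine and agree with the paper. The genuine gap is in your second step: the product metric $h_\rC\star h_\rT$ does \emph{not} solve the Hitchin equation for the image Higgs bundle in general. With $\varphi = f + \tilde\phi_{m_c} + \sum_j q_{l_j+1}$, the moment map $[\varphi,-\tau_h(\varphi)]$ contains cross terms of the form $[q_{l_i+1},-\tau_h(q_{l_j+1})]$ for $l_i\neq l_j$ and $[\tilde\phi_{m_c},-\tau_h(q_{l_j+1})]$, which lie in the weight spaces $\fg_{2l_i-2l_j}\cap\fh$ and $\fg_{2m_c-2l_j}\cap\fh$. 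These are nonzero $\ad_h$-weight spaces, while the curvature of any product metric is valued in $\fc\oplus\ft\subset\fg_0$; no ``relative rescaling'' of $h_\rC$ and $h_\rT$ can cancel a term in a nonzero weight space against a term in $\fg_0$. You can already see this failure in the simplest instance, the Hitchin section itself: for a general tuple of differentials the solution metric of the Hitchin equation has no closed-form product description, which is precisely why Hitchin argued via the $\C^*$-action and openness in \cite{liegroupsteichmuller} rather than by constructing a metric.

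The paper's proof has a two-step structure that your proposal misses. It first isolates and proves the statement when \emph{all} the differentials $q_{l_j+1}$ vanish (Lemma \ref{lem no differentials stability}), and then deduces the general statement from this special case using the $\C^*$-action on Higgs bundles together with openness of stability and a $\rT$-gauge transformation rescaling the graded pieces (Hitchin's argument). Even Lemma \ref{lem no differentials stability} is not a naive product-metric computation in Cases (3) and (4): the paper there re-packages the data as a $(\hat\rG_0,\hat\fg_1)$-Higgs pair for a $\Z/2p\Z$- resp.\ $\Z/4\Z$-grading, proves $\alpha$-polystability of an auxiliary $K^{m_c+1}$-twisted $\hat\rG_0$-Higgs bundle with a \emph{nonzero} parameter $\alpha = \lambda h'$ via a carefully assembled metric, and then uses the finite-dimensional GIT lemma of \cite{BGRmaximalToledo} to pass from the $\alpha$-bound to the $0$-bound before invoking Proposition \ref{prop: fixedpoint}. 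Your alternative ``reduce directly to Proposition \ref{prop: fixedpoint} via the $\Z$-grading induced by $\ad_h$'' is also off: that proposition concerns $\Z/n\Z$-gradings, not $\Z$-gradings, and the relevant gradings in Cases (3)--(4) are not the $\ad_h$-grading.
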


\begin{remark}
By Remark \ref{rem cayley map for other L}, the Cayley map can be defined for $L$-twisted Higgs bundles. The proof of Theorem \ref{thm Cayley map moduli spaces} given below also applies to this setting when $\deg(L)>0.$
\end{remark}

The difficult step in the proof of Theorem \ref{thm Cayley map moduli spaces} is proving the following lemma.
\begin{lemma}\label{lem no differentials stability}
    If $(\cE_{\rC},\tilde\psi_{m_c})\in\cH_{K^{m_c+1}}(\tilrG^\R)$ is stable (resp.\ polystable), then the $\rG^\R$-Higgs bundle $\widehat\Psi_e((\cE_{\rC},\tilde\psi_{m_c}),0,\ldots,0)$ is stable (resp.\ polystable).
\end{lemma}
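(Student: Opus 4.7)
The plan is to apply the Hitchin--Kobayashi correspondence (Theorem \ref{hk-twisted-higgs}) in both directions. Given a polystable $K^{m_c+1}$-twisted $\rG_{0,ss}^\R$-Higgs pair $(\cE_\rC,\tilde\psi_{m_c})$, we obtain a solution metric $h_\rC$ on $\cE_\rC$. The uniformizing Higgs bundle $(\cE_\rT,f)$ is stable and admits the hyperbolic solution metric $h_\rT$. Their product $h = h_\rC \star h_\rT$ defines a reduction of $\cE_\rC \star \cE_\rT[\rH]$ to the maximal compact of $\rH^\R$, compatible with the magical involution $\sigma_e$ (which preserves both $\fc = W_0$ and the $\fsl_2\C$-subalgebra $\fs$). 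Polystability of the image Higgs bundle will then follow once we verify that $h$ satisfies the Hitchin equation for $(\cE_\rC \star \cE_\rT[\rH],\, f + \tilde\phi_{m_c})$.

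The verification reduces to expanding
\[
F_h + [f+\tilde\phi_{m_c},\,-\tau_h(f+\tilde\phi_{m_c})]
= \bigl(F_{h_\rT} + [f,-\tau_h(f)]\bigr) + \bigl(F_{h_\rC} + [\tilde\phi_{m_c},-\tau_h(\tilde\phi_{m_c})]\bigr) + [f,-\tau_h(\tilde\phi_{m_c})] + [\tilde\phi_{m_c},-\tau_h(f)].
\]
The cross terms carry $\ad_h$-weight $\pm(2m_c+2)$, and since $\tau$ swaps highest and lowest weight vectors inside a magical $\fsl_2$-triple, the Clebsch--Gordan analysis of \S\ref{subsec: Magical nilpotents} places them in $W_{2m_c+2}$. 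The key Lie-theoretic input is that inspection of the $\fsl_2$-data in Proposition \ref{prop: magical sl2 data} shows $W_{2m_c+2} = 0$ in each of Cases (1)--(4) of Theorem \ref{thm: classification weighted dynkin}, so both cross terms vanish. The remaining two bracketed pairs vanish by our choices of solution metrics: the first is the Hitchin equation for the uniformizing bundle, and the second follows from the Hitchin--Kobayashi equation for $(\cE_\rC,\tilde\psi_{m_c})$ after using the bundle isomorphism $\ad_f^{m_c}$ to transfer between $\tilde\phi_{m_c}$ and $\tilde\psi_{m_c}$, possibly after rescaling $h_\rT$ by a constant to absorb the factors this introduces. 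For strict stability, Corollary \ref{Cor same automorphism groups} states that $\widehat\Psi_e$ preserves automorphism groups; since a polystable Higgs pair is stable precisely when its automorphism group is the smallest possible (the kernel of the relevant representation), stability transfers immediately.

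The main obstacle is the quantitative identification in the final step: the moment-map term for $(\cE_\rC,\tilde\psi_{m_c})$ is $\fc$-valued, involves sections of $K^{m_c+1}\otimes\overline{K^{m_c+1}}$, and uses $\tau_{h_\rC}$, while the corresponding term for the image Higgs bundle lives in $\fg_0$, involves sections of $K\otimes\overline{K}$, and uses $\tau_h$. The isomorphism $\ad_f^{m_c}:(\cE_\rC\star\cE_\rT)[V_{2m_c}]\otimes K \to \cE_\rC[Z_{2m_c}]\otimes K^{m_c+1}$ combined with appropriate powers of $h_\rT$ should intertwine them, but a careful case-by-case normalization is needed to arrange equality rather than mere proportionality. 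Case (1) is vacuous since $\fg_{0,ss}^\R = 0$, and Case (2) recovers the classical Cayley correspondence, which serves as a sanity check for the bookkeeping.
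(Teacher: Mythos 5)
Your proposal has a fatal gap, and it is not the quantitative bookkeeping issue you flag at the end --- it is the vanishing of the middle bracketed pair $F_{h_\rC}+[\tilde\phi_{m_c},-\tau_h(\tilde\phi_{m_c})]$. This term does not vanish because $[\tilde\phi_{m_c},-\tau_h(\tilde\phi_{m_c})]$ is \emph{not} $\fc$-valued: it is a bracket of a highest-weight vector of $W_{2m_c}$ with a lowest-weight vector of $W_{2m_c}$, which lands in $\fg_0$ with components in $\fc$ \emph{and} in the various $Z_{2m_j}$'s (by Clebsch--Gordan for $W_{2m_c}\otimes W_{2m_c}$). In contrast, the Hitchin--Kobayashi equation for $(\cE_\rC,\tilde\psi_{m_c})$ only controls the $\fc$-valued bracket $[\tilde\psi_{m_c},-\tau_{h_\rC}(\tilde\psi_{m_c})]$, where $\fc$-valuedness is automatic because $[\fm_{0,ss},\fm_{0,ss}]\subset\fc$ by the Cartan decomposition. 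The isomorphism $\ad_f^{m_c}$ between $V_{2m_c}$ and $Z_{2m_c}$ does \emph{not} intertwine the two brackets: brackets of $Z_{2m_c}$ with itself land in $\fc$ by Proposition \ref{prop: bracket relation on g0}, but brackets of extreme weight vectors of $W_{2m_c}$ do not. Since the curvature $F_h=F_{h_\rC}\oplus F_{h_\rT}$ of your product metric is valued in $\fc^\R\oplus i\R h$, the nonzero $\fm_{0,ss}$-component of $[\tilde\phi_{m_c},-\tau_h(\tilde\phi_{m_c})]$ has nothing to cancel it, and the Hitchin equation fails. This is already visible in Case (2): take $\fg=\fsp_{2n}\C$ with $n\geq 3$, so $\fg_{0,ss}^\R=\fsl_n\R$, $\fc=\fso_n\C$, $\fm_{0,ss}=$ traceless symmetric matrices. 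Writing $\tilde\phi_1$ via a traceless symmetric $S$, one computes $[\tilde\phi_1,-\tau(\tilde\phi_1)]$ to be given (in the $\fg_0\cong\fgl_n\C$ coordinates) by $S\bar S$, whose symmetric part $\frac{1}{2}(S\bar S+\bar S S)$ is positive semi-definite and generically has a nonzero traceless part, i.e., a nonzero $\fm_{0,ss}$-component. (For $n=2$ Cayley--Hamilton makes this part a scalar, which is a coincidence; for $n\geq 3$ it is genuinely nonzero.) So the product metric is not a solution, and your route does not establish polystability.

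There is also a secondary error in the cross-term argument. The cross terms do vanish, but not because $W_{2m_c+2}=0$: that claim is false in Case (3) when $p$ is odd and $p\geq 3$, where $p\in\{m_j\}$ so $W_{2p}\neq 0$ (cf.\ Proposition \ref{prop: magical sl2 data}). The cross terms vanish for the much simpler reason that $\tau_h(\tilde\phi_{m_c})$ is a lowest-weight vector and $\ad_f$ annihilates lowest-weight vectors (and dually $\tau_h(f)\propto e$ and $\ad_e$ annihilates the highest-weight vector $\tilde\phi_{m_c}$). This observation is correct and useful, but it does not rescue the diagonal term.

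The paper avoids the product-metric issue entirely. In Case (2) it proves that $(\cE_\rC\star\cE_\rT[\rH],\psi_1)$ is $\alpha$-polystable as a $K^2$-twisted $\rH$-Higgs bundle for $\alpha=h/2$ --- \emph{that} Higgs pair has $\psi_1\in\fg_0$, so its moment map is $\fc$-valued and the product metric does work --- and then transfers to $(\cE_\rH,f+\tilde\phi_{m_c})$ by a finite-dimensional GIT inequality comparing parabolic degrees. In Cases (3) and (4) it instead passes to a cyclic $\Z/(2m_c+2)\Z$-graded $(\hat\rG_0,\hat\fg_1)$-Higgs pair and applies Proposition \ref{prop: fixedpoint}, which handles the extra moment-map components because $\hat\fg_0$ already contains $\fg_{\pm(2m_c+2)}$, etc. If you want to pursue a metric route, you must work at the level of one of these auxiliary Higgs pairs (where the product metric is a solution for an appropriate $\alpha$) and then convert to the $\rG^\R$-polystability of $(\cE_\rH,f+\tilde\phi_{m_c})$ by a degree argument, rather than trying to solve the $\rG^\R$-Hitchin equation directly with a product metric.
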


Before proving Lemma \ref{lem no differentials stability}, we will prove Theorem \ref{thm Cayley map moduli spaces} assuming Lemma \ref{lem no differentials stability}. 

\begin{proof}[Proof of Theorem \ref{thm Cayley map moduli spaces} assuming Lemma \ref{lem no differentials stability}]
First note that the map is injective by Proposition \ref{prop injective Cayley}. 
The idea of the proof that the map is well defined is similar to Hitchin's proof \cite{liegroupsteichmuller} that the image of the Hitchin section consists of stable Higgs bundles. First assume $(\cE_{\rC},\tilde\psi_{m_c})$ is stable. Since stability is preserved by the $\C^*$-action, 
    \[\widehat\Psi_e((\cE_{\rC},\lambda\tilde\psi_{m_c}),0,\ldots,0)=(\cE_{\rC}\star\cE_\rT[\rH],f+\lambda\tilde\phi_{m_c})\]
    is a stable $\rG^\R$-Higgs bundle for all $\lambda\in\C^*$ by Lemma \ref{lem no differentials stability}. 
Since stability is open, 
    \[\widehat\Psi_e((\cE_{\rC},\tilde\psi_{m_c}),t_{1} q_1,\ldots,t_{\rkfge} q_{\rkfge})=\Bigg(\cE_{\rC}\star\cE_\rT[\rH], f+\tilde\phi_{m_c}+ \sum_{j=1}^{\rkfge}t_{j} q_{j}\Bigg)\]
    is stable for sufficiently small $t_j\in\R$. Thus, $(\cE_{\rC}\star\cE_\rT[\rH], \lambda^2 (f+\tilde\phi_{m_c}+ \sum_{j=1}^{\rkfge}t_{j} q_{j}))$ is stable for all $\lambda\in\C^*$. 

    Let $g_\lambda:\cE_\rT\to\cE_\rT$ be the holomorphic gauge transformation which acts on $f$ by $g_\lambda\cdot f=\lambda^{-2} f$, then $\Id_{\cE_{\rC}}\star g_\lambda$ acts on $\cE_{\rC}\star \cE_\rT[\fg_{2j}]\otimes K$ with eigenvalue $\lambda^{2j}$. 
    Since stability is also preserved by the gauge group, 
    \begin{align*}
        (\Id_{\cE_{\rC}}\star g_\lambda)&\cdot\Bigg(\cE_{\rC}\star\cE_\rT[\rH], \lambda^2\Bigg(f+\tilde\phi_{m_c}+ \sum_{j=1}^{\rkfge}t_{j} q_{j}\Bigg)\Bigg)\\&=\Bigg(\cE_{\rC}\star\cE_\rT[\rH], f+\lambda^{2m_c+2}\tilde\phi_{m_c}+\sum_{j=1}^{\rkfge}\lambda^{2l_j+2}t_{j} q_{j}\Bigg)\\
        &=\widehat\Psi_e((\cE_{\rC},\lambda^{2m_c+2}\tilde\psi_{m_c}),\lambda^{2l_1+2}t_{1}q_1,\ldots, \lambda^{2l_{\rkfge}+2}t_{\rkfge}q_{\rkfge})
    \end{align*} 
is stable for all $\lambda\in\C^*$. Thus, $\Psi_e((\cE_{\rC},\tilde\psi_{m_c}),q_1,\ldots, q_{\rkfge})$ is stable.

If $(\cE_{\rC},\tilde\psi_{m_c})$ is strictly polystable, then $\widehat\Psi_e(\cE_{\rC},\tilde\psi_{m_c})=(\cE_{\rC}\star\cE_\rT[\rH],f+\tilde\phi_{m_c})$ is a strictly polystable $\rG^\R$-Higgs bundle by Lemma \ref{lem no differentials stability}. 
Suppose $s\in i\fh^\R$ and $\cE_{\rP_s}\subset\cE_{\rC}\star\cE_\rT[\rH]$ is a holomorphic reduction to the parabolic $\rP_s$ with $\deg(\cE_{\rP_s})=0$ and such that $f+\tilde\phi_{m_c}\in H^0(\cE_{\rP_s}[\fm_s]\otimes K)$. 
By the definition of polystability there is a further holomorphic reduction $\cE_{\rL_s}\subset \cE_{\rP_s}$ such that $f+\tilde\phi_{m_c}\in H^0(\cE_{L_s}[\fm_{s}^0]\otimes K)$.  
We claim that this implies $s\in\fc$. Indeed, write $s=\sum s_{2j}$, where $s_{2j}$ is the projection of $s$ onto the graded piece $\fg_{2j}$ and suppose $k$ is the smallest $j$ with $s_{2j}\neq0$. If $v\in\fg_{2m_c}$, then $2k-2$-graded piece of $[s,f+v]=[s_{2k},f]$. Since $\{f,h,e\}$ is magical, $\ker(\ad_f)\cap\fh=\fc$. Thus, $[s_{2k},f]=0$ implies $s\in\fc$. 

By Proposition \ref{prop JordanHolder}, there is $s\in i\fh^\R$ and a holomorphic reduction $\cE_{\rL_s}\subset \cE_{\rC}\star\cE_\rT[\rH]$ with $f+\tilde\phi_{m_c}\in H^0(\cE_{\rL_s}[\fm_s^0]\otimes K)$ such that $(\cE_{\rL_s},f+\tilde\phi_{m_c})$ is a stable $\rG^\R_s$-Higgs bundles. Here $\rG^\R_s$ is the real form of the $\rG$-centralizer of $s$ associated to the complexified Cartan decomposition $\fg_s^0=\fl_s\oplus\fm_s^0$. 
Since, $s\in\fc$ and $[\fc,\fg(e)]=0$, it follows that $\widehat\Psi_e((\cE_{\rC},\tilde\psi_{m_c}),q_1,\ldots, q_{\rkfge})$ is a $\rG^\R_s$-Higgs bundle. Openness and $\C^*$-invariance of stability implies $\widehat\Psi_e((\cE_{\rC},\tilde\psi_{m_c}),q_1,\ldots, q_{\rkfge})$ is a stable $\rG^\R_s$-Higgs bundle and hence a polystable $\rG^\R$-Higgs bundle.
\end{proof}

We will prove Lemma \ref{lem no differentials stability} in each of the four cases of magical nilpotents from Theorem \ref{thm: classification weighted dynkin}. The result is immediate for Case (1),  it was proven in \cite{BGRmaximalToledo} for Case (2), and for Case (3) the result was proven in \cite{so(pq)BCGGO} for $\rG=\rSO_N\C$.  Our proof in Case (4) relies on the details of the proof of Case (2) so we outline the proof of \cite{BGRmaximalToledo}.
\begin{proof}[Proof of Lemma \ref{lem no differentials stability} Case (1)]
    For Case (1) of Theorem \ref{thm: classification weighted dynkin}, $\rC$ is the center of $\rG^\R$ and $\tilde\phi_{m_c}=0$. Thus, $\widehat\Psi_{e}(\cE_{\rC},\tilde\phi_{m_c},0,\ldots,0)=(\cE_{\rC}\star\cE_\rT[\rH],f)$.
      This is a polystable Higgs bundle since the solution metric for $(\cE_{\rT},f)$ induces a solution to the $\rG^\R$-Higgs bundle equations. It is stable since a principal nilpotent is not contained in the Levi subalgebra of any proper parabolic subalgebra of $\fg$.
\end{proof}

\begin{proof}[Proof of Lemma \ref{lem no differentials stability} Case (3)]
For Case (3) of Theorem \ref{thm: classification weighted dynkin} with $\rG=\rSO_N\C$ (and hence $\rG^\R=\rSO_{p,N-p}$), Lemma \ref{lem no differentials stability} was proven in \cite[Lemma 4.5]{so(pq)BCGGO}. Roughly, $m_c+1=p$ and there is a $\Z/2p\Z$-grading $\fg=\bigoplus\hat\fg_j$ such that $(\cE_\rC\star\cE_\rT[\hat\rG_0],f+\tilde\phi_{p-1})$ is a $(\hat\rG_0,\hat\fg_1)$-Higgs pair. This pair is shown to be polystable and Proposition \ref{prop: fixedpoint} is applied.  
By Remark \ref{rem stability and coverings}, it suffices to show that every $\rPSO_N\C$-Higgs bundle in the image of $\hat\Psi_e$ lifts to a $\rSO_N\C$-Higgs bundle in the image of $\hat\Psi_e$. This was shown in \S\ref{sec injectivity}.
\end{proof}

\begin{proof}[Proof of Lemma \ref{lem no differentials stability} Case (2)]
The proof for Case (2) is the result of Lemmas 5.5, 5.6 and 5.7 of \cite{BGRmaximalToledo}. We outline the argument here in the notation of the current article. In this case, and $m_c=1$ and $\rH=\rG_0\subset\rG$ is the centralizer of $h\in\fg$. 

Let $(\cE_\rC,\psi_1)$ be a stable (resp.\ polystable) $K^2$-twisted $\tilrG^\R$-Higgs bundle. By \cite[Lemma 5.5]{BGRmaximalToledo}, $(\cE_\rC\star\cE_T[\rH],\psi_1)$ is an $\alpha$-stable (resp.\ $\alpha$-polystable) $K^2$-twisted $\rH$-Higgs bundle for $\alpha=\frac{h}{2}\in\fz(\fh)$. This is proven using equations. 
Next one proves a finite dimensional GIT result (\cite[Lemma 5.6]{BGRmaximalToledo}) for the magical nilpotent $f\in\fg_{-2}$. Namely, if $s\in i\fh$ and $f\in \fg_{-2,s}$, then $\langle h,s\rangle\geq 0$ and if equality holds, then $f\in\fg_{-2,s}^0$. 

Now consider $\Psi_e(\cE_\rC,\psi_1)=(\cE_{\rC}\star\cE_{\rT}[\rH],f+\phi_1)$, where $\ad_f(\phi_1)=\psi_1\in H^0(\cE_{\rC}\star \cE_\rT[\fg_0]\otimes K^2)=H^0(\cE_{\rC}[\fh]\otimes K^2)$. 
Let $s\in i\fh^\R$ and $\cE_{\rP_s}\subset\cE_{\rC}\star\cE_{\rT}[\rH]$ be a holomorphic reduction such that $f+\phi_1\in H^0(\cE_{\rP_s}[\fm_s]\otimes K)$. Since $\rP_s$ preserves the splitting $\fm=\fg_{-2}\oplus\fg_{2}$, we have $f \in H^0(\cE_{\rP_s}[\fg_{-2,s}]\otimes K)$ and $\phi_1\in H^0(\cE_{\rP_s}[\fg_{2,s}]\otimes K)$. Hence $\psi_1=[f,\phi_1]\in H^0(\cE_{\rP_s}[\fh_s]\otimes K^2)$. We have $\deg(\cE_{\rP_s})\geq \langle\frac{h}{2},s\rangle$ by Lemma 5.5 and $\langle\frac{h}{2},s\rangle\geq 0$ by Lemma 5.6. Thus, $\deg(\cE_{\rP_s})\geq \langle\frac{h}{2},s\rangle\geq 0$.

If $\deg(\cE_{\rP_s})=0$, then $f\in\fg_{-2,s}^0$ and there is a holomorphic reduction $\cE_{\rL_s}\subset\cE_{\rP_s}$ such that $\psi_1=[f,\phi_1]\in H^0(\cE_{\rL_s}[\fh_{s}^0]\otimes K^2)$. Note that $[s,\phi_1]=0$ since $\ad_f:\fg_{2}\to\fg_0$ is injective and
\[0=[s,[f,\phi_1]]=-[\phi_1,[s,f]]-[f,[\phi_1,s]]=[f,[s,\phi_1]].\]
Hence $f+\phi_1\in H^0(\cE_{\rL_s}[\fm_{s}^0]\otimes K)$ and $\widehat\Psi_e(\cE_\rC,\psi_1)$ is a polystable $\rG^\R$-Higgs bundle.
\end{proof}

Before proving Case (4) below, we recall some relevant notions from previous sections. Let $\{f,h,e\}\subset\fg$ be a magical $\fsl_2$-triple from Case (4) of Theorem \ref{thm: classification weighted dynkin}. Recall from \S\ref{sec lie theory for except} that $m_c=3$, $\tilde\phi_3=\phi_3$ and the $\Z$-grading is given by $\fg=\bigoplus_{j=-5}^5\fg_{2j}$. Moreover, $\fg_{-2}$ decomposes $\fg_0$-invariantly as $\fg_{-2}=\fg_{-\tilde\alpha}\oplus\fg_{-2}^b$, where $\tilde\alpha$ is the simple root in the diagrams in \S\ref{sec diagrams and tables}.
Consider the $\Z/4\Z$-grading given by $\fg=\bigoplus_{j\in\Z/4\Z}\hat\fg_j$, where
\[\xymatrix@R=0em{\hat\fg_0=\fg_{-8}\oplus\fg_0\oplus\fg_8~,&\hat\fg_{1}=\fg_{-10}\oplus \fg_{-2}\oplus\fg_6~\\\hat\fg_{2}=\fg_{-4}\oplus\fg_4,&\hat\fg_3=\fg_{-6}\oplus\fg_{2}\oplus\fg_{10}~.}\]
By \eqref{eq hm decomp}, the complexified Cartan decomposition $\fg=\fh\oplus\fm$ of the canonical real form satisfies $\fh=\hat\fg_0\oplus\hat\fg_2$ and $\fm=\hat\fg_1\oplus\hat\fg_3$. 
Recall from \eqref{eq h'+sl2 decomp} that $\fh=\fh'\oplus\fsl_2\C$, and note that $\hat\fg_0=\fh_0'\oplus\fsl_2\C$.
Let $\rG_0\subset\hat\rG_0\subset\rG$ be the connected subgroups with Lie algebras $\fg_0\subset\hat\fg_0$ respectively. The adjoint action of $\rG_0$ and $\hat\rG_0$ preserve the spaces $\fg_j$ and $\hat\fg_j$ respectively. Moreover, by Lemma \ref{lem h' sl2 invariant decomp of m}, $\hat\fg_1$ decomposes $\hat\rG_0$-invariantly as
\begin{equation}
    \label{eq hatG0 invar hatg1 decomp}\hat\fg_1=(\fg_{-\tilde\alpha}\oplus\fg_{-10})\oplus(\fg_{-2}^b\oplus\fg_6).
\end{equation}

Consider the $K^4$-twisted $\tilrG^\R$-Higgs bundle $(\cE_\rC,\psi_3)$, and recall 
\[\hat\Psi_e((\cE_{\rC},\psi_3),0,0)=(\cE_\rC\star\cE_\rT[\rH],f+\phi_3),\] 
where $\ad_f^3(\phi_3)=\psi_3$. Since $\rC\star\rT\subset\rG_0$ and $f+\phi_3\in H^0(\cE_\rC\star\cE_\rT[\hat\fg_1]\otimes K)$, 
\[(\cE_{\hat\rG_0},\Phi)=(\cE_\rC\star\cE_\rT[\hat\rG_0],f+\phi_3)\]
is  a $K$-twisted $(\hat\rG_0,\hat\fg_1)$-Higgs pair. Using the decomposition \eqref{eq hatG0 invar hatg1 decomp} we write
\[\Phi=(f_b+\phi_3)\oplus (\tilde f+0)\in H^0(\cE_{\hat\rG_0}[\fg_{-2}^b\oplus\fg_{6}]\otimes K)\oplus H^0(\cE_{\hat\rG_0}[\fg_{-\tilde\alpha}\oplus\fg_{-10}]\otimes K).\] 
This, implies that $\ad_{f_b+\phi_3}(\tilde f)\in H^0(\cE_{\hat\rG_0}[\hat\fg_{2}]\otimes K^2)$. Recall from \eqref{eq ad f_b+phi}, that $\ad_{f_b+\phi_3}(\tilde f)=[f_b,\tilde f]\in\fg_{-4}$ is a magical nilpotent in $\fh'$ from Case (2) of Theorem \ref{thm: classification weighted dynkin}. Since the splitting $\hat\fg_2=\fg_{-4}\oplus\fg_{4}$ is $\hat\rG_0$-invariant, 
\begin{equation}
    \label{eq ad fb+phi3 tilde f}\ad_{f_b+\phi_3}\tilde f=[f_b,\tilde f]\in H^0(\cE_{\hat\rG_0}[\fg_{-4}]\otimes K^2).
\end{equation}

Also, $\theta=\ad^3_{f_b+\phi_3}(\tilde f)\in H^0(\cE_{\hat\rG_0}[\hat\fg_0]\otimes K^4)$. Thus, $(\cE_{\hat\rG_0},\theta)$ is a $K^4$-twisted $\hat\rG_0$-Higgs bundle. Moreover, the decomposition $\hat\fg_0=\fh_0'\oplus\fsl_2\C$ gives
\[\theta=\theta'\oplus\theta_2\in H^0(\cE_{\hat\rG_0}[\fh_0']\otimes K^4)\oplus H^0(\cE_{\hat\rG_0}[\fsl_2\C]\otimes K^4)~.\]
The bracket relations of Lemma \ref{lem exceptional bracket relations} now imply 
\begin{equation}
    \label{eq theta decomp}\xymatrix{\theta'=3\psi_3&\text{and}&\theta_2=\ad_{f_b}^3(\tilde f)+\ad_{\phi_3}^2\circ\ad_{f_b}(\tilde f).}
\end{equation}
In particular, $\theta_2$ is in the $K^4$-twisted $\rSL_2\C$-Hitchin section.

\begin{proof}
    [Proof of Lemma \ref{lem no differentials stability} Case (4)]
    Suppose $(\cE_{\rC},\psi_3)$ is a polystable $K^4$-twisted $\tilrG^\R$-Higgs bundle. To show that $\hat\Psi_e((\cE_\rC,\psi_3),0,0)$ is a polystable $\rG^\R$-Higgs bundle, it suffices to show that $(\cE_{\rC}\star\cE_\rT(\hat\rG_0),f+\phi_3)$ is a polystable $(\hat\rG_0,\hat\fg_1)$-Higgs pair by Proposition \ref{prop: fixedpoint}. 

    Consider the $(\hat\rG_0,\hat\fg_1)$-Higgs pair $(\cE_{\hat\rG_0},\Phi)=(\cE_{\rC}\star\cE_\rT(\hat\rG_0),f+\phi_3)$. Let $\hat\rH_0^\R\subset\hat\rG_0$ be a compact real form with Lie algebra $\hat\fh_0^\R$. Fix $s\in i\hat\fh_0^\R$ and let $\rP_s\subset\hat\rG_0$ be the corresponding parabolic. 
    Since $\hat\fg_0=\fh_0'\oplus\fsl_2\C$ we can write $s=s'+s_2$, where $s'\in \fh_0'$ and $s_2\in\fsl_2\C$. Let $\cE_{\rP_s}\subset\cE_{\hat\rG_0}$ be a holomorphic reduction such that $\Phi\in H^0(\cE_{\rP_s}[\hat\fg_{1,s}]\otimes K)$. Note that the inclusions $\rP_{s}\subset\rP_{s'}$ and $\rP_s\subset\rP_{s_2}$ define holomorphic reductions $\cE_{\rP_s}\subset\cE_{\rP_{s'}}\subset\cE_{\hat\rG_0}$ and $\cE_{\rP_s}\subset\cE_{\rP_{s_2}}\subset\cE_{\hat\rG_0}$
    We are interested in showing
    \[\deg(\cE_{\rP_s})=\deg(\cE_{\rP_{s'}})+\deg(\cE_{\rP_{s_2}})\geq0.\]

Since $\hat\rG_0$ preserves the splitting \eqref{eq hatG0 invar hatg1 decomp}, we have 
  \begin{equation}
      \label{eq higgs field decom for hatG0}\xymatrix{f_b+\phi_3\in H^0(\cE_{\rP_s}[\hat\fg_{1,s}]\otimes K)&\text{and}&\tilde f\in H^0(\cE_{\rP_s}[\hat\fg_{1,s}]\otimes K)}.
  \end{equation}
    Thus, $\ad_{f_b+\phi_3}^3(\tilde f)=\theta\in H^0(\cE_{\hat\rG_0}[\hat\fg_{0,s}]\otimes K^4)$, and, using the decomposition \eqref{eq theta decomp},  
    \[\xymatrix{\theta'=3\psi_3\in H^0(\cE_{\hat\rG_0}[\fh_{0,s'}']\otimes K^4)&\text{and}&\theta_2\in H^0(\cE_{\hat\rG_0}[\fsl_2\C_{s_2}]\otimes K^4).}\]
Since $\theta_2$ is in the $K^4$-twisted Hitchin section, we have 
\[\deg(\cE_{\rP_{s_2}})\geq 0\]
with equality if and only if $s_2=0$.

To show that $\deg(\cE_{\rP_{s'}})\geq0$, we use an argument similar to the proof of Case (2) of Theorem \ref{thm: classification weighted dynkin}. 
Write $h=h'+h_2$, where $h'\in\fh_0'$ and $h_2\in\fsl_2\C$ are both nonzero, and let $\rT',\rT_2\subset\rH$ be the subgroups generated by $\exp(th')$ and $\exp(th_2)$. The $\hat\rG_0$-bundle $\cE_\rT(\hat\rG_0)$ is given by 
\[\cE_{\rT}(\hat\rG_0)=\cE_{\rT'}\star\cE_{\rT_2}(\hat\rG_0).\]
Fix the K\"ahler form $\omega$ associated to the hyperbolic metric uniformizing the Riemann surface $X$, so that $F_K=-i\omega$.

Since $\theta_2$ is in the $\rPSL_2\C$-Hitchin section, there is a metric $h_{\rT_2}$ on $\cE_{\rT_2}$ so that 
\[F_{h_{\rT_2}}+[\theta_2,-\tau(\theta_2)]\omega=0.\] 
Let $h_{\rT'}$ be the uniformizing metric on $\cE_{\rT'}$ and take $h_\rT=h_{\rT'}\star h_{\rT_2}$. Since $(\cE_\rC,3\psi_3)$ is polystable, there is a metric $h_\rC$ on $\cE_\rC$ such that 
\[F_{h_\rC}+[3\psi_3,-\tau(3\psi)]\omega=0.\]
Thus, $h_\rC\star h_{\rT_2}\star h_{\rT'}$ defines a metric on $\cE_{\hat\rG_0}$ which satisfies
\[F_{h_\rC\star h_{\rT_2}\star h_{\rT'}}+[3\psi_3,-\tau(3\psi_3)]\omega+[\theta_2,-\tau(\theta_2)]\omega=F_{h_{\rT'}}=-i\lambda\omega h'\]
for some positive constant $\lambda$. The exact value of $\lambda$ is not important. 
Thus, $(\cE_{\hat\rG_0},3\psi_3+\theta_2)$ is an $\alpha=\lambda h'$-polystable $K^4$-twisted $\hat\rG_0$-Higgs bundle, and hence
\[\deg(\cE_{\rP_s})=\deg(\cE_{\rP_{s'}})+\deg(\cE_{\rP_{s_2}})\geq\deg(\cE_{\rP_s'})\geq \langle \lambda h',s'\rangle.\] 

Note that $\ad_{f_b+\phi_3}(\tilde f)=[f_b,\tilde f]\in H^0(\cE_{\rP_{s'}}[\fg_{-4,s'}]\otimes K^2)$ by \eqref{eq higgs field decom for hatG0} and \eqref{eq ad fb+phi3 tilde f}. Since $[f_b,\tilde f]\subset\fg_{-4}$ is a magical nilpotent in $\fh'$ corresponding to Case (2) of Theorem \ref{thm: classification weighted dynkin}, the finite dimensional GIT result \cite[Lemma 5.5]{BGRmaximalToledo} applies and gives $\langle \lambda h',s'\rangle \geq 0$ with equality if and only if $[f_b,\tilde f]\in\fg_{-4,s'}^{0}$. Thus, $\deg(\cE_{\rP_s})\geq 0$.

So far we have shown that $(\cE_{\hat\rG_0},\Phi)$ is a semistable $(\hat\rG_0,\hat\fg_1)$-Higgs pair. Suppose $\deg(\cE_{\rP_s})=0$, then $\deg(\cE_{\rP_{s_2}})=0$ and $\deg(\cE_{\rP_{s'}})=0$, and hence $s_2=0$. The $\alpha$-polystable of the $K^4$-twisted $\hat\rG_0$-Higgs bundle $(\cE_{\hat\rG_0},\theta)$ implies there is a holomorphic reduction $\cE_{\rL_s}\subset\cE_{\rP_s}$ such that $\theta\in H^0(\cE_{\rL_s}[\hat\fg_{0,s}^0]\otimes K^4)$. In particular, $\psi_3\in H^0(\cE_{\rL_s}[\hat\fg_{0,s}^0]\otimes K^4)$.
Since the splitting $\fg_{-\tilde\alpha}\oplus\fg_{-2}^b\oplus\fg_{6}$ is $\fh'_0$-invariant and $s=s'\in\fh_0'$, we have
\[\xymatrix@=1em{\tilde f\in H^0(\cE_{\rP_s}[\hat\fg_{1,s}]\otimes K)~,&f_b\in H^0(\cE_{\rP_s}[\hat\fg_{1,s}]\otimes K)&\text{and}&\phi_3\in H^0(\cE_{\rP_s}[\hat\fg_{1,s}]\otimes K).}\]
Thus, 
\[\xymatrix{[f_b,\tilde f]\in H^0(\cE_{\rP_s}[\fg_{-4,s}]\otimes K^2)&\text{and}&[f_b,\phi_3]\in H^0(\cE_{\rP_s}[\fg_{4,s}]\otimes K^2).}\]  
We have $0=\deg(\cE_{\rP_s})\geq\langle s,h'\rangle\geq 0$, thus the finite dimensional GIT lemma implies $[f_b,\tilde f]\in H^0(\cE_{\rL_s}[\fg_{-4,s}^0]\otimes K^2)$.
 By \eqref{eq ad^3 f_b+phi}, $\psi_3=[[f_b,\tilde f],[f_b,\phi_3]]$, and hence $[f_b,\phi_3]\in H^0(\cE_{\rL_s}[\fg_{4,s}^0]\otimes K^2)$.  Finally, since $\tilde f$, $f_b$, and $\phi_3$ are each in $H^0(\cE_{\rP_s}[\hat\fg_{1,s}]\otimes K)$, we have 
\[\xymatrix@=1em{\tilde f\in H^0(\cE_{\rL_s}[\hat\fg_{1,s}^0]\otimes K)~,&f_b\in H^0(\cE_{\rL_s}[\hat\fg_{1,s}^0]\otimes K)&\text{and}&\phi_3\in H^0(\cE_{\rL_s}[\hat\fg_{1,s}^0]\otimes K).}\]
Hence $(\cE_{\rC}\star\cE_{\rT}[\hat\rG_0],f+\phi_3)$ is a 0-polystable $K$-twisted $(\hat\rG_0,\hat\fg_1)$-Higgs pair.
 \end{proof}

\subsection{The Cayley map is open}\label{sec open}

We now prove that the Cayley map is open. Recall the deformation complex and description of the local structure of the moduli space from \S\ref{sec: local struct}. By Corollary \ref{cor mag implies even and injmap h->m}, $\ker(\ad_f:\fh\to\fm)=\fc$ and $\ad_f:\ad_f(\fm)\to\ad_f^2(\fm)$ is an isomorphism. Hence we have $\rC\star\rT$-invariant splittings 
\begin{equation}
    \label{eq invariant splittings adf}\xymatrix{\fh=\fc\oplus\ad_f(\fm)&\text{and}&\fm= V_{\fm}\oplus\ad_f^2(\fm)},
\end{equation}
where $V_\fm=\bigoplus_{j>0}V_{2m_j}$ is the set of highest weight spaces in $\fm$. 

Recall that the Cayley map \eqref{eq:Cayleymoduli} is defined by 
\[\Psi_e(\cE_\rC,\psi)=((\cE_\rC\star\cE_\rT)[\rH],f+\varphi),\]
where $\psi\in \bigoplus_{j>0}H^0(\cE_\rC[Z_{2m_j}]\otimes K^{m_j+1})$ and $\varphi\in H^0(\cE_\rC\star\cE_\rT[V_\fm]\otimes K)$ is determined by $\psi$ using the isomorphisms $\ad_f^{m_j}:(\cE_\rC\star\cE_\rT)[V_{2m_j}]\otimes K\to \cE_\rC[Z_{2m_j}]\otimes K^{m_j+1}$.
The deformation complex for $(\cE_\rC,\psi)$ is 
\[C^\bullet_\cC: \cE_{\rC}[\fc]\xrightarrow{\ \ad_{\psi}\ }\bigoplus_{j>0}\cE_{\rC}[Z_{2m_j}]\otimes K^{m_j+1}.\]
On the other hand, since $[\fc,f+V_\fm]\subset V_\fm$, the deformation complex for $\Psi_e(\cE_\rC,\psi)$ is 
\[C^\bullet_\cH: \cE_{\rC}[\fc]\oplus (\cE_{\rT}\star\cE_\rC)[\ad_f(\fm)]\xrightarrow{\smtrx{
    \ad_\varphi&\alpha\\0&\beta}}(\cE_{\rT}\star\cE_\rC)[V_\fm]\otimes K\oplus(\cE_{\rT}\star\cE_\rC)[\ad^2_f(\fm)]\otimes K,\]
where we've used the fact that $\rT$ acts trivially on $\fc$ to identify $(\cE_\rC\star\cE_\rT)[\fc]\cong\cE_\rC[\fc]$, and $\alpha$ and $\beta$ are defined by post composing $\ad_{f+\varphi}:(\cE_\rC\star\cE_\rT)[\ad_f]\to (\cE_\rC\star\cE_\rT)[\fm]\otimes K$ with the projection onto the $(\cE_\rC\star\cE_\rT)[V_\fm]\otimes K$ and $(\cE_\rC\star\cE_\rT)[\ad_f^2(\fm)]\otimes K$, respectively.

The Cayley map induces a short exact sequence of complexes 
\[0\longrightarrow C^\bullet_\cC\longrightarrow C^\bullet_\cH\longrightarrow C^\bullet_\cH/ C^\bullet_\cC\longrightarrow 0,\]
such that the quotient complex is isomorphic to 
\[C^\bullet_\cH/ C^\bullet_\cC: (\cE_\rC\star\cE_\rT)[\ad_f(\fm)]\xrightarrow{\ \beta\ }(\cE_\rC\star\cE_\rT)[\ad^2_f(\fm)]\otimes K.\]

\begin{proposition}\label{prop:isom in hypercohomo}
The quotient complex $C^\bullet_\cH/ C^\bullet_\cC$ has trivial hypercohomology. In particular, 
\[\HH^\bullet(C^\bullet_{\cC})\cong\HH^\bullet(C^\bullet_{\cH})~.\]
\end{proposition}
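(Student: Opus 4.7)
My plan is to derive the isomorphism from the long exact sequence in hypercohomology associated to the short exact sequence
\[
0\to C^\bullet_\cC\to C^\bullet_\cH\to C^\bullet_\cH/C^\bullet_\cC\to 0,
\]
reducing the goal to showing that the quotient complex $C^\bullet_\cH/C^\bullet_\cC$ is acyclic. Since this quotient is the two-term complex $\beta\colon (\cE_\rC\star\cE_\rT)[\ad_f(\fm)]\to (\cE_\rC\star\cE_\rT)[\ad_f^2(\fm)]\otimes K$, acyclicity is equivalent to $\beta$ being an isomorphism of coherent sheaves, which amounts to fibrewise invertibility (both bundles share the same rank because Corollary \ref{cor mag implies even and injmap h->m} asserts that $\ad_f\colon\ad_f(\fm)\to\ad_f^2(\fm)$ is already a linear isomorphism).

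The key observation is that $\beta = \ad_f + \pi_{\ad_f^2(\fm)}\circ \ad_\varphi$, with the first summand already valued in $\ad_f^2(\fm)$. To verify fibrewise invertibility I would use the $\ad_h$-weight decomposition: enumerate the $\ad_h$-weights $2n_1>2n_2>\cdots>2n_r$ appearing in $\ad_f(\fm)$ in decreasing order. Since $\ad_f$ lowers weight by $2$, Corollary \ref{cor mag implies even and injmap h->m} implies that $\ad_f$ restricts to isomorphisms $\ad_f(\fm)\cap\fg_{2n_i}\xrightarrow{\cong}\ad_f^2(\fm)\cap\fg_{2(n_i-1)}$ for each $i$. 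Writing $\varphi=\sum_l\varphi_l$ with $\varphi_l$ valued in $V_{2m_l}$ of $\ad_h$-weight $2m_l$, the strict positivity $m_l\geq 1$ (which holds because $V_\fm=\bigoplus_{j>0}V_{2m_j}$ consists of highest-weight pieces with strictly positive $\ad_h$-weight) implies that $\ad_{\varphi_l}$ sends the weight-$2n_j$ summand into the weight-$2(n_j+m_l)$ summand; this matches the weight-$2(n_i-1)$ piece of $\ad_f^2(\fm)$ only when $n_i = n_j + m_l + 1 > n_j$, i.e.\ $i<j$. With respect to this ordering, $\beta$ is therefore represented by a triangular matrix whose diagonal blocks are the isomorphisms coming from $\ad_f$, and whose off-diagonal blocks (from $\ad_\varphi$) are strictly triangular; such a matrix is always invertible.

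Once $\beta$ is established as an isomorphism of vector bundles, the two-term quotient complex is acyclic and the desired isomorphism $\HH^\bullet(C^\bullet_\cC)\cong\HH^\bullet(C^\bullet_\cH)$ follows immediately from the long exact sequence. The main technical point to verify carefully is the strict triangularity: one must confirm both that $\ad_f$ contributes only to the diagonal (since it shifts weight by exactly $-2$) and that each $\ad_{\varphi_l}$ with $m_l\geq 1$ strictly raises weight, so no $\ad_\varphi$ contribution lies on the diagonal. The argument should go through uniformly across all four cases of Theorem \ref{thm: classification weighted dynkin}, since it relies only on the general weight structure of magical $\fsl_2$-triples and the fact that the Higgs field lies in the strictly positive weight subspace $V_\fm$.
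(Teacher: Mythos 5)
Your proof is correct and follows the same strategy as the paper: reduce to showing $\beta$ is an isomorphism and then use the $\ad_h$-weight grading together with the fact that $\ad_f$ lowers weight by exactly $2$ while $\ad_\varphi$ raises weight (since $\varphi$ is valued in $V_\fm\subset\bigoplus_{j>0}\fg_j$). The paper states this more tersely — it simply observes that $\pi_{\ad_f^2(\fm)}\circ\ad_{f+v}$ is injective because for any nonzero $x$ the lowest-weight component of the image is $\ad_f(x)\neq 0$ — whereas you spell out the resulting block-triangular structure explicitly; both are the same computation.
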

\begin{proof}
It suffices to show that the map $\beta:(\cE_\rC\star\cE_\rT)[\ad_f(\fm)]\to(\cE_\rC\star\cE_\rT)[\ad_f^2(\fm)]\otimes K$ is an isomorphism. First, $\ad_f:(\cE_\rC\star\cE_\rT)[\ad_f(\fm)]\to(\cE_\rC\star\cE_\rT)[\ad_f^2(\fm)]\otimes K$ induces an isomorphism of holomorphic bundles. 
Since $v\in V_\fm\subset\bigoplus_{j>0}\fg_j$, for any $v\in V_\fm$, the composition of $\ad_{f+v}:\ad_f(\fm)\to\fm$ with projection onto $\ad_f^2(\fm)$ is injective and hence also defines an isomorphism $\ad_f(\fm)\to\ad_f^2(\fm)$. 
Thus, $\beta$ is an isomorphism and $C^\bullet_\cH/ C^\bullet_\cC$ has trivial hypercohomology.
\end{proof}

We can now prove the second hypercohomology of the complexes $C^\bullet_\cC$ and $C^\bullet_\cH$ vanishes.
\begin{proposition}\label{prop H2 cayley vanishes}
Suppose $(\cE_\rC,\psi)$ is a polystable object in the domain of $\Psi_e$. Then 
\[0=\HH^2(C^\bullet_\cC(\cE_\rC,\psi))=\HH^2(C^\bullet_\cH(\Psi_e(\cE_\rC,\psi)))=0.\]
\end{proposition}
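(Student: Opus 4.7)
By Proposition \ref{prop:isom in hypercohomo} the two hypercohomology groups are isomorphic, so it suffices to show $\HH^2(C^\bullet_\cC)=0$. My plan is to split $C^\bullet_\cC$ as a direct sum of a Higgs-bundle deformation complex for the Cayley partner and a collection of trivial-bracket complexes for the differentials, and then invoke Proposition \ref{prop H2 vanishes} and Serre duality respectively.

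Following Lemma \ref{lem identification of Slodowy domain with Cayley group}, write the Higgs data as $\psi=\tilde\psi_{m_c}\oplus q_{l_1+1}\oplus\cdots\oplus q_{l_{\rk(\fg(e))}+1}$, where $(\cE_\rC,\tilde\psi_{m_c})$ is a $K^{m_c+1}$-twisted $\rG_{0,ss}^\R$-Higgs bundle and $q_{l_j+1}\in H^0(K^{l_j+1})$. By Lemma \ref{lemma:Ccentralizesg(e)} the group $\rC$ centralizes $\fg(e)$, and since each $q_{l_j+1}$ takes values in $Z_{2l_j}\subset\fg(e)\cap\fg_0$, the brackets $[\fc,q_{l_j+1}]=0$; hence $\ad_\psi|_{\cE_\rC[\fc]}=\ad_{\tilde\psi_{m_c}}$. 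Moreover, the target of the complex splits as a direct sum of $\rC$-representations
$$\bigoplus_{j>0}\cE_\rC[Z_{2m_j}]\otimes K^{m_j+1}\;\cong\;\cE_\rC[\fm_{0,ss}]\otimes K^{m_c+1}\;\oplus\;\bigoplus_{j=1}^{\rk(\fg(e))} K^{l_j+1},$$
giving the direct-sum decomposition of complexes
$$C^\bullet_\cC\;\cong\;C^\bullet(\cE_\rC,\tilde\psi_{m_c})\;\oplus\;\bigoplus_{j=1}^{\rk(\fg(e))}\bigl[\,0\to K^{l_j+1}\,\bigr].$$
Note that polystability of $\psi$ in the domain of $\Psi_e$ is equivalent to polystability of $\tilde\psi_{m_c}$.

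Taking $\HH^2$ of this decomposition reduces the problem to the two vanishing statements $\HH^2(C^\bullet(\cE_\rC,\tilde\psi_{m_c}))=0$ and $H^1(K^{l_j+1})=0$ for each $j$. The second is immediate from Serre duality: since $1$ is always the smallest exponent of a simple Lie algebra, $l_j\geq 1$, so $\deg K^{l_j+1}\geq 2(2g-2)>2g-2$ and $H^1(K^{l_j+1})\cong H^0(K^{-l_j})^*=0$. For the first, I would apply Proposition \ref{prop H2 vanishes} to $(\cE_\rC,\tilde\psi_{m_c})$ with $L=K^{m_c+1}$, whose hypothesis $\deg L>2g-2$ becomes $m_c\geq 1$. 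Inspecting Proposition \ref{prop: magical sl2 data} together with the definition of $m_c$, one sees $m_c=1,\,p-1,\,3$ in Cases (2), (3), (4) of Theorem \ref{thm: classification weighted dynkin} respectively (with $p\geq 2$ in Case (3)), so the hypothesis holds. In Case (1), the semisimple factor $\fg_{0,ss}^\R$ is trivial, the complex $C^\bullet(\cE_\rC,\tilde\psi_{m_c})$ is zero, and the vanishing is automatic. The proof is essentially a bookkeeping argument; the only nontrivial ingredient is Proposition \ref{prop H2 vanishes}, which is already proved.
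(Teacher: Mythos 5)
Your proposal is correct and takes essentially the same route as the paper: invoke the vanishing result Proposition \ref{prop H2 vanishes} together with the isomorphism of hypercohomologies from Proposition \ref{prop:isom in hypercohomo}. The paper's own proof is two sentences long, simply observing that the domain is a product of $L$-twisted moduli spaces with $\deg L>2g-2$ and citing those two propositions.

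One small point in your favor: the paper states Proposition \ref{prop H2 vanishes} only for $\rG$ complex semisimple, whereas the factors $\cH_{K^{l_j+1}}(\R^+)$ involve the abelian group $\R^+$ (complexification $\C^*$), so the paper's citation is a mild abuse. Your proposal sidesteps this by splitting $C^\bullet_\cC$ explicitly into the $\rG^\R_{0,ss}$-complex plus the trivial complexes $[0\to K^{l_j+1}]$ and handling the latter directly by Serre duality, which is the cleaner way to treat those summands. Your verification that $m_c\geq1$ in Cases (2)--(4), with Case (1) handled because $\fg_{0,ss}^\R=0$, and that $l_j\geq1$ since the exponents of the simple algebra $\fg(e)$ start at $1$, is exactly the bookkeeping the paper leaves implicit. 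The decomposition of $\ad_\psi$ you use (namely that $[\fc,Z_{2l_j}]=0$ because $Z_{2l_j}\subset\fg(e)$ and $\rC$ centralizes $\fg(e)$, while $[\fc,\fm_{0,ss}]\subset\fm_{0,ss}$ by the Cartan decomposition of $\fg_{0,ss}$) is correct and makes the direct-sum splitting of the complex rigorous.
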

\begin{proof}
    Since the domain of the Cayley map is identified with a product of moduli spaces of $L$-twisted Higgs bundles with $\deg(L)>2g-2$, Proposition \ref{prop H2 vanishes} implies $\HH^2(C^\bullet_\cC(\cE_\rC,\psi))=0$. Now, Proposition \ref{prop:isom in hypercohomo} implies $\HH^2(C^\bullet_\cH(\Psi_e(\cE_\rC,\psi)))=0$.
\end{proof}

\begin{remark}
Note that isomorphism of hypercohomology groups and vanishing of $\HH^2$ in this general context is much cleaner than the one in \cite[\S4.2]{so(pq)BCGGO} for $\rG^\R=\rSO_{p,q}$, which took several pages. This is a reflection of the power of the magical $\fsl_2$-triple perspective.
\end{remark}

We can now prove the Cayley map is open.

\begin{proposition}\label{prop:open}
The Cayley map $\Psi_e:\cM_{K^{m_c+1}}(\tilrG^\R)\times \prod_{j=1}^{\rkfge} H^0(K^{l_j+1})\to\cM(\rG^\R)$ is open. In particular, its image is open in $\cM(\rG^\R)$.
\end{proposition}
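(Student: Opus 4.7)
The plan is to use the local Kuranishi description of the moduli spaces together with the hypercohomology isomorphism of Proposition \ref{prop:isom in hypercohomo} and the matching of automorphism groups from Corollary \ref{Cor same automorphism groups}. Since $\Psi_e$ is already known to be injective on moduli (Theorem \ref{thm Cayley map moduli spaces} and Proposition \ref{prop injective Cayley}), it will suffice to show $\Psi_e$ is a local homeomorphism at every point in the domain.

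First, fix a polystable point $(\cE_\rC,\psi)$ in the domain, with image $(\cE_\rH,\varphi)=\Psi_e(\cE_\rC,\psi)\in\cM(\rG^\R)$. By Proposition \ref{prop H2 cayley vanishes}, the second hypercohomology of the deformation complex vanishes on both sides. Applying the Kuranishi slice description \eqref{eq localmodel nhgb} at each point, a neighborhood of $(\cE_\rC,\psi)$ in the domain is modeled by $\HH^1(C^\bullet_\cC)\sslash\Aut(\cE_\rC,\psi)$, while a neighborhood of $(\cE_\rH,\varphi)$ in $\cM(\rG^\R)$ is modeled by $\HH^1(C^\bullet_\cH)\sslash\Aut(\cE_\rH,\varphi)$.

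Next I would check that $\Psi_e$ induces, on these local models, the natural map coming from the inclusion of complexes $C^\bullet_\cC \hookrightarrow C^\bullet_\cH$. By Proposition \ref{prop:isom in hypercohomo}, this inclusion induces an isomorphism on $\HH^1$. Moreover, by Corollary \ref{Cor same automorphism groups} the two automorphism groups coincide, and the isomorphism on $\HH^1$ is visibly equivariant for this common group (since the splittings \eqref{eq invariant splittings adf} are $\rC\star\rT$-invariant, hence preserved by $\Aut(\cE_\rC,\psi)=\Aut(\cE_\rH,\varphi)$). Consequently the local Kuranishi models are identified, so $\Psi_e$ is a local homeomorphism at $(\cE_\rC,\psi)$.

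Since $\Psi_e$ is injective (Proposition \ref{prop injective Cayley}) and a local homeomorphism at every point of the domain, it is an open map; its image is therefore open in $\cM(\rG^\R)$. The main subtlety in executing this is verifying that the map induced by $\Psi_e$ on local slices really is the one coming from $C^\bullet_\cC\hookrightarrow C^\bullet_\cH$ and is equivariant for the common automorphism group, but this follows directly from the construction of the Cayley map in \eqref{eq Cayley map config} together with the $\rC\star\rT$-invariance of the splittings \eqref{eq invariant splittings adf}.
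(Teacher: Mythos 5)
Your argument is correct and follows essentially the same route as the paper: Kuranishi local models made valid by the vanishing of $\HH^2$ (Proposition \ref{prop H2 cayley vanishes}), the $\HH^1$-isomorphism from Proposition \ref{prop:isom in hypercohomo}, and the equality of automorphism groups from Corollary \ref{Cor same automorphism groups}, which together yield a local homeomorphism at every point and hence openness. Your additional remark pinning down the $\Aut$-equivariance of the $\HH^1$-isomorphism via the $\rC\star\rT$-invariance of the splittings \eqref{eq invariant splittings adf} is a reasonable elaboration of a point the paper leaves implicit.
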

\begin{proof}
Let $(\cE_\rC,\psi)$ be a point in the domain of the $\Psi_e$. By Proposition \ref{prop H2 vanishes}, local neighborhoods of $(\cE_\rC,\psi)$ and $\Psi_e(\cE_\rC,\psi)$ are respectively isomorphic to 
\[\xymatrix{\HH^1(C^\bullet_\cC(\cE_\rC,\psi))\sslash\Aut(\cE_\rC,\psi)&\text{and}&\HH^1(C^\bullet_\cH(\Psi_e(\cE_\rC,\psi)))\sslash\Aut(\Psi_e(\cE_\rC,\psi)).}\]
By Proposition \ref{prop:isom in hypercohomo}, $\Psi_e$ induces an isomorphism $\HH^1(C^\bullet_\cC(\cE_\rC,\psi))\cong\HH^1(C^\bullet_\cH(\Psi_e(\cE_\rC,\psi)))$ which is $\Aut(\cE_\rC,\psi)$-equivariant. By Corollary \ref{Cor same automorphism groups} we have $\Aut(\cE_\rC,\psi)=\Aut(\Psi_e(\cE_\rC,\psi))$. 
Thus, the Cayley map induces a local isomorphism and hence is open.
\end{proof}

\subsection{The Cayley map is closed}\label{subsec: closed}

Recall from Remark \ref{rem slodowy slice}, that the Slodowy slice $f+\ker(\ad_e)=f+V\subset\fg$ is a slice for the adjoint action of $\rG$. 
We have an $\Ad_\rH$ invariant decomposition $V=\fc\oplus V_\fm$, and $f+V_\fm$ is a slice through $f$ for the $\rH$-action in $\fm$. Moreover, $f+V_\fm$ decomposes $\Ad_\rC$-invariantly as
\begin{equation}\label{eq:decomp-Sm}
f+V_\fm=f+\bigoplus_{j=1}^MV_{2m_j},
\end{equation}
where $\rC$-acts trivially on every summand except $V_{2m_c}$. Recall that the Cayley real form $\fg_\cC^\R$ is a real form of $\fg_0$ and has complexified Cartan decomposition $\fg_0=\fc\oplus Z_\fm$, where we define\footnote{Note that $Z_\fm$ is not a subset of $\fm$.} 
\[Z_\fm=\bigoplus_{j=1}^MZ_{2m_j}.\]
 There is a $\rC$-equivariant isomorphism $\psi_e:Z_\fm\to f+V_\fm$ induced by the $\rC$-equivariant isomorphisms $\ad_f^{m_j}:V_{2m_j}\to Z_{2m_j}$.

Let $\chi:\fm\to\fm\sslash\rH$ and $\chi_\cC:Z_\fm\to Z_\fm\sslash \rC$ be the adjoint quotient maps, and let $\chi_e:f+V_{\fm}\to\fm\sslash\rH$ be the restriction of $\chi$ to $f+V_{\fm}$. The composition $\chi_e\circ\psi_e:Z_\fm\to \fm\sslash\rH$ defines a map
\begin{equation}
    \label{eq gamma e}\gamma_e:Z_\fm\sslash\rC\to\fm\sslash\rH
\end{equation}
such that
\begin{equation}\label{eq:adjointmaps-commut}
\gamma_e\circ\chi_\cC=\chi_e\circ\psi_e.
\end{equation}
Recall that, by choosing a homogeneous basis of invariant polynomials, $\fm\sslash\rH$ and $Z_\fm\sslash \rC$ are identified with affine spaces of dimension the real rank of $\fg^\R$ and $\fg_\cC^\R$ respectively. Thus, by Proposition \ref{prop real ranks the same}, $\fm\sslash \rH$ and $Z_\fm\sslash \rC$ have the same dimension. 

\begin{proposition}\label{prop:faithflat,surj}
    Let $\{f,h,e\}\subset\fg$ be a magical $\fsl_2$-triple. Then $\chi_e:f+V_\fm\to\fm\sslash\rH$ and $\gamma_e:Z_\fm\sslash\rC\to\fm\sslash \rH$ are flat and surjective, thus faithfully flat. Moreover, $\gamma_e$ has finite fibers. 
\end{proposition}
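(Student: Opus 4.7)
The plan is to establish the four assertions in the order: surjectivity of $\chi_e$, flatness of $\chi_e$, flatness and surjectivity of $\gamma_e$ via descent, and finally finite fibers of $\gamma_e$ by a dimension count. The two main inputs will be the classical Slodowy theorem on flatness of the adjoint quotient restricted to the Slodowy slice $f+V$, and the Kostant--Rallis theorem for the symmetric pair $(\fg,\fh)$, which provides $\C[\fm]^\rH$ as a polynomial ring of rank equal to the real rank of $\fg^\R$ (and similarly for the Cayley pair on $Z_\fm$).

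For $\chi_e$, I would first observe that since $\{f,h,e\}$ is magical we have a $\sigma_e$-stable decomposition $V=\fc\oplus V_\fm$ with $V_\fm\subset\fm$ and $f\in\fm$, so $\chi_e$ is precisely the restriction to the ``$\fm$-part'' of the Slodowy slice of the Chevalley--type quotient map $\chi|_\fm:\fm\to\fm\sslash\rH$. Surjectivity of $\chi_e$ I would deduce from the contracting $\C^*$-action $\lambda\cdot(f+\sum v_{2j})=f+\sum\lambda^{2(j+1)}v_{2j}$ built from the grading element $h$: this action on $f+V_\fm$ is $\chi_e$-equivariant for a dilating $\C^*$-action on $\fm\sslash\rH$, the image of $\chi_e$ is $\C^*$-stable and contains the point $0=\chi_e(f)$, and a dimension count (using Kostant--Rallis) forces it to be all of $\fm\sslash\rH$. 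For flatness of $\chi_e$, I would invoke Slodowy's flatness theorem applied to the full slice $f+V\to\fg\sslash\rG$, together with the $\sigma_e$-equivariance: the decomposition $V=\fc\oplus V_\fm$ together with the fact that $\fg\sslash\rG$ factors through $\fm\sslash\rH$ (via the Kostant--Rallis restriction being surjective onto $\fh$-invariants) allows one to identify $\chi_e$ with a base change of a flat morphism, hence flat. Alternatively, since $f+V_\fm$ is smooth (an affine space) and $\fm\sslash\rH$ is smooth (Kostant--Rallis), one can argue via miracle flatness once one checks that the fibers are equidimensional of dimension $\dim V_\fm-r$, where $r$ is the real rank; equidimensionality again follows from Slodowy flatness for $f+V\to\fg\sslash\rG$.

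Once faithful flatness of $\chi_e$ is established, the statements about $\gamma_e$ are essentially formal from the commutative square \eqref{eq:adjointmaps-commut}. The map $\chi_\cC:Z_\fm\to Z_\fm\sslash\rC$ is the adjoint quotient for the Cayley pair $(\fg_0,\fc)$, which is faithfully flat by Kostant--Rallis applied to that symmetric pair. The isomorphism $\psi_e:Z_\fm\to f+V_\fm$ is $\rC$-equivariant and hence faithfully flat. Therefore $\gamma_e\circ\chi_\cC=\chi_e\circ\psi_e$ is a composition of faithfully flat maps, hence faithfully flat, and faithful flatness descends along the faithfully flat cover $\chi_\cC$ to give that $\gamma_e$ is faithfully flat.

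For the final assertion, I would use Proposition \ref{prop real ranks the same}: both $Z_\fm\sslash\rC$ and $\fm\sslash\rH$ are affine spaces of dimension equal to the common real rank. Any flat surjective morphism between irreducible varieties of the same dimension has all fibers of dimension zero, and a flat morphism with zero-dimensional fibers is quasi-finite; combined with the properness that follows from the affine-space target and finite generation (or directly from the fact that $\gamma_e$ is a finite-type morphism with zero-dimensional fibers between affine spaces), this gives finite fibers. The main obstacle I anticipate is the flatness of $\chi_e$: Slodowy's theorem is stated for the full slice $f+V$ mapping to $\fg\sslash\rG$, and extracting the ``symmetric-pair'' version for $f+V_\fm\to\fm\sslash\rH$ requires a careful Kostant--Rallis--style descent argument, particularly in verifying that the $\sigma_e$-decomposition of $V$ is compatible with the decomposition of the invariants; this compatibility is precisely where the magical condition enters crucially.
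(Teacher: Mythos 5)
Your proposal diverges from the paper at exactly the point you flag as the main obstacle, and the proposed workarounds do not close the gap. You want to deduce flatness of $\chi_e\colon f+V_\fm\to\fm\sslash\rH$ from Slodowy's flatness theorem for the adjoint slice $f+V\to\fg\sslash\rG$, either by base change along $\fm\sslash\rH\hookrightarrow\fg\sslash\rG$ or by importing equidimensionality of fibers. Neither works: the fiber product $(f+V)\times_{\fg\sslash\rG}(\fm\sslash\rH)$ is \emph{not} $f+V_\fm$ (it is the preimage in $f+V$ of the image of $\fm$ in $\fg\sslash\rG$, a larger set), and equidimensionality of the fibers of $\chi_e$ is not a formal consequence of equidimensionality of the fibers of the entirely different map $f+V\to\fg\sslash\rG$ --- the intersection of a Slodowy fiber with the $(-1)$-eigenspace $\fm$ has no a priori dimension control, so this route is circular. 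The paper avoids Slodowy's theorem altogether. It first establishes flatness of the \emph{unrestricted} quotient $\chi\colon\fm\to\fm\sslash\rH$ by citing the Kostant--Rallis equidimensionality of its fibers (applied directly to the symmetric pair $(\fg,\fh)$) and miracle flatness. It then observes that the orbit map $\mu\colon\rH\times(f+V_\fm)\to\fm$ is smooth --- because $f+V_\fm$ is a slice for the $\rH$-action, which follows from transversality of the Slodowy slice and $\sigma_e$-equivariance --- and hence $\chi\circ\mu$ is flat. Since $\chi\circ\mu=\chi_e\circ\mathrm{pr}_2$ and $\mathrm{pr}_2$ is faithfully flat, EGA IV Cor.\ 2.2.11 descends flatness to $\chi_e$. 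This is the step you are missing.

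There is a second, smaller problem: you attempt to prove surjectivity of $\chi_e$ \emph{before} flatness, from $\C^*$-equivariance plus a dimension count. But a constructible, dense, $\C^*$-invariant subset of $\C^r$ containing $0$ need not be all of $\C^r$ (e.g.\ $\{(x,y):y\neq 0\}\cup\{0\}$ for the scaling action); you really need to know the image is \emph{open}. The paper gets openness from flatness and only then concludes that an open $\C^*$-invariant set containing $0$ under a positive-weight contracting action must be everything. So the logical order must be flatness $\Rightarrow$ openness $\Rightarrow$ surjectivity. Once these two issues are repaired, the rest of your proposal --- the EGA descent for $\gamma_e$ along the faithfully flat $\chi_\cC$, and the finite-fiber count from Proposition \ref{prop real ranks the same} --- matches the paper. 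One further small correction: the magical condition is \emph{not} what makes the flatness and surjectivity of $\chi_e$ go through (the paper notes these hold for any normal triple); it enters only for the $\gamma_e$ statements, via the definition of the Cayley real form and the equality of real ranks.
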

\begin{proof}
By \cite[Theorem 9]{KostantRallis}, every fiber of the surjective morphism $\chi:\fm\to\fm\sslash\rH$ has pure dimension equal to $\dim(\fm)-\dim(\fm\sslash\rH)$. Since both $\fm$ and $\fm\sslash\rH$ are affine spaces, the so called ``miracle flatness theorem'' implies that $\chi_e$ is flat; see, for example \cite[Exercise III.10.9]{Hartshorne-AlgebraicGeometry} or \cite[pg. 158]{Fischer-CAG-book}.

On the other hand, the orbit map $\mu:\rH\times (f+V_\fm)\to\fm$ is smooth, and hence flat since $f+V_\fm$ is a slice for the $\rH$-action on $\fm$.
Thus, $\chi\circ\mu: \rH\times (f+V_\fm)\to\fm\sslash\rH$ is also flat. However, this morphism factors through $f+V_\fm$, so that we have a commutative diagram
\[\begin{tikzcd}
    \rH\times (f+V_\fm)\ar{dr}[swap]{\mathrm{pr}_2}\ar{r}{\mu}&\fm\ar{r}{\chi}&\fm\sslash\rH\\
    &f+V_\fm\ar[u,hook]\ar{ru}[swap]{\chi_e}&
    \end{tikzcd}\]
where $\mathrm{pr}_2$ is the canonical projection.
Since both $\chi\circ\mu$ and $\mathrm{pr}_2$ are flat, the morphism $\chi_e:f+V_\fm\to\fm\sslash\rH$ is flat too by \cite[Corollary 2.2.11]{EGAIV-tome2}. 

 As in \cite[\S7.4]{Slodowy-Simplesingularities-book}, to show that $\chi_e$ is surjective, we show that it is equivariant with respect to a $\C^*$-action with positive weights. 
Choose a basis $(p_1,\ldots,p_{r})$ of $\rH$-invariant polynomials on $\fm$ which are homogeneous of degree $m_{1}',\ldots, m_{r}'$. This identifies $\fm\sslash \rH$ with $\C^r$, via $[y]\mapsto (p_1(y),\ldots, p_r(y))$ for $y\in\fm$. We have 
\begin{equation}
    \label{eq base action}\chi_e(t^2y)=(t^{2m'_1}p_1(y),\ldots,t^{2m'_r}p_r(y)).
\end{equation}
Now consider the $\C^*$-action on $f+V_\fm$ by 
\begin{equation}
    \label{eq slice action}t \cdot \Bigg(f+\sum_{j=1}^Mv_{2m_j}\Bigg)=f+\sum_{j=1}^Mt^{2+2m_j}v_{2m_j},
\end{equation}
where $v_{2m_j}\in V_{2m_j}$. 
There is an element $g\in\rT\subset\rH$ so that  
\[\Ad_g\Bigg(t^2f+\sum_{j=1}^Mt^2v_{2m_j}\Bigg)=f+ \sum_{j=1}^Mt^{2m_j+2}v_{2m_j}=t\cdot\Bigg(f+ \sum_{j=1}^Mv_{2m_j}\Bigg).\]
Since the polynomials $p_j$ are $\rH$-invariant, the map $\chi_e:f+V_\fm\to\fm\sslash\rH$ is equivariant with respect to the $\C^*$-actions \eqref{eq slice action} and \eqref{eq base action}.

Now, flatness implies $\chi_e:f+V_\fm\to\fm\sslash\rH$ is open, so its image is an open set $U\subset\fm\sslash\rH$ containing $0=\chi_e(f)$. 
By $\C^*$-equivariance, it follows that $U$ must be $\C^*$-invariant. Since the weights of the $\C^*$-action are positive, we conclude that $U=\fm\sslash\rH$, and thus $\chi_e^\fm$ is surjective. 

For the map $\gamma_e$, surjectivity follows immediately from surjectivity of $\chi_e$. To prove flatness we use a similar argument as above. The argument for flatness of $\chi:\fm\to\fm\sslash\rH$ also applies to $\chi_\cC:Z_\fm\to Z_\fm\sslash\rC$, thus $\chi_\cC$ is flat. Hence, both $\chi_e\circ \psi_e=\gamma_e\circ \chi_\cC$ and $\chi_\cC$ are flat. Therefore, again by \cite[ Corollary 2.2.11]{EGAIV-tome2}, $\gamma_e$ is flat as well.
Finally, a faithfully flat morphism between affine spaces of the same dimension has finite fibers. So $\gamma_e$ has finite fibers since $\dim(Z_\fm\sslash\rC)=\dim(\fm\sslash\rH)$.
\end{proof}
\begin{remark}
    Note that the proof that $\chi_e:f+V_\fm\to\fm\sslash\rH$ is flat and surjective holds for general normal $\fsl_2$-triples $\{f,h,e\}\subset\fh\oplus\fm$.
\end{remark}

The global version of the above picture is given by taking the Hitchin maps from \S\ref{sec: Hitch fib} on the domain and target of the Cayley map $\Psi_e$ defined in \eqref{eq:Cayleymoduli}. Let $\cK$ be the holomorphic frame bundle of $K$. The Hitchin base on the domain is 
\[\cB_\cC=\cB_{K^{m_c+1}}(\tilrG^\R)\times \prod_{j=1}^{\rkfge}H^0(K^{l_j+1})\cong \bigoplus_{j>0} H^0(\cK^{m_j+1}[Z_{2m_j}\sslash \rC]),\]
because $\cB_{K^{m_c+1}}(\tilrG^\R)=H^0(\cK^{m_c+1}[Z_{2m_c}\sslash \rC])$ by the definition of the group $\tilrG^\R$ (see Definitions \ref{def cayley group} and \ref{def: Cayley real form}) and where we used the isomorphism \eqref{eq:isomHitchinbase} to identify $H^0(K^{l_j+1})$ with $H^0(\cK^{m_j+1}[Z_{2m_j}])$ for each $j\neq c$ (see also Lemma \ref{lem identification of Slodowy domain with Cayley group}), as well as the fact that $\rC$ acts trivially on $Z_{2m_j}$ precisely when $j\neq c$. The Hitchin base for $\cM(\rG^\R)$ is $\cB(\rG^\R)=H^0(\cK[\fm\sslash\rH])$. Let $h_\cC$ and $h$ be the respective Hitchin maps.
From the previous discussion, we conclude that the Cayley map $\Psi_e$ is compatible with the Hitchin maps $h_\cC$ and $h$ in the sense of the next proposition.

\begin{proposition}
There is a commutative diagram
\begin{equation}\label{eq comm diagram proper}
    \vcenter{\xymatrix@C=4em{\cM_{K^{m_c+1}}(\tilrG^\R)\times \prod_{j=1}^{\rkfge}H^0(K^{l_j+1})\ar[r]^{\ \ \ \ \ \ \ \ \ \ \ \ \ \ \ \ \ \ \ \ \Psi_e}\ar[d]_{h_\cC}&\cM(\rG^\R)\ar[d]_h\\\cB_\cC\ar[r]^{\Gamma_e}&\cB(\rG^\R)}}
\end{equation}
where $\Gamma_e$ is a proper map.
\end{proposition}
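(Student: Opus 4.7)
The plan is twofold: first to check that the diagram commutes, which amounts to globalizing the pointwise identity \eqref{eq:adjointmaps-commut}, and then to prove that $\Gamma_e$ is proper by leveraging the finiteness of $\gamma_e$ established in Proposition \ref{prop:faithflat,surj}.

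For commutativity, I would take a point $((\cE_\rC,\tilde\psi_{m_c}),q_{l_1+1},\ldots,q_{l_{\rk(\fg(e))}+1})$ in the domain of $\Psi_e$ and note that the Higgs field of its image
\[\varphi = f+\tilde\phi_{m_c}+\sum_{j=1}^{\rk(\fg(e))} q_{l_j+1}\]
takes values fiberwise in the Slodowy slice $f+V_\fm$, so applying the Hitchin map $h$ returns $\chi_e(\varphi)$ pointwise. Globalizing $\psi_e:Z_\fm\to f+V_\fm$, the inverse $\psi_e^{-1}$ extracts precisely the tuple $(\tilde\psi_{m_c},q_{l_1+1},\ldots)$ from $\varphi-f$, and $\chi_\cC$ applied to this tuple is by definition $h_\cC$ of the input. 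The pointwise identity $\chi_e\circ\psi_e=\gamma_e\circ\chi_\cC$ from \eqref{eq:adjointmaps-commut} then forces $h\circ\Psi_e$ to coincide with the section-level morphism induced by $\gamma_e$; this is the definition of $\Gamma_e$. The $\C^*$-equivariance of $\gamma_e$, with respect to the weighted scaling actions \eqref{eq base action} and \eqref{eq slice action} used to define the Hitchin bases, makes post-composition with $\gamma_e$ well defined between the relevant spaces of holomorphic sections of twisted bundles on $X$.

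For properness, Proposition \ref{prop:faithflat,surj} tells us that $\gamma_e:Z_\fm\sslash\rC\to\fm\sslash\rH$ is faithfully flat with finite fibers between affine varieties of equal dimension, hence a finite morphism of complex affine spaces once homogeneous coordinates are chosen. A finite morphism of affine varieties is proper in the complex-analytic topology, so there exists a non-decreasing control function $\phi:[0,\infty)\to[0,\infty)$ with $|y|\leq\phi(|\gamma_e(y)|)$ for every $y\in Z_\fm\sslash\rC$, in fixed norms. Fix a Hermitian metric on $K$; this induces sup-norms on every factor $H^0(K^k)$ appearing in $\cB_\cC$ and $\cB(\rG^\R)$. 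For any section $s$ in $\cB_\cC$ and any $x\in X$, pointwise evaluation gives $|s(x)|\leq\phi(|\Gamma_e(s)(x)|)\leq\phi(\|\Gamma_e(s)\|_\infty)$, hence $\|s\|_\infty\leq\phi(\|\Gamma_e(s)\|_\infty)$. Thus $\Gamma_e^{-1}$ of any bounded set is sup-norm bounded, and finite-dimensionality of each $H^0(K^k)$ makes sup-norm bounded sets relatively compact. This yields properness of $\Gamma_e$.

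The main obstacle is the transfer from the finite-dimensional, purely Lie-theoretic finiteness of $\gamma_e$ to a global statement about spaces of holomorphic sections on $X$. What makes it work is that the control furnished by finiteness of $\gamma_e$ is uniform in $\fm\sslash\rH$ and depends only on the map, not on any additional parameter, so pointwise application over all $x\in X$ yields a uniform sup-norm bound; finite-dimensionality of each $H^0(K^k)$ then closes the argument.
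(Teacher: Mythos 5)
Your overall strategy coincides with the paper's: commutativity is the globalization of the identity $\gamma_e\circ\chi_\cC=\chi_e\circ\psi_e$ in \eqref{eq:adjointmaps-commut}, and properness of $\Gamma_e$ is deduced from properness of the finite-dimensional map $\gamma_e$; the paper's proof is even terser than yours. Your commutativity argument and your control-function mechanism for passing from pointwise properness of $\gamma_e$ to properness on sup-norm-bounded sets of sections are both sound, as is the observation that $\C^*$-equivariance of $\gamma_e$ with respect to the weighted scaling actions is what makes $\Gamma_e$ well defined on associated bundles.

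There is, however, one real gap in your deduction that $\gamma_e$ is proper. You claim that $\gamma_e$, being ``faithfully flat with finite fibers between affine varieties of equal dimension,'' is therefore a finite morphism. This implication is false in general: a flat, surjective, quasi-finite morphism between affine spaces of the same dimension need not be finite or proper. For instance, $(x,y)\mapsto(x,x^2y^2+y)$ from $\mathbb{A}^2$ to $\mathbb{A}^2$ is flat (by miracle flatness, since all fibers are $0$-dimensional), surjective, and quasi-finite, but is not finite, since the fiber length jumps from $2$ on $\{x\neq 0\}$ to $1$ on $\{x=0\}$, which is incompatible with $f_*\mathcal{O}$ being locally free of constant rank. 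The ingredient that rescues the argument in your situation --- and which you already invoke elsewhere for well-definedness --- is the $\C^*$-equivariance of $\gamma_e$ with respect to weighted $\C^*$-actions having strictly positive weights (cf.\ \eqref{eq base action} and \eqref{eq slice action}). Since $\gamma_e^{-1}(0)$ is a finite $\C^*$-invariant subset of $Z_\fm\sslash\rC$, it must equal $\{0\}$; combining this with positivity of the weights gives properness directly (scale a putative escaping sequence onto the compact unit sphere and pass to a limit to contradict $\gamma_e^{-1}(0)=\{0\}$). Once properness of $\gamma_e$ is secured this way, your control-function argument transfers it to $\Gamma_e$ as written.
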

\begin{proof}
 By Proposition \ref{prop:faithflat,surj}, the map $\gamma_e:Z_\fm\sslash\rC\to \fm\sslash\rH$ defines a proper map 
\[\Gamma_e:\cB_\cC\longrightarrow\cB(\rG^\R),\] and the commutativity of the diagram follows from \eqref{eq:adjointmaps-commut}.
\end{proof}
\begin{remark}
    We expect that the map $\Gamma_e$ is an isomorphism, but, for our purposes, being proper is sufficient.
\end{remark}

We now complete the proof of Theorem \ref{thm Cayley map open closed} by showing the Cayley map is closed.

 \begin{proposition}\label{prop:close}
 The image $\mathrm{Im}(\Psi_e)$ of the Cayley map $\Psi_e$ is closed in $\cM(\rG^\R)$.
 \end{proposition}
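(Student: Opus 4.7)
The plan is to exploit the commutative diagram \eqref{eq comm diagram proper} and the fact that both vertical maps are proper to reduce closedness of $\mathrm{Im}(\Psi_e)$ to a standard topological argument.

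First, I would record that the Cayley map $\Psi_e$ is continuous. This is essentially formal: on Higgs pairs, $\widehat\Psi_e$ is given by the fiber-product construction $(\cE_\rC,\tilde\psi_{m_c},q_{l_1+1},\dots,q_{l_{\rk(\fg(e))}+1})\mapsto(\cE_\rC\star\cE_\rT[\rH],f+\tilde\phi_{m_c}+\sum_j q_{l_j+1})$, which is continuous in the appropriate Sobolev completions because extending the structure group and forming holomorphic sections via the $\rH$-equivariant isomorphisms $\ad_f^{m_j}:(\cE_\rC\star\cE_\rT)[V_{2m_j}]\otimes K\to \cE_\rC[Z_{2m_j}]\otimes K^{m_j+1}$ are continuous operations. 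The descent to moduli spaces preserves continuity because the $\cG_\rH$-action is continuous and the projection to the GIT quotient is continuous.

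Next, both Hitchin maps in the diagram are proper. The map $h:\cM(\rG^\R)\to\cB(\rG^\R)$ is proper by the general result recalled in \S\ref{sec: Hitch fib}. The same general result shows that the Hitchin map on the left of \eqref{eq comm diagram proper} is proper: indeed, $\cM_{K^{m_c+1}}(\rG^\R_{0,ss})$ admits a proper Hitchin map, and each factor $H^0(K^{l_j+1})$ maps as the identity to itself, so the product map $h_\cC$ is proper. Finally, $\Gamma_e$ is proper by the proposition just established.

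Now I would run the standard ``proper + continuous'' closedness argument. Let $y_n=\Psi_e(x_n)$ be a sequence in $\mathrm{Im}(\Psi_e)$ converging to some $y\in\cM(\rG^\R)$. Applying $h$ gives $h(y_n)\to h(y)$, and by commutativity $h(y_n)=\Gamma_e(h_\cC(x_n))$. The set $K=\{h(y_n)\}_{n}\cup\{h(y)\}\subset\cB(\rG^\R)$ is compact, so by properness of $\Gamma_e$ the preimage $\Gamma_e^{-1}(K)$ is compact in $\cB_\cC$ and contains each $h_\cC(x_n)$. Passing to a subsequence, $h_\cC(x_n)\to b$ for some $b\in\cB_\cC$. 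Then $\{h_\cC(x_n)\}\cup\{b\}$ is compact in $\cB_\cC$, and by properness of $h_\cC$ its preimage is compact, so after extracting a further subsequence $x_n\to x$ in the domain of $\Psi_e$. By continuity of $\Psi_e$, $\Psi_e(x_n)\to\Psi_e(x)$, and since $\Psi_e(x_n)=y_n\to y$ in the Hausdorff space $\cM(\rG^\R)$, we conclude $y=\Psi_e(x)\in\mathrm{Im}(\Psi_e)$.

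The main conceptual point is that properness of both Hitchin maps, together with properness of $\Gamma_e$, lets one ``lift'' the convergent sequence $y_n$ in the target to a convergent (sub)sequence $x_n$ in the domain; no delicate analysis of the Higgs field is needed beyond the standard properness of the Hitchin fibration. The only non-formal input is continuity of $\Psi_e$, which is routine since $\Psi_e$ is built from functorial operations on holomorphic bundles.
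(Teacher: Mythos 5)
Your proof is correct and uses the same key ingredients as the paper (commutativity of the diagram \eqref{eq comm diagram proper} together with properness of $h$, $h_\cC$ and $\Gamma_e$); the only difference is presentational, as you argue directly that a convergent sequence in the image lifts, after passing to subsequences, to a convergent sequence in the domain, whereas the paper phrases the same argument contrapositively by showing that a divergent sequence in the domain pushes forward to a divergent sequence in $\cM(\rG^\R)$. You also explicitly invoke the continuity of $\Psi_e$ at the final step, which the paper uses but leaves implicit.
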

 \begin{proof}
 Consider a sequence $x_n=\Psi_e(y_n)$ that diverges in $\mathrm{Im}(\Psi_e)$. In particular $y_n$ diverges in the domain of the Cayley map. Since the maps $h_\cC$ and $\Gamma_e$ in the diagram \eqref{eq comm diagram proper} are proper, we conclude that $h_\cC(y_n)$ diverges in $\cB_\cC$ and $\Gamma_e(h_\cC(y_n))$ diverges in the Hitchin base $\cB(\rG^\R)$ of $\cM(\rG^\R)$. 
 Since the diagram \eqref{eq comm diagram proper} commutes and the Hitchin map $h$ is proper, we conclude that $x_n$ diverges in $\cM(\rG^\R)$. Hence the image of $\Psi_e$ is closed in $\cM(\rG^\R)$.
 \end{proof}

 \subsection{Remarks on local minima of energy and components}\label{section components}
The connected components of the moduli spaces of $\rG^\R$-Higgs bundles have been subject to an extensive study through the last three decades (see for example, \cite{selfduality,liegroupsteichmuller,sp4GothenConnComp,UpqHiggs,HermitianTypeHiggsBGG,AndrePGLnR,Oliveira_GarciaPrada_2016,CollierSOnn+1components}). 
Most of the works dealt with $\rG^\R$ in a case-by-case basis, and the main tool, pioneered by Hitchin \cite{selfduality,liegroupsteichmuller}, to detect and count such components was the \emph{Hitchin function} defined by taking the $L^2$-norm of the Higgs field. 
Namely, the $L^2$-norm of the Higgs field with respect to the metric solving the Hitchin equations \eqref{eq Hitchin eq} defines a proper function on the moduli space
\begin{equation}\label{eq:energyfunction}
F:\cM(\rG^\R)\longrightarrow\R,\ \ \ \ (\cE_\rH,\varphi)\mapsto \int_X||\varphi||^2.
\end{equation}
Since proper maps attain their minimum on every closed set we have an inequality
\[|\pi_0(\cM(\rG^\R))|\leq|\pi_0(\text{local min of $F$})|.\]

\begin{remark}
    The strategy is then to classify local minima of $F$ and show that each component of the local minimum define a component of $\cM(\rG^\R)$. There is an obvious global minimum which occurs when the Higgs field $\varphi$ is identically zero. The component count of the global minimum is then given by the component count of the moduli space of polystable $\rH$-bundles. By \cite{ramanathan_1975}, the number of such components is determined by the number of different topological types of $\rH$-bundles.  
\end{remark}

We briefly recall the local minimum criterion for stable Higgs bundles whose second hypercohomology $\HH^2$ vanishes; see for example the Appendix of \cite{so(pq)BCGGO} for details. The local minima of $F$ are in particular fixed points of the $\C^*$-action on $\cM(\rG^\R)$. If $(\cE_\rH,\varphi)$ is a stable $\C^*$-fixed point with $\varphi\neq 0$, then there is a $\Z$-grading $\fg=\bigoplus_{j\in\Z}\fh_j\oplus\fm_j$ and a holomorphic $\rH_0$-bundle $\cE_{\rH_0}$, where $\rH_0\subset\rH$ is the connected with Lie algebra $\fh_0$, such that 
\[\xymatrix{\cE_{\rH_0}[\rH]\cong\cE_\rH&\text{and}&\varphi\in H^0(\cE_{\rH_0}[\fm_{-1}]\otimes K).}\]
As a result, for all $j$, the Higgs field $\varphi$ defines a map 
\begin{equation}\label{eq: subcomplex-K}
\ad_\varphi:\cE_{\rH_0}[\fh_j]\longrightarrow\cE_{\rH_0}[\fm_{j-1}]\otimes K.
\end{equation}
It turns out that if the stable $\rG^\R$-Higgs bundle $(\cE_\rH,\varphi)$ is such that $\HH^2(C^\bullet(\cE_\rH,\varphi))=0$, then it is a local minimum of $F$ if and only if $\ad_\varphi:\cE_{\rH_0}[\fh_j]\xrightarrow{ \   \cong \  }\cE_{\rH_0}[\fm_{j-1}]\otimes K$ is an isomorphism for all $j<0$; see \cite[\S3.4]{BGGHomotopyGroups} and  \cite[Remark 4.16]{UpqHiggs}. 

Recall from Corollary \ref{cor mag implies even and injmap h->m} that if $\{f,h,e\}\subset\fg$ is a magical $\fsl_2$-triple, then $\ad_f:\fh_{j}\to\fm_{j-1}$ is injective for all $j<0$. This implies the $\rG^\R$-Higgs bundle $\Psi_e(\cE_\rC)=(\cE_\rC\star\cE_\rT[\rH],f)$ defines a local minimum of the Hitchin function.
\begin{proposition}
    Let $\{f,h,e\}\subset\fg$ be a magical $\fsl_2$-triple and $\rC\subset\rH$ be its $\rH$-centralizer. Then the $\rG^\R$-Higgs bundle $(\cE_\rC\star\cE_\rT[\rH],f)$ is a local minimum of the Hitchin function $F$.
\end{proposition}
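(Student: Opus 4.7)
Proof plan: I would apply the local-minimum criterion recalled just above, which reduces the statement to three verifications: (i) that $(\cE_\rC\star\cE_\rT[\rH],f)$ is a $\C^*$-fixed point in $\cM(\rG^\R)$, (ii) that $\HH^2$ of its deformation complex vanishes, and (iii) that $\ad_f$ is an isomorphism on all negatively-graded pieces of the associated $\Z$-grading.

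For (i), the gauge transformation $\Id_{\cE_\rC}\star g_{\mu}$ constructed in the proof of Theorem \ref{thm Cayley map moduli spaces} rescales the Higgs field $f$ by a power of $\mu$, so the orbit of $(\cE_\rC\star\cE_\rT[\rH],f)$ under the $\C^*$-action $\lambda\cdot(\cE,\varphi)=(\cE,\lambda\varphi)$ is a single isomorphism class in $\cM(\rG^\R)$. The induced $\Z$-grading on $\fg$ is (up to reindexing) the $\ad_h$-eigenspace decomposition $\fg=\bigoplus_{j\in\Z}\fg_{2j}$; this is genuinely $\Z$-graded because the magical nilpotent $e$ is even by Corollary \ref{cor mag implies even and injmap h->m}. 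The reduction of structure group is to $\rC\star\rT$, sitting inside the relevant $\rH_0\subset\rH$. For (ii), the Higgs bundle is the image under the Cayley map of the polystable data $((\cE_\rC,0),0,\ldots,0)$, so Proposition \ref{prop H2 cayley vanishes} immediately yields $\HH^2(C^\bullet(\cE_\rC\star\cE_\rT[\rH],f))=0$.

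The substantive step is (iii): that, for every $j<0$, the bundle map
\[
 \ad_f:\ (\cE_\rC\star\cE_\rT)[\fg_{2j}\cap\fh]\ \longrightarrow\ (\cE_\rC\star\cE_\rT)[\fg_{2j-2}\cap\fm]\otimes K
\]
is an isomorphism. Since $f$ is a nowhere-vanishing section of $(\cE_\rC\star\cE_\rT)[\fg_{-2}]\otimes K\cong\cO$, the $K$-twist on the right is exactly what is needed so that this map is induced by the $\rC\star\rT$-equivariant linear map $\ad_f\colon\fg_{2j}\cap\fh\to\fg_{2j-2}\cap\fm$; it therefore suffices to prove that this linear map is a vector-space isomorphism. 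Injectivity is immediate from Corollary \ref{cor mag implies even and injmap h->m}, which identifies $\ker(\ad_f\colon\fh\to\fm)$ with $\fc\subset\fg_0$, a subspace that intersects $\fg_{2j}$ trivially when $j<0$. For surjectivity, I would decompose $\fg=\bigoplus_m W_{2m}$ into $\fsl_2\C$-isotypic summands: standard $\fsl_2\C$-theory gives that $\ad_f\colon W_{2m}\cap\fg_{2j}\to W_{2m}\cap\fg_{2j-2}$ is a bijection of one-dimensional spaces whenever both sides are nonzero, and the explicit formula \eqref{eq magical involution} for $\sigma_e$ shows that $W_{2m}\cap\fg_{2j}\subset\fh$ and $W_{2m}\cap\fg_{2j-2}\subset\fm$ occur under the same condition, namely that $m-j$ is odd.

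The main obstacle is the parity bookkeeping required to align the $\fh$- and $\fm$-components across the weight shift $2j\mapsto 2j-2$, and in particular to verify that the single bad case for $\fsl_2\C$-theory---the lowest weight $m=-j$ of $W_{2m}$, where $\ad_f$ fails to be injective---never contributes on the $\fh$-side when $j<0$, because there $m-j=-2j$ is even and so the corresponding space lies in $\fm$ rather than $\fh$. Once this parity check is carried out, the isomorphism holds $W_{2m}$-by-$W_{2m}$ and weight-by-weight, so the local-minimum criterion applies and the conclusion follows.
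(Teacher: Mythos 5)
Your proof is correct and follows essentially the same route as the paper: check the three conditions of the local-minimum criterion for stable $\C^*$-fixed points with vanishing $\HH^2$, the substantive one being that $\ad_f\colon\fh_j\to\fm_{j-1}$ is an isomorphism for all $j<0$, which is ultimately a consequence of the evenness of magical $\fsl_2$-triples together with basic $\fsl_2\C$-representation theory. You are somewhat more explicit than the paper's terse one-line argument (which cites Corollary \ref{cor mag implies even and injmap h->m} for injectivity and leaves the surjectivity to the splittings \eqref{eq invariant splittings adf}), carrying out the parity verification $W_{2m}$-by-$W_{2m}$; the only minor imprecision is describing the weight spaces $W_{2m}\cap\fg_{2j}$ as one-dimensional when they in fact have dimension $n_{2m}$, though this does not affect the bijectivity claim.
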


Since the image of the Cayley map $\Psi_e$ is a union of connected components of the moduli space $\cM(\rG^\R)$, it is natural to ask how many components are those. Of course that number equals the number of connected components of the moduli space $\cM_{K^{m_c+1}}(\tilrG^\R)$. This question has been studied whenever $\rG^\R$ is one of the classical groups corresponding to Cases (1), (2) and (3) of Theorem \ref{thm: classification weighted dynkin}. 

The classification of local minima of the Hitchin function \eqref{eq:energyfunction} also applies to $L$-twisted Higgs bundles when $\deg(L)>2g-2$, the only difference being that a metric on $L$ must be fixed to make sense of the $L^2$-norm. Moreover, all the results of \cite[Appendix 1]{so(pq)BCGGO} hold for $L$-twisted $\rG^\R$-Higgs bundles.
To count the components in the image of the Cayley map, one first classifies the stable local minima of the $L$-twisted Hitchin function $F_L:\cM_L(\rG^\R)\to\R$ and then  the polystable local minima. 
As in the $K$-twisted case, the crucial computation to detect the stable local minima among the $\C^*$-fixed points is \cite[Lemma 3.11]{BGGHomotopyGroups} (see also \cite[Remark 4.16]{UpqHiggs}). 
These results can be easily adapted the $L$-twisted setup. 
Consider the $L$-twisted version of \eqref{eq: subcomplex-K},
\begin{equation}\label{eq: subcomplex-L}
\ad_\varphi:\cE_{\rH_0}[\fh_j]\longrightarrow\cE_{\rH_0}[\fm_{j-1}]\otimes L.
\end{equation}
\begin{proposition}\label{prop l-twisted minima criterion}
If $\deg(L)>2g-2$, a stable $L$-twisted $\rG^\R$-Higgs bundle $(\cE_\rH,\varphi)$ with $\varphi\neq 0$ is a local minimum of the Hitchin function $F_L$ if and only if \eqref{eq: subcomplex-L} is an isomorphism for every $j<0$.
\end{proposition}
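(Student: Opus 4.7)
The plan is to adapt the arguments of \cite[Lemma 3.11]{BGGHomotopyGroups} and \cite[Appendix 1]{so(pq)BCGGO} from the $K$-twisted to the $L$-twisted setting. The only step in those arguments which genuinely exploits the $K$-twisting is the vanishing of $\HH^2(C^\bullet(\cE_\rH,\varphi))$ for polystable Higgs bundles, and this vanishing is recovered under the hypothesis $\deg(L)>2g-2$ via Proposition \ref{prop H2 vanishes}. I would first observe that the circle action $e^{i\theta}\cdot(\cE_\rH,\varphi)=(\cE_\rH,e^{i\theta}\varphi)$ on $\cM_L(\rG^\R)$ is Hamiltonian with moment map proportional to $F_L$, by the standard Hitchin argument \cite{selfduality} applied to the $L$-twisted Hitchin--Kobayashi equations of Theorem \ref{hk-twisted-higgs}. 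Hence every local minimum of $F_L$ is an $S^1$-fixed point, and by Simpson's characterization of $\C^*$-fixed points, any stable such fixed point with $\varphi\neq 0$ carries a compatible $\Z$-grading $\fg=\bigoplus_j(\fh_j\oplus\fm_j)$ and an $\rH_0$-reduction $\cE_{\rH_0}\subset\cE_\rH$ with $\varphi\in H^0(\cE_{\rH_0}[\fm_{-1}]\otimes L)$.

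Next, the deformation complex splits $\C^*$-equivariantly as $C^\bullet(\cE_\rH,\varphi)=\bigoplus_k C^\bullet_k$, where $C^\bullet_k$ is the weight-$k$ subcomplex \eqref{eq: subcomplex-L}. By Proposition \ref{prop H2 vanishes} we have $\HH^2(C^\bullet)=0$ and hence $\HH^2(C^\bullet_k)=0$ for all $k$, so by \eqref{eq localmodel nhgb} the moduli space is smooth at $(\cE_\rH,\varphi)$ (up to the finite automorphism group), with tangent space $\HH^1(C^\bullet)=\bigoplus_k\HH^1(C^\bullet_k)$ on which $S^1$ acts with weight $k$ on the $k$-th summand. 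The Hessian of $F_L$ at the fixed point is then encoded by these weights, and the standard moment-map criterion for local minima on a smooth K\"ahler manifold yields: $(\cE_\rH,\varphi)$ is a local minimum of $F_L$ if and only if $\HH^1(C^\bullet_k)=0$ for every $k<0$.

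Finally, one must translate the vanishing of $\HH^1(C^\bullet_k)$ for $k<0$ into the isomorphism statement of the proposition. The easy direction is immediate: if $\ad_\varphi$ is a sheaf isomorphism, the two-term complex $C^\bullet_k$ is acyclic and its hypercohomology vanishes entirely. For the converse, using $\HH^2(C^\bullet_k)=0$, the hypercohomology long exact sequence reduces the problem to showing that the sheaf morphism $\ad_\varphi:\cE_{\rH_0}[\fh_k]\to\cE_{\rH_0}[\fm_{k-1}]\otimes L$ is injective with vanishing cokernel; the injectivity part is a consequence of stability of $(\cE_\rH,\varphi)$, while the cokernel is controlled by a Riemann--Roch and Euler-characteristic comparison driven by $\deg(L)>2g-2$.

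The main obstacle is precisely this last step. In the $K$-twisted case one enjoys a Serre-duality self-pairing between $C^\bullet_k$ and $C^\bullet_{1-k}$ which propagates the isomorphism property symmetrically between weights, and renders the verification essentially automatic. For general $L\neq K$ this symmetry is broken, and one must instead argue directly via Riemann--Roch together with the polystability-induced semistability of the sheaves $\cE_{\rH_0}[\fh_k]$ and $\cE_{\rH_0}[\fm_{k-1}]\otimes L$, using $\deg(L)>2g-2$ to ensure that the degree gap forces any putative nonzero cokernel to violate semistability of the target. Once this bookkeeping is carried out, the equivalence of the two conditions follows and the proposition is proved.
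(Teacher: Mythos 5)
Your overall strategy is the one the paper intends: reduce to the Frankel--Kirwan moment-map picture for the circle action, decompose the tangent space $\HH^1(C^\bullet)$ by weight, invoke Proposition \ref{prop H2 vanishes} to kill $\HH^2$ and obtain smooth Kuranishi models, and read off the Morse index from the negative-weight pieces. The paper itself gives essentially no proof of this proposition --- it simply cites \cite[Lemma 3.11]{BGGHomotopyGroups}, \cite[Remark 4.16]{UpqHiggs} and \cite[Appendix 1]{so(pq)BCGGO} and asserts the adaptation is easy --- so your first three paragraphs are a reasonable reconstruction of what is being invoked, and they match the intended route.

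The gap is in your final paragraph. You reduce the converse (local minimum $\Rightarrow$ $\ad_\varphi$ is an isomorphism) to the claim that the vanishing of all three hypercohomology groups of the weight-$k$ two-term complex $C^\bullet_k\colon A_k\to B_k$ forces the sheaf map $\ad_\varphi$ to be an isomorphism, with injectivity ``a consequence of stability'' and the cokernel ``controlled by Riemann--Roch driven by $\deg L>2g-2$''. This is not sound as written. Having $H^\bullet(A_k)\xrightarrow{\,\cong\,}H^\bullet(B_k)$ does not force a map of bundles on a curve to be a sheaf isomorphism: for instance on $\PP^1$ take $A=\cO(-1)\oplus\cO(1)$, $B=\cO\oplus\cO$, and $f$ zero on $\cO(-1)$ and given by two non-proportional maps $\cO(1)\to\cO$ on the other factor; then $\HH^0=\HH^1=\HH^2=0$ but $f$ has a nontrivial kernel. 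So ``$\HH^1(C^\bullet_k)=0$ for $k<0$'' alone is strictly weaker than ``$\ad_\varphi$ is an isomorphism for $k<0$''. In particular, the rank equality $\dim\fh_k=\dim\fm_{k-1}$ needed for $\ad_\varphi\colon A_k\to B_k$ to be a bundle isomorphism is nowhere established, and your Riemann--Roch bookkeeping presupposes it. The actual argument in the cited references uses the structure of the Hodge bundle and stability to control the sheaf kernel and cokernel of $\ad_\varphi$ on each weight piece (and, when the ranks disagree, to force a negative weight contribution to $\HH^1$); it is not a formal Euler-characteristic count, nor is the $K$-twisted case ``essentially automatic from Serre duality'' --- that self-pairing is used, but the verification still involves a nontrivial argument. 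Until the equivalence between $\HH^1$-vanishing and the sheaf isomorphism statement is actually proved for the $L$-twisted complexes, the proposition is not established.
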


Recall from Proposition \ref{prop JordanHolder} that a strictly polystable $\rG^\R$-Higgs bundle admits a Jordan--H\"older reduction to a stable $\hat\rG^\R$-Higgs bundle, for a subgroup $\hat\rG^\R\subset\rG^\R$. Such subgroup $\hat\rG^\R$ is independent of the twisting line bundle \cite[\S2.10]{HiggsPairsSTABILITY}.
So the identification of strictly polystable local minima of $F_L$ is done by identifying stable local minima for $F_L$ in $\cM(\hat\rG^\R)$ and then checking if such minima still define local minima in $\cM(\rG^\R)$. 
Using Proposition \ref{prop l-twisted minima criterion} and the minima classification in the literature, we arrive at the following count of Cayley components, i.e.~of connected components in the image of the Cayley map, for Case (4) of Theorem \ref{thm: classification weighted dynkin}.

\begin{proposition}
Let $\rG$ be a complex simple Lie group of type $\rF_4,$ $\rE_6,$ or $\rE_7$ and $\rG^\R\subset\rG$ be the quaternionic real form. Let $\Psi_e$ be the Cayley map from Theorem \ref{thm Cayley map open closed}. Then,
\begin{itemize}
\item $|\pi_0(\mathrm{Im}(\Psi_e))|=3$ for $\rG$ of type $\rF_4$;
\item $|\pi_0(\mathrm{Im}(\Psi_e))|=1$ for $\rG$ the simply connected group of type $\rE_6$;
\item $|\pi_0(\mathrm{Im}(\Psi_e))|=3$ for $\rG$ the adjoint group of type $\rE_6$;
\item $|\pi_0(\mathrm{Im}(\Psi_e))|=1$ for $\rG$ the simply connected group of type $\rE_7$;
\item $|\pi_0(\mathrm{Im}(\Psi_e))|=2$ for $\rG$ the adjoint group of type $\rE_7.$
\end{itemize}
\end{proposition}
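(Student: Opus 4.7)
The plan is to reduce the component count of $\mathrm{Im}(\Psi_e)$ to that of the factor $\cM_{K^4}(\rG_{0,ss}^\R)$ in the domain of $\Psi_e$, to identify $\rG_{0,ss}^\R$ explicitly in each case, and then to apply known component counts for $K^4$-twisted Higgs bundle moduli spaces. By Theorem \ref{thm Cayley map open closed}, $\Psi_e$ is injective, open, and closed, so its image is homeomorphic to
\[\cM_{K^4}(\rG_{0,ss}^\R)\times\prod_{j=1}^{\rk(\fg(e))} H^0(K^{l_j+1}),\]
where $m_c+1=4$ in Case (4) by Proposition \ref{prop: magical sl2 data}. Each factor $H^0(K^{l_j+1})$ is a complex vector space and hence connected, so $|\pi_0(\mathrm{Im}(\Psi_e))|=|\pi_0(\cM_{K^4}(\rG_{0,ss}^\R))|$.

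Next I would identify $\rG_{0,ss}^\R$ as a real Lie group. By Proposition \ref{prop cayley real form class}, its Lie algebra is $\fsl_3\R$, $\fsl_3\C$, and $\fsu_6^*$ for $\rF_4$, $\rE_6$, and $\rE_7$ respectively; by Definition \ref{def cayley group}, its maximal compact subgroup is $\rC\cap\rG^\R$, where $\rC\subset\rG$ is the centralizer of the magical $\fsl_2$-triple, with Lie algebra $\fso_3\C$, $\fsl_3\C$, and $\fsp_6\C$ respectively (Proposition \ref{prop c subalgebra}). Since $\pi_1(\rG\cdot e)$ is trivial in Case (4) (as used in the proof of Lemma \ref{lemma:Ccentralizesg(e)}), $\rC$ is connected, and its isogeny type in each global form of $\rG$ is dictated by the action of $\rZ(\rG)$. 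This yields: $\rG_{0,ss}^\R=\rSL_3\R$ for $\rF_4$; $\rSL_3\C$ (resp.\ $\rPSL_3\C$) for the simply connected (resp.\ adjoint) group of type $\rE_6$; and $\rSU_6^*$ (resp.\ $\rSU_6^*/\Z_2$) for the simply connected (resp.\ adjoint) group of type $\rE_7$.

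The final step is to count $|\pi_0(\cM_{K^4}(\rG_{0,ss}^\R))|$ in each of the five cases. Since $\deg(K^4)>2g-2$, Proposition \ref{prop H2 vanishes} implies that the $\HH^2$ of the deformation complex vanishes, so the Morse-theoretic analysis of the $L^2$-norm function on the moduli space is clean and the components correspond to components of its local minima locus. For the complex groups $\rSL_3\C$ and $\rPSL_3\C$, the $L$-twisted analog of \cite{JunLiConnectedness} produces one component per topological type of the underlying bundle, giving $|\pi_1(\rSL_3\C)|=1$ and $|\pi_1(\rPSL_3\C)|=3$. For the non-complex, non-split real forms $\rSU_6^*$ and $\rSU_6^*/\Z_2$, the Lie algebra $\fsu_6^*$ admits no magical $\fsl_2$-triples by Theorem \ref{thm: classification weighted dynkin}, so by Proposition \ref{prop l-twisted minima criterion} the only local minima occur at vanishing Higgs field; the count is thus purely topological, yielding $1$ and $2$ since the relevant fundamental groups are trivial and $\Z/2\Z$. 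Finally, for the split form $\rSL_3\R$ there are two topological strata indexed by $w_2\in H^2(\Sigma,\Z/2\Z)$, and the $K^4$-twisted Hitchin section attached to the principal $\fsl_2\R$-subalgebra of $\fsl_3\R$ contributes an additional component, giving $3$ in total.

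The main obstacle is justifying the last case: showing that for $\rSL_3\R$ the $K^4$-twisted Hitchin section is disjoint from the zero-Higgs minima lying in the same topological stratum. This amounts to adapting Hitchin's original argument in \cite{liegroupsteichmuller} to the $K^4$-twisted setting. Via Proposition \ref{prop l-twisted minima criterion}, any non-trivial local minimum within a fixed topological stratum must arise from a $\C^*$-fixed point whose infinitesimal structure is governed by a magical $\fsl_2$-triple in $\fsl_3\R$; by Theorem \ref{thm: classification weighted dynkin} the only such triple in $\fsl_3\R$ is the principal one, and the corresponding image of the Hitchin section is the single extra component.
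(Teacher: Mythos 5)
Your overall reduction is correct and matches the paper's strategy: the Cayley map is a homeomorphism onto its image, the vector-space factors $H^0(K^{l_j+1})$ are connected, so $|\pi_0(\mathrm{Im}(\Psi_e))| = |\pi_0(\cM_{K^4}(\rG_{0,ss}^\R))|$, and the identification of $\rG_{0,ss}^\R$ as $\rSL_3\R$, $\rSL_3\C$ or $\rPSL_3\C$, and $\rSU_6^*$ or $\rPSU_6^*$ is right.

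However, there is a genuine gap in the way you count components of $\cM_{K^4}(\rG_{0,ss}^\R)$ in the non-complex cases. You argue that because $\fsu_6^*$ admits no magical $\fsl_2$-triples (Theorem \ref{thm: classification weighted dynkin}), the only local minima of the Hitchin function occur at zero Higgs field, and similarly that any nonzero local minimum for $\rSL_3\R$ must be ``governed by a magical $\fsl_2$-triple.'' This implication --- that nonzero local minima can only arise from magical $\fsl_2$-triples --- is not established anywhere in the paper; only the converse direction is proved (Corollary \ref{cor mag implies even and injmap h->m} shows magical triples satisfy the injectivity conditions that feed into the minima criterion, Proposition \ref{prop l-twisted minima criterion}). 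The missing converse is essentially the content of the conjecture stated at the end of the introduction, namely that the Cayley components exhaust the ``non-topological'' components of $\cM(\rG^\R)$, and it is expressly flagged there as open. A $\C^*$-fixed point determines a variation-of-Hodge-structure $\Z$-grading, and the local-minimum condition is an isomorphism of certain twisted bundle maps; nothing forces the underlying grading to be that of a magical $\fsl_2$-triple, and indeed the paper must invoke direct case-by-case minima analyses: Hitchin's computation for $\rSL_3\R$ in \cite{liegroupsteichmuller}, \cite{Oliveira_GarciaPrada_2016} for $\rSL_3\C$/$\rPSL_3\C$, and \cite[Proposition 4.6]{AndreOscarSUstar} for $\rSU_6^*$/$\rPSU_6^*$, all adapted to the $K^4$-twisting. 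To fix your argument you should replace the appeal to ``no magical triples $\Rightarrow$ no nonzero minima'' with these concrete classifications (or recompute the minima directly from Proposition \ref{prop l-twisted minima criterion} in each case), as the paper does. Your citation of \cite{JunLiConnectedness} for the complex groups is an acceptable alternative route to the same topological count and is not itself a gap.
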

\begin{proof}
Suppose $\rG^\R$ is a quaternionic real form of the simply connected group of type $\rF_4$, $\rE_6$, $\rE_7$ or $\rE_8$. 
By Proposition \ref{prop cayley real form class}, the semisimple part $\tilrG^\R$ of the Cayley group $\rG^\R_\cC$ is $\rSL_3\R$, $\rSL_3\C$ and $\SU^*_6$ and $\rE_6^{-26}$, respectively. 
For $\rF_4$ and $\rE_8,$ the adjoint group is simply connected but for $\rE_6$ and $\rE_7$ the fundamental group of the adjoint group is $\Z/3\Z$ and $\Z/2\Z$, respectively, and $\tilrG^\R$ is $\rP\rSL_3\C$ and $\rP\rSU^*_6$, respectively. 
The number of connected components of the image of the Cayley map $\Psi_e$ is equal to the number of connected components of the moduli space $\cM_{K^4}(\tilrG^\R)$. 

For $\tilrG^\R=\rSL_3\R$, the number of connected components of $\cM_K(\rSL_3\R)$ is 3. This was computed in \cite{liegroupsteichmuller} by showing the only nonzero local minima of the Hitchin function arises from Case (1) of Theorem \ref{thm: classification weighted dynkin}. These methods can easily be adapted to the $K^4$-twisted situation and no extra local minima arise. Thus,  $|\pi_0(\mathrm{Im}(\Psi_e))|=3$ for $\rG$ of type $\rF_4$. 
Similarly, when $\tilrG^\R$ is $\rSL_3\C$ or $\rP\rSL_3\C$, there are no nonzero local minima of the Hitchin function by \cite{Oliveira_GarciaPrada_2016} and the number of components is 1 or 3, respectively. These methods also generalize directly to the $K^4$-twisted situation, and give the desired component count. 
Finally, for $\tilrG^\R=\rSU^*_6$ it is shown in \cite[Proposition 4.6]{AndreOscarSUstar} that there are no nonzero local minima of the Hitchin function. This computation also applies to $\tilrG^\R=\rP\rSU_6^*$. These techniques also generalize immediately to the $K^4$-twisted case and give the desired component counts.
\end{proof}

\begin{remark}\label{CayleyCompleteness}
  When $\rG$ has type $\rE_8$, we expect the image of the Cayley map $\Psi_e$ to be connected since the maximal compact of the Cayley group has type $\rF_4$ which is simply connected, and hence has only one topological type. 
  In general, it is expected that the Cayley map is the only source of connected components of the moduli space of $\rG^\R$-Higgs bundles which are not labeled by topological invariants of $\rG^\R$-bundles. This has been proven for the real groups $\rSL_n\R$ \cite{liegroupsteichmuller,TopologicalComponents}, $\rU_{p,q}$ \cite{UpqHiggs,chains-2018}, $\rPGL_n\R$ \cite{AndrePGLnR}, $\rSU_{2n}^*$ \cite{AndreOscarSUstar},  $\rSO_{p,q}$ with $p=1$ or $2<p\leq q$ \cite{so(pq)BCGGO}, $\rSO_{2,3}$ \cite{AndreQuadraticPairs,nonmaxSp4} and $\rSp_{2p,2q}$ \cite{Sp(2p2q)modulispaceconnected}. Moreover, when there is a Cayley map for these groups, the number of connected components in the image of the Cayley map is counted. 
\end{remark}

\section{Positive surface group representations}\label{sec: Positive surface group representations}
In this section we deduce properties of the surface group representations associated to Higgs bundles in the image of the Cayley map via the nonabelian Hodge correspondence. 

For this section $\rG$ is a complex simple Lie group and $\rG^\R\subset \rG$ is a real form. We fix a maximal compact subgroup $\rH^\R\subset \rG^\R$ with complexification $\rH$, and consider the Cartan decomposition $\fg^\R=\fh^\R\oplus\fm^\R$, and its complexification $\fg=\fh\oplus\fm$.

\subsection{Surface group representations}
Let $\Sigma$ be a compact smooth oriented surface, without boundary, and $\pi_1\Sigma$ be its fundamental group. Consider the space $\Hom(\pi_1\Sigma,\rG^\R)$ of all representations of $\pi_1\Sigma\to\rG^\R$.
The group $\rG^\R$ acts on $\Hom(\pi_1\Sigma,\rG^\R)$ by conjugation. 
Recall that a representation $\pi_1\Sigma \to \rG^\R$ is called 
\emph{reductive} if its composition with the adjoint representation 
of $\rG^\R$ in $\fg^\R$
decomposes as a direct sum of irreducible representations. Let $\Hom^+(\pi_1\Sigma,\rG^\R)$ be the $\rG^\R$-invariant the subspace consisting of reductive representations.
\begin{definition}
The \emph{$\rG^\R$-character variety} $\cX(\rG^\R)$ of 
$\pi_1\Sigma$ is defined as the orbit space  
\[\cX(\rG^\R) = \Hom^{+}(\pi_1\Sigma,\rG^\R) /\rG^\R.\]
\end{definition}

\begin{example}\label{EX: Fuchsian reps}
Let $\rS^\R$ be $\rPSL_2\R$ or $\rSL_2\R$. The space of {\em Fuchsian representations} $\Fuch(\rS^\R)\subset\cX(\rS^\R)$ is defined to be the subset of conjugacy classes of {\em faithful} representations $\rho_{\Fuch}:\pi_1\Sigma\to\rS^\R$ with {\em discrete image}.  
The space $\Fuch(\rPSL_2\R)$ defines two connected components of $\cX(\rPSL_2\R)$ \cite{TopologicalComponents} and is in one-to-one correspondence with the Teichm\"uller space of isotopy classes of marked Riemann surface structures on the surface $\Sigma$ with either of its orientations. 
Every Fuchsian representation $\rho\in\Fuch(\rPSL_2\R)$ lifts to a representation $\tilde\rho_{\Fuch}\in \Fuch(\rSL_2\R)$. There are $2^{2g}$ such lifts and each lift lies in a distinct connected component of $\cX(\rSL_2\R)$. 

If $\fg^\R$ is the Lie algebra of $\rG^\R$ and $e\in\fg^\R$ is a nonzero nilpotent, the inclusion of the associated $\fsl_2\R$-subalgebra in $\fg^\R$ induces an embedding $\iota_e:\rS^\R\to\rG^\R$, which in turn defines a map on character varieties
\begin{equation}\label{eq fuch locus}
    \iota_e:\Fuch(\rS^\R)\to\cX(\rG^\R).
\end{equation}
Such maps define ways to deform the Teichm\"uller space of $\Sigma$ inside the character variety $\cX(\rG^\R)$. We will call the set $\iota_e(\Fuch(\rS^\R))$ the {\em Fuchsian locus}.
\end{example}

The following theorem links the $\G^\R$-character variety and the $\G^\R$-Higgs bundle moduli space and is known as the \emph{nonabelian Hodge correspondence}. It was proven by Hitchin \cite{selfduality}, Donaldson \cite{harmoicmetric}, Corlette \cite{canonicalmetrics} and Simpson \cite{SimpsonVHS} in various generalities (see also \cite{HiggsPairsSTABILITY}). 

\begin{theorem}\label{Nonabelian Hodge Correspondence}
  Let $\Sigma$ be a closed oriented surface of genus $g\geq2$ and $\G^\R$ be a real semisimple Lie group. 
  For each Riemann surface structure $X$ on $\Sigma$, there is a homeomorphism between the moduli space $\cM(\G^\R)$ of $\G^\R$-Higgs bundles on $X$ and the $\rG^\R$-character variety $\cX(\G^\R)$. 
\end{theorem}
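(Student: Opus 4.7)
The plan is to establish the bijection by constructing maps in both directions, using harmonic map techniques on the representation side and the Hitchin--Kobayashi correspondence of Theorem \ref{hk-twisted-higgs} on the Higgs bundle side, then verifying that the resulting maps are mutually inverse and continuous with respect to the natural topologies.

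First, from a reductive representation $\rho:\pi_1\Sigma\to\rG^\R$, I would form the associated flat $\rG^\R$-bundle $E_\rho=\widetilde{\Sigma}\times_\rho \rG^\R$ over $\Sigma$. The main input here is the theorem of Corlette (extended by Labourie and Jost--Yau) asserting that reductivity of $\rho$ is equivalent to the existence of a $\rho$-equivariant harmonic map $\widetilde{\Sigma}\to \rG^\R/\rH^\R$, or equivalently a harmonic metric (reduction of structure group) $h:\Sigma\to E_\rho/\rH^\R$. Fixing a Riemann surface structure $X$, the harmonic metric decomposes the flat connection $D$ as $D=d_A+\psi$, where $d_A$ is a connection on the associated $\rH$-bundle $\cE_\rH$ and $\psi\in \Omega^1(\cE_\rH[\fm^\R])$. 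The $(0,1)$-part of $d_A$ gives $\cE_\rH$ a holomorphic structure, while $\varphi:=\psi^{1,0}\in \Omega^{1,0}(\cE_\rH[\fm])$ becomes holomorphic precisely because of harmonicity combined with flatness of $D$; this is the content of the Donaldson--Corlette decomposition. The pair $(\cE_\rH,\varphi)$ is then a $\rG^\R$-Higgs bundle, and polystability follows from the fact that $h$ solves the Hitchin equation \eqref{eq Hitchin eq} with $\alpha=0$, via the easy direction of Theorem \ref{hk-twisted-higgs}.

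In the reverse direction, starting from a polystable $\rG^\R$-Higgs bundle $(\cE_\rH,\varphi)$, Theorem \ref{hk-twisted-higgs} with $\alpha=0$ produces a metric $h$ solving the Hitchin equation $F_h+[\varphi,-\tau_h(\varphi)]=0$. Setting $D=d_A+\varphi-\tau_h(\varphi)$, a direct computation using the Hitchin equation and the holomorphicity $\bar\partial_A\varphi=0$ shows that $F_D=0$, so $D$ is a flat $\rG^\R$-connection on the underlying smooth bundle, whose holonomy yields a representation $\rho:\pi_1\Sigma\to\rG^\R$. Reductivity of $\rho$ follows from the fact that the harmonic reduction $h$ exists, together with a standard argument that splits off any non-reductive part via a parabolic reduction violating polystability.

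To conclude, I would check that the two constructions are mutual inverses: starting from $\rho$, producing $(\cE_\rH,\varphi)$, and then re-solving the Hitchin equation recovers the same harmonic metric by uniqueness (up to $\Aut(\cE_\rH,\varphi)$), and so the resulting flat connection has holonomy conjugate to $\rho$; conversely, starting from $(\cE_\rH,\varphi)$, the flat connection $D$ comes equipped with the harmonic metric $h$, and decomposing $D$ with respect to $h$ returns $(\cE_\rH,\varphi)$. Invariance under gauge transformations on one side and $\rG^\R$-conjugation on the other ensures the maps descend to $\cM(X,\rG^\R)\leftrightarrow \cX(\Sigma,\rG^\R)$. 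Continuity in both directions follows from continuous dependence of solutions to the Hitchin equations and of harmonic metrics on their data, established using standard Sobolev and elliptic theory. The main technical obstacle is Corlette's existence theorem for equivariant harmonic maps, whose proof requires a non-trivial heat-flow or direct-method argument in non-positively curved targets; the corresponding obstacle on the Higgs side, namely the Hitchin--Kobayashi correspondence, is available to us as Theorem \ref{hk-twisted-higgs}.
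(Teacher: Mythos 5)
Your proposal is correct and follows essentially the same route the paper does: the paper states the theorem by citing Hitchin, Donaldson, Corlette, and Simpson, and then sketches exactly the same two constructions you describe — applying the Hitchin--Kobayashi correspondence (Theorem~\ref{hk-twisted-higgs}) to pass from a polystable Higgs bundle to a flat connection, and applying Corlette's existence theorem for harmonic metrics to pass from a reductive representation to a Higgs bundle. Your account of why the two maps are mutually inverse and continuous is somewhat more explicit than the paper's sketch, but there is no substantive difference in approach.
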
 
One direction of the nonabelian Hodge correspondence is given by considering solutions to the Hitchin equations \eqref{eq Hitchin eq}. Namely, given a polystable $\rG^\R$-Higgs bundle $(\cE_\rH,\varphi)$, there is a metric $h$ on $\cE_\rH$ such that $F_h+[\varphi,-\tau_h(\varphi)]=0$, where $F_h$ is the curvature of the Chern connection $A_h$ associated to $h.$ 
If $E_h\subset\cE_\rH$ is the associated $\rH^\R$-bundle, then the connection $D=A_h+\varphi-\tau(\varphi)$ defines a flat connection on the smooth $\rG^\R$-bundle $E_h[\rG^\R]$. The flat connection $D$ defines the associated reductive representation $\rho:\pi_1\Sigma\to \rG^\R.$

For the other direction, let $\rho:\pi_1\Sigma\to\rG^\R$ be a reductive representation and consider the associated  $\rG^\R$-bundle with flat connection $D_\rho$,
\[E_\rho=\widetilde\Sigma\times_\rho\rG^\R,\]
where $\widetilde\Sigma$ is the universal cover of $\Sigma.$ Each metric $h$ on $E_\rho$ defines a decomposition of the flat connection $D_\rho=A_h+\Psi$, where $A_h$ preserves the metric. Fixing a Riemann surface structure $X$ on $\Sigma$ allows us to decompose $A_h$ and $\Psi$ into $(1,0)$ and $(0,1)$-parts. 
If $E_h\subset E_\rho$ is the $\rH^\R$-bundle associated to $h$, then the $(0,1)$-part of $A_h$ defines a holomorphic structure on the $\rH$-bundle $E_h[\rH]$ and the $(1,0)$-part of $\Psi$ defines a section of $E_h[\fm]\otimes K.$ By Corlette's Theorem \cite{canonicalmetrics}, there is a metric $h$ on $E_\rho$ (the \emph{harmonic metric}) which defines a polystable $\rG^\R$-Higgs bundle
\[(\cE_\rH,\varphi)=((E_h[\rH],A_h^{0,1}),\Psi^{1,0}).\]
Note that for complex groups $\rG$ we have $\rH=\rG$ and the underlying smooth bundle of $\cE_\rG$ is $E_\rho=E_h[\rG].$
\begin{definition}
Let $\rG$ be a complex reductive Lie group, $\rG^\R\subset\rG$ be a real form, and $\hat\rG^\R\subset\rG^\R$ be a reductive subgroup. Let $\hat\rH\subset\rH\subset\rG$ be the complexifications of maximal compact subgroups of $\hat\rG^\R\subset\rG^\R$ and $\fg=\fh\oplus\fm$ and $\hat\fg=\hat\fh\oplus\hat\fm$ be associated complexified Cartan decompositions with $\hat\fm\subset\fm$.
\begin{itemize}
    \item A representation $\rho:\pi_1\Sigma\to\rG^\R$ \emph{factors} through $\hat\rG^\R$ if $\rho=\iota\circ\hat\rho$, where $\hat\rho:\pi_1\Sigma\to\hat\rG^\R$ and $\iota:\hat\rG^\R\to\rG^\R$ is the inclusion. 
    \item A $\rG^\R$-Higgs bundle $(\cE_\rH,\varphi)$ \emph{reduces} to a $\hat\rG^\R$-Higgs bundle $(\cE_{\hat\rH},\varphi)$ if there is a holomorphic $\hat\rH$-subbundle $\cE_{\hat\rH}\subset\cE_\rH$ such that $\varphi\in H^0(\cE_{\hat\rH}[\hat\fm]\otimes K)$. 
\end{itemize}
\end{definition}

The following is an immediate consequence of the nonabelian Hodge correspondence. 

\begin{proposition}
 A reductive representation $\rho:\pi_1\Sigma\to\rG^\R$ factors through a reductive subgroup $\hat\rG^\R\subset\rG^\R$ if and only if the associated $\rG^\R$-Higgs bundle $(\cE_\rH,\varphi)$ reduces to a $\hat\rG^\R$-Higgs bundle. In particular, $\rho$ factors through a compact subgroup if and only if the Higgs field $\varphi$ is identically zero.
\end{proposition}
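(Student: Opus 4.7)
My plan is to unpack the two directions of the statement using the harmonic metric side of the nonabelian Hodge correspondence recalled in Theorem \ref{Nonabelian Hodge Correspondence}, together with the uniqueness (up to isomorphism) of the polystable Higgs bundle in each $S$-equivalence class.

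For the forward direction, suppose $\rho = \iota\circ\hat\rho$ where $\hat\rho:\pi_1\Sigma\to\hat\rG^\R$ is reductive. Applying the nonabelian Hodge correspondence for the reductive group $\hat\rG^\R$ on the Riemann surface $X$, I obtain a polystable $\hat\rG^\R$-Higgs bundle $(\cE_{\hat\rH},\hat\varphi)$ whose harmonic metric $\hat h$ produces, via the flat connection $A_{\hat h}+\hat\varphi-\tau(\hat\varphi)$, precisely the flat $\hat\rG^\R$-bundle $E_{\hat\rho}$. Extending structure group along $\hat\rH\hookrightarrow\rH$ gives a $\rG^\R$-Higgs bundle $(\cE_{\hat\rH}[\rH],\hat\varphi)$, and the harmonic metric $\hat h$ induces a metric on $\cE_{\hat\rH}[\rH]$ whose associated flat connection is $E_\rho = E_{\hat\rho}\times_\iota\rG^\R$. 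Polystability of the extended pair then follows from the Hitchin--Kobayashi correspondence (Theorem \ref{hk-twisted-higgs}) applied to the metric $\hat h$. Uniqueness in the correspondence forces $(\cE_\rH,\varphi)\cong (\cE_{\hat\rH}[\rH],\hat\varphi)$, which exhibits the desired reduction.

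For the converse, suppose $(\cE_\rH,\varphi)$ reduces to a $\hat\rG^\R$-Higgs pair $(\cE_{\hat\rH},\varphi)$. The main obstacle is that an a priori reduction need not be polystable as a $\hat\rG^\R$-Higgs bundle, so one cannot apply the nonabelian Hodge correspondence for $\hat\rG^\R$ directly. I would handle this by passing, if necessary, to a Jordan--H\"older reduction of $(\cE_{\hat\rH},\varphi)$ to a stable pair in some further reductive subgroup $\hat\rG^\R_0\subset\hat\rG^\R$, using Proposition \ref{prop JordanHolder}; this does not change what it means to factor through a reductive subgroup of $\rG^\R$. Once we have a polystable $\hat\rG^\R_0$-Higgs bundle reducing $(\cE_\rH,\varphi)$, the nonabelian Hodge correspondence for $\hat\rG^\R_0$ gives a reductive representation $\hat\rho:\pi_1\Sigma\to\hat\rG^\R_0$. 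The harmonic metric on $\cE_{\hat\rH_0}$ produces a flat $\hat\rG^\R_0$-connection, and extending it by $\hat\rG^\R_0\hookrightarrow\hat\rG^\R\hookrightarrow\rG^\R$ yields a flat $\rG^\R$-connection which must (by uniqueness) correspond to the representation $\rho$; hence $\rho$ factors through $\hat\rG^\R$.

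For the final assertion, observe that any compact subgroup of $\rG^\R$ is conjugate into a maximal compact $\rH^\R\subset\rG^\R$, so ``factoring through a compact subgroup'' may be replaced by ``factoring through $\rH^\R$''. Applying the main equivalence with $\hat\rG^\R=\rH^\R$, the complexified Cartan decomposition becomes $\hat\fg=\fh\oplus 0$, i.e.\ $\hat\fm=0$, so a reduction of $(\cE_\rH,\varphi)$ to an $\rH^\R$-Higgs bundle is exactly a reduction of the underlying bundle together with the condition $\varphi\in H^0(\cE_\rH[0]\otimes K)=0$. Conversely, if $\varphi=0$ then $(\cE_\rH,0)$ trivially reduces to the $\rH^\R$-Higgs bundle $(\cE_\rH,0)$, so $\rho$ factors through $\rH^\R$. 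This gives the compact case as an immediate corollary.
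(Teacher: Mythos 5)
Your forward direction and the compact-group corollary are both sound and match what the paper has in mind (the paper records this Proposition without proof, calling it an ``immediate consequence'' of the nonabelian Hodge correspondence; your forward argument and the $\varphi=0\Leftrightarrow$ unitary argument are exactly the immediate part). In particular, for the compact case the reduced pair $(\cE_\rH,0)$ is automatically a polystable $\rH$-bundle, so there is no issue there.

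The gap is in your converse argument. You correctly identify the obstacle --- the reduction $(\cE_{\hat\rH},\varphi)$ need not obviously be polystable as a $\hat\rG^\R$-Higgs bundle --- but the fix you propose does not address it. Proposition~\ref{prop JordanHolder} is a statement about \emph{strictly polystable} Higgs pairs: it takes polystability as a hypothesis and produces a Jordan--H\"older reduction to a stable pair. If $(\cE_{\hat\rH},\varphi)$ is not already known to be (semi/poly)stable, you have no license to invoke it; an unstable reduction would have a Harder--Narasimhan destabilizing filtration, not a Jordan--H\"older one, and that would not produce a reduction to a reductive subgroup in the sense you need. Thus the step ``pass to a Jordan--H\"older reduction'' is circular: it presupposes exactly the polystability whose absence you flagged one sentence earlier. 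Moreover, once you do know $(\cE_{\hat\rH},\varphi)$ is polystable, you do not need Jordan--H\"older at all; you can apply the nonabelian Hodge correspondence for $\hat\rG^\R$ directly and conclude by uniqueness, as in your forward direction.

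What is actually needed to close this is a direct argument that the reduced pair is polystable. The natural route is the one suggested by Definition~\ref{def:L-twisted-pairs-stability}: given $\hat s\in i\hat\fh^\R\subset i\fh^\R$ and a holomorphic reduction $\cE_{\hat\rP_{\hat s}}\subset\cE_{\hat\rH}$ with $\varphi\in H^0(\cE_{\hat\rP_{\hat s}}[\hat\fm_{\hat s}]\otimes K)$, extend the structure group to $\rP_{\hat s}=\hat\rP_{\hat s}\cdot$(unipotent of $\rH$) to obtain a reduction $\cE_{\rP_{\hat s}}\subset\cE_\rH$ satisfying $\varphi\in H^0(\cE_{\rP_{\hat s}}[\fm_{\hat s}]\otimes K)$, and compare degrees via the compatible Levi reductions $\hat\rL^\R_{\hat s}\subset\rL^\R_{\hat s}$; polystability of $(\cE_\rH,\varphi)$ then forces the inequality and (in the equality case) the Levi reduction for the $\hat\rG^\R$-pair. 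One also has to verify that the canonical harmonic/Hermitian--Einstein metric restricts to the subbundle, which is where uniqueness of the solution metric enters, as you anticipate, but this verification must come \emph{after} polystability is established, since polystability is what guarantees the existence of a solution metric on $\cE_{\hat\rH}$ in the first place. Until that step is supplied, the converse implication is not proved.
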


The \emph{centralizer of a representation} $\rho:\pi_1\Sigma\to\rG^\R$ is the reductive subgroup  
\[Z_{\rG^\R}(\rho)=\{g\in\rG^\R~|~g\cdot\rho(\gamma)\cdot g^{-1}=\rho(\gamma) \text{ for all $\gamma\in\pi_1\Sigma$}\}.\]
The double centralizer $Z_{\rG^\R}(Z_{\rG^\R}(\rho))\subset\rG^\R$ is reductive, and by construction, $\rho$ factors through $Z_{\rG^\R}(Z_{\rG^\R}(\rho))$. 
\begin{proposition}
    Let $\rG$ be a complex reductive Lie group and $\rho:\pi_1\Sigma\to\rG$ be a reductive representation. Then the centralizer $Z_\rG(\rho)$ of $\rho$ is naturally a subgroup of the automorphism group of the associated $\rG$-Higgs bundle. 
\end{proposition}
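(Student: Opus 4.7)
The plan is to construct the injection $Z_\rG(\rho) \hookrightarrow \Aut(\cE_\rG,\varphi)$ by first producing automorphisms at the level of the flat bundle and then showing they descend through the nonabelian Hodge correspondence. Given $z \in Z_\rG(\rho)$, I would define $\lambda_z$ on the flat bundle $E_\rho = \widetilde{\Sigma}\times_\rho \rG$ by fiberwise left multiplication, $\lambda_z[(\tilde{x},g)] = [(\tilde{x},zg)]$. This is well-defined precisely because $z$ commutes with the image of $\rho$: we have $(\gamma\tilde{x},\rho(\gamma)g)\mapsto(\gamma\tilde{x},z\rho(\gamma)g) = (\gamma\tilde{x},\rho(\gamma)zg)$, which represents the same equivalence class as $(\tilde{x},zg)$. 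Since left multiplication commutes with the right $\rG$-action, $\lambda_z$ is a $\rG$-bundle automorphism, and because the flat connection $D_\rho$ is induced from the left-invariant trivial connection on $\widetilde{\Sigma}\times\rG$, the map $\lambda_z$ preserves $D_\rho$. The assignment $z\mapsto\lambda_z$ is a group homomorphism with trivial kernel, embedding $Z_\rG(\rho)$ into the automorphisms of the flat bundle $(E_\rho, D_\rho)$.

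Next, I would transport this action to the Higgs bundle side. Recall that $(\cE_\rG,\varphi)$ is built from $(E_\rho,D_\rho)$ by choosing a harmonic metric $h$, which determines a reduction $E_h \subset E_\rho$ to a maximal compact subgroup $\rK \subset \rG$ and a decomposition $D_\rho = A_h + \Psi$; the associated Higgs bundle is obtained by equipping $E_\rho$ with the holomorphic structure $A_h^{0,1}$ and Higgs field $\varphi = \Psi^{1,0}$. The key step is to choose $h$ to be $Z_\rG(\rho)$-invariant. This is possible because the space of harmonic metrics for a reductive (hence polystable) representation is a principal homogeneous space under a subgroup of the centralizer $Z_\rG(\rho)$, so a standard fixed-point/averaging argument (using reductivity of $Z_\rG(\rho)$) produces a $Z_\rG(\rho)$-invariant harmonic metric. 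With such an $h$, the reduction $E_h$ is $\lambda_z$-invariant, so $\lambda_z$ preserves $A_h$ and $\Psi$ (and hence their $(0,1)$ and $(1,0)$ parts), giving a holomorphic automorphism of $\cE_\rG$ that commutes with $\varphi$.

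Finally, I would check that the resulting homomorphism $Z_\rG(\rho)\to\Aut(\cE_\rG,\varphi)$ is independent of the choice of invariant harmonic metric (any two differ by an element of $Z_\rG(\rho)$ acting on $E_\rho$, which conjugates the induced Higgs-bundle automorphism back to itself), and that injectivity is inherited from the injectivity at the flat bundle level, since the underlying smooth bundle of $\cE_\rG$ is $E_\rho$ itself.

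The main obstacle is the construction of a $Z_\rG(\rho)$-invariant harmonic metric: one must exploit reductivity of $Z_\rG(\rho)$ and the uniqueness-up-to-centralizer property of harmonic metrics for polystable objects to carry out the fixed-point argument. An alternative route, which bypasses this step, is to invoke directly the functoriality of the nonabelian Hodge correspondence as an equivalence of categories between reductive representations and polystable Higgs bundles; under this equivalence, $\Aut(\rho) = Z_\rG(\rho)$ is sent isomorphically onto $\Aut(\cE_\rG,\varphi)$, which yields not only the desired injection but in fact an equality.
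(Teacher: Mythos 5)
Your construction of the flat-bundle automorphism $\lambda_z$ is fine, but the step where you transport it to the Higgs bundle side has a genuine gap: a $Z_\rG(\rho)$-invariant harmonic metric does not exist in general, and no averaging or fixed-point argument can produce one because $Z_\rG(\rho)$ is typically non-compact. Concretely, take $\rG=\rGL_n\C$ and $\rho=\rho_1\oplus\rho_1$ with $\rho_1$ irreducible of rank $n/2$; then $Z_\rG(\rho)\cong\rGL_2\C$, and the set of harmonic metrics for $\rho$ is the symmetric space $\rGL_2\C/\rU_2$, on which $\rGL_2\C$ acts by left translation with no fixed point, and has no finite Haar measure to average over. (The conclusion of the proposition still holds in this example, but only because the non-invariance of the metric lies in a direction that does not affect $A_h^{0,1}$ or $\Psi^{1,0}$; your argument does not establish this, and it is precisely the content one must prove.) Your fallback of invoking the nonabelian Hodge correspondence as an equivalence of categories is correct but is a much stronger result invoked without proof, so it cannot serve as the proof here.

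The paper's argument avoids metrics altogether. Set $\hat\rG=Z_\rG(Z_\rG(\rho))$. Then $\rho$ factors through $\hat\rG$, so $E_\rho=E_{\hat\rho}[\rG]$ and the $\rG$-Higgs bundle reduces to a $\hat\rG$-Higgs bundle $(\cE_{\hat\rG},\varphi)$ with $\varphi\in H^0(\cE_{\hat\rG}[\hat\fg]\otimes K)$. Every $z\in Z_\rG(\rho)$ centralizes $\hat\rG$ by the double-centralizer construction, so the constant section $z$ is a well-defined (and, being constant, holomorphic) gauge transformation of $\cE_{\hat\rG}[\rG]$, and $\Ad_z$ restricts to the identity on $\hat\fg$, so it fixes $\varphi$. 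This gives the embedding $Z_\rG(\rho)\hookrightarrow\Aut(\cE_\rG,\varphi)$ directly, with no discussion of harmonic metrics or their uniqueness. I would encourage you to replace the invariant-metric step with this double-centralizer reduction.
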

\begin{proof}
    Set $\hat\rG=Z_\rG(Z_\rG(\rho))$ and write $\rho=\iota\circ\hat\rho$, where $\hat\rho:\pi_1\Sigma\to\hat\rG$. The flat bundle $E_\rho$ is given by $E_{\hat\rho}[\rG].$ Thus, the associated $\rG$-Higgs bundle $(\cE_\rG,\varphi)$ reduces to a $\hat\rG$-Higgs bundle
    \[(\cE_\rG,\varphi)=(\cE_{\hat\rG}[\rG],\varphi).\]
    Any element $g\in Z_\rG(\rho)$ defines a constant gauge transformation $g$ of the flat bundle $E_\rho=E_{\hat\rho}[\rG].$ 
    Since $\rG$ and $\hat\rG$ are complex, this defines a gauge transformation of the resulting $\rG$-Higgs bundle. 
   But the constant gauge transformation $g$ acts trivially on $(\cE_{\hat\rG}[\rG],\varphi)$ and hence defines an element of $\Aut(\cE_{\hat\rG}[\rG],\varphi)$. 
\end{proof}

\begin{proposition}
Let $(\cE_\rH,\varphi)$ be a $\rG^\R$-Higgs bundle and $(\cE_\rH[\rG],\varphi)$ be the $\rG$-Higgs bundle obtained by extension of structure group. If the second hypercohomology group $\HH^2(C^\bullet(\cE_\rH,\varphi))$ from \eqref{EQ deformation complex DEF} vanishes, then we have an isomorphism 
\[\HH^0(C^\bullet(\cE_\rH,\varphi))\cong\HH^0(C^\bullet(\cE_\rH[\rG],\varphi)).\]
In particular, we have an isomorphism of the Lie algebras $\aut(\cE_\rH,\varphi)\cong\aut(\cE_\rH[\rG],\varphi)$.
\end{proposition}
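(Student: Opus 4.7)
The plan is to exploit the Cartan decomposition $\fg=\fh\oplus\fm$ to split the $\rG$-deformation complex into a direct sum of the $\rG^\R$-deformation complex and a second subcomplex, then identify the $\HH^0$ of the second subcomplex with the dual of $\HH^2(C^\bullet(\cE_\rH,\varphi))$ via Serre duality. Under the hypothesis that $\HH^2$ vanishes, this summand becomes zero and the claim follows.

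First I would write out the deformation complex for the $\rG$-Higgs bundle. Since $\cE_\rH[\rG][\fg]=\cE_\rH[\fg]=\cE_\rH[\fh]\oplus\cE_\rH[\fm]$ and $\varphi\in H^0(\cE_\rH[\fm]\otimes K)$, the inclusions $[\fm,\fh]\subset\fm$ and $[\fm,\fm]\subset\fh$ imply that $\ad_\varphi$ sends $\cE_\rH[\fh]$ into $\cE_\rH[\fm]\otimes K$ and $\cE_\rH[\fm]$ into $\cE_\rH[\fh]\otimes K$. Thus the $\rG$-deformation complex decomposes as a direct sum of complexes
\[
C^\bullet(\cE_\rH[\rG],\varphi)\;=\;C^\bullet(\cE_\rH,\varphi)\;\oplus\;C_2^\bullet,
\]
where $C_2^\bullet$ is the complex $\cE_\rH[\fm]\xrightarrow{\ad_\varphi}\cE_\rH[\fh]\otimes K$. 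Taking hypercohomology in degree zero gives
\[
\HH^0(C^\bullet(\cE_\rH[\rG],\varphi))\;=\;\HH^0(C^\bullet(\cE_\rH,\varphi))\;\oplus\;\HH^0(C_2^\bullet).
\]

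Next I would show that $\HH^0(C_2^\bullet)\cong \HH^2(C^\bullet(\cE_\rH,\varphi))^*$. This is done exactly as in the proof of Proposition \ref{prop H2 vanishes}: applying Serre duality to $C^\bullet(\cE_\rH,\varphi)$ and using the Killing form on $\fg$ (which restricts to non-degenerate forms on $\fh$ and on $\fm$, identifying $\fh^*\cong\fh$ and $\fm^*\cong\fm$, and sending $\ad_\varphi^*$ to $-\ad_\varphi$), one finds that $\HH^2(C^\bullet(\cE_\rH,\varphi))$ is dual to the $\HH^0$ of the two-term complex $\cE_\rH[\fm]\xrightarrow{-\ad_\varphi}\cE_\rH[\fh]\otimes K$, which is precisely $C_2^\bullet$ (a sign on the differential does not affect the hypercohomology).

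Combining these two observations, the vanishing hypothesis $\HH^2(C^\bullet(\cE_\rH,\varphi))=0$ forces $\HH^0(C_2^\bullet)=0$, whence $\HH^0(C^\bullet(\cE_\rH[\rG],\varphi))\cong \HH^0(C^\bullet(\cE_\rH,\varphi))$. The statement about Lie algebras of automorphism groups then follows immediately from Proposition \ref{prop automorphism}, which identifies $\HH^0$ of each deformation complex with the Lie algebra of the corresponding automorphism group. No step here should pose a serious obstacle: the decomposition of the $\rG$-complex is formal from the Cartan decomposition, and the Serre-duality identification is a verbatim adaptation of the calculation already carried out in Proposition \ref{prop H2 vanishes}.
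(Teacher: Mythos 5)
Your proposal is correct, and it follows essentially the same route as the paper. The paper simply cites the isomorphism
\[
\HH^0(C^\bullet(\cE_\rH[\rG],\varphi))\cong \HH^0(C^\bullet(\cE_\rH,\varphi))\oplus \HH^2(C^\bullet(\cE_\rH,\varphi))^*
\]
to \cite[Corollary 3.16]{HiggsPairsSTABILITY} and concludes immediately, whereas you unpack that isomorphism by observing that the $\rG$-deformation complex splits via the Cartan decomposition into $C^\bullet(\cE_\rH,\varphi)\oplus C_2^\bullet$ and then identifying $\HH^0(C_2^\bullet)$ with $\HH^2(C^\bullet(\cE_\rH,\varphi))^*$ by Serre duality and the Killing form — this is precisely the content of the cited corollary, so your argument is a self-contained version of the paper's proof rather than a genuinely different one.
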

\begin{proof}
Serre duality for the complex $C^\bullet(\cE_\rH[\rG],\varphi)$ yields an isomorphism  
\[\HH^0(C^\bullet(\cE_\rH[\rG],\varphi))\cong \HH^0(C^\bullet(\cE_\rH,\varphi))\oplus \HH^2(C^\bullet(\cE_\rH,\varphi))^*;\]    
see \cite[Corollary 3.16]{HiggsPairsSTABILITY}. So $\HH^2(C^\bullet(\cE_\rH,\varphi))=0$ implies $\HH^0(C^\bullet(\cE_\rH[\rG],\varphi))\cong \HH^0(C^\bullet(\cE_\rH,\varphi)).$
\end{proof}

We are now set up to prove Theorem A from the Introduction.

\begin{theorem}\label{thm character variety components}
Let $\rG$ be a complex simple Lie group with Lie algebra $\fg$ and $\{f,h,e\}\subset\fg$ be a magical $\fsl_2$-triple with canonical real form $\rG^\R\subset\rG$. Let $\Sigma$ be a closed orientable surface of genus $g\geq2$ and $\cX(\rG^\R)$ be the $\rG^\R$-character variety. Then, there exists a nonempty open and closed subset
    \[\cP_e(\rG^\R)\subset\cX(\rG^\R),\]
 such that every $\rho\in\cP_e(\rG^\R)$ has a compact centralizer and does not factor through a compact subgroup.
Moreover, the components $\cP_e(\rG^\R)$ contain the Fuchsian locus defined by $\{f,h,e\}$
 \[\iota_e(\Fuch(\rS^\R))\subset\cP_e(\rG^\R),\]
 where $\iota_e:\rS^\R\hookrightarrow\rG^\R$ is the subgroup associated to the $\fsl_2\R$-subalgebra defined by $\{f,h,e\}$.
\end{theorem}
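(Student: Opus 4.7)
The plan is to define $\cP_e(\Sigma,\rG^\R)$ as the image of the Cayley components $\mathrm{Im}(\Psi_e)\subset\cM(X,\rG^\R)$ under the nonabelian Hodge homeomorphism (Theorem \ref{Nonabelian Hodge Correspondence}), for a fixed Riemann surface structure $X$ on $\Sigma$. Since $\mathrm{Im}(\Psi_e)$ is open and closed in $\cM(X,\rG^\R)$ by Theorem \ref{thm Cayley map open closed}, so is $\cP_e(\Sigma,\rG^\R)$ in $\cX(\Sigma,\rG^\R)$. For nonemptiness and the Fuchsian inclusion, let $\rho_0\in\Fuch(\Sigma,\rS^\R)$; magicality forces evenness (Corollary \ref{cor mag implies even and injmap h->m}), so $\rS^\R\cong\rPSL_2\R$. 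Choosing $X$ to be the Riemann surface uniformized by $\rho_0$, the representation $\rho_0$ corresponds under NAH to the uniformizing Higgs bundle $(\cE_\rT,f)$, and $\iota_e\circ\rho_0$ corresponds to the $\rG^\R$-Higgs bundle obtained by extension of structure group, namely $(\cE_\rT[\rH],f)$. This is precisely $\Psi_e((\cE_{\rC,\mathrm{triv}},0),0,\ldots,0)$ where $\cE_{\rC,\mathrm{triv}}$ denotes the trivial $\rC$-bundle, so $\iota_e(\Fuch(\Sigma,\rS^\R))\subset\cP_e(\Sigma,\rG^\R)$, and in particular $\cP_e$ is nonempty.

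Non-factorization through compact subgroups is immediate from Theorem \ref{thm Cayley map open closed}: if $\rho\in\cP_e$ factored through a compact subgroup of $\rG^\R$, the corresponding Higgs bundle $(\cE_\rH,\varphi)$ would have $\varphi\equiv 0$, contradicting the nowhere vanishing of Higgs fields in $\mathrm{Im}(\Psi_e)$.

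The main work is compactness of the centralizer. Under NAH for complex groups, $Z_\rG(\rho_\C)$ is identified with $\Aut(\cE_\rH[\rG],\varphi)$, whence $Z_{\rG^\R}(\rho)=Z_\rG(\rho_\C)\cap\rG^\R$ sits inside $\Aut(\cE_\rH[\rG],\varphi)\cap\rG^\R$. To bound the latter, consider the short exact sequence of deformation complexes
\[0\to C^\bullet(\cE_\rH,\varphi)\to C^\bullet(\cE_\rH[\rG],\varphi)\to C^\bullet_\fm\to 0,\]
with quotient complex $C^\bullet_\fm\colon \cE_\rH[\fm]\to\cE_\rH[\fh]\otimes K$. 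By Serre duality $\HH^0(C^\bullet_\fm)\cong\HH^2(C^\bullet(\cE_\rH,\varphi))^*$, which vanishes by Proposition \ref{prop H2 cayley vanishes}, so the long exact hypercohomology sequence forces $\HH^0(C^\bullet(\cE_\rH,\varphi))\cong\HH^0(C^\bullet(\cE_\rH[\rG],\varphi))$. In other words, the Lie algebras of $\Aut(\cE_\rH,\varphi)$ and $\Aut(\cE_\rH[\rG],\varphi)$ coincide. By Proposition \ref{prop injective Cayley} and Corollary \ref{Cor same automorphism groups}, $\Aut(\cE_\rH,\varphi)\subset\rC\cap\rH$, so the Lie algebra of $Z_{\rG^\R}(\rho)$ is contained in $(\fc\cap\fh)\cap\fg^\R=\fc\cap\fh^\R$, which is compact by Remark \ref{remark compact centralizer}. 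Since the centralizer of a reductive representation is a reductive algebraic group with finitely many components, $Z_{\rG^\R}(\rho)$ is compact.

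The main obstacle is this last step: the Cayley injectivity analysis of \S\ref{sec injectivity} directly bounds only the $\rH$-gauge automorphism group $\Aut(\cE_\rH,\varphi)$, whereas the centralizer naturally embeds into the larger $\rG$-gauge automorphism group $\Aut(\cE_\rH[\rG],\varphi)$. Bridging the two requires the vanishing of $\HH^2(C^\bullet(\cE_\rH,\varphi))$ for Cayley Higgs bundles together with Serre duality on the quotient complex, both of which are supplied by Proposition \ref{prop H2 cayley vanishes} and the structure of the deformation theory.
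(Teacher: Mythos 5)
Your proof is correct and follows essentially the same strategy as the paper, including the key bridging step: $Z_{\rG^\R}(\rho)$ embeds in $\Aut(\cE_\rH[\rG],\varphi)$, and one must transport this bound to $\Aut(\cE_\rH,\varphi)\subset\rC\cap\rH$, which both you and the paper do via the Serre duality isomorphism $\HH^0(C^\bullet(\cE_\rH[\rG],\varphi))\cong\HH^0(C^\bullet(\cE_\rH,\varphi))\oplus\HH^2(C^\bullet(\cE_\rH,\varphi))^*$ together with the vanishing of $\HH^2$ from Proposition \ref{prop H2 cayley vanishes}. Your derivation via the short exact sequence of complexes and Serre duality applied to the quotient complex $C^\bullet_\fm$ is just an unpacking of the citation the paper uses, and your use of Corollary \ref{Cor same automorphism groups} in place of the paper's direct appeal to Propositions \ref{prop automorphism} and \ref{prop injective Cayley} is an equivalent route to $\Aut(\cE_\rH,\varphi)\subset\rC\cap\rH$.

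One small inaccuracy: you write that evenness of a magical nilpotent forces $\rS^\R\cong\rPSL_2\R$. Remark \ref{rmk: even nilpotents} only guarantees this for the image of $\rS$ in the adjoint group of $\rG$; for non-adjoint $\rG$ the connected subgroup $\rS\subset\rG$ can still be $\rSL_2\C$, and the Cayley construction in \S\ref{subsec: Cayley map} explicitly handles both cases by choosing either the uniformizing Higgs bundle or one of its $\rSL_2\R$-lifts. This does not affect the validity of your argument, since the Fuchsian inclusion goes through verbatim with the appropriate $\rS^\R$, but the general statement of the theorem does cover the $\rS^\R\cong\rSL_2\R$ case.
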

\begin{remark}
    The components $\cP_e(\rG^\R)\subset\cX(\rG^\R)$ are obtained by applying the nonabelian Hodge correspondence to the components defined by the Cayley map $\Psi_e$ from Theorem \ref{thm Cayley map open closed}. For the magical $\fsl_2$-triples from Case (1) of Theorem \ref{thm: classification weighted dynkin}, the components $\cP_e(\rG^\R)$ are the spaces of Hitchin representations and the above theorem was proven by Hitchin in \cite{liegroupsteichmuller}. For the magical triples from Case (2) of Theorem \ref{thm: classification weighted dynkin}, the components $\cP_e(\rG^\R)$ are the spaces of maximal representations and most aspects of the above theorem were proven in \cite{BGRmaximalToledo}. For Case (3), the statement was proven in \cite{so(pq)BCGGO}.
\end{remark}
Since the center of a proper parabolic $\rP^\R\subset\rG^\R$ is not compact, the following is immediate.
\begin{corollary}\label{cor:norepfactors properparab}
    If $\rho$ is any representation in $\cP_e(\rG^\R)$, then there is no proper parabolic subgroup $\rP^\R\subset\rG^\R$ such that $\rho$ factors through $\rP^\R$.
\end{corollary}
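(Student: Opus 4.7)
The plan is to derive this corollary directly from the compact centralizer property of representations in $\cP_e(\Sigma,\rG^\R)$ established in Theorem~\ref{thm character variety components}. The argument is by contradiction: assuming $\rho$ factors through a proper parabolic $\rP^\R<\rG^\R$, one exhibits a noncompact subgroup of $Z_{\rG^\R}(\rho)$, contradicting compactness.

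Concretely, suppose $\rho = \iota \circ \hat\rho$, where $\hat\rho\colon\pi_1\Sigma\to \rP^\R$ and $\iota\colon\rP^\R\hookrightarrow\rG^\R$ is the inclusion. Any element $z$ in the center $\rZ(\rP^\R)$ of $\rP^\R$ commutes with every element in the image of $\hat\rho$, and hence, after applying $\iota$, commutes with every element in the image of $\rho$. Thus
\[
\iota(\rZ(\rP^\R)) \subseteq Z_{\rG^\R}(\rho).
\]
It therefore suffices to show that $\rZ(\rP^\R)$ contains a noncompact subgroup whenever $\rP^\R$ is a proper parabolic of a real semisimple Lie group $\rG^\R$.

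For this, invoke the Langlands decomposition $\rP^\R = \rM^\R \rA^\R \rN^\R$, where $\rA^\R$ is a connected split abelian subgroup of positive dimension (since $\rP^\R$ is a proper parabolic), $\rM^\R$ is reductive with compact center modulo $\rA^\R$, and $\rN^\R$ is the unipotent radical. The subgroup $\rA^\R$ is central in the Levi $\rL^\R = \rM^\R \rA^\R$ and normalizes $\rN^\R$ trivially in the abelian quotient, but more importantly $\rA^\R \cong (\R^+)^k$ for some $k\geq 1$ commutes with all of $\rL^\R$ and acts trivially on itself, so $\rA^\R\subset \rZ(\rL^\R)$. In fact $\rA^\R$ centralizes all of $\rP^\R$ modulo $\rN^\R$, and one easily checks that the fixed points of conjugation by $\rA^\R$ on $\rP^\R$ yield a noncompact abelian subgroup contained in $\rZ(\rP^\R)$ (in the worst case, replace $\rZ(\rP^\R)$ by an appropriate subtorus of $\rA^\R$ intersected with the centralizer; since $\rA^\R$ is abelian and central in $\rL^\R$, this intersection still contains a positive-dimensional noncompact piece because $\rA^\R$ is noncompact). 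Consequently $Z_{\rG^\R}(\rho)$ contains this noncompact subgroup, contradicting Theorem~\ref{thm character variety components}.

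The main (and only) subtle point in executing this plan is the final Lie-theoretic assertion: that a proper parabolic subgroup of a real semisimple Lie group has noncompact center (or at least a noncompact subgroup commuting with all of $\rP^\R$). This is standard and follows from the structure of the Langlands decomposition, so no genuine obstacle is expected; the corollary is indeed immediate once Theorem~\ref{thm character variety components} is in hand.
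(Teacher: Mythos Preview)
Your overall strategy matches the paper's one-line argument: invoke the compact-centralizer statement from Theorem~\ref{thm character variety components} together with the noncompactness of the ``center'' associated to a proper parabolic. However, the Lie-theoretic claim you lean on is false as stated. The center $\rZ(\rP^\R)$ of a proper parabolic need \emph{not} be noncompact: for the Borel $\rB\subset\rSL_2\R$ of upper-triangular matrices one computes directly that $\rZ(\rB)=\{\pm I\}$, which is finite. Your attempt to locate a noncompact piece of $\rA^\R$ inside $\rZ(\rP^\R)$ cannot succeed, because $\rA^\R$ acts on the unipotent radical $\rN^\R$ with nonzero weights, so no nontrivial element of $\rA^\R$ centralizes $\rN^\R$. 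The hedging sentence (``in the worst case, replace $\rZ(\rP^\R)$ by an appropriate subtorus\ldots'') does not repair this.

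The fix is to use that any $\rho\in\cP_e(\Sigma,\rG^\R)\subset\cX(\Sigma,\rG^\R)$ is by definition \emph{reductive}. If a reductive representation factors through a proper parabolic $\rP^\R$, then (after conjugation, which does not change the class in the character variety) it factors through a Levi subgroup $\rL^\R=\rM^\R\rA^\R\subset\rP^\R$. Now $\rA^\R\subset\rZ(\rL^\R)$ genuinely centralizes the image of $\rho$, so $\rA^\R\subset Z_{\rG^\R}(\rho)$; since $\rA^\R\cong(\R^+)^k$ with $k\geq 1$, this contradicts compactness of the centralizer. (The paper's own one-sentence justification carries the same imprecision about the ``center of a proper parabolic''; the intended argument is the one via the Levi.)
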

\begin{proof}[Proof of Theorem \ref{thm character variety components}]
    By Theorem \ref{thm Cayley map open closed}, the image of the Cayley map $\Psi_e$ defines nonempty connected components of the moduli space $\cM(\rG^\R)$. Applying the nonabelian Hodge correspondence to these components defines an nonempty, open and closed subset $\cP_e(\rG^\R)$ of the $\rG^\R$-character variety $\cX(\rG^\R).$
    Since the Higgs field in the image of the Cayley map is never zero, the associated representations never factor through compact subgroups. 

    By construction of the Cayley map, when $\cE_\rC$ is the trivial $\rC$-bundle and all sections $\tilde\psi_{m_c}$ and $q_{j}$ are zero, the resulting Higgs bundle reduces to the uniformizing $\rS^\R$-Higgs bundle for the Riemann surface $X$. Applying the nonabelian Hodge correspondence to this point defines a point in the Fuchsian locus $\iota_e(\Fuch(\rS^\R))$. Actually, $\iota_e(\Fuch(\rS^\R))$ corresponds, under the nonabelian Hodge correspondence, to 
    \[\Psi_e\left(\left\{((\cE_{\rC},0),q_2,0,\ldots,0)\,|\,\cE_{\rC}\text{ trivial, }q_2\in H^0(K^2)\right\}\right).\]
     Thus, $\cP_e(\rG^\R)$ contains the Fuchsian locus defined by the magical $\fsl_2$-triple. 

    Finally we show that the centralizer is compact. Let $\rho:\pi_1\Sigma\to\rG^\R$ be a representation in such a component and let $Z_{\rG^\R}(\rho)\subset\rG^\R$ be its centralizer. 
    Considering the induced complex representation $\rho:\pi_1\Sigma\to\rG^\R\subset\rG$, we have 
    \[Z_{\rG^\R}(\rho)=Z_\rG(\rho)\cap\rG^\R.\]
   It suffices to show that the Lie algebra $\fz_{\fg^\R}(\rho)\subset\fg^\R$ is contained in $\fh^\R$.  By Proposition  \ref{prop automorphism} and Proposition \ref{prop injective Cayley}, the automorphism group $\Aut(\cE_\rH,\varphi)$ is identified with a closed subgroup of $\rC$, and hence $\aut(\cE_\rH,\varphi)\subset\fc$. Thus, 
    \[\fz_{\rG}(\rho)\subset\aut(\cE_\rH[\rG],\varphi)=\aut(\cE_\rH,\varphi)\subset\fc.\]
    Since $\fg^\R\cap\fc=\fc^\R\subset\fh^\R$, we conclude that the centralizer $Z_{\rG^\R}(\rho)$ of $\rho$ is compact.
\end{proof}

Points in the domain of the Cayley map \eqref{eq:Cayleymoduli} are given by 
\[((\cE_\rC,\tilde\psi_{m_c}),q_1,\ldots,q_{\rkfge})\in \cM_{K^{m_c+1}}(\tilrG^\R)\times \prod_{j=1}^{\rkfge} H^0(K^{l_j+1}).\]
When $\tilde\psi_{m_c}=0,$ the associated Higgs bundle reduces to a $\rG(e)^\R*\rC^\R$-Higgs bundle, where $\rG(e)^\R\subset\rG^\R$ is the connected group with Lie algebra $\fg(e)^\R$ and $\rC^\R$ is the compact real form of $\rC$. 
Moreover, by construction of the Cayley map the Higgs field of the associated Higgs bundle is in the image of the Cayley map for the magical $\fsl_2$-triple in $\fg(e)$ from Case (1) of Theorem \ref{thm: classification weighted dynkin}. Hence, the associated representations $\rho:\pi_1\Sigma\to\rG^\R$ are of the form 
$\rho=\rho_{Hit}*\rho_{\rC^\R}$, where $\rho_{Hit}:\pi_1\Sigma\to\rG(e)^\R$ is a Hitchin representation into $\rG(e)^\R$ and $\rho_{\rC^\R}:\pi_1\Sigma\to\rC^\R$ is any representation into the compact group $\rC^\R$. In particular, we have the following proposition. 
\begin{proposition}\label{prop hitchin rep in Pe}
 Each of the sets $\cP_e(\rG^\R)$ contains all representations of the form 
 \[\rho=\rho_{Hit}*\rho_{\rC^\R}:\pi_1\Sigma\longrightarrow\rG(e)^\R*\rC^\R\subset\rG^\R,\]
 where $\rho_{Hit}:\pi_1\Sigma\to\rG(e)^\R$ is any $\rG(e)^\R$-Hitchin representation and $\rho_{\rC^\R}:\pi_1\Sigma\to\rC^\R$ is any $\rC^\R$-representation.
\end{proposition}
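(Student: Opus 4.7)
The plan is to verify the statement by identifying, under the nonabelian Hodge correspondence, the $\rG^\R$-Higgs bundle associated to $\rho=\rho_{Hit}*\rho_{\rC^\R}$ with a point in the image of the Cayley map $\Psi_e$ of Theorem \ref{thm Cayley map open closed}, namely a point of the form $\Psi_e((\cE_\rC,0),q_{l_1+1},\ldots,q_{l_{\rk(\fg(e))}+1})$.

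First I would record that the groups $\rG(e)^\R$ and $\rC^\R$ genuinely commute inside $\rG^\R$: indeed, by Proposition \ref{prop subalge g(e)}, $\fg(e)$ is contained in the $\fg$-centralizer of $\fc$, and restricting to the real form, $\fg(e)^\R$ centralizes $\fc^\R$; integrating gives $[\rG(e)^\R,\rC^\R]=1$, so the multiplication map $\rG(e)^\R\times\rC^\R\to\rG^\R$ has image the commuting product $\rG(e)^\R*\rC^\R$. Hence the product representation $\rho_{Hit}*\rho_{\rC^\R}$ is well-defined and reductive, since $\rho_{Hit}$ is reductive and $\rho_{\rC^\R}$ is unitary.

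Next I would analyze the two factors separately via Theorem \ref{Nonabelian Hodge Correspondence}. Since $\fg(e)^\R$ is the split real form of the simple subalgebra $\fg(e)$ and $\{f,h,e\}$ is a principal $\fsl_2$-triple of $\fg(e)$ (Proposition \ref{prop subalge g(e)}), Hitchin's parametrization \cite{liegroupsteichmuller} -- equivalently Case (1) of our Cayley map applied inside $\fg(e)$ -- shows that the Higgs bundle associated to any $\rG(e)^\R$-Hitchin representation is of the form
\[(\cE_{\rT}[\rH(e)^\R],\ f+q_{l_1+1}+\cdots+q_{l_{\rk(\fg(e))}+1})\]
with $q_{l_j+1}\in H^0(K^{l_j+1})$ and $\{l_j\}$ the exponents of $\fg(e)$. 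On the other hand, $\rho_{\rC^\R}$ has harmonic metric that is the original one, and produces a polystable holomorphic $\rC$-bundle $\cE_\rC$ (reducing to $\rC^\R$) with vanishing Higgs field.

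I would then assemble the two pictures. Because the harmonic metric for the product representation is the product of the harmonic metrics, the associated $\rG^\R$-Higgs bundle of $\rho_{Hit}*\rho_{\rC^\R}$ is precisely
\[\bigl((\cE_\rC\star\cE_\rT)[\rH],\ f+q_{l_1+1}+\cdots+q_{l_{\rk(\fg(e))}+1}\bigr),\]
using that $\rC$ acts trivially on each $Z_{2l_j}\subset\fg(e)\cap\fg_0$ by Lemma \ref{lemma:Ccentralizesg(e)}, so the differentials $q_{l_j+1}$ are genuine holomorphic sections of $K^{l_j+1}$. By the definition of the Cayley map in \eqref{eq Cayley map config} and the identification in Lemma \ref{lem identification of Slodowy domain with Cayley group}, this Higgs bundle is exactly $\Psi_e((\cE_\rC,0),q_{l_1+1},\ldots,q_{l_{\rk(\fg(e))}+1})$, which lies in a Cayley component; hence $\rho_{Hit}*\rho_{\rC^\R}\in\cP_e(\Sigma,\rG^\R)$.

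The only non-routine point is justifying that the Higgs bundle of the product representation is the ``product'' of the individual Higgs bundles; I expect this to follow directly from uniqueness of the harmonic metric applied to the flat bundle $E_\rho=E_{\rho_{Hit}}\star E_{\rho_{\rC^\R}}[\rG^\R]$, since a product metric solves the Hitchin equations for the product flat connection. Once that is in place, the remaining identification with the Cayley map is purely bookkeeping using Lemma \ref{lem identification of Slodowy domain with Cayley group}.
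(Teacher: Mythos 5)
Your proof is correct and follows essentially the same route as the paper's, which reasons from Cayley-map data $((\cE_\rC,0),q_{l_1+1},\ldots)$ forward to the representation $\rho_{Hit}*\rho_{\rC^\R}$, whereas you run the identification in the reverse (and, given the phrasing of the proposition, more natural) direction. The one non-routine step you flag---that the harmonic metric for $\rho_{Hit}*\rho_{\rC^\R}$ is the product of the individual harmonic metrics---is indeed the key compatibility; it follows from uniqueness in Corlette's theorem applied to the lift to the product group $\rG(e)^\R\times\rC^\R$, and the paper treats this as implicit in the dictionary between factorization of representations and reduction of structure group for Higgs bundles.
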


\subsection{Positive Anosov representations}\label{subsec: Positive Anosov representations}
Anosov representations were introduced by Labourie in \cite{AnosovFlowsLabourie} and have many interesting geometric and dynamic properties, generalizing convex cocompact representations into rank one Lie groups. 
Important examples of Anosov representations include Fuchsian representations, quasi-Fuchsian representations, Hitchin representations into split real groups and maximal representations into Lie groups of Hermitian type. We will briefly recall the important points for our applications and refer the reader to  \cite{AnosovFlowsLabourie,guichard_wienhard_2012,AnosovAndProperGGKW,KLPDynamicsProperCocompact} for more details. 

Let $\rG^\R$ be a real semisimple Lie group, $\rP^\R\subset\rG^\R$ be a proper parabolic subgroup and $\rL^\R\subset\rG^\R$ be a Levi subgroup of $\rP^\R$. If $\rP^\R_{opp}$ is the opposite parabolic of $\rP^\R$, then $\rL^\R=\rP^\R\cap\rP^\R_{opp}$ and the homogeneous space $\rG^\R/\rL^\R$ is realized as the unique open $\rG^\R$-orbit in $\rG^\R/\rP^\R\times \rG^\R/\rP^\R_{opp}$. 
The pairs of elements $(x,y)\in \rG^\R/\rP^\R\times \rG^\R/\rP^\R_{opp}$ which lie in this open orbit are called \emph{transverse}. 

\begin{definition}\label{DEF: Anosov rep}
    Let $\Sigma$ be a closed orientable surface of genus $g\geq 2$. Let $\partial_\infty\pi_1\Sigma$ be the Gromov boundary of the fundamental group $\pi_1\Sigma$. Topologically $\partial_\infty\pi_1\Sigma\cong\R\mathbb{P}^1$. A representation $\rho:\pi_1\Sigma\to\rG^\R$ \emph{is $\rP^\R$-Anosov} if there exists a unique continuous boundary map $\xi_\rho:\partial_\infty\pi_1\Sigma\to\rG^\R/\rP^\R$
which satisfies the following properties:
\begin{itemize}
    \item Equivariance: $\xi(\gamma\cdot x)=\rho(\gamma)\cdot\xi(x)$ for all $\gamma\in\pi_1\Sigma$ and all $x\in\partial_\infty\pi_1S$.
    \item Transversality: for all distinct $x,y\in\partial_\infty\pi_1S$ the generalized flags $\xi(x)$ and $\xi(y)$ are transverse.
    \item Dynamics preserving: see \cite{AnosovFlowsLabourie,guichard_wienhard_2012,AnosovAndProperGGKW,KLPDynamicsProperCocompact} for the precise notion. 
\end{itemize}
The map $\xi_\rho$ will be called the {\em $\rP^\R$-Anosov boundary curve}.
\end{definition}

An important property of Anosov representations is that they are stable, that is, they define an \emph{open} set of the character variety \cite{AnosovFlowsLabourie}. However, in general, the set of Anosov representations is \emph{not closed}. For example, the set of Anosov representations in the $\rPSL_2\C$-character variety is the open set of quasi-Fuchsian representations, which is not closed. 
On the other hand, the set of Hitchin representations in split real groups and the set of maximal representations in Hermitian Lie groups do define sets of Anosov representations which are both open and closed in the character variety. 
For both of these cases, the representations satisfy an additional positivity property \cite{AnosovFlowsLabourie,fock_goncharov_2006,MaxRepsAnosov}. 
These notions have been unified into the notion of $\Theta$-positive Anosov representations by Guichard--Wienhard \cite{PosRepsGWPROCEEDINGS,GuichardWienhardPosFull}, which we now briefly recall. 

Let $\rP^\R\subset\rG^\R$ be a parabolic subgroup, $\rL^\R\subset\rP^\R$ be a Levi subgroup and $\rU^\R\subset\rP^\R$ be the unipotent radical. The Lie algebra $\fp^\R$ decomposes $\Ad_{\rL^\R}$-invariantly as $\fp^\R=\fl^\R\oplus\fu^\R.$ Moreover, the nilpotent Lie algebra $\fu^\R$ decomposes into irreducible $\rL^\R$-representations
\[\fu^\R=\bigoplus_{\beta\in\fz(\fl^\R)^*}\fu_\beta.\]
The parabolic subgroup $\rP^\R$ is determined by fixing a restricted root system $\Delta$ of a maximal $\R$-split torus of $\rG^\R$ and then choosing a subset $\Theta\subset\Delta$ of simple roots. To each simple root $\beta_j\in\Theta,$ there is a corresponding irreducible $\rL^\R$-representation $\fu_{\beta_j}.$

\begin{definition}\cite[Definition 4.2]{PosRepsGWPROCEEDINGS}\label{DEF: Positive Structure}
    A pair $(\rG^\R,\rP_\Theta^\R)$ admits \emph{a $\Theta$-positive structure} if, for all $\beta_j\in\Theta,$ the $\rL_\Theta^\R$-representation space $\fu_{\beta_j}$ has an $(\rL^\R_{\Theta})_0$-invariant acute convex cone $c_{\beta_j}^\Theta$, where $(\rL^\R_\Theta)_0$ denotes the identity component of $\rL^\R_\Theta$. 
\end{definition}

The set of pairs $(\rG^\R,\rP_\Theta^\R)$ which admit a positive structure were classified in \cite[Theorem 4.3]{PosRepsGWPROCEEDINGS}, and we now relate this classification with the classification of magical $\fsl_2$-triples given in Theorem \ref{thm: classification weighted dynkin}. 
Fix a magical $\fsl_2$-triple $\{f,h,e\}\subset\fg$ and let $\fg=\fh\oplus\fm$ be the complexified Cartan decomposition defined by the involution $\sigma_e$ from \eqref{eq magical involution}. Fix an involution $\tau_e:\fg\to\fg$ which commutes with $\sigma_e$. 
Recall that $\tau_e$ defines the canonical real form $\fg^\R$ associated to the magical $\fsl_2$-triple.
Recall also from \S\ref{subsec: Real nilpotents and the Sekiguchi correspondence} that $\{f,h,e\}$ is a normal $\fsl_2$-triple and its Cayley transform $\gamma^{-1}(\{f,h,e\})=\{\hat f,\hat h,\hat e\}$ is a Cayley triple (see \eqref{eq inverse cayley transform}) which is a magical $\fsl_2\R$-triple of $\fg^\R$. 
In particular, the nonzero nilpotent $\hat e$ belongs to $\fg^\R$ and hence it determines a parabolic subgroup $\rP_{\hat e}^\R\subset\rG^\R$ of the canonical real form. Comparing the two classification yields the following theorem. 

\begin{theorem}\label{thm pos and mag}
    A pair $(\rG^\R,\rP^\R_\Theta)$ admits a $\Theta$-positive structure if and only if there is a magical $\fsl_2\R$-triple $\{\hat f,\hat h,\hat e\}\subset\fg^\R$ such that $(\rG^\R,\rP^\R_\Theta)=(\rG^\R,\rP_{\hat e}^\R)$. In particular, there are four such families
    \begin{enumerate}
        \item $\rG^\R$-split and $\rP^\R_\Theta$ is the Borel subgroup.
        \item $\rG^\R$ is a Hermitian group of tube type and $\rP^\R_\Theta$ is the maximal parabolic associated the Shilov boundary.
        \item $\rG^\R$ is locally isomorphic to $\rSO_{p,q}$ and $\rP^\R_\Theta$ stabilizes an isotropic flag of the form  
        \[\R\subset\R^2\subset\cdots\subset\R^{p-1}\subset\R^{q+1}\subset\cdots\subset\R^{p+q-1}\subset\R^{p+q}.\]
        \item $\rG^\R$ is a quaternionic real form of $\rE_6$, $\rE_7$, $\rE_8$ or $\rF_4$, so that its restricted root system is that of $\rF_4$, and $\Theta=\{\alpha_1,\alpha_2\}$, where 
       \begin{center}
            \begin{tikzpicture}[scale=.4]
    \draw (-1,0) node[anchor=east]  {$\rF_4:$};
  \foreach \x in {0,...,3}
    \draw[xshift=\x cm,thick] (\x cm,0) circle (.25cm);
    \draw[thick] (0.25 cm,0) -- +(1.5 cm,0);
    \draw[thick] (2.25 cm, .1 cm) -- +(1.5 cm,0);
    \draw[thick] (2.8 cm, 0) -- +(.3cm, .3cm);
    \draw[thick] (2.8 cm, 0) -- +(.3cm, -.3cm);
    \draw[thick] (2.25 cm, -.1 cm) -- +(1.5 cm,0);
    \draw[thick] (4.25 cm, 0) -- +(1.5cm,0);
    \node at (0,0) [below = 1 mm ] {{\scriptsize $\alpha_1$}};
    \node at (2,0) [below = 1 mm ] {{\scriptsize $\alpha_2$}};
    \node at (4,0) [below = 1 mm ] {{\scriptsize $\alpha_3$}};
    \node at (6,0) [below = 1 mm ] {{\scriptsize $\alpha_4$}};
        \node at (7,0)  {.};
  \end{tikzpicture}
        \end{center}
    \end{enumerate}
\end{theorem}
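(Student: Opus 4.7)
The statement is an equivalence between magical Cayley triples and $\Theta$-positive structures. The Guichard--Wienhard classification \cite[Theorem 4.3]{PosRepsGWPROCEEDINGS} shows that the pairs $(\rG^\R, \rP^\R_\Theta)$ admitting a $\Theta$-positive structure are exactly the four families listed in (1)--(4). On the magical side, Theorem~\ref{thm: classification weighted dynkin} together with Proposition~\ref{prop canonical real form} tells us that the canonical real forms carrying a magical $\fsl_2$-triple are exactly the split real forms, the Hermitian tube-type real forms, $\fso_{p,q}$ with $1<p<q$, and the quaternionic real forms of $\rE_6, \rE_7, \rE_8, \rF_4$. Thus the two classifications match at the level of real forms, and the substantive task is to verify, case by case, that the parabolic $\rP^\R_{\hat e}$ attached to the Cayley transform of a magical nilpotent coincides with the parabolic $\rP^\R_\Theta$ associated to the positive structure on $\rG^\R$.

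For this matching I would work via the weighted Dynkin diagram data. The parabolic $\rP^\R_{\hat e}$ is the stabilizer in $\rG^\R$ of the filtration $\bigoplus_{j \geq k} \fg^\R_{\hat h,j}$ coming from the $\ad_{\hat h}$-grading on $\fg^\R$. After Cayley transform, this is determined by the $\ad_h$-grading on $\fg$, whose simple-root labels lie in $\{0,2\}$ since magical triples are even (Corollary~\ref{cor mag implies even and injmap h->m}). The Satake diagram of each of the four real forms then converts the complex weighted Dynkin data into the restricted root datum defining $\rP^\R_\Theta$: the subset $\Theta$ consists of the restricted simple roots whose label is $2$. For Cases (1) and (2) this is immediate: every label being $2$ in Case (1) gives the Borel of the split group, while the single label $2$ at the end of the diagram in Case (2) gives the Shilov-type maximal parabolic, recovering the Cayley correspondence of \cite{BGRmaximalToledo}. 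For Case (3), the pattern of $p-1$ consecutive $2$'s in the $\rB_n$ or $\rD_n$ diagram becomes, via the $\fso_{p,q}$ Satake diagram, the stabilizer of the prescribed isotropic flag $\R\subset\R^2\subset\cdots\subset\R^{p-1}\subset\R^{q+1}\subset\cdots\subset\R^{p+q}$.

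The main obstacle is Case (4), since the restricted root system of the quaternionic real form of $\rE_6, \rE_7, \rE_8, \rF_4$ is always $\rF_4$ and one has to check that the two nodes labelled $2$ in Theorem~\ref{thm: classification weighted dynkin} correspond, under the appropriate Satake diagram, to $\{\alpha_1,\alpha_2\}$ in the $\rF_4$ diagram above. I would handle this by using the structural results of Section~\ref{sec lie theory for except}. In particular, Lemma~\ref{lem red so8} identifies an $\fso_8\C$-subalgebra whose $\rD_4$-subsystem of red roots carries the magical triple as a principal $\fsl_2$, and Lemma~\ref{lem fg(e) in red so8} shows $\fg(e)\cong \mathrm{Lie}(\rG_2)$ with $\fg_{\tilde\alpha}$ a long root and $\langle e_b\rangle$ a short root; together with the $\fg_0$-decomposition of $\fm$ from Lemma~\ref{lem h' sl2 invariant decomp of m}, this pins down the depth-four grading on $\fg^\R$ and matches it to the Guichard--Wienhard grading associated to $\Theta=\{\alpha_1,\alpha_2\}$.

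Once the case-by-case identification is established in one direction, the converse is essentially automatic: given a $\Theta$-positive structure with $\Theta$ one of the four prescribed subsets, the Guichard--Wienhard classification determines $\fg^\R$ and $\rP^\R_\Theta$ up to isomorphism, and by the matching above, the conjugacy class of $\hat e$ is forced to be the class of the magical nilpotent of the corresponding type (its weighted Dynkin diagram, hence the $\rG^\R$-orbit of $\hat e$ via the Sekiguchi correspondence, is uniquely determined by $\Theta$ and $\fg^\R$).
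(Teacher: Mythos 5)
Your proposal follows the same two-citation skeleton as the paper's own proof, which literally consists of two sentences: the Guichard--Wienhard classification \cite[Theorem 4.3]{PosRepsGWPROCEEDINGS} gives the list of $\Theta$-positive pairs, and ``the correspondence with magical $\fsl_2$-triples follows from Theorem~\ref{thm: classification weighted dynkin} and Proposition~\ref{prop canonical real form}.'' You correctly recognize that this leaves the actual matching of $\rP^\R_{\hat e}$ with $\rP^\R_\Theta$ unargued, and the Satake-diagram translation you outline is the right way to close that gap. So your blueprint is sound, and it supplies content the paper omits.

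Two cautions, though. First, the rule ``$\Theta$ consists of the restricted simple roots whose label is $2$'' is correct in spirit but requires care: the label $2$ lives on the \emph{complex} weighted Dynkin diagram attached to $h$, while $\Theta$ is a subset of \emph{restricted} simple roots for $\fa^\R\subset\fm^\R\ni\hat h$. The passage from one to the other is exactly the Satake-diagram bookkeeping, and for the non-split real forms (Cases (2)--(4)) a node of the restricted diagram corresponds to a Galois orbit of white nodes of the complex diagram, so ``copy the label'' is not automatic. Second, and relatedly, keep an eye on the $\rF_4$ labeling conventions: the paper draws the $\rF_4$ arrow in the opposite sense to Bourbaki (so that, in the paper's pictures, the nodes labelled $2$ in Theorem~\ref{thm: classification weighted dynkin}, Case (4), are the \emph{long} nodes), and Theorem~\ref{thm pos and mag} reports $\Theta=\{\alpha_1,\alpha_2\}$ in Guichard--Wienhard's own Bourbaki labeling. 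If you carry out the case-by-case verification without fixing a single convention you will appear to get $\Theta$ equal to the complementary pair. Pinning this down explicitly -- say by observing that the Levi of $\rP_{\hat e}$ has semisimple part $\fsl_3\C$ generated by a \emph{short} $\rA_2$ in $\rF_4$, which agrees with the Levi $\rL^\R_\Theta$ prescribed by GW -- would make the Case~(4) step airtight. The use of Lemmas~\ref{lem red so8}--\ref{lem h' sl2 invariant decomp of m} to identify $\fg(e)\cong\mathrm{Lie}(\rG_2)$ and the depth-five grading is the right toolkit for this; the paper relies on exactly those structural facts (via Remark~\ref{rem descriptions of cones} and the proof of Proposition~\ref{prop inclusions  preserve positive}) when it later works with $\Theta$-positivity in Case~(4).
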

\begin{proof}
    By \cite[Theorem 4.3]{PosRepsGWPROCEEDINGS}, the set of pairs $(\rG^\R,\rP_\Theta)$ which admit a $\Theta$-positive structure are given by the above list. 
    The correspondence with magical $\fsl_2$-triples follows from Theorem \ref{thm: classification weighted dynkin} and Proposition \ref{prop canonical real form}.
\end{proof}

\begin{remark}\label{rem descriptions of cones}
    The cones in Cases (1) and (2) are the only relevant cones. For Case (1), we have $\fu_{\beta_j}\cong\R$ for all $j$ and the cone is $\R^+\subset\R.$ For Case (2), the cones are related to the causal structure on the Shilov boundary (see \cite{SojiCausalStruct}). For example, when $\rG^\R=\rSU_{n,n},$ the positive cone is the set of positive definite $(n\times n)$-matrices inside the set of all $(n\times n)$-matrices and for $\rG^\R=\rSO_{2,N-2}$ the cone is the light cone in $\R^{1,N-3}$. For Case (3), there are $p-2$ cones isomorphic to $\R^+\subset\R$ and one isomorphic to the cone in Case (2) for $\rSO_{2,q-p+1}.$ 
    For Case (4), there are two invariant cones. One is $\R^+\subset\R$ corresponding to the simple root space $\tilde\alpha$ from \eqref{eq G0 invariant g2 decomp}, and the other is isomorphic to the cone in Case (2) for $\rSp_{6}\R$, $\rSU_{3,3}$, $\rSO^*_{12}$ and $\rE_7^{-25}$, for $\rG^\R$ given by the quaternionic real form of $\rF_4,$ $\rE_6,$ $\rE_7$ and $\rE_8$ respectively.  
\end{remark}

For pairs $(\rG^\R,\rP^\R_\Theta)$ which admit a $\Theta$-positive structure, there is a distinguished semigroup $\rU_{\Theta,+}^\R\subset\rU_\Theta^\R$ of the unipotent radical \cite[Theorem 4.5]{PosRepsGWPROCEEDINGS}, which allows one to define a notion of positively ordered triples in  $\rG^\R/\rP^\R_\Theta$ as follows. Since the group $\rG^\R$ acts transitively on the space of transverse points in $\rG^\R/\rP^\R$, any two points $x,y\in\rG^\R/\rP^\R_{\Theta}$ can be mapped to the points $(x_+,x_-)$ associated to $\rP^\R_\Theta$ and $\rP_{\Theta,opp}^\R$ respectively.   
\begin{definition}\cite[Definition 4.6]{PosRepsGWPROCEEDINGS}
    \label{def: Positive triples in G/P}
 Let $x_+,x_-\in\rG^\R/\rP^\R_\Theta$ be the points associated to $\rP^\R_\Theta$ and $\rP_{\Theta,opp}^\R$ respectively. 
A point $x_0$ which is transverse to $x_+$ is the image of $x_-$ under a unique element $u_0\in\rU^\R_\Theta$. The triple $(x_+,x_0,x_-)$ is {\em positive} if $u_0\in \rU^\R_{\Theta,+}.$
 \end{definition} 
 With respect to the orientation on $\partial_\infty\Gamma$, we say that a triple of pairwise distinct points $(a,b,c)$ is a {\em positive triple} if the points appear in this order.
\begin{definition}
\cite[Definition 5.3]{PosRepsGWPROCEEDINGS}\label{DEF: Positive rep}
   Suppose the pair $(\rG^\R,\rP^\R_\Theta)$ admits a $\Theta$-positive structure. Then a $\rP^\R_\Theta$-Anosov representation $\rho:\pi_1\Sigma\to\rG^\R$ is \emph{$\Theta$-positive} if the Anosov boundary curve $\xi:\partial_\infty\pi_1\Sigma\to \rG^\R/\rP^\R_\Theta$ sends positively ordered triples in $\partial_\infty\pi_1S$ to positive triples in $\G^\R/\rP^\R_\Theta.$
\end{definition}
\begin{remark}As mentioned in the introduction, Guichard--Wienhard conjecture that the set $\Theta$-positive Anosov representations is an open and closed subset of $\cX(\rG^\R)$. This conjecture aims to characterize connected components of the character variety consisting entirely of discrete and faithful representations as precisely those arising from positive Anosov representations. Such components are now commonly referred to as \emph{higher rank Teichm\"uller spaces} (cf.~\cite{PosRepsGWPROCEEDINGS}). 
\end{remark}


The construction of the positive semigroup $\rU_{\Theta,+}^\R\subset\rU^\R_\Theta$ is defined by exponentiating certain combinations of elements in the cones $c_{\beta_j}\subset\fu_{\beta_j}$. 
Namely, there is a certain Weyl group $\cW_\Theta$, and if $w_\Theta=\sigma_{i_1} \cdots \sigma_{i_l}$ is an expression for the longest word in $\cW_\Theta$, it defines the map 
\[F_{\sigma_{i_1} \cdots \sigma_{i_l}}:c_{\beta_{i_1}}^0\times \cdots \times c_{\beta_{i_l}}^0\longrightarrow \rU^\R_\Theta~;~ F_{\sigma_{i_1} \cdots \sigma_{i_l}}(v_{i_1},\ldots,v_{i_l})=\exp(v_{i_1})\cdots\exp(v_{i_l}),\]
where $c_{\beta_{i_j}}^0$ is the interior of $c_{\beta_{i_l}}.$ 
By \cite[Theorem 4.5]{PosRepsGWPROCEEDINGS}, the semigroup $\rU^\R_{\Theta,+}\subset\rU^\R_\Theta$ is given by 
\[\rU^\R_{\Theta,+}=F_{\sigma_{i_1} \cdots \sigma_{i_l}}(c_{\beta_{i_1}}^0\times \cdots \times c_{\beta_{i_l}}^0).\]
 
Recall from Proposition \ref{prop subalge g(e)}, that if $\{f,h,e\}\subset\fg$ is a magical $\fsl_2$-triple and $\fc\subset\fg$ is its centralizer, then we denoted the semisimple part of the centralizer of $\fc$ by $\fg(e)\subset\fg.$ For magical triples we showed that $\fg(e)$ is simple and $\{f,h,e\}\subset\fg(e)$ is a principal $\fsl_2$-triple in $\fg(e).$ The next result relates the Weyl group $\cW_\Theta$ with the Weyl group of $\fg(e)$, for each one of positive families from Theorem \ref{thm pos and mag}. 
\begin{proposition}
   Let $\{f,h,e\}\subset\fg$ be a magical $\fsl_2$-triple with canonical real form $\rG^\R$ and let $\fg(e)\subset\fg$ be the semisimple part of the double centralizer of $\{f,h,e\}.$ Then the relevant Weyl group $\cW_\Theta$ used to define the semigroup $\rU^\R_{\Theta,+}$ is the Weyl group of $\fg(e).$ In particular, 
   \begin{enumerate}
        \item For Case (1) of Theorem \ref{thm pos and mag}, $\fg(e)=\fg$ and $\cW_\Theta$ is the Weyl group of $\fg.$
        \item For Case (2) of Theorem \ref{thm pos and mag}, $\fg(e)=\langle f,h,e\rangle$ and $\cW_\Theta$ is the Weyl group of $\fsl_2\C.$
        \item For Case (3) of Theorem \ref{thm pos and mag}, $\fg(e)\cong\fso_{2p-1}\C$ and $\cW_\Theta$ is the Weyl group of $\fso_{2p-1}\C.$
        \item For Case (4) of Theorem \ref{thm pos and mag}, $\fg(e)\cong Lie(\rG_2)$ and $\cW_\Theta$ is the Weyl group of $Lie(\rG_2).$
    \end{enumerate} 
\end{proposition}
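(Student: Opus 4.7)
The approach is case-by-case following the classification in Theorem \ref{thm pos and mag}, matching the Weyl group $\cW_\Theta$ (whose longest element $w_\Theta = \sigma_{i_1}\cdots \sigma_{i_l}$ yields the factorization defining $\rU^\R_{\Theta,+}$) with the Weyl group $W(\fg(e))$ of the simple subalgebra identified in Proposition \ref{prop subalge g(e)}. In each case the cone data from Remark \ref{rem descriptions of cones} together with the length of $w_\Theta$ should pin $\cW_\Theta$ down uniquely.

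Cases (1) and (2) are immediate. In Case (1), $\Theta$ is the entire set of simple restricted roots of the split real form, so $\cW_\Theta$ is the full Weyl group, equal to $W(\fg) = W(\fg(e))$ since $\{f,h,e\}$ is principal. In Case (2), $\Theta$ is a single simple root, giving $\cW_\Theta \cong \Z/2\Z = W(\fsl_2\C)$, matching $\fg(e) = \langle f,h,e\rangle$.

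For Case (3), $\fg = \fso_N\C$ and the restricted root system of $\fso_{p,q}$ has type $B_p$, with $\Theta = \{\alpha_1,\ldots,\alpha_{p-1}\}$. The positivity data consists of $p-2$ one-dimensional cones together with one higher-dimensional cone (the causal cone of $\fso_{2,q-p+1}$); the invariance of this last cone under the full hyperoctahedral symmetry forces $\cW_\Theta$ to act as reflections of type $B_{p-1}$ rather than $A_{p-1}$, so $\cW_\Theta \cong W(\fso_{2p-1}\C) = W(\fg(e))$, and the length of $w_\Theta$ equals the number $(p-1)^2$ of positive roots of $B_{p-1}$. For Case (4), the restricted system is $F_4$ and $\Theta = \{\alpha_1, \alpha_2\}$, with cones $c_{\alpha_1}^\Theta \cong \R^+$ and $c_{\alpha_2}^\Theta$ the Hermitian tube cone of the Cayley partner. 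By Lemma \ref{lem fg(e) in red so8}, $\fg(e) \cong Lie(\rG_2)$ has a pair of simple root spaces—a long root $\fg_{\tilde\alpha}$ and a short root $\langle e_b\rangle$—mirroring the short/long pattern of $\{\alpha_1,\alpha_2\}$ in $F_4$; the reflections preserving the cone structure therefore generate $W(G_2) = W(\fg(e))$.

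The main obstacle is Case (4), where one must verify that the reflections permitted by the $F_4$ geometry of $\Theta$ together with the positivity of the two cones indeed generate a $G_2$-type Weyl group, rather than the $B_2$-parabolic of $W(F_4)$ naively read off from the Dynkin diagram. Equivalently, the longest word $w_\Theta$ should have length $6$ (the number of positive roots of $G_2$). This can be deduced from the structural fact in \S\ref{sec lie theory for except} that the principal $\fsl_2$-triple in $Lie(\rG_2)$ remains principal inside the subalgebra $\fso_8\C \subset \fg$ spanned by the red roots: the $G_2$-braid relations on the two cone directions are forced by the way $\tilde\alpha$ and the short root $e_b$ sit inside the $D_4$-system of Lemma \ref{lem red so8}. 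With Case (4) in hand, the statement of the proposition follows.
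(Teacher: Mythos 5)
The paper gives no proof of this proposition; the identification of $\cW_\Theta$ is treated as a direct consequence of the classification in \cite{PosRepsGWPROCEEDINGS} (specifically the $\Theta$-Weyl group worked out in Theorem 4.5 there), matched against the description of $\fg(e)$ in Proposition \ref{prop subalge g(e)}. You are attempting an actual re-derivation of $\cW_\Theta$, which is more ambitious than what the paper needs, and the argument has a real gap in the two nontrivial cases.

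The key error is in Case (4). You claim that the pair $\{\alpha_1,\alpha_2\}\subset\Theta$ in the restricted $\rF_4$ system carries a short/long asymmetry mirroring $\rG_2$, and that Lemma \ref{lem fg(e) in red so8} makes this match. This is false: in the $\rF_4$ Dynkin diagram $\alpha_1$ and $\alpha_2$ are joined by a \emph{single} bond, so they are roots of the same length (both long), and the sub-diagram spanned by $\Theta$ is of type $\rA_2$, not $\rG_2$. Nothing about the geometry of $\fg(e)\cong Lie(\rG_2)$ inside the red $\fso_8\C$ changes this. The $\rG_2$-type braid relation in $\cW_\Theta$ is not inherited from a preexisting length pattern in $\rF_4$; it appears because $\fu_{\alpha_2}$ is higher-dimensional, so the Guichard--Wienhard generator $\sigma_2$ is \emph{not} the simple reflection $s_{\alpha_2}$ but the longest element of the Weyl group of the sub-root-system $\{\alpha_2,\alpha_3,\alpha_4\}\subset\rF_4$ (type $\rC_3$). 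Replacing $s_{\alpha_2}$ by this longer element changes the order of $\sigma_1\sigma_2$ from $3$ to $6$, which is what yields $W(\rG_2)$. The same issue affects Case (3): the sub-diagram of $\rB_p$ spanned by $\Theta=\{\alpha_1,\dots,\alpha_{p-1}\}$ has type $\rA_{p-1}$, and one only gets $W(\rB_{p-1})$ after replacing $s_{\alpha_{p-1}}$ by the longest element of $W(\{\alpha_{p-1},\alpha_p\})\cong W(\rB_2)$. "Invariance of the causal cone under the hyperoctahedral group'' does not derive this; the change of Weyl group is built into the definition of $\cW_\Theta$ in \cite{PosRepsGWPROCEEDINGS}. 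Positive-root counts ($(p-1)^2$ for $\rB_{p-1}$, $6$ for $\rG_2$) are consistency checks, not proofs. To repair this one should either carefully carry out Guichard--Wienhard's construction of the $\sigma_i$ in each case, or, as the paper tacitly does, simply invoke their Theorem 4.5 for the value of $\cW_\Theta$ and then compare with Proposition \ref{prop subalge g(e)}.
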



Recall that the canonical real form $\tau_e:\fg\to\fg$ associated to a magical $\fsl_2$-triple $\{f,h,e\}$ preserves the subalgebra $\fg(e)\oplus\fc$ and the fixed point set defines a subalgebra 
\[\fg(e)^\R\oplus\fc^\R\subset\fg^\R.\] 
Here $\fg(e)^\R$ is the split real form of $\fg(e)$ and contains the Cayley transform $\{\hat f,\hat h,\hat e\}$ of $\{f,h,e\}$ and $\fc^\R$ is the compact real form of $\fc.$ This defines an embedding of the connected subgroup with Lie algebra $\fg(e)^\R$ 
\[\iota:\rG(e)^\R\longrightarrow\rG^\R.\]
Moreover, the intersection of the parabolic $\rP_\Theta=\rP_{\hat e}\subset\rG^\R$ defined by $\hat e$ with $\rG(e)^\R$ is the Borel subgroup $\rB_e^\R$ of $\rG(e)^\R.$ 
As a result, there are two important semigroups appearing: the semigroup $\rU^\R_{\Theta,+}\subset\rU^\R_\Theta$ coming from $\Theta$-positivity for $\{f,h,e\}\subset\fg$,  and the semigroup $\rU^\R_{e,+}\subset\rU^\R_{e}\subset\rB_e^\R$ coming from $\Theta$-positivity of $\{f,h,e\}\subset\fg(e)$ from Case (1) of Theorem \ref{thm pos and mag}.
\begin{proposition}\label{prop inclusions  preserve positive}
    Let $\{f,h,e\}\subset\fg$ be a magical $\fsl_2$-triple with canonical real form $\fg^\R$ and consider the parabolic $\rP_\Theta\subset\rG$ and the Borel subgroup $\rB_e^\R\subset\rG(e)^\R.$ Then the inclusion $\iota:\rB_e^\R\to\rP_\Theta^\R$ induces an inclusion of the positive semigroups
    \[\iota:\rU^\R_{e,+}\longrightarrow\rU^\R_{\Theta,+}.\]
\end{proposition}
\begin{proof}
    For Case (1) of Theorem \ref{thm pos and mag}, there is nothing to prove since $\fg(e)=\fg.$ For Case (2) of Theorem \ref{thm pos and mag}, $\fg(e)=\{f,h,e\}$ and the semigroup is just the exponential of the positive cone $c_{\beta_1}^0$. In this case the Cayley transform $\hat e$ of $e$ is contained in the cone, and hence $\exp(t\hat e)$ is contained in $c_{\beta_1}^0$ for $t>0$. For Case (3) of Theorem \ref{thm pos and mag} the statement was proven in \cite{CollierSOnn+1components} for $\rG^\R=\rSO_{p,p+1}$ and the proof for $\rSO_{p,q}$ is identical; see \cite[\S 7.2]{so(pq)BCGGO}.

    Finally we focus on Case (4) of Theorem \ref{thm pos and mag}. Note that there are two simple roots $\alpha_3,\alpha_4\notin \Theta$, and the $\rL^\R_\Theta$-invariant decomposition 
    $\fu^\R_{\alpha_3}\oplus\fu_{\alpha_4}^\R$ is a real version of the decomposition $\fg_2=\fg_2^b\oplus\fg_{\tilde\alpha}$ in \eqref{eq G0 invariant g2 decomp}. Recall from Remark \ref{rem descriptions of cones}, that the two cones $c_{\alpha_3}\subset\fu_{\alpha_3}$ and $c_{\alpha_4}\subset\fu_{\alpha_4}$ are described as follows: $c_{\alpha_4}\subset\fu_{\alpha_4}$ is $\R^+\subset\R$ and $c_{\alpha_3}\subset\fu_{\alpha_3}$ is the cone from Case (2) for the Lie algebras $\fsp_6\R,$ $\fsu_{3,3},$ $\fso_{12}^*$ and $\fe_7^{-25}$, with $\fg^\R$ equal to the quaternionic real forms of $\ff_4,$ $\fe_6,$ $\fe_7$ and $\fe_8$ respectively. 

    We claim that the Cayley transform $\hat e$ of the magical nilpotent $e$ is contained in $c_{\alpha_3}^0\times c_{\alpha_4}^0.$ 
    First note, that the projections $\hat e_{\alpha_3}$ and $\hat e_{\alpha_4}$ of $\hat e$ onto each factor $\fu_{\alpha_3}^\R\oplus\fu_{\alpha_4}^\R$ are nonzero since the parabolic $\fp_\Theta^\R=\fp_{\hat e}^\R$ is determined by $\hat e.$ Since the projection of $\hat e$ onto $\fu^\R_{\alpha_4}$ is nonzero, we conclude that it is in the cone $c^0_{\alpha_4}\subset\fu_{\alpha_4}^\R$. 
    Recall from Remark \ref{rem fb magical from case 2}, that $\{f_b,[f_b,e_b],e_b\}$ is a magical $\fsl_2$-triple from Case (2). 
    Since the Cayley transform of $e_b$ is contained in the cone from Case (2), the projection of $\hat e$ onto $\fu_{\alpha_3}$ is contained in the cone $c_{\alpha_3}^0\subset\fu_{\alpha_3}.$ 
  Now, the Weyl group $\cW_\Theta$ is the Weyl group of $\fg(e)^\R$, thus that of $Lie(\rG_2)$, and $\fg(e)^\R$ is the split real form of $\rG_2$. Moreover, the  projections $\hat e_{\alpha_3}$ and $\hat e_{\alpha_4}$ generate the nilpotent part of the Borel subalgebra $\fb_e^\R\subset\fg(e)^\R$. Hence, the inclusion $\iota:\rB_e^\R\to\rP_\Theta^\R$ induces an inclusion $\iota:\rU^\R_{e,+}\to\rU^\R_{\Theta,+}.$
 \end{proof}
As in \cite[Theorem 7.13]{CollierSOnn+1components}, we can now prove that, for a magical $\fsl_2$-triple $\{f,h,e\}\subset\fg$ with canonical real form $\rG^\R$,  the set of representations in $\cP_e(\rG^\R)$ described by Proposition \ref{prop hitchin rep in Pe} are $\Theta$-positive Anosov representations. Using openness of $\Theta$-positive Anosov representations, we conclude from this that the union of connected components $\cP_e(\rG^\R)$ contains an open set of $\Theta$-positive Anosov representations.
\begin{theorem}\label{thm positive reps in Pe comps}
  Let $\rG$ be a simple complex Lie group and with Lie algebra $\fg.$ Let $\{f,h,e\}\subset\fg$ be a magical $\fsl_2$-triple with canonical real form $\rG^\R\subset\rG.$ Then the set of representations $\rho_{Hit}*\rho_{\rC^\R}$  from Proposition \ref{prop hitchin rep in Pe} are $\Theta$-positive Anosov representations. 
  In particular, each of the sets $\cP_e(\rG^\R)\subset\cX(\rG^\R)$ from Theorem \ref{thm character variety components} contains a nonempty open set of $\Theta$-positive Anosov representations. 
\end{theorem}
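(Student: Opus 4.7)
The plan is to construct an Anosov boundary curve for $\rho = \rho_{Hit} * \rho_{\rC^\R}$ and verify its positivity by ``inducing up'' from the corresponding data for the split subgroup $\rG(e)^\R$. Openness of $\Theta$-positive Anosov representations then yields the second statement.

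First, I would recall that by Labourie \cite{AnosovFlowsLabourie} and Fock--Goncharov \cite{fock_goncharov_2006}, every Hitchin representation $\rho_{Hit}:\pi_1\Sigma\to\rG(e)^\R$ is $\rB_e^\R$-Anosov, where $\rB_e^\R\subset\rG(e)^\R$ is the Borel subgroup associated to the principal $\fsl_2$-triple $\{\hat f,\hat h,\hat e\}\subset\fg(e)^\R$. This yields a continuous, equivariant, transverse, dynamics-preserving boundary map $\xi_{Hit}:\partial_\infty\pi_1\Sigma\to\rG(e)^\R/\rB_e^\R$, and the representation is $\Theta$-positive for the pair $(\rG(e)^\R,\rB_e^\R)$ in the sense of Case (1) of Theorem \ref{thm pos and mag}.

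Second, I would use the inclusion $\iota:\rG(e)^\R\hookrightarrow\rG^\R$ furnished by the magical structure. Since $\fg^\R\supset\fg(e)^\R\oplus\fc^\R$ and $\iota^{-1}(\rP_\Theta^\R)=\rB_e^\R$, the inclusion descends to a well-defined closed embedding
\[
\bar\iota:\rG(e)^\R/\rB_e^\R\longrightarrow \rG^\R/\rP_\Theta^\R,
\]
which carries transverse pairs to transverse pairs (as the positive cones are preserved, cf.\ Proposition \ref{prop inclusions preserve positive}). Since $\rC^\R$ is compact and commutes with $\rG(e)^\R$, the product representation $\rho=\rho_{Hit}*\rho_{\rC^\R}$ is $\rP_\Theta^\R$-Anosov with boundary curve
\[
\xi_\rho:=\bar\iota\circ \xi_{Hit}:\partial_\infty\pi_1\Sigma\longrightarrow\rG^\R/\rP_\Theta^\R,
\]
where equivariance uses that $\rC^\R$ centralizes $\xi_{Hit}(\partial_\infty\pi_1\Sigma)$; the dynamics-preserving condition survives extension of structure group because the generic eigenvalue gaps of $\rho_{Hit}(\gamma)$ acting on $\fg(e)^\R$ persist when viewed on $\fg^\R$, the compact factor contributing only unitary (gap-preserving) perturbations.

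Third, and this is the key step, I would check $\Theta$-positivity. Fix a positively ordered triple $(x,y,z)\in\partial_\infty\pi_1\Sigma$. By Labourie--Fock--Goncharov, $(\xi_{Hit}(x),\xi_{Hit}(y),\xi_{Hit}(z))$ is a positive triple in $\rG(e)^\R/\rB_e^\R$, i.e.\ the unipotent element $u_0\in\rU_e^\R$ carrying the base-point configuration to $\xi_{Hit}(y)$ lies in the positive semigroup $\rU_{e,+}^\R$. By Proposition \ref{prop inclusions preserve positive}, $\iota(u_0)\in\rU_{\Theta,+}^\R$, so $(\bar\iota\xi_{Hit}(x),\bar\iota\xi_{Hit}(y),\bar\iota\xi_{Hit}(z))$ is a positive triple in $\rG^\R/\rP_\Theta^\R$. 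The compact twist by $\rho_{\rC^\R}(\gamma)$ translates configurations by elements of $\rC^\R$; because $\rC^\R\subset\rL_\Theta^\R$ commutes with $\fg(e)^\R$ and preserves each cone $c_{\beta_j}$ (it acts trivially on $\fu_{\alpha_4}\cong\R$ in Case (4) and, in all four cases, is contained in the connected component of $(\rL_\Theta^\R)_0$ preserving the cones), positivity of the $u$-element is preserved. Thus $\rho$ is $\Theta$-positive Anosov.

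Finally, by the openness of the $\Theta$-positive Anosov locus \cite[Theorem 5.10]{PosRepsGWPROCEEDINGS}, a neighbourhood of $\rho$ inside $\cX(\Sigma,\rG^\R)$ consists of $\Theta$-positive Anosov representations. Since such a $\rho$ lies in $\cP_e(\Sigma,\rG^\R)$ by Proposition \ref{prop hitchin rep in Pe}, which is open, the intersection is a nonempty open set of $\Theta$-positive Anosov representations contained in $\cP_e(\Sigma,\rG^\R)$. The anticipated main obstacle is verifying the cone-preservation clause in Step three for Case (4), where $c_{\alpha_3}$ is a non-trivial symmetric cone and one must check that the compact centralizer $\rC^\R$ (which equals $\rSU_2$, $\rSU_3$, $\rSp_6$, $\rF_4$ respectively) acts by automorphisms of this cone, but this follows from $[\fc,\fg(e)]=0$ (Lemma \ref{lemma:Ccentralizesg(e)}) combined with the explicit description of the cone via $\fg(e)$-data in Remark \ref{rem descriptions of cones}.
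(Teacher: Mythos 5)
Your proof is essentially the same as the paper's, and it is correct. However, the last part of your Step 3 introduces an unnecessary complication. You worry about whether the "compact twist" by $\rho_{\rC^\R}$ preserves positivity, and you go to some lengths to argue that $\rC^\R$ preserves the cones $c_{\beta_j}$, flagging Case~(4) as a potential obstacle. This detour is not needed. The key observation—which the paper's proof makes explicitly—is that since $\rC^\R$ centralizes $\rG(e)^\R$ and lies inside the Levi $\rL^\R_\Theta \subset \rP^\R_\Theta$, it acts \emph{trivially} on $\bar\iota(\rG(e)^\R/\rB_e^\R) \subset \rG^\R/\rP^\R_\Theta$: for $g \in \rG(e)^\R$ and $c \in \rC^\R$ one has $c\,g\,\rP^\R_\Theta = g\,c\,\rP^\R_\Theta = g\,\rP^\R_\Theta$. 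Consequently the Anosov boundary curve $\xi_\rho = \bar\iota \circ \xi_{Hit}$ for $\rho = (\iota\circ\rho_{Hit})*\rho_{\rC^\R}$ is \emph{literally identical} to that of $\iota\circ\rho_{Hit}$, and its positivity has already been verified via Proposition~\ref{prop inclusions  preserve positive}. Since $\Theta$-positivity is a property of the boundary curve alone, no separate cone-preservation check for $\rC^\R$ is required. (The cone-preservation fact you invoke is a consequence of the general theory in \cite{PosRepsGWPROCEEDINGS}—that the identity component of the Levi preserves the cones—but it plays no role in this specific argument.) Apart from this, your structure, use of Proposition~\ref{prop inclusions  preserve positive}, and appeal to openness all match the paper.
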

\begin{proof}
Consider a $\rG(e)^\R$-Hitchin representation $\rho_{Hit}:\pi_1\Sigma\to\rG(e)^\R$. Since $\rho_{Hit}$ is a $\Theta$-positive Anosov representation for Case (1) of Theorem \ref{thm pos and mag}, the Anosov boundary curve 
\[\xi_{\rho_{Hit}}:\partial_\infty\pi_1\Sigma\longrightarrow\rG(e)^\R/\rB_e^\R\]
sends positive triples in $\partial_\infty\pi_1\Sigma$ to positive triples of transverse points in $\rG(e)^\R/\rB_e^\R$. 
The inclusion $\iota:\rG(e)^\R\to\rG^\R$ induces a representation $\iota\circ\rho_{Hit}$ and an Anosov boundary curve
\[\iota\circ\xi_{\rho_{Hit}}:\partial_\infty\pi_1\Sigma\longrightarrow\rG(e)^\R/\rB_e^\R\hookrightarrow \rG^\R/\rP_\Theta^\R.\]
By Proposition \ref{prop inclusions  preserve positive}, $\iota\circ\xi_{\rho_{Hit}}$ also sends positive triples in $\partial_\infty\pi_1\Sigma$ to positive triples of transverse points in $\rG(e)^\R/\rB_e^\R$, and hence $\iota\circ\rho_{Hit}$ is a $\Theta$-positive Anosov representation. 

The centralizer of $\iota\circ\rho_{Hit}$ is $\rC^\R$, so compact. Since multiplication by an element in the compact part of the centralizer does not change the boundary curve and does not affect the  Anosov property, the boundary curve $\iota\circ\xi_{\rho_{Hit}}$ is also the Anosov boundary curve for the representation $\rho=(\iota\circ\rho_{Hit})*\rho_{\rC^\R}$, where $\rho_{\rC^\R}:\pi_1\Sigma\to\rC^\R$ is any representation into the compact group $\rC^\R$.  
Therefore, all representations from Proposition \ref{prop hitchin rep in Pe} are $\Theta$-positive Anosov representations. Since the set of $\Theta$-positive Anosov representations is open, each of the spaces $\cP_e(\rG^\R)$ contain an open set of $\Theta$-positive Anosov representations. 
\end{proof}
\begin{remark}
     By Corollary \ref{cor:norepfactors properparab}, none of the representations in $\cP_e(\rG^\R)$ factors through a proper parabolic subgroup of $\rG^\R$. This fact should be important in proving that in fact every connected component of $\cP_e(\rG^\R)$ which contains the $\Theta$-positive Anosov representations described in Theorem \ref{thm positive reps in Pe comps} consists entirely of $\Theta$-positive Anosov representations. There are known examples of components in $\cP_e(\rG^\R)$ which do not contain the locus described in Theorem \ref{thm positive reps in Pe comps}, namely for the group $\rSO_{p,p+1}$ \cite{CollierSOnn+1components}. However, each of these components lie in a component of $\cP_e(\rSO_{p,p+2})$ which does contain representations in the locus of Theorem \ref{thm positive reps in Pe comps}. In fact, one expects that all $\Theta$-positive Anosov representations do not factor through proper parabolic subgroups. This gives further evidence that the space of $\Theta$-positive Anosov representations is exactly described by the space $\cP_e(\rG^\R)$, and thus that the higher rank Teichm\"uller spaces coincide precisely with the spaces $\cP_e(\rG^\R)$.  
\end{remark}

\newpage

\section{Diagrams and Tables}\label{sec diagrams and tables}

\subsection{Tables}\mbox{}

\begin{table}[h]
    \centering
    \caption{Table of magical triples for inner real forms of exceptional Lie algebras}
    \begin{tabular}
        {|c|c|c|c|c|c|}\hline
            real form&table in \cite{ExceptionalNilpotentsInner}&row(s)&Columns 4 \& 5&$\fc^\R$&weighted Dynkin diagram\\     
            \hline
            $\fg_2^2$&VI&$5$&$0$&$0$&Theorem \ref{thm: classification weighted dynkin} Case (1)\\
            \hline
            $\ff_4^4$&VII&$19$&$3$&$\fso_3$&Theorem \ref{thm: classification weighted dynkin} Case (4)\\
            \hline
            $\ff_4^4$&VII&$26$&$0$&$0$&Theorem \ref{thm: classification weighted dynkin} Case (1)\\
            \hline
            $\ff_4^{-20}$&VIII&---&---&---&---\\
            \hline
            $\fe_6^2$&IX&$23$&$8$&$\fsu_3$&Theorem \ref{thm: classification weighted dynkin} Case (4)\\
            \hline
            $\fe_6^{-14}$&X&---&---&---&---\\
            \hline
            $\fe_7^7$&XI&$93, 94$&$0$&$0$&Theorem \ref{thm: classification weighted dynkin} Case (1)\\
            \hline
            $\fe_7^{-5}$&XII&$22$&$21$&$\fsp_6$&Theorem \ref{thm: classification weighted dynkin} Case (4)\\
            \hline
            $\fe_7^{-25}$&XIII&$6,7$&$52$&$\ff_{4}^{-52}$&Theorem \ref{thm: classification weighted dynkin} Case (2)\\
            \hline
            $\fe_8^8$&XIV&$115$&$0$&$0$&Theorem \ref{thm: classification weighted dynkin} Case (1)\\
            \hline
            $\fe_8^{-24}$&XV&$21$&$52$&$\ff_{4}^{-52}$&Theorem \ref{thm: classification weighted dynkin} Case (4)\\
            \hline
        \end{tabular}
        \label{Table of exceptional inner real forms}
\end{table}

\begin{table}[h]
    \centering
    \caption{Table of noncompact real forms of classical simple Lie algebras}
   \resizebox{\columnwidth}{!}{
    \begin{tabular}
            {|c|c|c|c|}\hline
            $\fg$&$\fg^\R$&Description&$\dim\fm-\dim\fh$\\\hline
            $\fsl_n\C$ & $\fsl_n\R$& traceless $(n \times n)$ $\R$-matrices&$n-1$\\\hline
            $\fsl_{p+q}\C$ &$\fsu_{p,q}$& traceless $(p+q)\times (p+q)$ $\C$-matrices  which are skew-adjoint&$1-(q-p)^2$\\ 
            &&  w.r.t.~a nondegenerate signature $(p,q)$ Hermitian form&\\\hline
             $\fsl_{2m}\C$&$\fsu_{2m}^*$& $m\times m$ $\mathbb{H}$-matrices with purely imaginary trace&$-2m-1$\\\hline
            $\fso_{p+q}\C$ &$\fso_{p,q}$ &$(p+q)\times (p+q)$ $\R$-matrices which are skew-adjoint&$\frac{1}{2}(p+q-(q-p)^2)$\\ &&w.r.t.~a nondegenerate signature $(p,q)$ symmetric form&\\\hline
            $\fso_{2m}\C$ &$\fso_{2m}^*$& $(m\times m)$ $\mathbb H$-matrices which are skew-adjoint&$-m$\\&& w.r.t.~a nondegenerate skew-Hermitian form&\\\hline
            $\fsp_{2m}\C$&$\fsp_{2m}(\R)$ &$(2m\times 2m)$ $\R$-matrices which are skew-adjoint&$m$\\ &&w.r.t.~a nondegenerate skew-symmetric form&\\\hline
            $\fsp_{2p+2q}\C$&$\fsp_{2p,2q}$ & $(m\times m)$ $\mathbb{H}$-matrices which are skew-adjoint&$-p-q-2(q-p)^2$\\&& w.r.t.~a nondegenerate signature $(p,q)$ Hermitian form&\\\hline
        \end{tabular}}
        \label{Table of classical real forms}
\end{table}
\newpage
\subsection{Weighted root poset for magical nilpotents in $\rE_6$, $\rE_7$, $\rE_8$ and $\rF_4$}\label{subsec root posets exceptional}
\begin{center}

    \scalebox{.7}[.55]{\begin{tikzpicture}[scale=1]
        \draw[white] (0,-1) circle (.3cm);
    \draw (-.5,0) node[anchor=east]  {$\rF_4:$};
    \node at (0,-.5) {$\alpha_1$};
    \node at (2,-.5) {$\alpha_2$};
    \node at (4,-.5) {$\alpha_3$};
    \node at (6,-.5) {$\alpha_4=\tilde\alpha$};
    \draw[thick] (0 cm,0) circle (.3cm);
    \draw[thick] (2 cm,0) circle (.3cm);
    \draw[thick,red] (4 cm,0) circle (.3cm);
    \draw[thick,red] (6 cm,0) circle (.3cm);
    \draw[thick] (0.3 cm,0) -- +(1.4 cm,0) ;
    \draw[thick] (2.3 cm, .1 cm) -- +(1.4 cm,0);
    \draw[thick] (2.8 cm, 0) -- +(.3cm, .3cm);
    \draw[thick] (2.8 cm, 0) -- +(.3cm, -.3cm);
    \draw[thick] (2.3 cm, -.1 cm) -- +(1.4 cm,0);
    \draw[thick] (4.3 cm, 0) -- +(1.4cm,0);
    \node at (0,0)  {$0$};
    \node at (2,0)  {$0$};
    \node at (4,0)  {$2$};
    \node at (6,0)  {$2$};
    \node at (1,1)  {$0$};
        \draw[thick] (1 cm,1) circle (.3cm);
    \node at (3,1)  {$2$};
    \draw[thick] (3 cm,1) circle (.3cm);
    \node at (5,1) {$4$};
    \draw[thick,red] (5 cm,1) circle (.3cm);

   \draw[thick] (.2,.2) --node[above]{$\alpha_2\ \ $} +(.6,.6);
    \draw[thick] (1.8,.2) -- +(-.6,.6);
    \draw[thick] (2.2,.2) -- +(.6,.6);
    \draw[thick] (3.8,.2) -- +(-.6,.6);
    \draw[thick] (4.2,.2) -- +(.6,.6);
    \draw[thick] (5.8,.2) -- +(-.6,.6);

    \node at (2,2)  {$2$};
    \node at (3,2)  {$2$};
    \node at (4,2)  {$4$};

    \draw[thick] (2 cm,2) circle (.3cm);
    \draw[thick,red] (3 cm,2) circle (.3cm);
    \draw[thick] (4 cm,2) circle (.3cm);
    \draw[thick] (1.2 cm,1.2 cm) --node[above]{$\alpha_3\ \ $} +(.6,.6);
    \draw[thick] (2.8 cm,1.2 cm) -- +(-.6 ,.6);
    \draw[thick] (3.2 cm,1.2 cm) -- +(.6,.6);
    \draw[thick] (3 cm,1.3 cm) -- +(0,.4);
    \draw[thick] (4.8 cm,1.2 cm) -- +(-.6 ,.6);
    \node at (2,3)  {$2$};
    \draw[thick] (2 cm,3 cm) circle (.3cm);
    \node at (3,3)  {$4$};
    \draw[thick] (3 cm,3 cm) circle (.3cm);
    \node at (4,3)  {$4$};
    \draw[thick,red] (4 cm,3 cm) circle (.3cm);

    \draw[thick] (2 cm,2.3 cm) -- node[left]{$\alpha_2 \ $}+(0,.4);
    \draw[dashed,thick] (2.2 cm,2.2 cm) -- +(.6 cm,.6);
    \draw[thick] (2.8 cm,2.2 cm) -- +(-.6 cm,.6);
    \draw[thick] (3.2 cm,2.2 cm) -- +(.6 cm,.6);
    \draw[dashed,thick] (3.8 cm,2.2 cm) -- +(-.6 cm,.6);
    \draw[thick] (4 cm,2.3 cm) -- +(0,.4);

    \node at (1,4)  {$2$};
    \draw[thick,red] (1 cm,4) circle (.3cm);
    \node at (3,4)  {$4$};
    \draw[thick] (3 cm,4) circle (.3cm);
    \node at (5,4)  {$6$};
    \draw[thick,red] (5 cm,4) circle (.3cm);

    \draw[thick] (1.8 cm,3.2 cm) --node[below]{$\alpha_1\ \ $} +(-.6,.6);
    \draw[thick] (2.2 cm,3.2 cm) -- +(.6,.6);
    \draw[dashed,thick] (3,3.3 cm) -- +(0,.4);
    \draw[thick] (3.8 cm,3.2 cm) -- +(-.6,.6);
    \draw[thick] (4.2 cm,3.2 cm) -- +(.6,.6);
    \node at (2,5)  {$4$};
    \draw[thick,red] (2 ,5) circle (.3cm);
    \node at (4,5)  {$6$};
    \draw[thick] (4 ,5) circle (.3cm);

    \draw[thick] (1.2 cm,4.2) --node[above]{$\alpha_4\ \ $} +(.6,.6);
    \draw[thick] (3.2 cm,4.2) -- +(.6,.6);
    \draw[thick] (2.8 cm,4.2) -- +(-.6,.6);
    \draw[thick] (4.8 cm,4.2) -- +(-.6,.6);
    \node at (3,6)  {$6$};
    \draw[thick,red] (3,6) circle (.3cm);
    \node at (5,6)  {$6$};
    \draw[thick] (5,6) circle (.3cm);

    \draw[thick] (2.2 cm,5.2 cm) --node[above]{$\alpha_3\ \ $} +(.6,.6);
    \draw[thick] (4.2 cm,5.2 cm) -- +(.6,.6);
    \draw[thick] (3.8 cm,5.2 cm) -- +(-.6,.6);
    \node at (4,7)  {$6$};
    \draw[thick] (4 cm,7) circle (.3cm);
    \draw[thick] (3.2 cm,6.2 cm) --node[above]{$\alpha_2\ \ $} +(.6,.6);
    \draw[thick] (4.8 cm,6.2 cm) -- +(-.6,.6);
    \node at (4,8)  {$6$};
    \draw[thick,red] (4 cm,8) circle (.3cm);
    \draw[thick] (4,7.3 cm) --node[left]{$\alpha_2\ $} +(0,.4);
    \node at (4,9)  {$8$};
    \draw[thick,red] (4 cm,9) circle (.3cm);
    \draw[thick] (4,8.3 cm) --node[left]{$\alpha_3 \ $} +(0,.4);
    \node at (4,10)  {$10$};
    \draw[thick,red] (4 cm,10) circle (.3cm);
    \draw[thick] (4,9.3 cm) --node[left]{$\alpha_4 \ $} +(0,.4);
  \end{tikzpicture}\hspace{2cm}
  \begin{tikzpicture}[scale=1]
        \draw (-.5,0) node[anchor=east]  {$\rE_6:$};
        \node at (0,-.5)  {$\alpha_1$};
    \node at (2,-.5)  {$\alpha_2$};
    \node at (4,-.5)  {$\alpha_3$};
    \node at (6,-.5)  {$\alpha_5$};
    \node at (8,-.5)  {$\alpha_6$};
    \node at (5,-1.3)  {$\alpha_4=\tilde\alpha$};
    \foreach \y in {0,...,3}
    \draw[thick,xshift=\y cm] (\y cm,0) ++(.3 cm, 0) -- +(14 mm,0);
    \draw[thick] (4.2 cm, -.2 cm) -- +(.5,-.5 cm);
    \node at (0,0)  {$0$};
    \draw[thick] (0 cm,0cm) circle (.3cm);
    \node at (2,0)  {$0$};
    \draw[thick] (2 cm,0cm) circle (.3cm);
    \node at (4,0)  {$2$};
    \draw[thick,red] (4 cm,0cm) circle (.3cm);
    \node at (6,0)  {$0$};
    \draw[thick] (6 cm,0cm) circle (.3cm);
    \node at (8,0)  {$0$};
    \draw[thick] (8 cm,0cm) circle (.3cm);
    \node at (5,-.8)  {$2$};
 \draw[thick,red] (5 cm, -.8 cm) circle (3 mm);

    \draw[thick] (1 cm,1 cm) circle (.3cm);
    \node at (1,1)  {$0$};
    \draw[thick] (3 cm,1 cm) circle (.3cm);
    \node at (3,1)  {$2$};
    \draw[thick,red] (4 cm,1 cm) circle (.3cm);
    \node at (4,1)  {$4$};
    \draw[thick] (5 cm,1 cm) circle (.3cm);
    \node at (5,1)  {$2$};
    \draw[thick] (7 cm,1 cm) circle (.3cm);
    \node at (7,1)  {$0$};

    \draw[thick] (0.2,.2) --node[above]{$\alpha_2\ \ $} +(.6,.6);
    \draw[thick] (1.8, .2 cm) -- +(-.6,.6);
    \draw[thick] (2.2, .2 cm) -- +(.6,.6);
    \draw[thick] (3.8, .2 cm) -- +(-.6,.6);
    \draw[thick] (4, .3 cm) -- +(0,.4);
    \draw[thick] (4.2, .2 cm) -- +(.6,.6);
    \draw[thick] (5.8, .2 cm) -- +(-.6,.6);
    \draw[thick] (6.2, .2 cm) -- +(.6,.6);
    \draw[thick] (7.8, .2 cm) -- +(-.6,.6);
    \draw[thick] (5,-.5) -- +(-.8,1.25);

    \node at (2,2)  {$2$};
    \draw[thick] (2 cm,2) circle (.3cm);
    \node at (3,2)  {$4$};
    \draw[thick] (3 cm,2) circle (.3cm);
    \node at (4,2)  {$2$};
    \draw[thick,red] (4 cm,2) circle (.3cm);
    \node at (5,2)  {$4$};
    \draw[thick] (5 cm,2) circle (.3cm);
    \node at (6,2)  {$2$};
    \draw[thick] (6 cm,2) circle (.3cm);

    \draw[thick] (1.2,1.2) --node[above]{$\alpha_3\ \ $} +(.6,.6);
    \draw[thick] (2.8,1.2) -- +(-.6,.6);
    \draw[thick] (3 cm,1.3) -- +(0,.4);
    \draw[dashed,thick] (3.2,1.2) -- +(.6,.6);
    \draw[thick] (3.8,1.2) -- +(-.6,.6);
    \draw[thick] (4.2,1.2) -- +(.6,.6);
    \draw[dashed,thick] (4.8,1.2) -- +(-.6,.6);
    \draw[thick] (5,1.3) -- +(0,.4);
    \draw[thick] (5.2,1.2) -- +(.6,.6);
    \draw[thick] (6.8,1.2) -- +(-.6,.6);
    \node at (2,3)  {$4$};
    \draw[thick] (2 cm,3 cm) circle (.3cm);
    \node at (3,3)  {$2$};
    \draw[thick] (3 cm,3 cm) circle (.3cm);
    \node at (4,3)  {$4$};
    \draw[thick,red] (4 cm,3 cm) circle (.3cm);
    \node at (5,3)  {$2$};
    \draw[thick] (5 cm,3 cm) circle (.3cm);
    \node at (6,3)  {$4$};
    \draw[thick] (6 cm,3 cm) circle (.3cm);

    \draw[thick] (2 ,2.3) --node[left]{$\alpha_4\ $} +(0,.4);
    \draw[dashed,thick] (2.2 ,2.2) -- +(.6,.6);
    \draw[thick] (2.8,2.2) -- +(-.6,.6);
    \draw[thick] (3.2,2.2) -- +(.6,.6);
    \draw[dashed,thick] (3.8,2.2) -- +(-.6,.6);
    \draw[dashed,thick] (4 ,2.3) -- +(0 ,.4);
    \draw[dashed,thick] (4.2,2.2) -- +(.6,.6);
    \draw[thick] (4.8,2.2) -- +(-.6,.6);
    \draw[thick] (5.2,2.2) -- +(.6,.6);
    \draw[dashed,thick] (5.8,2.2) -- +(-.6,.6);
    \draw[thick] (6 ,2.3) -- +(0,.4);

    \node at (3,4)  {$4$};
    \draw[thick] (3 cm,4 cm) circle (.3cm);
    \node at (3.75,4)  {$2$};
    \draw[thick,red] (3.75 cm,4 cm) circle (.25cm);
    \node at (4.25,4)  {$6$};
    \draw[thick,red] (4.25 cm,4 cm) circle (.25cm);
    \node at (5,4)  {$4$};
    \draw[thick] (5 cm,4 cm) circle (.3cm);
    \draw[thick] (2.2,3.2) --node[above]{$\alpha_5\ \ $} +(.6,.6);
    \draw[dashed,thick] (3,3.3) -- +(0,.4);
    \draw[dashed,thick] (3.2,3.3) -- +(.55,.55);
    \draw[thick] (3.8,3.2) -- +(-.6,.6);
    \draw[thick] (4.2,3.2) -- +(0,.55);
    \draw[thick] (4.2,3.2) -- +(.6,.6);
    \draw[dashed,thick] (4.7,3) -- +(-.8,.8);
    \draw[dashed,thick] (5,3.3) -- +(0,.4);
    \draw[thick] (5.8,3.2) -- +(-.6,.6);

    \node at (3,5)  {$6$};
    \draw[thick] (3 cm,5 cm) circle  (.3cm);
    \node at (4,5)  {$4$};
    \draw[thick,red] (4 cm,5 cm) circle (.3cm);
    \node at (5,5)  {$6$};
    \draw[thick] (5 cm,5 cm) circle (.3cm);

    \draw[thick] (3,4.3) --node[left]{$\alpha_3 \ $} +(0,.4);
    \draw[dashed,thick] (3.2,4.2) -- +(.6,.6);
    \draw[dashed,thick] (3.85,4.25) -- +(0,.5);
    \draw[thick] (4.1,4.2) -- +(-.78,.78);
    \draw[thick] (4.4,4.2) -- +(.5,.5);
    \draw[dashed,thick] (4.8,4.2) -- +(-.6,.6);
    \draw[thick] (5,4.3) -- +(0,.4);

    \node at (2,6)  {$6$};
    \draw[thick] (2 cm,6 cm) circle (.3cm);
    \node at (4,6)  {$6$};
    \draw[thick,red] (4 cm,6 cm) circle (.3cm);
    \node at (6,6)  {$6$};
    \draw[thick] (6 cm,6 cm) circle (.3cm);
    
    \draw[thick] (2.8 cm,5.2 cm) --node[below]{$\alpha_2\ \ $} +(-.6,.6);
    \draw[thick] (3.2 cm,5.2 cm) -- +(.6,.6);
    \draw[dashed,thick] (4 cm,5.3 cm) -- +(0,.4);
    \draw[thick] (4.8 cm,5.2 cm) -- +(-.6,.6);
    \draw[thick] (5.2 cm,5.2 cm) -- +(.6,.6);
    \node at (3,7)  {$6$};
      \draw[thick] (3 cm,7) circle (.3cm);
    \node at (5,7)  {$6$};
    \draw[thick] (5 cm,7) circle (.3cm);

    \draw[thick] (2.2 cm,6.2 cm) --node[above]{$\alpha_6\ \ $} +(.6,.6);
    \draw[thick] (3.8 cm,6.2 cm) -- +(-.6,.6);
    \draw[thick] (4.2 cm,6.2 cm) -- +(.6,.6);
    \draw[thick] (5.8 cm,6.2 cm) -- +(-.6,.6);
    \node at (4,8)  {$6$};
    \draw[thick,red] (4 cm,8) circle (.3cm);

    \draw[thick] (3.2 cm,7.2 cm) --node[above]{$\alpha_5\ \ $} +(.6,.6);
    \draw[thick] (4.8 cm,7.2)  -- +(-.6,.6);
    \node at (4,9)  {$8$};
    \draw[thick,red] (4 cm,9) circle (.3cm);

    \draw[thick] (4 cm,8.3 cm) --node[left]{$\alpha_3\ $} +(0,.4);
    \node at (4,10)  {$10$};
    \draw[thick,red] (4 cm,10) circle (.3cm);

    \draw[thick] (4 cm,9.3 cm) --node[left]{$\alpha_4 \ $} +(0,.4);
      \end{tikzpicture}}
\end{center}
\begin{center}
    \scalebox{.7}[.5]{
\begin{tikzpicture}[scale=1]
        \draw[white] (0,-6) circle (.3cm);
        \draw (-.5,0) node[anchor=east]  {$\rE_7:$};
        \node at (0,-.5)  {$\alpha_1=\tilde\alpha$};
    \node at (2,-.5)  {$\alpha_2$};
    \node at (4,-.5)  {$\alpha_3$};
    \node at (6,-.5)  {$\alpha_5$};
    \node at (8,-.5)  {$\alpha_6$};
    \node at (10,-.5)  {$\alpha_7$};
    \node at (5,-1.3)  {$\alpha_4$};
    \foreach \y in {0,...,4}
    \draw[thick,xshift=\y cm] (\y cm,0) ++(.3 cm, 0) -- +(14 mm,0);
      \draw[thick] (5 cm, -.8 cm) circle (3 mm);
    \draw[thick] (4.2 cm, -.2 cm) -- +(.5,-.5 cm);
    \node at (0,0)  {$2$};
     \draw[thick,red] (0,0)  circle (.3cm);
    \node at (2,0)  {$2$};
     \draw[thick,red] (2,0)  circle (.3cm);
    \node at (4,0)  {$0$};
     \draw[thick] (4,0)  circle (.3cm);
    \node at (6,0)  {$0$};
     \draw[thick] (6,0)  circle (.3cm);
    \node at (8,0)  {$0$};
     \draw[thick] (8,0)  circle (.3cm);
    \node at (10,0)  {$0$};
     \draw[thick] (10,0)  circle (.3cm);
    \node at (5,-.8)  {$0$};

    \node at (1,1)  {$4$};
    \draw[thick,red] (1 cm,1 cm) circle (.3cm);
    \node at (3,1)  {$2$};
    \draw[thick] (3 cm,1 cm) circle (.3cm);
    \node at (4,1)  {$0$};
    \draw[thick] (4 cm,1 cm) circle (.3cm);
    \node at (5,1)  {$0$};
    \draw[thick] (5 cm,1 cm) circle (.3cm);
    \node at (7,1)  {$0$};
    \draw[thick] (7 cm,1 cm) circle (.3cm);
    \node at (9,1)  {$0$};
    \draw[thick] (9 cm,1 cm) circle (.3cm);

    \draw[thick] (0.2,.2) -- node[above]{$\alpha_2\ \ $} +(.6,.6);
    \draw[thick] (1.8, .2 cm) -- +(-.6,.6);
    \draw[thick] (2.2, .2 cm) -- +(.6,.6);
    \draw[thick] (3.8, .2 cm) -- +(-.6,.6);
    \draw[thick] (4, .3 cm) -- +(0,.4);
    \draw[thick] (4.2, .2 cm) -- +(.6,.6);
    \draw[thick] (5.8, .2 cm) -- +(-.6,.6);
    \draw[thick] (6.2, .2 cm) -- +(.6,.6);
    \draw[thick] (7.8, .2 cm) -- +(-.6,.6);
    \draw[thick] (8.2, .2 cm) -- +(.6,.6);
    \draw[thick] (9.8, .2 cm) -- +(-.6,.6);
    \draw[thick] (5,-.5) -- +(-.8,1.25);

    \node at (2,2)  {$4$};
    \draw[thick] (2 cm,2 cm) circle (.3cm);
    \node at (3,2)  {$2$};
    \draw[thick] (3 cm,2 cm) circle (.3cm);
    \node at (4,2)  {$2$};
    \draw[thick] (4 cm,2 cm) circle (.3cm);
    \node at (5,2)  {$0$};
    \draw[thick] (5 cm,2 cm) circle (.3cm);
    \node at (6,2)  {$0$};
    \draw[thick] (6 cm,2 cm) circle (.3cm);
    \node at (8,2)  {$0$};
    \draw[thick] (8 cm,2 cm) circle (.3cm);

    \draw[thick] (1.2,1.2) --node[above]{$\alpha_3\ \ $} +(.6,.6);
    \draw[thick] (2.8,1.2) -- +(-.6,.6);
    \draw[thick] (3 cm,1.3) -- +(0,.4);
    \draw[dashed,thick] (3.2,1.2) -- +(.6,.6);
    \draw[thick] (3.8,1.2) -- +(-.6,.6);
    \draw[thick] (4.2,1.2) -- +(.6,.6);
    \draw[dashed,thick] (4.8,1.2) -- +(-.6,.6);
    \draw[thick] (5,1.3) -- +(0,.4);
    \draw[thick] (5.2,1.2) -- +(.6,.6);
    \draw[thick] (6.8,1.2) -- +(-.6,.6);
    \draw[thick] (7.2 cm,1.2 cm) -- +(.6,.6);
    \draw[thick] (8.8 cm,1.2 cm) -- +(-.6,.6);

    \node at (2,3)  {$4$};
     \draw[thick] (2 cm,3 cm) circle (.3cm);
    \node at (3,3)  {$4$};
    \draw[thick] (3 cm,3 cm) circle (.3cm);
    \node at (4,3)  {$2$};
    \draw[thick] (4 cm,3 cm) circle (.3cm);
    \node at (5,3)  {$2$};
    \draw[thick] (5 cm,3 cm) circle (.3cm);
    \node at (6,3)  {$0$};
    \draw[thick] (6 cm,3 cm) circle (.3cm);
    \node at (7,3)  {$0$};
    \draw[thick] (7 cm,3 cm) circle (.3cm);

    \draw[thick] (2 ,2.3) --node[left]{$\alpha_4 \ $} +(0,.4);
    \draw[dashed,thick] (2.2 ,2.2) -- +(.6,.6);
    \draw[thick] (2.8,2.2) -- +(-.6,.6);
    \draw[thick] (3.2,2.2) -- +(.6,.6);
    \draw[dashed,thick] (3.8,2.2) -- +(-.6,.6);
    \draw[dashed,thick] (4 ,2.3) -- +(0 ,.4);
    \draw[dashed,thick] (4.2,2.2) -- +(.6,.6);
    \draw[thick] (4.8,2.2) -- +(-.6,.6);
    \draw[thick] (5.2,2.2) -- +(.6,.6);
    \draw[dashed,thick] (5.8,2.2) -- +(-.6,.6);
    \draw[thick] (6 ,2.3) -- +(0,.4);
    \draw[thick] (6.2 cm,2.2 cm) -- +(.6,.6);
    \draw[thick] (7.8 cm,2.2 cm) -- +(-.6,.6);

    \node at (3,4)  {$4$};
    \draw[thick] (3 cm,4 cm) circle (.3cm);
    \node at (3.75,4)  {$4$};
    \draw[thick] (3.75 cm,4 cm) circle (.25cm);
    \node at (4.25,4)  {$2$};
    \draw[thick,red] (4.25 cm,4 cm) circle (.25cm);
    \node at (5,4)  {$2$};
    \draw[thick] (5 cm,4 cm) circle (.3cm);
    \node at (6,4)  {$2$};
    \draw[thick] (6 cm,4 cm) circle (.3cm);
    \node at (7,4)  {$0$};
    \draw[thick] (7 cm,4 cm) circle (.3cm);

    \draw[thick] (2.2,3.2) --node[above]{$\alpha_5\ \ $} +(.6,.6);
    \draw[dashed,thick] (3,3.3) -- +(0,.4);
    \draw[dashed,thick] (3.2,3.3) -- +(.55,.55);
    \draw[thick] (3.8,3.2) -- +(-.6,.6);
    \draw[thick] (4.2,3.2) -- +(0,.55);
    \draw[thick] (4.2,3.2) -- +(.6,.6);
    \draw[dashed,thick] (4.7,3) -- +(-.8,.8);
    \draw[dashed,thick] (5,3.3) -- +(0,.4);
    \draw[dashed,thick] (5.2,3.2) -- +(.6,.6);
    \draw[thick] (5.8,3.2) -- +(-.6,.6);
    \draw[thick] (6.2,3.2) -- +(.6,.6);
    \draw[dashed,thick] (6.8,3.2) -- +(-.6,.6);
    \draw[thick] (7,3.3) -- +(0,.4);

    \node at (3,5)  {$4$};
    \draw[thick,red] (3 cm,5 cm) circle (.3cm);
    \node at (4,5)  {$4$};
    \draw[thick] (4 cm,5 cm) circle (.3cm);
    \node at (4.75,5)  {$4$};
    \draw[thick] (4.75 cm,5 cm) circle (.25cm);
    \node at (5.25,5)  {$2$};
    \draw[thick] (5.25 cm,5 cm) circle (.25cm);
    \node at (6,5)  {$2$};
    \draw[thick] (6 cm,5 cm) circle (.3cm);

    \draw[thick] (3,4.3) --node[left]{$\alpha_3 \ $} +(0,.4);
    \draw[dashed,thick] (3.2,4.2) -- +(.6,.6);
    \draw[dashed,thick] (3.85,4.25) -- +(0,.5);
    \draw[dashed,thick] (3.9,4.2) -- +(.65,.65);
    \draw[thick] (4.1,4.2) -- +(-.78,.78);
    \draw[thick] (4.45,4.15) -- +(.65,.65);
    \draw[dashed,thick] (4.8,4.2) -- +(-.6,.6);
    \draw[thick] (5.2,4.2) -- +(0,.55);
    \draw[thick] (5.2,4.2) -- +(.6,.6);
    \draw[dashed,thick] (5.7,4) -- +(-.8,.8);;
    \draw[dashed,thick] (6,4.3) -- +(0,.4);
    \draw[thick] (6.8,4.2) -- +(-.6,.6);

    \node at (2,6)  {$6$};
    \draw[thick,red] (2 cm,6 cm) circle (.3cm);
    \node at (4,6)  {$4$};
    \draw[thick] (4 cm,6 cm) circle (.3cm);
    \node at (4.75,6)  {$4$};
    \draw[thick] (4.75 cm,6 cm) circle (.25cm);
    \node at (5.25,6)  {$2$};
    \draw[thick] (5.25 cm,6 cm) circle (.25cm);
    \node at (6,6)  {$2$};
    \draw[thick] (6 cm,6 cm) circle (.3cm);

    \draw[thick] (2.8,5.2) --node[below]{$\alpha_2\ \ $} +(-.6,.6);
    \draw[thick] (3.2,5.2) -- +(.6,.6);
    \draw[thick] (4,5.3) -- +(.0,.4);
    \draw[dashed,thick] (4.2,5.3) -- +(.55,.55);
    \draw[dashed,thick] (4.75,5.25) -- +(.0,.5);
    \draw[thick] (5.1,5.2) -- +(-.78,.78);
    \draw[thick] (5.25,5.25) -- +(.0,.5);
    \draw[thick] (5.4,5.2) -- +(.5,.5);
    \draw[dashed,thick] (5.7,5.05) -- +(-.78,.78);
    \draw[thick] (6,5.3) -- +(0,.4);

    \node at (3,7)  {$6$};
    \draw[thick] (3 cm,7 cm) circle (.3cm);
    \node at (4,7)  {$4$};
    \draw[thick] (4 cm,7 cm) circle (.3cm);
    \node at (5,7)  {$4$};
    \draw[thick] (5 cm,7 cm) circle (.3cm);
    \node at (6,7)  {$2$};
    \draw[thick] (6 cm,7 cm) circle (.3cm);

    \draw[thick] (2.2,6.2) --node[above]{$\alpha_6\ \ $} +(.6,.6);
    \draw[thick] (3.8,6.2) -- +(-.6,.6);
    \draw[thick] (4,6.3) -- +(0,.4);
    \draw[dashed,thick] (4.2,6.2) -- +(.6,.6);
    \draw[dashed,thick] (4.85,6.25) -- +(0,.5);
    \draw[thick] (5.1,6.2) -- +(-.78,.78);
    \draw[thick] (5.4,6.2) -- +(.5,.5);
    \draw[dashed,thick] (5.8,6.2) -- +(-.6,.6);
    \draw[thick] (6,6.3) -- +(0,.4);

    \node at (3,8)  {$6$};
    \draw[thick] (3,8) circle (.3cm) ;
    \draw[thick] (4,8) circle (.3cm) ;
    \node at (4,8)  {$6$};
    \node at (5,8)  {$4$};
    \draw[thick] (5,8) circle (.3cm) ;
    \node at (7,8)  {$2$};
    \draw[thick,red] (7,8) circle (.3cm) ; 

    \draw[thick] (3,7.3) -- node[left]{$\alpha_5\ \ $}+(0,.4);
    \draw[dashed,thick] (3.2,7.2) -- +(.6,.6);
    \draw[thick] (3.8,7.2) -- +(-.6,.6);
    \draw[thick] (4.2,7.2) -- +(.6,.6);
    \draw[dashed,thick] (4.8,7.2) -- +(-.6,.6);
    \draw[dashed,thick] (5,7.3) -- +(0,.4);
    \draw[thick] (5.8,7.2) -- +(-.6,.6);
    \draw[thick] (6.2,7.2) -- +(.6,.6);

    \node at (2,9)  {$6$};
    \draw[thick] (2,9) circle (.3cm) ;
    \node at (4,9)  {$6$};
    \draw[thick] (4,9) circle (.3cm) ;
    \node at (6,9)  {$4$};
    \draw[thick,red] (6,9) circle (.3cm) ;

    \draw[thick] (2.8 cm,8.2 cm) --node[below]{$\alpha_3\ \ $} +(-.6,.6);
    \draw[thick] (3.2 cm,8.2 cm) -- +(.6,.6);
    \draw[thick] (4.8 cm,8.2 cm) -- +(-.6,.6);
    \draw[thick] (5.2 cm,8.2 cm) -- +(.6,.6);
    \draw[thick] (6.8 cm,8.2 cm) -- +(-.6,.6);
    \draw[dashed,thick] (4 cm,8.3 cm) -- +(0,.4);
    \node at (1,10)  {$6$};
    \draw[thick] (1,10)  circle (.3cm);
    \node at (3,10)  {$6$};
    \draw[thick] (3,10)  circle (.3cm);
    \node at (5,10)  {$6$};
    \draw[thick,red] (5,10)  circle (.3cm);

    \draw[thick] (3.8 cm,9.2 cm) -- +(-.6,.6);
    \draw[thick] (4.2 cm,9.2 cm) -- +(.6,.6);
    \draw[thick] (5.8 cm,9.2 cm) -- +(-.6,.6);
    \draw[thick] (2.2 cm,9.2 cm) -- +(.6,.6);
    \draw[thick] (1.8 cm,9.2 cm) --node[below ]{$\alpha_4\ \ $} +(-.6,.6);
    \node at (2,11)  {$6$};
    \draw[thick] (2,11)  circle (.3cm);
    \node at (4,11)  {$6$};
    \draw[thick] (4,11)  circle (.3cm);

    \draw[thick] (1.2 cm,10.2 cm) --node[above]{$\alpha_7\ \ $} +(.6,.6);
    \draw[thick] (2.8 cm,10.2 cm) -- +(-.6,.6);
    \draw[thick] (3.2 cm,10.2 cm) -- +(.6,.6);
    \draw[thick] (4.8 cm,10.2 cm) -- +(-.6,.6);

    \node at (3,12)  {$6$};
    \draw[thick] (3,12)  circle (.3cm);
    \node at (5,12)  {$6$};
    \draw[thick] (5,12)  circle (.3cm);

    \draw[thick] (2.2 cm,11.2 cm) --node[above]{$\alpha_6\ \ $} +(.6,.6);
    \draw[thick] (3.8 cm,11.2 cm) -- +(-.6,.6);
    \draw[thick] (4.2 cm,11.2 cm) -- +(.6,.6);
     \node at (4,13)  {$6$};
    \draw[thick] (4,13)  circle (.3cm);

    \draw[thick] (3.2 cm,12.2 cm) --node[above]{$\alpha_5\ \ $} +(.6,.6);
    \draw[thick] (4.8 cm,12.2 cm) -- +(-.6,.6);

    \node at (4,14)  {$6$};
    \draw[thick,red] (4,14)  circle (.3cm);

    \draw[thick] (4 cm,13.3 cm) --node[left]{$\alpha_3 \ $} +(0,.4);

     \node at (4,15)  {$8$};
    \draw[thick,red] (4,15)  circle (.3cm);

    \draw[thick] (4 cm,14.3 cm) --node[left]{$\alpha_2 \ $} +(0,.4);

    \node at (4,16)  {$10$};
    \draw[thick,red] (4,16)  circle (.3cm);

    \draw[thick] (4 cm,15.3 cm) --node[left]{$\alpha_1 \ $} +(0,.4);
    \end{tikzpicture}\hspace{-1.5cm}
    \begin{tikzpicture}[scale=.9]
        \draw (-.5,0) node[anchor=east]  {$\rE_8:$};
         \node at (0,-.5)  {$\alpha_1$};
    \node at (2,-.5)  {$\alpha_2$};
    \node at (4,-.5)  {$\alpha_3$};
    \node at (6,-.5)  {$\alpha_5$};
    \node at (8,-.5)  {$\alpha_6$};
    \node at (10,-.5)  {$\alpha_7$};
    \node at (12,-.5)  {$\alpha_8=\tilde\alpha$};
    \node at (5,-1.3)  {$\alpha_4$};
    \foreach \y in {0,...,5}
    \draw[thick,xshift=\y cm] (\y cm,0) ++(.3 cm, 0) -- +(14 mm,0);
    \draw[thick] (5 cm, -.8 cm) circle (3 mm);
    \draw[thick] (4.2 cm, -.2 cm) -- +(.5,-.5 cm);

    \node at (0,0)  {$0$};
    \draw[thick] (0,0)  circle (.3cm);
    \node at (2,0)  {$0$};
     \draw[thick] (2,0)  circle (.3cm);
    \node at (4,0)  {$0$};
     \draw[thick] (4,0)  circle (.3cm);
    \node at (6,0)  {$0$};
     \draw[thick] (6,0)  circle (.3cm);
    \node at (8,0)  {$0$};
     \draw[thick] (8,0)  circle (.3cm);
    \node at (10,0)  {$2$};
     \draw[thick,red] (10,0)  circle (.3cm);
    \node at (12,0)  {$2$};
     \draw[thick,red] (12,0)  circle (.3cm);
    \node at (5,-.8)  {$0$};
    \node at (1,1) {$0$}; 
    \draw[thick] (1,1)  circle (.3cm);
    \node at (3,1)  {$0$};
    \draw[thick] (3,1)  circle (.3cm);
    \node at (4,1)  {$0$};
    \draw[thick] (4,1)  circle (.3cm);
    \node at (5,1)  {$0$};
    \draw[thick] (5,1)  circle (.3cm);
    \node at (7,1)  {$0$};
    \draw[thick] (7,1)  circle (.3cm);
    \node at (9,1)  {$2$};
    \draw[thick] (9,1)  circle (.3cm);
    \node at (11,1)  {$4$};
    \draw[thick,red] (11,1)  circle (.3cm);

    \draw[thick] (.2,.2) --node[above]{$\alpha_2\ \ $} +(.6,.6);
    \draw[thick] (1.8,.2) -- +(-.6,.6);
    \draw[thick] (2.2,.2) -- +(.6,.6);
    \draw[thick] (3.8,.2) -- +(-.6,.6);
    \draw[thick] (4.2,.2) -- +(.6,.6);
    \draw[thick] (4,.3) -- +(0,.4);
    \draw[thick] (5.8,.2) -- +(-.6,.6);
    \draw[thick] (6.2,.2) -- +(.6,.6);
    \draw[thick] (7.8,.2) -- +(-.6,.6);
    \draw[thick] (8.2,.2) -- +(.6,.6);
    \draw[thick] (9.8,.2) -- +(-.6,.6);
    \draw[thick] (10.2,.2) -- +(.6,.6);
    \draw[thick] (11.8,.2) -- +(-.6,.6);
    \draw[thick] (5,-.5) -- +(-.8,1.25);
    \node at (2,2)  {$0$};
    \draw[thick] (2,2)  circle (.3cm);
    \node at (3,2)  {$0$};
    \draw[thick] (3,2)  circle (.3cm);
    \node at (4,2)  {$0$};
    \draw[thick] (4,2)  circle (.3cm);
    \node at (5,2)  {$0$};
    \draw[thick] (5,2)  circle (.3cm);
    \node at (6,2)  {$0$};
    \draw[thick] (6,2)  circle (.3cm);
    \node at (8,2)  {$2$};
    \draw[thick] (8,2)  circle (.3cm);
    \node at (10,2)  {$4$};
    \draw[thick] (10,2)  circle (.3cm);

    \draw[thick] (1.2,1.2) --node[above]{$\alpha_3\ \ $} +(.6,.6);
    \draw[thick] (2.8,1.2) -- +(-.6,.6);
    \draw[thick] (3,1.3) -- +(0,.4);
    \draw[dashed,thick] (3.2,1.2) -- +(.6,.6);
    \draw[thick] (3.8,1.2) -- +(-.6,.6);
    \draw[thick] (4.2,1.2) -- +(.6,.6);
    \draw[dashed,thick] (4.8,1.2) -- +(-.6,.6);
    \draw[thick] (5,1.3) -- +(0,.4);
    \draw[thick] (5.2,1.2) -- +(.6,.6);
    \draw[thick] (6.8,1.2) -- +(-.6,.6);
    \draw[thick] (7.2,1.2) -- +(.6,.6);
    \draw[thick] (8.8,1.2) -- +(-.6,.6);
    \draw[thick] (9.2,1.2) -- +(.6,.6);
    \draw[thick] (10.8,1.2) -- +(-.6,.6);

    \node at (2,3)  {$0$};
    \draw[thick] (2,3)  circle (.3cm);
    \node at (3,3)  {$0$};
    \draw[thick] (3,3)  circle (.3cm);
    \node at (4,3)  {$0$};
    \draw[thick] (4,3)  circle (.3cm);
    \node at (5,3)  {$0$};
    \draw[thick] (5,3)  circle (.3cm);
    \node at (6,3)  {$0$};
    \draw[thick] (6,3)  circle (.3cm);
    \node at (7,3)  {$2$};
    \draw[thick] (7,3)  circle (.3cm);
    \node at (9,3)  {$4$};
    \draw[thick] (9,3)  circle (.3cm);

    \draw[thick] (2,2.3) --node[left]{$\alpha_4 \ $} +(0,.4);
    \draw[dashed,thick] (2.2,2.2) -- +(.6,.6);
    \draw[thick] (3.2,2.2) -- +(.6,.6);
    \draw[thick] (2.8,2.2) -- +(-.6,.6);
    \draw[dashed,thick] (3.8,2.2) -- +(-.6,.6);
    \draw[dashed,thick] (4,2.3) -- +(0,.4);
    \draw[dashed,thick] (4.2,2.2) -- +(.6,.6);
    \draw[thick] (4.8,2.2) -- +(-.6,.6);
    \draw[thick] (5.2,2.2) -- +(.6,.6);
    \draw[dashed,thick] (5.8,2.2) -- +(-.6,.6);
    \draw[thick] (6,2.3) -- +(0,.4);
    \draw[thick] (6.2,2.2) -- +(.6,.6);
    \draw[thick] (7.8,2.2) -- +(-.6,.6);
    \draw[thick] (8.2,2.2) -- +(.6,.6);
    \draw[thick] (9.8,2.2) -- +(-.6,.6);

    \node at (3,4)  {$0$};
    \draw[thick] (3,4)  circle (.3cm);
    \node at (3.75,4)  {$0$};
    \draw[thick] (3.75,4)  circle (.25cm);
    \node at (4.25,4)  {$0$};
    \draw[thick] (4.25,4)  circle (.25cm);
    \node at (5,4)  {$0$};
    \draw[thick] (5,4)  circle (.3cm);
    \node at (6,4)  {$2$};
    \draw[thick] (6,4)  circle (.3cm);
    \node at (7,4)  {$2$};
    \draw[thick] (7,4)  circle (.3cm);
    \node at (8,4)  {$4$};
    \draw[thick] (8,4)  circle (.3cm);

    \draw[thick] (2.2,3.2) -- node[above]{$\alpha_5\ \ $} +(.6,.6);
    \draw[dashed,thick] (3,3.3) -- +(0,.4);
    \draw[dashed,thick] (3.2,3.3) -- +(.55,.55);
    \draw[thick] (3.8,3.2) -- +(-.6,.6);
    \draw[thick] (4.2,3.2) -- +(0,.55);
    \draw[thick] (4.2,3.2) -- +(.6,.6);
    \draw[dashed,thick] (4.7,3) -- +(-.8,.8);
    \draw[dashed,thick] (5,3.3) -- +(0,.4);
    \draw[dashed,thick] (5.2,3.2) -- +(.6,.6);
    \draw[thick] (5.8,3.2) -- +(-.6,.6);
    \draw[thick] (6.2,3.2) -- +(.6,.6);
    \draw[dashed,thick] (6.8,3.2) -- +(-.6,.6);
    \draw[thick] (7,3.3) -- +(0,.4);
    \draw[thick] (7.2,3.2) -- +(.6,.6);
    \draw[thick] (8.8,3.2) -- +(-.6,.6);

    \node at (3,5)  {$0$};
    \draw[thick] (3,5)  circle (.3cm);
    \node at (4,5)  {$0$};
    \draw[thick] (4,5)  circle (.3cm);
    \node at (4.75,5)  {$2$};
    \draw[thick] (4.75,5)  circle (.25cm);
    \node at (5.25,5)  {$0$};
    \draw[thick] (5.25,5)  circle (.25cm);
    \node at (6,5)  {$2$};
    \draw[thick] (6,5)  circle (.3cm);
    \node at (7,5)  {$4$};
    \draw[thick] (7,5)  circle (.3cm);
    \node at (8,5)  {$4$};
    \draw[thick] (8,5)  circle (.3cm);

    \draw[thick] (3,4.3) --node[left]{$\alpha_3 \ $} +(0,.4);
    \draw[dashed,thick] (3.2,4.2) -- +(.6,.6);
    \draw[dashed,thick] (3.85,4.25) -- +(0,.5);
    \draw[dashed,thick] (3.9,4.2) -- +(.65,.65);
    \draw[thick] (4.1,4.2) -- +(-.78,.78);
    \draw[thick] (4.45,4.15) -- +(.65,.65);
    \draw[dashed,thick] (4.8,4.2) -- +(-.6,.6);
    \draw[thick] (5.2,4.2) -- +(0,.55);
    \draw[thick] (5.2,4.2) -- +(.6,.6);
    \draw[dashed,thick] (5.7,4) -- +(-.8,.8);;
    \draw[dashed,thick] (6,4.3) -- +(0,.4);
    \draw[dashed,thick] (6.2,4.2) -- +(.6,.6);
    \draw[thick] (6.8,4.2) -- +(-.6,.6);
    \draw[thick] (7.2,4.2) -- +(.6,.6);
    \draw[dashed,thick] (7.8,4.2) -- +(-.6,.6);
    \draw[thick] (8,4.3) -- +(0,.4);

    \node at (2,6)  {$0$};
    \draw[thick] (2,6)  circle (.3cm);
    \node at (4,6)  {$0$};
    \draw[thick] (4,6)  circle (.3cm);
    \node at (4.75,6)  {$2$};
    \draw[thick] (4.75,6)  circle (.25cm);
    \node at (5.25,6)  {$0$};
    \draw[thick] (5.25,6)  circle (.25cm);
    \node at (5.75,6)  {$4$};
    \draw[thick] (5.75,6)  circle (.25cm);
    \node at (6.25,6)  {$2$};
    \draw[thick] (6.25,6)  circle (.25cm);
    \node at (7,6)  {$4$};
    \draw[thick] (7,6)  circle (.3cm);

    \draw[thick] (2.8,5.2) --node[below]{$\alpha_2\ \ $} +(-.6,.6);
    \draw[thick] (3.2,5.2) -- +(.6,.6);
    \draw[thick] (4,5.3) -- +(.0,.4);
    \draw[dashed,thick] (4.2,5.3) -- +(.55,.55);
    \draw[dashed,thick] (4.75,5.25) -- +(.0,.5);
    \draw[dashed,thick] (4.9,5.2) -- +(.65,.65);
    \draw[thick] (5.1,5.2) -- +(-.78,.78);
    \draw[thick] (5.25,5.25) -- +(.0,.5);
    \draw[thick] (5.4,5.2) -- +(.65,.65);
    \draw[dashed,thick] (5.7,5) -- +(-.8,.8);
    \draw[thick] (6.2,5.2) -- +(0,.55);
    \draw[thick] (6.2,5.2) -- +(.6,.6);
    \draw[dashed,thick] (6.7,5) -- +(-.8,.8);;
    \draw[dashed,thick] (7,5.3) -- +(0,.4);
    \draw[thick] (7.8,5.2) -- +(-.6,.6);

    \node at (3,7)  {$0$};
    \draw[thick] (3,7)  circle (.3cm);
    \node at (4,7)  {$0$};
    \draw[thick] (4,7)  circle (.3cm);
    \node at (5,7)  {$2$};
    \draw[thick] (5,7)  circle (.3cm);
    \node at (5.75,7)  {$4$};
    \draw[thick] (5.75,7)  circle (.25cm);
    \node at (6.25,7)  {$2$};
    \draw[thick] (6.25,7)  circle (.25cm);
    \node at (7,7)  {$4$};
    \draw[thick] (7,7)  circle (.3cm);

    \draw[thick] (2.2,6.2) --node[above]{$\alpha_6\ \ $} +(.6,.6);
    \draw[thick] (3.8,6.2) -- +(-.6,.6);
    \draw[thick] (4,6.3) -- +(0,.4);
    \draw[dashed,thick] (4.2,6.2) -- +(.6,.6);
    \draw[dashed,thick] (4.85,6.25) -- +(0,.5);
    \draw[dashed,thick] (4.9,6.2) -- +(.65,.65);
    \draw[thick] (5.1,6.2) -- +(-.78,.78);
    \draw[thick] (5.4,6.2) -- +(.65,.65);
    \draw[dashed,thick] (5.75,6.25) -- +(.0,.5);
    \draw[dashed,thick] (6.05,6.15) -- +(-.78,.78);
    \draw[thick] (6.25,6.25) -- +(.0,.5);
    \draw[thick] (6.4,6.2) -- +(.5,.5);
    \draw[dashed,thick] (6.7,6) -- +(-.8,.8);
    \draw[thick] (7,6.3) -- +(0,.4);

    \node at (3,8)  {$0$};
    \draw[thick] (3,8)  circle (.3cm);
    \node at (4,8)  {$2$};
    \draw[thick] (4,8)  circle (.3cm);
    \node at (5,8)  {$2$};
    \draw[thick] (5,8)  circle (.3cm);
    \node at (5.75,8)  {$4$};
    \draw[thick] (5.75,8)  circle (.25cm);
    \node at (6.25,8)  {$2$};
    \draw[thick,red] (6.25,8)  circle (.25cm);
    \node at (7,8)  {$4$};
    \draw[thick] (7,8)  circle (.3cm);

    \draw[thick] (3,7.3) --node[left]{$\alpha_5 \ $} +(0,.4);
    \draw[dashed,thick] (3.2,7.2) -- +(.6,.6);
    \draw[thick] (3.8,7.2) -- +(-.6,.6);
    \draw[thick] (4.2,7.2) -- +(.6,.6);
    \draw[dashed,thick] (4.8,7.2) -- +(-.6,.6);
    \draw[dashed,thick] (5,7.3) -- +(0,.4);
    \draw[dashed,thick] (5.2,7.3) -- +(.55,.55);
    \draw[dashed,thick] (5.75,7.25) -- +(.0,.5);
    \draw[thick] (6.05,7.15) -- +(-.78,.78);
    \draw[thick] (6.25,7.25) -- +(.0,.5);
    \draw[thick] (6.4,7.2) -- +(.5,.5);
    \draw[dashed,thick] (6.7,7) -- +(-.8,.8);
    \draw[thick] (7,7.3) -- +(0,.4);

    \node at (2,9)  {$0$};
    \draw[thick] (2,9)  circle (.3cm);
    \node at (4,9)  {$2$};
    \draw[thick] (4,9)  circle (.3cm);
    \node at (4.75,9)  {$4$};
    \draw[thick] (4.75,9)  circle (.25cm);
    \node at (5.25,9)  {$2$};
    \draw[thick] (5.25,9)  circle (.25cm);
    \node at (6,9)  {$4$};
    \draw[thick] (6,9)  circle (.3cm);
    \node at (7,9)  {$4$};
    \draw[thick,red] (7,9)  circle (.3cm);

    \draw[thick] (2.8,8.2) --node[below]{$\alpha_3\ \ $} +(-.6,.6);
    \draw[thick] (3.2,8.2) -- +(.6,.6);
    \draw[dashed,thick] (4,8.3) -- +(0,.4);
    \draw[dashed,thick] (4.2, 8.3) -- +(.55,.55);
    \draw[thick] (4.8,8.2) -- +(-.6,.6);
    \draw[thick] (5.2,8.2) -- +(0,.55);
    \draw[dashed,thick] (5.2,8.2) -- +(.6,.6);
    \draw[dashed,thick] (5.55,8.15) -- +(-.65,.65);
    \draw[dashed,thick] (5.85,8.25) -- +(0,.5);
    \draw[thick] (6.05,8.15) -- +(-.65,.65);
    \draw[thick] (6.4,8.2) -- +(.5,.5);
    \draw[dashed,thick] (6.8,8.2) -- +(-.6,.6);
    \draw[thick] (7,8.3) -- +(0,.4);

    \node at (1,10)  {$0$};
    \draw[thick] (1,10)  circle (.3cm);
    \node at (3,10)  {$2$};
    \draw[thick] (3,10)  circle (.3cm);
    \node at (4,10)  {$2$};
    \draw[thick] (4,10)  circle (.3cm);
    \node at (5,10)  {$4$};
    \draw[thick] (5,10)  circle (.3cm);
    \node at (6,10)  {$4$};
    \draw[thick] (6,10)  circle (.3cm);
    \node at (8,10)  {$6$};
    \draw[thick,red] (8,10)  circle (.3cm);

    \draw[thick] (1.8,9.2) --node[below]{$\alpha_4\ \ $} +(-.6,.6);
    \draw[thick] (2.2,9.2) -- +(.6,.6);
    \draw[thick] (3.8,9.2) -- +(-.6,.6);
    \draw[thick] (4,9.3) -- +(0,.4);
    \draw[dashed,thick] (4.2,9.2) -- +(.6,.6);
    \draw[dashed,thick] (4.85,9.25) -- +(0,.5);
    \draw[thick] (5.1,9.2) -- +(-.78,.78);
    \draw[thick] (5.4,9.2) -- +(.5,.5);
    \draw[dashed,thick] (5.8,9.2) -- +(-.6,.6);
    \draw[dashed,thick] (6,9.3) -- +(0,.4);
    \draw[thick] (6.8,9.2) -- +(-.6,.6);
    \draw[thick] (7.2,9.2) -- +(.6,.6);

    \node at (2,11)  {$2$};
    \draw[thick] (2,11)  circle (.3cm);
    \node at (3,11)  {$2$};
    \draw[thick] (3,11)  circle (.3cm);
    \node at (4,11)  {$4$};
    \draw[thick] (4,11)  circle (.3cm);
    \node at (5,11)  {$4$};
    \draw[thick] (5,11)  circle (.3cm);
    \node at (7,11)  {$6$};
    \draw[thick] (7,11)  circle (.3cm);

    \draw[thick] (1.2,10.2) --node[above]{$\alpha_7\ \ $} +(.6,.6);
    \draw[thick] (2.8,10.2) -- +(-.6,.6);
    \draw[thick] (3,10.3) -- +(0,.4);
    \draw[dashed,thick] (3.2,10.2) -- +(.6,.6);
    \draw[thick] (3.8,10.2) -- +(-.6,.6);
    \draw[thick] (4.2,10.2) -- +(.6,.6);
    \draw[dashed,thick] (4.8,10.2) -- +(-.6,.6);
    \draw[dashed,thick] (5,10.3) -- +(0,.4);
    \draw[thick] (5.8,10.2) -- +(-.6,.6);
    \draw[thick] (6.2,10.2) -- +(.6,.6);
    \draw[thick] (7.8,10.2) -- +(-.6,.6);

    \node at (2,12)  {$2$};
    \draw[thick] (2,12)  circle (.3cm);
    \node at (2.75,12)  {$4$};
    \draw[thick] (2.75,12)  circle (.25cm);
    \node at (3.25,12)  {$2$};
    \draw[thick] (3.25,12)  circle (.25cm);
    \node at (4,12)  {$4$};
    \draw[thick] (4,12)  circle (.3cm);
    \node at (6,12)  {$6$};
    \draw[thick] (6,12)  circle (.3cm);

    \draw[thick] (2,11.3) --node[left]{$\alpha_6 \ $} +(0,.4);
    \draw[dashed,thick] (2.2,11.3) -- +(.55,.55);
    \draw[thick] (2.8,11.2) -- +(-.6,.6);
    \draw[thick] (3.2,11.2) -- +(0,.55);
    \draw[thick] (3.2,11.2) -- +(.6,.6);
    \draw[dashed,thick] (3.7,11) -- +(-.8,.8);
    \draw[dashed,thick] (4,11.3) -- +(0,.4);
    \draw[thick] (4.8,11.2) -- +(-.6,.6);
    \draw[thick] (5.2,11.2) -- +(.6,.6);
    \draw[thick] (6.8,11.2) -- +(-.6,.6);

    \node at (2,13)  {$2$};
    \draw[thick] (2,13)  circle (.3cm);
    \node at (3,13)  {$4$};
    \draw[thick] (3,13)  circle (.3cm);
    \node at (4,13)  {$4$};
    \draw[thick] (4,13)  circle (.3cm);
    \node at (5,13)  {$6$};
    \draw[thick] (5,13)  circle (.3cm);

    \draw[thick] (2,12.3) --node[left]{$\alpha_5 \ $} +(0,.4);
    \draw[dashed,thick] (2.2,12.2) -- +(.6,.6);
    \draw[thick] (2,11.3) -- +(0,.4);
    \draw[dashed,thick] (2.85,12.25) -- +(0,.5);
    \draw[thick] (3.1,12.2) -- +(-.78,.78);
    \draw[thick] (3.4,12.2) -- +(.5,.5);
    \draw[dashed,thick] (3.8,12.2) -- +(-.6,.6);
    \draw[thick] (4,12.3) -- +(0,.4);
    \draw[thick] (4.2,12.2) -- +(.6,.6);
    \draw[thick] (5.8,12.2) -- +(-.6,.6);

    \node at (1,14)  {$2$};
    \draw[thick] (1,14)  circle (.3cm);
    \node at (3,14)  {$4$};
    \draw[thick] (3,14)  circle (.3cm);
    \node at (4,14)  {$6$};
    \draw[thick] (4,14)  circle (.3cm);
    \node at (5,14)  {$6$};
    \draw[thick] (5,14)  circle (.3cm);

    \draw[thick] (1.8,13.2) --node[below]{$\alpha_3\ \ $} +(-.6,.6);
    \draw[thick] (2.2,13.2) -- +(.6,.6);
    \draw[dashed,thick] (3,13.3) -- +(0,.4);
    \draw[dashed,thick] (3.2,13.2) -- +(.6,.6);
    \draw[thick] (3.8,13.2) -- +(-.6,.6);
    \draw[thick] (4.2,13.2) -- +(.6,.6);
    \draw[dashed,thick] (4.8,13.2) -- +(-.6,.6);
    \draw[thick] (5,13.3) -- +(0,.4);

    \node at (0,15)  {$2$};
    \draw[thick] (0,15)  circle (.3cm);
    \node at (2,15)  {$4$};
    \draw[thick] (2,15)  circle (.3cm);
    \node at (4,15)  {$6$};
    \draw[thick] (4,15)  circle (.3cm);
    \node at (6,15)  {$6$};
    \draw[thick] (6,15)  circle (.3cm);

    \draw[thick] (.8,14.2) --node[below]{$\alpha_2\ \ $} +(-.6,.6);
    \draw[thick] (1.2,14.2) -- +(.6,.6);
    \draw[thick] (2.8,14.2) -- +(-.6,.6);
    \draw[thick] (3.2,14.2) -- +(.6,.6);
    \draw[dashed,thick] (4,14.3) -- +(0,.4);
    \draw[thick] (4.8,14.2) -- +(-.6,.6);
    \draw[thick] (5.2,14.2) -- +(.6,.6);

    \node at (-1,16)  {$2$};
    \draw[thick,red] (-1,16)  circle (.3cm);
     \node at (1,16)  {$4$};
    \draw[thick] (1,16)  circle (.3cm);
    \node at (3,16)  {$6$};
    \draw[thick] (3,16)  circle (.3cm);
    \node at (5,16)  {$6$};
    \draw[thick] (5,16)  circle (.3cm);

    \draw[thick] (-.2,15.2) --node[below]{$\alpha_1\ \ $} +(-.6,.6);
    \draw[thick] (.2,15.2) -- +(.6,.6);
    \draw[thick] (1.8,15.2) -- +(-.6,.6);
    \draw[thick] (2.2,15.2) -- +(.6,.6);
    \draw[thick] (3.8,15.2) -- +(-.6,.6);
    \draw[thick] (4.2,15.2) -- +(.6,.6);
    \draw[thick] (5.8,15.2) -- +(-.6,.6);

    \node at (0,17)  {$4$};
    \draw[thick,red] (0,17)  circle (.3cm);
    \node at (2,17)  {$6$};
    \draw[thick] (2,17)  circle (.3cm);
    \node at (4,17)  {$6$};
    \draw[thick] (4,17)  circle (.3cm);

    \draw[thick] (-.8,16.2) --node[above]{$\alpha_8\ \ $} +(.6,.6);
    \draw[thick] (.8,16.2) -- +(-.6,.6);
    \draw[thick] (1.2,16.2) -- +(.6,.6);
    \draw[thick] (2.8,16.2) -- +(-.6,.6);
    \draw[thick] (3.2,16.2) -- +(.6,.6);
    \draw[thick] (4.8,16.2) -- +(-.6,.6);
    
    %
     \node at (1,18)  {$6$};
    \draw[thick,red] (1,18)  circle (.3cm);
    \node at (3,18)  {$6$};
    \draw[thick] (3,18)  circle (.3cm);
    \node at (5,18)  {$6$};
    \draw[thick] (5,18)  circle (.3cm);

    \draw[thick] (.2,17.2) --node[above]{$\alpha_7\ \ $} +(.6,.6);
    \draw[thick] (1.8,17.2) -- +(-.6,.6);
    \draw[thick] (2.2,17.2) -- +(.6,.6);
    \draw[thick] (3.8,17.2) -- +(-.6,.6);
    \draw[thick] (4.2,17.2) -- +(.6,.6);
    %
     \node at (2,19)  {$6$};
    \draw[thick] (2,19)  circle (.3cm);
    \node at (4,19)  {$6$};
    \draw[thick] (4,19)  circle (.3cm);

    \draw[thick] (1.2,18.2) -- node[above]{$\alpha_6\ \ $}+(.6,.6);
    \draw[thick] (2.8,18.2) -- +(-.6,.6);
    \draw[thick] (3.2,18.2) -- +(.6,.6);
    \draw[thick] (4.8,18.2) -- +(-.6,.6);
    \node at (3,20)  {$6$};
    \draw[thick] (3,20)  circle (.3cm);
    \node at (5,20)  {$6$};
    \draw[thick] (5,20)  circle (.3cm);

    \draw[thick] (2.2,19.2) --node[above]{$\alpha_5\ \ $} +(.6,.6);
    \draw[thick] (3.8,19.2) -- +(-.6,.6);
    \draw[thick] (4.2,19.2) -- +(.6,.6);
    \node at (4,21)  {$6$};
    \draw[thick] (4,21)  circle (.3cm);
    \node at (6,21)  {$6$};
    \draw[thick] (6,21)  circle (.3cm);

    \draw[thick] (3.2,20.2) --node[above]{$\alpha_3\ \ $} +(.6,.6);
    \draw[thick] (4.8,20.2) -- +(-.6,.6);
    \draw[thick] (5.2,20.2) -- +(.6,.6);
    \node at (3,22)  {$6$};
    \draw[thick] (3,22)  circle (.3cm);
    \node at (5,22)  {$6$};
    \draw[thick] (5,22)  circle (.3cm);

    \draw[thick] (3.8,21.2) --node[below]{$\alpha_2\ \ $} +(-.6,.6);
    \draw[thick] (4.2,21.2) -- +(.6,.6);
    \draw[thick] (5.8,21.2) -- +(-.6,.6);
    \node at (4,23)  {$6$};
    \draw[thick] (4,23)  circle (.3cm);

    \draw[thick] (3.2,22.2) --node[above]{$\alpha_4\ \ $} +(.6,.6);
    \draw[thick] (4.8,22.2) -- +(-.6,.6);
    \node at (4,24)  {$6$};
    \draw[thick] (4,24)  circle (.3cm);

    \draw[thick] (4,23.3) --node[left]{$\alpha_3 \ $} +(0,.4);

    \node at (4,25)  {$6$};
    \draw[thick] (4,25)  circle (.3cm);

    \draw[thick] (4,24.3) --node[left]{$\alpha_5 \ $} +(0,.4);
    \node at (4,26)  {$6$};
    \draw[thick,red] (4,26)  circle (.3cm);

    \draw[thick] (4,25.3) -- node[left]{$\alpha_6 \ $}+(0,.4);

    \node at (4,27)  {$8$};
    \draw[thick,red] (4,27)  circle (.3cm);

    \draw[thick] (4,26.3) --node[left]{$\alpha_7 \ $} +(0,.4);

    \node at (4,28)  {$10$};
    \draw[thick,red] (4,28)  circle (.3cm);
    \draw[thick] (4,27.3) --node[left]{$\alpha_8 \ $} +(0,.4);

    \end{tikzpicture}}
    \end{center}
    \newpage

\subsection{Notation}
\label{sec:notation}

The following is a non-exhaustive list of notation 
used throughout  the paper. We consider an $\fsl_2$-triple
$\{f,h,e\}$ in a finite-dimensional complex simple Lie algebra $\fg$
and a complex connected Lie group $\rG$ with Lie algebra $\fg$.

\begin{tabular}{p{.2\textwidth} p{.75\textwidth}}
  $\fs=\langle f,h,e\rangle$
  &subalgebra of $\fg$ generated by $\{f,h,e\}$; see Remark~\ref{remark compatible involutions}.\\
 
  $\fg=\bigoplus_{j=0}^MW_{j}$
  &decomposition of $\fg$ into irreducible $\fsl_2$-modules;  see
    \eqref{eq sl2 module decomp}.\\
  

  $\fg=\bigoplus_{j=-l}^l\fg_j$
  &$\ad_h$-weight space decomposition of $\fg$; see \eqref{eq adh
    weight decomp}.\\

  $V=V(e)$
  &centralizer of $e$ in $\fg$; see
   p.~\pageref{p:centralizer-e}.\\

  $V=\bigoplus_{j\geq0}V_j$
  &decomposition into highest weight spaces $V_j=W_j\cap\fg_j$ in $W_j$; see
    \eqref{eq decomp highest weight spaces}.\\

  $\fc = W_0 = V_0$
  &subalgebra which centralizes $\langle f,h,e\rangle\subset\fg$; see
    Remark~\ref{rem slodowy slice}.\\

  $Z_{2m_j}=W_{2m_j}\cap\fg_0$
  &weight zero subspace of the $\fsl_2$-module $W_{2m_j}$;
    see \eqref{eq:g0Sl2module}.\\

  $\fg(e)\subset\fg$
  &semisimple part of double centralizer of magical $\langle
    f,h,e\rangle$; see Proposition~\ref{prop subalge g(e)}.\\

  $r(e)=\rk(\fg(e))$
  &rank of $\fg(e)$; see \eqref{eq:def-re-fgss}.\\

  $l_1,\dots,l_{r(e)}$
  &exponents of $\fg(e)$; see Lemma~\ref{lem identification of Slodowy
    domain with Cayley group}.\\ 

  $\tilfg\subset\fg_0$
  &semisimple part of $\fg_0$; see \eqref{eq:def-re-fgss}.\\

  $\sigma_e : \fg\to\fg$
  &magical involution associated to $\langle
    f,h,e\rangle\subset\fg$; see \eqref{eq magical involution}.\\
  
  $\fg^\R\subset\fg$
  &canonical real form with $\sigma_e$ as Cartan involution; see Definition~\ref{def: canonical real form}.\\

  $\fg=\fh\oplus\fm$
  &complex Cartan decomposition defined by $\sigma_e$; see
    p.~\pageref{p:maximal-compact}.\\ 

  $\fg^\R=\fh^\R\oplus\fm^\R$
  &real Cartan decomposition defined by $\sigma_e$; see p.~\pageref{p:maximal-compact}.\\

  $\theta_e : \fg_0\to\fg_0$
  &Cayley involution associated to magical $\{f,h,e\}$; see \eqref{eq g0 involution}.\\

  $\fg_\cC^\R\subset\fg_0$
  &Cayley real form of $\fg_0$ with $\theta_e$ as Cartan involution; see Definition~\ref{def: Cayley real form}.\\

  $\tilfg^\R\subset \fg_\cC^\R$
  &semisimple part of $\fg_\cC^\R$; see Proposition~\ref{prop cayley real form class}.\\

  $\tilfg=\fc\oplus\tilfm$
  &Cartan decomposition given by restriction to $\tilfg$ of Cayley involution
    $\theta_e$; see p.~\pageref{p:cartan-tilfg}.\\

  $\rS\subset\rG$
  & connected subgroup with Lie algebra $\fs$; see p.~\pageref{p:def-S-group}.\\

  $\rC\subset \rG$
  &centralizer in $\rG$ of  $\langle f,h,e\rangle\subset\fg$ (with Lie
    algebra $\fc$); see
  Lemma~\ref{lemma:Ccentralizesg(e)}.\\

  $\rH\subset\rG$
  &fixed-point group for $\sigma_e$; see p.~\pageref{p:maximal-compact}.\\

  $\rG^\R\subset\rG$
  &canonical real group associated to magical $\{f,h,e\}$; see Definition~\ref{def Lie group canonical real form}.\\

   $\rH^\R=\rH\cap\rG^\R$
  &maximal compact subgroup of $\rG^\R$; see p.~\pageref{p:maximal-compact}.\\

  $\rG_\cC^\R$
  &Cayley group associated to magical $\{f,h,e\}$ and $\rG$; see Definition~\ref{def cayley group}.\\

  $\tilrG^\R\subset \rG_\cC^\R$
  &semisimple part of Cayley group; see Definition~\ref{def cayley group}.\\

  $(\cE_\rT,f)$
  &uniformizing Higgs bundle; see Definition~\ref{def:uniformizing-HB}.\\

  $(\cE_{\rH_1}\star\cE_{\rH_2})[\rH]$
  &star product $\rH$-bundle for commuting subgroups
    $\rH_1,\rH_2\subset\rH$; see \eqref{eq: star notation}.\\

  $\widehat\Psi_e$
  &Cayley map on configuration space; see \eqref{eq Cayley
    map config}.\\

  $\Psi_e$
  &Cayley map on moduli space; see \eqref{eq:Cayleymoduli}.\\

\end{tabular}

\newpage

\bibliographystyle{plain}
\bibliography{AnnalsResubmission}

\begin{thebibliography}{10}

\bibitem{CartanGaloisCohomAdams}
Jeffrey Adams and Olivier Ta\"{\i}bi.
\newblock Galois and {C}artan cohomology of real groups.
\newblock {\em Duke Math. J.}, 167(6):1057--1097, 2018.

\bibitem{KHCquiversvortices}
Luis {\'A}lvarez-C{\'o}nsul and Oscar Garc{\'{\i}}a-Prada.
\newblock Hitchin-{K}obayashi correspondence, quivers, and vortices.
\newblock {\em Comm. Math. Phys.}, 238(1-2):1--33, 2003.

\bibitem{so(pq)BCGGO}
Marta Aparicio-Arroyo, Steven Bradlow, Brian Collier, Oscar Garc\'{\i}a-Prada,
  Peter~B. Gothen, and Andr\'{e} Oliveira.
\newblock {$\mathrm{SO}(p,q)$}-{H}iggs bundles and higher {T}eichm\"{u}ller
  components.
\newblock {\em Invent. Math.}, 218(1):197--299, 2019.

\bibitem{Baraglia18}
David Baraglia.
\newblock Monodromy of the {$SL(n)$} and {$GL(n)$} {H}itchin fibrations.
\newblock {\em Math. Ann.}, 370(3-4):1681--1716, 2018.

\bibitem{baragliaschaposnikmonodromyrank2}
David Baraglia and Laura Schaposnik.
\newblock Monodromy of rank 2 twisted {H}itchin systems and real character
  varieties.
\newblock {\em Trans. Amer. Math. Soc.}, 370(8):5491--5534, 2018.

\bibitem{DavidLauraCayleyLanglands}
David Baraglia and Laura~P. Schaposnik.
\newblock Cayley and {L}anglands type correspondences for orthogonal {H}iggs
  bundles.
\newblock {\em Trans. Amer. Math. Soc.}, 371(10):7451--7492, 2019.

\bibitem{BGLPWPosClosed}
Jonas {Beyrer}, Olivier {Guichard}, Fran{\c{c}}ois {Labourie}, Beatrice
  {Pozzetti}, and Anna {Wienhard}.
\newblock Positivity, cross-ratios, and the collar lemma.
\newblock {\em (in preparation)}.

\bibitem{BeyrerPozzettiSOpq}
Jonas {Beyrer} and Beatrice {Pozzetti}.
\newblock {Positive surface group representations in $\mathsf{PO}(p,q)$}.
\newblock {\em arXiv:2106.14725 [math.GT]}, June 2021.
\newblock \url{https://arxiv.org/abs/2106.14725}.

\bibitem{BGRmaximalToledo}
Olivier Biquard, Oscar García-Prada, and Roberto Rubio.
\newblock Higgs bundles, the {T}oledo invariant and the {C}ayley
  correspondence.
\newblock {\em Journal of Topology}, 10(3):795--826, 2017.

\bibitem{biswas-ramanan}
I.~Biswas and S.~Ramanan.
\newblock An infinitesimal study of the moduli of {H}itchin pairs.
\newblock {\em J. London Math. Soc. (2)}, 49(2):219--231, 1994.

\bibitem{chains-2018}
Steven Bradlow, Oscar Garcia-Prada, Peter Gothen, and Jochen Heinloth.
\newblock Irreducibility of moduli of semistable chains and applications to
  {$\mathrm{U}(p,q)$}-{H}iggs bundles.
\newblock In {\em Geometry and Physics: Volume 2, A Festschrift in honour of
  Nigel Hitchin}, pages 455--470. Oxford University Press, 2018.

\bibitem{UpqHiggs}
Steven~B. Bradlow, Oscar Garc{\'{\i}}a-Prada, and Peter~B. Gothen.
\newblock Surface group representations and {${\rm U}(p,q)$}-{H}iggs bundles.
\newblock {\em J. Differential Geom.}, 64(1):111--170, 2003.

\bibitem{HermitianTypeHiggsBGG}
Steven~B. Bradlow, Oscar Garc{\'{\i}}a-Prada, and Peter~B. Gothen.
\newblock Maximal surface group representations in isometry groups of classical
  {H}ermitian symmetric spaces.
\newblock {\em Geom. Dedicata}, 122:185--213, 2006.

\bibitem{BGGHomotopyGroups}
Steven~B. Bradlow, Oscar Garc{\'{\i}}a-Prada, and Peter~B. Gothen.
\newblock Homotopy groups of moduli spaces of representations.
\newblock {\em Topology}, 47(4):203--224, 2008.

\bibitem{SO2n*connected}
Steven~B. Bradlow, Oscar Garc{\'{\i}}a-Prada, and Peter~B. Gothen.
\newblock Higgs bundles for the non-compact dual of the special orthogonal
  group.
\newblock {\em Geom. Dedicata}, 175:1--48, 2015.

\bibitem{MaxRepsAnosov}
Marc Burger, Alessandra Iozzi, Fran{\c{c}}ois Labourie, and Anna Wienhard.
\newblock Maximal representations of surface groups: symplectic {A}nosov
  structures.
\newblock {\em Pure Appl. Math. Q.}, 1(3, Special Issue: In memory of Armand
  Borel. Part 2):543--590, 2005.

\bibitem{CompteRenduBIW}
Marc Burger, Alessandra Iozzi, and Anna Wienhard.
\newblock Surface group representations with maximal {T}oledo invariant.
\newblock {\em C. R. Math. Acad. Sci. Paris}, 336(5):387--390, 2003.

\bibitem{BIWmaximalToledoAnnals}
Marc Burger, Alessandra Iozzi, and Anna Wienhard.
\newblock Surface group representations with maximal {T}oledo invariant.
\newblock {\em Ann. of Math. (2)}, 172(1):517--566, 2010.

\bibitem{CollierSOnn+1components}
Brian Collier.
\newblock $\mathsf{SO}(n,n+1)$-surface group representations and their {H}iggs
  bundles.
\newblock {\em Ann. Sci. \'Ec. Norm. Sup\'er. (4)}, 63(6):1561--1616, 2020.

\bibitem{ColSandGlobalSlodowy}
Brian Collier and Andrew Sanders.
\newblock ({G},{P})-{O}pers and global {S}lodowy slices.
\newblock {\em Advances in Mathematics}, 377:107490, 2021.

\bibitem{CollMcGovNilpotents}
David~H. Collingwood and William~M. McGovern.
\newblock {\em Nilpotent orbits in semisimple {L}ie algebras}.
\newblock Van Nostrand Reinhold Mathematics Series. Van Nostrand Reinhold Co.,
  New York, 1993.

\bibitem{canonicalmetrics}
Kevin Corlette.
\newblock Flat {$G$}-bundles with canonical metrics.
\newblock {\em J. Differential Geom.}, 28(3):361--382, 1988.

\bibitem{DokovicKostConj}
Dragomir~\v{Z}. Dokovi\'{c}.
\newblock Proof of a conjecture of {K}ostant.
\newblock {\em Trans. Amer. Math. Soc.}, 302(2):577--585, 1987.

\bibitem{ExceptionalNilpotentsInner}
Dragomir~\v{Z}. Dokovi\'{c}.
\newblock Classification of nilpotent elements in simple exceptional real {L}ie
  algebras of inner type and description of their centralizers.
\newblock {\em J. Algebra}, 112(2):503--524, 1988.

\bibitem{ExceptionalNilpotentsOuter}
Dragomir~\v{Z}. Dokovi\'{c}.
\newblock Classification of nilpotent elements in simple real {L}ie algebras
  {$E_{6(6)}$} and {$E_{6(-26)}$} and description of their centralizers.
\newblock {\em J. Algebra}, 116(1):196--207, 1988.

\bibitem{DonagiGaitsgory}
R.~Y. Donagi and D.~Gaitsgory.
\newblock The gerbe of {H}iggs bundles.
\newblock {\em Transform. Groups}, 7(2):109--153, 2002.

\bibitem{DonagiGrassmannians}
Ron~Y. Donagi.
\newblock On the geometry of {G}rassmannians.
\newblock {\em Duke Math. J.}, 44(4):795--837, 1977.

\bibitem{harmoicmetric}
Simon Donaldson.
\newblock Twisted harmonic maps and the self-duality equations.
\newblock {\em Proc. London Math. Soc. (3)}, 55(1):127--131, 1987.

\bibitem{fanHiggsmodulianalytic}
Yue Fan.
\newblock Construction of the moduli space of {H}iggs bundles using analytic
  methods.
\newblock {\em arXiv:2004.07182 [math.DG]}, May 2020.
\newblock \url{https://arxiv.org/abs/2004.07182}.

\bibitem{Fischer-CAG-book}
Gerd Fischer.
\newblock {\em Complex analytic geometry}.
\newblock Lecture Notes in Mathematics, Vol. 538. Springer-Verlag, Berlin-New
  York, 1976.

\bibitem{fock_goncharov_2006}
Vladimir Fock and Alexander Goncharov.
\newblock Moduli spaces of local systems and higher {T}eichm\"uller theory.
\newblock {\em Publ. Math. Inst. Hautes \'Etudes Sci.}, 103:1--211, 2006.

\bibitem{nonmaxSp4}
O.~Garc{\'{\i}}a-Prada and I.~Mundet~i Riera.
\newblock Representations of the fundamental group of a closed oriented surface
  in {${\rm Sp}(4,{\Bbb R})$}.
\newblock {\em Topology}, 43(4):831--855, 2004.

\bibitem{HiggsPairsSTABILITY}
Oscar Garc{\'{\i}}a-Prada, Peter Gothen, and Ignasi Mundet~i Riera.
\newblock The {H}itchin-{K}obayashi correspondence, {H}iggs pairs and surface
  group representations.
\newblock {\em arXiv:0909.4487 [math.DG]}, September 2009.
\newblock \url{https://arxiv.org/abs/0909.4487}.

\bibitem{HiggsbundlesSP2nR}
Oscar Garc{\'{\i}}a-Prada, Peter Gothen, and Ignasi Mundet~i Riera.
\newblock Higgs bundles and surface group representaions in the real symplectic
  group.
\newblock {\em Journal of Topology}, 6(1):64--118, 2013.

\bibitem{Oliveira_GarciaPrada_2016}
Oscar Garc{\'{\i}}a-Prada and Andr{\'e} Oliveira.
\newblock Connectedness of {H}iggs bundle moduli for complex reductive {L}ie
  groups.
\newblock {\em Asian Journal of Mathematics}, 21(5):791--810, 2017.

\bibitem{AndreOscarSUstar}
Oscar Garc\'{\i}a-Prada and Andr\'{e}~G. Oliveira.
\newblock Higgs bundles for the non-compact dual of the unitary group.
\newblock {\em Illinois J. Math.}, 55(3):1155--1181 (2013), 2011.

\bibitem{Sp(2p2q)modulispaceconnected}
Oscar Garc{\'{\i}}a-Prada and Andr{\'e}~G. Oliveira.
\newblock Connectedness of the moduli of {${\rm Sp}(2p,2q)$}-{H}iggs bundles.
\newblock {\em Q. J. Math.}, 65(3):931--956, 2014.

\bibitem{GarciaPradaPeonRamanan-HKRsection}
Oscar Garc\'{\i}a-Prada, Ana Pe\'{o}n-Nieto, and S.~Ramanan.
\newblock Higgs bundles for real groups and the {H}itchin-{K}ostant-{R}allis
  section.
\newblock {\em Trans. Amer. Math. Soc.}, 370(4):2907--2953, 2018.

\bibitem{TopologicalComponents}
William~M. Goldman.
\newblock Topological components of spaces of representations.
\newblock {\em Invent. Math.}, 93(3):557--607, 1988.

\bibitem{sp4GothenConnComp}
Peter~B. Gothen.
\newblock Components of spaces of representations and stable triples.
\newblock {\em Topology}, 40(4):823--850, 2001.

\bibitem{AndreQuadraticPairs}
Peter~B. Gothen and Andr{\'e}~G. Oliveira.
\newblock Rank two quadratic pairs and surface group representations.
\newblock {\em Geom. Dedicata}, 161:335--375, 2012.

\bibitem{EGAIV-tome2}
A.~Grothendieck.
\newblock \'{E}l\'{e}ments de g\'{e}om\'{e}trie alg\'{e}brique. {IV}. \'{E}tude
  locale des sch\'{e}mas et des morphismes de sch\'{e}mas. {II}.
\newblock {\em Inst. Hautes \'{E}tudes Sci. Publ. Math.}, 24:231, 1965.

\bibitem{AnosovAndProperGGKW}
Fran\c{c}ois Gu\'{e}ritaud, Olivier Guichard, Fanny Kassel, and Anna Wienhard.
\newblock Anosov representations and proper actions.
\newblock {\em Geom. Topol.}, 21(1):485--584, 2017.

\bibitem{TopInvariantsAnosov}
O.~Guichard and A.~Wienhard.
\newblock Topological invariants of {A}nosov representations.
\newblock {\em Journal of Topology}, 3(3):578--642, Jan 2010.

\bibitem{GLWPosRepsarXiv}
Olivier {Guichard}, Fran{\c{c}}ois {Labourie}, and Anna {Wienhard}.
\newblock {Positivity and representations of surface groups}.
\newblock {\em arXiv:2106.14584 [math.DG]}, June 2021.
\newblock \url{https://arxiv.org/abs/2106.14584}.

\bibitem{guichard_wienhard_2012}
Olivier Guichard and Anna Wienhard.
\newblock Anosov representations: domains of discontinuity and applications.
\newblock {\em Invent. Math.}, 190(2):357--438, 2012.

\bibitem{PosRepsGWPROCEEDINGS}
Olivier Guichard and Anna Wienhard.
\newblock Positivity and higher {T}eichm{\"u}ller theory.
\newblock {\em Proceedings of the 7th {E}uropean {C}ongress of {M}athematics},
  2016.

\bibitem{GuichardWienhardPosFull}
Olivier {Guichard} and Anna {Wienhard}.
\newblock {Generalizing {L}usztig's total positivity}.
\newblock {\em arXiv:2208.10114 [math.DG]}, August 2022.
\newblock \url{https://arxiv.org/abs/2208.10114}.

\bibitem{HarisE7}
Stephen~J. Haris.
\newblock Some irreducible representations of exceptional algebraic groups.
\newblock {\em Amer. J. Math.}, 93:75--106, 1971.

\bibitem{Hartshorne-AlgebraicGeometry}
Robin Hartshorne.
\newblock {\em Algebraic Geometry}.
\newblock Springer-Verlag, New York-Heidelberg, 1977.
\newblock Graduate Texts in Mathematics, No. 52.

\bibitem{selfduality}
Nigel Hitchin.
\newblock The self-duality equations on a {R}iemann surface.
\newblock {\em Proc. London Math. Soc. (3)}, 55(1):59--126, 1987.

\bibitem{IntSystemFibration}
Nigel Hitchin.
\newblock Stable bundles and integrable systems.
\newblock {\em Duke Math. J.}, 54(1):91--114, 1987.

\bibitem{liegroupsteichmuller}
Nigel Hitchin.
\newblock Lie groups and {T}eichm\"uller space.
\newblock {\em Topology}, 31(3):449--473, 1992.

\bibitem{Igusa}
Jun-ichi Igusa.
\newblock A classification of spinors up to dimension twelve.
\newblock {\em Amer. J. Math.}, 92:997--1028, 1970.

\bibitem{SojiCausalStruct}
Soji Kaneyuki.
\newblock On the causal structures of the \v{S}ilov boundaries of symmetric
  bounded domains.
\newblock In {\em Prospects in complex geometry ({K}atata and {K}yoto, 1989)},
  volume 1468 of {\em Lecture Notes in Math.}, pages 127--159. Springer,
  Berlin, 1991.

\bibitem{KLPDynamicsProperCocompact}
Michael Kapovich, Bernhard Leeb, and Joan Porti.
\newblock Dynamics on flag manifolds: domains of proper discontinuity and
  cocompactness.
\newblock {\em Geom. Topol.}, 22(1):157--234, 2018.

\bibitem{knappbeyondintro}
Anthony~W. Knapp.
\newblock {\em Lie Groups -- Beyond an Introduction}, volume 140 of {\em
  Progress in Mathematics}.
\newblock Birkh\"auser Boston Inc., Boston, MA, second edition, 2002.

\bibitem{DiffGeomCompVectBun}
Shoshichi Kobayashi.
\newblock {\em Differential geometry of complex vector bundles}, volume~15 of
  {\em Publications of the Mathematical Society of Japan}.
\newblock Princeton University Press, Princeton, NJ; Iwanami Shoten, Tokyo,
  1987.
\newblock Kan{\^o} Memorial Lectures, 5.

\bibitem{KostantRallis}
B.~Kostant and S.~Rallis.
\newblock Orbits and representations associated with symmetric spaces.
\newblock {\em Amer. J. Math.}, 93:753--809, 1971.

\bibitem{ptds}
Bertram Kostant.
\newblock The principal three-dimensional subgroup and the {B}etti numbers of a
  complex simple {L}ie group.
\newblock {\em Amer. J. Math.}, 81:973--1032, 1959.

\bibitem{AnosovFlowsLabourie}
Fran{\c{c}}ois Labourie.
\newblock Anosov flows, surface groups and curves in projective space.
\newblock {\em Invent. Math.}, 165(1):51--114, 2006.

\bibitem{LandsbergManivelorbits}
J.~M. Landsberg and L.~Manivel.
\newblock The projective geometry of {F}reudenthal's magic square.
\newblock {\em J. Algebra}, 239(2):477--512, 2001.

\bibitem{JunLiConnectedness}
Jun Li.
\newblock The space of surface group representations.
\newblock {\em Manuscripta Math.}, 78(3):223--243, 1993.

\bibitem{LusztigTotPos}
G.~Lusztig.
\newblock Total positivity in reductive groups.
\newblock In {\em Lie theory and geometry}, volume 123 of {\em Progr. Math.},
  pages 531--568. Birkh\"{a}user Boston, Boston, MA, 1994.

\bibitem{NitsureHiggs}
Nitin Nitsure.
\newblock Moduli space of semistable pairs on a curve.
\newblock {\em Proc. London Math. Soc. (3)}, 62(2):275--300, 1991.

\bibitem{AndrePGLnR}
Andr\'e~Gama Oliveira.
\newblock Representations of surface groups in the projective general linear
  group.
\newblock {\em Internat. J. Math.}, 22(2):223--279, 2011.

\bibitem{BeatriceBourbaki}
Maria~Beatrice {Pozzetti}.
\newblock {Higher rank Teichm{\"u}ller theories}.
\newblock {\em S{\'e}minaire N. Bourbaki}, Mars 2019.

\bibitem{ramanathan_1975}
Annamalai Ramanathan.
\newblock Stable principal bundles on a compact {R}iemann surface.
\newblock {\em Mathematische Annalen}, 213(2):129--152, 1975.

\bibitem{schmitt_2005}
Alexander Schmitt.
\newblock Moduli for decorated tuples of sheaves and representation spaces for
  quivers.
\newblock {\em Proceedings Mathematical Sciences}, 115(1):15--49, 2005.

\bibitem{Sekiguchi}
Jir\={o} Sekiguchi.
\newblock Remarks on real nilpotent orbits of a symmetric pair.
\newblock {\em J. Math. Soc. Japan}, 39(1):127--138, 1987.

\bibitem{KatzMiddleInvCyclicHiggs}
Carlos Simpson.
\newblock Katz's middle convolution algorithm.
\newblock {\em Pure Appl. Math. Q.}, 5(2, Special Issue: In honor of Friedrich
  Hirzebruch. Part 1):781--852, 2009.

\bibitem{SimpsonVHS}
Carlos~T. Simpson.
\newblock Constructing variations of {H}odge structure using {Y}ang-{M}ills
  theory and applications to uniformization.
\newblock {\em J. Amer. Math. Soc.}, 1(4):867--918, 1988.

\bibitem{SimpsonModuli1}
Carlos~T. Simpson.
\newblock Moduli of representations of the fundamental group of a smooth
  projective variety. {I}.
\newblock {\em Inst. Hautes \'Etudes Sci. Publ. Math.}, 79:47--129, 1994.

\bibitem{SimpsonModuli2}
Carlos~T. Simpson.
\newblock Moduli of representations of the fundamental group of a smooth
  projective variety. {II}.
\newblock {\em Inst. Hautes \'Etudes Sci. Publ. Math.}, 80:5--79 (1995), 1994.

\bibitem{Slodowy-Simplesingularities-book}
Peter Slodowy.
\newblock {\em Simple singularities and simple algebraic groups}, volume 815 of
  {\em Lecture Notes in Mathematics}.
\newblock Springer, Berlin, 1980.

\bibitem{AnnaICM}
Anna Wienhard.
\newblock An invitation to higher {T}eichm\"{u}ller theory.
\newblock In {\em Proceedings of the {I}nternational {C}ongress of
  {M}athematicians---{R}io de {J}aneiro 2018. {V}ol. {II}. {I}nvited lectures},
  pages 1013--1039. World Sci. Publ., Hackensack, NJ, 2018.

\end{thebibliography}

\end{document}